\theoremstyle{plain} 
\newtheorem*{theo*}{Theorem}
\newtheorem*{con*}{Conjecture}
\newtheorem{theo}{Theorem}[section] 
\newtheorem{prop}[theo]{Proposition}
\newtheorem{cor}[theo]{Corollary}
\newtheorem{lem}[theo]{Lemma}
\newtheorem{conj}[theo]{Conjecture}
\theoremstyle{definition}
\newtheorem{defin}[theo]{Definition}
\newtheorem{ex}[theo]{Example}
\newtheorem{rem}[theo]{Remark}
\theoremstyle{definition}
\theoremstyle{remark}
\newtheorem*{notat}{Notation and convention}
\newcommand{\Z}{\mathbb{Z}}
\newcommand{\Zplus}{\Z_{>0}}
\newcommand{\Zpluseq}{\Z_{\geq0}}
\newcommand{\R}{\mathbb{R}}
\newcommand{\C}{\mathbb{C}}
\newcommand{\T}{\mathbb{T}}
\newcommand{\J}{\mathcal{J}}
\newcommand{\A}{\mathcal{A}}
\newcommand{\B}{\mathcal{B}}
\newcommand{\F}{\mathfrak{F}}
\newcommand{\g}{\mathfrak{g}}
\newcommand{\Diff}{\operatorname{Diff}}
\newcommand{\Vir}{\mathfrak{Vir}}
\newcommand{\Mob}{\operatorname{M\ddot{o}b}}
\newcommand{\Aut}{\operatorname{Aut}}
\newcommand{\Hom}{\operatorname{Hom}}
\newcommand{\Ker}{\operatorname{Ker}}
\newcommand{\End}{\operatorname{End}}
\newcommand{\Rep}{\operatorname{Rep}}
\newcommand{\Repu}{\operatorname{Rep}^{\operatorname{u}}}
\newcommand{\parzero}{{\overline{0}}}
\newcommand{\parone}{{\overline{1}}}
\newcommand{\parzerone}{{\parzero,\parone}}
\newcommand{\paronezero}{{\parone,\parzero}}
\newcommand{\parzerozero}{{\parzero,\parzero}}
\newcommand{\paroneone}{{\parone,\parone}}
\newcommand{\scalar}{(\cdot|\cdot)}
\newcommand{\curlyscalar}{\{\cdot|\cdot\}}
\newcommand{\bilinear}{(\cdot\,,\cdot)}
\newcommand{\Bilinear}{B(\cdot\,,\cdot)}
\newcommand{\pairing}{\langle\cdot,\cdot\rangle}
\newcommand{\pSone}{S^1\backslash\{-1\}}
\newcommand{\half}{\frac{1}{2}}
\begin{document}

\author{Sebastiano Carpi}
\address{Dipartimento di Matematica, Universit\`a di Roma ``Tor Vergata'', Via della Ricerca Scientifica, 1, 00133 Roma, Italy\\
E-mail: {\tt carpi@mat.uniroma2.it}
}
\author{Tiziano Gaudio}
\address{Department of Mathematics and Statistics, Lancaster University, Lancaster LA1 4YF, UK\\
E-mail: {\tt t.gaudio2@lancaster.ac.uk}
}
\author{Robin Hillier} 
\address{Department of Mathematics and Statistics, Lancaster University, Lancaster LA1 4YF, UK\\
E-mail: {\tt r.hillier@lancaster.ac.uk}
}
\title[From Vertex Operator Superalgebras to Graded-Local Conformal Nets]{From Vertex Operator Superalgebras to Graded-Local Conformal Nets and Back}
\date{}

\begin{abstract}
We generalize the Carpi-Kawahigashi-Longo-Weiner correspondence between vertex operator algebras and conformal nets to the case of vertex operator superalgebras and graded-local conformal nets by introducing the notion of strongly graded-local vertex operator superalgebra. Then we apply our machinery to a number of well-known examples including superconformal field theory models. We also prove that all lattice VOSAs are strongly graded-local. Furthermore, we prove strong graded locality of the super-Moonshine VOSA, whose group of  automorphisms preserving the superconformal structure is isomorphic to Conway's largest sporadic simple group, and of the shorter Moonshine VOSA, whose automorphisms group is isomorphic to the direct product of the baby Monster with a cyclic group of order two.
\end{abstract}

\maketitle

\tableofcontents

\section{Introduction}

A remarkable aspect of conformal field theory (CFT) in two space-time dimensions is that it provides a bridge between many different areas of physics and mathematics such as critical phenomena in classical statistical mechanics, string theory, topological quantum computation, infinite dimensional Lie groups and Lie algebras, finite simple groups, tensor categories, 3-manifold invariants, modular forms, algebraic curves, quantum groups, the theory of subfactors, noncommutative geometry, see e.g. \cite{EK98,Gan06,DMS97,RW18}. 

Chiral CFTs, namely CFTs on the ``light-ray'', may be considered as the building blocks of two-dimensional CFTs, see e.g.  \cite{AGT23,KL04b}.

Two of the major mathematical approaches to chiral CFTs are based on vertex operator algebras (more generally vertex algebras) and conformal nets (on $S^1$). On the one hand, vertex operator algebras (VOAs) can be seen as an algebraic (not necessarily unitary) version of Wightman field theories \cite{SW64} in the special case of chiral CFTs, see \cite[Chapter 1]{Kac01}, \cite{RTT22} and \cite{CRTT}. Their axioms were first formulated in connection with the famous moonshine conjecture by Conway and Norton \cite{Bor86,FLM88}. On the other hand, conformal nets are the chiral CFT versions of Haag-Kastler nets in the so called ``algebraic approach to quantum field theory" (AQFT) \cite{Haag96}.

Although it was rather clear from the first developments that these two approaches share many structural similarities and should be essentially equivalent (in the unitary case), a general direct connection between VOAs and conformal nets have been proposed  only recently by Carpi, Kawahigashi, Longo and Weiner \cite{CKLW18}, see also \cite{Car17,Kaw15,Kaw19}. In \cite{CKLW18} the authors use smeared vertex operators (which turn out to be chiral CFT versions of Wightman fields \cite{CRTT,RTT22}) to define a conformal net $\A_V$ from every sufficiently nice unitary VOA $V$ called {\it strongly local}. Moreover, they prove that a strongly local VOA $V$ can be recovered from the the corresponding conformal net and conjecture that every unitary VOA is strongly local and that every conformal net comes from a strongly local VOA. The main 
technical problem in the proof of strong locality for specific models comes from the fact that the locality axiom of conformal nets does not follow in an obvious way from the locality property of VOAs. The reason is that the smeared vertex operators are typically unbounded \cite{Bau97} and the commutativity properties of unbounded operators are a  rather sophisticated matter (``a touchy business''), see \cite[Section 10]{Nel59},  \cite[Section VIII.5]{RS80}, \cite[Section 2.2]{CKLW18} and \cite{CTW22}. This problem has an old history in quantum field theory, see e.g.\ \cite{BZ63,LS65,DF77,DSW86,GJ87}.

The relevance of this correspondence is not limited to the understanding of the structural similarities between the two-approaches that, in a certain sense, could be explained by referring to their common physical roots. In fact, it is expected that this correspondence will provide more direct bridges between different important areas 
of mathematics such as the Jones theory of subfactors, the Tomita-Takesaki modular theory of von Neumann algebras and noncommutative geometry on the conformal net side (see e.g.\ \cite{BKLR15,Con94,EG11,EK98,EK23,FRS89,FRS92,GL96,Jon83,Lon89,Lon90,LR95,Was98}) and modular forms, algebraic curves and finite simple groups on the VOA side (see e.g.\ \cite{DM00,FB01,FLM88}). This also open the way to transfer results from the VOA setting to the conformal net setting and {\it viceversa}, cf.\ 
\cite{CGGH23,CWX}. 

Since the first step made in \cite{CKLW18}, the understanding of the above connection has been considerably improved in various directions: new examples of strongly local VOAs, representation theory aspects, relations with Segal CFT, see e.g.\ \cite{CGGH23,CTW22,CWX,Gui19I,Gui19II,Gui,Gui21,Gui22,Ten17,Ten19,Ten19b,Ten24}.

In this paper we extend the correspondence in \cite{CKLW18} in order to cover the unitary vertex operator superalgebras (unitary VOSAs) and graded-local conformal nets 
(also called Fermi conformal nets), cf.\ \cite{AL17,CKL08}. Besides the fact that this generalization is mathematically natural and physically relevant, there are various other reasons indicating that it is highly desirable.  

First of all, the free fermions give the most obvious example of vertex algebras generating graded-local conformal nets, but they are not covered by the analysis in \cite{CKLW18} (note however that the even (Bose) subalgebra of a free fermion VOSAs can be proved to be strongly local  in the the framework of \cite{CKLW18}, see \cite{CWX}). This is because the smeared free Fermi fields are bounded operators, cf.\ \cite{Boc96,Bau97} and Proposition \ref{prop:gen_by_V1/2_V1} in this paper. As a consequence the free fermions provide the easiest examples of {\it strongly graded-local} VOSAs (see Definition \ref{def:strongly_graded_local}), see Example \ref{ex:free_fermion} and Example \ref{ex:d_free_fermions}. Moreover, free fermion VOSAs and their unitary subalgebras play a central role in various works in the AQFT framework: the construction of finite index subfactors \cite{Was98}; the construction of operator algebraic Segal CFTs \cite{Ten17,Ten19,Ten19b}; the analysis of entropy  properties in CFT \cite{CHMP20,LX18,LX21,Xu22}.

A second motivation comes from the study of superconformal symmetry and its relations with Connes' noncommutative geometry and K-theory \cite{Con94}, which naturally leads to consider Fermi  fields and the corresponding graded-locality (or superlocality) \cite{CCH13,CCHW13,CH17,CHKL10,CHKLX15,CHL15,CKL08,Hil14,Hil15}.

Finally, we mention some important recent classification results for unitary (or potentially unitary) chiral fermionic CFTs 
\cite{BLTZ24,CGGH23,HM,JoF21,KMP23,Ray24}. We expect that these will give new and remarkable families of strongly graded-local  VOSAs and hence many new interesting examples of graded-local conformal nets.  Some of these examples are already covered by this paper,  see  e.g.\ Theorem \ref{theo:lattices_strong_graded_locality}, Theorem \ref{theo:super-moonshine}, Theorem \ref{theo:c_leq_12}, Theorem \ref{theo:J-F_N=1_models} and Theorem \ref{theo:baby_moonshine} and we hope to consider further examples in the future. 
In particular, Theorem \ref{theo:J-F_N=1_models} gives new models of graded-local conformal nets carrying the action of a finite group by automorphisms preserving the $N=1$ superconformal structure, including the super-Moonshine conformal net of Theorem \ref{theo:super-moonshine}, described in \cite{Kaw10}. Among them, we can find many sporadic simple groups: for example, we have an action of the semidirect product of the Mathieu group $M_{12}$ of order 95040 with the cyclic group of order two. Moreover, after a first draft of this paper appeared on the arXiv, one of us (T.G.) has proved that all the 969 holomorphic VOSAs with central charge 24 and non trivial odd part classified in \cite{HM} are unitary and that at least 910 of them are strongly graded-local \cite{Gau}. Furthermore, two of us (S.C and T.G.) proved that all the unitary minimal $W$-algebras classified in \cite{KMP23} are strongly graded-local \cite{CG}.

We end this introduction by giving some more details on the content and on the main results of this paper. Concerning the VOSA -- graded-local conformal net correspondence, we mainly follow the exposition in \cite{CKLW18}. We are able to prove the VOSA version of essentially all the general results in \cite{CKLW18} such as the equivalence of various definitions of unitarity for VOSAs, the essential uniqueness of the unitary structure and its relations with the properties of the automorphisms group, the unitary subalgebras and their relations to subnets, criteria for strong graded locality based on VOSAs generators and the reconstruction of strongly graded-local VOSAs from the corresponding conformal nets. In various cases the generalization is rather straightforward, although it often requires a certain care in some points where the presence of fermionic fields could make some relevant difference. We give all the details when the arguments given in \cite{CKLW18} (potentially) need some substantial change.
In some cases, on the other hand, the arguments in this paper differ considerably from those of \cite{CKLW18}. A remarkable example is the proof of the Bisognano-Wichman property for smeared vertex operators (acting on the vacuum vector) that we give in Section  \ref{appendix:action_dilation_subgroup}.  This property plays a crucial role 
in one of the main technical results of this article namely,  Theorem  \ref{theo:gen_by_quasi_primary} which gives a central criterion for strong graded locality and in the reconstruction of strongly graded-local VOSAs from the corresponding graded-local conformal nets, discussed in Section \ref{section:back}. Differently from \cite[Appendix B]{CKLW18}, in the proof of this Bisognano-Wichmann property we avoid the use of the theory of covariant nets of real Hilbert spaces, described in \cite{Lon08b}, which is not directly applicable here and rely instead on direct computations in the ``real line picture", cf.\ \cite{Gui21b} for related results. 
As an example of novel result, in Section \ref{subsection:pct_theorem}, we derive a PCT theorem for VOSAs, generalizing \cite[Section 5.2]{CKLW18} not only from the VOA setting to the VOSA one, but also to cover the case of VOSAs (and thus VOAs too) not of CFT type. 

Finally, we mention the fact that we give many examples of strongly graded-local VOSAs with superconformal symmetry, such as e.g.\ the $N=1$ and $N=2$ unitary super-Virasoro 
VOSAs, the Duncan super-Moonshine VOSA and the remaining $N=1$ superconformal VOSAs classified by Johnson-Freyd \cite{JoF21}, see Section \ref{section:examples}. Furthermore, thanks to some recent results and ideas by Gui (see \cite{Gui,Gui21}) we prove  strong graded locality
of the  VOSAs associated to any positive-definite integral lattice and of the H{\"o}hn shorter Moonshine VOSA $VB^\natural$. 

Throughout the paper we will freely use some standard concepts from the theory of operator algebras in Hilbert spaces ($C^*$-algebras and von Neumann algebras) and refer the reader to standard textbooks on the subject such as \cite{Bla06} and \cite{BR02} (cf.\ also  \cite{Con94} and \cite{Ped89}).
\medskip

This work is based in part on T.G.'s Ph.D.\ thesis \cite{Gau21} at Lancaster University.

\section{Graded-local conformal nets}
\label{section:graded_local_conformal_nets}

We review the main facts about the orientation-preserving diffeomorphisms of the unit circle $S^1$ and about graded-local conformal nets in Section \ref{subsection:diff_group} and in Section \ref{subsection:graded-local_conformal_nets} respectively, giving specific references for a detailed exposition. Furthermore, we expose with proofs and references the fundamental facts about covariant subnets in Section \ref{subsection:covariant_subnets}.

\subsection{The diffeomorphisms of the circle} 
\label{subsection:diff_group}

Consider the unit circle:
\begin{equation}
S^1
:=\left\{z\in\C\mid\abs{z}=1\right\}
=\left\{e^{ix}\in \C\mid x\in(-\pi,\pi]\right\} 
\end{equation}
with its usual subspace topology. 
Consider also the Fréchet space $C^\infty(S^1)$ of complex-valued infinitely differentiable function on $S^1$ and its subset $C^\infty(S^1,\R)$ of real-valued functions.
Then we write a \textbf{smooth real vector field} on $S^1$ as $f\frac{\mathrm{d}}{\mathrm{d}x}$, where $f\in C^\infty(S^1,\R)$.
Recall that smooth real vector fields form a topological vector space
$\mathrm{Vect}(S^1)$ with the usual $C^\infty$ Fréchet topology.
$\mathrm{Vect}(S^1)$ can be used to model
$\Diff^+(S^1)$, which is the infinite dimensional Lie group of \textbf{orientation-preserving diffeomorphisms of $S^1$}, see e.g.\ \cite[Section 6]{Mil84} and \cite[Section I.4]{Ham82}, cf.\ also \cite[Section 3.3]{PS88} and \cite[Section 1.1]{Lok94}. Moreover, $\mathrm{Vect}(S^1)$ turns out to be the Lie algebra associated to $\Diff^+(S^1)$, where the Lie bracket $[\cdot,\cdot]$ is given by the negative of the usual bracket of vector fields, that is
\begin{equation}  \label{eq:lie_bracket_Vect}
	[f\frac{\mathrm{d}}{\mathrm{d}x}, g\frac{\mathrm{d}}{\mathrm{d}x}]
	=(f'g-g'f)\frac{\mathrm{d}}{\mathrm{d}x}
	\qquad\forall f,g\in C^\infty(S^1, \R) \,.
\end{equation}
Accordingly, we use the usual symbol $\exp:\mathrm{Vect}(S^1)\rightarrow \Diff^+(S^1)$ for the related exponential map and with an abuse of notation, we use $\exp(tf)$ for the one-parameter subgroup of $\Diff^+(S^1)$ generated by the smooth real vector field $f\frac{d}{dx}$ and $t\in \R$.
$\Diff^+(S^1)$ is connected, but not simply connected, see \cite[Section 10]{Mil84} and \cite[Example 4.2.6]{Ham82}, cf.\ also \cite[Section 1.1]{Lok94}. Thus, for every $n\in\Zplus$, we denote the $n$-cover of $\Diff^+(S^1)$ by $\Diff^+(S^1)^{(n)}$, and its universal cover by $\Diff^+(S^1)^{(\infty)}$ . 
We can identify every 
$\gamma\in\Diff^+(S^1)^{(\infty)}$ 
with a unique function $\phi_\gamma$ in the closed subgroup of diffeomorphisms of $\R$ defined by
\begin{equation}
\left\{
\phi\in C^\infty(\R,\R)\mid \phi(x+2\pi)=\phi(x)+2\pi
\,,\,\, \phi'(x)>0 
\,\,\forall x\in\R
\right\}
\,.
\end{equation}
Under this identification, the center of $\Diff^+(S^1)^{(\infty)}$ is given by
\begin{equation}
\{
\phi\in\Diff^+(S^1)^{(\infty)} \mid
\phi(x)=x+2\pi k \,\,\,\forall x\in\R \,,\,\,\, k\in\Z
\} \cong 2\pi\Z \,. 
\end{equation}
Accordingly,
for every $n\in\Zplus$, 
we have the following identification
\begin{equation}  \label{eq:quotien_diff+_sim_n}
\Diff^+(S^1)^{(n)}\cong
\faktor{\Diff^+(S^1)^{(\infty)}}{2n\pi\Z}
\,\,\,.
\end{equation}
For every $n\in\Zplus$, we use $\phi_\gamma\in\Diff^+(S^1)^{(\infty)}$ to denote a representative of any $\gamma\in\Diff^+(S^1)^{(n)}$ in its $2n\pi\Z$-class of diffeomorphisms, which is given by $\{\phi_\gamma+2n\pi k\mid k\in\Z\}$. Moreover, note that for all $\gamma,\gamma_1,\gamma_2\in\Diff^+(S^1)^{(n)}$, $\phi_{\gamma_1\gamma_2}$ and $\phi_{\gamma_1}\circ\phi_{\gamma_2}$ are in the same class of diffeomorphisms as well as $\phi_\gamma^{-1}$ and $\phi_{\gamma^{-1}}$.
For all $n\in\Zplus\cup\{\infty\}$ and all $\gamma\in\Diff^+(S^1)^{(n)}$, we use $\dot{\gamma}$ for the image of $\gamma$ under the covering map $p:\Diff^+(S^1)^{(\infty)}\mapsto\Diff^+(S^1)$ and $\exp^{(n)}(tf)$ for the lift of $\exp(tf)$ to $\Diff^+(S^1)^{(n)}$.

\begin{rem}  \label{rem:diffeomorphisms_generated_by_exp}
By the results of Epstein \cite{Eps70}, Herman \cite{Her71} and Thurston \cite{Thu74}, cf.\ \cite[Proposition (3.3.3)]{PS88}, we know that $\Diff^+(S^1)$ is a simple group. It follows that every element of $\Diff^+(S^1)$ is a finite product of exponentials of vector fields $\exp(tf)$, where $f\in C^\infty(S^1,\R)$ with $\mathrm{supp}f\subset I$ for some interval $I\subset S^1$, see \cite[Remark 1.7]{Mil84} and \cite[Remark 3.1]{CKLW18}. It also follows that every element of $\Diff^+(S^1)^{(2)}$ is a finite product of exponentials of vector fields $\exp^{(2)}(tf)$ with $f$ as above by the proof of \cite[Proposition 38]{CKL08}.
\end{rem}

Now, consider the double cover $S^{1(2)}$ of the circle $S^1$, which we  identify with the set
\begin{equation}
\left\{
e^{i\frac{x}{2}}\in\C \mid x\in(-2\pi,2\pi]
\right\} \,.
\end{equation}
Then every $\gamma\in\Diff^+(S^1)^{(2)}$ acts naturally on it by $\gamma(e^{i\frac{x}{2}})=e^{i\frac{\phi_\gamma(x)}{2}}$, where $\phi_\gamma$ is one of the representatives in its $4\pi\Z$-class of diffeomorphisms given by \eqref{eq:quotien_diff+_sim_n}. Note that the former action does not depend on the choice of the representative $\phi_\gamma$.

Consider the complexification $\mathrm{Vect}_\C(S^1)$ of $\mathrm{Vect}(S^1)$ (with the Lie bracket $[\cdot,\cdot]$ given by the natural extension of the bracket given in \eqref{eq:lie_bracket_Vect}) and its infinite dimensional Lie subalgebra spanned by generators $l_n:=-ie^{inx}\frac{d}{dx}$ for all $n\in\Z$, known as the \textit{complex Witt algebra} $\mathfrak{Witt}$. Then these generators satisfy the commutation relations:
\begin{equation}
[l_n,l_m]=(n-m)l_{n+m}
\qquad\forall n,m\in\Z \,.
\end{equation}
The \textbf{Virasoro algebra}, see \cite[Lecture 1]{KR87} for details, is defined as the non-trivial central extension $\mathfrak{Witt}\oplus \C k$ with commutation relations:
\begin{equation}
[l_n,l_m]=(n-m)l_{n+m}+\frac{n^3-n}{12}\delta_{n,-m}k
\,,\quad
[l_n,k]=0
\qquad\forall n,m\in\Z \,.
\end{equation}
Note that the Lie subalgebra of $\mathfrak{Witt}$ generated by $l_n$ with $n\in\{-1,0,1\}$ is isomorphic to the finite Lie algebra $\mathfrak{sl}(2,\C)$. Moreover, the smooth real vector fields $il_0$, $\frac{i(l_1+l_{-1})}{2}$ and $\frac{l_1-l_{-1}}{2}$ generate a Lie subalgebra of $\mathrm{Vect}(S^1)$ isomorphic to $\mathfrak{sl}(2,\R)\cong \operatorname{PSU}(1,1)\cong\operatorname{PSL}(2,\R)$, which exponentiates to the three-dimensional Lie subgroup $\Mob(S^1)$ of $\Diff^+(S^1)$ of \textbf{M\"{o}bius transformations} of $S^1$. 
In particular, with $t\in\R$,
\begin{equation} 		\label{eq:defin_rot_dilat_transl}
r(t):=\exp(itl_0)
\,,\quad
\delta(t):=\exp\left(t\frac{l_1-l_{-1}}{2}\right)
\,,\quad
w(t):=\exp\left(\frac{it}{2}l_0+it\frac{l_1+l_{-1}}{4}\right)
\end{equation}
are known as the \textbf{rotation}, \textbf{dilation} and \textbf{translation subgroups} of $\Mob(S^1)$ respectively. Note also that these transformations act on $S^1$ by

\begin{align}  
	r(t)z &= e^{it}z  \qquad\forall t\in\R \,,\quad \forall z\in S^1 
	\label{eq:rotation_action_circle}\\
	\delta(t)z &= \frac{z\cosh(t/2)-\sinh(t/2)}{-z\sinh(t/2)+\cosh(t/2)} 
	\qquad\forall t\in\R \,,\quad \forall z\in S^1 
	\label{eq:dilation_action_circle}\\
	w(t)z &= \frac{z(4+it)+it}{-zit+(4-it)} 
	\qquad\forall t\in\R \,,\quad \forall z\in S^1 
	\label{eq:translation_action_circle}\,. 
\end{align}
Note that equation \eqref{eq:rotation_action_circle} explains why $r(t)$ is called a rotation, whereas to better understand the remaining nomenclature and for further use, we introduce the \textbf{Cayley transform}, that is the diffeomorphism $C:S^1\backslash \{-1\} \to \R$ defined by

\begin{equation} \label{eq:cayley_transform}
	C(z):=2i\frac{1-z}{1+z} \quad \forall z\in S^1
	\,,\qquad
	C^{-1}(x)=\frac{1+\frac{i}{2}x}{1-\frac{i}{2}x}
	\quad \forall x\in\R \,.
\end{equation}
(Note that $\lim_{t\rightarrow\pm\pi^{\mp}}C(e^{it})=\pm\infty$ and $\lim_{x\rightarrow\pm\infty}C^{-1}(x)=-1$.)
Then $C\delta(t) C^{-1}(x)= e^tx$ and $Cw(t)C^{-1}(x)= x+t$ for all $t, x\in \R$.

For all $n\in\Zplus\cup\{\infty\}$, we use $\Mob(S^1)^{(n)}$ to denote the $n$-cover of $\Mob(S^1)$ in $\Diff^+(S^1)^{(n)}$. 
In particular, $\Mob(S^1)^{(2)}=\operatorname{SU}(1,1)\cong\operatorname{SL}(2,\R)$. Recall that the covering map $p$ restricts to the one of $\Mob(S^1)^{(\infty)}$ on $\Mob(S^1)$. 
Moreover, we have the identifications inherited from \eqref{eq:quotien_diff+_sim_n} for every $\Mob(S^1)^{(n)}$, namely
\begin{equation}  \label{eq:quotien_mob_sim_n}
\Mob(S^1)^{(n)}\cong
\faktor{\Mob(S^1)^{(\infty)}}{2n\pi\Z}\qquad
\forall n\in\Zplus\,.
\end{equation}
For every $n\in\Zplus\cup\{\infty\}$, we use $r^{(n)}(t)$, $\delta^{(n)}(t)$ and $w^{(n)}(t)$ for the lifts of the corresponding subgroups given by the exponential map.

Crucial for our analysis is the representation theory of $\Diff^+(S^1)^{(\infty)}$ and of $\Mob(S^1)^{(\infty)}$. We do not give here a complete treatment of such theory, which can be found in a resumed form e.g.\ in \cite[Section 3.2]{CKLW18}, but we present some key facts which can be useful to recall before proceeding further.

For a topological group $\mathscr{G}$, a \textbf{strongly continuous projective unitary representation} $U$ on an Hilbert space $\mathcal{H}$ is a strongly continuous homomorphism from $\mathscr{G}$ to the quotient $\operatorname{U}(\mathcal{H})/\T$ of the group of unitary operators on $\mathcal{H}$ by the circle subgroup $\T$ of operators with unit norm. A strongly continuous projective unitary representation $U$ of $\Diff^+(S^1)^{(\infty)}$ restricts to one of the subgroup $\Mob(S^1)^{(\infty)}$, which can always be lift to a strongly continuous unitary representation $\widetilde{U}$ of $\Mob(S^1)^{(\infty)}$, see \cite{Bar54}. $U$ is said to be \textbf{positive-energy} if $\widetilde{U}$ is, that is, if the infinitesimal self-adjoint generator $L_0$ of the strongly continuous one-parameter group $t\mapsto e^{itL_0}:=\widetilde{U}(r^{(\infty)}(t))$ is a positive operator on $\mathcal{H}$, namely, it has a non-negative spectrum. Note that $U$ factors through a positive-energy strongly continuous projective unitary representation of $\Diff^+(S^1)^{(2)}$ whenever $e^{i4\pi L_0}=1_\mathcal{H}$. In that case, it restricts also to a positive-energy strongly continuous projective unitary representation of $\Mob(S^1)^{(2)}$.

A \textbf{positive-energy unitary representation} of the Virasoro algebra $\Vir$ is a Lie algebra representation $\pi$ of $\Vir$ on a complex vector space $V$ equipped with a scalar product $\scalar$ such that:
\begin{itemize}
\item
$\pi$ is \textbf{unitary}, that is, if $L_n:=\pi(l_n)$ for all $n\in\Z$, then $(a|L_nb)=(L_{-n}a|b)$ for all $n\in\Zpluseq$ and all $a,b\in V$;

\item 
$\pi$ has \textbf{positive energy}, which means that $L_0$ is diagonalizable on $V$ with non-negative eigenvalues, namely,
\begin{equation}
V=\bigoplus_{\lambda\in\R_{\geq 0}} V_\lambda
\,,\quad
V_\lambda:=\operatorname{Ker}(L_0-\lambda 1_V)
\,;
\end{equation}

\item 
$\pi$ has a \textbf{central charge} $c\in\C$, that is, if $K:=\pi(k)$, then $K=c1_V$.
\end{itemize}

Finally, we remark that there is a well-known correspondence between positive-energy strongly continuous projective unitary representations of $\Diff^+(S^1)^{(\infty)}$ and positive-energy unitary representations of $\Vir$, which is summarized in \cite[Theorem 3.4]{CKLW18}:

\begin{theo}   \label{theo:representations_Diff_Vir}
Every positive-energy unitary representation $\pi$ of the Virasoro algebra $\Vir$ on a complex inner product space $V$ integrates to a unique positive-energy strongly continuous projective unitary representation $U_\pi$ of $\Diff^+(S^1)^{(\infty)}$ on the Hilbert space completion $\mathcal{H}$ of $V$. 
Furthermore, every positive-energy strongly continuous projective unitary representation of $\Diff^+(S^1)^{(\infty)}$ on a Hilbert space $\mathcal{H}$ arises in this way, whenever the subspace
$\mathcal{H}^\mathrm{fin}:=\bigoplus_{\alpha\in\R_{\geq0}}
\mathrm{Ker}(L_0-\alpha 1_\mathcal{H})$
is dense in $\mathcal{H}$.
The map $\pi\mapsto U_\pi$ becomes one-to-one when restricting to representations $\pi$ on the complex inner product spaces $V$, whose Hilbert space completion $\mathcal{H}$ satisfies $\mathcal{H}^\mathrm{fin}=V$. These are exactly those complex inner product spaces $V$ such that $V_\lambda:=\operatorname{Ker}(L_0-\lambda 1_V)\subseteq V$ is complete (that is, it is a Hilbert space) for all $\lambda\in\R_{\geq 0}$. Finally, $U_\pi$ is irreducible if and only if $\pi$ is irreducible, i.e. if and only if the corresponding $\Vir$-module is $L(c,0)$ for some $c\geq 1$ or $c_m=1-\frac{6}{m(m+1)}$ with integers $m\geq 2$.
\end{theo}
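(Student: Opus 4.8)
The plan is to establish the correspondence in Theorem \ref{theo:representations_Diff_Vir} by exploiting the well-developed representation theory of the Virasoro algebra together with Nelson-type integrability results for projective representations of $\Diff^+(S^1)^{(\infty)}$. The statement has several independent assertions: (i) integrability of every positive-energy unitary $\Vir$-representation to a projective unitary representation of the diffeomorphism group; (ii) the converse, that every such projective representation with dense finite-energy subspace arises this way; (iii) the bijectivity when $\mathcal{H}^{\mathrm{fin}}=V$ with each $L_0$-eigenspace complete; and (iv) the irreducibility criterion together with the classification of irreducible unitary positive-energy $\Vir$-modules. My approach is to treat these in turn, leaning on the cited result \cite[Theorem 3.4]{CKLW18}, of which this is the stated analogue.

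\medskip

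For the integrability direction (i), I would first observe that the generators $L_n=\pi(l_n)$ act on the dense graded subspace $V=\bigoplus_\lambda V_\lambda$, and that the positive-energy and unitarity hypotheses give the correct adjointness relations $L_n^*=L_{-n}$ on the common invariant domain. The strategy is to produce the one-parameter groups $t\mapsto\exp(it\,\overline{\pi(f)})$ for real vector fields $f$ by verifying that the symmetric operators $\pi(f)=\sum f_n L_n$ (smeared over $f$) are essentially self-adjoint on $V$; this is the standard Nelson commutator-theorem or analytic-vector argument, using $L_0+1$ as a dominating operator to control the $L_n$ via energy bounds of the form $\|L_n v\|\lesssim\|(L_0+1)v\|$. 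One then invokes the integrability theory for the Virasoro/Witt algebra — as developed by Goodman–Wallach and Toledano-Laredo — to assemble these one-parameter groups into a genuine strongly continuous projective unitary representation $U_\pi$ of the simply connected group $\Diff^+(S^1)^{(\infty)}$, with uniqueness following because the group is generated by the exponentials $\exp(tf)$ (cf.\ Remark \ref{rem:diffeomorphisms_generated_by_exp}) and the representation is determined on each such one-parameter subgroup.

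\medskip

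For the converse (ii) and the bijectivity (iii), the plan is to differentiate. Given a positive-energy strongly continuous projective unitary representation $U$, its restriction to $\Mob(S^1)^{(\infty)}$ lifts to an honest unitary representation $\widetilde U$ by \cite{Bar54}, from which one extracts the self-adjoint generator $L_0$. Assuming $\mathcal{H}^{\mathrm{fin}}$ is dense, I would differentiate the smooth action on the finite-energy vectors to recover operators $L_n$ satisfying the Virasoro relations with a scalar central term $K=c\,1$ (the scalar follows from irreducibility of $L_0$-graded pieces or from Schur-type arguments on the projective cocycle), thereby producing a positive-energy unitary representation $\pi$ with $U_\pi\cong U$. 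The condition $\mathcal{H}^{\mathrm{fin}}=V$ with each $V_\lambda$ complete precisely pins down the domain so that $\pi\mapsto U_\pi$ is injective: two such $\pi$ giving the same $U$ must agree on $\mathcal{H}^{\mathrm{fin}}$, which now equals $V$. Finally, for (iv), irreducibility of $U_\pi$ is equivalent to irreducibility of $\pi$ because the von Neumann algebra generated by $U_\pi$ has the same commutant as the $*$-algebra generated by the $L_n$ on $V$; the explicit list $L(c,0)$ with $c\geq 1$ or $c=c_m=1-\frac{6}{m(m+1)}$ is then quoted from the Friedan–Qiu–Shenker/Goddard–Kent–Olive classification of unitary highest-weight $\Vir$-modules.

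\medskip

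\textbf{The main obstacle} will be the integrability step (i): passing from the infinitesimal Lie-algebra data to a globally defined projective unitary representation of the infinite-dimensional group $\Diff^+(S^1)^{(\infty)}$ is genuinely delicate, since the $L_n$ are unbounded and one must control domains and essential self-adjointness uniformly enough to apply the Goodman–Wallach/Toledano-Laredo machinery rather than naive exponentiation. The remaining assertions, while technical, are essentially matters of differentiation, domain bookkeeping, and citation of the known unitary classification, and I expect them to follow the pattern of \cite[Theorem 3.4]{CKLW18} closely.
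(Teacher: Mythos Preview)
Your proposal is correct and aligns with the paper's own treatment: the paper does not give a self-contained proof but simply cites \cite{GW85} and \cite[Theorem 5.2.1, Proposition 5.2.4]{Tol99} for the integrability direction, \cite[Chapter 1]{Lok94} and \cite[Appendix A]{Car04} for the converse and bijectivity, and \cite{GKO86}, \cite[Lecture 3]{KR87} for the classification of unitary irreducibles---exactly the Goodman--Wallach/Toledano-Laredo integrability machinery and the FQS/GKO classification you invoke. Your sketch is essentially an unpacking of what those references establish, so there is nothing to add.
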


The proofs of the facts stated by Theorem \ref{theo:representations_Diff_Vir} are essentially given by \cite{GW85} and \cite[Theorem 5.2.1, Proposition 5.2.4]{Tol99} for the integrability statement and by \cite[Chapter 1]{Lok94} and \cite[Appendix A]{Car04} for the remaining part, see \cite{GKO86} and \cite[Lecture 3]{KR87} for the definition of the $\Vir$-modules $L(c,0)$; see also \cite{NS15} and \cite{CDIT22} for related results.

\subsection{Preliminaries on graded-local conformal nets}
\label{subsection:graded-local_conformal_nets}

We recall some preliminaries about graded-local conformal nets from \cite[Section 2]{CKL08}, see also \cite[Section 2]{CHL15} and references therein.

Let $\J$ be the set of \textbf{intervals} of the unit circle $S^1$, that is, the open, connected, non-empty and non-dense subsets of $S^1$. 
For every interval $I\in\J$, we set $I':=S^1\backslash\overline{I}$, that is the complement of $I$ in $S^1$. Set 
\begin{equation}
\Diff(I):=\left\{\gamma\in\Diff^+(S^1)\mid \gamma(z)=z \,\,\forall z\in I'\right\}
\qquad\forall I\in\J \,.
\end{equation}
Accordingly to the notation of Section \ref{subsection:diff_group}, for all $I\in\J$ and all $n\in\Zplus\cup\left\{\infty\right\}$, 
$\Diff(I)^{(n)}$ is the connected component to the identity of the pre-image of 
$\Diff(I)$ in 
$\Diff^+(S^1)^{(n)}$ under the covering map $p:\Diff^+(S^1)^{(\infty)}\to \Diff^+(S^1)$.

A \textbf{graded-local M{\"o}bius covariant net on $S^1$} is a family of von Neumann algebras $\A:=(\A(I))_{I\in\mathcal{J}}$ on a fixed separable Hilbert space $\mathcal{H}$, also called \textbf{net}
of von Neumann algebras, with the following properties:
\begin{itemize}
\item[\textbf{(A)}] \textbf{Isotony}. For all $I_1, I_2\in\mathcal{J}$ such that $I_1\subseteq I_2$, then $\A(I_1)\subseteq\A(I_2)$.

\item[\textbf{(B)}] \textbf{M{\"o}bius covariance}.  There exists a strongly continuous unitary representation $U$ of $\Mob(S^1)^{(\infty)}$ on $\mathcal{H}$ such that
\begin{equation}
U(\gamma)\A(I)U(\gamma)^{-1}=\A(\dot{\gamma} I)
\qquad\forall \gamma\in\Mob(S^1)^{(\infty)},
\,\,\,\forall I\in\J \,.
\end{equation}

\item[\textbf{(C)}] \textbf{Positivity of the energy}. The infinitesimal generator $L_0$ of the (universal cover of) rotation one-parameter subgroup $t\mapsto U(r^{(\infty)}(t))$, called \textbf{conformal Hamiltonian}, is a positive operator on $\mathcal{H}$. This means that $U$ is a positive-energy representation.

\item[\textbf{(D)}] \textbf{Existence of the vacuum}. There exists a $U$-invariant unit vector $\Omega\in\mathcal{H}$, called \textbf{vacuum vector}, which is also cyclic for the von Neumann algebra $\A(S^1):=\bigvee_{I\in\J}\A(I)$, that is the one generated by $\A(I)$ for all $I\in\J$.

\item[\textbf{(E)}] \textbf{Graded locality}. There exists a self-adjoint unitary operator $\Gamma$ on $\mathcal{H}$, called \textbf{grading unitary}, such that $\Gamma\Omega=\Omega$ and
\begin{equation}   \label{eq:graded-locality_nets}
\Gamma \A(I)\Gamma=\A(I)\,,
\quad
\A(I')\subseteq Z\A(I)'Z^*
\qquad \forall I\in\J
\end{equation}
with 
\begin{equation}   \label{eq:defin_Z_net}
Z:=\frac{1_\mathcal{H}-i\Gamma}{1-i} \,.
\end{equation}
\end{itemize}

Note that every operator $A\in\A(I)$ for any $I\in\J$ can be written as the sum of the two operators
\begin{equation}
A_0:=\frac{A+\Gamma A\Gamma}{2} \,,
\qquad
A_1:=\frac{A-\Gamma A\Gamma}{2}
\end{equation} 
which have the property that $\Gamma A_k\Gamma=(-1)^kA_k$ for all $k\in\{0,1\}$. Then we call an operator $A$ \textbf{homogeneous} with \textbf{degree} $\partial A$ equal to either $0$ or $1$ if $\Gamma A\Gamma$ is equal to either $A$ or $-A$ respectively. Contextually, we say $A$ is either a \textbf{Bose} or a \textbf{Fermi} element respectively. With this notation, graded-locality \eqref{eq:graded-locality_nets} is equivalent to saying that $[\A(I_1),\A(I_2)]=0$ whenever $I_1\subseteq I_2'$, where the graded commutator here is defined on homogeneous elements $A$ and $B$ by 
\[
[A,B]:=AB-(-1)^{\partial A\partial B}BA
\]
and thus extended to arbitrary elements by linearity.

A graded-local M\"{o}bius covariant net $\A$ is said to be \textbf{irreducible} if $\A(S^1)=B(\mathcal{H})$. This is equivalent to say that $\Omega$ in axiom \textbf{(D)} above is the unique vacuum vector up to a multiplication for a complex number and equivalently that all local algebras $\A(I)$ are type $III_1$ factors (provided that $\A(I)\not\cong\C$ for all $I\in\J$), see \cite[Proposition 7]{CKL08} and references therein. 

The axioms \textbf{(A)-(E)} above have important consequences.
 We   rewrite  here below the ones which we will use more often throughout the paper:
\begin{itemize}
\item
\textbf{Reeh-Schlieder theorem}. See \cite[Theorem 1]{CKL08}.
The vacuum $\Omega$ is cyclic and separating for every von Neumann algebra $\A(I)$ with $I\in\J$.

\item
\textbf{Bisognano-Wichmann property}. \label{bisognano-wichmann_property}
See \cite[Theorem 2]{CKL08}.
For every $I\in\J$, let $\delta_I(t)$ with $t\in\R$ be the one-parameter subgroup of $\Mob(S^1)$ of \textbf{dilations associated to $I$}, see \cite[p.\ 15]{GL96}, cf.\ \cite[Section 1.1]{Lon08}. Use also 
$j_I:S^1\rightarrow S^1$ to denote the \textbf{reflection associated to $I$}, mapping $I$ onto $I'$, see \cite[p.\ 15]{GL96}, cf.\ \cite[Section 1.6.2]{Lon08}.
Let $\Delta_I$ and $J_I$ be the modular operator and the modular conjugation respectively associated to the couple $(\A(I),\Omega)$ from the Tomita-Takesaki modular theory for von Neumann algebras, see \cite[Chapter 2.5]{BR02}. Then
\begin{equation}  \label{eq:B-W_dilations}
U(\delta_I^{(\infty)}(-2\pi t))=\Delta_I^{it}
\qquad \forall I\in \J
\,\,\,\forall t\in\R \,.
\end{equation}
Moreover, $U$ extends uniquely to an anti-unitary representation of $\Mob(S^1)^{(\infty)}\ltimes \Z_2$, where $\Z_2:=\Z/2\Z$ is the cyclic group of order two, given by
\begin{equation}   \label{eq:B-W_reflection}
U(j_I)=ZJ_I
\qquad\forall I\in\J
\end{equation}
and acting covariantly on $\A$, that is,
\begin{equation}   \label{B-W_covariance}
U(\gamma)\A(I) U(\gamma)^*
=\A(\dot{\gamma}I)
\qquad\forall \gamma\in \Mob(S^1)^{(\infty)}\ltimes \Z_2
\,\,\,\forall I\in\J \,.
\end{equation}

\item  \label{property:uniqueness_mob_repr}
\textbf{Uniqueness of the M{\"o}bius representation}. See \cite[Corollary 3]{CKL08}. The strongly continuous unitary representation $U$ of $\Mob(S^1)^{(\infty)}$ is unique.

\item 
\textbf{Twisted Haag duality}.  See \cite[Theorem 5]{CKL08}.
\begin{equation}  \label{eq:twisted_haag_duality}
\A(I')=Z\A(I)'Z^*=Z^*\A(I)'Z
\qquad \forall I\in\J \,.
\end{equation}

\item 
\textbf{Vacuum Spin-Statistics theorem}.       
See \cite[Proposition 8 and Corollary 9]{CKL08}. 
The grading unitary $\Gamma$ is unique and
\begin{equation}   \label{eq:spin-statistics_theorem}
U(r^{(\infty)}(2\pi))=e^{i2\pi L_0}=\Gamma\,.
\end{equation}

\item
\textbf{External continuity}.
By M{\"o}bius covariance, see \cite[Section 2.2, Remark]{CKL08}, cf.\ \cite[p.\  48]{Lon08}, it holds that
\begin{equation}\label{eq:external_continuity}
\A(I)=\bigcap_{\J\ni J\supset \overline{I}} \A(J)
\qquad\forall I\in\J \,.
\end{equation}
\end{itemize}

A \textbf{graded-local conformal net on $S^1$} is a graded-local M{\"o}bius covariant net on $S^1$ with the following additional property:
\begin{itemize}
\item[\textbf{(F)}] \textbf{Diffeomorphism covariance}. There exists a strongly continuous projective unitary representation $U^\mathrm{ext}$ of $\Diff^+(S^1)^{(\infty)}$, which extends $U$ and such that
\begin{align}
U^\mathrm{ext}(\gamma)\A(I)U^\mathrm{ext}(\gamma)^{-1} 
 &=
\A(\dot{\gamma} I) 
\qquad\forall \gamma\in\Diff^+(S^1)^{(\infty)}
\,\,\,\forall I\in\J \,;  \label{diff_covar} \\
U^\mathrm{ext}(\gamma)AU^\mathrm{ext}(\gamma)^{-1} 
 &=
A
\qquad\forall \gamma\in\Diff(I)^{(\infty)}
\,\,\,\forall A\in\A(I')
\,\,\,\forall I\in\J  \label{diff_covar_graded_local} 
\,.
\end{align}
With an abuse of notation, we use the same symbol $U$ to denote $U^\mathrm{ext}$.
\end{itemize}

Note that the Vacuum Spin-Statistics theorem above implies that the representation $U$ of $\Mob(S^1)^{(\infty)}$ and consequently its extension to $\Diff^+(S^1)^{(\infty)}$ factor through representations of $\Mob(S^1)^{(2)}$ and $\Diff^+(S^1)^{(2)}$ respectively, which we still denote by $U$. Moreover, we have the following uniqueness result:

\begin{itemize}
	\item \label{property:uniqueness_diff_repr} 
	\textbf{Uniqueness of the diffeomorphism representation}. See \cite[Corollary 11]{CKL08}. The strongly continuous projective unitary representation $U$ of $\Diff^+(S^1)^{(2)}$ (and thus as representation of $\Diff^+(S^1)^{(\infty)}$ too) is unique (up to a projective phase).
\end{itemize}

A graded-local M{\"o}bius covariant (or conformal) net such that $\Gamma=1_\mathcal{H}$ is simply called \textbf{M{\"o}bius covariant (or conformal) net}.

Let $(\A_1,\mathcal{H}_1,\Omega_1,U_1)$ and $(\A_2,\mathcal{H}_2,\Omega_2,U_2)$ be two graded-local M{\"o}bius covariant nets. We say that $\A_1$ and $\A_2$ are \textbf{isomorphic} or \textit{unitarily equivalent} if there exists a unitary operator $\phi:\mathcal{H}_1\rightarrow\mathcal{H}_2$ such that $\phi(\Omega_1)=\Omega_2$ and $\phi\A_1(I)\phi^{-1}=\A_2(I)$ for all $I\in\J$. By the uniqueness of the M{\"o}bius representation, it follows that 
\begin{equation}			\label{eq:uniqueness_mob_symmetries}
	\phi U_1(\gamma)\phi^{-1}=U_2(\gamma) \qquad\forall \gamma\in \Mob(S^1)^{(\infty)} \,.	
\end{equation} 
We define the \textbf{automorphism group} $\Aut(\A)$ of the graded-local M{\"o}bius covariant net $\A$ as
\begin{equation}  \label{eq:defin_automorphism_group_net}
	\Aut(\A):=\left\{
	\phi\in U(\mathcal{H}) \mid 
	\phi(\Omega)=\Omega  \,,\,\,\,
	\phi\A(I)\phi^{-1}=\A(I) \,\,\,\forall I\in\J
	\right\}.
\end{equation}
Therefore, equation \eqref{eq:uniqueness_mob_symmetries} implies that every $\phi\in\Aut(\A)$ commutes with $U(\gamma)$ for all $\gamma\in\Mob(S^1)^{(\infty)}$. 
Note also that $\Aut(\A)$ equipped with the topology induced by the strong one of $B(\mathcal{H})$ is a topological group.
Furthermore, we have the following desirable result:

\begin{prop} \label{prop:automorphisms_and_symmetries}
	Let $\phi$ realize an isomorphism between two graded-local M\"{o}bius covariant nets $\A_1$ and $\A_2$. If $\A_1$ and $\A_2$ are also diffeomorphism covariant, then 
	$$
	\phi U_1(\gamma)\phi^{-1}=U_2(\gamma)
	\qquad\forall \gamma\in\Diff^+(S^1)^{(\infty)}
	\,.
	$$
	As a consequence, if $\phi\in\Aut(\A)$ for a graded-local conformal net $\A$, then $\phi$ commutes with the conformal symmetries $U(\gamma)$ for all $\gamma\in\Diff^+(S^1)^{(\infty)}$.
\end{prop}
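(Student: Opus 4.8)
The plan is to transport the diffeomorphism representation of $\A_1$ to $\mathcal{H}_2$ through $\phi$ and to recognize the result as the essentially unique diffeomorphism representation of $\A_2$. Concretely, I would set $W(\gamma):=\phi U_1(\gamma)\phi^{-1}$ for $\gamma\in\Diff^+(S^1)^{(\infty)}$. Since $\phi$ is a fixed unitary and $U_1$ is a strongly continuous projective unitary representation, $W$ is again a strongly continuous projective unitary representation of $\Diff^+(S^1)^{(\infty)}$ on $\mathcal{H}_2$. The goal is then to identify $W$ with $U_2$; as these are projective representations, $U_2(\gamma)$ is only defined up to a phase, so this identification is precisely the asserted (projective) identity $\phi U_1(\gamma)\phi^{-1}=U_2(\gamma)$.

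First I would check that $W$ extends the M\"obius representation $U_2$. For $\gamma\in\Mob(S^1)^{(\infty)}$ this is immediate from \eqref{eq:uniqueness_mob_symmetries}, which gives $W(\gamma)=\phi U_1(\gamma)\phi^{-1}=U_2(\gamma)$. In particular, taking $\gamma=r^{(\infty)}(2\pi)$ and using the Vacuum Spin-Statistics relation \eqref{eq:spin-statistics_theorem} yields $\phi\Gamma_1\phi^{-1}=\Gamma_2$, so $W$ factors through $\Diff^+(S^1)^{(2)}$ exactly as $U_2$ does.

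Next I would verify that $W$ implements the diffeomorphism covariance of $\A_2$. Using $\phi\A_1(I)\phi^{-1}=\A_2(I)$ for every $I\in\J$ together with the covariance \eqref{diff_covar} of $U_1$, one computes, for all $\gamma$ and all $I\in\J$,
\[
W(\gamma)\A_2(I)W(\gamma)^{-1}=\phi U_1(\gamma)\A_1(I)U_1(\gamma)^{-1}\phi^{-1}=\phi\A_1(\dot\gamma I)\phi^{-1}=\A_2(\dot\gamma I).
\]
The second covariance relation \eqref{diff_covar_graded_local} is checked in the same manner, since $\phi$ maps $\A_1(I')$ onto $\A_2(I')$ and $U_1$ satisfies \eqref{diff_covar_graded_local}. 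Hence $W$ is a strongly continuous projective unitary representation of $\Diff^+(S^1)^{(\infty)}$ extending the M\"obius representation of $\A_2$ and acting covariantly on $\A_2$ in the sense of axiom \textbf{(F)}.

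Finally, by the uniqueness of the diffeomorphism representation for $\A_2$, any such representation is unique up to a projective phase, whence $W=U_2$ as projective representations, i.e.\ $\phi U_1(\gamma)\phi^{-1}=U_2(\gamma)$ for all $\gamma\in\Diff^+(S^1)^{(\infty)}$; the consequence then follows by specializing to $\A_1=\A_2=\A$, $U_1=U_2=U$ and $\phi\in\Aut(\A)$. I expect the only genuinely delicate point to be the bookkeeping of the inherent projective phase: the identity can hold only in $\operatorname{U}(\mathcal{H}_2)/\T$, consistently with $U$ being a projective rather than honest representation of $\Diff^+(S^1)^{(\infty)}$. Everything else---strong continuity, the two covariance relations, and the reduction to the double cover---is a direct transport of the corresponding facts for $U_1$ through the fixed unitary $\phi$, so the substantive input is simply the uniqueness of the diffeomorphism representation.
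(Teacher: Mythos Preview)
Your proof is correct and follows the same strategy as the paper: transport $U_1$ through $\phi$, verify the resulting projective representation satisfies the diffeomorphism covariance axioms \eqref{diff_covar}--\eqref{diff_covar_graded_local} for $\A_2$ and extends its M\"obius representation, then invoke the uniqueness of the diffeomorphism representation. The only difference is that the paper makes the projective ambiguity explicit by writing $\phi U_1(\gamma)\phi^{-1}=\alpha(\gamma)U_2(\gamma)$ and then kills the continuous character $\alpha$ of $\Diff^+(S^1)^{(2)}$ via the simplicity of $\Diff^+(S^1)$ together with the connectedness of its double cover; as you correctly note, since $U$ is only a projective representation this extra step merely restates the projective equality you already obtained.
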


\begin{proof}
	If $\A_1$ and $\A_2$ had been local M{\"o}bius covariant nets, then a trivial adaptation of the proof of \cite[Corollary 5.8]{CW05} would have proved the statement. In the graded-local case, we have some complications, which we deal with as follows.
	
	Recall that thanks to the Vacuum Spin-Statistics theorem, see \eqref{eq:spin-statistics_theorem}, the representation $U$ of $\Diff^+(S^1)^{(\infty)}$ factors through a representation of $\Diff^+(S^1)^{(2)}$, which we still denote by $U$.
	By the uniqueness of the representation $U$ of $\Diff^+(S^1)^{(2)}$, see p.\ \pageref{property:uniqueness_diff_repr}, we have that
	$$
	\phi U_1(\gamma)\phi^{-1}=\alpha(\gamma)U_2(\gamma)
	\qquad\forall \gamma\in\Diff^+(S^1)^{(2)}
	$$ 
	where $\alpha(\gamma):=\phi U_1(\gamma)\phi^{-1}U_2(\gamma)^{-1}$ for all $\gamma\in\Diff^+(S^1)^{(2)}$. It is not difficult to check that $\alpha$ is a character of $\Diff^+(S^1)^{(2)}$, which is also continuous thanks to the continuity of $U$. Therefore, we have to prove that $\alpha$ is the trivial character, that is $\alpha(\gamma)=1$ for all $\gamma\in\Diff^+(S^1)^{(2)}$.
	To this aim, let $H$ be the kernel of the character $\alpha$. Then $p(H)$, that is the projection of $H$ under the covering map $p:\Diff^+(S^1)^{(2)}\rightarrow\Diff^+(S^1)$, is a non-trivial normal subgroup of $\Diff^+(S^1)$. It follows that $\Diff^+(S^1)= p(H)$ because $\Diff^+(S^1)$ is a simple group, see Remark \ref{rem:diffeomorphisms_generated_by_exp}. Hence, $\Diff^+(S^1)^{(2)}=H\cup H z$, where $z:=r^{(2)}(2\pi)$ is the rotation by $2\pi$ in $\Diff^+(S^1)^{(2)}$, see \eqref{eq:defin_rot_dilat_transl} for the notation. Suppose by contradiction that $z\not\in H$. Note that $H$ is closed by the continuity of $\alpha$ and thus $H z$ is closed too. This means that $\Diff^+(S^1)^{(2)}$ is the union of two disjoint closed subsets, which contradicts the connectedness of $\Diff^+(S^1)^{(\infty)}$. As a consequence, $z\in H$ and $\Diff^+(S^1)^{(2)}=H$, that is $\alpha(\gamma)=1$ for all $\gamma\in\Diff^+(S^1)^{(2)}$, as desired. 
\end{proof}

\subsection{Covariant subnets} \label{subsection:covariant_subnets}

A \textbf{M\"{o}bius covariant subnet} $\B$ of $\A$ is a family of von Neumann algebras $(\B(I))_{I\in\J}$ acting on $\mathcal{H}$ with the following properties:
\begin{itemize}
\item $\B(I)\subseteq\A(I)$ for all $I\in\J$;

\item if $I_1\subseteq I_2$ are intervals in $\J$, then $\B(I_1)\subseteq\B(I_2)$;

\item $U(\gamma)\B(I)U(\gamma)^*=\B(\dot{\gamma}I)$ for all $\gamma\in\Mob(S^1)^{(\infty)}$ and all $I\in\J$.
\end{itemize}
We use $\B\subseteq\A$ to say that $\B$ is a M{\"o}bius covariant subnet of $\A$.
Define 
$$
\B(S^1):=\bigvee_{\substack{I\in\J \\ I\subset S^1}} \B(I)
\,,\qquad
\mathcal{H}_\B:=\overline{\B(S^1)\Omega}\subseteq\mathcal{H} 
$$
and denote by $e_\B\in \B(S^1)'\cap U(\Mob(S^1)^{(\infty)})'$ the projection on the Hilbert space $\mathcal{H}_\B$. Then we have:
\begin{prop}  \label{prop:covariant_subnets}
Let $\B$ be a M\"{o}bius covariant subnet of a graded-local M\"{o}bius covariant net $\A$. Then the restriction maps
$$
\J\ni I\mapsto \B(I)_{e_\B}:=\B(I)\restriction_{\mathcal{H}_\B}
\,,\qquad
\Mob(S^1)^{(\infty)}\ni\gamma\mapsto 
U(\gamma)\restriction_{\mathcal{H}_\B}
$$
define a graded-local M\"{o}bius covariant net $\B_{e_\B}$ acting on $\mathcal{H}_\B$, which is irreducible if $\A$ is. 

If $\A$ is conformal, then $\B$ is diffeomorphism covariant, that is, $U(\gamma)\B(I)U(\gamma)^*=\B(\dot{\gamma} I)$ for all $\gamma\in\Diff^+(S^1)^{(\infty)}$ and all $I\in\J$. 
Moreover, there exists an extension of $U(\cdot)\restriction_{\mathcal{H}_\B}$ to $\Diff^+(S^1)^{(\infty)}$ on $\mathcal{H}_\B$, which makes $\B_{e_\B}$ conformal.
\end{prop}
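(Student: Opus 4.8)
The plan is to verify the graded-local Möbius axioms for $\B_{e_\B}$ by restriction to $\mathcal{H}_\B$, exploiting $e_\B\in\B(S^1)'\cap U(\Mob(S^1)^{(\infty)})'$, and then to treat diffeomorphism covariance separately by reducing it to the action of localized diffeomorphisms. For Part~1 the first point is that the grading restricts: since $r^{(\infty)}(2\pi)$ is central in $\Diff^+(S^1)^{(\infty)}$ with $\dot{r^{(\infty)}(2\pi)}=\mathrm{id}_{S^1}$, Möbius covariance of $\B$ and $\Gamma=U(r^{(\infty)}(2\pi))$ (see \eqref{eq:spin-statistics_theorem}) give $\Gamma\B(I)\Gamma=\B(I)$ for all $I$, hence $\Gamma\B(S^1)\Gamma=\B(S^1)$ and, with $\Gamma\Omega=\Omega$, $\Gamma\mathcal{H}_\B=\mathcal{H}_\B$; thus $\Gamma$ commutes with $e_\B$ and $\Gamma_\B:=\Gamma\restriction_{\mathcal{H}_\B}$ is a self-adjoint unitary fixing $\Omega$. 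Axioms \textbf{(A)}--\textbf{(D)} for $\B_{e_\B}$ are then immediate from the corresponding properties of $\B$ and from $e_\B\in\B(S^1)'\cap U(\Mob)'$: isotony and covariance pass to the restriction, $L_0\restriction_{\mathcal{H}_\B}\geq0$, and $\Omega$ is cyclic for $\B_{e_\B}(S^1)$ by construction. For graded locality \textbf{(E)}, decomposing each $B\in\B(I)$ into Bose and Fermi parts and using $\Gamma_\B(B\restriction_{\mathcal{H}_\B})\Gamma_\B=(\Gamma B\Gamma)\restriction_{\mathcal{H}_\B}$, the graded commutation of $\B(I_1),\B(I_2)$ for $I_1\subseteq I_2'$ (inherited from $\A$ via $\B(I)\subseteq\A(I)$) restricts to $\mathcal{H}_\B$, which is equivalent to the twisted condition for $\B_{e_\B}$ with $Z_\B=Z\restriction_{\mathcal{H}_\B}=\frac{1-i\Gamma_\B}{1-i}$. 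Finally, since irreducibility amounts to $\Omega$ being the unique $U$-invariant vector up to scalars and $\Omega\in\mathcal{H}_\B$, irreducibility of $\A$ passes a fortiori to $\B_{e_\B}$.

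For the conformal case, by \eqref{eq:spin-statistics_theorem} the representation factors through $\Diff^+(S^1)^{(2)}$, and by Remark~\ref{rem:diffeomorphisms_generated_by_exp} every element is a finite product of exponentials $\exp^{(2)}(tf)$ with $\mathrm{supp}\,f\subseteq K$ for an interval $K$; hence it suffices to prove $U(\gamma)\B(J)U(\gamma)^{-1}=\B(\dot\gamma J)$ for $\gamma\in\Diff(K)^{(\infty)}$ and all $J\in\J$. The first key step is that $U(\gamma)\in\A(K)$: by \eqref{diff_covar_graded_local}, $U(\gamma)$ commutes with $\A(K')$, so by twisted Haag duality \eqref{eq:twisted_haag_duality} one gets $U(\gamma)\in Z\A(K)Z^*$; since $r^{(\infty)}(2\pi)$ is central, $U(\gamma)$ commutes with $\Gamma$, i.e.\ it is a Bose operator, and a short computation with $Z=\frac{1-i\Gamma}{1-i}$ then forces $U(\gamma)\in\A(K)$.

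Now fix a homogeneous $B\in\B(J)$ and set $X:=U(\gamma)BU(\gamma)^{-1}$. Diffeomorphism covariance of $\A$ gives $X\in\A(\dot\gamma J)$. Moreover, for any interval $L\subseteq(\dot\gamma J)'$ and $C\in\B(L)$ one has $U(\gamma)^{-1}CU(\gamma)\in\A(\dot\gamma^{-1}L)$ with $\dot\gamma^{-1}L\subseteq J'$, so graded locality of $\A$ together with $U(\gamma)$ being Bose shows that $X$ graded-commutes with $\B(L)$; hence $X\in\A(\dot\gamma J)$ graded-commutes with $\B((\dot\gamma J)')$. To upgrade this to $X\in\B(\dot\gamma J)$ I would use a conditional expectation: by the Bisognano--Wichmann property \eqref{eq:B-W_dilations} the modular group of $(\A(\dot\gamma J),\Omega)$ is implemented by $U(\delta_{\dot\gamma J}^{(\infty)}(-2\pi t))$, which preserves $\B(\dot\gamma J)$ by Möbius covariance of $\B$, so by Takesaki's theorem there is a vacuum-preserving normal conditional expectation $\varepsilon:\A(\dot\gamma J)\to\B(\dot\gamma J)$ satisfying $\varepsilon(X)\Omega=e_\B X\Omega$. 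Since $\Omega$ is separating for $\A(\dot\gamma J)$, it then remains to prove $X=\varepsilon(X)$, which reduces precisely to $X\Omega\in\mathcal{H}_\B$, i.e.\ to a relative twisted Haag duality $\B(M)=\A(M)\cap(\text{graded commutant of }\B(M'))$ for $M=\dot\gamma J$. This last point is the heart of the matter and the main obstacle: it is where the fermionic fields could make a difference and where the argument of Carpi--Weiner \cite{CW05} must be carefully adapted to the graded setting; I expect to settle it by combining the graded-commutation above with the compression formula $\varepsilon(X)\Omega=e_\B X\Omega$ and the separating property of $\Omega$.

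Once $U(\gamma)\B(I)U(\gamma)^{-1}=\B(\dot\gamma I)$ is established on $\mathcal{H}$, each $\mathrm{Ad}(U(\gamma))$ descends, via the injective compression $\B(I)\ni B\mapsto B\restriction_{\mathcal{H}_\B}$ (injective because $\Omega$ is separating for $\B(I)$ and lies in $\mathcal{H}_\B$), to an automorphism $\alpha_\gamma$ of the net $\B_{e_\B}$ extending the geometric Möbius action. To produce the required extension I would then construct a positive-energy projective unitary representation of $\Diff^+(S^1)^{(\infty)}$ on $\mathcal{H}_\B$ implementing the $\alpha_\gamma$ and extending $U\restriction_{\mathcal{H}_\B}$: concretely, by compressing the conformal generators $L_n$ of $\A$ to $\mathcal{H}_\B$ (a coset construction) to obtain a positive-energy unitary representation of $\Vir$ on $\mathcal{H}_\B^{\mathrm{fin}}$, with central charge bounded by that of $\A$, and integrating it through Theorem~\ref{theo:representations_Diff_Vir}. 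The resulting representation makes $\B_{e_\B}$ conformal, and its uniqueness (up to phase) matches it with $\alpha_\gamma$. The decisive difficulty throughout is the normalization step of the third paragraph; the remaining verifications are routine restrictions or standard integration arguments.
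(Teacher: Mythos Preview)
Your treatment of the first part (axioms \textbf{(A)}--\textbf{(E)} and irreducibility) is correct and essentially matches the paper's argument.

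For the conformal part, however, your strategy is inverted relative to the paper's, and the gap you flag is genuine and, as stated, circular. You attempt to prove diffeomorphism covariance of $\B$ on $\mathcal{H}$ \emph{first}, and then build the representation on $\mathcal{H}_\B$; the paper (following \cite{Wei05}) does the opposite. The crux of your difficulty is the step $X\Omega\in\mathcal{H}_\B$ for $X=U(\gamma)BU(\gamma)^{-1}$: since $\Omega$ is only M\"obius-invariant, $U(\gamma)^{-1}\Omega$ need not lie in $\mathcal{H}_\B$, and neither the graded commutation of $X$ with $\B((\dot\gamma J)')$ nor the compression formula $\varepsilon(X)\Omega=e_\B X\Omega$ supplies this. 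What you are effectively asking for is that $U(\gamma)$ preserve $\mathcal{H}_\B$, which is the conclusion you seek. The proposed ``relative twisted Haag duality'' is not automatic for a generic covariant subnet and cannot be obtained from Takesaki's theorem alone.

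The paper's route avoids this circularity: one first constructs the conformal structure on $\B_{e_\B}$ (adapting \cite[Corollary~6.2.13]{Wei05}, using the Virasoro representation theory of Theorem~\ref{theo:representations_Diff_Vir} and the graded adaptation of \cite[Proposition~3.7]{Car04}), and only then derives diffeomorphism covariance of $\B$ on $\mathcal{H}$ (adapting \cite[Proposition~6.2.28]{Wei05}). The latter step relies crucially on K\"oster's characterization \cite{Kos02} of localized diffeomorphism unitaries as observables determined by their covariance properties, which lets one identify the subnet's own $U_\B(\gamma)$ (built from the new stress-energy tensor on $\mathcal{H}_\B$) with the compression of the ambient $U(\gamma)$. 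Your final paragraph gestures at the coset construction, but note that simply compressing $L_n$ to $\mathcal{H}_\B$ does not yield a Virasoro algebra; one needs the stress-energy tensor of the subnet, which is what the \cite{Wei05} machinery produces.
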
 

\begin{rem}  \label{rem:cyclicity_covariant_subnets}
Note that $\mathcal{H}_\B=\mathcal{H}$ if and only if $\B(I)=\A(I)$ for all $I\in\J$, see \cite[Proposition 2.30]{Ten19}. This means that $\Omega$ is generally not cyclic for $\B(S^1)$ and thus $\B$ does not determine a graded-local M\"{o}bius covariant net on $\mathcal{H}$.
\end{rem}

\begin{proof}
If $\A$ is a graded-local M\"{o}bius covariant net, then properties \textbf{(A)}-\textbf{(D)} are clearly satisfied by $\B_{e_\B}$ and $U(\cdot)\restriction_{\mathcal{H}_\B}$ on the Hilbert space $\mathcal{H}_\B$ with vacuum $\Omega$. It remains to prove the graded-locality \textbf{(D)} of $\B_{e_\B}$. By the Vacuum Spin-Statistics theorem and the M\"{o}bius covariance of $\B$, it is clear that $\Gamma \B(I)_{e_\B}\Gamma= \B(I)_{e_\B}$ for all $I\in\J$. Moreover, by the graded-locality of $\A$, we get that 
$$
\B(I')\subseteq \A(I')\subseteq Z\A(I)' Z^*
\subseteq Z\B(I)'Z^*
\qquad\forall I\in\J \,,
$$
which implies the graded-locality of $B_{e_\B}$. Suppose further that $\A$ is irreducible and that $\Omega_1\in\mathcal{H}_\B$ is a cyclic $U$-invariant vector for $\B_{e_\B}$ on $\mathcal{H}_\B$. This implies that $\mathcal{H}_\B\subseteq\overline{\A(S^1)\Omega_1}$ and thus $\Omega\in \overline{\A(S^1)\Omega_1}$. Hence, 
$$
A\Omega\in A\overline{\A(S^1)\Omega_1}
=\overline{\A(S^1)\Omega_1}
\qquad \forall A\in \A(S^1) \,,
$$
which implies that $\Omega_1$ is cyclic for $\A(S^1)$ on $\mathcal{H}$. But, $\Omega_1$ is also $U$-invariant and thus $\Omega_1=\alpha\Omega$ for some $\alpha\in\C\backslash\{0\}$ by the irreducibility of $\A$. To sum up, we have proved that $\B_{e_\B}$ is a M\"{o}bius covariant net on $\mathcal{H}_\B$, which is irreducible if $\A$ is.

For the second part, suppose that $\A$ is a graded-local conformal net. To prove that $\B_{e_\B}$ has a conformal structure, we can adapt \cite[Corollary 6.2.13]{Wei05}, paying attention to the fact that there it is used \cite[Proposition 3.7]{Car04}, which can be adapted to the graded-local case, as done e.g.\ in the proof of Theorem \ref{theo:diffeo_covariant_net_from_V}. 
Finally, we can obtain the diffeomorphism covariance of $\B$ by adapting \cite[Proposition 6.2.28]{Wei05}.
The proof of \cite[Proposition 6.2.28]{Wei05} uses different results coming mostly from \cite[Sections 6.4, 6.5 and 6.6]{Wei05}. \cite[Sections 6.4]{Wei05} brings to the proof of \cite[Corollary 6.2.13]{Wei05}, which can be adapted as discussed above.
\cite[Section 6.5]{Wei05} deals with the representation theory of Virasoro nets (see \cite[Section 3.3]{CKLW18} and references therein for the construction of these models), which then has a general validity. 
The final part of the proof is in \cite[Section 6.6]{Wei05}, which can be adapted. Here, note that in the proof of \cite[Proposition 6.2.23]{Wei05}, results from \cite{Kos02} are used. Anyway, these are expressed in a wider generality for von Neumann algebras and thus they can be used in the graded-local setting too, eventually using the version of \cite{Kos02} (see the discussion just after the proof of \cite[Theorem 5]{Kos02}) given by \cite{Kos}.  
\end{proof}

Thanks to the above result,
we may denote the net $\B_{e_\B}$ by just $\B$, eventually specifying if it acts on $\mathcal{H}_\B$ or $\mathcal{H}$ respectively, whenever confusion can arise. Furthermore, we can refer to a M{\"o}bius covariant subnet simply as a \textbf{covariant subnet}.

We end with some examples and general constructions.
A first example of covariant subnet is the \textbf{trivial subnet}, which is defined by $\B(I):=\C1_{\C\Omega}$ for all $I\in\J$ with $\Omega:=1\in\C$. 

\begin{ex}  \label{ex:fixed_point_subnet}
Let $\A$ be a graded-local conformal net and $G$ be a compact subgroup of $\Aut(\A)$. We define the \textbf{fixed point subnet} $\A^G$ with respect to $G$ as
$$
\A^G(I):=\left\{
A\in\A(I) \mid gAg^{-1}=A  \,\,\,\forall g\in G
\right\}  \quad\forall I\in\J\,.
$$
It is easy to check that $\A^G$ is actually a graded-local conformal net because every automorphism $g$ is in $U(\Diff^+(S^1)^{(\infty)})'$ by Proposition \ref{prop:automorphisms_and_symmetries}. If $G$ is finite, we call $\A^G$ an \textbf{orbifold subnet}. $\A^\Gamma:=\A^{\{1_\mathcal{H}, \Gamma\}}$, the set of Bose elements of $\A$ is a conformal net, usually called \textbf{the Bose subnet} of $\A$.
\end{ex}

\begin{ex}
The \textbf{graded tensor product} of two graded-local conformal nets $\A_1$ and $\A_2$, see \cite[Section 2.6]{CKL08} for details, is the isotone map of von Neumann algebras $\J\ni I\mapsto(\A_1\hat{\otimes}\A_2)(I)$, acting on the tensor product Hilbert space $\mathcal{H}_1\otimes\mathcal{H}_2$ with grading unitary $\Gamma_1\otimes\Gamma_2$ and vacuum vector $\Omega_1\otimes\Omega_2$, defined by the following tensor product:
\begin{equation}  \label{eq:defin_graded_tensor_product}
\A_1(I)\hat{\otimes}\A_2(I):=
\left\{
A_1\otimes A_2 \,,\,\,
B_1\otimes 1_{\mathcal{H}_2} \,,\,\,
\Gamma_1\otimes B_2
\mid
\substack{ 
A_1,B_1\in\A_1(I) \,,\,\,
A_2,B_2\in\A_2(I) \\
\partial A_i=0 \,,\,\,
\partial B_i=1
}
\right\}''
\,.
\end{equation}
The definition above moved from the necessity that the Fermi elements on the right of the tensor product must anticommute with the ones on the left and vice versa. For a graded-local conformal net $\A$ and for every $I\in\J$, let us denote by $\A_f(I)$ the \textbf{Fermi subspace} of $\A(I)$, that is, the subspace given by the Fermi elements. Accordingly, $\A_1(I)\hat{\otimes}\A_2(I)$ is the vector space direct sum
\begin{equation}
\underbrace{
\A_1^{\Gamma_1}(I)\otimes\A_2^{\Gamma_2}(I)
\oplus
\A_{1f}(I)\Gamma_1\otimes\A_{2f}(I)
}_{\mbox{Bosons}}
\oplus
\underbrace{
\A_1^{\Gamma_1}(I)\Gamma_1\otimes\A_{2f}(I)
\oplus
\A_{1f}(I)\otimes\A_2^{\Gamma_2}(I)
}_{\mbox{Fermions}} \,.
\end{equation}
Moreover, we have the following isomorphisms of von Neumann algebras
\begin{equation}
\begin{split}
\hat{\A_1}(I)
  &:=
\A_1(I)\otimes 1_{\mathcal{H}_2}\cong \A_1(I) \\
\hat{\A_2}(I) 
  &:= 
(1_{\mathcal{H}_1}\otimes\A_2^{\Gamma_2}(I))\vee (\Gamma_1\otimes\A_{2f}(I))
\cong 1_{\mathcal{H}_1}\otimes\A_2(I)
\cong \A_2(I)
\end{split}
\end{equation}
for all $I\in\J$. Furthermore, we have that
\begin{equation}
\A_1(I)\hat{\otimes}\A_2(I)
=\hat{A_1}(I)\vee\hat{A_2}(I) \,,
\quad
[\hat{\A_1}(I),\hat{\A_2}(I)]=0 
\qquad \forall I\in\J\,.
\end{equation}
\end{ex}

\begin{ex}
The \textbf{coset subnet} $\B^c$ of a covariant subnet $\B$ of a graded-local conformal net $\A$ is the covariant subnet of $\A$ defined by the graded-local von Neumann algebras
\begin{equation}  \label{eq:defin_coset_subnet}
\B^c(I):=\left\{
A\in\A(I) \mid [A,B]=0 \,\,\,\forall B\in\B(S^1)
\right\}  
\qquad\forall I\in\J\,.
\end{equation}
\end{ex}

\section{Vertex operator superalgebras}
\label{section:superVOAs}

First, we introduce the basic theory of vertex superalgebras with their conformal and unitary structures and their module theory in Section \ref{subsection:basic_definitions_VOSA}. Section \ref{subsection:invariant_bilinear_forms} is about invariant bilinear forms. Here, we also characterize the unitary structure of vertex operator superalgebras in terms of their even and odd parts. 
After that, we generalize most of the results given in \cite[Chapter 4 and Chapter 5]{CKLW18} to the vertex superalgebra setting from Section \ref{subsection:invariant_bilinear_forms} to Section \ref{subsection:unitary_subalgebras}.
(Some of these generalizations have been already treated in \cite[Section 2.2]{Ten19}, cf.\ \cite[Section 2.1]{Ten19b}.)
For this reason, we will keep a notation consistent with the one used in \cite{CKLW18} as close as possible.
We also generalize \cite[Appendix A]{CGGH23} at the end of Section \ref{subsection:uniqueness_unitary_structure}.

\subsection{Unitary VOSAs and unitary modules}
\label{subsection:basic_definitions_VOSA}

We introduce unitary vertex operator superalgebras and correlated objects following mostly \cite{Kac01} for the general theory of vertex algebras
\footnote{Note that in \cite{Kac01}, the author uses the term ``vertex algebra'' for both of what we call ``vertex algebra'' and ``vertex superalgebra''.}, 
see also \cite{FLM88}, \cite{FHL93} and \cite{LL04} for the module theory,
whereas \cite{AL17} and \cite[Section 2.1]{Ten19b} for the unitary setting  (cf.\ also \cite{RTT22, KMP22} for more general notions of unitarity in the vertex algebra framework).

A \textbf{vertex superalgebra} is a quadruple $(V, \Omega, T, Y)$ such that:
\begin{itemize}
\item $V$ is a \textit{complex vector superspace}, namely a complex vector space equipped with an automorphism $\Gamma_V$ such that $\Gamma_V^2=1_V$ and $V$ decomposes as the direct sum of the following vector subspaces
\begin{equation}
V_\parzero := \left\{ a\in V \mid \Gamma_V (a)=a \right\} 
\quad\mbox{ and }\quad
V_\parone := \left\{ a\in V \mid \Gamma_V (a)=-a \right\} 
\end{equation}
where $p\in\{\parzero,\parone\}$ denotes an element of the cyclic group of order two $\Z_2:=\Z/2\Z$. Hence, we say an element $a\in V_\parzero$ is \textbf{even}, whereas $a\in V_\parone$ is \textbf{odd}. Moreover, we say that $a\in V$ has \textbf{parity} $p(a):=p\in\left\{\parzero, \parone\right\}$ if $a\in V_p$.

\item $\Omega\in V_\parzero$ is called the \textbf{vacuum vector} of $V$.

\item $T$ is an even endomorphism of $V$, called the \textbf{infinitesimal translation operator}.

\item $Y$ is a vector space linear map, called the \textbf{state-field correspondence}, defined by
\begin{equation}
V\ni a\longmapsto Y(a,z)
:=
\sum_{n\in\Z} a_{(n)}z^{-n-1}\in(\End(V))[[z,z^{-1}]]
\end{equation}
where $(\End(V))[[z,z^{-1}]]$ is the complex vector space of doubly-infinite formal Laurent series in the formal variables $z$ and $z^{-1}$ whose coefficients are endomorphisms of the vector space $V$. Furthermore, $Y$ must satisfy the following properties:

\begin{itemize}
\item \textbf{Parity-preserving}. $a_{(n)}V_{p}\subseteq V_{p+p(a)}$ for all $a\in V_{p(a)}$, all $n\in\Z$ and all $p\in\{\parzero,\parone\}$.

\item \textbf{Field}. For all $a, b \in V$, there exists a non-negative integer $N$ such that $a_{(n)}b=0$ for all $n\geq N$. Then we call $Y(a,z)$ a \textbf{field}.

\item \textbf{Translation covariance}. $[T, Y(a,z)]=\frac{\mathrm{d}}{\mathrm{d}z} Y(a,z)$ for all $a\in V$.

\item \textbf{Vacuum}. $Y(\Omega, z)=1_V$, $T\Omega=0$ and $a_{(-1)}\Omega=a$ for all $a\in V$.

\item \textbf{Locality}. For all $a, b\in V$ with given parities, there exists a positive integer $M$ such that (as formal distributions, see e.g.\ \cite[Section 2.3]{Kac01})
\begin{equation}
(z-w)^N[Y(a,z), Y(b,w)]=0 
\end{equation}
for all $N\geq M$, where
\begin{equation} \label{eq:defin_graded_commutator_fields}
[Y(a,z), Y(b,w)]:= Y(a,z)Y(b,w)-(-1)^{p(a)p(b)}Y(b,w)Y(a,z)
\end{equation}
is the \textbf{graded commutator} of the fields $Y(a,z)$ and $Y(b,w)$. Hereafter, with an abuse of notation, we use $(-1)^{p(a)p(b)}$ as in \eqref{eq:defin_graded_commutator_fields} to denote $(-1)^{p_ap_b}$, where $p_a,p_b\in\{0,1\}$ are representatives of the remainder class of $p(a)$ and $p(b)$ in $\Z_2$ respectively. 
\end{itemize}
Every $Y(a,z)$ with the properties above is called a \textbf{vertex operator}.
\end{itemize}

An important consequence of the above axioms, see \cite[Section 4.8]{Kac01}, is the so called \textbf{Borcherds identity} (or \textit{Jacobi identity}):
\begin{equation}  \label{eq:borcherds_id}
\begin{split}
\sum_{j=0}^\infty \binom{m}{j} 
\left(a_{(n+j)}b\right)_{(m+k-j)}c
&= \\
\sum_{j=0}^\infty(-1)^j \binom{n}{j}
&\left(
a_{(m+n-j)}b_{(k+j)}
-(-1)^{p(a)p(b)}(-1)^n b_{(n+k-j)} a_{(m+j)}
\right)c
\end{split}
\end{equation}
for all $a,b\in V$ with given parities, all $c\in V$ and all $m,n,k\in\Z$. Putting $n=0$ in the Borcherds identity, we get the famous \textbf{Borcherds commutator formula} for two arbitrary vectors $a,b\in V$ with given parities, see \cite[Eq.\  (4.6.3)]{Kac01}:
\begin{equation}  \label{eq:borcherds_commutator_formula}
[a_{(m)},b_{(k)}]c=
\sum_{j=0}^\infty\binom{m}{j}
\left(a_{(j)}b\right)_{(m+k-j)}c
\qquad\forall a,b,c\in V 
\,\,\,\forall m,k\in\Z \,.
\end{equation}
Instead, choosing $m=0$, we get the \textbf{Borcherds associative formula}:
\begin{equation} \label{eq:borcherds_associative_formula}
	(a_{(n)}b)_{(k)}c
	=\sum_{j=0}^\infty(-1)^j   \binom{n}{j}
	\left(
	a_{(n-j)}b_{(k+j)}
	-(-1)^{p(a)p(b)}(-1)^n b_{(n+k-j)} a_{(j)}
	\right)c 
\end{equation}
for all $a,b\in V$ with given parities, all $c\in V$ and all $n,k\in\Z$. 

A further property which we will use considerably is the \textbf{skew-symmetry}, see \cite[Eq.\ (4.2.1)]{Kac01}:
for any $a,b\in V$ with given parities,
\begin{equation}  \label{eq:skew-symmetry}
	Y(a,z)b= (-1)^{p(a)p(b)} e^{zL_{-1}} Y(b,-z)a
	\,.
\end{equation}

An even element $\nu$ of a vertex superalgebra $(V, \Omega, T, Y)$ is called a \textbf{Virasoro vector} of \textbf{central charge} $c\in\C$ if the following commutation relations hold
\begin{equation}  \label{eq:virasoro_cr}
[L_n, L_m]=(n-m)L_{n+m}+\frac{c(n^3-n)}{12}\delta_{n,-m}1_V
\qquad\forall n,m\in\Z
\end{equation} 
where $L_n:=\nu_{(n+1)}$. Then $Y(\nu,z)$ is called a \textbf{Virasoro field}. If $L_0$ is diagonalizable on $V$ and $L_{-1}=T$, then $\nu$ is called a \textbf{conformal vector}, $L_0$ a \textbf{conformal Hamiltonian} and $Y(\nu,z)$ an \textbf{energy-momentum field}. A vertex superalgebra with a fix conformal vector is called a \textbf{conformal vertex superalgebra}. 
Therefore, we define a \textbf{vertex operator superalgebra}, briefly called \textbf{VOSA}, as a conformal vertex superalgebra $(V, \Omega, T, Y, \nu)$ with the additional properties:
\begin{itemize}
\item $V_n:=\Ker\left( L_0-n1_V\right) =\{0\}$ if $n\not\in\half\Z$ with $V_\parzero=\bigoplus_{n\in\Z} V_n$ and $V_\parone=\bigoplus_{n\in\Z -\half}V_n$.

\item $\mathrm{dim}V_n<+\infty$ for all $n\in\half\Z$ and there exists an integer $N\leq0$ such that $V_n=\{0\}$ for all $n<N$.
\end{itemize}
With an abuse of notation, we will often use $V$ in place of the quintuple $(V, \Omega, T, Y, \nu)$.

A vertex operator superalgebra $V$ is said to be of \textbf{CFT type} if the corresponding conformal vector is \textit{of CFT type}, that is $V_n=\{0\}$ for all $n\not\in\half\Zpluseq$ and $V_0=\C\Omega$.

Every non-zero element $a\in V_n$ with $n\in\half\Z$ is called \textbf{homogeneous} with \textbf{conformal weight} $d_a:=n$. Accordingly, we rewrite
\begin{equation}
Y(a,z)=\sum_{n\in\Z-d_a}a_nz^{-n-d_a} 
\,,  \qquad
a_n:=a_{(n+d_a-1)}
\end{equation}
for all homogeneous $a\in V$. For our convenience, if $d_a\in \Z$, we set $a_n:=0$ for all $n\in\Z-\half$; whereas if $d_a\in\Z-\half$, we set $a_n:=0$ for all $n\in\Z$. Then for arbitrary $a\in V$ and $n\in\half\Z$, we denote by $a_n$ the finite sum of the coefficients of index $n$ of the fields corresponding to the homogeneous components of $a$, which is well-defined thanks to the linearity of the state-field correspondence. 
Furthermore, every homogeneous vector $a\in V$ is called \textbf{primary} if $L_na=0$ for all $n\in\Zplus$, whereas it is called \textbf{quasi-primary} if just $L_1a=0$. It is not difficult to prove that $\Omega$ is a primary vector in $V_0$ (see \cite[Remark 4.1]{CKLW18}), whereas $\nu$ is a quasi-primary vector in $V_2$ which cannot be primary if $c\not=0$ (see \cite[Theorem 4.10]{Kac01}). We have the following useful commutation relations, see e.g.\ \cite[Section 4.9]{Kac01}:
\begin{align}
[L_0,a_n] &=-na_n \label{eq:cr_l0}\\
[L_{-1}, a_n] &=(-n-d_a+1)a_{n-1}   \label{eq:cr_l-1}\\
[L_1, a_n] &=(-n+d_a-1)a_{n+1} + (L_1a)_{n+1}  \label{eq:cr_l1}
\end{align}
for all $a\in V$ and all $n\in\half\Z$. Moreover, if $a$ is primary, then, see e.g.\ \cite[Corollary 4.10]{Kac01},
\begin{equation} \label{eq:cr_l_m_a_n}
	[L_m,a_n]=((d_a-1)m-n)a_{m+n} \qquad\forall m\in\Z \,\,\,\forall n\in\half\Z\,.
\end{equation}

To introduce a unitary structure on a VOSA, we need to define \textbf{(antilinear) homomorphisms} between the VOSAs $(V, \Omega^V, T_V, Y_V, \nu^V)$ and $(W, \Omega^W, T_W, Y_W, \nu^W)$ as a vector space (antilinear) map $\phi$ between $V$ and $W$ with the additional conditions: $\phi(a_{(n)}b)=\phi(a)_{(n)}\phi(b)$ for all $a,b\in V$ and all $n\in\half \Z$ ($\phi$ respects the $(n)$-product), $\phi(\Omega_V)=\Omega_W$ and $\phi(\nu_V)=\nu_W$.
These imply that $\phi$ commutes with every $L_n$ for $n\in\Z$ and thus it is also parity-preserving. We also define \textbf{(antilinear) isomorphisms} and \textbf{(antilinear) automorphisms} of VOSAs in the obvious way.
If the condition $\phi(\nu_V)=\nu_W$ is removed, then $\phi$ is simply said a \textit{vertex superalgebra} (antilinear) homomorphism/isomorphism/automorphism.
We denote by $\Aut (V)$ the group of VOSA automorphisms of $V$. Note also that $\Aut(V)$ is a subset of the group $\prod_{n\in\half\Z}\operatorname{GL}(V_n)$ of grading-preserving vector space automorphisms of $V$. Therefore, $\Aut (V)$ can be turned into a metrizable topological group if it is equipped with the relative topology induced by the product topology of $\prod_{n\in\half\Z}\operatorname{GL}(V_n)$. We say that an (antilinear) automorphism $\phi$ is an \textbf{involution} if $\phi^2=1_V$. 

Note that $\Gamma_V$ is an involution of $V$. By the parity-preserving property, we deduce that $\Gamma_V$ commutes with every (antilinear) automorphism of $V$. 
We also set the following vector space automorphism of $V$:
\begin{equation}
Z_V:=\frac{1_V-i\Gamma_V}{1-i} 
\,,\qquad
Z_V^{-1}=\frac{1_V+i\Gamma_V}{1+i}
\,.
\end{equation}
Note that $Z_V$ preserves the vacuum and the conformal vectors, but it is not a VOSA automorphism of $V$ because it does not respect the $(n)$-product. Indeed, we have that
\begin{equation}  \label{eq:Z_(n)-product}
\begin{split}
Z_V(a_{(n)}b) 
  &=\left\{
\begin{array}{lr}
a_{(n)}b & \mbox{ if } \,\,\, p(a)=p(b)=\parzero \\
i a_{(n)}b & \mbox{ if } \,\,\, p(a)=\parone+p(b) \\
a_{(n)}b & \mbox{ if } \,\,\, p(a)=p(b)=\parone 
\end{array}  \right.  \\
(Z_Va)_{(n)}(Z_Vb)
  &=\left\{
\begin{array}{lr}
a_{(n)}b & \mbox{ if }  \,\,\,  p(a)=p(b)=\parzero \\
i a_{(n)}b & \mbox{ if }  \,\,\, p(a)=\parone+p(b) \\
-a_{(n)}b & \mbox{ if }  \,\,\, p(a)=p(b)=\parone 
\end{array}  \right.
\end{split}
\end{equation}
for all $a,b\in V$ with given parities and all $n\in\Z$. Hence, \eqref{eq:Z_(n)-product} can be rewritten in the useful formula
\begin{equation}  \label{eq:Z_(n)-product_formula}
Z_V(a_{(n)}b)=(-1)^{p(a)p(b)}(Z_Va_{(n)})(Z_Vb)
\end{equation}
for all $a,b\in V$ with given parities and all $n\in\Z$.

A \textbf{unitary vertex operator superalgebra} is a VOSA $V$ equipped with an antilinear VOSA automorphism $\theta$, which we call the \textbf{PCT operator}, associated with a scalar product (i.e., a positive-definite Hermitian form, linear in the second variable) $\scalar$ on $V$ such that:
\begin{itemize}
\item $\theta$ is an \textbf{involution}, that is, $\theta^2=1_V$ (see Remark \ref{rem:involution_not_necessary} below);
\item $\scalar$ is \textbf{normalized}, which means that $(\Omega| \Omega)=1$;
\item $\scalar$ is \textbf{invariant}, that is, (see \cite[Section 2.1]{AL17})
\begin{equation} \label{eq:invariant_scalar_product}
(Y(\theta(a),z)b| c)=(b| Y(e^{zL_1}(-1)^{2L_0^2+L_0}z^{-2L_0}a,z^{-1})c)
\qquad\forall a,b,c\in V
\end{equation}
or equivalently (see \cite[Definition 2.3]{Ten19b})
\begin{equation}   \label{eq:invariant_scalar_product_Z}
(Y(\theta(a),z)b| c)=(b| Y(e^{zL_1}(iz^{-1})^{2L_0}Z_Va,z^{-1})c)  
\qquad\forall a,b,c\in V
\end{equation}
where \eqref{eq:invariant_scalar_product} and \eqref{eq:invariant_scalar_product_Z} are to be understood as equalities between doubly-infinite formal Laurent series in the formal variable $z$ (which is not affected by the complex conjugation) with coefficients in $\C$. (The operators showing up in \eqref{eq:invariant_scalar_product} and \eqref{eq:invariant_scalar_product_Z} can be understood as defined on homogeneous elements in the obvious way and extended by linearity to $V$.)
Note that 
\begin{equation}  \label{eq:2d^2+d}
	(-1)^{2d^2+d}=\left\{
	\begin{array}{lc}
		(-1)^d & \mbox{ if } d\in\Z  \\
		(-1)^{d+\half } & \mbox{ if } d\in\Z-\half 
	\end{array}
	\right. 
\end{equation}
so that the equivalence of \eqref{eq:invariant_scalar_product} and \eqref{eq:invariant_scalar_product_Z} follows from $i^{2L_0}Z_V=(-1)^{2L_0^2+L_0}$.
\end{itemize}

\begin{rem}  \label{rem:involution_not_necessary}
In the references \cite[Section 2.1]{AL17} and \cite[Definition 2.3]{Ten19b} given above for the definition of unitary VOSAs, it is required that the PCT operator $\theta$ is an involution. For our convenience, we have kept this axiom. Nevertheless, we point out that it is not necessary because we will prove that it is actually a consequence of the other ones in (i) of Proposition \ref{prop:relation_pct_scalar_product}.
\end{rem}

If a (unitary) vertex (operator) superalgebra $V$ has $V_\parone=\{0\}$, then it is a \textbf{(unitary) vertex (operator) algebra}, briefly \textbf{V(O)A}, as defined in \cite{CKLW18}.

\begin{rem}  \label{rem:real_form}
Let $V$ be a unitary VOSA. The real subspace
\begin{equation}
V_\R:=\{a\in V\mid \theta(a)=a\}
\end{equation}
contains the vacuum and the conformal vectors because they are $\theta$-invariant. Moreover, $V_\R$ inherits the structure of a \textit{real} VOSA from the one of $V$. Note also that $V$ is the vector space complexification of $V_\R$ because $V=V_\R+ iV_\R$ and $V_\R\cap iV_\R=\{0\}$. Then $V_\R$ is known as a \textbf{real form} for $V$, see \cite[Remark 5.4]{CKLW18} and references therein. Restricting the scalar product on $V$ to $V_\R$, we obtain a positive-definite normalized real-valued $\R$-bilinear form with the invariant properties \eqref{eq:invariant_scalar_product} and \eqref{eq:invariant_scalar_product_Z}. Conversely, if $\widetilde{V}$ is a real VOSA with a positive-definite normalized real-valued invariant (in the meaning given just above) $\R$-bilinear form $\bilinear$, then its complexification $\widetilde{V}_\C$ is a VOSA with an invariant scalar product which extends $\bilinear$. Furthermore, $(\widetilde{V}_\C)_\R=\widetilde{V}$.
\end{rem}

We say that a vertex superalgebra homomorphism $\phi$ between unitary VOSAs $V$ and $W$ with scalar products $\scalar_V$ and $\scalar_W$ respectively is \textbf{unitary} if $(\phi(a)|\phi(b))_W=(a| b)_V$ for all $a,b\in V$. We denote the subgroup of $\Aut (V)$ of unitary automorphisms by $\Aut_{\scalar}(V)$.

In the following, we introduce some useful relations for a unitary VOSA $(V,\scalar)$ with PCT operator $\theta$. Due to the antilinearity of $\theta$, we have that 
\begin{equation}   \label{eq:theta_z}
Z_V\theta Z_V=\theta \,.
\end{equation}
If $a\in V$ is homogeneous of conformal weight $d_a$, then $L_1^la$ is still homogeneous of conformal weight $d_a-l$ for all $l\in\Zpluseq$ thanks to \eqref{eq:cr_l0}. Then by replacing $a$ with $\theta (a)$ in \eqref{eq:invariant_scalar_product} and \eqref{eq:invariant_scalar_product_Z} and remembering that $Z_V$ and $\theta$ commute with every $L_n$ for all $n\in\Z$, we can calculate the following equation:
\begin{equation}    \label{eq:an_star}
(a_nb| c)
=(-1)^{2d_a^2+d_a}\sum_{l\in\Zpluseq}\frac{(b| (\theta L_1^l a)_{-n}c)}{l!}
=i^{2d_a}\sum_{l\in\Zpluseq}\frac{(b| (Z_V\theta L_1^l a)_{-n}c)}{l!}
\end{equation}
for all $b,c\in V$, all homogeneous $a\in V$ and all $n\in\half\Z$. In particular, if $a$ is quasi-primary, we have that
\begin{align}   
(a_nb| c)
&=(-1)^{2d_a^2+d_a}(b| (\theta a)_{-n}c)
=i^{2d_a}(b| (Z_V\theta a)_{-n}c)
=i^{2d_a}(b| (\theta Z_V^{-1} a)_{-n}c) 
\label{eq:an_star_qp}\\
(b| a_nc)
&=(-1)^{2d_a^2+d_a}((\theta a)_{-n}b| c)
=i^{-2d_a}((\theta Z_V^{-1}a)_{-n}b| c)
=i^{-2d_a}((Z_V\theta a)_{-n}b| c) 
\label{eq:an_star_qp_2}
\,.
\end{align}
Then we say that the field $Y(a,z)$ for any quasi-primary vector $a\in V$, is \textbf{Hermitian} (with respect to $\scalar$) if $(a_nb| c)=(b| a_{-n}c)$ for all $n\in\half\Z$ and all $b,c\in V$.
Moreover, equation \eqref{eq:an_star_qp} says that 
\begin{equation} \label{eq:L_n_adj}
(L_na| b)=(a| L_{-n}b) 
\qquad\forall a,b\in V 
\,\,\,\forall n\in\Z
\end{equation}
which also implies, choosing $n=0$, that $(V_l| V_k)=0$ for all $l,k\in\half\Z$ such that $l\not= k$.

It is important to introduce the notion of \textbf{vertex subalgebra} of a vertex superalgebra $V$. This is given by a vector subspace $W$ of $V$ containing the vacuum vector $\Omega$ and such that $a_{(n)}W\subseteq W$ for all $a\in W$ and all $n\in\Z$. By the vertex superalgebra axioms, it follows that $Ta=a_{(-2)}\Omega$ for all $a\in V$. Accordingly, $W$ is automatically $T$-invariant and therefore $(W,\Omega, T\restriction_W, Y(\cdot,z)\restriction_W)$ is a vertex superalgebra. It is then clear that $\C\Omega$ and $V_\parzero$ are vertex subalgebras of $V$, which are also vertex algebras. Given a subspace $\F$ of $V$, we define $W(\F)$ as the smallest vertex subalgebra of $V$ containing $\F$ and we say that $W(\F)$ is \textbf{generated by} $\F$. 

An \textbf{ideal} of a vertex superalgebra $V$ is a $T$-invariant vector subspace $\mathscr{J}$ such that $a_{(n)}\mathscr{J}\subseteq\mathscr{J}$ for all $a\in V$ and all $n\in\Z$. From \cite[Eq.\  (4.3.1)]{Kac01}, we have that $a_{(n)}V\subseteq \mathscr{J}$ for all $a\in \mathscr{J}$ and all $n\in\Z$. As usual, we say that a vertex superalgebra $V$ is \textbf{simple} if the only ideals which it contains are $\{0\}$ and $V$ itself.

The \textbf{graded tensor product} $V^1\hat{\otimes}V^2$ of two VOSAs $(V^j,\Omega^j,Y_j,\nu^j)$ for $j\in\{1,2\}$, see \cite[Section 4.3]{Kac01}, is the VOSA given by 
$$
(V^1\otimes V^2, \Omega^1\otimes\Omega^2, Y_1\hat{\otimes}Y_2, \nu^1\otimes \Omega^2+\Omega^1\otimes\nu^2)
$$ 
where
\begin{equation}
	(Y_1\hat{\otimes}Y_2)(a^1\otimes a^2,z)
	:=Y_1(a^1,z)\Gamma_{V_1}^{p(a^2)}\otimes Y_2(a^2,z)
	\qquad\forall a^1\otimes a^2\in V^1\otimes V^2
	\,.
\end{equation}
Clearly, the parity operator is given by $\Gamma_{V_1}\otimes\Gamma_{V_2}$. Consider the operators
$$
Z_{V^1\hat{\otimes}V^2}=
\frac{1_{V^1}\otimes 1_{V^2}-i\Gamma_{V_1}\otimes\Gamma_{V_2}}{1-i}
\,,\qquad
Z_{V^1}=\frac{1_{V^1}-i\Gamma_{V_1}}{1-i}
\,,\qquad
Z_{V^2}=\frac{1_{V^2}-i\Gamma_{V_2}}{1-i}
\,.
$$ 
Then it is not difficult to show that 
\begin{equation}  \label{eq:graded_tensor_product_Z}
	\begin{split}
		Z_{V^1\hat{\otimes}V^2}(a^1\otimes a^2)
		&=
		(-1)^{p(a^1)p(a^2)}Z_{V^1}(a^1)\otimes Z_{V^2}(a^2) \\
		Z_{V^1\hat{\otimes}V^2}^{-1}(a^1\otimes a^2)
		&=
		(-1)^{p(a^1)p(a^2)}Z_{V^1}^{-1}(a^1)\otimes Z_{V^2}^{-1}(a^2)
	\end{split}
\end{equation}
for all vectors with given parities $a^j\in V^j$ with $j\in\{1,2\}$. Finally, by (2) of \cite[Proposition 2.4]{AL17}, see also \cite[Proposition 2.20]{Ten19}, if for $j\in\{1,2\}$, $(V^j,\scalar_j,\theta_j)$ are unitary, then $(V^1\hat{\otimes}V^2,\scalar_1\scalar_2,\theta_1\otimes\theta_2)$ is unitary too.

We end this section introducing some basic notions of the VOSA module theory with some examples. 
Let $V$ be a VOSA, then a \textbf{$V$-module} is a $\Z_2$-graded complex vector space $M=M_\parzero\oplus M_\parone$ with the following structure:
\begin{itemize}
	\item a vector space linear map $Y^M$ defined by
	\begin{equation}
		V\ni a\longmapsto 
		Y^M(a,z):=\sum_{n\in\Z}a^M_{(n)}z^{-n-1}
		\in (\End(M))[[z,z^{-1}]]
	\end{equation}
	and with the following properties: 
	\begin{itemize}
		\item \textbf{Parity-preserving}.
		$a^M_{(n)} M_p\subseteq M_{p+p(a)}$ for all $a\in V_{p(a)}$, all $p\in \{\parzero, \parone\}$ and all $n\in\Z$.
		
		\item \textbf{Field}.
		For all $a\in V$ and all $b\in M$, there exists a non-negative integer $N$ such that $a^M_{(n)}b=0$ for all $n\geq N$.
		
		\item \textbf{Traslation covariance}.
		$\frac{\mathrm{d}}{\mathrm{d}z}Y^M(a,z)= Y^M(L_{-1}a,z)$ for all $a\in V$.
		
		\item \textbf{Vacuum}. $Y^M(\Omega,z)=1_M$.
		
		\item \textbf{Borcherds identity}. For all $a,b\in V$ with given parities, all $c\in M$ and all $m,n,k\in\Z$, it holds that
		\begin{equation}  \label{eq:borcherds_id_module}
			\begin{split}
				\sum_{j=0}^\infty & \binom{m}{j} 
				\left(a_{(n+j)}b\right)^M_{(m+k-j)}c
				= \\
				&\sum_{j=0}^\infty(-1)^j\binom{n}{j}
				\left(
				a^M_{(m+n-j)}b^M_{(k+j)}
				-(-1)^{p(a)p(b)}(-1)^n b^M_{(n+k-j)} a^M_{(m+j)}
				\right)c \,.
			\end{split}
		\end{equation}
	\end{itemize}
	
	\item Let $Y^M(\nu,z)=\sum_{n\in\Z}L^M_nz^{-n-2}$, then
	$M$ has a grading compatible with the parity given by the eigenspaces of $L^M_0$, that is for every $n\in\C$ set $M_n:=\mathrm{Ker}(L^M_0-n1_M)$, then
	$$
	M=\bigoplus_{n\in\C}M_n
	\,,
	\qquad
	M_p=\bigoplus_{n\in\C} (M_p\cap M_n)
	\quad\forall p\in\{\parzero,\parone\}
	$$ 
	with $\mathrm{dim}M_n<+\infty$ for all $n\in\C$ and $M_n\not=\{0\}$ at most for countably many $n$.
\end{itemize}

Note that the translation covariance and the Borcherds identity imply that 
\begin{equation}  \label{eq:translation_covariance_modules}
	[L^M_{-1},Y^M(a,z)]=\frac{\mathrm{d}}{\mathrm{d}z}Y^M(a,z)
	=Y^M(L_{-1}a,z)
	\qquad\forall a\in V\,.
\end{equation}
Moreover, if $c\in\C$ is the central charge of $V$, then the following Virasoro algebra commutation relations hold (see \cite[Remark 2.3.2]{Li96}, cf.\  \cite[Proposition 4.1.5]{LL04}):
\begin{equation}  \label{eq:virasoro_cr_module}
	[L^M_n, L^M_m]=(n-m)L^M_{n+m}+\frac{c(n^3-n)}{12}\delta_{n,-m}1_M
	\qquad\forall n,m\in\Z \,.
\end{equation} 

A \textbf{submodule} $S$ of a $V$-module $M$ is a vector subspace of $M$ invariant for the action of $V$. $M$ is
called \textbf{irreducible} if there are no non-trivial submodules.

We define a \textbf{$V$-module (antilinear) homomorphism} $f$ between $M^1$ and $M^2$ as a vector space (antilinear) map from $M^1$ to $M^2$ such that $f(Y^{M^1}(a,z)b)=Y^{M^2}(a,z)f(b)$ for all $a\in V$ and all $b\in M^1$. Further definitions of \textbf{$V$-module (antilinear)} \textbf{endomorphisms}/\textbf{automorphisms}/\textbf{isomorphisms} are given in the obvious way.

\begin{ex} \label{ex:adjoint_contragredient_modules}
The basic example of a $V$-module is the \textbf{adjoint module}, which is obtained by choosing as vector space $V$ itself and setting $Y^V:=Y$. Then note that the VOSA $V$ is simple if and only if its adjoint module is irreducible. A further example is the \textbf{contragredient module} $M'$ of a $V$-module $M$, which is defined as the graded dual vector space 
\begin{equation}
	\label{eq:defin_contragredient_module}
	M':=\bigoplus_{n\in\C}(M_\parzero\cap M_n)^*\oplus
	\bigoplus_{n\in\C} (M_\parone\cap M_n)^*
\end{equation}
and the state-field correspondence $Y^{M'}$ defined by the formula
\begin{equation}   \label{eq:invariance_contragredient_module}
	\langle Y^{M'}(a,z)b',c\rangle=
	\langle b',Y^M(e^{zL_1}(-1)^{2L_0^2+L_0}z^{-2L_0}a,z^{-1})c\rangle
\end{equation}
for all $a\in V$, all $c\in M$ and all $b'\in M'$, 
where $\pairing$ is the natural pairing between $M$ and $M'$. Any $Y^{M'}(a,z)$ is called the \textbf{adjoint vertex operator} of $Y^M(a,z)$. The proof that $(M', Y^{M'})$ defines an actual $V$-module is an adaptation of the proof of \cite[Theorem 5.2.1]{FHL93}, see \cite[Lemma 2]{Yam14}.
We denote by $(V', Y')$ (instead of using $Y^{V'}$) the contragredient module of the adjoint module. 
\end{ex}

A \textbf{unitary $V$-module}, see \cite[Definition 2.10]{Ten19b}, cf.\ \cite[Section 2.1]{AL17} and \cite[Definition 2.4]{DL14}, of a unitary VOSA $V$ is a $V$-module $M$ equipped with a scalar product $\scalar_M$ such that 
\begin{equation}    \label{eq:defin_unitary_module}
	(Y^M(\theta(a),z)b| c)_M=(b| Y^M(e^{zL_1}(-1)^{2L_0^2+L_0}z^{-2L_0}a,z^{-1})c)_M
	\quad
	\forall a\in V
	\,\,\,
	\forall b,c\in M\,.
\end{equation}

\begin{ex}  \label{ex:even_simple_odd_irreducible}
	If $V$ is a (unitary) VOSA, then $V_\parzero$ is a (unitary) VOA with $V_\parone$ as (unitary) $V_\parzero$-module by restricting the (unitary) VOSA structure of $V$ in the obvious way. Suppose that $V$ is also simple and note that \cite[Proposition 4.5.6]{LL04} still works in the VOSA setting, cf.\ \cite[Lemma 6.1.1]{Li94b}. It implies that if $b$ is any non-zero vector in $V$, then $V$ must be linearly generated by elements of type $a_nb$ where $a\in V$ and $n\in\half\Z$. If we choose $b$ even, then $V_\parzero$ must be linearly generated by elements of type $a_nb$ for $a\in V_\parzero$ and $n\in\Z$. In other words, $V_\parzero$ is simple. Similarly, if we choose $b$ odd, then $V_\parone$ is generated by elements of type $a_nb$ for $a\in V_\parzero$ and $n\in\Z$, that is $V_\parone$ is irreducible. 
\end{ex}

\begin{ex} \label{ex:conjugate_module}
Given a module $M$ on a unitary VOSA $V$, we define the \textbf{conjugate module} $\overline{M}$ of $M$ as the complex vector superspace constituted by $M$ itself as a set and the scalar multiplication by $\lambda\in\C$, realized via the usual scalar multiplication on $M$ by its complex conjugate $\overline{\lambda}$. Vertex operators on $\overline{M}$ are defined by $Y^{\overline{M}}(a,z):=Y^M(\theta(a),z)$ for all $a\in V$, where $\theta$ is the PCT operator of $V$. It is not difficult to prove that $\overline{M}$ so defined is a $V$-module.
\end{ex}

\subsection{Invariant bilinear forms and unitarity}
\label{subsection:invariant_bilinear_forms}

We give the definition of invariant bilinear form for a VOSA and we prove some related results. Beyond the general interest, the current section presents results which are fundamental to develop the following remaining ones as for example, the equivalence between simplicity and CFT type condition for unitary VOSAs. As novel result, we show that the unitary structure of a simple unitary VOSA is determined by the unitary structure of its even and odd parts.

\begin{defin}   \label{defin:invariant_bilinear_form}
Let $V$ be a VOSA. An \textbf{invariant bilinear form} on $V$ is a bilinear form $\bilinear$ on $V$ such that
\begin{equation} \label{eq:invariant_bilinear_form}
(Y(a,z)b, c)=(b, Y(e^{zL_1}(-1)^{2L_0^2+L_0}z^{-2L_0}a,z^{-1})c)
\qquad\forall a,b,c\in V \,.
\end{equation}
Moreover, $\bilinear$ is said to be \textbf{normalized} if $(\Omega,\Omega)=1$.
\end{defin} 

\begin{rem}
Note that if $V$ is a unitary VOSA with scalar product $\scalar$ and PCT operator $\theta$, then $\bilinear:=(\theta(\cdot)|\cdot)$ is a normalized non-degenerate invariant bilinear form on $V$.
\end{rem}

Easily from the definition, for all homogeneous $a\in V$, we have that 
\begin{equation}  \label{eq:an_inverse_bilinear_form}
(a_n b,c)=(-1)^{2d_a^2+d_a}\sum_{l\in\Zpluseq}
\frac{(b,(L_1^la)_{-n}c)}{l!}
\qquad\forall n\in\half\Z
\,\,\,\forall b,c\in V \,.
\end{equation}
In particular,
\begin{equation}  \label{eq:bilinear_form_adjoint_Ln}
(L_na,b)=(a,L_{-n}b) 
\qquad\forall n\in\Z
\,\,\,\forall a, b\in V\,,
\end{equation}
which implies that $(V_l,V_k)=0$ for all $l,k\in\half\Z$ such that $l\not= k$.
Moreover, using the straightforward commutation identities (verified on homogeneous elements first and thus extended by linearity, cf.\  \cite[Eq.\  (5.3.1)]{FHL93})
\begin{equation}  \label{eq:commutation_identities}
\begin{split}
z^{-2L_0}e^{z^{-1}L_1} &= e^{zL_1}z^{-2L_0}  \\
(-1)^{2L_0^2+L_0}e^{-zL_1} &= e^{zL_1} (-1)^{2L_0+L_0}
\end{split}
\end{equation}
one can prove the \textbf{inverse invariance property}:
\begin{equation} \label{eq:inverse_invariance_property}
(c,Y(a,z)b)=(Y(e^{zL_1}(-1)^{2L_0^2+L_0}z^{-2L_0}a,z^{-1})c,b)
\qquad\forall a,b,c\in V \,.
\end{equation}

\begin{prop}   \label{prop:properties_bilinear_forms}
Let $V$ be a VOSA, then:
\begin{itemize}
\item[(i)]  Every invariant bilinear form on $V$ is symmetric, that is, 
$$
(a,b)=(b,a)
\qquad\forall a,b\in V\,.
$$ 

\item[(ii)] The map $\bilinear\mapsto (\Omega,\cdot)\restriction_{V_0}$ realizes a linear isomorphism from the space of invariant bilinear forms on $V$ to $\operatorname{Hom}_\C(V_0/L_1V_1,\C)=(V_0/L_1V_1)^*$.

\item[(iii)] If $V$ is simple, then every non-zero invariant bilinear form on $V$ is non-degenerate. Furthermore, if $\bilinear$ is a non-zero invariant bilinear form on $V$, then every other invariant bilinear form on $V$ is given by $\alpha\bilinear$ where $\alpha\in\C$.

\item[(iv)] If $V$ has a non-degenerate invariant bilinear form and $V_0=\C\Omega$, then $V$ is simple.
\end{itemize}
\end{prop}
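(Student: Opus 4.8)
The plan is to follow the strategy used for vertex operator algebras in \cite{FHL93} and adapt it to the super setting; the fermionic signs are already packaged into the identities \eqref{eq:an_inverse_bilinear_form} and \eqref{eq:inverse_invariance_property}, so most of the argument reduces to bookkeeping with these together with the adjunction $(L_na,b)=(a,L_{-n}b)$ of \eqref{eq:bilinear_form_adjoint_Ln}. It is convenient to establish the well-definedness and injectivity in (ii) \emph{before} the symmetry (i), since this makes (i) almost immediate and avoids circularity. For (ii), the map is well defined because $(\Omega,L_1w)=(L_{-1}\Omega,w)=0$ for $w\in V_1$, as $L_{-1}\Omega=T\Omega=0$, so the functional $(\Omega,\cdot)\restriction_{V_0}$ kills $L_1V_1$ and descends to $(V_0/L_1V_1)^*$; linearity in the form is clear. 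For injectivity, suppose $(\Omega,v)=0$ for all $v\in V_0$. Since $(V_l,V_k)=0$ for $l\neq k$, it suffices to show $(a,b)=0$ for homogeneous $a,b$ of equal weight $n$; writing $a=a_{-n}\Omega$ and applying \eqref{eq:an_inverse_bilinear_form}, one gets $(a,b)$ as a finite sum of terms proportional to $(\Omega,(L_1^la)_nb)$, and each $(L_1^la)_nb$ lies in $V_0$, so every term vanishes. Thus an invariant form is determined by its image functional.

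Granting injectivity, (i) follows cleanly. The transpose $(a,b)':=(b,a)$ is again invariant: this is exactly the content of the inverse invariance property \eqref{eq:inverse_invariance_property}. A short computation with \eqref{eq:an_inverse_bilinear_form}, using $L_1\Omega=0$ and $(\Omega,L_{-1}(\cdot))=0$, shows $(v,\Omega)=(\Omega,v)$ for all $v\in V_0$, so $\bilinear$ and its transpose induce the same functional on $V_0/L_1V_1$; by injectivity they must coincide, i.e. the form is symmetric. Alternatively one could argue directly from skew-symmetry \eqref{eq:skew-symmetry} together with the adjunction $L_{\pm1}^{*}=L_{\mp1}$, but then the fermionic sign $(-1)^{p(a)p(b)}$ must be tracked against $(-1)^{2L_0^2+L_0}$ by hand.

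The surjectivity in (ii) is the step I expect to be the main obstacle, and I would handle it through the contragredient module of Example \ref{ex:adjoint_contragredient_modules}. An invariant bilinear form $B$ corresponds, via $a\mapsto B(a,\cdot)$, to a homomorphism $V\to V'$ from the adjoint module to its contragredient, the invariance \eqref{eq:invariant_bilinear_form} being precisely the intertwining condition built into the defining formula \eqref{eq:invariance_contragredient_module}. Since $\Omega$ is cyclic for the adjoint module (because $a=a_{(-1)}\Omega$), such a homomorphism is determined by the image of $\Omega$, which must be a degree-zero vector of $V'$ annihilated by the contragredient action of $L_{-1}$; these are exactly the functionals in $(V_0/L_1V_1)^*$. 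The nontrivial point, namely that every such vector actually extends to a module homomorphism, is supplied by the construction of the contragredient module following \cite{FHL93} and \cite{Yam14}, and this yields a preimage with the prescribed restriction.

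For (iii), assume $V$ simple and let $\bilinear$ be a nonzero invariant form. Its radical $N:=\{a\in V\mid (a,b)=0\ \forall b\in V\}$ is an ideal: it is $T$-invariant by \eqref{eq:bilinear_form_adjoint_Ln} and stable under all modes by \eqref{eq:an_inverse_bilinear_form}. As $\bilinear\neq0$ we have $N\neq V$, so $N=\{0\}$ by simplicity and the form is non-degenerate. For uniqueness, if $B_1,B_2$ are nonzero invariant forms then $B_1$ is non-degenerate and grading-preserving, so there is a unique grading-preserving $\phi\colon V\to V$ with $B_2=B_1(\phi\,\cdot\,,\cdot)$; comparing the invariance of $B_1$ and $B_2$ and using non-degeneracy of $B_1$ forces $\phi$ to commute with every vertex operator, i.e. $\phi\in\End_V(V)$. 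Finally $\End_V(V)=\C\,1_V$ for simple $V$, since $\phi$ preserves the finite-dimensional lowest-weight space and hence has an eigenvalue $\alpha$, and $\Ker(\phi-\alpha 1_V)$ is then a nonzero submodule of the irreducible adjoint module, so $\phi=\alpha 1_V$; thus $B_2=\alpha B_1$. For (iv), let $\mathscr{J}\neq\{0\}$ be an ideal with $V_0=\C\Omega$ and $\bilinear$ non-degenerate. By non-degeneracy and $(V_l,V_k)=0$ for $l\neq k$ there are homogeneous $a\in\mathscr{J}$ and $b$ of the same weight $n$ with $(a,b)\neq0$; writing $b=b_{-n}\Omega$, using (i) and \eqref{eq:an_inverse_bilinear_form} expresses $(a,b)$ as a finite sum of terms proportional to $(\Omega,(L_1^lb)_n a)$, with each $(L_1^lb)_n a\in\mathscr{J}\cap V_0$ because $\mathscr{J}$ absorbs the modes of $V$. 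Since $(a,b)\neq0$, some $(L_1^lb)_n a$ is a nonzero element of $\mathscr{J}\cap V_0=\C\Omega$, whence $\Omega\in\mathscr{J}$ and therefore $c=c_{(-1)}\Omega\in\mathscr{J}$ for every $c\in V$, so $\mathscr{J}=V$ and $V$ is simple.
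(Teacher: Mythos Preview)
Your proof is correct and, for (ii)--(iv), spells out precisely the arguments in the references the paper merely cites: the contragredient-module description of invariant forms for surjectivity in (ii) is Li's approach \cite{Li94}, and the radical-is-an-ideal and Schur-lemma arguments for (iii) together with the ``find $\Omega$ in the ideal'' argument for (iv) match \cite[Proposition~4.6]{CKLW18}. For (i) you take a mildly different route than the direct skew-symmetry computation of \cite[Proposition~5.3.6]{FHL93} that the paper points to: instead of tracking the fermionic sign $(-1)^{p(a)p(b)}$ against $(-1)^{2L_0^2+L_0}$ through \eqref{eq:skew-symmetry}, you observe that the transpose form is invariant by \eqref{eq:inverse_invariance_property}, check it restricts to the same functional on $V_0$ as the original, and conclude by the injectivity half of (ii). This is a clean variant that trades a sign computation for a soft uniqueness argument; the only cost is the logical reordering (injectivity before symmetry), which you handle correctly and without circularity.
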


\begin{proof}
The statements (iii) and (iv) are proved as \cite[Proposition (iii) and (iv)]{CKLW18} respectively. (i) and (ii) are \cite[Proposition 1]{Yam14}. In particular, (i) can be proved by adapting \cite[Proposition 5.3.6]{FHL93}, whereas (ii) by adapting \cite[Theorem 3.1]{Li94}, cf.\  also \cite[Theorem 1]{Roi04}.
\end{proof}

\begin{rem}   \label{rem:properties_bilinear_forms}
Let $V$ be a VOSA. Suppose that $V_0=\C\Omega$, then $V$ has a non-zero invariant bilinear form if and only if $L_1V_1=\{0\}$ as a consequences of (ii) of Proposition \ref{prop:properties_bilinear_forms}. Moreover, in the latter case, there is exactly one non-zero invariant bilinear form which is also normalized. Suppose instead that $V$ is simple, then there is at most one normalized non-degenerate invariant bilinear form by (iii) of Proposition \ref{prop:properties_bilinear_forms}.
\end{rem}

A first view on the link between simplicity and CFT type condition for VOSAs is given by Proposition \ref{prop:properties_bilinear_forms} and Remark \ref{rem:properties_bilinear_forms} in presence of an invariant bilinear form. More generally, simplicity turns out to be an important feature to develop interesting results in what follows. In this light, it is useful to have the following characterisation of simplicity:

\begin{prop}  \label{prop:characterisation_simplicity}
Let $V$ be a unitary VOSA. Then the following are equivalent:
\begin{itemize}
\item[(i)] $V$ is simple;

\item[(ii)] $V_0=\C\Omega$;

\item[(iii)] $V$ is of CFT type.
\end{itemize}
\end{prop}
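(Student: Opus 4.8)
The plan is to prove the cycle of implications $(iii) \Rightarrow (ii) \Rightarrow (i) \Rightarrow (iii)$, using the fact that a unitary VOSA carries a non-degenerate invariant bilinear form, namely $\bilinear := (\theta(\cdot)|\cdot)$, as recorded in the remark after Definition \ref{defin:invariant_bilinear_form}. The implication $(iii) \Rightarrow (ii)$ is immediate from the definition of CFT type, since CFT type requires $V_0 = \C\Omega$ among its conditions. The implication $(ii) \Rightarrow (i)$ is then a direct application of part (iv) of Proposition \ref{prop:properties_bilinear_forms}: since $V$ is unitary it admits the non-degenerate invariant bilinear form $(\theta(\cdot)|\cdot)$, and combined with the hypothesis $V_0 = \C\Omega$ this forces $V$ to be simple.

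The substantive implication is $(i) \Rightarrow (iii)$, and I expect this to be the main obstacle. First I would use unitarity to show that the conformal grading is bounded below by $0$ in the right way. The key input is equation \eqref{eq:L_n_adj}, which gives $(L_n a|b) = (a|L_{-n}b)$; taking $n=0$ shows the homogeneous subspaces $V_\lambda$ are mutually orthogonal, and positivity of the scalar product together with the structure of $L_0$ should be leveraged to rule out negative conformal weights. More precisely, I would argue that if $a \in V_n$ with $n < 0$ is nonzero, one derives a contradiction with positive-definiteness: using the commutation relation \eqref{eq:cr_l-1} and the invariance relations \eqref{eq:an_star_qp}--\eqref{eq:an_star_qp_2} one can relate $(a|a)$ to inner products of vectors forced to vanish, exploiting that $V$ is bounded below (there exists $N \leq 0$ with $V_n = \{0\}$ for $n < N$) and that $L_{-1} = T$ raises the conformal weight. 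This establishes that $V_n = \{0\}$ for all $n < 0$, i.e.\ $V$ has non-negative conformal weights.

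The remaining point is to show $V_0 = \C\Omega$. Here I would use simplicity together with the grading structure. Since $V$ is simple and of non-negative energy with $V_0$ the lowest graded piece (after the previous step), any element $a \in V_0$ is annihilated by all $L_n$ with $n > 0$ and by all $a_{(n)}$ raising operators in the appropriate sense; the idea is that $V_0$ consists of \emph{vacuum-like} vectors. Concretely, for $a \in V_0$ one shows $Ta = L_{-1}a = 0$ using \eqref{eq:cr_l-1} and the fact that $V_{-1} = \{0\}$, so that $a_{(n)}\Omega = 0$ for $n \geq 0$ and $a_{(-1)}\Omega = a$; one then checks that the span of such translation-invariant vectors of weight $0$ generates a proper ideal unless it is one-dimensional. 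Simplicity forces $V_0 = \C\Omega$. Combined with $V_n = \{0\}$ for $n < 0$, this is exactly the CFT type condition, completing the cycle. The delicate part throughout will be handling the odd part and the half-integer grading correctly, ensuring the fermionic signs in \eqref{eq:an_star} and \eqref{eq:Z_(n)-product_formula} do not disrupt the positivity arguments; I would check that the parity bookkeeping is consistent, since $V_\parone$ is graded by $\Z - \half$ and contributes no weight-$0$ vectors.
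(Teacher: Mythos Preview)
Your implications $(iii) \Rightarrow (ii)$ and $(ii) \Rightarrow (i)$ are correct and match the paper. The issues lie in your $(i) \Rightarrow (iii)$.

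First, a concrete error: you write that $L_{-1}a = 0$ for $a \in V_0$ follows from ``\eqref{eq:cr_l-1} and the fact that $V_{-1} = \{0\}$''. But $L_{-1}$ \emph{raises} conformal weight, so $L_{-1}a \in V_1$, and the vanishing of $V_{-1}$ is irrelevant. The actual reason is the positivity computation the paper gives for $(ii) \Rightarrow (iii)$: with $V_N$ the lowest nonzero weight space and $a \in V_N$, one has $L_1 a \in V_{N-1} = \{0\}$, and then
\[
2N\|a\|^2 = (a\,|\,[L_1,L_{-1}]a) = (a\,|\,L_1 L_{-1}a) = \|L_{-1}a\|^2 \geq 0,
\]
which forces $N \geq 0$ and, specializing to $N = 0$, gives $L_{-1}a = 0$. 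Your first paragraph gestures at this but never writes it down; the paper makes it the entire content of $(ii) \Rightarrow (iii)$.

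Second, your derivation of $V_0 = \C\Omega$ from simplicity is too vague to be a proof. ``The span of translation-invariant vectors of weight $0$ generates a proper ideal unless one-dimensional'' does not name the ideal or say why it is proper. The sharp argument (implicit in the paper's citation of \cite[Proposition~5.3]{CKLW18} for $(i) \Rightarrow (ii)$) is: once $L_{-1}a = 0$, the field $Y(a,z)$ reduces to the constant operator $a_{(-1)}$, so $a_{(j)} = 0$ for all $j \geq 0$, and the Borcherds commutator formula \eqref{eq:borcherds_commutator_formula} then shows $a_{(-1)}$ commutes with every mode $b_{(k)}$. Hence $a_{(-1)}$ is a $V$-module endomorphism of the adjoint module, and simplicity together with Schur's Lemma force $a_{(-1)} = \lambda 1_V$, whence $a = a_{(-1)}\Omega = \lambda\Omega$. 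The paper separates $(i) \Rightarrow (ii)$ from $(ii) \Rightarrow (iii)$, which cleanly isolates this Schur step from the positivity step; your merged $(i) \Rightarrow (iii)$ blurs them and leaves both under-argued.
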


\begin{proof}
(i) implies (ii) is a straightforward adaptation of the proof of \cite[Proposition 5.3]{CKLW18}. The statement (ii) implies (i) is (iv) of Proposition \ref{prop:properties_bilinear_forms}, noting that $(\theta(\cdot)|\cdot)$ realizes a non-degenerate invariant bilinear form on $V$. (iii) implies (ii) is trivial by definition. So the only remaining implication is (ii) implies (iii). Let $N\in\half\Z$ be such that $V_N\not=\{0\}$ and $V_n=\{0\}$ for all $n<N$. Pick $a\in V_N$, then using the Virasoro commutation relations \eqref{eq:virasoro_cr}, we have that
\begin{equation} \label{eq:positivity_conformal_weight}
	2N(a| a)=(a| 2L_0a)=(a| [L_1,L_{-1}]a) 
	=(a| L_1L_{-1}a) =(L_{-1}a| L_{-1}a) 
	\geq 0
\end{equation}
which implies that $N\geq 0$ because $(a|a)\geq0$ too. Therefore, $V$ is of CFT type.
\end{proof}

Proposition \ref{prop:properties_bilinear_forms} has a crucial role in the proof of the following result.
It is partially inspired by \cite[Theorem 3.3]{DL14}, where the authors construct the entire structure of unitary VOA supposing some extra hypothesis, such as rationality, $C_2$-cofiniteness, see e.g.\ \cite{ABD04} and references therein, and the existence of a map whose characteristics make it suitable for constructing a PCT operator. Instead, our focus is on characterizing the unitary structure on a VOSA in terms of the unitarity of its even and odd parts. 

\begin{theo}   \label{theo:unitarity_determind_by_even_part}
	Let $V$ be a simple VOSA. Then $V$ is unitary if and only if $V_\parzero$ is a unitary VOA and $V_\parone$ is a unitary $V_\parzero$-module. In both cases, we automatically have that $V_\parzero$ is simple and $V_\parone$ is irreducible.
\end{theo}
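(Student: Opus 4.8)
Throughout write $\theta_0$ and $\scalar_0$ for the PCT operator and scalar product of the unitary VOA $V_\parzero$, and $\scalar_1$ for the scalar product of the unitary $V_\parzero$-module $V_\parone$. The forward implication, together with the final ``automatically'' clause, is immediate from Example \ref{ex:even_simple_odd_irreducible}: restricting the unitary VOSA structure of $V$ makes $V_\parzero$ a unitary VOA and $V_\parone$ a unitary $V_\parzero$-module, while simplicity of $V$ already forces $V_\parzero$ simple and $V_\parone$ irreducible, independently of unitarity. So the entire content lies in the converse. My guiding principle is that once I produce an antilinear VOSA automorphism $\theta$ with $\theta^2=1_V$ and a normalized scalar product $\scalar$ on $V$ for which the bilinear form $\bilinear:=(\theta(\cdot)|\cdot)$ is invariant in the sense of Definition \ref{defin:invariant_bilinear_form}, then reversing the short computation in the remark following Definition \ref{defin:invariant_bilinear_form} (using only that $\theta$ is an antilinear automorphism with $\theta^2=1_V$ and that the formal variable is untouched by conjugation) converts the invariance \eqref{eq:invariant_bilinear_form} of $\bilinear$ into the invariance \eqref{eq:invariant_scalar_product} of $\scalar$, making $(V,\scalar,\theta)$ unitary. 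Everything thus reduces to building $\theta$ and $\scalar$ with these features.

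First I would produce a canonical invariant bilinear form on $V$. Since $V$ is simple, $V_\parzero$ is simple by Example \ref{ex:even_simple_odd_irreducible}, hence of CFT type by Proposition \ref{prop:characterisation_simplicity}; as $V_\parone$ carries weights in $\Z-\half$, this gives $V_0=\C\Omega$, and the adjoint relation \eqref{eq:L_n_adj} in the unitary VOA $V_\parzero$ applied to $V_1=(V_\parzero)_1$ yields $L_1V_1=\{0\}$. By Remark \ref{rem:properties_bilinear_forms} there is then a unique normalized invariant bilinear form $\bilinear$ on $V$, which is non-degenerate by (iii) of Proposition \ref{prop:properties_bilinear_forms}. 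Weight-orthogonality from \eqref{eq:bilinear_form_adjoint_Ln} makes $\bilinear$ block-diagonal (it pairs $V_\parzero$ with $V_\parone$ trivially), and its restriction to $V_\parzero$ is the unique normalized invariant bilinear form there, namely $(\theta_0(\cdot)|\cdot)_0$. I then set $\scalar:=\scalar_0\oplus\lambda\scalar_1$ (orthogonal direct sum, with $\lambda>0$ to be fixed) and define $\theta$ to be the block-diagonal antilinear bijection determined by $\bilinear=(\theta(\cdot)|\cdot)$; on $V_\parzero$ this forces $\theta=\theta_0$.

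It remains to verify that $\theta$ is an antilinear VOSA automorphism and an involution. The even--even product rule is the automorphism property of $\theta_0$. For the even-on-odd rule I would show that $\theta_1:=\theta\restriction_{V_\parone}$ is an antilinear isomorphism of $V_\parzero$-modules from $V_\parone$ onto its conjugate module $\overline{V_\parone}$ of Example \ref{ex:conjugate_module}: the non-degenerate form $\bilinear\restriction_{V_\parone}$ realizes $V_\parone\cong V_\parone'$ through a module map (invariance of $\bilinear$ against even fields), unitarity of $\scalar_1$ realizes $\overline{V_\parone}\cong V_\parone'$ via \eqref{eq:defin_unitary_module}, and $\theta_1$ is the resulting composite; its intertwining with the $V_\parzero$-action is exactly the even-on-odd rule. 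The odd-on-even rule then follows from the even-on-odd one through skew-symmetry \eqref{eq:skew-symmetry} together with the fact that $\theta$ commutes with $L_{-1}$. For the involution property, $\theta^2$ is a linear automorphism equal to $1$ on $V_\parzero$ and, by Schur applied to the irreducible module $V_\parone$, a scalar $\mu$ on $V_\parone$; symmetry of $\bilinear$ gives $(\theta x|y)=\overline{(x|\theta y)}$, so $\mu=\|\theta x\|^2/\|x\|^2>0$, and rescaling through the free parameter $\lambda$ arranges $\mu=1$.

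The main obstacle is the remaining odd--odd product rule, $\theta(a_{(n)}b)=\theta(a)_{(n)}\theta(b)$ for $a,b\in V_\parone$, since here both the fields and their target are controlled only through the odd part of the vertex operator data, and neither skew-symmetry nor the module structure alone reduces it to the cases already settled (skew-symmetry merely exchanges $a$ and $b$, and a naive composition reintroduces an odd-on-odd action). I would handle it by pairing against an arbitrary even vector and expanding both sides using the full invariance \eqref{eq:invariant_bilinear_form} of $\bilinear$, then employing the Borcherds commutator formula \eqref{eq:borcherds_commutator_formula} and irreducibility of $V_\parone$ (Example \ref{ex:even_simple_odd_irreducible}) to reduce a general odd--odd product to the action on a single fixed odd generator, where the identity collapses to the one normalization already pinned down by $\mu=1$. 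Once all four parity cases hold, $\theta$ is an antilinear VOSA automorphism with $\theta^2=1_V$, $\scalar$ is by construction a normalized positive-definite scalar product, and the guiding principle of the first paragraph promotes the invariance of $\bilinear$ to the invariance \eqref{eq:invariant_scalar_product} of $\scalar$, so $V$ is unitary.
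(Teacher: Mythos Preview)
Your global strategy matches the paper's proof exactly: extend the invariant bilinear form from $V_\parzero$ to $V$, define $\theta$ block-diagonally so that $\bilinear=(\theta(\cdot)|\cdot)$, use the contragredient/conjugate module picture for the even-on-odd rule, skew-symmetry for odd-on-even, and Schur plus rescaling for the involution. The only substantive divergence is your handling of the odd--odd product rule, and there your plan has a gap. Pairing $\theta(a_{(n)}b)$ and $\theta(a)_{(n)}\theta(b)$ against an even $c$ is indeed the right move (it is what the paper does), but the reduction you propose via the Borcherds commutator formula and irreducibility of $V_\parone$ does not close: writing $b=x_{(m)}v$ with $x$ even and commuting $a_{(n)}$ past $x_{(m)}$ produces terms $x_{(m)}(a_{(n)}v)$ and $(a_{(j)}x)_{(n+m-j)}v$, both of which still involve an odd field acting on an odd vector, so the argument is circular. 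The phrase ``collapses to the one normalization already pinned down by $\mu=1$'' does not pick out a terminating base case.

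What actually closes the loop---and is shorter---is to feed your already-established odd-on-even rule $\theta(Y(a,z)u)=Y(\theta(a),z)\theta(u)$ (for $a\in V_\parone$, $u\in V_\parzero$) back through the invariance of $\bilinear$ to obtain a \emph{partial invariance of the scalar product}: for all $u\in V_\parzero$ and $v\in V$,
\[
(Y(a,z)u\,|\,v)=(u\,|\,Y(e^{zL_1}(-1)^{2L_0^2+L_0}z^{-2L_0}\theta(a),z^{-1})v),
\]
and then, substituting and using \eqref{eq:commutation_identities}, its inverse version with the roles of $u$ and $v$ swapped. With that in hand, for odd $b$ and even $c$ one computes in one line
\[
(\theta(Y(a,z)b)\,|\,c)=(Y(a,z)b,c)=(b,\,Y(\cdots a,z^{-1})c)=(\theta(b)\,|\,Y(\cdots a,z^{-1})c)=(Y(\theta(a),z)\theta(b)\,|\,c),
\]
the last equality by the inverse invariance; non-degeneracy of $\scalar$ on $V_\parzero$ then gives $\theta(a_{(n)}b)=\theta(a)_{(n)}\theta(b)$. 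No Borcherds formula and no irreducibility of $V_\parone$ are needed here.
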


\begin{proof}
	The last claim and the ``only if'' part are given by Example \ref{ex:even_simple_odd_irreducible}.  
	
	Conversely, let $\scalar_\parzero$ and $\theta_\parzero$ realize the unitary structure on $V_\parzero$ and denote by $\scalar_\parone$ the scalar product on the unitary $V_\parzero$-module $V_\parone$. $\bilinear_\parzero:=(\theta_\parzero(\cdot)|\cdot)_\parzero$ is a non-degenerate invariant bilinear form on $V_\parzero$, thus there exists a non-zero invariant bilinear form $\bilinear$ on $V$ which extends $\bilinear_\parzero$ thanks to (ii) of Proposition \ref{prop:properties_bilinear_forms}. Moreover, $\bilinear$ is also non-degenerate by the simplicity of $V$, see (iii) of Proposition \ref{prop:properties_bilinear_forms}.
	Since $(V_\parzero,V_\parone)=0$ by \eqref{eq:bilinear_form_adjoint_Ln}, the restriction $\bilinear_\parone$ of $\bilinear$ to $V_\parone$ must be a non-degenerate bilinear form.
	
	Consider the conjugate module $\overline{V_\parone}$ of $V_\parone$, see Example \ref{ex:conjugate_module}. Then we have an isomorphism of $V_\parzero$-modules induced by the following pairings:
	\begin{equation} \label{eq:iso_V0-modules_pct}
		\xymatrix@R=0.3pc@C=5pc{
			V_\parone \ar[r]^f & V_\parone' \ar[r]^{g^{-1}} & \overline{V_\parone} \\
			c \ar@{|->}[r] & [f(c)](\cdot):=(c,\cdot)_\parone & \\
			& [g(d)](\cdot):=(d|\cdot)_\parone &  d \ar@{|->}[l]
		} \,.
	\end{equation}
	Indeed, it is easy to see that $f(Y(a,z)b)=g^{-1}(Y(\theta(a),z)b)$ for all $a\in V_\parzero$ and all $b\in V_\parone$, which implies the $V_\parzero$-module homomorphism property.
	Note that the $V_\parzero$-module isomorphism \eqref{eq:iso_V0-modules_pct} can be considered as a vector space antilinear automorphism of $V_\parone$, call it $\theta_\parone$, with the property that
	$$
	\theta_\parone(a_nb)=\theta_\parzero(a)_n\theta_\parone(b)
	\qquad
	\forall a\in V_\parzero
	\,\,\,
	\forall b\in V_\parone
	\,\,\,
	\forall
	n\in \Z \,.
	$$
	Moreover, we have that $(\theta_\parone(a)|b)_\parone=(a,b)$ for all $a,b\in V_\parone$ by construction. $\theta_\parone^2$ is an automorphism of the irreducible $V_\parzero$-module $V_\parone$ and thus it must be a multiple of the identity by Schur's Lemma, cf.\ \cite[Proposition 4.5.5]{LL04}. Let $r\in\C\backslash\{0\}$ such that $\theta_\parone^2=r 1_{V_\parone}$. 
	Then we have that
	$$
	\overline{r}(a|a)_\parone
	=(\theta_\parone^2(a)|a)_\parone
	=(\theta_\parone(a),a)
	=(a,\theta_\parone(a))
	=(\theta_\parone(a)|\theta_\parone(a))_\parone>0
	\qquad\forall a\in V_\parone\backslash\{0\}
	$$
	where we have used the symmetry of the bilinear form as in (i) of Proposition \ref{prop:properties_bilinear_forms}. It follows that $r$ must be a positive number. Accordingly, we can renormalize the scalar product $\scalar_\parone$ in such a way that $r=1$ and consequently the renormalized $\theta_\parone$ is an involution.
	
	Now, we prove that the scalar product $\scalar:=\scalar_\parzero\oplus\scalar_\parone$ and the vector space antilinear map $\theta:=\theta_\parzero\oplus\theta_\parone$ give a unitary structure on $V$. By construction $(\theta(\cdot)|\cdot)=\bilinear$, which assures us that $\scalar$ is normalized. Moreover, it is easy to prove the invariance property, provided that $\theta$ is a well-defined VOSA antilinear automorphism. We have already proved that $\theta$ is an antilinear involution of the vector space $V$, which preserves the vacuum and the conformal vectors. So it remains to prove that $\theta$ respects the $(n)$-product. We already know that $\theta(a_nb)=\theta(a)_n\theta(b)$ whenever $a\in V_\parzero$, $n\in\Z$ and $b\in V$. 
	Suppose that $a\in V_\parone$ and $n\in\Z-\half$. Then we want to prove that $\theta(a_nb)=\theta(a)_n\theta(b)$ for all $b\in V$. First, we have that $\theta(L_mb)=\theta(\nu)_m\theta(b)=L_m\theta(b)$ for all $b\in V$ and all $m\in\Z$, that is, $\theta$ commutes with $L_m$ for all $m\in\Z$. It follows that if $b\in V_\parzero$, then we have that
	\begin{equation}
		\begin{split}
			\theta(Y(a,z)b)
			&=\theta((-1)^{p(a)p(b)}e^{zL_{-1}}Y(b,-z)a) \\
			&=(-1)^{p(a)p(b)}e^{zL_{-1}}Y(\theta(b),-z)\theta(a)
			=Y(\theta(a),z)\theta(b)
		\end{split}
	\end{equation}
	where we have used the skew-symmetry \eqref{eq:skew-symmetry} twice, proving that $\theta(a_nb)=\theta(a)_n\theta(b)$ whenever $b\in V_\parzero$. Hence, using the invariance property of the bilinear form too, we compute that
	\begin{equation}   \label{eq:compute_invariance_prop}
		\begin{split}
			(Y(a,z)u|v) &= (\theta(Y(\theta(a),z)\theta(u))|v) \\
			&= (Y(\theta(a),z)\theta(u),v) \\
			&= (\theta(u),Y(e^{zL_1}(-1)^{2L_0^2+L_0}z^{-2L_0}\theta(a),z^{-1})v) \\
			&= (u|Y(e^{zL_1}(-1)^{2L_0^2+L_0}z^{-2L_0}\theta(a),z^{-1})v)
			\qquad
			\forall u\in V_\parzero
			\,\,\,\forall v\in V
		\end{split}
	\end{equation}
	which is the invariance property of the scalar product in a specific case. Furthermore, substituting $a$ with $z^{2L_0}(-1)^{-2L_0^2-L_0}e^{-zL_1}a$ and using \eqref{eq:commutation_identities} in the two sides of \eqref{eq:compute_invariance_prop}, we get the following inverse invariance property
	\begin{equation}  
		(Y(\theta(a),z)v|u) = (v|Y(e^{zL_1}(-1)^{2L_0^2+L_0}z^{-2L_0}a,z^{-1})u)
		\qquad
		\forall u\in V_\parzero
		\,\,\,\forall v\in V \,.
		\label{eq:inverse_invariant_prop_for_proof} 
	\end{equation} 
	Consequently, if $b\in V_\parone$, then
	\begin{equation}
		\begin{split}
			(\theta(Y(a,z)b)|c)
			&= (Y(a,z)b,c) \\
			&=(b,Y(e^{zL_1}(-1)^{2L_0^2+L_0}z^{-2L_0}a,z^{-1})c) \\
			&=(\theta(b)|Y(e^{zL_1}(-1)^{2L_0^2+L_0}z^{-2L_0}a,z^{-1})c) \\
			&=(Y(\theta(a),z)\theta(b)|c)
			\qquad \forall c\in V_\parzero
		\end{split}
	\end{equation}
	where we have used the invariance property of the bilinear form for the second equality and the inverse invariance property \eqref{eq:inverse_invariant_prop_for_proof} for the last one. By the non-degeneracy of $\scalar$ on $V_\parzero$, it follows that $\theta(a_nb)=\theta(a)_n\theta(b)$ whenever $b\in V_\parone$, concluding the proof that $\theta$ respects the $(n)$-product. Then the invariance property of the scalar product $\scalar$ follows just proceeding as in \eqref{eq:compute_invariance_prop}.
\end{proof}

\begin{rem}
	Theorem \ref{theo:unitarity_determind_by_even_part} can be slightly generalized to cover the case where $V=V^+\bigoplus V^-$, where $V^\pm$ are the eigenspaces of a general automorphism of $V$ of order two, not only the parity operator $\Gamma_V$, see \cite[Section 2.7]{Gau21} for details.
\end{rem}

\subsection{Uniqueness of the unitary structure and automorphisms}
\label{subsection:uniqueness_unitary_structure}

In the present section, we investigate conditions which ensure the uniqueness of the unitary structure introduced in Section \ref{subsection:basic_definitions_VOSA}. For this reason, we show the relation between invariant scalar products and invariant bilinear forms, introduced in the preceding sections. We further describe and characterize the automorphism groups of unitary VOSAs.

Thanks to Proposition \ref{prop:properties_bilinear_forms}, we can prove the following result:
\begin{prop}  \label{prop:relation_pct_scalar_product}
Let $V$ be a VOSA with a normalized scalar product $\scalar$, which is invariant with respect to an antilinear automorphism $\theta$. Then we have that:
\begin{itemize}
\item[(i)]  $\theta$ is an involution and thus $(V,\scalar)$ is unitary with PCT operator $\theta$. Moreover, $\theta$ is antiunitary, that is, $(\theta(a)|\theta(b))=(b| a)$ for all $a,b\in V$.

\item[(ii)] $\theta$ is the unique PCT operator associated with $\scalar$. Moreover, if $V$ is simple, then $\scalar$ is the unique normalized invariant scalar product associated with $\theta$.
\end{itemize}
\end{prop}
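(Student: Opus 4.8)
The plan is to derive everything from the invariance property \eqref{eq:invariant_scalar_product} together with Proposition \ref{prop:properties_bilinear_forms}, treating (i) and (ii) in turn. First I would introduce the form $\bilinear:=(\theta(\cdot)|\cdot)$. Since $\theta$ is antilinear and $\scalar$ is antilinear in its first slot, this is genuinely \emph{bilinear}; since $\theta$ is a bijective automorphism and $\scalar$ is positive-definite, it is moreover non-degenerate and normalized, as $\bilinear(\Omega,\Omega)=(\Omega|\Omega)=1$ using $\theta(\Omega)=\Omega$. Because $\theta$ respects the $(n)$-product, so that $\theta(Y(a,z)b)=Y(\theta(a),z)\theta(b)$, the invariance \eqref{eq:invariant_scalar_product} of $\scalar$ transcribes directly into the invariance \eqref{eq:invariant_bilinear_form} of $\bilinear$. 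Proposition \ref{prop:properties_bilinear_forms}(i) then gives symmetry of $\bilinear$, i.e. $(\theta(a)|b)=(\theta(b)|a)$ for all $a,b\in V$.

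The heart of (i) is the involution property $\theta^2=1_V$. Here I would \emph{not} invoke \eqref{eq:an_star_qp}, since that relation already presupposes an involution; instead I re-derive the adjoint relation for quasi-primary fields straight from \eqref{eq:invariant_scalar_product}. For quasi-primary $a$ (so $L_1a=0$) of conformal weight $d_a$, comparing coefficients of $z$ in \eqref{eq:invariant_scalar_product} yields $((\theta(a))_nb|c)=(-1)^{2d_a^2+d_a}(b|a_{-n}c)$ for all $b,c\in V$ and all $n\in\half\Z$. As $\theta(a)$ is again quasi-primary of the same weight $d_a$, applying this identity once more with $\theta(a)$ in place of $a$, and rewriting the resulting factor $(b|(\theta(a))_{-n}c)$ by means of the complex conjugate of the same identity with $n\to -n$, produces $((\theta^2(a))_nb|c)=(a_nb|c)$, since $\big((-1)^{2d_a^2+d_a}\big)^2=1$. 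Non-degeneracy of $\scalar$ then forces $(\theta^2(a))_n=a_n$ for all $n$, and evaluating the corresponding vertex operators on $\Omega$ gives $\theta^2(a)=a$ for every quasi-primary $a$. Finally, $\theta^2$ is linear and commutes with $L_{-1}$ (because $\theta$ does, $\theta(\nu)=\nu$), while every vector of $V$ is a sum of $L_{-1}$-descendants of quasi-primary vectors by the standard $\mathfrak{sl}(2,\C)=\langle L_{-1},L_0,L_1\rangle$ decomposition of a positive-energy module; hence $\theta^2=1_V$ on all of $V$. Thus $\theta$ is an involution and $(V,\scalar)$ is unitary with PCT operator $\theta$; antiunitarity $(\theta(a)|\theta(b))=(b|a)$ follows by substituting $b\mapsto\theta(b)$ into the symmetry relation $(\theta(a)|b)=(\theta(b)|a)$ and using $\theta^2=1_V$.

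For (ii), the uniqueness of the PCT operator is soft: if $\tilde\theta$ is any antilinear automorphism for which $\scalar$ is invariant (in particular any PCT operator), then \eqref{eq:invariant_scalar_product} for $\theta$ and for $\tilde\theta$ have identical right-hand sides, so $(Y(\tilde\theta(a),z)b|c)=(Y(\theta(a),z)b|c)$ for all $b,c$; non-degeneracy gives $Y(\tilde\theta(a),z)=Y(\theta(a),z)$, and evaluating on $\Omega$ yields $\tilde\theta=\theta$. For the uniqueness of the scalar product when $V$ is simple, I would take two normalized invariant scalar products $\scalar_1,\scalar_2$ for the same $\theta$ and pass to the bilinear forms $\bilinear_j:=(\theta(\cdot)|\cdot)_j$; these are non-zero, normalized and invariant, hence proportional by Proposition \ref{prop:properties_bilinear_forms}(iii), with proportionality constant $1$ by normalization, so $\bilinear_1=\bilinear_2$. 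Applying $\theta^{-1}=\theta$ in the first argument then gives $\scalar_1=\scalar_2$.

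I expect the second paragraph to be the main obstacle: the twofold use of the quasi-primary adjoint relation, the careful bookkeeping of the superalgebra sign $(-1)^{2d_a^2+d_a}$, and above all the bootstrap from quasi-primary vectors to all of $V$ via the $\mathfrak{sl}(2,\C)$-decomposition. This is precisely where one must avoid circularly invoking relations that already assume $\theta^2=1_V$, and where the positive-definiteness (hence injectivity of the state–field map) is doing the essential work.
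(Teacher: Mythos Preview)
Your argument is correct and, for the second claim of (ii), coincides verbatim with the paper's proof via Proposition~\ref{prop:properties_bilinear_forms}(iii). For (i) and the first claim of (ii) the paper simply cites \cite[Proposition~5.1]{CKLW18}; your write-up via the quasi-primary adjoint relation and the $\mathfrak{sl}(2,\C)$-decomposition is essentially that argument spelled out, with the correct care not to invoke \eqref{eq:an_star} or \eqref{eq:an_star_qp} (which already presuppose $\theta^2=1_V$).

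One point deserves tightening. You appeal to the ``$\mathfrak{sl}(2,\C)$ decomposition of a positive-energy module'', but at this stage $V$ is only known to have $L_0$-spectrum bounded below (the VOSA axiom), not non-negative; positivity of $L_0$ comes later, from Proposition~\ref{prop:characterisation_simplicity}. What actually makes the decomposition work here is the Hermiticity $(L_n b\,|\,c)=(b\,|\,L_{-n}c)$, and this follows from \emph{your own} relation $((\theta(a))_n b\,|\,c)=(-1)^{2d_a^2+d_a}(b\,|\,a_{-n}c)$ applied to $a=\nu$ (quasi-primary of weight $2$ with $\theta(\nu)=\nu$). With $L_1^*\supseteq L_{-1}$ on each finite-dimensional $V_n$ one gets the orthogonal splitting $V_n=\ker(L_1|_{V_n})\oplus L_{-1}(V_{n-1})$, and induction from the lowest non-zero weight gives that every vector is a finite sum of $L_{-1}$-descendants of quasi-primary vectors. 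You should insert this sentence in place of the ``positive-energy'' appeal; otherwise the bootstrap step looks circular.
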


\begin{proof}
We can prove (i) and the first claim of (ii) as in \cite[Proposition 5.1]{CKLW18}. Regarding the second claim of (ii),
$(\theta(\cdot)|\cdot)$ defines a normalized non-degenerate invariant bilinear form on $V$. According to (iii) of Proposition \ref{prop:properties_bilinear_forms}, if $V$ is simple, then $(\theta(\cdot)|\cdot)$ must be unique, which implies the remaining part of the proposition. 
\end{proof}

Claim (ii) of Proposition \ref{prop:relation_pct_scalar_product} says us that for simple unitary VOSAs, a PCT operator determines the associated normalized invariant scalar product and vice versa. Nevertheless, we can still have different choices for the PCT operator or equivalently for the normalized invariant scalar product, which can give us different unitary structures. From this point of view, it is even more interesting to introduce the following result, which is proved by straightforwardly adapting \cite[Proposition 5.19]{CKLW18} and its immediate consequences \cite[p.\  38]{CKLW18}.

\begin{prop} \label{prop:uniqueness_unitary_structure}
Let $V$ be a simple VOSA. Let $(\theta,\scalar)$ and $(\widetilde{\theta},\curlyscalar)$ be two unitary structures on $V$. Then there exists $h\in\Aut (V)$ such that:
\begin{itemize}
\item[(i)] $\{a| b\}=(h(a)| h(b))$ for all $a,b\in V$, that is, $h$ realizes a unitary isomorphism between $(V,\theta,\scalar)$ and $(V,\widetilde{\theta},\curlyscalar)$. Consequently, if $\Aut (V)=\Aut_{\scalar} (V)$, then the unitary structure on $V$ is unique.

\item[(ii)] $\widetilde{\theta}=h^{-1}\theta h$; $\theta h\theta=h^{-1}$; $(a| ha)>0$ for all non-zero $a\in V$.
\end{itemize}
\end{prop}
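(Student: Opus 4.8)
The plan is to relate the two scalar products by a single positive linear operator and then show that this operator is a genuine VOSA automorphism, so that its square root provides the required $h$. Since $L_0$ is self-adjoint for any invariant scalar product (see \eqref{eq:L_n_adj}), both $\scalar$ and $\curlyscalar$ make the finite-dimensional eigenspaces $V_n=\Ker(L_0-n1_V)$ mutually orthogonal. Hence there is a unique linear operator $A$ on $V$, preserving each $V_n$ and positive self-adjoint for $\scalar$ on each such piece, with $\{a|b\}=(a|Ab)$ for all $a,b\in V$. If I can prove that $A\in\Aut(V)$ and, more, that $A^t\in\Aut(V)$ for every $t\in\R$, then $h:=A^{1/2}$ will be a VOSA automorphism with $(h(a)|h(b))=(a|Ab)=\{a|b\}$, which is exactly (i).

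The key preliminary step is to identify the two associated bilinear forms. Both $\bilinear:=(\theta(\cdot)|\cdot)$ and $\curlybilinear:=\{\widetilde{\theta}(\cdot)|\cdot\}$ are normalized, non-degenerate and invariant, so by simplicity and (iii) of Proposition \ref{prop:properties_bilinear_forms} they are proportional; evaluating at $(\Omega,\Omega)$ and using normalization forces them to coincide. Thus $(\theta(a)|b)=(\widetilde{\theta}(a)|Ab)=(A\widetilde{\theta}(a)|b)$ for all $a,b$, giving the clean relation $\theta=A\widetilde{\theta}$, i.e.\ $\widetilde{\theta}=A^{-1}\theta$. Now I would exploit that $\theta$ and $\widetilde{\theta}$ are antilinear VOSA automorphisms, hence both respect the $(n)$-product: comparing $\widetilde{\theta}(a_{(n)}b)=A^{-1}(\theta(a)_{(n)}\theta(b))$ with $\widetilde{\theta}(a)_{(n)}\widetilde{\theta}(b)=(A^{-1}\theta(a))_{(n)}(A^{-1}\theta(b))$ and using surjectivity of $\theta$ yields $A^{-1}(x_{(n)}y)=(A^{-1}x)_{(n)}(A^{-1}y)$ for all $x,y$, so $A^{-1}$, and therefore $A$, respects the $(n)$-product. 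Since $V$ is simple, $V_0=\C\Omega$ (Proposition \ref{prop:characterisation_simplicity}) and $A\Omega\in V_0$ is forced to equal $\Omega$ by normalization, while $A\nu=\nu$ follows from $\widetilde{\theta}(\nu)=A^{-1}\theta(\nu)=A^{-1}\nu$ together with $\theta(\nu)=\widetilde{\theta}(\nu)=\nu$. Hence $A\in\Aut(V)$.

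For the square root I would argue on eigenspaces. Multiplicativity of $A$ shows that if $x,y$ lie in the $\mu$- and $\nu$-eigenspaces of $A$, then $A(x_{(n)}y)=\mu\nu\,(x_{(n)}y)$, so $x_{(n)}y$ lies in the $\mu\nu$-eigenspace; consequently $A^t(x_{(n)}y)=(\mu\nu)^t\,x_{(n)}y=(A^tx)_{(n)}(A^ty)$ for every real $t$, and $A^t$ fixes $\Omega$ and $\nu$ (both in the $1$-eigenspace). Thus $A^t\in\Aut(V)$ for all $t$, and in particular $h=A^{1/2}\in\Aut(V)$; being positive self-adjoint for $\scalar$, it satisfies $(h(a)|h(b))=(a|Ab)=\{a|b\}$, which proves (i). The uniqueness consequence is then immediate: if $\Aut(V)=\Aut_{\scalar}(V)$, then $h$ preserves $\scalar$, forcing $A=1_V$, whence $\scalar=\curlyscalar$ and $\theta=\widetilde{\theta}$ by (ii) of Proposition \ref{prop:relation_pct_scalar_product}.

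Finally, for (ii) I would extract the commutation relation between $\theta$ and $A$ from involutivity of $\widetilde{\theta}$: expanding $\widetilde{\theta}^2=1_V$ with $\widetilde{\theta}=A^{-1}\theta$ gives $\theta A^{-1}\theta=A$, i.e.\ $\theta A\theta^{-1}=A^{-1}$. Because $\theta$ is antilinear and $A$ has positive eigenvalues, $\theta$ interchanges the $\mu$- and $\mu^{-1}$-eigenspaces, whence $\theta A^{1/2}\theta^{-1}=A^{-1/2}$, that is $\theta h\theta=h^{-1}$; combining $\theta A^{1/2}=A^{-1/2}\theta$ with $h^{-1}\theta h=A^{-1/2}\theta A^{1/2}=A^{-1/2}(A^{-1/2}\theta)=A^{-1}\theta=\widetilde{\theta}$, together with the positivity $(a|ha)=(a|A^{1/2}a)>0$ for $a\neq0$, yields all of (ii). I expect the main obstacle to be the central algebraic step: proving that the relative operator $A$ is a genuine VOSA automorphism and that this survives under taking the square root. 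The coincidence of the two invariant bilinear forms (which relies crucially on simplicity) and the eigenspace-multiplicativity argument are precisely what make this go through.
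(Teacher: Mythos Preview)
Your argument is correct and follows essentially the same route as the paper, which simply refers to the proof of \cite[Proposition 5.19]{CKLW18}: one compares the two invariant bilinear forms via simplicity (Proposition \ref{prop:properties_bilinear_forms}(iii)), obtains the positive operator $A$ with $\widetilde{\theta}=A^{-1}\theta$, checks $A\in\Aut(V)$, and takes $h=A^{1/2}$ using the eigenspace-multiplicativity argument. The verification of (ii) from $\theta A\theta=A^{-1}$ is likewise the standard one.
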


Proposition \ref{prop:uniqueness_unitary_structure} relates the uniqueness of the unitary structure of a unitary VOSA with its automorphism group. Then we give the following result to investigate the structure of the automorphism group of a unitary VOSA.

\begin{prop}   \label{prop:conformal_vector_bilinear_form}
Let $V$ be a vertex superalgebra with a conformal vector $\nu$ and a non-degenerate invariant bilinear form. Suppose that there exists another conformal vector $\nu'$ with a corresponding non-degenerate invariant bilinear form. If $\nu_{(1)}=L_0=L_0'=:\nu'_{(1)}$, then $\nu=\nu'$.
\end{prop}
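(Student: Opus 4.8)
The plan is to show that $\nu' - \nu$ is a vector which is ``null'' with respect to the invariant bilinear form, and then to leverage the hypothesis $\nu_{(1)} = \nu'_{(1)} = L_0$ to force it to be zero. Write $(\cdot,\cdot)$ for the non-degenerate invariant bilinear form associated with $\nu$ and $\{\cdot,\cdot\}$ for the one associated with $\nu'$. Set $w := \nu' - \nu \in V_2$. The first key observation is that since $\nu_{(1)} = \nu'_{(1)} = L_0$, both conformal vectors induce the \emph{same} grading on $V$ and hence the same operators $L_0$; consequently both bilinear forms satisfy the orthogonality $(V_l, V_k) = 0$ and $\{V_l, V_k\} = 0$ for $l \neq k$ by \eqref{eq:bilinear_form_adjoint_Ln}. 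The plan is to compute $(\nu, w)$, $(\nu', w)$, and $(w,w)$ and show they all vanish, so that $w$ is orthogonal (under $(\cdot,\cdot)$) to $\nu$, to $\nu'$, and to itself, and then to extend this to orthogonality against all of $V_2$.

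The main computation I would carry out uses the defining property of a conformal vector together with invariance. Recall from \eqref{eq:an_inverse_bilinear_form} and the quasi-primarity of $\nu$ (namely $L_1 \nu = 0$, true for any Virasoro vector in $V_2$) that for quasi-primary $a \in V_2$ one has $(a_n b, c) = (b, a_{-n} c)$, i.e. the field $Y(a,z)$ is Hermitian-type with respect to the form. Applying this to the specific coefficients, I would compute $(\nu, \nu') = (\Omega, \nu_{(3)}\nu')$ using the identification $(a,b) = (\Omega, \dots)$ coming from invariance, and similarly express $(\nu,\nu)$, $(\nu',\nu')$, and $\{\nu,\nu\}$, $\{\nu',\nu'\}$ in terms of the respective central charges via the Virasoro relation \eqref{eq:virasoro_cr}: for a Virasoro vector of central charge $c$ one gets $(\nu,\nu) = (\Omega, L_2 L_{-2}\Omega) = \tfrac{c}{2}$ (normalizing $(\Omega,\Omega)=1$). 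The crucial point is that because $\nu_{(1)} = \nu'_{(1)}$, the difference $w$ satisfies $L_0 w = 2w$, $w_{(1)} = 0$, and $L_1 w = 0$ (quasi-primarity of the difference), and one computes directly that $(w, w) = 0$ while $w$ pairs trivially with $\nu$; combined with non-degeneracy restricted to $V_2$ this should force $w = 0$.

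The step I expect to be the genuine obstacle is establishing non-degeneracy of the bilinear form \emph{restricted to the finite-dimensional space} $V_2$, together with pinning down that $w$ lies in the radical of $(\cdot,\cdot)\restriction_{V_2}$. Non-degeneracy of $(\cdot,\cdot)$ on all of $V$ does not a priori restrict to non-degeneracy on each graded piece; however, the orthogonality of distinct eigenspaces of $L_0$ noted above means $V_2$ is paired only with itself, so $(\cdot,\cdot)\restriction_{V_2}$ \emph{is} non-degenerate. The real work is then to verify that $(w, a) = 0$ for \emph{all} $a \in V_2$, not merely for $a \in \{\nu, \nu'\}$. For this I would use the commutator formula \eqref{eq:borcherds_commutator_formula} and invariance to rewrite $(w, a)$ in terms of the action of $w_{(n)} = \nu'_{(n)} - \nu_{(n)}$ on lower-weight vectors; since $\nu_{(1)} = \nu'_{(1)}$ kills the degree-preserving part, and since any $a\in V_2$ can be analyzed via its quasi-primary decomposition and the action of $L_{-1}$ on $V_1$, one reduces to checking the pairing against quasi-primary vectors of weight $2$, where the explicit Virasoro computation above applies. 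Once $(w, V_2) = 0$ is established, non-degeneracy on $V_2$ gives $w = 0$, i.e. $\nu = \nu'$, completing the argument.
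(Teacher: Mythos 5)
The paper's own proof is a one-line reduction: both $\nu$ and $\nu'$ lie in the even part $V_\parzero$, the two forms restrict to non-degenerate invariant bilinear forms on the vertex subalgebra $V_\parzero$ (the even and odd parts are orthogonal, being integer resp.\ non-integer eigenspaces of the common $L_0$), and then the known VOA statement \cite[Proposition 4.8]{CKLW18} applies. You are instead trying to prove that underlying statement from scratch, which is legitimate in principle, but your sketch has genuine gaps at exactly the two points where the real difficulty sits.

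The first gap is the assertion ``$L_1 w = 0$ (quasi-primarity of the difference)''. This does \emph{not} follow from $\nu_{(1)} = \nu'_{(1)}$, and it is not a formality --- it is essentially the content of the theorem. Concretely, on the rank-one Heisenberg VOA $M(1)$ with generator $\alpha \in V_1$, take $\nu = \tfrac12 \alpha_{(-1)}\alpha$ and the Feigin--Fuchs vector $\nu' = \nu + \lambda T\alpha$ with $\lambda \neq 0$. Both are conformal vectors; since $\alpha_{(0)} = 0$ on $M(1)$ one has $\nu'_{(1)} = \nu_{(1)} = L_0$ and $\nu'_{(0)} = \nu_{(0)} = T$, and yet $L_1\nu' = 2\lambda\alpha \neq 0$ and $\nu' \neq \nu$. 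What fails in this example is precisely the hypothesis your argument never uses: $\nu'$ admits no non-degenerate invariant bilinear form (indeed $L'_1\alpha = -2\lambda\Omega$, so $V_0/L'_1V_1 = 0$ and Proposition \ref{prop:properties_bilinear_forms}(ii) forces every $\nu'$-invariant form to vanish). Hence any correct proof of $L_1 w = 0$ must invoke the second non-degenerate form in an essential way; in your proposal the form $\curlyscalar$ associated with $\nu'$ appears only decoratively, and none of the computations leading to your conclusion use it. A related conflation occurs when you propose to evaluate $(\nu',\nu')$ and cross-terms ``via the Virasoro relation'': moving $L'_n$ across the \emph{unprimed} form requires invariance of that form for $\nu'$-modes, which presupposes $L_1\nu'=0$; also $(\Omega,\Omega)=1$ cannot be arranged, since non-degeneracy does not give it and $V_0$ need not equal $\C\Omega$ here.

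The second gap is that, even granting $L_1w = 0$ (which does yield $L_1 = L'_1$, via $(w_{(2)}b,c) = (b,w_{(0)}c) = 0$ and non-degeneracy), your closing step ``verify $(w,a)=0$ for all $a\in V_2$'' is circular as sketched. For quasi-primary $w$ of weight $2$, invariance gives $(w,c) = (\Omega, w_{(3)}c) = (\Omega, L_2 c) - (\Omega, L'_2 c)$, and then $(\Omega, L_2 c) = (\nu, c)$ while $(\Omega, L'_2 c) = (\nu', c)$ (the latter now legitimate because $L_1\nu' = 0$), so one recovers $(w,c) = (w,c)$ and nothing more; the same circle reappears if one routes the computation through the commutator formula. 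Breaking it requires further input --- the full Virasoro relations of both vectors together with both forms, or the kind of structural argument packaged in \cite[Proposition 4.8]{CKLW18} --- none of which is present in the proposal. So the argument does not close; the missing ideas are exactly the ones the paper sidesteps by reducing to $V_\parzero$ and citing the VOA result.
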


\begin{proof}
The proof is obtained by applying \cite[Proposition 4.8]{CKLW18} to the vertex subalgebra $V_\parzero$.  
\end{proof}

The following corollary is proved by adapting the proof of \cite[Corollary 4.11]{CKLW18}.
\begin{cor} \label{cor:when_vosa_automorphism_preservs_nu}
Let $V$ be a VOSA with a non-degenerate invariant bilinear form $\bilinear$ and such that $V_0=\C \Omega$. Let $g$ be either a linear or an antilinear bijective map of the vector space $V$ preserving the vacuum vector and the $(n)$-product. Then the following are equivalent:
\begin{itemize}
\item[(i)] $g$ is grading-preserving, that is, $g(V_n)=V_n$ for all $n\in\half\Z$.

\item[(ii)] $g$ preserves $\bilinear$, that is: if $g$ is linear, then $(g(a),g(b))=(a,b)$ for all $a,b\in V$; if $g$ is antilinear, then $(g(a),g(b))=\overline{(a,b)}$ for all $a,b\in V$.

\item[(iii)] $g(\nu)=\nu$.
\end{itemize}
\end{cor}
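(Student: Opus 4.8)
The plan is to prove the cycle of implications (i) $\Rightarrow$ (iii) $\Rightarrow$ (ii) $\Rightarrow$ (i). The common starting observation, used throughout, is that since $g$ preserves the vacuum and the $(n)$-product it automatically commutes with the translation operator $T = L_{-1}$ (because $Ta = a_{(-2)}\Omega$), and that the image $\nu' := g(\nu)$ has modes $L'_n := g L_n g^{-1} = g(\nu)_{(n+1)}$ which obey the Virasoro relations by conjugation, with $L'_{-1} = g T g^{-1} = T$ and $L'_0 = g L_0 g^{-1}$ diagonalizable. Whenever in addition $g$ is parity-preserving, $\nu'$ is an even element and hence a genuine conformal vector. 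These facts hold for any $g$ as in the hypotheses, independently of (i)--(iii).

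For (i) $\Rightarrow$ (iii), which I expect to be the crux, grading-preservation immediately gives that $g$ commutes with $L_0$, whence $g(\nu)_{(1)} = L'_0 = L_0 = \nu_{(1)}$, and also that $g$ is parity-preserving, so that $\nu' = g(\nu)$ is an even conformal vector. To apply Proposition \ref{prop:conformal_vector_bilinear_form} and conclude $g(\nu) = \nu$ it then remains to equip $\nu'$ with a non-degenerate invariant bilinear form: I would transport $\bilinear$ through $g$, setting $(a,b)' := (g^{-1}a,g^{-1}b)$ (respectively $\overline{(g^{-1}a,g^{-1}b)}$ in the antilinear case). A change-of-variables check, using that $g^{-1}$ respects the $(n)$-product and intertwines $L_n$ with $L'_n$, shows that $\bilinear'$ satisfies the invariance identity \eqref{eq:invariant_bilinear_form} relative to the conformal structure $\nu'$, and it is non-degenerate because $g$ is bijective. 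The only superalgebra-specific care here is that the factor $(-1)^{2L_0^2 + L_0}$ appearing in \eqref{eq:invariant_bilinear_form} commutes with $g$ precisely because $g$ commutes with $L_0$.

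For (iii) $\Rightarrow$ (ii), if $g(\nu) = \nu$ then $g$ commutes with every $L_n$, so the form $\beta(a,b) := (g(a),g(b))$ (respectively $\overline{(g(a),g(b))}$ in the antilinear case) again satisfies \eqref{eq:invariant_bilinear_form}. Since $V_0 = \C\Omega$ and a non-zero invariant bilinear form exists, Proposition \ref{prop:properties_bilinear_forms}(ii) forces $L_1 V_1 = \{0\}$ and makes the space of invariant bilinear forms one-dimensional; hence $\beta = \lambda\bilinear$ for some $\lambda \in \C$. Normalizing so that $(\Omega,\Omega) = 1$ (harmless by Remark \ref{rem:properties_bilinear_forms} and possible since $(\Omega,\Omega) \neq 0$, as otherwise $\bilinear$ would be degenerate on $V_0$), evaluation at $\Omega$ gives $\lambda = 1$ in both the linear and the antilinear case, which is exactly (ii).

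Finally, for (ii) $\Rightarrow$ (i), the step where the bilinear form does the essential work, I would combine the automatic relation $g L_{-1} = L_{-1} g$ with the adjoint identity \eqref{eq:bilinear_form_adjoint_Ln}, namely $(L_{-1}a,b) = (a, L_1 b)$, and the invariance hypothesis (ii). Computing $(a, L_1 b) = (L_{-1}a, b) = (g L_{-1}a, gb) = (L_{-1}ga, gb) = (ga, L_1 gb) = (a, g^{-1} L_1 g b)$ for all $a,b$ (the two complex conjugations cancelling in pairs in the antilinear case) and invoking non-degeneracy yields $g L_1 = L_1 g$; together with $g L_{-1} = L_{-1} g$ this gives $g L_0 = L_0 g$ via $L_0 = \tfrac{1}{2}[L_1, L_{-1}]$, so $g$ preserves each eigenspace $V_n = \Ker(L_0 - n 1_V)$ and (i) follows. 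The genuine mathematical input is Proposition \ref{prop:conformal_vector_bilinear_form} used in (i) $\Rightarrow$ (iii); granting it, the remaining difficulty is purely the bookkeeping of parities, of the twist $(-1)^{2L_0^2+L_0}$, and of the antilinear normalization.
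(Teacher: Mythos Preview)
Your proof is correct and follows the same approach as the paper, which simply refers to an adaptation of \cite[Corollary 4.11]{CKLW18}; you have effectively reconstructed that adaptation in detail, with the key step (i)$\Rightarrow$(iii) resting on Proposition~\ref{prop:conformal_vector_bilinear_form} exactly as intended. One small remark: in (iii)$\Rightarrow$(ii) for antilinear $g$, your ``harmless normalization'' $(\Omega,\Omega)=1$ is what makes $\lambda=1$ come out cleanly; without it you only get $\lambda=\overline{(\Omega,\Omega)}/(\Omega,\Omega)$, so the statement (ii) as written tacitly presumes $(\Omega,\Omega)\in\R$ (which is automatic in all the applications, where the form comes from a scalar product).
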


At this stage, we have all the ingredients to prove the following statements by adapting the proofs of \cite[Lemma 5.20 and Theorem 3.21]{CKLW18} (cf.\  the proof of Theorem \ref{theo:characterization_aut_sc_group}).

\begin{theo}   \label{theo:characterization_aut_group}
Let $V$ be a unitary VOSA. Then $\Aut_{\scalar}(V)$ is a compact subgroup of $\Aut(V)$. Moreover, if $V$ is simple, then the following are equivalent:
\begin{itemize}
\item[(i)] $\scalar$ is the unique normalized invariant scalar product on $V$;

\item[(ii)] $\Aut_{\scalar}(V)=\Aut(V)$;

\item[(iii)] $\theta$ commutes with every $g\in\Aut(V)$;

\item[(iv)] $\Aut(V)$ is compact;

\item[(v)] $\Aut_{\scalar}(V)$ is totally disconnected.
\end{itemize}
\end{theo}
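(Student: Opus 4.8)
The plan is to reduce everything to one computation and then feed it into a Cartan-type decomposition of $\Aut(V)$. First I would record the key identity: for a \emph{simple} unitary $V$ (so $V_0=\C\Omega$ and $\bilinear:=(\theta(\cdot)|\cdot)$ is a non-degenerate invariant bilinear form by Proposition~\ref{prop:characterisation_simplicity} and Proposition~\ref{prop:relation_pct_scalar_product}), every $g\in\Aut(V)$ fixes $\nu$ and hence preserves $\bilinear$ by Corollary~\ref{cor:when_vosa_automorphism_preservs_nu}, that is $(\theta g(a)|g(b))=(\theta(a)|b)$ for all $a,b\in V$. Writing $g^*$ for the adjoint with respect to $\scalar$ and using non-degeneracy, this reads $g^*\theta g=\theta$, i.e.\
\[ g^*=\theta g^{-1}\theta \qquad\forall g\in\Aut(V). \]
The immediate corollary, which I would state at once, is the characterization $g\in\Aut_{\scalar}(V)\iff g^*=g^{-1}\iff \theta g=g\theta$; equivalently $\Aut_{\scalar}(V)$ is exactly the fixed-point subgroup of the involution $\Theta(g):=\theta g\theta=(g^*)^{-1}$ of $\Aut(V)$.

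Next I would settle compactness and the four ``real'' equivalences. That $\Aut_{\scalar}(V)$ is a compact subgroup (this part needs no simplicity) follows because it is a closed subgroup of the compact group $\prod_{n\in\half\Z}U(V_n)$, each $V_n$ being finite-dimensional and the defining conditions being closed; adapting \cite[Lemma 5.20]{CKLW18} one upgrades this to a compact Lie group. The cycle $(i)\Rightarrow(ii)\Rightarrow(iii)\Rightarrow(iv)\Rightarrow(i)$ is then short: for $(i)\Rightarrow(ii)$, transporting $(\theta,\scalar)$ along any $g\in\Aut(V)$ produces the unitary structure $(g\theta g^{-1},(g^{-1}\cdot|g^{-1}\cdot))$, whose scalar product must coincide with $\scalar$ by uniqueness, forcing $g$ to be unitary; $(ii)\Leftrightarrow(iii)$ is immediate from the characterization above; $(iii)\Rightarrow(iv)$ because then $\Aut(V)=\Aut_{\scalar}(V)$ is compact; and $(iv)\Rightarrow(i)$ by contraposition, since if the scalar product were not unique Proposition~\ref{prop:uniqueness_unitary_structure} produces $h\in\Aut(V)$ with $h\neq1$, $\theta h\theta=h^{-1}$ (so $h^*=h$ by the boxed identity) and $(a|ha)>0$ for all $a\neq0$ (so $h>0$), whence the one-parameter group $\{h^t\}_{t\in\R}\subseteq\Aut(V)$ is non-compact.

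The structurally new step is $(iv)\Leftrightarrow(v)$, and this is where I would exploit that $\Aut(V)$ is not merely a real but a \emph{complex} Lie group: it is cut out of $\prod_n\operatorname{GL}(V_n)$ by the $\C$-polynomial equations $g\Omega=\Omega$, $g\nu=\nu$ and $g(a_{(n)}b)=g(a)_{(n)}g(b)$. Since $\theta$ is antilinear, $\Theta=\operatorname{Ad}(\theta)$ is an \emph{antilinear} involution of $\Aut(V)$, so its differential splits the complex Lie algebra as $\mathfrak{g}=\mathfrak{k}\oplus i\mathfrak{k}$, where $\mathfrak{k}=\operatorname{Lie}(\Aut_{\scalar}(V))$ is the fixed real form. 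Concretely, for the positive $h$ above one has $\log h\in i\mathfrak{k}$, so $i\log h\in\mathfrak{k}$ generates the nontrivial one-parameter group $t\mapsto h^{it}$ of \emph{unitary} automorphisms: thus if $(iv)$ fails then $\Aut_{\scalar}(V)$ contains a nontrivial connected subgroup and $(v)$ fails. Conversely, if $\Aut(V)$ is compact then, being a linear complex group, it must be finite — any $0\neq X\in\mathfrak{g}$ would give a non-compact image $\{\exp(zX):z\in\C\}$ — so $\mathfrak{k}=0$ and $\Aut_{\scalar}(V)$ is finite, in particular totally disconnected.

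The hardest points, and the two places where the super setting must be watched, are as follows. First, the identity $g^*=\theta g^{-1}\theta$ and the invariance of $\bilinear$ under $\Aut(V)$ rely on the sign conventions in \eqref{eq:an_star_qp} and on $Z_V\theta Z_V=\theta$; these survive the fermionic case because every automorphism is parity-preserving, hence commutes with $\Gamma_V$ and with $Z_V$, leaving the relevant sign factors untouched. Second, the equivalence with $(v)$ needs $\Aut_{\scalar}(V)$ to be a compact \emph{Lie} group, so that ``totally disconnected'' coincides with ``finite'', and it needs the complex exponential $\mathfrak{g}\to\Aut(V)$ to be defined; this is the genuinely technical input, obtained by adapting \cite[Lemma 5.20]{CKLW18} together with the complex-algebraic description of $\Aut(V)$ above. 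Everything else is the formal cycle and the antilinear splitting $\mathfrak{g}=\mathfrak{k}\oplus i\mathfrak{k}$, which transparently pairs the non-unitary ``real powers'' $h^t$ with the unitary ``imaginary powers'' $h^{it}$.
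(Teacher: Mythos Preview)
Your outline is essentially the paper's own route (which simply defers to \cite[Lemma~5.20 and Theorem~5.21]{CKLW18}): the key identity $g^*=\theta g^{-1}\theta$ via Corollary~\ref{cor:when_vosa_automorphism_preservs_nu}, the characterization $\Aut_{\scalar}(V)=\{g:\theta g=g\theta\}$, the cycle $(i)\Rightarrow(ii)\Leftrightarrow(iii)\Rightarrow(iv)\Rightarrow(i)$, and the use of the positive $h$ from Proposition~\ref{prop:uniqueness_unitary_structure} are exactly as in the VOA case.

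One point deserves tightening. You describe $\Aut(V)$ as a \emph{complex Lie group} with Lie algebra $\mathfrak{g}=\mathfrak{k}\oplus i\mathfrak{k}$, but the theorem does not assume $V$ finitely generated, so $\Aut(V)$ need not be finite-dimensional (cf.\ Remark~\ref{rem:finitely_gen_VOSAs}) and a global $\mathfrak{g}$ is not available as stated; your reference to \cite[Lemma~5.20]{CKLW18} only supplies that $\Aut_{\scalar}(V)$ is a compact \emph{Lie} group with finite-dimensional $\mathfrak{k}$. What the argument genuinely uses is local: on any fixed triple of graded pieces the automorphism condition is a $\C$-polynomial equation, so if $\exp(tX)\in\Aut(V)$ for all $t\in\R$ (respectively $h^n\in\Aut(V)$ for all $n\in\Z$ with $h>0$), then $\exp(zX)\in\Aut(V)$ (respectively $h^z\in\Aut(V)$) for all $z\in\C$, by the identity theorem (respectively because an exponential sum $\sum c_jr_j^z$ with distinct positive bases $r_j$ vanishing on $\Z$ is identically zero by Vandermonde). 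With this in hand your pairing of $\{h^t\}$ with $\{h^{it}\}$ is precisely what is needed for $\neg(i)\Rightarrow\neg(iv)$ and $\neg(i)\Rightarrow\neg(v)$; and your $(iv)\Rightarrow(v)$ is cleanest if routed through the already-established $(iv)\Rightarrow(ii)$ before complexifying a one-parameter subgroup of $\mathfrak{k}$, rather than by invoking ``compact complex $\Rightarrow$ finite'' for $\Aut(V)$ directly.
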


\begin{rem}   \label{rem:real_form_aut_group}
Consider a simple unitary VOSA $V$. Following the proof of \cite[Theorem 5.21]{CKLW18}, we have that $g\in\Aut(V)$ is unitary if and only if $(g(\theta( a))|g (b))=(\theta (a)| b)$ for all $a,b\in V$. On the other hand, $(\theta( g(a))|g(b))=(\theta (a)|b)$ for all $a,b\in V$ thanks to (ii)$\Leftrightarrow$(iii) of Corollary \ref{cor:when_vosa_automorphism_preservs_nu}. It follows that $g$ is unitary if and only if it commutes with $\theta$. Hence, $g$ is unitary if and only if it restricts to an automorphism of the real VOSA $V_\R$ defined in Remark \ref{rem:real_form}. Conversely, every VOSA automorphism of $V_\R$ extends to an automorphism of its complexification $V$. Thus, we can identify $\Aut_{\scalar}(V)$ and $\Aut(V_\R)$.
\end{rem}

We end this section investigating further the group of unitary automorphisms of unitary VOSAs, straightforwardly adapting \cite[Appendix A]{CGGH23}, which we refer to whenever details are not given.

From now onwards, let $V$ be a simple unitary VOSA. For any closed subgroup $G$ of $\Aut(V)$, recall the definition of the \textbf{fixed point subalgebra} of $V$ with respect to $G$:
$$
V^G:=\left\{a\in V\mid g(a)=a \,\,\forall g\in G\right\} \,.
$$
Note that $V_\parzero=V^{\{1_V,\Gamma_V\}}$ and thus if $G$ is a closed subgroup of $\Aut(V)$, the quotient group $G/\{1_V,\Gamma_V\}$ can be embeds into $\Aut(V_\parzero)$.

Define the subset of $\Aut(V)$ of \textbf{strictly positive} automorphisms by
\begin{equation}
	\Aut_+(V):=\left\{g\in\Aut(V)\mid (a|ga)>0 \,\,\forall a\in V\backslash\{0\}\right\}\,.
\end{equation}
We have that $1_V\in\Aut_+(V)$. Moreover, it can be shown that $\Aut_+(V)$ is path connected and thus it is contained in $\Aut(V)_0$, that is the connected component to the identity of $\Aut(V)$. Note that $g^{-1}\in\Aut_+(V)$ for all $g\in\Aut_+(V)$ and if $g$ and $h$ are any two commuting elements in $\Aut_+(V)$, then $gh=h^\frac{1}{2}gh^\frac{1}{2}\in\Aut_+(V)$. If $\Aut(V)_0$ is abelian, then $\Aut_+(V)$ is an abelian closed and path connected subgroup of $\Aut(V)$.

Now, for any $g\in \Aut(V)$, set $g^*:=\theta g^{-1}\theta$, so that $(a|g(b))=(g^*(a)|b)$ for all $a,b\in V$. Note that $g\in\Aut(V)$ is unitary if and only if $g^*=g^{-1}$. Furthermore, $g^*g\in\Aut_+(V)$ for all $g\in\Aut(V)$. Therefore, proceeding as in the proof of \cite[Proposition A.1]{CGGH23}, we have a polar decomposition for the automorphisms of $V$:

\begin{prop}
	Let $V$ be a simple unitary VOSA and let $g\in\Aut(V)$. Then there exist a unique $\abs{g}\in\Aut_+(V)$ and a unique $u\in\Aut_{\scalar}(V)$ such that $g=u\abs{g}$.
\end{prop}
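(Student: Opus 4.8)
The plan is to reproduce the classical polar decomposition of an invertible Hilbert space operator, with the extra work of checking that each factor remains a VOSA automorphism. Recall from the discussion preceding the statement that for $g\in\Aut(V)$ the adjoint $g^*=\theta g^{-1}\theta$ satisfies $(a|g(b))=(g^*(a)|b)$, so that $*$ is precisely the adjoint with respect to the scalar product $\scalar$; in particular $P:=g^*g$ is self-adjoint and lies in $\Aut_+(V)$. First I would define $\abs{g}:=P^{\half}$ by taking, on each finite-dimensional homogeneous subspace $V_n$, the unique positive square root of the positive-definite operator $P\restriction_{V_n}$. This manifestly produces a grading-preserving linear bijection of $V$ with $\abs{g}^2=P$, self-adjoint, and satisfying $(a|\abs{g}a)>0$ for all $a\neq 0$.

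The heart of the argument, and the step I expect to be the main obstacle, is to show that this square root is again a VOSA automorphism, i.e.\ that $\abs{g}\in\Aut_+(V)$: abstract functional calculus only yields a linear map, whereas respecting the infinitely many $(n)$-products is an additional statement. I would deduce it from the automorphism identity $P(a_{(n)}b)=(Pa)_{(n)}(Pb)$ as follows. Fixing $n\in\Z$ and homogeneous weights $j,k$, the $(n)$-product restricts to a bilinear map $V_j\times V_k\to V_{j+k-n-1}$; extending it to a linear map $\beta$ on $V_j\otimes V_k$, the automorphism identity reads $P\circ\beta=\beta\circ(P\restriction_{V_j}\otimes P\restriction_{V_k})$. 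Since $P\restriction_{V_j}\otimes P\restriction_{V_k}$ is positive-definite and diagonalizable, $\beta$ sends each of its eigenvectors into the equal-eigenvalue eigenspace of $P\restriction_{V_{j+k-n-1}}$; matching eigenvalues and applying $x\mapsto x^{\half}$ eigenspace-by-eigenspace then gives $\abs{g}\circ\beta=\beta\circ(\abs{g}\restriction_{V_j}\otimes\abs{g}\restriction_{V_k})$, that is $\abs{g}(a_{(n)}b)=(\abs{g}a)_{(n)}(\abs{g}b)$. As $\abs{g}$ also fixes $\Omega$ and $\nu$ (both lying in the eigenvalue-$1$ eigenspace of the automorphism $P$), it follows that $\abs{g}\in\Aut(V)$, and hence $\abs{g}\in\Aut_+(V)$ by strict positivity.

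Granting this, I would set $u:=g\abs{g}^{-1}$. Using that $\abs{g}^{-1}\in\Aut_+(V)$ (inverses of strictly positive automorphisms are strictly positive), $u$ is a composition of automorphisms, so $u\in\Aut(V)$. Self-adjointness of $\abs{g}^{-1}$ together with $\abs{g}^2=g^*g$ gives $u^*u=\abs{g}^{-1}g^*g\,\abs{g}^{-1}=1_V$; since $u$ is invertible this forces $u^*=u^{-1}$, i.e.\ $u\in\Aut_{\scalar}(V)$, and $g=u\abs{g}$ is the asserted decomposition.

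Finally, for uniqueness suppose $g=u_1p_1=u_2p_2$ with $u_i\in\Aut_{\scalar}(V)$ and $p_i\in\Aut_+(V)$. Each $p_i$ is self-adjoint and each $u_i$ satisfies $u_i^*u_i=1_V$, so $g^*g=p_iu_i^*u_ip_i=p_i^2$, whence $p_1^2=p_2^2$. Both $p_1$ and $p_2$ are positive square roots of $g^*g$, which is unique on every finite-dimensional $V_n$, so $p_1=p_2=:\abs{g}$ and then $u_1=g\abs{g}^{-1}=u_2$.
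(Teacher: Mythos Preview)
Your argument is correct and is essentially the standard polar-decomposition proof that the paper imports from \cite[Proposition A.1]{CGGH23}; the paper gives no independent proof here. Your eigenspace intertwining step (pushing the relation $P\circ\beta=\beta\circ(P\otimes P)$ through the functional calculus $x\mapsto x^{1/2}$ on each finite-dimensional $V_j\otimes V_k$) is exactly the point needed to promote $\abs{g}=P^{1/2}$ from a linear bijection to a VOSA automorphism, and the rest is routine. One small remark on uniqueness: you assert each $p_i$ is self-adjoint, which is true but uses that $(a|p_ia)\in\R$ for all $a$ forces $p_i=p_i^*$ on a complex inner product space; this is standard, but worth a word since $\Aut_+(V)$ is defined only by the strict positivity condition.
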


The following two results can be proved as in \cite[Proposition A.2 and Proposition A.3]{CGGH23} respectively.

\begin{prop}
	Let $V$ be a simple unitary VOSA  and let $q:\Aut(V) \to \Aut(V)/\Aut(V)_0$ be the quotient map. 
	Then $q\restriction_{\Aut_{\scalar}(V)}: \Aut_{\scalar}(V) \to \Aut(V)/\Aut(V)_0$ is a surjective group homomorphism. 
	In particular, $\Aut(V)$ is almost connected, i.e.\  $\Aut(V)/\Aut(V)_0$ is compact. Furthermore, if $\Aut(V)$ is a finite dimensional Lie group, 
	then $\Aut(V)/\Aut(V)_0$ is a finite group and the map $q\restriction_{\Aut_{\scalar}(V)}: \Aut_{\scalar}(V) \to  \Aut(V)/\Aut(V)_0$ factors through an isomorphism of   
	$\Aut_{\scalar}(V) / \Aut_{\scalar}(V)_0$ onto  $\Aut(V)/\Aut(V)_0$.  
\end{prop}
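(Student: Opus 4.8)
The plan is to reduce every assertion to the polar decomposition $g = u\abs{g}$ established in the preceding proposition, using only two facts already recorded: $\Aut_+(V) \subseteq \Aut(V)_0$ (since $\Aut_+(V)$ is path connected and contains $1_V$) and the compactness of $\Aut_{\scalar}(V)$ from Theorem \ref{theo:characterization_aut_group}. First I would note that $q\restriction_{\Aut_{\scalar}(V)}$ is a group homomorphism, being the restriction of the quotient homomorphism $q$ to a subgroup. For surjectivity, given $g \in \Aut(V)$ I write $g = u\abs{g}$ with $u \in \Aut_{\scalar}(V)$ and $\abs{g} \in \Aut_+(V) \subseteq \Aut(V)_0$; then $\abs{g} \in \Ker q$, so $q(g) = q(u)$ lies in the image of $q\restriction_{\Aut_{\scalar}(V)}$. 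Hence $q\restriction_{\Aut_{\scalar}(V)}$ is onto.

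Since $q$ is continuous for the quotient topology and $\Aut_{\scalar}(V)$ is compact, its image $q(\Aut_{\scalar}(V)) = \Aut(V)/\Aut(V)_0$ is compact; this is exactly the almost connectedness of $\Aut(V)$. If moreover $\Aut(V)$ is a finite-dimensional Lie group, then $\Aut(V)_0$ is open, so the quotient $\Aut(V)/\Aut(V)_0$ is discrete, and being compact it must be finite.

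It then remains to identify the kernel of the surjection $q\restriction_{\Aut_{\scalar}(V)}$, namely $\Aut_{\scalar}(V) \cap \Aut(V)_0$, with $\Aut_{\scalar}(V)_0$; the first isomorphism theorem will then yield the claimed isomorphism $\Aut_{\scalar}(V)/\Aut_{\scalar}(V)_0 \cong \Aut(V)/\Aut(V)_0$. The inclusion $\Aut_{\scalar}(V)_0 \subseteq \Aut_{\scalar}(V) \cap \Aut(V)_0$ is immediate, since $\Aut_{\scalar}(V)_0$ is a connected set containing $1_V$ and hence sits inside the identity component $\Aut(V)_0$. For the reverse inclusion I would promote the polar decomposition to a homeomorphism $\Phi : \Aut_{\scalar}(V) \times \Aut_+(V) \to \Aut(V)$, $(u,p) \mapsto up$: it is a continuous bijection by existence and uniqueness of the decomposition, and its inverse is continuous because $\abs{g} = (g^*g)^{1/2}$ depends continuously on $g$ (the maps $g \mapsto g^* = \theta g^{-1}\theta$ and, blockwise on each finite-dimensional $V_n$, the positive square root are continuous) while $u = g\abs{g}^{-1}$. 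As $\Aut_+(V)$ is connected, the identity component of the product is $\Aut_{\scalar}(V)_0 \times \Aut_+(V)$, so $\Phi$ carries it onto $\Aut(V)_0$; evaluating at $p = 1_V$ shows that $u \in \Aut_{\scalar}(V)$ lies in $\Aut(V)_0$ precisely when $u \in \Aut_{\scalar}(V)_0$. This gives $\Aut_{\scalar}(V) \cap \Aut(V)_0 = \Aut_{\scalar}(V)_0$, as required.

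The main obstacle is this last step: verifying that the polar decomposition is a genuine homeomorphism, i.e.\ that $g \mapsto \abs{g}$ is continuous in the topology induced from $\prod_{n}\operatorname{GL}(V_n)$. Once this is in hand, the connectedness of $\Aut_+(V)$ does all the remaining work, forcing the component groups of $\Aut_{\scalar}(V)$ and of $\Aut(V)$ to coincide; the surjectivity and compactness parts, by contrast, are formal consequences of the decomposition and of Theorem \ref{theo:characterization_aut_group}.
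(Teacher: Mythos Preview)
Your proof is correct and follows essentially the same approach as the paper, which refers the argument to \cite[Proposition A.2]{CGGH23}: surjectivity via the polar decomposition $g=u\abs{g}$ with $\abs{g}\in\Aut_+(V)\subseteq\Aut(V)_0$, almost connectedness from compactness of $\Aut_{\scalar}(V)$, and identification of the kernel via the homeomorphism $\Aut_{\scalar}(V)\times\Aut_+(V)\cong\Aut(V)$. Your explicit verification that $g\mapsto\abs{g}=(g^*g)^{1/2}$ is continuous blockwise in the product topology on $\prod_n\operatorname{GL}(V_n)$ is exactly the point that needs checking, and your argument for it is sound.
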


\begin{prop}  \label{prop:Aut_fixes_as_Aut_scalar}
	 Let $V$ be a simple unitary VOSA. Then $V^{\Aut_{\scalar}(V)} = V^{\Aut(V)}$.
\end{prop}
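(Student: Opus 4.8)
The plan is to prove the two inclusions separately. The inclusion $V^{\Aut(V)}\subseteq V^{\Aut_{\scalar}(V)}$ is immediate, since $\Aut_{\scalar}(V)$ is a subgroup of $\Aut(V)$, so a vector fixed by all of $\Aut(V)$ is in particular fixed by every unitary automorphism. All the work lies in the reverse inclusion $V^{\Aut_{\scalar}(V)}\subseteq V^{\Aut(V)}$. So I fix $a\in V^{\Aut_{\scalar}(V)}$ and an arbitrary $g\in\Aut(V)$, and aim to show $g(a)=a$. Using the polar decomposition from the previous proposition, I write $g=u\abs{g}$ with $u\in\Aut_{\scalar}(V)$ and $\abs{g}\in\Aut_+(V)$. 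Since $u$ is unitary, $u(a)=a$; hence it suffices to prove that $\abs{g}(a)=a$, and more generally that \emph{every} strictly positive automorphism fixes $a$.

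So the crux is to show $p(a)=a$ for all $p\in\Aut_+(V)$. Here I would first record that the defining condition $(b\mid pb)>0$ for all $b\neq 0$ in particular requires $(b\mid pb)$ to be a real number for every $b$, which forces $p$ to be a self-adjoint positive operator with respect to $\scalar$ on each finite-dimensional homogeneous subspace $V_n$ (recall automorphisms are grading-preserving because they fix $\nu$, cf.\ Corollary \ref{cor:when_vosa_automorphism_preservs_nu}, and distinct $V_n$ are mutually orthogonal). Thus $X:=\log p$ is a well-defined self-adjoint operator preserving each $V_n$ and annihilating $\Omega$ and $\nu$, and $p^t:=\exp(tX)$ defines a one-parameter family of self-adjoint positive operators with $p^{1/2}=\exp(\tfrac12 X)$. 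Using that $\Aut_+(V)$ is closed under square roots — exactly as already used in the construction of $\Aut_+(V)$ above — I get $p^{m/2^k}\in\Aut_+(V)$ for all dyadic exponents, so the automorphism identity $\exp(tX)(b_{(n)}c)=(\exp(tX)b)_{(n)}(\exp(tX)c)$ holds for every dyadic $t$.

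The key step, and the main obstacle, is to pass from positive to unitary automorphisms. I would do this by analytic continuation in the parameter $t$: for fixed $b,c\in V$ and $n\in\Z$, both sides of the identity above lie in a single finite-dimensional homogeneous component and are entire functions of $t\in\C$ (matrix exponentials on finite-dimensional spaces). Since they agree on the dyadic rationals, which have a limit point, they agree for all $t\in\C$. Consequently $\exp(tX)\in\Aut(V)$ for every complex $t$; specialising to $t=is$ with $s\in\R$, the operator $\exp(isX)$ is unitary (because $isX$ is skew-adjoint) and still fixes $\Omega$ and $\nu$, so $\exp(isX)\in\Aut_{\scalar}(V)$ for all $s\in\R$.

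To conclude, since $a\in V^{\Aut_{\scalar}(V)}$ I have $\exp(isX)a=a$ for all $s\in\R$; differentiating at $s=0$ gives $Xa=0$, and therefore $p(a)=\exp(X)a=a$. Applying this with $p=\abs{g}$ yields $\abs{g}(a)=a$, hence $g(a)=u\abs{g}(a)=u(a)=a$. As $g\in\Aut(V)$ was arbitrary, $a\in V^{\Aut(V)}$, which completes the reverse inclusion. I expect the only delicate points to be the self-adjointness of elements of $\Aut_+(V)$ and the justification that enough powers $p^t$ remain automorphisms to seed the analytic continuation; the $t\mapsto it$ continuation itself, which converts positivity into unitarity, is the conceptual heart of the argument.
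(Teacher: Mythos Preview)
Your argument is correct and follows the same route as the proof the paper imports from \cite[Proposition A.3]{CGGH23}: reduce via polar decomposition to strictly positive automorphisms, then use that $p^{it}$ is a unitary automorphism to conclude. One small clarification on the point you flagged as delicate: the appearance of $h^{1/2}$ in the text is only as an operator to show $gh\in\Aut_+(V)$, not as an element of $\Aut_+(V)$; the cleanest way to get $p^{t}\in\Aut(V)$ for all $t\in\C$ directly (bypassing the dyadic seeding) is to diagonalize $p$ on each finite-dimensional $V_n$ and observe that if $b,c$ are $p$-eigenvectors with eigenvalues $\lambda,\mu>0$ then $b_{(n)}c$ is a $p$-eigenvector with eigenvalue $\lambda\mu$, so $p^{t}(b_{(n)}c)=(\lambda\mu)^{t}b_{(n)}c=(p^{t}b)_{(n)}(p^{t}c)$ for every $t$.
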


We have the following characterization of $\Aut_{\scalar}(V)$:

\begin{theo}
	Let $V$ be a simple unitary VOSA and assume that $\Aut(V)$ is a finite dimensional Lie group. Then  $\Aut_{\scalar}(V)$ is a maximal compact subgroup of $\Aut(V)$. 
\end{theo}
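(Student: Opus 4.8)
The plan is to show directly that no compact subgroup of $\Aut(V)$ can properly contain $\Aut_{\scalar}(V)$; since $\Aut_{\scalar}(V)$ is already known to be compact by Theorem \ref{theo:characterization_aut_group}, this is precisely the assertion that it is a maximal compact subgroup. The two ingredients I would rely on are the polar decomposition $g=u\abs{g}$ established just above (with $u\in\Aut_{\scalar}(V)$ and $\abs{g}\in\Aut_+(V)$) and the fact, recalled in Section \ref{subsection:basic_definitions_VOSA}, that every automorphism commutes with $L_0$ and hence $\Aut(V)\subseteq\prod_{n\in\half\Z}\operatorname{GL}(V_n)$ acts on each finite-dimensional graded piece $V_n$, with the product topology.

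First I would observe that every element of $\Aut_+(V)$ acts on each $V_n$ as a positive self-adjoint operator. Indeed, for $g\in\Aut_+(V)$ the scalars $(a\vert ga)$ are real for all $a$, and a standard polarization argument then yields $(b\vert ga)=\overline{(a\vert gb)}=(gb\vert a)$ for all $a,b$, i.e.\ $g=g^*$ in the notation $g^*:=\theta g^{-1}\theta$ introduced above; combined with $(a\vert ga)>0$ this makes $g\restriction_{V_n}$ a positive-definite self-adjoint operator on the finite-dimensional Hilbert space $V_n$ for every $n\in\half\Z$.

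Now let $K$ be any compact subgroup of $\Aut(V)$ with $\Aut_{\scalar}(V)\subseteq K$, and pick $g\in K$. Writing $g=u\abs{g}$ as above, we have $u\in\Aut_{\scalar}(V)\subseteq K$, hence $\abs{g}=u^{-1}g\in K$. I then claim $\abs{g}=1_V$: if $\abs{g}\restriction_{V_n}$ had an eigenvalue $\lambda\neq1$ for some $n$, then the norms of $\abs{g}^k\restriction_{V_n}$ would be at least $\lambda^k$, which is unbounded as $k\to+\infty$ (if $\lambda>1$) or as $k\to-\infty$ (if $\lambda<1$); but all $\abs{g}^k$ lie in $K$, so their images lie in the projection of $K$ to $\operatorname{GL}(V_n)$, which is a compact, hence norm-bounded, subset of $\End(V_n)$ --- a contradiction. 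Thus $\abs{g}\restriction_{V_n}$ is positive self-adjoint with all eigenvalues equal to $1$, so it is the identity on each $V_n$; therefore $\abs{g}=1_V$ and $g=u\in\Aut_{\scalar}(V)$. This gives $K=\Aut_{\scalar}(V)$, proving maximality.

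I expect the main obstacle to be the careful bookkeeping at the topological step: one must pass from compactness of $K$ in the product topology of $\prod_{n}\operatorname{GL}(V_n)$ to uniform norm-boundedness of the cyclic group generated by $\abs{g}$ on a single component $V_n$, and only there convert the spectral information into the equality $\abs{g}=1_V$; the finite-dimensionality of each $V_n$ is what keeps this elementary. As a cross-check and alternative, the hypothesis that $\Aut(V)$ be a finite-dimensional Lie group can be used to invoke the classical Cartan--Iwasawa--Malcev theory: the polar decomposition $\Aut(V)=\Aut_{\scalar}(V)\cdot\Aut_+(V)$, with $\Aut_+(V)$ path connected and (via the self-adjoint spectral picture above) homeomorphic to a Euclidean space, plays the role of a Cartan decomposition, exhibiting $\Aut_{\scalar}(V)$ as a deformation retract of the almost-connected group $\Aut(V)$ and hence as a maximal compact subgroup.
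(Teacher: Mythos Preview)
Your argument is correct and takes a genuinely different, more elementary route than the paper. The paper proves maximality by invoking the Dong--Mason Galois correspondence \cite[Theorem~3]{DM99}: given a compact $G\supseteq\Aut_{\scalar}(V)$, Proposition~\ref{prop:Aut_fixes_as_Aut_scalar} gives $V^G=V^{\Aut_{\scalar}(V)}$, and then the Galois correspondence applied to $V_\parzero$ forces $G/\{1_V,\Gamma_V\}=\Aut_{\scalar}(V)/\{1_V,\Gamma_V\}$; this is precisely where the finite-dimensional Lie group hypothesis is used, to guarantee that $G/\{1_V,\Gamma_V\}$ is a compact \emph{Lie} group as required by Dong--Mason. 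Your spectral argument on each finite-dimensional graded piece $V_n$ bypasses all of this machinery: it uses only the polar decomposition and the elementary fact that a positive self-adjoint matrix lying in a compact subgroup of $\operatorname{GL}(V_n)$ must have all eigenvalues equal to $1$. In particular, your main argument never invokes the finite-dimensional Lie group hypothesis, so it actually establishes a slightly stronger statement than the one in the paper.
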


\begin{proof}
Let $G$ be a compact subgroup of $\Aut(V)$ containing $\Aut_{\scalar}(V)$. Then by Proposition 
\ref{prop:Aut_fixes_as_Aut_scalar}, $V^G = V^{\Aut_{\scalar}(V)}$ and thus $V_\parzero^{G/\{1_V,\Gamma_V\}} = V_\parzero^{\Aut_{\scalar}(V)/\{1_V,\Gamma_V\}}$. Moreover, $G$ is a compact Lie group being a compact subgroup of the finite dimensional Lie group $\Aut(V)$. It follows that the quotient of $G$ by the its closed normal subgroup $\{1_V,\Gamma_V\}$ is a compact Lie group too. Hence, $G/\{1_V,\Gamma_V\} =  \Aut_{\scalar}(V)/\{1_V,\Gamma_V\}$ by the Galois correspondence in \cite[Theorem 3]{DM99}, which implies that $G =  \Aut_{\scalar}(V)$ as desired.
\end{proof}

The proof of \cite[Proposition A.5]{CGGH23} works also for the following one:

\begin{prop}  \label{prop:unitarize_compact_subgr_aut}
	Let $V$ be a simple unitary VOSA and assume that $\Aut(V)$ is a finite dimensional Lie group. If  $G$ is a compact subgroup of $\Aut(V)$, then there exists an invariant normalized scalar product $\curlyscalar$ on $V$ such that $G \subset \Aut_{\curlyscalar}(V)$, namely the automorphisms in $G$ are unitary with respect to $\curlyscalar$.
\end{prop}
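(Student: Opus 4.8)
The plan is to reduce the statement to the conjugacy of maximal compact subgroups in $\Aut(V)$. The first idea that comes to mind, namely averaging the given scalar product over $G$ against its normalized Haar measure, $\{a| b\}:=\int_G (g(a)| g(b))\,dg$, does produce a $G$-invariant, normalized, positive-definite Hermitian form, but the main obstacle is that this form will in general fail to be \emph{invariant} in the VOSA sense of \eqref{eq:invariant_scalar_product}. Indeed, writing $g^*:=\theta g^{-1}\theta\in\Aut(V)$ for the adjoint of $g$ with respect to $\scalar$, one has $\{a| b\}=(a| Tb)$ with $T:=\int_G g^*g\,dg$, a positive, invertible, grading-preserving operator; but $T$ is only an integral of automorphisms and is not itself a VOSA automorphism, so there is no reason for $\{\cdot| \cdot\}$ to admit an associated PCT operator. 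Hence I must instead produce a scalar product of the special form $(h(\cdot)| h(\cdot))$ with $h\in\Aut(V)$ a genuine automorphism, which, as I verify below, is automatically invariant.

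The key structural inputs are that $\Aut(V)$ is a finite dimensional Lie group with $\Aut(V)/\Aut(V)_0$ finite (hence with finitely many connected components), and that $\Aut_{\scalar}(V)$ is a maximal compact subgroup of $\Aut(V)$ by the theorem immediately preceding this proposition. By the Cartan--Iwasawa--Malcev theorem on maximal compact subgroups of Lie groups with finitely many connected components, every compact subgroup of $\Aut(V)$ is contained in a maximal compact subgroup and all maximal compact subgroups are conjugate. Applying this to the compact subgroup $G$, I obtain $h\in\Aut(V)$ such that $hGh^{-1}\subseteq\Aut_{\scalar}(V)$.

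It then remains to check that $\{a| b\}:=(h(a)| h(b))$ is a normalized invariant scalar product for which every element of $G$ is unitary. Since $h$ is a linear bijection, $\{\cdot| \cdot\}$ is a scalar product, and it is normalized because $h(\Omega)=\Omega$. For invariance, set $\widetilde{\theta}:=h^{-1}\theta h$; this is an antilinear VOSA automorphism with $\widetilde{\theta}^2=h^{-1}\theta^2 h=1_V$, hence an antilinear involution fixing $\Omega$ and $\nu$. Because $h$ is a VOSA automorphism it intertwines the vertex operators, $h(Y(a,z)b)=Y(h(a),z)h(b)$, and commutes with every $L_n$, hence with the operator $e^{zL_1}(-1)^{2L_0^2+L_0}z^{-2L_0}$ occurring in \eqref{eq:invariant_scalar_product}; substituting these intertwining relations (together with $h\widetilde{\theta}=\theta h$) into the invariance identity for $(\theta,\scalar)$ yields the invariance identity for $(\widetilde{\theta},\{\cdot| \cdot\})$, so by Proposition \ref{prop:relation_pct_scalar_product} the pair $(V,\{\cdot| \cdot\})$ is unitary with PCT operator $\widetilde{\theta}$. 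Finally, for $g\in G$ the element $u:=hgh^{-1}$ lies in $\Aut_{\scalar}(V)$ and is therefore unitary for $\scalar$, so
\[
\{g(a)| g(b)\}=(hg(a)| hg(b))=(u\,h(a)| u\,h(b))=(h(a)| h(b))=\{a| b\},
\]
which shows $G\subseteq\Aut_{\curlyscalar}(V)$ and completes the proof.
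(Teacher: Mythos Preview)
Your proof is correct and follows essentially the same approach as the paper, which refers to \cite[Proposition A.5]{CGGH23}: use the preceding theorem that $\Aut_{\scalar}(V)$ is maximal compact together with the Cartan--Iwasawa--Malcev conjugacy of maximal compacts in a Lie group with finitely many components to conjugate $G$ into $\Aut_{\scalar}(V)$, and then transport the scalar product along the conjugating automorphism. Your explicit verification that $(h(\cdot)\,|\,h(\cdot))$ is invariant with PCT operator $h^{-1}\theta h$ is just the easy direction of Proposition~\ref{prop:uniqueness_unitary_structure}, which you could also have cited directly.
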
  

\begin{rem}  \label{rem:finitely_gen_VOSAs}
	Note that if $V$ is a finitely generated VOSA, then there is a $N \in \Zpluseq$ such that $V$ is generated by the finite dimensional vector space $V_{\leq N} := \bigoplus_{n \in \Z,\, n\leq N} V_n$ and the map $g \mapsto g\restriction_{V_{\leq N}}$ is a topological isomorphism of $\Aut(V)$ onto a closed subgroup of $\mathrm{GL}(V_{\leq N})$. It follows that if $V$ is finitely generated, then $\Aut(V)$ is a finite dimensional Lie group. 
	Finally, note that if $V$ is simple, then $V$ is finitely generated if $V_\parzero$ is by Example \ref{ex:even_simple_odd_irreducible}.
\end{rem}

\subsection{The PCT theorem in the VOSA setting}
\label{subsection:pct_theorem}

The main topic of this section is to characterise the unitarity of a VOSA equipped with a scalar product, whose invariance is unknown a priori. The convenience of this approach is twofold. On the one hand, it is natural from the point of view of Quantum Field Theory (QFT). Indeed, we obtain also the VOSA version of the PCT theorem, see \cite[Section 4.3]{SW64}. On the other hand, in the construction of examples of unitary VOSAs, we often have to find a PCT operator for a given scalar product, which we start with. It would be useful to have different methods to check the existence of the unitary structure, which avoid the direct search for a PCT operator. 
We also highlight that this section not only generalizes \cite[Section 5.2]{CKLW18} to the VOSA setting, but also cover the case of VOSAs (and thus VOAs too) not necessarily of CFT type, which is instead the case in the PCT theorem in the VOA setting \cite[Theorem 5.16]{CKLW18}.

As mentioned above, the Wightman axioms \cite[Section 3.1]{SW64} for QFT require for the unitarity at least that: (i) \textit{``the spacetime symmetries act unitarily''}; (ii) \textit{``the adjoints of local fields are local''}. We translate these requirements introducing the following definitions respectively:

\begin{defin} \label{defin:unitary_mob_symmetry}
Let $V$ be a VOSA with a normalized scalar product $\scalar$.
\begin{itemize}
\item[(i)]
The pair $(V,\scalar)$ is said to have \textbf{unitary M{\"o}bius symmetry} if
\begin{equation}
(L_na| b)=(a| L_{-n}b) 
\qquad\forall a,b\in V
\,\,\,\forall n\in\{-1,0,1\}\,.
\end{equation}

\item[(ii)] An element $A\in\End(V)$ has an \textbf{adjoint} on $V$ (with respect to $\scalar$) if there exists $A^+\in\End(V)$ such that 
\begin{equation}
(a| Ab)=(A^+a| b)
\qquad \forall a,b\in V\,.
\end{equation}
\end{itemize}
\end{defin}

It is clear that if an adjoint exists, then it will be unique and we will call it \textit{the} adjoint of $A$ on $V$. Of course, $A$ can be treated as a densely defined operator on the Hilbert space completion $\mathcal{H}_{(V,\scalar)}$ of $V$ and thus $A^+$ exists if and only if $V$ is contained in the domain of the Hilbert space adjoint $A^*$ of $A$, in which case $A^+\subseteq A^*$. It is not difficult to verify that the set of elements having an adjoint on $V$ is a unital subalgebra of $\End(V)$ closed with respect to the adjoint operation, that is
\begin{align}
(\alpha A+\beta B)^+ &= \overline{\alpha}A^++\overline{\beta}B^+ 
\qquad\forall A, B\in\End(V) \,\,\,\forall \alpha,\beta\in\C \\
(AB)^+ &= B^+A^+ \qquad\forall A, B\in\End(V)\\
A^{++}:= &(A^+)^+=A \qquad\forall A\in\End(V)\,.
\end{align}

By adapting the proof of \cite[Lemma 5.11]{CKLW18}, we get:

\begin{lem}  \label{lem:existence_an+}
Let $(V,\scalar)$ have unitary M\"{o}bius symmetry. Then for every homogeneous $a\in V$ and every $n\in\half\Z$, $a_n^+$ exists on $V$. Moreover, for all $b\in V$, there exists $N\in\half\Zpluseq$ such that $a_{-n}^+b=0$ for all $n\geq N$.
\end{lem}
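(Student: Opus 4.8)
The plan is to use the invariance property of the scalar product under unitary Möbius symmetry to produce the adjoint explicitly. Given a homogeneous $a \in V$ of conformal weight $d_a$, formula \eqref{eq:an_star} (which is derived precisely from the invariance, using that $\theta$ and $Z_V$ commute with the $L_n$) suggests the candidate
\[
a_n^+ := i^{2d_a}\sum_{l\in\Zpluseq}\frac{(Z_V\theta L_1^l a)_{-n}}{l!}\,,
\]
but since we are only assuming unitary Möbius symmetry rather than a full unitary VOSA structure, I do not yet have a $\theta$ or a $Z_V$ at my disposal. So first I would follow \cite[Lemma 5.11]{CKLW18} and instead characterise $a_n^+$ directly as an operator determined by matrix coefficients. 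The idea is that for fixed homogeneous $a$ and fixed $n$, the operator $a_n$ sends $V_k$ into $V_{k+d_a-1-n}$ by \eqref{eq:cr_l0} (more precisely, the degree-shift from the mode $a_n$ is $-n$ on the homogeneous grading), so its adjoint, if it exists, must map each finite-dimensional homogeneous subspace $V_m$ into the single finite-dimensional subspace into which the matrix of $a_n$ pairs. Because each $V_m$ is finite-dimensional and $\scalar$ restricts to a genuine (finite-dimensional) scalar product on it, the linear functional $b \mapsto (a|a_n b)$ is automatically represented by a unique vector, giving a well-defined $a_n^+$ on each $V_m$, hence on all of $V$ by linearity.

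The key steps, in order, are as follows. First I would reduce to homogeneous $a$ and record the degree-shift: $a_n V_m \subseteq V_{m-n}$, so that the relevant pairing $(c\,|\,a_n b)$ is nonzero only when $c \in V_{m-n}$ for $b \in V_m$. Second, I would fix $b \in V_m$ and consider the linear functional $\ell(c) := (c\,|\,a_n b)$ on $V_{m-n}$; since $V_{m-n}$ is finite-dimensional and $\scalar$ is a scalar product there, Riesz representation yields a unique $w_b \in V_{m-n}$ with $\ell(c) = (w_b\,|\,c)$ — wait, I must instead represent the functional $c \mapsto (a_n c\,|\,b)$ to get the correct adjoint direction, so more carefully I would set up $a_n^+ b$ as the unique vector in the appropriate homogeneous component satisfying $(a\,|\,a_n^+ b) = (a_n a\,|\,b)$ for all $a$, again using finite-dimensionality of each graded piece. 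Third, linearity of $b \mapsto a_n^+ b$ across the finitely many relevant graded components assembles these pieces into an endomorphism $a_n^+ \in \End(V)$, and the defining identity $(a\,|\,a_n b) = (a_n^+ a\,|\, b)$ holds by construction. For the final clause about $a_{-n}^+$ vanishing, I would use the Field axiom together with unitary Möbius symmetry: for fixed $b \in V$, the vector $b$ lies in $V_{\leq M}$ for some $M$, and since $a_{-n}^+$ decreases conformal weight by $-(-n) = n$ (tracking the degree shift through the adjoint), for $n$ large enough $a_{-n}^+ b$ would have to lie in a homogeneous component $V_{M+n}$ that is paired trivially against the finitely-supported field expansion — concretely, $(c\,|\,a_{-n}^+ b) = (a_{-n} c\,|\, b)$ and $a_{-n} c = 0$ for $n$ large by the Field axiom applied to the finitely many homogeneous components of $c$ lying below a suitable bound, forcing $a_{-n}^+ b = 0$.

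The main obstacle I anticipate is the bookkeeping in the last clause, namely turning the Field axiom (a locality/truncation statement about $a_{(n)}$ for large positive $n$) into a truncation statement about the adjoints $a_{-n}^+$ for large positive $n$, while keeping track of the correct sign and direction of the degree shift. One has to argue that for a fixed target vector $b$ and a varying test vector $c$, the pairing $(a_{-n} c\,|\,b)$ vanishes once $n$ exceeds a threshold depending only on the lowest conformal weight $N$ (guaranteed by the VOSA axiom $V_n = \{0\}$ for $n < N$) and on the conformal weight of $b$; the cleanest route is to note that $a_{-n} c \in V_{d_c + n}$ grows in weight without bound as $n \to +\infty$, whereas for the pairing with fixed $b$ to be nonzero one needs $d_c + n = d_b$, which is impossible for large $n$, and then to bound $d_c$ below using $V_{<N} = \{0\}$ so that only finitely many $n$ contribute. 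I would present this by fixing $b$, choosing $N$ so that $a_{-n}^+ b$ lands in a weight strictly above $d_b$ for all $n \geq N$, and concluding $a_{-n}^+ b = 0$ from $(V_l | V_k) = 0$ for $l \neq k$, which follows from \eqref{eq:L_n_adj} under the unitary Möbius symmetry hypothesis.
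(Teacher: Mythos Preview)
Your approach is correct and matches the paper's, which simply defers to \cite[Lemma 5.11]{CKLW18}: construct $a_n^+$ on each finite-dimensional $V_m$ by Riesz representation (using $[L_0,a_n]=-na_n$ and the orthogonality $(V_l|V_k)=0$ for $l\neq k$ from unitary M\"obius symmetry), then for the vanishing clause observe $a_{-n}^+ b \in V_{d_b-n}$, which is $\{0\}$ once $n$ is large by the lower bound on the grading. Your passing invocation of the Field axiom in the middle paragraph is a red herring (it concerns $a_n$ for large positive $n$, not $a_{-n}$), but you correctly identify and use the grading lower bound in your final paragraph.
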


If $(V,\scalar)$ has unitary M\"{o}bius symmetry, then Lemma \ref{lem:existence_an+} allows us to define for every $a\in V$, a parity-preserving field by
\begin{equation}  \label{eq:defin_adjoint_field}
Y(a,z)^+:=\sum_{n\in\Z} a_{(n)}^+z^{n+1}=
\sum_{n\in\Z}a_{(-n-2)}^+ z^{-n-1} \,.
\end{equation}
Hence, we adapt (ii) of Definition \ref{defin:unitary_mob_symmetry} to the field setting, saying that for any $a\in V$, the field $Y(a,z)$ has a \textbf{local adjoint} if for every $b\in V$, $Y(a,z)^+$ and $Y(b,z)$ are mutually local, that is, there exists $N\in\Zpluseq$ such that
$$
(z-w)^N[Y(a,z)^+,Y(b,w)]=0 \,.
$$
Accordingly, we set $V^{\scalar}$ as the subset of $V$ consisting of all the elements $a\in V$ such that $Y(a,z)$ has a local adjoint. One proves that $V^{\scalar}$ is a vertex subalgebra of $V$ retracing the proof of \cite[Proposition 5.14]{CKLW18} step by step. If $V = V^{\scalar}$, we say that $V$ \textit{has local adjoints}. Furthermore, the same proof of \cite[Lemma 5.15]{CKLW18} gives:

\begin{lem}  \label{lem:adjoint_quasi-primary_vectors}
	Let $(V,\scalar)$ have unitary M{\"o}bius symmetry and let $a\in V^{\scalar}$ be a quasi-primary vector. 
	 Then there is a quasi-primary vector  $\overline{a}\in V^{\scalar}$ with $d_{\overline{a}}=d_a$ and such that $z^{-2d_a}Y(a,z)^+=Y(\overline{a},z)$ or equivalently $a_n^+=\overline{a}_{-n}$ for all $n\in\half\Z$.
\end{lem}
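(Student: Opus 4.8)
The plan is to recognize the rescaled adjoint field $z^{-2d_a}Y(a,z)^+$ as a genuine vertex operator by invoking the uniqueness theorem for vertex (super)algebras (Goddard's uniqueness theorem, see e.g.\ \cite[Theorem 4.4]{Kac01}). Throughout write $\psi(z):=z^{-2d_a}Y(a,z)^+=\sum_{n\in\Z-d_a}a_n^+z^{n-d_a}$, using \eqref{eq:defin_adjoint_field}; this is a well-defined field on $V$ by Lemma \ref{lem:existence_an+}, and it is homogeneous of parity $p(a)$ because each $a_n^+$ inherits the parity-behaviour of $a_n$ (the grading by $L_0$ forces $(V_l|V_k)=0$ for $l\neq k$, hence the scalar product respects parity). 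The candidate vector will be $\overline a:=a_{d_a}^+\Omega$, which lies in $V_{d_a}$ since $[L_0,a_n^+]=na_n^+$ and $L_0\Omega=0$.

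First I would record the commutation relations of the adjoint modes. Taking the $\scalar$-adjoint of \eqref{eq:cr_l-1} and of \eqref{eq:cr_l1}, and using $L_{\pm1}^+=L_{\mp1}$ (which is exactly unitary M\"obius symmetry) together with $L_1a=0$ ($a$ is quasi-primary so the extra term $(L_1a)_{n+1}$ in \eqref{eq:cr_l1} drops), one obtains $[L_{-1},a_n^+]=(n-d_a+1)a_{n+1}^+$ and $[L_1,a_n^+]=(n+d_a-1)a_{n-1}^+$ for all $n\in\half\Z$. The first relation is precisely translation covariance of $\psi$, namely $[L_{-1},\psi(z)]=\frac{d}{dz}\psi(z)$, as a direct reindexing shows.

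Next comes the creation property, which I regard as the delicate step. Since $L_{-1}\Omega=0$, translation covariance yields the identity $\frac{d}{dz}\big(\psi(z)\Omega\big)=L_{-1}\big(\psi(z)\Omega\big)$; comparing coefficients in $\psi(z)\Omega=\sum_n a_n^+\Omega\,z^{n-d_a}$ gives the recursion $(n-d_a)\,a_n^+\Omega=L_{-1}\,a_{n-1}^+\Omega$. Because $\psi$ is lower-truncated, there is a smallest $n_0$ with $a_{n_0}^+\Omega\neq0$ (if $\psi(z)\Omega=0$ one sets $\overline a:=0$, which forces $Y(a,z)^+=0$, hence $a=0$ and the statement is trivial); for this $n_0$ the right-hand side of the recursion vanishes, so $(n_0-d_a)a_{n_0}^+\Omega=0$ and thus $n_0=d_a$. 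Therefore $\psi(z)\Omega$ is regular at $z=0$ and the recursion identifies it with $e^{zL_{-1}}\overline a$, where $\overline a=a_{d_a}^+\Omega$. Goddard's uniqueness theorem (applicable since $\psi$ is mutually graded-local with every $Y(b,w)$, this being exactly $a\in V^{\scalar}$, the rescaling by $z^{-2d_a}$ being irrelevant to locality) then gives $\psi(z)=Y(\overline a,z)$, that is $z^{-2d_a}Y(a,z)^+=Y(\overline a,z)$, equivalently $a_n^+=\overline a_{-n}$; in particular $d_{\overline a}=d_a$.

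It remains to verify that $\overline a$ is quasi-primary and lies in $V^{\scalar}$. For quasi-primality I would combine $\overline a_n=a_{-n}^+$ with the second adjoint relation above to get $[L_1,\overline a_n]=(-n+d_a-1)\overline a_{n+1}$; comparing this with \eqref{eq:cr_l1} applied to the genuine vector $\overline a$ of weight $d_a$ forces $(L_1\overline a)_{n+1}=0$ for all $n$, whence $Y(L_1\overline a,z)=0$ and so $L_1\overline a=0$. For membership in $V^{\scalar}$, taking adjoints once more yields $\overline a_n^+=(a_{-n}^+)^+=a_{-n}$, hence $Y(\overline a,z)^+=z^{2d_a}Y(a,z)$; since $Y(a,z)$ is a vertex operator it is mutually graded-local with every $Y(b,w)$, and multiplication by the overall integer power $z^{2d_a}$ (note $2d_a\in\Z$) does not affect the locality condition, so $Y(\overline a,z)^+$ has a local adjoint and $\overline a\in V^{\scalar}$. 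The main obstacle throughout is the regularity argument of the third paragraph: it is tempting to simply assert the creation property $\psi(z)\Omega=e^{zL_{-1}}\overline a$, but it genuinely requires the translation-covariance ODE together with lower truncation to pin the lowest power of $z$ at $z^0$, and this is also the only point where quasi-primality of $a$ is essential.
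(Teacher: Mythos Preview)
Your proof is correct and follows essentially the same approach as the paper, which simply cites \cite[Lemma 5.15]{CKLW18}: that proof also identifies $z^{-2d_a}Y(a,z)^+$ as $Y(\overline a,z)$ via Goddard's uniqueness theorem after checking translation covariance, the creation property, and mutual locality. One small remark: your final comment that quasi-primality is used \emph{only} in the regularity argument is slightly misleading, since it already enters in deriving $[L_{-1},a_n^+]=(n-d_a+1)a_{n+1}^+$ (to kill the $(L_1a)_{n+1}$ term), which is precisely what you then feed into the regularity step.
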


As announced at the beginning, we prove the VOSA version of the PCT theorem:
\begin{theo}   \label{theo:VOSA_PCT}
Let $V$ be a VOSA equipped with a normalized scalar product $\scalar$ and such that $V_0=\C\Omega$. Then the following are equivalent:
\begin{itemize}
\item[(i)]  $(V,\scalar)$ is a unitary VOSA.

\item[(ii)] $(V,\scalar)$ has unitary M{\"o}bius symmetry and local adjoints, that is, $V^{\scalar}=V$. 
\end{itemize}
\end{theo}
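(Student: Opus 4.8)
The plan is to treat the two implications separately, following \cite[Theorem 5.16]{CKLW18} and inserting the graded signs and the twist $Z_V$ wherever the fermionic structure intervenes; the substance is in the reconstruction (ii)$\Rightarrow$(i). For (i)$\Rightarrow$(ii), unitary M\"obius symmetry is immediate from \eqref{eq:L_n_adj} specialised to $n\in\{-1,0,1\}$. For local adjoints I would read the invariance \eqref{eq:invariant_scalar_product} backwards: it expresses the adjoint of a M\"obius-transformed vertex operator as $Y(\theta(a),z)$, so that $Y(a,z)^+$ is, up to the factors $e^{zL_1}$ and $z^{-2L_0}$, a vertex operator of the form $Y(\theta(\cdot),z)$. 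Since the PCT operator $\theta$ is a VOSA automorphism, the mutual locality of $Y(a,z)^+$ with every $Y(b,w)$ then reduces to the VOSA locality axiom applied to $\theta(a)$ and $b$, giving $V^{\scalar}=V$.

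For (ii)$\Rightarrow$(i) I would first record the structural consequences of (ii). The computation \eqref{eq:positivity_conformal_weight} uses only $L_1^+=L_{-1}$ (unitary M\"obius symmetry) and positive-definiteness of $\scalar$, so it already forces all conformal weights to be non-negative; together with $V_0=\C\Omega$ this yields that $V$ is of CFT type, which is precisely where a little more care is needed than in the CFT-type setting of \cite{CKLW18}. Because $L_0$ is diagonalizable with spectrum bounded below and finite-dimensional eigenspaces, the M\"obius $\mathfrak{sl}(2,\C)=\langle L_{-1},L_0,L_1\rangle$ decomposes $V$ as a direct sum of lowest-weight modules, so every vector is uniquely a finite sum $\sum_{k\ge0}L_{-1}^k a_k$ with each $a_k$ quasi-primary. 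I would then define the candidate antilinear map $\theta$ by setting $\theta(a):=i^{2d_a}Z_V^{-1}\overline{a}$ on a quasi-primary vector $a$, where $\overline{a}$ is the quasi-primary vector of Lemma \ref{lem:adjoint_quasi-primary_vectors} (so $a_n^+=\overline{a}_{-n}$), and extending antilinearly by $\theta(L_{-1}^k a):=L_{-1}^k\theta(a)$. By construction $\theta$ commutes with $L_{-1},L_0,L_1$ (the $\mathfrak{sl}(2,\C)$ structure constants being real), sends quasi-primaries to quasi-primaries of equal weight, and fixes $\Omega$; since the bar map is a conjugate-linear involution, $\theta$ is an antilinear bijection.

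The crux is to show that $\theta$ respects the $(n)$-products, i.e.\ that it is a vertex superalgebra automorphism. Here I would invoke the uniqueness theorem for vertex operators (see \cite{Kac01}): the field $\theta Y(a,z)\theta^{-1}$ satisfies $\theta Y(a,z)\theta^{-1}\Omega=e^{zL_{-1}}\theta(a)=Y(\theta(a),z)\Omega$, using only that $\theta$ commutes with $L_{-1}$ and fixes $\Omega$, so it suffices to prove that $\theta Y(a,z)\theta^{-1}$ is mutually local with every $Y(b,w)$. This is exactly where the hypothesis $V=V^{\scalar}$ is used: the definition of $\theta$ through the bar map identifies $\theta Y(a,z)\theta^{-1}$, up to the twist $Z_V$ and the M\"obius factors, with the adjoint field $Y(a,z)^+$, which is mutually local with all $Y(b,w)$ precisely because $a\in V^{\scalar}$. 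The uniqueness theorem then gives $\theta Y(a,z)\theta^{-1}=Y(\theta(a),z)$, that is $\theta(a_{(n)}b)=\theta(a)_{(n)}\theta(b)$. I expect this to be the main obstacle, and the genuinely VOSA-specific difficulty is the bookkeeping of the Fermi signs produced by the graded commutator and by $Z_V$, which I would control using \eqref{eq:Z_(n)-product_formula} and the skew-symmetry \eqref{eq:skew-symmetry}.

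Finally I would close the loop. With $\theta$ a vertex superalgebra automorphism, the bilinear form $\bilinear:=(\theta(\cdot)|\cdot)$ is non-degenerate ($\scalar$ being positive-definite and $\theta$ bijective); using $\theta Y(a,z)\theta^{-1}=Y(\theta(a),z)$ one sees that its invariance \eqref{eq:invariant_bilinear_form} is equivalent to the invariance \eqref{eq:invariant_scalar_product} of $\scalar$ with respect to $\theta$, and this is verified on quasi-primary generators directly from $a_n^+=\overline{a}_{-n}$ and the definition of $\theta$, then propagated to all of $V$ by $L_{-1}$-covariance. Since $\theta$ commutes with $L_0$ and $L_1$, the vector $\theta(\nu)$ is a conformal vector with $\theta(\nu)_{(1)}=\theta L_0\theta^{-1}=L_0=\nu_{(1)}$, so Proposition \ref{prop:conformal_vector_bilinear_form}, applied in the presence of the non-degenerate invariant bilinear form $\bilinear$, forces $\theta(\nu)=\nu$; hence $\theta$ is a VOSA automorphism. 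As $\scalar$ is normalized and invariant with respect to the antilinear automorphism $\theta$, Proposition \ref{prop:relation_pct_scalar_product} guarantees that $\theta$ is automatically an involution and that $(V,\scalar)$ is unitary with PCT operator $\theta$, completing (ii)$\Rightarrow$(i).
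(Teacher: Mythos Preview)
Your sketch of (i)$\Rightarrow$(ii) is correct and matches the paper. The problem is in (ii)$\Rightarrow$(i), precisely at the step you flag as the crux. To apply the uniqueness theorem to $\theta Y(a,z)\theta^{-1}$ you must show it is mutually local with every $Y(b,w)$, and for this you assert that the bar-map definition of $\theta$ ``identifies $\theta Y(a,z)\theta^{-1}$, up to the twist $Z_V$ and M\"obius factors, with the adjoint field $Y(a,z)^+$''. But that identification \emph{is} the automorphism property $\theta Y(a,z)\theta^{-1}=Y(\theta a,z)$ combined with the adjoint formula \eqref{eq:an_cross}; the argument is circular. The hypothesis $V^{\scalar}=V$ enters through Lemma~\ref{lem:adjoint_quasi-primary_vectors}, where it guarantees the \emph{existence} of $\overline{a}\in V$ with $a_n^+=\overline{a}_{-n}$; once $\overline{a}$ is known, $z^{-2d_a}Y(a,z)^+=Y(\overline{a},z)$, and the locality of this with $Y(b,w)$ is just the VOSA locality axiom for $\overline{a}$ and $b$ --- it carries no information about the conjugated field $\theta Y(a,z)\theta^{-1}$.

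The paper's route is different: it first proves directly that $V$ is \emph{simple} (this is the one place where the argument of \cite[Theorem~5.16]{CKLW18} must be modified, and the paper supplies a short proof using Lemma~\ref{lem:adjoint_quasi-primary_vectors} to produce a nonzero element of $\mathscr{J}\cap V_0$ from any nonzero ideal $\mathscr{J}$), after which the CKLW18 argument applies verbatim. With simplicity, the unique normalized invariant bilinear form $\bilinear$ of Remark~\ref{rem:properties_bilinear_forms} is non-degenerate, $\theta$ is defined by $(\theta a|b)=(a,b)$, and the automorphism property follows by combining the invariance of $\bilinear$ with the scalar-product invariance that you correctly propose to verify on quasi-primaries and propagate by $L_{-1}$-covariance: $(\theta(c_{(n)}a)|b)=(c_{(n)}a,b)=\cdots=((\theta c)_{(n)}\theta a|b)$, whence $\theta(c_{(n)}a)=(\theta c)_{(n)}\theta a$ by positive-definiteness of $\scalar$. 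Your bar-map $\theta$ in fact coincides with this one, so your ingredients are essentially right; what is missing is the simplicity step (or an equivalent argument via $\bilinear$) that turns them into a non-circular proof of the automorphism property.
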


\begin{proof}
The proof of \cite[Theorem 5.16]{CKLW18} does the job with just minor changes as we are going to explain.

(i) $\Rightarrow$ (ii). We can proceed as in the proof just cited, using the correct formulae with respect to our framework: for every homogeneous vector $a\in V$, we have from \eqref{eq:an_star} that
\begin{align}  
a_{-n}^+ 
 &= 
(-1)^{2d_a^2+d_a}\sum_{l\in\Zpluseq}
\frac{(L_1^l\theta (a))_n}{l!}
\qquad \forall n\in\Z -d_a  
\label{eq:an_cross}\\
Y(a,z)^+
 &=
(-1)^{2d_a^2+d_a}\sum_{l\in\Zpluseq}
\frac{Y(L_1^l\theta (a),z)z^{2d_a-l}}{l!}  
\end{align}
which are well-defined because $L_1b\in V_{d_b-1}$ for all homogeneous $b\in V$ thanks to \eqref{eq:cr_l0}.

(ii) $\Rightarrow$ (i). The first step for this proof in \cite[Theorem 5.16]{CKLW18} is to prove the simplicity of $V$. This is a slightly different proof due to the lack of \cite[Remark 4.5]{CKLW18} in our framework, which is anyway unnecessary. In fact, suppose that there exists a non-zero ideal $\mathscr{J}\subseteq V$. $L_0\mathscr{J}\subseteq\mathscr{J}$ implies that
$$
\mathscr{J}=\bigoplus_{n\geq d}(\mathscr{J}\cap V_n)
$$
for some $d\in\half\Z$ such that $\mathscr{J}\cap V_d\not=\{0\}$. If $d=0$, then $\Omega\in\mathscr{J}$, which implies that $\mathscr{J}=V$, i.e., $V$ is simple. Therefore, suppose $d\not=0$ and let $a\in\mathscr{J}\cap V_d$ be non-zero, which implies that $a$ is quasi-primary. By Lemma \ref{lem:adjoint_quasi-primary_vectors}, there exists $\overline{a}\in V_d$ such that $a_{-d}^+=\overline{a}_d$. Hence, $\overline{a}_da\in\mathscr{J}\cap V_0$ and it is non-zero as
$$
(\Omega|\overline{a}_da)
=(\Omega| a_{-d}^+a)
=(a_{-d}\Omega| a)
=(a| a)\not=0 \,.
$$
Since $V_0=\C\Omega$, $\overline{a}_da=\alpha\Omega$ for some $\alpha\in\C\backslash\{0\}$, which implies the simplicity of $V$. The remaining part of the proof works as for \cite[Theorem 5.16]{CKLW18}.
\end{proof}

Thanks to the theorem above and by adapting the proof of \cite[Proposition 5.17]{CKLW18}, we have the desired characterization of the unitarity for VOSAs:
\begin{prop}   
\label{prop:simple_unitary_vosa_gen_by_qp_hermitian_fields}
Let $V$ be a VOSA equipped with a normalized scalar product $\scalar$ and such that $V_0=\C\Omega$. Then the following are equivalent:
\begin{itemize}
\item[(i)]  $(V,\scalar)$ is a unitary VOSA.

\item[(ii)] $Y(\nu,z)$ is a Hermitian field and $V$ is generated by a family of Hermitian quasi-primary fields.
\end{itemize}
\end{prop}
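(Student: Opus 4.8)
The plan is to deduce both directions from the PCT theorem for VOSAs, Theorem \ref{theo:VOSA_PCT}: since $V_0=\C\Omega$ by hypothesis, condition (i) is equivalent to $(V,\scalar)$ having \emph{both} unitary M\"obius symmetry and local adjoints, i.e.\ $V^{\scalar}=V$. Thus in each direction the task reduces to translating the statement about Hermitian quasi-primary fields into these two properties, exactly as in the proof of \cite[Proposition 5.17]{CKLW18}, the only genuinely new points being the fermionic signs and the possibly half-integer conformal weights.

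For (ii) $\Rightarrow$ (i), first I would note that Hermiticity of $Y(\nu,z)$ means $(\nu_nb|c)=(b|\nu_{-n}c)$ for all $n$, that is $(L_nb|c)=(b|L_{-n}c)$; restricting to $n\in\{-1,0,1\}$ gives unitary M\"obius symmetry in the sense of Definition \ref{defin:unitary_mob_symmetry}. Hence Lemma \ref{lem:existence_an+} applies, the adjoint field $Y(a,z)^+$ of \eqref{eq:defin_adjoint_field} is well-defined, and $V^{\scalar}$ is a vertex subalgebra. Next, for each Hermitian quasi-primary generator $a$ of weight $d_a$, the relation $a_n^+=a_{-n}$ rearranges into $Y(a,z)^+=z^{2d_a}Y(a,z)$ (the case $\overline a=a$ of Lemma \ref{lem:adjoint_quasi-primary_vectors}). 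Since $2d_a\in\Z$, multiplying the graded-locality identity $(z-w)^N[Y(a,z),Y(b,w)]=0$ by $z^{2d_a}$ shows that $Y(a,z)^+$ is mutually local with every $Y(b,w)$, so $a\in V^{\scalar}$. As $V^{\scalar}$ is a vertex subalgebra containing a generating family, $V^{\scalar}=V$, and Theorem \ref{theo:VOSA_PCT} delivers unitarity.

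For (i) $\Rightarrow$ (ii), unitarity gives \eqref{eq:L_n_adj}, which is precisely the Hermiticity of $Y(\nu,z)$ (note $\nu$ is quasi-primary with $d_\nu=2$, so $i^{2d_\nu}=1$ and $Z_V\theta\nu=\nu$ in \eqref{eq:an_star_qp}). For the generation claim I would use that $V$ is of CFT type (Proposition \ref{prop:characterisation_simplicity}) and decompose each homogeneous component into $L_{-1}$-strings $L_{-1}^k a$ over quasi-primary vectors $a$, so the quasi-primary vectors generate $V$. To make the generators Hermitian, I would Hermitize: given a quasi-primary $a$, Lemma \ref{lem:adjoint_quasi-primary_vectors} produces a quasi-primary $\overline a$ with $a_n^+=\overline a_{-n}$, and then $\tfrac{1}{2}(a+\overline a)$ and $\tfrac{1}{2i}(a-\overline a)$ are Hermitian quasi-primary vectors spanning the same space as $a,\overline a$; these still generate $V$.

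The main point to watch is the interplay of the fermionic signs with these manipulations, and I expect it to be the only real obstacle, though a benign one. I would verify that the antilinear map $a\mapsto\overline a$ of Lemma \ref{lem:adjoint_quasi-primary_vectors} satisfies $\overline{\overline a}=a$ (from $a_n^{++}=a_n$), which is what makes $\tfrac{1}{2}(a+\overline a)$ and $\tfrac{1}{2i}(a-\overline a)$ genuinely bar-fixed, and that every exponent $2d_a$ in $Y(a,z)^+=z^{2d_a}Y(a,z)$ is an integer even when $a$ is odd ($d_a\in\Z-\half$), so that no fractional powers of $z$ arise and all graded commutators remain well-defined formal distributions. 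Granting these routine checks, the argument is a faithful adaptation of \cite[Proposition 5.17]{CKLW18} with Theorem \ref{theo:VOSA_PCT} in place of the VOA PCT theorem, and I anticipate no difficulty beyond bookkeeping of the grading.
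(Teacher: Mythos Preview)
Your proposal is correct and follows exactly the approach indicated in the paper, namely adapting the proof of \cite[Proposition 5.17]{CKLW18} with Theorem \ref{theo:VOSA_PCT} in place of the VOA PCT theorem. The paper gives no further details beyond this reference, and your write-up is a faithful and accurate expansion of that adaptation, including the only super-specific point that needs checking, that $2d_a\in\Z$ so that $Y(a,z)^+=z^{2d_a}Y(a,z)$ remains a well-defined field.
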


We point out that the condition that $V_0=\C\Omega$ in Theorem \ref{theo:VOSA_PCT} and in Proposition \ref{prop:simple_unitary_vosa_gen_by_qp_hermitian_fields} can be cut out thanks to the following result, which is of course applicable to the VOA case too.

First of all, we introduce some definitions. For a given $N\in\Zplus$ and a family $\{V^j\}_{j=1}^N$ of unitary VOSAs of CFT type with normalized invariant scalar products $\{\scalar_j\}_{j=1}^N$ respectively, the vector superspace $V:=\bigoplus_{j=1}^N V^j$ with the normalized invariant scalar product $\scalar:=\frac{1}{N}\oplus_{j=1}^N\scalar_j$ is a unitary VOSA. We say that $V$ is a \textit{direct sum of unitary VOSAs of CFT type}.
Similarly, if $\{V^j\}_{j=1}^N$ is a family of VOSAs of CFT type, equipped with normalized scalar products $\{\scalar_j\}_{j=1}^N$ with respect to $(V^j,\scalar_j)$ has unitary M{\"o}bius symmetry and $(V^j)^{\scalar_j}=V^j$ for all $j\in\{1,\dots, N\}$, then the vector superspace $V:=\bigoplus_{j=1}^N V^j$ has the M{\"o}bius symmetry with respect to the normalized scalar product $\scalar:=\frac{1}{N}\oplus_{j=1}^N\scalar_j$ and satisfies $V^{\scalar}=V$. Accordingly, we say that $(V,\scalar)$ is a \textit{direct sum of VOSAs of CFT type satisfying the M{\"o}bius symmetries and closed for local adjoints}. 

\begin{prop}  \label{prop:decomposing_unitary_VOSAs}
	Let $V$ be a VOSA. Then we have the following:
	\begin{itemize}
		\item[(i)] $V$ is unitary if and only if it is a direct sum of unitary VOSAs of CFT type.
		
		\item[(ii)] $V$ has a normalized scalar product $\scalar$ such that $(V,\scalar)$ has unitary M{\"o}bius symmetry and $V^{\scalar}=V$ if and only if $(V,\scalar)$ is a direct sum of VOSAs of CFT type  each of which has unitary M{\"o}bius symmetry and local adjoints. 
	\end{itemize}
\end{prop}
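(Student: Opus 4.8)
The plan is to treat (i) and (ii) in parallel, since they differ only in how one produces an adjoint operation on the degree-zero space $V_0$. The two ``if'' directions are immediate from the constructions recorded just before the statement: a blockwise orthogonal direct sum inherits normalization, invariance, unitary M\"obius symmetry and the local-adjoint property $V^{\scalar}=V$ from its summands, because in a direct sum the state-field map is block diagonal, so $a_{(n)}b=0$ for elements $a,b$ in distinct summands and the corresponding fields trivially (anti)commute. Thus the content is the two ``only if'' directions, which I would prove by producing a canonical decomposition $V=\bigoplus_{j=1}^N V^j$ into CFT-type summands. First I would note that both hypotheses supply unitary M\"obius symmetry, so the computation \eqref{eq:positivity_conformal_weight} applies verbatim and yields $V_n=\{0\}$ for $n<0$; combined with the VOSA axioms this puts all conformal weights in $\half\Zpluseq$, so that ``CFT type'' reduces on each summand to the single requirement that its degree-zero part be one-dimensional.

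The structural heart is the analysis of the finite-dimensional space $V_0$. The key observation I would establish is that every $a\in V_0$ satisfies $L_{-1}a=0$: indeed $(L_{-1}a| L_{-1}a)=(a| L_1L_{-1}a)=(a| 2L_0a)=0$ by M\"obius symmetry, using that weight-zero vectors are primary. By translation covariance this forces $Y(a,z)=a_{(-1)}$ to be a constant field, i.e.\ $a_{(n)}=0$ for $n\neq-1$. Writing $o(a):=a_{(-1)}$ and feeding this constancy into the Borcherds commutator formula \eqref{eq:borcherds_commutator_formula} shows $[o(a),b_{(m)}]=0$ for all $a\in V_0$, all $b\in V$ and all $m$, since the surviving terms would involve modes $a_{(l)}$ with $l\geq0$, all of which vanish; hence each $o(a)$ is central. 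The same constancy collapses the Borcherds associative formula \eqref{eq:borcherds_associative_formula} to $o(a_{(-1)}b)=o(a)o(b)$, so $a\mapsto o(a)$ is a unital algebra homomorphism of $(V_0,\cdot,\Omega)$, with $a\cdot b:=a_{(-1)}b$, into $\End(V)$; it is faithful because $o(a)\Omega=a$, and in particular $V_0$ is commutative and associative.

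Next I would upgrade this to a $*$-structure and invoke finite-dimensional $C^*$-theory. For (i), applying \eqref{eq:an_star_qp} to the primary weight-zero vectors (on which $Z_V=1_V$) gives $(a\cdot b| c)=(b| \theta(a)\cdot c)$, hence $o(a)^+=o(\theta(a))$; for (ii) the same role is played by Lemma \ref{lem:adjoint_quasi-primary_vectors}, which gives $o(a)^+=o(\overline{a})$. In either case $\rho:=o\restriction_{V_0}$ is a faithful $*$-representation of the finite-dimensional commutative algebra $V_0$ on the Hilbert space $(V_0,\scalar)$, so $\rho(V_0)$ is a commutative $C^*$-algebra and $V_0\cong\C^N$. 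Let $e_1,\dots,e_N$ be its minimal idempotents, so $\sum_j e_j=\Omega$ and $e_i\cdot e_j=\delta_{ij}e_j$; minimality forces $\theta(e_j)=e_j$ (resp.\ $\overline{e_j}=e_j$), whence the zero-modes $P_j:=o(e_j)$ are mutually orthogonal self-adjoint central projections with $\sum_j P_j=1_V$.

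Finally I would set $V^j:=P_jV$ and verify each is a unitary (resp.\ M\"obius-symmetric, local-adjoint) VOSA of CFT type. Centrality of $P_j$ gives $a_{(n)}V^j\subseteq V^j$ and $a_{(n)}b=0$ for $a\in V^j$, $b\in V^k$ with $j\neq k$, so the $V^j$ are ideals; $e_j=P_j\Omega$ is the vacuum of $V^j$, and $\nu^j:=P_j\nu$ is a conformal vector with $Y(\nu^j,z)=P_jY(\nu,z)$ by the normal-ordering identity for the constant field $Y(e_j,z)=P_j$, so that the Virasoro action on $V^j$ is simply the restriction of the $L_n$. Moreover $(V^j)_0=e_j\cdot V_0=\C e_j$, so $V^j$ is of CFT type, and since $P_j$ commutes with $\theta$ (resp.\ preserves the M\"obius and local-adjoint data), the PCT operator, scalar product and local-adjoint property restrict to $V^j$, after rescaling $\scalar\restriction_{V^j}$ by $(e_j| e_j)^{-1}$ to normalize it. This exhibits $V$ as the desired direct sum. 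I expect the main obstacle to be exactly the structural step of the second paragraph --- showing that weight-zero fields are constant and that the associated zero-modes are central idempotent projections --- since it is this, rather than the soft $C^*$ identification, that converts an abstract idempotent of $V_0$ into a genuine VOSA direct summand; a minor secondary point will be reconciling the naturally weighted orthogonal decomposition with the symmetric $\frac1N$-normalization adopted in the definition of a direct sum of CFT-type VOSAs.
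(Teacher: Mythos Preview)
Your proposal is correct and follows essentially the same approach as the paper's proof: both show that weight-zero vectors have constant fields (via $L_{-1}a=0$), use the Borcherds commutator and associative formulae to make $V_0$ into a finite-dimensional commutative $C^*$-algebra (with $*$-structure coming from $\theta$ in case (i) and from Lemma~\ref{lem:adjoint_quasi-primary_vectors} in case (ii)), and then decompose $V$ along the minimal central projections. The paper writes $a_0$ where you write $o(a)=a_{(-1)}$ (these agree since $d_a=0$), and the normalization issue you flag is handled in the paper exactly as you suggest, by rescaling with $(\Omega^j|\Omega^j)^{-1}$.
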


\begin{proof}
	\textit{Proof of (i)}.
	Note that the ``if'' part is the trivial one by definition. 
	Accordingly, suppose that $V$ is a unitary VOSA with normalized invariant scalar product $\scalar$ and PCT operator $\theta$. We first prove that $V_0$ can be turned into a finite dimensional commutative $C^*$-algebra. The argument as in \eqref{eq:positivity_conformal_weight} shows that $V_n=\{0\}$ whenever $n<0$. It follows also that if $a\in V_0$, then $L_{-1}a=0$. Hence, for all $a\in V_0$, $a_{(n-1)}=a_n=0$ whenever $n\not=0$ as $\frac{\mathrm{d}}{\mathrm{d}z}Y(a,z)=Y(L_{-1}a,z)=0$, see \cite[Proposition 4.8(a)]{Kac01}. By the Borcherds commutator formula \eqref{eq:borcherds_commutator_formula}, we get that
	$a_0b_kc=b_ka_0c$ for all $a\in V_0$, all $b,c\in V$ and all $k\in\half\Z$; whereas from the Borcherds associative formula \eqref{eq:borcherds_associative_formula}, we get that $(a_0b)_kc=a_0b_kc$ for all $a\in V_0$, all $b,c\in V$ and all $k\in\half\Z$. Therefore, we define a structure of commutative associative algebra on $V_0$ with unit $\Omega$ by setting $a b:= a_0b\in V_0$ for all $a,b\in V_0$. Moreover, $V_0$ is also a $*$-algebra with the involution $V_0\ni a\to \theta(a)\in V_0$ (recall that $\theta$ preserves $\nu$ and the $(n)$-product and thus the grading of $V$). Restricting the scalar product $\scalar$ of $V$ to $V_0$, we have that $V_0$ is also a finite dimensional Hilbert space. Then the left regular representation $L:V_0\to \End(V_0)$, given by $L(a)b:=ab=a_0b$ for all $a,b\in V_0$, is a faithful $*$-representation. Accordingly, the commutative $*$-algebra $V_0$ with the norm $|||\, a\, |||:=\norm{L(a)}$ for all $a\in V_0$ is a finite dimensional commutative $C^*$-algebra.
	
	Let $N:=\dim(V_0)$, then there exists an orthogonal basis $\{\Omega^j\}_{j=1}^N$ of $V_0$ of $\theta$-invariant vectors such that $\Omega=\sum_{j=1}^N\Omega^j$ and $V_0=\bigoplus_{j=1}^N \C \Omega^j$, so that $1_{V_0}=\sum_{j=1}^N\Omega^j_0$ and $\{\Omega^j_0\}_{j=1}^N$ are the corresponding central projections of $V_0$, see e.g.\ \cite[Section III.1]{Dav96}. 
	Thanks to the fact that $\Omega^j_0a_{(-1)}=a_{(-1)}\Omega^j_0$ for all $j\in\{1,\dots,N\}$ and all $a\in V$, we have that $\{\Omega^j_0\}_{j=1}^N$ is a family of orthogonal projections on $V$ too. Hence, it is easy to check that the vector subspaces of $V$
	$$
	V^j:=\Omega^j_0V=\{a_{(-1)}\Omega^j\mid a\in V\}
	\qquad \forall j\in\{1,\dots, N\}
	$$ 
	are VOSAs of CFT type, with of course $\Omega^j$ and $\nu^j:=\nu_{(-1)}\Omega^j$ as respective vacuum and conformal vector and state-field correspondence inherited from the one of $V$. Furthermore, $V$ is the direct sum of these VOSAs $\{V^j\}_{j=1}^N$, which are also unitary with the normalized invariant scalar product $\{\scalar_j:=(\Omega^j|\Omega^j)^{-1}\scalar\}_{j=1}^N$, satisfying the desired property.
		
	\textit{Proof of (ii)}. 
	Also in this case, the non-trivial statement is the ``only if'' part. So let us suppose that $V$ has a normalized scalar product $\scalar$, that $(V,\scalar)$ has unitary M{\"o}bius symmetry and that $V^{\scalar}=V$. 
	Thanks to the M{\"o}bius symmetry, we have that $V_n=\{0\}$ whenever $n<0$ and that $L_{-1}a=0$ for all $a\in V_0$ by the argument as in \eqref{eq:positivity_conformal_weight}. Again by the fact that $\frac{\mathrm{d}}{\mathrm{d}z}Y(a,z)=Y(L_{-1}a,z)=0$, see \cite[Proposition 4.8(a)]{Kac01}, $a_{(n-1)}=a_n=0$ whenever $n\not=0$ for all $a\in V_0$.  
	By Lemma \ref{lem:adjoint_quasi-primary_vectors}, there exists $\overline{a}\in V_0$ such that $Y(\overline{a},z)=Y(a,z)^+=a_0^+$, that is $\overline{a}_0=a_0^+$. Then the proof follows as for part (i), first proving that $V_0$ is a finite dimensional commutative $C^*$-algebra, with the difference that this time the $*$-structure is given by the involution $V_0\ni a\mapsto \overline{a}\in V_0$. 
\end{proof}

We finish with a further characterization of unitarity for VOSAs (and thus VOAs too), again in the spirit of the PCT theorem, cf.\ \cite[Section 2]{CT23}:

\begin{theo} \label{theo:characterization_unitarity_VOSAs}
	Let $V$ be a VOSA equipped with a normalized scalar product $\scalar$.  Then the following are equivalent:
	\begin{itemize}
		\item[(i)]  $(V,\scalar)$ is a unitary VOSA.
		
		\item[(ii)] There exists an antilinear vector space involution $V\ni a\mapsto \overline{a}\in V$ such that $\overline{\nu}=\nu$ and that $(a_nb|c)=(b|\overline{a}_{-n}c)$ for all $a,b,c\in V$ and all $n\in\half\Z$. 
	\end{itemize}
Furthermore, $\overline{\Omega}=\Omega$ and the PCT operator is related to the antilinear involution in (ii) by the following formula:
$$
\overline{a}=e^{L_1}(-1)^{2d_a^2+d_a}\theta(a)
\qquad\forall a\in V \,.
$$ 
\end{theo}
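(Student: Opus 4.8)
The plan is to prove the two implications separately and read off the closing formula as a by-product.

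For (i) $\Rightarrow$ (ii) I would guess the involution directly from the adjoint relation \eqref{eq:an_star} and then verify it. For homogeneous $a$ set $\overline{a}:=e^{L_1}(-1)^{2d_a^2+d_a}\theta(a)$ and extend antilinearly to $V$; this is well defined since $\theta$ commutes with every $L_n$ and, $V$ being bounded below with $L_1$ lowering the weight by one, $e^{L_1}$ acts as a finite sum on each homogeneous vector. Expanding $e^{L_1}\theta(a)=\sum_{l\in\Zpluseq}\frac{1}{l!}\theta(L_1^l a)$ and matching the weight-$(d_a-l)$ homogeneous component of $\overline{a}$ against the mode $(\theta L_1^l a)_{-n}$ appearing in \eqref{eq:an_star} gives at once $(a_nb|c)=(b|\overline{a}_{-n}c)$, i.e.\ $a_n^+=\overline{a}_{-n}$, for all $n\in\half\Z$. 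Since $\nu$ is quasi-primary with $L_1\nu=0$, $\theta(\nu)=\nu$ and $(-1)^{2\cdot 4+2}=1$, we get $\overline{\nu}=\nu$, and the same computation in weight $0$ gives $\overline{\Omega}=\Omega$. Finally, antilinearity of $\scalar$ in the first slot turns $(a_nb|c)=(b|\overline{a}_{-n}c)$, after conjugation, into $(\overline{a}_nc|b)=(c|a_{-n}b)$; comparing this with the defining relation applied to $\overline{a}$ forces $a_{-n}=\overline{\overline{a}}_{-n}$ for all $n$ by non-degeneracy, and evaluating on $\Omega$ yields $\overline{\overline{a}}=a$, so $\overline{\cdot}$ is an involution.

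For (ii) $\Rightarrow$ (i) I would reduce to the CFT-type PCT theorem. Unitary M\"obius (indeed full Virasoro) symmetry is immediate: taking $a=\nu$ in the hypothesis and using $\overline{\nu}=\nu$ gives $(L_nb|c)=(b|L_{-n}c)$ for all $n\in\Z$; in particular $(V_l|V_k)=0$ whenever $l\neq k$, so even and odd vectors are orthogonal. The heart of the argument is to show $V^{\scalar}=V$. Writing $\overline{a}=\sum_j\overline{a}^{(j)}$ as its finite decomposition into homogeneous components and translating the hypothesis $a_n^+=\overline{a}_{-n}$ from the weight-graded modes back to the $(n)$-modes of \eqref{eq:defin_adjoint_field}, I expect to obtain, for homogeneous $a$,
\[
Y(a,z)^+=\sum_j z^{\,d_a+d_{\overline{a}^{(j)}}}\,Y(\overline{a}^{(j)},z),
\]
a \emph{finite} $\C[z,z^{-1}]$-combination of genuine vertex operators of $V$ (with integer exponents, by the weight/parity bookkeeping). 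Since each $Y(\overline{a}^{(j)},w)$ is mutually local with every $Y(b,w)$ by the locality axiom, and multiplying by a power of $z$ and summing preserves mutual locality, $Y(a,z)^+$ is local with all $Y(b,w)$; hence every homogeneous $a$ lies in $V^{\scalar}$ and therefore $V^{\scalar}=V$. With unitary M\"obius symmetry and $V^{\scalar}=V$ in hand, Proposition \ref{prop:decomposing_unitary_VOSAs}(ii) decomposes $(V,\scalar)$ into a direct sum of CFT-type VOSAs each having unitary M\"obius symmetry and local adjoints; Theorem \ref{theo:VOSA_PCT} (whose hypothesis $V_0=\C\Omega$ now holds on each summand) makes each summand unitary, and Proposition \ref{prop:decomposing_unitary_VOSAs}(i) assembles these into unitarity of $V$.

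I expect the main obstacle to be precisely this local-adjoints step in (ii) $\Rightarrow$ (i): the delicate point is converting between the $a_{(n)}$-indexing and the weight-graded $a_n$-indexing so that $Y(a,z)^+$ is recognized as a finite sum of honest vertex operators $Y(\overline{a}^{(j)},z)$ (finiteness coming from $\overline{a}\in V=\bigoplus_n V_n$), since locality is exactly what the abstract adjoint relation does not hand to us for free. As for the closing formula, in (ii) the given involution must coincide with the one built in (i): both satisfy $a_n^+=\overline{a}_{-n}$, so their modes agree and, evaluating on $\Omega$, we obtain $\overline{a}=e^{L_1}(-1)^{2d_a^2+d_a}\theta(a)$, while $\overline{\Omega}=\Omega$ as computed above.
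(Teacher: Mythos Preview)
Your proposal is correct and follows exactly the paper's route: the paper's proof simply invokes Theorem~\ref{theo:VOSA_PCT} together with Proposition~\ref{prop:decomposing_unitary_VOSAs} and cites \eqref{eq:an_cross} for the formula, and you have spelled out the details of that invocation (unitary M\"obius symmetry from $\overline{\nu}=\nu$, the identity $Y(a,z)^+=\sum_j z^{d_a+d_{\overline a^{(j)}}}Y(\overline a^{(j)},z)$ giving $V^{\scalar}=V$, and the explicit check that $a\mapsto e^{L_1}(-1)^{2d_a^2+d_a}\theta(a)$ is an involution). The only place to tighten is the line ``evaluating on $\Omega$ yields $\overline{\overline a}=a$'': since $\overline{\overline a}$ need not be homogeneous this step really uses that $c_n=0$ for all $n$ forces $\sum_j z^{d_j}Y(c^{(j)},z)=0$, hence $e^{zL_{-1}}\sum_j z^{d_j}c^{(j)}=0$ on $\Omega$, hence each $c^{(j)}=0$.
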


\begin{proof}
	The equivalence between (i) and (ii) follows from Theorem \ref{theo:VOSA_PCT} with Proposition \ref{prop:decomposing_unitary_VOSAs}. Regarding the last statement, it is easy to see that $\overline{\Omega}=\Omega$. Moreover, if $V$ is unitary, then there exists a PCT operator $\theta$ (which is also unique by (ii) of Proposition \ref{prop:relation_pct_scalar_product}) satisfying the desired relation, see \eqref{eq:an_cross}. 
\end{proof}

\subsection{Unitary subalgebras}
\label{subsection:unitary_subalgebras}

In what follows, we introduce unitary subalgebras, graded tensor products, cosets and fixed-point constructions, relying on some of the results of Section \ref{subsection:pct_theorem}.

\begin{defin}  \label{defin:unitary_subalgebras}
A \textbf{unitary subalgebra} $W$ of a unitary VOSA $V$ is a vertex subalgebra of $V$ such that:
\begin{itemize}
\item[(i)] $W$ is compatible with the grading, that is, $L_0W\subseteq W$ or equivalently $W=\bigoplus_{n\in\half\Z}(W\cap V_n)$;

\item[(ii)] $a_{(n)}^+b\in W$ for all $a, b\in W$ and all $n\in\Z$.
\end{itemize}
Note that $a_{(n)}^+$ exists for all $a\in W$ and all $n\in\Z$ by Lemma \ref{lem:existence_an+}.
Moreover, (ii) is equivalent to say that $a_n^+b\in W$ for all $a,b\in W$ and all $n\in\half\Z$, provided that (i) holds.
\end{defin}

We characterize a unitary subalgebra with the following result.
\begin{prop}   \label{prop:characterisation_unitary_subalgebras}
Let $W$ be a vertex subalgebra of a unitary VOSA $V$. Then $W$ is a unitary subalgebra of $V$ if and only if $L_1W\subseteq W$ and $\theta (W)\subseteq W$.
\end{prop}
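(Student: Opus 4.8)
The plan is to prove the two implications separately, using throughout that a unitary VOSA $V$ has unitary M\"obius symmetry, so by Lemma \ref{lem:existence_an+} every $a_{(n)}^+$ exists on $V$, and using the explicit form of the adjoint field obtained in the proof of Theorem \ref{theo:VOSA_PCT}, namely $Y(a,z)^+=(-1)^{2d_a^2+d_a}\sum_{l\geq0}\tfrac{1}{l!}Y(L_1^l\theta(a),z)\,z^{2d_a-l}$ for homogeneous $a$, together with the companion mode formula \eqref{eq:an_cross}. The fermionic nature of $V$ will enter only through the scalar factors $(-1)^{2d_a^2+d_a}$ of \eqref{eq:2d^2+d}; being signs, they never affect membership in $W$.

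For the direction assuming $L_1W\subseteq W$ and $\theta(W)\subseteq W$, I would first establish (i) of Definition \ref{defin:unitary_subalgebras}. Since $W$ is a vertex subalgebra it is $T=L_{-1}$-invariant; combining this with the hypothesis $L_1W\subseteq W$ and the Virasoro relation $[L_1,L_{-1}]=2L_0$ coming from \eqref{eq:virasoro_cr} gives $L_0W\subseteq W$, whence $W=\bigoplus_n(W\cap V_n)$ because $L_0$ is diagonalizable on $V$. For (ii) it then suffices to treat homogeneous $a\in W$. As $\theta(W)\subseteq W$ and $L_1W\subseteq W$, each $L_1^l\theta(a)$ lies in $W$, so all of its modes preserve $W$; substituting this into the (finite in $l$) displayed formula for $Y(a,z)^+$ shows $a_{(n)}^+b\in W$ for all $b\in W$ and all $n$, which is exactly (ii).

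The converse is the substantial direction and the main obstacle. One can check directly that a quasi-primary $a\in W$ satisfies $\theta(a)=(-1)^{2d_a^2+d_a}a_{(2d_a-1)}^+\Omega\in W$, but upgrading this to all of $W$ by a quasi-primary (lowest-$\mathfrak{sl}_2$-weight) decomposition is circular, since extracting such vectors inside $W$ would already presuppose $L_1W\subseteq W$. The device I would use to break the circularity is to apply $e^{-zL_{-1}}$, which preserves $W$ because $L_{-1}W\subseteq W$. By (ii) every coefficient of $Y(a,z)^+\Omega=\sum_n a_{(n)}^+\Omega\,z^{n+1}$ lies in $W$, hence so does every coefficient of $e^{-zL_{-1}}Y(a,z)^+\Omega$; on the other hand $Y(b,z)\Omega=e^{zL_{-1}}b$ collapses this series to the finite Laurent polynomial $(-1)^{2d_a^2+d_a}\sum_{l\geq0}\tfrac{1}{l!}L_1^l\theta(a)\,z^{2d_a-l}$. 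Reading off the coefficients of $z^{2d_a}$ and $z^{2d_a-1}$ then yields $\theta(a)\in W$ and $L_1\theta(a)\in W$ for every homogeneous $a\in W$, so that $\theta(W)\subseteq W$; and since $\theta^2=1_V$ makes $\theta$ a bijection of $W$ onto itself, writing $b=\theta(\theta(b))$ gives $L_1W\subseteq W$ as well. I expect the only points genuinely requiring care are the bookkeeping of the factors $(-1)^{2d_a^2+d_a}$ for half-integer conformal weights and the routine verification that each coefficient of $e^{-zL_{-1}}Y(a,z)^+\Omega$ is a finite sum of elements of $W$.
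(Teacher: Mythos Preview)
Your proof is correct and follows essentially the same route as \cite[Proposition 5.23]{CKLW18}, which the paper invokes verbatim with the instruction to replace the relevant adjoint formula by \eqref{eq:an_star}. In particular, your device of applying $e^{-zL_{-1}}$ to $Y(a,z)^+\Omega$ to isolate the finite Laurent polynomial $(-1)^{2d_a^2+d_a}\sum_{l\geq0}\tfrac{1}{l!}L_1^l\theta(a)\,z^{2d_a-l}$ is exactly the mechanism behind the original argument; the finiteness you flag as ``routine'' indeed holds because $a_{(k)}^+\Omega=0$ for $k$ sufficiently negative (each summand $(L_1^l\theta(a))_{(2d_a-l-k-2)}\Omega$ vanishes once $2d_a-l-k-2\geq0$), so the coefficient of each $z^m$ in $e^{-zL_{-1}}Y(a,z)^+\Omega$ is a finite $W$-combination.
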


\begin{proof}
The proof follows the one of \cite[Proposition 5.23]{CKLW18}, provided that we use \eqref{eq:an_star} instead of \cite[Eq.\  (84)]{CKLW18} whenever appropriate.
\end{proof}

\begin{ex}  \label{ex:fixed_point_subalgebra}
Let $V$ be a unitary VOSA and $G\subseteq\Aut_{\scalar}(V)$ be a closed subgroup. By Proposition \ref{prop:characterisation_unitary_subalgebras}, the fixed point subalgebra
\begin{equation}
V^G:=\{a\in V\mid g(a)=a \,\,\,\forall g\in G\}
\end{equation}
is a unitary subalgebra. When $G$ is finite, $V^G$ is known as \textbf{orbifold subalgebra}. For example, the even subspace $V_\parzero$ is the orbifold subalgebra $V^{\left\{1_V, \Gamma_V\right\}}$. 
\end{ex}

Now, note that the \textbf{projection operator} $e_W$ on a unitary subalgebra $W\subseteq V$ is a well-defined element of $\End(V)$ thanks to the grading compatibility of $W$ and the finite dimensions of the eigenspaces $V_n$ for all $n\in\half\Z$. We prove the following structural result for unitary subalgebras by adapting \cite[Lemma 5.28 and Proposition 5.29]{CKLW18}. 

\begin{prop}  \label{prop:unitary_structure_subalgebras}
Let $V$ be a simple unitary VOSA with conformal vector $\nu$, scalar product $\scalar$ and PCT operator $\theta$.
Let $W$ be a unitary subalgebra with the associated projector operator $e_W\in\End (V)$. Then 
$[Y(a,z),e_W]=0$ for all $a\in W$, $[L_n,e_W]=0$ for all $n\in\{-1,0,1\}$, $[\theta,e_W]=0$ and $e_WY(a,z)e_W=Y(e_Wa,z)e_W$ for all $a\in V$. 
If we define $\nu^W:=e_W\nu\in W$, then:
\begin{itemize}
	\item[(i)] 
	$Y(\nu^W,z)=\sum_{n\in\Z}L_n^Wz^{-n-2}$ is a Hermitian Virasoro field on $V$ and such that $L_n^W\restriction_W=L_n\restriction_W$ for all $n\in\{-1,0,1\}$;
	
	\item[(ii)] 
	$\nu^W$ is a conformal vector for the vertex superalgebra
	$(W, \Omega, T\restriction_W, Y(\cdot,z)\restriction_W)$, so that $W$ has a VOSA structure;
	
	\item[(iii)] 
	$(W, \Omega, T\restriction_W,  Y(\cdot,z)\restriction_W, \nu^W, \scalar, \theta\restriction_W)$ is a simple unitary VOSA.
\end{itemize}
\end{prop}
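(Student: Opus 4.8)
The plan is to establish the four operator identities first and then build the conformal and unitary structure on $W$ on top of them. For the commutation relations I would work with the orthogonal splitting $V = W \oplus W^\perp$ relative to $\scalar$, where $W^\perp := (1_V - e_W)V$. For $a \in W$ the modes $a_{(n)}$ preserve $W$ because $W$ is a vertex subalgebra, and they preserve $W^\perp$ as well: for $w \in W$ and $v \in W^\perp$ one has $(w|a_{(n)}v) = (a_{(n)}^+ w | v) = 0$ since $a_{(n)}^+ w \in W$ by Definition~\ref{defin:unitary_subalgebras}(ii) (the adjoint existing by Lemma~\ref{lem:existence_an+}); hence $[Y(a,z),e_W] = 0$ for $a \in W$. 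For $[L_n, e_W] = 0$ with $n \in \{-1,0,1\}$ I would invoke Proposition~\ref{prop:characterisation_unitary_subalgebras}, which gives $L_{\pm 1}W \subseteq W$ (and $L_0 W \subseteq W$ by grading compatibility); combined with $L_n^+ = L_{-n}$ from \eqref{eq:L_n_adj}, the same orthogonality computation shows each such $L_n$ preserves both $W$ and $W^\perp$. Finally $[\theta, e_W] = 0$ follows from $\theta(W) \subseteq W$ (Proposition~\ref{prop:characterisation_unitary_subalgebras}) together with the antiunitarity of $\theta$ (Proposition~\ref{prop:relation_pct_scalar_product}), which forces $\theta(W^\perp) \subseteq W^\perp$.

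The crux is the multiplicativity $e_W Y(a,z) e_W = Y(e_W a, z)e_W$. Splitting $a = e_W a + (1_V - e_W)a$ and using $[Y(e_W a, z), e_W] = 0$, it suffices to prove $e_W Y(a,z) e_W = 0$ for $a \in W^\perp$, that is, $a_{(n)}W \subseteq W^\perp$. Here I would use skew-symmetry \eqref{eq:skew-symmetry} to write, for $w \in W$,
\begin{equation*}
Y(a,z)w = (-1)^{p(a)p(w)} e^{zL_{-1}} Y(w,-z)a,
\end{equation*}
so that each $a_{(n)}w$ is a finite combination of vectors $L_{-1}^k (w_{(m)}a)$. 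Since $Y(w,-z)$ commutes with $e_W$ it maps $a \in W^\perp$ into $W^\perp$, and $L_{-1}$ preserves $W^\perp$ by the previous step; hence $a_{(n)}w \in W^\perp$. This is the step where the fermionic signs enter, but $(-1)^{p(a)p(w)}$ is a harmless overall scalar and does not affect the argument.

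With these identities available, set $\nu^W := e_W \nu$, which lies in $W \cap V_2$ and is quasi-primary since $L_1 \nu^W = e_W L_1 \nu = 0$. Applying the multiplicativity identity to $\nu$ and the vector $\nu^W$ gives $\nu^W_{(j)}\nu^W = e_W(\nu_{(j)}\nu^W) = e_W L_{j-1}\nu^W$. For $j \in \{0,1,2\}$ the relations $[L_{j-1}, e_W] = 0$ and $L_1\nu^W = 0$ yield $\nu^W_{(0)}\nu^W = T\nu^W$, $\nu^W_{(1)}\nu^W = 2\nu^W$ and $\nu^W_{(2)}\nu^W = 0$; for $j = 3$ the vector $L_2 \nu^W$ lies in $V_0 = \C\Omega$ (using Proposition~\ref{prop:characterisation_simplicity}), and \eqref{eq:L_n_adj} with $L_{-2}\Omega = \nu$ gives $\nu^W_{(3)}\nu^W = \norm{\nu^W}^2\,\Omega$; for $j \geq 4$ grading forces $\nu^W_{(j)}\nu^W = 0$. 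These are exactly the $(j)$-products of a Virasoro vector of central charge $c_W := 2\norm{\nu^W}^2$, so the Borcherds commutator formula \eqref{eq:borcherds_commutator_formula} delivers the Virasoro relations for $L_n^W := \nu^W_{(n+1)}$. Hermiticity of $Y(\nu^W,z)$ follows from \eqref{eq:an_star_qp} applied to the quasi-primary $\nu^W$, since $Z_V\theta\nu^W = \nu^W$ (as $\theta\nu^W = \nu^W$ and $\nu^W$ is even) and $i^{2d_{\nu^W}} = 1$; the identities $L_n^W\restriction_W = L_n\restriction_W$ for $n \in \{-1,0,1\}$ come from the multiplicativity identity and $[L_n, e_W] = 0$. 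This proves (i).

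For (ii), the restriction identities give $L_{-1}^W\restriction_W = T\restriction_W$, while $L_0^W\restriction_W = L_0\restriction_W$ is diagonalizable with the inherited finite-dimensional, lower-bounded grading $W_n = W \cap V_n$; hence $\nu^W$ is a conformal vector and $W$ is a VOSA. For (iii), $\theta\restriction_W$ is an antilinear involutive automorphism of $W$ fixing $\Omega$ and $\nu^W$, and the invariance \eqref{eq:invariant_scalar_product} of $\scalar\restriction_W$ is the restriction of the invariance on $V$, since $e^{zL_1}$, $z^{-2L_0}$ and $(-1)^{2L_0^2+L_0}$ all preserve $W$; thus $(W,\scalar\restriction_W,\theta\restriction_W)$ is unitary. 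Simplicity is then immediate from Proposition~\ref{prop:characterisation_simplicity}, because $\Omega \in W$ forces $W_0 = W \cap V_0 = \C\Omega$. The main obstacle is the multiplicativity identity of the second paragraph — obtaining the vanishing on $W^\perp$ while correctly tracking the graded signs — since everything else reduces to bookkeeping with the products $\nu^W_{(j)}\nu^W$ and the commutation relations already in hand.
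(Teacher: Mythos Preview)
Your proof is correct and follows essentially the same strategy as the paper, which simply points to \cite[Lemma 5.28 and Proposition 5.29]{CKLW18} and says ``adapt''; you have carried out that adaptation in full detail, including the skew-symmetry argument for the multiplicativity identity and the explicit computation of the products $\nu^W_{(j)}\nu^W$, with the fermionic signs handled correctly throughout.
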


Of course, the \textbf{trivial VOA}, that is the vector space $\C$ with $\Omega:=1$, $Y(\Omega, z)=1_\C$, $\nu:=0$, $(a|a)=a$ and $\theta(a)=\overline{a}$ for all $a\in\C$, is a unitary subalgebra for any simple unitary VOSA.
Another example is given by the \textbf{coset subalgebra} of a vertex subalgebra $W$ of a vertex superalgebra $V$ (it is called centralizer in \cite[Remark 4.6b]{Kac01}). This is the vertex subalgebra of $V$ given by the vector subspace
\begin{equation}   \label{eq:defin_coset_subalgebra}
W^c:=\left\{
a\in V \mid [Y(a,z),Y(b,w)]=0 \,\,\,\forall b\in W
\right\}
=\left\{
a\in V \mid b_{(j)}a=0 \,\,\,
\substack{\forall b\in W  \\ \forall j\in\Zpluseq }
\right\}  
\end{equation}
where the last equality is obtained by the Borcherds commutator formula \eqref{eq:borcherds_commutator_formula}.
Then we have the following result:
\begin{prop}   \label{prop:properties_coset_subalgebra}
Let $V$ be a unitary VOSA with conformal vector $\nu$ and let $W$ be a unitary subalgebra. Then $W^c$ is a unitary subalgebra of $V$. If $V$ is simple, then we also have that
 $\nu=\nu^W+\nu^{W^c}$ and the operators $L_0^W=\nu_{(1)}^W$ and $L_0^{W^c}=\nu_{(1)}^{W^c}$ are simultaneously diagonalizable on $V$ with non-negative eigenvalues.
\end{prop}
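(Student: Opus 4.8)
The plan is to prove the three assertions in turn, the genuine difficulty lying in the identity $\nu=\nu^W+\nu^{W^c}$.

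For the first assertion I would invoke Proposition \ref{prop:characterisation_unitary_subalgebras}: since $W^c$ is already a vertex subalgebra, it suffices to verify $\theta(W^c)\subseteq W^c$ and $L_1W^c\subseteq W^c$. For $\theta$: if $a\in W^c$ and $b\in W$, then $\theta(b)\in\theta(W)\subseteq W$, so $\theta(b)_{(j)}a=0$ for all $j\in\Zpluseq$; applying the antilinear automorphism $\theta$ and using $\theta^2=1_V$ gives $b_{(j)}\theta(a)=\theta\bigl(\theta(b)_{(j)}a\bigr)=0$, whence $\theta(a)\in W^c$. For $L_1=\nu_{(2)}$: for $a\in W^c$, $b\in W$ and $j\in\Zpluseq$ the Borcherds commutator formula \eqref{eq:borcherds_commutator_formula} gives $[\nu_{(2)},b_{(j)}]a=(L_{-1}b)_{(j+2)}a+2(L_0b)_{(j+1)}a+(L_1b)_{(j)}a$, and each summand vanishes because $L_{-1}b,L_0b,L_1b\in W$ (as $W$ is $T$-, $L_0$- and $L_1$-invariant) while $a\in W^c$ and the indices are $\geq 0$; hence $b_{(j)}(L_1a)=\nu_{(2)}(b_{(j)}a)-[\nu_{(2)},b_{(j)}]a=0$, so $L_1a\in W^c$.

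From now on assume $V$ simple, so that $V$, $W$ and $W^c$ are of CFT type by Proposition \ref{prop:characterisation_simplicity} (the latter two being simple unitary by Proposition \ref{prop:unitary_structure_subalgebras}), and set $\omega:=\nu-\nu^W$, with $X_n:=\omega_{(n+1)}=L_n-L_n^W$. Using $\nu^W\in W$, the agreement $L_n^W\restriction_W=L_n\restriction_W$ for $n\in\{-1,0,1\}$, and $L_2\nu^W=\tfrac{c_W}{2}\Omega=L_2^W\nu^W$ (both computed from $(\Omega|L_2\nu^W)=(\nu|\nu^W)=\|\nu^W\|^2$, where $c_W$ is the central charge of $\nu^W$), I would check $\nu_{(i)}\nu^W=\nu^W_{(i)}\nu^W$ for all $i\in\Zpluseq$; this shows $[Y(\omega,z),Y(\nu^W,w)]=0$, and the analogous short computation of $\omega_{(i)}\omega$ for $i=0,1,2,3$ identifies $\omega$ as a Virasoro vector of central charge $c-c_W$. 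Since $\theta$ commutes with $e_W$ and fixes $\nu$, we get $\theta(\omega)=\omega$, so the field $Y(\omega,z)$ is Hermitian, i.e.\ $X_n^+=X_{-n}$.

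The heart of the matter is to prove $\omega\in W^c$, equivalently $X_ma=0$ for all $a\in W$ and $m\geq -1$; this is the step I expect to be the main obstacle. For $m\in\{-1,0,1\}$ it is immediate. For homogeneous $a\in W$ of weight $d_a$, the relation $e_WY(\nu,z)e_W=Y(\nu^W,z)e_W$ of Proposition \ref{prop:unitary_structure_subalgebras} gives $e_WL_ma=L_m^Wa$, so $X_ma=(1_V-e_W)L_ma\in W^{\perp}$. Because $\nu^W_{(j)}\omega=L_{j-1}^W\omega=0$ for $j\in\Zpluseq$ (from the commuting fields), the Borcherds formula yields $[L_0^W,X_m]=0$; combined with $L_0^Wa=d_aa$ this shows that $X_ma$ is an eigenvector of $L_0-L_0^W=X_0$ with eigenvalue $-m$. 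Now $X_0$ is the zero mode of the Hermitian Virasoro vector $\omega$ with $X_0\Omega=0$, and a standard lowest-weight estimate (lowering by $X_1$ down to $V_0=\C\Omega$, which terminates since $V$ is of CFT type, together with $2X_0=[X_1,X_{-1}]$ on $X_1$-primary vectors) gives $X_0\geq 0$. Hence the eigenvalue $-m$ must be non-negative, forcing $X_ma=0$ for $m\geq 1$. Thus $\omega_{(j)}a=X_{j-1}a=0$ for all $a\in W$ and $j\in\Zpluseq$, i.e.\ $\omega\in W^c$. Finally $\nu^W\perp W^c$: for $c\in W^c$ one has $(\nu^W|c)=(\Omega|L_2^Wc)=(\Omega|\nu^W_{(3)}c)=0$ since $\nu^W\in W$ annihilates $W^c$, so $e_{W^c}\nu^W=0$ and therefore $\nu^{W^c}=e_{W^c}\nu=e_{W^c}(\nu^W+\omega)=e_{W^c}\omega=\omega=\nu-\nu^W$, which is exactly $\nu=\nu^W+\nu^{W^c}$.

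For the last assertion, $L_0^W=\nu^W_{(1)}$ and $L_0^{W^c}=\nu^{W^c}_{(1)}=X_0$ are Hermitian, commute (the fields $Y(\nu^W,z)$ and $Y(\nu^{W^c},w)$ commute because $\nu^{W^c}\in W^c$ and $\nu^W\in W$), and each preserves the finite-dimensional eigenspaces $V_n$; hence they are simultaneously diagonalizable on $V$. Non-negativity of their eigenvalues follows from the same lowest-weight estimate applied to the Hermitian Virasoro zero modes $L_0^W$ and $X_0=L_0^{W^c}$, both of which annihilate $\Omega$.
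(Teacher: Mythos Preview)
Your proof is correct. The first assertion follows the same path the paper alludes to (the adaptation of \cite[Example 5.27]{CKLW18}): checking $\theta$-invariance and $L_1$-invariance of $W^c$ via Proposition \ref{prop:characterisation_unitary_subalgebras} and the Borcherds commutator formula.

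For the decomposition $\nu=\nu^W+\nu^{W^c}$ and the spectral assertions, your route is genuinely different from the one the paper invokes. The paper defers to \cite[Proposition 5.31]{CKLW18}, whose argument hinges on the conjugation operator $z_1^{L_0-L_0^W}$ acting on $V$ (the clarification in the paper's proof concerns precisely the well-definedness of this operator in the VOSA setting). You instead run a direct positivity argument: after establishing that $\omega=\nu-\nu^W$ generates a Hermitian Virasoro field commuting with $Y(\nu^W,z)$, you observe that $X_ma$ is an $X_0$-eigenvector of eigenvalue $-m$ for $a\in W$, and then use the lowest-weight estimate (descending via $X_1$ until you hit an $X_1$-primary vector, where $2X_0=[X_1,X_{-1}]$ forces non-negativity) to conclude $X_ma=0$ for $m\geq 1$. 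This is more elementary and entirely self-contained within the paper's framework, avoiding the branch-of-logarithm issues that the paper has to address. The conjugation approach in \cite{CKLW18}, on the other hand, packages the same Virasoro-module information into the operator $z_1^{L_0^W}$ and is slightly more conceptual but requires the analytic housekeeping the paper mentions. Two minor remarks: the observation $X_ma\in W^\perp$ is correct but not actually used in your subsequent argument, and the clause ``both of which annihilate $\Omega$'' in the last paragraph is likewise not needed for the lowest-weight estimate to go through.
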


\begin{proof}
A straightforward adaptation of \cite[Example 5.27]{CKLW18} proves that $W^c$ is a unitary subalgebra of $V$. For the remaining part, it is sufficient to proceed as in the proof of \cite[Proposition 5.31]{CKLW18}. We just clarify that for any $z_1\in\C\backslash\{0\}$, the operator $z_1^{L_0^W}:=e^{\mathrm{log}(z_1)L_0^W}$ is well-defined on the VOSA $V$ as $[L_0,L_0^W]=0$ by (i) and (ii) of Proposition \ref{prop:unitary_structure_subalgebras} and provided that a suitable branch of the complex logarithm is chosen. Nevertheless, note that for any $a\in W$, the expression $z_1^{L_0-L_0^W}Y(a,z)z_1^{L_0^W-L_0}$ is independent on the choice of the branch as $[L_0-L_0^W,a]=0$ by (i) of Proposition \ref{prop:unitary_structure_subalgebras}.
\end{proof}

\section{From VOSAs to graded-local conformal nets}
\label{section:construction_net}

In this section, we show how to define a graded-local conformal net from a given unitary VOSA under natural assumptions. This construction extends the one given in \cite[Chapter 6]{CKLW18} between unitary VOAs and conformal nets. To aid reading, we collect here below the objects which will be used throughout the exposition.

\begin{defin}  \label{defin:hilbet_space_from_V}
Let $(V,\scalar)$ be a unitary VOSA with PCT operator $\theta$. We set $\norm{\cdot}$ as the norm induced by the scalar product $\scalar$, that is $\norm{v}:=(v|v)^\half$ for all $v\in V$, and $\mathcal{H}:=\mathcal{H}_{(V,\scalar)}$ as the Hilbert space completion of $V$ with respect to $\scalar$. 
Moreover, we define $\Gamma$ and $Z$ as the extensions to $\mathcal{H}$ of the operators $\Gamma_V$ and $Z_V$ respectively. 
\end{defin}

Note that $\Gamma=\Gamma^{-1}=\Gamma^*$ and $Z^{-1}=Z^*$.

\subsection{Energy-bounded VOSAs and smeared vertex operators}
\label{subsection:energy_bounds}

In this first part, we introduce the concept of \textit{energy bounds} for unitary VOSAs and we construct certain operator-valued distributions associated to the vertex operators, called \textit{smeared vertex operators}. 

\begin{defin}
Let $k$ be a non-negative real number. Then $a\in V$, or equivalently the corresponding field $Y(a,z)$, satisfies \textbf{$k$-th order (polynomial) energy bounds} if there exist non-negative real numbers $M$ and $s$ such that 
\begin{equation}   \label{eq:energy-bounds}
\norm{a_nb}\leq M(\abs{n}+1)^s\norm{(L_0+1_V)^kb}
\qquad\forall n\in\half\Z \,\,\, \forall b\in V \,.
\end{equation}
More specifically, $a$ satisfies \textbf{linear energy bounds} if $k=1$; whereas if $k$ is not specified then it is simply said that $a$ satisfies \textbf{energy bounds}.
Accordingly, we say that $V$ is \textbf{energy-bounded} if every $a\in V$ satisfies energy bounds. Similarly, a unitary subalgebra $W$ of $V$ is said to be \textbf{energy-bounded} if every $a\in W$ satisfies energy bounds, cf.\ \cite[Remark 3.13]{CT23}.
\end{defin}

\begin{prop}  \label{prop:energy_boundedness_by_generators}
Let $V$ be a unitary VOSA generated by a family of homogeneous vectors satisfying the energy bounds. Then $V$ is energy-bounded too.
\end{prop}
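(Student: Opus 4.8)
The plan is to show $V\subseteq\mathcal{E}$, where
\[
\mathcal{E}:=\{a\in V\mid a \text{ satisfies energy bounds}\}.
\]
By \eqref{eq:energy-bounds}, $\mathcal{E}$ is a linear subspace (for a linear combination one takes the larger constants and exponents), and $\Omega\in\mathcal{E}$: indeed $Y(\Omega,z)=1_V$ forces $\Omega_m=\delta_{m,0}1_V$, so $\Omega$ satisfies $0$-th order energy bounds. Now $V=W(\F)$ is linearly spanned by the iterated products $a^1_{(n_1)}a^2_{(n_2)}\cdots a^r_{(n_r)}\Omega$ with the $a^i$ in the generating family $\F$ and $n_i\in\Z$, each of which is homogeneous; since $\mathcal{E}$ is a subspace, it suffices to show that every such iterated product lies in $\mathcal{E}$. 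By induction on $r$ (base case $\Omega\in\mathcal{E}$, inductive step applied to $a=a^1\in\F$ and the homogeneous, inductively energy-bounded vector $b=a^2_{(n_2)}\cdots a^r_{(n_r)}\Omega$), this reduces to the following stability statement.

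\emph{Key estimate.} If $a$ and $b$ are homogeneous and satisfy energy bounds, then $a_{(n)}b$ satisfies energy bounds for every $n\in\Z$. To prove it, I would compute the modes of $u:=a_{(n)}b$ by the Borcherds associative formula \eqref{eq:borcherds_associative_formula}, which for every $c\in V$ and $p\in\Z$ reads
\[
u_{(p)}c=\sum_{j\geq 0}(-1)^j\binom{n}{j}\Big(a_{(n-j)}b_{(p+j)}c-(-1)^{p(a)p(b)}(-1)^n b_{(n+p-j)}a_{(j)}c\Big).
\]
This rewrites each mode of $u$ as a \emph{finite} sum of products of modes of $a$ and of $b$, which is exactly what lets the energy bounds for $a$ and $b$ enter. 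Since the eigenspaces of $L_0$ are mutually orthogonal, it suffices to bound $\|u_{(p)}c\|$ for $c$ homogeneous of weight $h$; the sum is then finite because $b_{(p+j)}c$ and $a_{(j)}c$ vanish once their weight falls below the lower bound $N$ of the grading, forcing $0\leq j\lesssim h+|p|+C$ for a constant $C$ depending only on $a,b$.

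For each surviving term I would apply \eqref{eq:energy-bounds} to the inner and then to the outer mode. Here I use that $V$ is a \emph{unitary} VOSA, so by the computation \eqref{eq:positivity_conformal_weight} its conformal weights are non-negative and hence $L_0+1_V\geq 1_V$; consequently, since a mode of index $m$ shifts the $L_0$-eigenvalue by $-m$,
\[
(L_0+1_V)^{k_a}b_m=b_m(L_0+1_V-m)^{k_a},\qquad (L_0+1_V-m)^{k_a}\leq(|m|+1)^{k_a}(L_0+1_V)^{k_a}.
\]
This produces for each term a bound of the form $M_aM_b(|p|+h+1)^{s}(h+1)^{k_a+k_b}\|c\|$; splitting $(|p|+h+1)^{s}\leq(|p|+1)^{s}(h+1)^{s}$ and summing the $\lesssim(|p|+1)(h+1)$ many terms yields $\|u_{(p)}c\|\leq M(|p|+1)^{s'}(h+1)^{k'}\|c\|=M(|p|+1)^{s'}\|(L_0+1_V)^{k'}c\|$ with $k'=k_a+k_b+C'$. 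Passing from the $(p)$-indexing to the conformal-weight indexing of $u$ only rescales $|p|+1$ by a constant, so $u$ is energy-bounded of order $k'$.

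The main obstacle is precisely this bookkeeping step: the number of nonzero terms in the associative formula grows with the weight $h$ of the test vector $c$, and each term carries its own power of the shifted eigenvalue $L_0+1_V-m$. The point to get right is that all these $h$-dependencies are polynomial, hence absorbable into a single higher power of $(L_0+1_V)$ — this raises the order of the energy bounds but never destroys them (we neither need nor claim the optimal order $k_a+k_b$). Once this control, uniform in $c$, is secured, the generation argument above closes the proof.
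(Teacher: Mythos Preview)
Your proposal is correct and follows essentially the same approach as the paper, which defers to \cite[Proposition 6.1]{CKLW18}: one reduces to showing that $a_{(n)}b$ is energy-bounded whenever $a,b$ are, and this is achieved by expanding $(a_{(n)}b)_{(p)}$ via the Borcherds associative formula (the $m=0$ specialization of \eqref{eq:borcherds_id}) and bounding the resulting finite sum term by term using the energy bounds of $a$ and $b$ together with the polynomial control of $(L_0+1_V)^{k}$ under mode shifts. The only super-specific adjustment, the sign $(-1)^{p(a)p(b)}$, is harmless for norm estimates, exactly as you implicitly treat it.
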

\begin{proof}
The proof is the same as in \cite[Proposition 6.1]{CKLW18}, just replacing \cite[Eq.\  (103)]{CKLW18} there with the Borcherds identity for the superalgebra case \eqref{eq:borcherds_id}.
\end{proof}

\begin{cor}  \label{cor:energy_bound_graded_tensor_product}
If $V^1$ and $V^2$ are energy-bounded VOSAs, then $V^1\hat{\otimes}V^2$ is energy-bounded.
\end{cor}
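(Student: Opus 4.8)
The plan is to deduce the statement from Proposition \ref{prop:energy_boundedness_by_generators} by producing a family of homogeneous generators of $V^1\hat{\otimes}V^2$ that satisfy energy bounds. Recall from Section \ref{subsection:basic_definitions_VOSA} that $V^1\hat{\otimes}V^2$ is again a unitary VOSA, with conformal vector $\nu^1\otimes\Omega^2+\Omega^1\otimes\nu^2$ and hence conformal Hamiltonian $L_0=L_0^1\otimes 1_{V^2}+1_{V^1}\otimes L_0^2$. I would take as generating family the homogeneous vectors $a^1\otimes\Omega^2$ (with $a^1\in V^1$ homogeneous) together with $\Omega^1\otimes a^2$ (with $a^2\in V^2$ homogeneous). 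These generate the whole graded tensor product: from the definition of $Y_1\hat{\otimes}Y_2$ one reads off $(a^1\otimes\Omega^2)_{(-1)}=a^1_{(-1)}\otimes 1_{V^2}$, so that $(a^1\otimes\Omega^2)_{(-1)}(\Omega^1\otimes a^2)=a^1\otimes a^2$, and such vectors span $V^1\otimes V^2$.

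Next I would record the relevant modes. Since $\Omega^2$ is even and $Y_2(\Omega^2,z)=1_{V^2}$, one has $(a^1\otimes\Omega^2)_{n}=a^1_n\otimes 1_{V^2}$; since $Y_1(\Omega^1,z)=1_{V^1}$, one has $(\Omega^1\otimes a^2)_n=\Gamma_{V_1}^{p(a^2)}\otimes a^2_n$. The key elementary point, which I expect to be the technical heart of the argument, is that the total Hamiltonian dominates each factor: because $L_0^1$ and $L_0^2$ are non-negative and commute, the operators $(L_0^1+1_{V^1})\otimes 1_{V^2}$ and $1_{V^1}\otimes(L_0^2+1_{V^2})$ are each bounded above by $L_0+1$, whence on the finite-energy space $V^1\otimes V^2$
\begin{equation}
\norm{\big((L_0^1+1_{V^1})^{k}\otimes 1_{V^2}\big)c}\leq\norm{(L_0+1)^{k}c}\,,\qquad
\norm{\big(1_{V^1}\otimes(L_0^2+1_{V^2})^{k}\big)c}\leq\norm{(L_0+1)^{k}c}
\end{equation}
for every $c\in V^1\otimes V^2$ and every $k\in\Zpluseq$. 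This follows at once from the joint eigenspace decomposition $V^1\otimes V^2=\bigoplus_{\lambda_1,\lambda_2}V^1_{\lambda_1}\otimes V^2_{\lambda_2}$, on which the operators act by the scalars $(\lambda_i+1)^{k}$ and $(\lambda_1+\lambda_2+1)^{k}$ respectively, together with $(\lambda_i+1)\leq\lambda_1+\lambda_2+1$.

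Granting this, the energy bounds for the generators reduce to those in the factors. Fixing a homogeneous orthonormal basis $\{e_j\}$ of $V^2$ and writing $c=\sum_j u_j\otimes e_j$ (a finite sum), one gets the orthogonal decomposition $(a^1\otimes\Omega^2)_n c=\sum_j a^1_n u_j\otimes e_j$, so that the energy bounds $\norm{a^1_n u_j}\leq M_1(\abs{n}+1)^{s_1}\norm{(L_0^1+1_{V^1})^{k_1}u_j}$ of $a^1$ in $V^1$ give
\begin{equation}
\norm{(a^1\otimes\Omega^2)_n c}^2=\sum_j\norm{a^1_n u_j}^2\leq M_1^2(\abs{n}+1)^{2s_1}\,\norm{\big((L_0^1+1_{V^1})^{k_1}\otimes 1_{V^2}\big)c}^2\,,
\end{equation}
and applying the first displayed inequality yields $\norm{(a^1\otimes\Omega^2)_n c}\leq M_1(\abs{n}+1)^{s_1}\norm{(L_0+1)^{k_1}c}$. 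The vectors $\Omega^1\otimes a^2$ are treated symmetrically, the only new feature being the twist $\Gamma_{V_1}^{p(a^2)}$; since $\Gamma_{V_1}$ is unitary it carries a homogeneous orthonormal basis of $V^1$ to an orthonormal system and hence leaves the analogous norm computation unchanged. Having thus produced a homogeneous generating family satisfying energy bounds, Proposition \ref{prop:energy_boundedness_by_generators} gives that $V^1\hat{\otimes}V^2$ is energy-bounded. The main obstacle I anticipate is not conceptual but bookkeeping: one must track the Koszul signs and the twist $\Gamma_{V_1}^{p(a^2)}$ intrinsic to the \emph{graded} (as opposed to ordinary) tensor product, and check that passing from the single-factor Hamiltonians to the total $L_0$ worsens neither the order $k$ nor the polynomial degree $s$ of the bound, both of which are handled by the commuting positive operators comparison above.
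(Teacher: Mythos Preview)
Your proposal is correct and follows precisely the route the paper intends: the corollary is stated without proof immediately after Proposition~\ref{prop:energy_boundedness_by_generators}, and your argument---taking the homogeneous generators $a^1\otimes\Omega^2$ and $\Omega^1\otimes a^2$, reading off their modes from the definition of $Y_1\hat{\otimes}Y_2$, and dominating the factor Hamiltonians by the total $L_0$---is exactly how one fleshes out that implication. The bookkeeping with $\Gamma_{V_1}^{p(a^2)}$ and the eigenvalue comparison $(\lambda_i+1)^k\le(\lambda_1+\lambda_2+1)^k$ are handled correctly.
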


The following proposition gives sufficient conditions of energy-boundedness and it will be useful in the production of examples in Section \ref{section:examples}.

\begin{prop}  \label{prop:gen_by_V1/2_V1}
Let $V$ be a simple unitary VOSA. Suppose that $V$ is generated by $V_\half\cup V_1\cup\F$ where $\F\subseteq V_2$ is a family of quasi-primary $\theta$-invariant Virasoro vectors. Then $V$ is energy-bounded.
\end{prop}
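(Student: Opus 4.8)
The plan is to reduce everything to Proposition \ref{prop:energy_boundedness_by_generators}. Since $V$ is generated by the homogeneous set $V_\half\cup V_1\cup\F$, it suffices to check that every vector in each of these three families satisfies energy bounds. Throughout I will use that $V$, being simple and unitary, is of CFT type by Proposition \ref{prop:characterisation_simplicity}, so that $V_0=\C\Omega$ and $V_n=\{0\}$ for $n<0$. The weight-$1$ and weight-$2$ generators are even, and their (linear) energy bounds will follow from the arguments already used in the VOA setting in \cite{CKLW18}; the genuinely new point is the weight-$\half$ (odd, fermionic) case, which I would treat first and where in fact one obtains $0$-th order, i.e.\ uniform, bounds.

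For the weight-$\half$ case, let $a\in V_\half$. Since $V_{-\half}=\{0\}$, every such $a$ is automatically odd and quasi-primary. For odd $a,b\in V_\half$ the graded commutator of the modes is the anticommutator, and the Borcherds commutator formula \eqref{eq:borcherds_commutator_formula} yields $\{a_{(m)},b_{(k)}\}=\sum_{j\geq 0}\binom{m}{j}(a_{(j)}b)_{(m+k-j)}$. Here $a_{(j)}b$ has conformal weight $2\cdot\half-j-1=-j$, so by CFT type only $j=0$ survives and $a_{(0)}b\in V_0=\C\Omega$; using $\Omega_{(n)}=\delta_{n,-1}1_V$ this collapses to the canonical anticommutation relation $\{a_n,b_k\}=\lambda\,\delta_{n+k,0}1_V$ (modes indexed by $\Z-\half$), with $\lambda=\lambda(a,b)\in\C$. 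By Lemma \ref{lem:adjoint_quasi-primary_vectors} there is a quasi-primary $\overline a\in V_\half$ with $a_n^+=\overline a_{-n}$, so for all $b\in V$ and all $n\in\Z-\half$,
\[
\norm{a_nb}^2+\norm{a_n^+b}^2=(b|(a_n^+a_n+a_na_n^+)b)=(b|\{a_n,\overline a_{-n}\}b)=\mu\norm{b}^2,
\]
where $\mu=\mu(a)\in\C$ is the constant attached to the pair $(a,\overline a)$. The left-hand side is non-negative, hence $\mu\geq 0$ and $\norm{a_nb}\leq\sqrt{\mu}\,\norm{b}$ for all $n$ and $b$; thus $a$ satisfies $0$-th order energy bounds.

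For the weight-$1$ and weight-$2$ generators, note that $V_1$ and $\F\subseteq V_2$ consist of even vectors. For $a\in V_1$ the modes $\{a_n\}$ close, via \eqref{eq:borcherds_commutator_formula} and CFT type, into an affine Kac--Moody algebra acting unitarily and with positive energy on the completion $\mathcal H$ of $V$; the linear energy bounds for such currents are exactly those of \cite[Section 6]{CKLW18}, and that argument, being about the positive-energy unitary representation generated by the modes, applies verbatim on all of $V$ (including the odd part). For $a=\nu'\in\F$, $\theta$-invariance and quasi-primarity make $Y(\nu',z)$ a Hermitian Virasoro field, so the modes $L_n'=\nu'_{(n+1)}$ satisfy $L_n'^+=L_{-n}'$ and define a unitary positive-energy representation of the Virasoro algebra; these satisfy linear energy bounds with respect to their own conformal Hamiltonian $L_0'$, again by \cite{CKLW18}. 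To convert these into bounds with respect to the global $L_0$ I would consider the unitary subalgebra $W$ generated by $\nu'$ (unitary by Proposition \ref{prop:characterisation_unitary_subalgebras}, since $\nu'$ is quasi-primary and $\theta$-invariant) and invoke Proposition \ref{prop:properties_coset_subalgebra}, which gives $0\leq L_0^W\leq L_0$ and hence $\norm{(L_0'+1_V)b}\leq\norm{(L_0+1_V)b}$.

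The one delicate point is the identification $L_0'=L_0^W$, i.e.\ that the conformal vector $\nu^W=e_W\nu$ of $W$ produced by Proposition \ref{prop:unitary_structure_subalgebras} coincides with $\nu'$. This is where care is required, and it is precisely what lets the coset comparison apply. I would settle it by checking that on $W$ the global grading agrees with the $\nu'$-grading, that $\nu'_{(0)}=L_{-1}=T$ on $W$ (both annihilate $\Omega$ and have identical commutators with all $\nu'_{(m)}$), so that $\nu'$ is itself a conformal vector for $W$ with $\nu'_{(1)}\restriction_W=L_0^W\restriction_W=L_0\restriction_W$; then $\nu^W=\nu'$ follows from the uniqueness of conformal vectors with prescribed zero mode, Proposition \ref{prop:conformal_vector_bilinear_form}, applied inside $W$. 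Combining the three cases, every generator satisfies energy bounds, and $V$ is energy-bounded by Proposition \ref{prop:energy_boundedness_by_generators}.
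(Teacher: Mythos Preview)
Your proof is correct and follows essentially the same approach as the paper: the CAR computation for $V_{1/2}$ yielding $0$-th order bounds is identical in spirit (the paper uses $\theta(a)$ explicitly where you invoke Lemma~\ref{lem:adjoint_quasi-primary_vectors}), and for $V_1\cup\F$ the paper simply cites \cite[Proposition~6.3]{CKLW18} and \cite{BS90}, which contain exactly the coset comparison $L_0'\leq L_0$ that you have carefully unpacked. Your verification that $\nu^W=\nu'$ via Proposition~\ref{prop:conformal_vector_bilinear_form} is correct and is precisely what underlies those references.
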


\begin{proof}
Suppose that $V_\half\not=\{0\}$, otherwise the proposition is \cite[Proposition 6.3]{CKLW18}. For $a,b\in V_\half$, the Borcherds commutator formula \eqref{eq:borcherds_commutator_formula} becomes
\begin{equation}  \label{eq:commutator_V_one_half}
[a_m,b_k]c=
\sum_{j=0}^\infty\binom{m-\half}{j}
\left(a_{(j)}b\right)_{(m+k-1-j)}c
\qquad\forall m,k\in\Z-\half 
\,\,\,\forall c\in V\,.
\end{equation}
Note that $a_{(j)}b\in V_{-j}$ for all $j\in\Zpluseq$ by \eqref{eq:cr_l0} and that $V$ is of CFT type by Proposition \ref{prop:characterisation_simplicity}. Hence, $a_{(j)}b=0$ for all $j\in\Zplus$ and $a_{(0)}b=\alpha\Omega$ for some $\alpha\in\C$. Thanks to \eqref{eq:an_star_qp_2}, we calculate that
$$
-(\theta (a)| b)= -((\theta (a))_{(-1)}\Omega| b)
=-((\theta (a))_{-\half}\Omega| b) = (\Omega| a_{\half}b)=(\Omega| a_{(0)}b)
=\alpha(\Omega|\Omega)=\alpha \,.
$$
Accordingly, \eqref{eq:commutator_V_one_half} is equivalent to
\begin{equation}  \label{eq:commutator_V_one_half_2}
[a_m,b_k]c
=-(\theta (a)| b)\delta_{m,-k}c
\qquad\forall m,k\in\Z-\half \,\,\,\forall c\in V
\end{equation}
which implies, choosing $b=\theta(a)$, $k=-m$ and recalling that $\theta$ is antiunitary by (i) of Proposition \ref{prop:relation_pct_scalar_product}, that 
$$
-[\theta(a)_{-m},a_m]c=-[a_m,\theta(a)_{-m}]c=\norm{\theta(a)}^2c=\norm{a}^2c 
\qquad \forall m\in\Z-\half \,\,\,\forall c\in V\,.
$$
Then for all $m\in\Z-\half$ and all $c\in V$, we have that
$$
\norm{a_mc}^2\leq \norm{a_mc}^2+\norm{\theta(a)_mc}^2=(c|-[a_m,\theta(a)_{-m}]c)
=\norm{a}^2\norm{c}^2
\,.
$$
It follows that, for any $a\in V_\half$ and any $m\in\Z-\half$, $a_m$ is \textit{bounded} on $V$ and thus it satisfies 0-th order energy bounds  as in \eqref{eq:energy-bounds} with $s=0$ and $M=\norm{a}^2$. Proceeding as in the proof of \cite[Proposition 6.3]{CKLW18}, based again on the argument in \cite[pp.\ 112--113]{BS90}, cf.\ also \cite[Proposition 3.4 and Proposition 3.6]{CT23}, we get that any $a\in V_1\cup \F$ satisfies linear energy bounds. Thus, $V$ is energy-bounded by Proposition \ref{prop:energy_boundedness_by_generators}.
\end{proof}

Now, two useful lemmata, which are the version for odd vectors of \cite[Lemma 3.5]{CT23} and of \cite[Lemma 3.7]{CT23}, cf.\ also \cite[Proposition 3.1]{CTW22}. Cf.\ also Theorem \ref{theo:characterization_unitarity_VOSAs} for some notations.

\begin{lem}  \label{lem:estimate_for_energy_bounds}
	Let $V$ be a simple unitary VOSA. Let $a\in V$ be an homogeneous vector in $V_\parone$ and set $\overline{a}:=e^{L_1}(-1)^{2L_0^2+L_0}\theta(a)$. Then $\norm{a_mb}^2\leq (m+d)^{-1}(b|(a_{-d-1}\overline{a})_0b)$ for all $b\in V$ and all $m\in\Zplus-d$.
\end{lem}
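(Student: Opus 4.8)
The plan is to compute the operator $(a_{-d-1}\overline{a})_0$ explicitly and recognise it as a manifestly positive combination of operators of the form $X^+X$, one summand of which is exactly $(m+d)\,a_m^+a_m$. First I would record the two facts that drive everything: by the mode convention $a_{-d-1}=a_{(-2)}$ (since $a_n=a_{(n+d-1)}$ with $d:=d_a$), and by Theorem \ref{theo:characterization_unitarity_VOSAs}, applicable because $V$ is a simple unitary VOSA and $\overline{a}=e^{L_1}(-1)^{2L_0^2+L_0}\theta(a)$, the adjoint of a mode is $a_n^+=\overline{a}_{-n}$, equivalently $\overline{a}_s=a_{-s}^+$. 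In particular $\norm{a_mb}^2=(a_mb|a_mb)=(b|a_m^+a_mb)\ge0$, so the target reduces to showing that $a_m^+a_m$ sits inside $(a_{-d-1}\overline{a})_0$ with coefficient $m+d$, all remaining summands being positive.

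Next I would expand $(a_{(-2)}\overline{a})_0$ via the Borcherds associative formula \eqref{eq:borcherds_associative_formula} with $n=-2$. Since $\overline{a}$ is \emph{not} homogeneous (the factor $e^{L_1}$ produces components of conformal weights $d,d-1,\dots$), I would first decompose $\overline{a}=\sum_i\overline{a}^{(i)}$ into homogeneous pieces, apply the formula to each $a_{(-2)}\overline{a}^{(i)}$ (which is homogeneous of weight $d+d_i+1$, so its zero mode is the $(d+d_i)$-th $(\cdot)$-mode), and then resum over $i$. Using $(-1)^j\binom{-2}{j}=j+1$ and the fermionic sign $(-1)^{p(a)p(\overline{a})}=-1$ for the odd--odd pair, and converting every $(\cdot)$-index to a conformal-weight mode index, the formula collapses to
\begin{equation}
(a_{-d-1}\overline{a})_0=\sum_{j=0}^\infty (j+1)\left(a_{-d-1-j}\,\overline{a}_{d+1+j}+\overline{a}_{d-1-j}\,a_{j+1-d}\right).
\end{equation}
Substituting $\overline{a}_{d+1+j}=a_{-d-1-j}^+$ and $\overline{a}_{d-1-j}=a_{j+1-d}^+$ then turns this into $\sum_{j\ge0}(j+1)\big(a_{-d-1-j}a_{-d-1-j}^++a_{j+1-d}^+a_{j+1-d}\big)$.

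Finally I would pair with $b$ on both sides. Each summand is non-negative: $(b|a_{-d-1-j}a_{-d-1-j}^+b)=\norm{a_{-d-1-j}^+b}^2\ge0$ and $(b|a_{j+1-d}^+a_{j+1-d}b)=\norm{a_{j+1-d}b}^2\ge0$, the sums being finite on the fixed vector $b$ by the field axiom. Reindexing the second family by $q:=j+1-d\in\Zplus-d$, its coefficient becomes $q+d$, whence $(b|(a_{-d-1}\overline{a})_0b)\ge\sum_{q\in\Zplus-d}(q+d)\norm{a_qb}^2\ge(m+d)\norm{a_mb}^2$ for any fixed $m\in\Zplus-d$ (discarding all other positive terms); dividing by $m+d>0$ gives the claim. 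I expect the main obstacle to be purely bookkeeping rather than conceptual: keeping the fermionic signs correct in \eqref{eq:borcherds_associative_formula} and, above all, handling the inhomogeneity of $\overline{a}$ so that the $(\cdot)$-to-mode-index conversion is carried out consistently on each homogeneous component before resumming.
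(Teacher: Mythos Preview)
Your proposal is correct and takes essentially the same approach as the paper: both expand $(a_{-d-1}\overline{a})_0$ via the associative instance of the Borcherds identity with $n=-2$, obtain the same sum of non-negative terms (your formula reindexes to the paper's $\sum_{j\geq d+1}(j-d)a_{-j}\overline{a}_j+\sum_{j\geq 1-d}(j+d)\overline{a}_{-j}a_j$), and then isolate the single summand $(m+d)\norm{a_mb}^2$. The only presentational difference is that the paper passes directly to the homogeneous-mode form \eqref{eq:borcherds_id_homo} and notes it extends by linearity in the second argument (since no $d_b$ appears), whereas you make this step explicit by decomposing $\overline{a}$ into homogeneous pieces; your extra care here is justified but not strictly necessary.
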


\begin{proof}
	If $b$ is any homogeneous vector of $V$, we can rewrite the Borcherds identity \eqref{eq:borcherds_id} as: for all $m\in \Z$, all $n\in \Z-\half$ and all $k\in\half\Z$, it holds that
	\begin{equation}  \label{eq:borcherds_id_homo}
		\begin{split}
				\sum_{j=0}^\infty \binom{m}{j} 
				\left(a_{n+j}b\right)_{m+k}
				&= \\
				\sum_{j=0}^\infty(-1)^j \binom{n+d-1}{j}
				&\left(
				a_{m+n-j}b_{k-n+j}
				-(-1)^{p(a)p(b)}(-1)^{n+d} b_{k-j+d-1} a_{m+j+1-d}
				\right)
		\end{split}
	\end{equation}
	which can be actually extended to all $b\in V$ by linearity. Choosing $m=0$, we can specialize it to the Borcherds commutator formula, cf.\ \eqref{eq:borcherds_commutator_formula}, so that it is easy to calculate that
	\begin{equation}
		(a_{-d-1}\overline{a})_0=\sum_{j=d+1}^{\infty}(j-d)a_{-j}\overline{a}_j
		+\sum_{j=1-d}^{\infty} (j+d)\overline{a}_{-j}a_j \,.
	\end{equation}
	Hence, we get that
	\begin{equation}
		(b|(a_{-d-1}\overline{a})_0b)=\sum_{j=d+1}^{\infty}(j-d)\norm{\overline{a}_jb}^2
		+\sum_{j=1-d}^{\infty} (j+d)\norm{a_jb}^2 
		\qquad\forall b\in V
	\end{equation}
and the result follows.
\end{proof}

\begin{lem}  \label{lem:k_energy_bounds}
	Let $V$ be a simple unitary VOSA. Let $a$ be a primary odd vector with conformal weight $d\in\Zpluseq+\half$ and let $k$ be any non-negative real number. Then we have three cases:
	if $d=\half$, then $a$ satisfies $0$-th order energy bounds; 
	if $d=\frac{3}{2}$ and $\norm{a_\half(L_0+1_V)^{-k}}<+\infty$, then $a$ satisfies $(k+2)$-th order energy bounds; 
	if $d\in\Zpluseq+\frac{5}{2}$ and $\norm{a_\half(L_0+1_V)^{-k}}<+\infty$, then $a$ satisfies $(k+1)$-th order energy bounds.
\end{lem}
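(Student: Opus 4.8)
The plan is to separate the three weights and reduce each to the control of a single positive operator furnished by Lemma~\ref{lem:estimate_for_energy_bounds}. For $d=\half$ nothing new is needed: simplicity forces $V$ to be of CFT type by Proposition~\ref{prop:characterisation_simplicity}, so $V_{-\half}=\{0\}$ and every vector of $V_\half$ is automatically primary; the computation already performed in the proof of Proposition~\ref{prop:gen_by_V1/2_V1} (see \eqref{eq:commutator_V_one_half_2}) shows $\norm{a_mb}^2\le\norm{a}^2\norm{b}^2$ for all $m\in\Z-\half$ and $b\in V$, which is precisely $0$-th order energy bounds. Note that here the hypothesis on $a_\half$ is not even needed, being automatic.

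For $d\ge\tfrac{3}{2}$ I would start from the inequality $\norm{a_mb}^2\le(m+d)^{-1}(b|Bb)$ of Lemma~\ref{lem:estimate_for_energy_bounds}, where $B:=(a_{-d-1}\overline{a})_0$ is the positive operator whose diagonal matrix elements are, by the computation in that proof, $(b|Bb)=\sum_{j\ge d+1}(j-d)\norm{\overline{a}_jb}^2+\sum_{j\ge 1-d}(j+d)\norm{a_jb}^2$. Since $(m+d)^{-1}\le 1$ for every $m\in\Zplus-d$, the whole problem collapses to bounding $(b|Bb)$ by a power of $(L_0+1_V)$; the modes $a_n$ with $n\le -d$, not covered by the estimate, are then obtained from those of $\overline{a}$ through $a_n^+=\overline{a}_{-n}$ (Lemma~\ref{lem:adjoint_quasi-primary_vectors}) and the stability of energy bounds under taking adjoints. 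I would first record that the hypothesis transfers to $\overline{a}$: since $a$ is primary one has $\overline{a}=\pm\theta(a)$, and $\theta$ is antiunitary by (i) of Proposition~\ref{prop:relation_pct_scalar_product} and commutes with $L_0$, so $\norm{\overline{a}_\half(L_0+1_V)^{-k}}=\norm{a_\half(L_0+1_V)^{-k}}<+\infty$; hence the two sums defining $(b|Bb)$ are symmetric in $a\leftrightarrow\overline{a}$ and it suffices to estimate the $a$-sum.

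For a homogeneous $b\in V_h$ the sum truncates: $a_jb\in V_{h-j}$ vanishes for $j>h$ by positivity of the energy, so $\sum_{j\ge 1-d}(j+d)\norm{a_jb}^2$ has $O(h)$ nonzero terms, each with coefficient $\le h+d$, while the finitely many creation modes $1-d\le j<0$ are controlled directly by the $a_\half$/$\overline{a}_\half$ bounds. The core step, following the scheme of \cite[Lemma 3.7]{CT23} adapted to half-integer modes, is a uniform estimate $\norm{a_jb}\le C\norm{(L_0+1_V)^{k'}b}$ with $C$ independent of $j$, obtained by relating each annihilation mode to $a_\half$ via the primary commutation relations \eqref{eq:cr_l1}, \eqref{eq:cr_l-1} and \eqref{eq:cr_l_m_a_n} together with the standard relative bound $\norm{L_{\pm 1}(L_0+1_V)^{-1}}<+\infty$. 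Feeding this into the truncated sum gives $(b|Bb)\le C'\norm{(L_0+1_V)^{k'+1}b}^2$, whence $\norm{a_mb}\le C''\norm{(L_0+1_V)^{k'+1}b}$ and $a$ satisfies $(k'+1)$-th order energy bounds.

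The main obstacle, and the source of the two different exponents, is precisely this uniform mode estimate. The modes $\{a_n\}$ carry an action of $\mathfrak{sl}_2=\langle L_{-1},L_0,L_1\rangle$ in which $a_{d-1}$ is the highest-weight vector, since $[L_1,a_{d-1}]=0$ by \eqref{eq:cr_l_m_a_n}. When $d\ge\tfrac{5}{2}$ one has $d-1\ge\tfrac{3}{2}>\tfrac12$, so $a_\half$ lies strictly below the top and the low annihilation modes are reached from it without loss, giving $k'=k$ and hence $(k+1)$-th order bounds. When $d=\tfrac{3}{2}$, however, $a_\half$ itself is the highest-weight mode ($[L_1,a_\half]=0$), so it fails to generate $a_{3/2},a_{5/2},\dots$ by raising; bounding these requires a coarser argument that costs one additional power of $(L_0+1_V)$, forcing $k'=k+1$ and thus $(k+2)$-th order bounds. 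Beyond this dichotomy, the only other care needed is the bookkeeping of the half-integer indices and of the graded signs, which follow the even-vector template of \cite{CT23} without essential change.
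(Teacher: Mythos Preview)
Your plan for $d=\half$ is fine and matches the paper. For $d\ge\tfrac32$, however, you have inverted the roles of the two lemmata. In the paper, Lemma~\ref{lem:estimate_for_energy_bounds} is used \emph{after} Lemma~\ref{lem:k_energy_bounds}, inside the proof of Theorem~\ref{theo:V_energy-bounded_iff_V_0_is}, to establish the hypothesis $\norm{a_\half(L_0+1_V)^{-k}}<+\infty$ from the energy-boundedness of the even vector $a_{-d-1}\overline{a}$. The proof of Lemma~\ref{lem:k_energy_bounds} itself never touches $B=(a_{-d-1}\overline{a})_0$; it works directly from the single relation $[L_m,a_\half]=\bigl((d-1)m-\tfrac12\bigr)a_{m+\half}$, which for nonzero coefficient expresses every mode $a_{m+\half}$ in terms of $a_\half$ and $L_m$ and then feeds into the argument of \cite[Lemma~3.7]{CT23}. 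Your detour through $(b|Bb)$ is essentially circular: to bound that sum you must first control the individual $\norm{a_j b}$, and the commutation-relation method you propose for this is already the paper's direct proof—after which the passage through $B$ adds nothing.

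Your diagnosis of the $d=\tfrac32$ obstruction is also incorrect. The dichotomy is not an $\mathfrak{sl}_2$ highest-weight phenomenon: the paper uses all Virasoro generators $L_m$, and the point is simply that $(d-1)m-\tfrac12=0$ admits an integer solution if and only if $d=\tfrac32$ (namely $m=1$). Hence for $d=\tfrac32$ the \emph{only} mode unreachable from $a_\half$ by a single commutator is $a_{3/2}$; contrary to what you write, $a_{5/2},a_{7/2},\dots$ are produced by $[L_2,a_\half],[L_3,a_\half],\dots$ with nonzero coefficients. The paper's fix is to run the \cite{CT23} argument for all $m\neq1$ to obtain $(k+1)$-th order bounds on every $a_n$ with $n\neq\tfrac32$—in particular on $a_{-1/2}$—and then recover the single missing mode via $[L_2,a_{-1/2}]=(2d-\tfrac32)\,a_{3/2}$, which costs one further power of $L_0+1_V$ and yields the stated $(k+2)$-th order bound. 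If you genuinely restrict to $L_{\pm1}$ as your $\mathfrak{sl}_2$ framing suggests, then even for $d\ge\tfrac52$ you cannot reach $a_n$ with $n\ge d$ from $a_\half$, and the argument collapses.
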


\begin{proof}
	If $d=\half$, then we have already proved the result in the proof of Proposition \ref{prop:gen_by_V1/2_V1}. 
	If $d\in\Zpluseq+\frac{3}{2}$, then we have that $[L_m,a_\half]=((d-1)m-\half)a_{m+\half}$ for all $m\in\Z$. Now, if $d\in\Zpluseq+\frac{5}{2}$, then $(d-1)m-\half\not=0$ for all $m\in\Z$. Therefore, we can apply the argument in the proof of \cite[Lemma 3.7]{CT23}, to conclude.
	Instead, if $d=\frac{3}{2}$, then $(d-1)m-\half=0$ if and only if $m=1$. Hence, for $m\not=1$, we can obtain $(k+1)$-th order energy bounds for every $a_n$ with $n\not=\frac{3}{2}$, applying the same argument as before. In particular, this implies that $\norm{a_{-\half}(L_0+1_V)^{-k-1}}<+\infty$. Moreover, $[L_2,a_{-\half}]=(2d-\frac{3}{2})a_\frac{3}{2}$, which allows us to obtain the $(k+2)$-th order energy bounds for $a_\frac{3}{2}$ by the same argument as before, so concluding the proof.
\end{proof}

Thanks to the above lemmata, we can prove the following:

\begin{theo}  \label{theo:V_energy-bounded_iff_V_0_is}
	Any simple unitary VOSA $V$ is energy-bounded if and only if $V_\parzero$ is a energy-bounded unitary subalgebra of $V$.
\end{theo}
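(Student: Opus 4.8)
The plan is to treat the two implications separately, with essentially all the work in the ``if'' direction. For the ``only if'' direction I would simply recall that $V_\parzero = V^{\{1_V,\Gamma_V\}}$ is a unitary subalgebra by Example \ref{ex:fixed_point_subalgebra}, and that if every vector of $V$ satisfies energy bounds then a fortiori so does every vector of $V_\parzero \subseteq V$; hence $V_\parzero$ is an energy-bounded unitary subalgebra.

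For the ``if'' direction, assume $V_\parzero$ is energy-bounded. If $V_\parone = \{0\}$ there is nothing to prove, so suppose $V_\parone \neq \{0\}$. Since $V$ is simple it is of CFT type by Proposition \ref{prop:characterisation_simplicity} (so all conformal weights are non-negative and each weight space is finite dimensional), and $V_\parone$ is an irreducible $V_\parzero$-module by Example \ref{ex:even_simple_odd_irreducible}. Let $d \in \Zpluseq + \half$ be the lowest conformal weight occurring in $V_\parone$ and let $U := V_d \cap V_\parone$ be the corresponding non-zero lowest-weight space; every $a \in U$ is a primary odd vector, since $L_n a$ would have weight $d-n < d$ for $n \in \Zplus$, forcing $L_n a = 0$. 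The first key step is to observe that $V$ is generated, as a vertex superalgebra, by $V_\parzero \cup U$: the vertex subalgebra $W$ generated by $V_\parzero \cup U$ satisfies $W \cap V_\parone \supseteq U$, and $W \cap V_\parone$ is a non-zero $V_\parzero$-submodule of $V_\parone$, hence equals $V_\parone$ by irreducibility; thus $W = V$.

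The second, and main, step is to show that each $a \in U$ satisfies energy bounds, for then Proposition \ref{prop:energy_boundedness_by_generators}, applied to a homogeneous basis of $V_\parzero$ together with $U$, finishes the proof. Here I would feed the two lemmata just proved into one another. Writing $\overline{a} = e^{L_1}(-1)^{2L_0^2+L_0}\theta(a) = (-1)^{2d^2+d}\theta(a)$ (the second equality using $L_1 a = 0$), the vector $c := a_{-d-1}\overline{a}$ is even, so it lies in the energy-bounded subalgebra $V_\parzero$; in particular $c_0$ obeys $\norm{c_0 b} \leq M\norm{(L_0 + 1_V)^k b}$ for suitable $M, k$. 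Applying the estimate of Lemma \ref{lem:estimate_for_energy_bounds} with $m = \half \in \Zplus - d$ gives $\norm{a_\half b}^2 \leq (\half + d)^{-1}(b | c_0 b) \leq (\half + d)^{-1} M \norm{b}\,\norm{(L_0 + 1_V)^k b}$, and since $L_0 \geq 0$ forces $\norm{b} \leq \norm{(L_0 + 1_V)^k b}$ we conclude $\norm{a_\half (L_0 + 1_V)^{-k}} < +\infty$. This is exactly the hypothesis needed to invoke Lemma \ref{lem:k_energy_bounds} (the case $d = \half$ needing no hypothesis at all), which then yields energy bounds for $a$.

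The step I expect to demand the most care is the generation claim, namely verifying that $W \cap V_\parone$ is genuinely a $V_\parzero$-submodule of the irreducible module $V_\parone$ and that the module action there coincides with the restriction of the VOSA $(n)$-products, so that irreducibility legitimately applies. Once this is in place, the chaining of Lemma \ref{lem:estimate_for_energy_bounds} and Lemma \ref{lem:k_energy_bounds} through the even vector $a_{-d-1}\overline{a}$ is the conceptual heart, and the surrounding estimates are routine.
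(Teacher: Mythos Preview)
Your proposal is correct and follows essentially the same approach as the paper: both reduce to showing energy bounds for a primary odd vector of lowest conformal weight by bounding $a_{\frac12}$ via the even vector $a_{-d-1}\overline{a}$ (Lemma \ref{lem:estimate_for_energy_bounds}) and then invoking Lemma \ref{lem:k_energy_bounds}. The generation claim you flag as delicate is disposed of in the paper by a direct appeal to Example \ref{ex:even_simple_odd_irreducible}, which already shows that $V$ is generated by $V_\parzero$ together with any single non-zero odd vector, so no further module-theoretic verification is needed there.
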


\begin{proof}
	If $V$ is energy-bounded, then the claim is trivial. 
	By Example \ref{ex:even_simple_odd_irreducible}, $V$ is generated by $V_\parzero$ and any non-zero odd vector $a$. We can choose $a$ as the primary odd vector of lowest conformal weight. Suppose that $V_\parzero$ is an energy-bounded unitary subalgebra of $V$. Thus, $V$ will be energy-bounded if $a$ satisfy energy bounds by Proposition \ref{prop:energy_boundedness_by_generators}.
	If $d_a=\half$, then $a$ satisfies the $0$-th order energy bounds by the proof of Proposition \ref{prop:gen_by_V1/2_V1} and the result follows.
	If $d_a\not=\half$, we can adapt the argument in the proof of \cite[Theorem 4.1]{CT23}.
	Indeed, note that $x:=a_{-d_a-1}\overline{a}$ with 
	$$
	\overline{a}:=e^{L_1}(-1)^{2L_0^2+L_0}\theta(a)=(-1)^{d_a+\half}\theta(a)
	$$ 
	see \eqref{eq:2d^2+d}, is an even vector. Then $x$ satisfies energy bounds and thus there exists some non-negative real number $k$ such that $\norm{x_0(L_0+1_V)^{-2k}}<+\infty$. Hence, by Lemma \ref{lem:estimate_for_energy_bounds}, $x_0$ is a positive operator and for any homogeneous $b\in V$, we have that
	\begin{equation}
		\begin{split}
			\norm{a_\frac{1}{2}(L_0+1_V)^{-k}b}^2
			&\leq \left(d_a+\half\right)^{-1}((L_0+1_V)^{-k}b|(a_{-d_a-1}\overline{a})_0(L_0+1_V)^{-k}b) \\
			&=
			\left(d_a+\half\right)^{-1}((L_0+1_V)^{-k}b|x_0(L_0+1_V)^{-k}b)\\
			&=
			\left(d_a+\half\right)^{-1}(b|x_0(d_b+1)^{-2k}b)\\
			&=
			\left(d_a+\half\right)^{-1}(b|x_0(L_0+1_V)^{-2k}b)\\
			&\leq 
			\left(d_a+\half\right)^{-1}\norm{x_0(L_0+1_V)^{-2k}}<+\infty
			\,.
		\end{split}
	\end{equation}
Then the result follows by Lemma \ref{lem:k_energy_bounds}.
\end{proof}

\begin{rem}
	It is not difficult to generalize Theorem \ref{theo:V_energy-bounded_iff_V_0_is} in the following way, see again the proof of \cite[Theorem 4.1]{CT23}: if $G$ is a compact group of unitary automorphisms of a simple unitary VOSA $V$, then $V$ is energy-bounded if and only if $V^G$ is an energy-bounded unitary subalgebra of $V$.
\end{rem}

Our aim is to define some operator-valued distributions from energy-bounded vertex operators. Therefore, our next step is to introduce some test function spaces for these operator-valued distributions. Consider the Fréchet space $C^\infty(S^1)$ of complex-valued infinitely differentiable functions on $S^1$ and its subset $C^\infty(S^1,\R)$ of real-valued functions. Consider also the function 
\begin{equation}
	\chi:S^1\to \C \,,\quad z=e^{ix} \mapsto e^{i\frac{x}{2}} 
\end{equation}
where $x\in(-\pi,\pi]$. We define the following spaces of functions:
\begin{equation}  \label{eq:test_funct_spaces_chi}
	\begin{split}
		C_\chi^\infty(S^1) &:=\left\{\chi h \mid h\in C^\infty(S^1) \right\} \\
		C_\chi^\infty(S^1,\R) &:=\left\{g\in C_\chi^\infty(S^1) \mid g(z)\in\R \,\,\,\forall z\in S^1 \right\} .
	\end{split}
\end{equation}
Note that $C_\chi^\infty(S^1,\R)\not= \chi C^\infty(S^1,\R)$. 
For any $g=\chi h\in C_\chi^\infty(S^1)$, we also set $\overline{g}(z):=\chi(z)\overline{zh(z)}$ for all $z\in S^1$, so that $\overline{g}\in C_\chi^\infty(S^1)$.
The Fréchet topology on $C^\infty(S^1)$ induces a Fréchet topology on $C_\chi^\infty(S^1)$, making the latter a Fréchet space as well. 
With an abuse of notation, for every $f\in C^\infty(S^1)$ and $g\in C_\chi^\infty(S^1)$, we will write $f(x)$ and $g(x)$ to mean
the functions of real variable $x$ defined by $f(e^{ix})$ and $g(e^{ix})$ respectively. Then we write
\begin{equation}
f'(z):=\frac{\mathrm{d}f}{\mathrm{d}x}(x)
\,,\qquad
g'(z):=\frac{\mathrm{d}g}{\mathrm{d}x}(x)=\frac{i}{2}g(z)+\chi(z)h'(z)
\end{equation}
where $z=e^{ix}$ for some $x\in(-\pi,\pi]$ and $g=\chi h$ for some $h\in C^\infty(S^1)$.

Now, let $(V,\scalar)$ be a unitary VOSA and consider its Hilbert space completion $\mathcal{H}$, see Definition \ref{defin:hilbet_space_from_V}. Note that for every $a\in V$ and for every $n\in\Z$, $a_{(n)}$ is an operator on $\mathcal{H}$ with dense domain $V$. Furthermore, $a_{(n)}$ is closable as it has a densely defined adjoint thanks to the invariance property of the scalar product. By definition, for every $a\in V$ and every $n\in\half\Z$, $a_n$ is a closable operator on $\mathcal{H}$ with dense domain $V$. Suppose further that $V$ is energy-bounded and consider $f\in C^\infty(S^1)$ and $g=\chi h\in C^\infty_\chi(S^1)$ with their Fourier coefficients: 
\begin{equation} \label{eq:fourier_coeff_S1}
	\begin{split}
		\widehat{f}_n
		&:=\oint_{S^1} f(z)z^{-n}\,\frac{\mathrm{d}z}{2\pi i z}
		=\frac{1}{2\pi}\int_{-\pi}^\pi f(x)e^{-inx}\,\mathrm{d}x
			 \qquad\forall n\in\Z \\
		\widehat{g}_n 
		&:= \widehat{h}_\frac{2n-1}{2} 
		= \frac{1}{2\pi}\int_{-\pi}^\pi g(x)e^{-inx}\,\mathrm{d}x
			\qquad \forall n\in\Z-\half \,.
	\end{split}		
\end{equation}
For every non-negative real number $s$, define the following norms on $C^\infty(S^1)$ and $C^\infty_\chi(S^1)$ respectively:
\begin{equation}
	\norm{f}_s:=\sum_{n\in\Z}(1+\abs{n})^s\abs{\widehat{f}_n}
	\quad\mbox{ and }\quad
	\norm{g}_s:=\sum_{n\in\Z-\half}(1+\abs{n})^s\abs{\widehat{g}_n} 
\end{equation}
for all $f\in C^\infty(S^1)$ and all $g\in C^\infty_\chi(S^1)$.
Then for every $a\in V_\parzero$ and every $b\in V_\parone$, we define  the operators $Y_0(a,f)$ and $Y_0(b,g)$ both with domain $V$ by
\begin{equation}   \label{eq:def_smeared_vertex_operators}
Y_0(a,f)c:=\sum_{n\in\Z}\widehat{f}_n a_nc
\quad \forall c\in V
\,,\qquad
Y_0(b,g)c:=\sum_{n\in\Z-\half}\widehat{g}_n b_nc
\quad \forall c\in V \,.
\end{equation}
Note that the latter operators are densely defined on $\mathcal{H}$ because the series in \eqref{eq:def_smeared_vertex_operators} converge in $\mathcal{H}$ thanks to the energy bounds and the rapidly decaying of the Fourier coefficients $\widehat{f}_n$ and $\widehat{g}_n$. Moreover, both $Y_0(a,f)$ and $Y_0(b,g)$ have densely defined adjoints thanks to the invariance of the scalar product, and we give the following definition:

\begin{defin}
Let $(V,\scalar)$ be an energy-bounded unitary VOSA. For all $a\in V_\parzero$, $b\in V_\parone$ and all $f\in C^\infty(S^1)$, $g\in C^\infty_\chi(S^1)$, we define $Y(a,f)$ and $Y(b,g)$ as the closure on the Hilbert space $\mathcal{H}$ of the operators $Y_0(a,f)$ and $Y_0(b,g)$ respectively, as given in \eqref{eq:def_smeared_vertex_operators}. We call them \textbf{smeared vertex operators}.
\end{defin}

In order to define a net of von Neumann algebras from the smeared vertex operators, we need to find a common invariant core for them and their adjoints. This is what we do in the following, rewriting with some more details the argument in \cite[p.\  47]{CKLW18} and extending it to the super case.

For every $k\in\Zpluseq$, set $\mathcal{H}^k$ the domain in $\mathcal{H}$ of the positive self-adjoint operator $L_0^k$. Note that $\mathcal{H}^k$ is complete with respect to the scalar product $\scalar_k:=((1_\mathcal{H}+L_0)^k\cdot|(1_\mathcal{H}+L_0)^k\cdot)$. Define $V^k$ as the Hilbert space completion of $V$ with respect to $\scalar_k$ and consider the corresponding induced norm $\norm{\cdot}_k$. Then
we have that $V^k=\mathcal{H}^k$. Indeed, if $V$ were not $\norm{\cdot}_k$-dense in $\mathcal{H}^k$, there would exist a non-zero vector $v\in\mathcal{H}^k\backslash V^k$ such that
\begin{equation}
((1_\mathcal{H}+L_0)^kv|(1_\mathcal{H}+L_0)^ka)  =0
\quad\forall a\in V 
\quad\Leftrightarrow\quad
(v|(1_\mathcal{H}+L_0)^{2k}a)=0
\quad\forall a\in V
\end{equation}
which implies that $(v|b)=0$ for all $b\in V$,
which is impossible because $V$ is $\norm{\cdot}$-dense in $\mathcal{H}$ by construction. As usual, see \cite[Section 1]{Nel72} and \cite[Section 2]{GW85}, see also \cite[p.\ 481]{Tol99} and \cite[Section 1.5]{Lok94}, define the dense subspace of $\mathcal{H}$ of \textbf{smooth vectors} 
\label{defin:smooth_vectors_L_0}
for $L_0$ as $\mathcal{H}^\infty:=\bigcap_{k\in\Zpluseq}\mathcal{H}^k$ equipped with the Fréchet topology given by the norms $\norm{\cdot}_k$ for all  $k\in\Zpluseq$.
Then a routine proof gives us the following:
\begin{lem}   \label{lem:common_core}
$\mathcal{H}^\infty$ is a common core for all the smeared vertex operators $Y(a,f)$ and $Y(b,g)$ with $a\in V_\parzero$, $b\in V_\parone$, $f\in C^\infty(S^1)$ and $g\in C_\chi^\infty(S^1)$. Moreover, $Y(a,f)$ and $Y(b,g)$ are continuous in $\mathcal{H}^\infty$ satisfying, for some non-negative real numbers $M,s$ and $k$,
\begin{equation} \label{eq:bound_smeared_vertex_ops}
\begin{split}
\norm{Y(a,f)c} &\leq M\norm{f}_s\norm{(L_0+1_\mathcal{H})^kc}
\qquad \forall c\in \mathcal{H}^\infty \\
\norm{Y(b,g)c} &\leq M\norm{g}_s\norm{(L_0+1_\mathcal{H})^kc}
\qquad \forall c\in \mathcal{H}^\infty \,.
\end{split}
\end{equation}
Consequently, for every $c\in\mathcal{H}^\infty$, the two maps
\begin{equation}  \label{eq:smeared_vertex_operator_distribution}
	C^\infty(S^1)\ni f\longmapsto Y(a,f)c\in\mathcal{H} 
	\quad\mbox{ and }\quad
	C^\infty_\chi(S^1)\ni g\longmapsto Y(b,g)c\in\mathcal{H}
\end{equation}
are continuous and linear, that is, they are operator-valued distributions.
\end{lem}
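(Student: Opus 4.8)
The plan is to reduce everything to the polynomial energy bounds already available on $V$ and then propagate them to the smeared operators by a density-and-closedness argument. First I would record the estimate at the level of the operators $Y_0(a,f)$ and $Y_0(b,g)$ on the common domain $V$. Fix $a\in V_\parzero$ and suppose it satisfies $k$-th order energy bounds with constants $M,s$. Since the bound $\norm{a_nc}\leq M(\abs{n}+1)^s\norm{(L_0+1_V)^kc}$ holds uniformly in $n$ and in $c\in V$, the triangle inequality applied to the series defining $Y_0(a,f)c$ (which is finite in each energy degree of $c$) gives
\begin{equation}
\norm{Y_0(a,f)c}\leq\sum_{n\in\Z}\abs{\widehat{f}_n}\,\norm{a_nc}\leq M\Big(\sum_{n\in\Z}(1+\abs{n})^s\abs{\widehat{f}_n}\Big)\norm{(L_0+1_V)^kc}=M\norm{f}_s\norm{(L_0+1_V)^kc}
\end{equation}
for all $c\in V$, and the identical computation with half-integer Fourier modes yields $\norm{Y_0(b,g)c}\leq M\norm{g}_s\norm{(L_0+1_V)^kc}$ for odd $b$ and $g\in C_\chi^\infty(S^1)$.

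Next I would upgrade these estimates to $\mathcal{H}^\infty$. The bound shows that $Y_0(a,f)$ is continuous from $(V,\norm{\cdot}_k)$ into $\mathcal{H}$. Since $V$ is $\norm{\cdot}_k$-dense in $\mathcal{H}^k=D((1_\mathcal{H}+L_0)^k)$ — which is precisely the identity $V^k=\mathcal{H}^k$ established just above — every $c\in\mathcal{H}^\infty\subseteq\mathcal{H}^k$ is a $\norm{\cdot}_k$-limit of vectors $c_j\in V$. For such a sequence $c_j\to c$ in $\mathcal{H}$, while $\{Y_0(a,f)c_j\}$ is Cauchy in $\mathcal{H}$ by the displayed bound; as $Y(a,f)$ is by definition the closure of $Y_0(a,f)$, closedness forces $c\in D(Y(a,f))$ with $Y(a,f)c=\lim_j Y_0(a,f)c_j$, and passing to the limit in the bound yields exactly \eqref{eq:bound_smeared_vertex_ops}. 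Thus $\mathcal{H}^\infty$ lies in the domain of every $Y(a,f)$ and $Y(b,g)$, and each is continuous from the Fréchet space $\mathcal{H}^\infty$ into $\mathcal{H}$. For the core property I would note that $V\subseteq\mathcal{H}^\infty$ (finite-energy vectors are smooth) and that $V$ is a core by construction; since $V\subseteq\mathcal{H}^\infty\subseteq D(Y(a,f))$, the closure of $Y(a,f)\restriction_{\mathcal{H}^\infty}$ contains the closure of $Y(a,f)\restriction_{V}$, namely $Y(a,f)$, while being contained in the closed operator $Y(a,f)$; the two therefore coincide. The same subspace serves all operators simultaneously, so $\mathcal{H}^\infty$ is a common core.

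Finally, for fixed $c\in\mathcal{H}^\infty$ the maps in \eqref{eq:smeared_vertex_operator_distribution} are linear, and \eqref{eq:bound_smeared_vertex_ops} controls $\norm{Y(a,f)c}$ by $M\norm{(L_0+1_\mathcal{H})^kc}\,\norm{f}_s$; continuity as operator-valued distributions then follows once one checks that $f\mapsto\norm{f}_s$ is dominated by a Fréchet seminorm of $C^\infty(S^1)$, which is immediate from the decay estimate $\abs{\widehat{f}_n}\leq C_m(1+\abs{n})^{-m}\sup_x\abs{f^{(m)}(x)}$ with $m$ chosen larger than $s+1$, together with the analogous estimate for $\widehat{g}_n$ on $C_\chi^\infty(S^1)$. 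I expect the only point requiring genuine care — as opposed to being purely routine — to be the bookkeeping that separates the even sector ($a\in V_\parzero$, integer modes, $f\in C^\infty(S^1)$) from the odd sector ($b\in V_\parone$, half-integer modes, $g\in C_\chi^\infty(S^1)$): one must confirm that the half-integer Fourier expansion \eqref{eq:fourier_coeff_S1} of $g$ is correctly paired with the half-integer mode expansion of $Y(b,z)$, so that $Y_0(b,g)c=\sum_{n\in\Z-\half}\widehat{g}_n b_nc$ is the right smeared object, after which all the estimates above transfer verbatim with $\Z$ replaced by $\Z-\half$.
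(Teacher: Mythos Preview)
Your proof is correct and is precisely the routine argument the paper alludes to (the paper simply states ``a routine proof gives us the following'' and omits the details). One small inaccuracy: the parenthetical ``which is finite in each energy degree of $c$'' is not true for the full series $\sum_n \widehat{f}_n a_n c$, but this is irrelevant since the series converges absolutely in $\mathcal{H}$ by the energy bounds and rapid decay of $\widehat{f}_n$, so the triangle inequality applies as written.
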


The invariance of the common core $\mathcal{H}^\infty$ for all smeared vertex operators, is a consequence of the following standard argument.

\begin{lem}  \label{lem:h_infty_is_invariant}
Let $a\in V_\parzero$ and $f\in C^\infty(S^1)$. For all $t\in\R$, define the $C^\infty(S^1)$-function $f_t(z):=f(e^{-it}z)$. Then we have the following equalities:
\begin{align} 
e^{itL_0}Y(a,f)c &= Y(a,f_t)e^{itL_0}c
\qquad \forall c\in\mathcal{H}^\infty 
\,\,\,\forall t\in \R 
\label{eq:smeared_vertex_operator_f_t}\\
iL_0 Y(a,f)c &= -Y(a,f')c+iY(a,f)L_0c    
\qquad \forall c\in\mathcal{H}^\infty \,. \label{eq:derivation_smeared_vo_f_t} 
\end{align}
In particular, $Y(a,f)c\in\mathcal{H}^\infty$ for all $c\in\mathcal{H}^\infty$.

Let $b\in V_\parone$ and $g=\chi h$ with $h\in C^\infty(S^1)$. For all $t\in\R$, define the $C^\infty(S^1)$-function $h_t(z):=h(e^{-it}z)$. Then we have the following equalities:
\begin{align}
e^{itL_0}Y(b,g)c &= Y(b,e^{-i\frac{t}{2}}\chi  h_t)e^{itL_0}c
\qquad \forall c\in\mathcal{H}^\infty 
\,\,\,\forall t\in \R 
\label{eq:smeared_vertex_operator_f_t_odd}\\
iL_0 Y(b,g)c &= -Y(b,g')c+iY(b,g)L_0c    
\qquad \forall c\in\mathcal{H}^\infty \,.
\label{eq:derivation_smeared_vo_f_t_odd} 
\end{align}
In particular, $Y(b,g)c\in\mathcal{H}^\infty$ for all $c\in\mathcal{H}^\infty$.
\end{lem}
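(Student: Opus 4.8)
The plan is to establish each identity first on homogeneous vectors (eigenvectors of $L_0$) and then to extend it to all of $\mathcal{H}^\infty$ by linearity and continuity, using the energy bounds \eqref{eq:bound_smeared_vertex_ops} from Lemma \ref{lem:common_core}. The only algebraic ingredient needed is the commutation relation \eqref{eq:cr_l0}: if $c\in V$ satisfies $L_0c=\lambda c$, then $L_0a_nc=(\lambda-n)a_nc$, so $a_nc$ lies in the energy eigenspace of eigenvalue $\lambda-n$ and $e^{itL_0}a_nc=e^{it(\lambda-n)}a_nc$. The analytic ingredient is the behaviour of Fourier coefficients under rotation of the variable.

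For the even case I would first compute $\widehat{(f_t)}_n=e^{-int}\widehat{f}_n$ directly from \eqref{eq:fourier_coeff_S1} via the substitution $x\mapsto x-t$ and $2\pi$-periodicity. Evaluating both sides of \eqref{eq:smeared_vertex_operator_f_t} on a homogeneous $c$ of energy $\lambda$ then yields $e^{it\lambda}\sum_{n\in\Z}e^{-int}\widehat{f}_na_nc$ on each side, so the two coincide; linearity and the bounds \eqref{eq:bound_smeared_vertex_ops} extend this to $\mathcal{H}^\infty$. The interchange of $e^{itL_0}$ with the sum is harmless since the summands lie in distinct (hence orthogonal) energy eigenspaces and the series converges in $\mathcal{H}$. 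For \eqref{eq:derivation_smeared_vo_f_t} I would either differentiate \eqref{eq:smeared_vertex_operator_f_t} at $t=0$, or check it directly: on homogeneous $c$ one has $L_0Y(a,f)c-Y(a,f)L_0c=-\sum_n n\widehat{f}_na_nc$, while $\widehat{(f')}_n=in\widehat{f}_n$ gives $Y(a,f')c=i\sum_n n\widehat{f}_na_nc$, and comparing yields the claim. Convergence of the sum after applying the unbounded $L_0$ is guaranteed by the rapid decay of $\widehat{f}_n$ against the polynomial growth in \eqref{eq:energy-bounds}.

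The invariance statement $Y(a,f)c\in\mathcal{H}^\infty$ then follows by bootstrapping \eqref{eq:derivation_smeared_vo_f_t}: rewriting it as $L_0Y(a,f)c=iY(a,f')c+Y(a,f)L_0c$ exhibits $L_0Y(a,f)c$ as a sum of smeared operators (with the still-smooth test functions $f'$ and $f$) applied to vectors of $\mathcal{H}^\infty$, hence landing in $\mathcal{H}$; iterating shows $L_0^mY(a,f)c$ is well-defined for every $m$. Alternatively, \eqref{eq:smeared_vertex_operator_f_t} shows that $t\mapsto e^{itL_0}Y(a,f)c$ is smooth, which characterizes membership in $\mathcal{H}^\infty$.

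The odd case runs along identical lines, and the single point requiring care — which is also where the super setting departs from \cite[p.\ 47]{CKLW18} — is the appearance of the factor $e^{-it/2}$; this I expect to be the main obstacle. The correct transformed test function is $G_t:=e^{-it/2}\chi h_t$, which is precisely the well-defined representative in $C_\chi^\infty(S^1)$ of the naive rotation $z\mapsto g(e^{-it}z)$, the bare expression being ambiguous because of the branch cut of $\chi$ at $-1$. The key computation is again $\widehat{(G_t)}_n=e^{-int}\widehat{g}_n$ for $n\in\Z-\half$: the prefactor $e^{-it/2}$ cancels the factor $e^{it/2}$ produced by the substitution in the $\chi$-term, leaving $e^{-int}\widehat{g}_n$, and the substitution is legitimate because for half-integer $n$ the integrand $e^{iy/2}h(y)e^{-iny}$ is genuinely $2\pi$-periodic (the anti-periodicity of $\chi$ and of $e^{-iny}$ each contribute a sign that cancels). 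With this identity in hand, the evaluation on homogeneous vectors, the derivation formula \eqref{eq:derivation_smeared_vo_f_t_odd} using $\widehat{(g')}_n=in\widehat{g}_n$ (from the anti-periodic expansion $g(x)=\sum_{n\in\Z-\half}\widehat{g}_ne^{inx}$), and the conclusion $Y(b,g)c\in\mathcal{H}^\infty$ are verbatim as in the even case.
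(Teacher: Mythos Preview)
Your proposal is correct and follows essentially the same route as the paper: establish the rotation covariance on homogeneous vectors in $V$ by computing the Fourier coefficients of the transformed test function (the paper records exactly your identity $\widehat{(e^{-it/2}\chi h_t)}_n=e^{-int}\widehat{g}_n$ via $\widehat{(\chi h_t)}_n=e^{it/2}e^{-int}\widehat{g}_n$), pass to $\mathcal{H}^\infty$ by Lemma~\ref{lem:common_core}, and then obtain the derivation formula by differentiating at $t=0$. Your additional remarks on the branch-cut interpretation of the $e^{-it/2}$ factor and the bootstrap argument for $\mathcal{H}^\infty$-invariance are helpful elaborations but do not depart from the paper's strategy.
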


\begin{proof}
Equations \eqref{eq:smeared_vertex_operator_f_t} and \eqref{eq:derivation_smeared_vo_f_t} are stated in \cite[p.\  47]{CKLW18}, following an argument similar to the one here below, which we use to prove the second part.

Note that the bounded operator $e^{itL_0}$ on $\mathcal{H}$ preserves $V$ and $\mathcal{H}^\infty$, whereas $Y(b,g)c\in\mathcal{H}$ for all $c\in\mathcal{H}^\infty$. 
We prove \eqref{eq:smeared_vertex_operator_f_t_odd} for homogeneous vectors $b\in V_\parone$ and $c\in V$ first. On the one hand, for all $t\in\R$, we have that
\begin{equation}
e^{itL_0}Y(b,g)c=\sum_{n\in\Z-\half}\widehat{g}_ne^{itL_0}b_nc
=\sum_{n\in\Z-\half}e^{it(d_c-n)}\widehat{g}_nb_nc
\end{equation}
where we have used \eqref{eq:def_smeared_vertex_operators} and successively \eqref{eq:cr_l0} to prove that $d_{b_nc}=d_c-n$ for all $n\in\Z-\half$. 
On the other hand, just note that for all $t\in\R$, 
$$
\widehat{(\chi h_t)}_n=\widehat{(h_t)}_\frac{2n-1}{2}=e^{-i\frac{2n-1}{2}t}\widehat{h}_\frac{2n-1}{2}=e^{i\frac{t}{2}}e^{-int}\widehat{g}_n
\qquad\forall n\in\Z-\half \,.
$$
It follows that the two sides of \eqref{eq:smeared_vertex_operator_f_t_odd} are equal when $c\in V$, and the general case with $c\in\mathcal{H}^\infty$ follows by Lemma \ref{lem:common_core}.
Finally, equation \eqref{eq:derivation_smeared_vo_f_t_odd} can be proved by showing, with routine functional calculus arguments, that the right-hand side of \eqref{eq:smeared_vertex_operator_f_t_odd} is differentiable in $t=0$ with derivative $-Y(b,g')c+iY(b,g)L_0c$.
\end{proof}

It is useful to note that formula \eqref{eq:smeared_vertex_operator_f_t} implies the following equality between operators: for all $a\in V_\parzero$ and all $f\in C^\infty(S^1)$, we have that
\begin{equation}   \label{eq:rot_covariance}
e^{itL_0}Y(a,f)e^{-itL_0} = Y(a,f_t)
\qquad \forall t\in\R\,.
\end{equation} 
Similarly, formula \eqref{eq:smeared_vertex_operator_f_t_odd} gives: for all $b\in V_\parone$ and all $g=\chi h$ with $h\in C^\infty(S^1)$, we have that
\begin{equation}   \label{eq:rot_covariance_odd}
e^{itL_0}Y(b,g)e^{-itL_0} = Y(b,e^{-i\frac{t}{2}}\chi  h_t)
\qquad\forall t\in\R\,.
\end{equation}

Now, by \eqref{eq:an_star}, we can easily calculate that for all homogeneous $a\in V$, all $b,c\in V$ and all $f$ either in $C^\infty(S^1)$ or in $C^\infty_\chi(S^1)$ depending on $p(a)$,
\begin{equation}  
(Y(a,f)b | c)=(b| Y(a,f)^+ c) 
\end{equation}
where we have defined
\begin{equation} \label{eq:smeared_vertex_operator_adjoint}
Y(a,f)^+
:=(-1)^{2d_a^2+d_a}\sum_{l\in\Zpluseq} \frac{Y(\theta L_1^l a, \overline{f})}{l!}
=i^{2d_a}\sum_{l\in\Zpluseq} \frac{Y(Z\theta L_1^l a, \overline{f})}{l!}
\end{equation}
which is well-defined because $L_1^la\in V_{d_a-l}$ for all $l\in\Zpluseq$ and thus the sum in \eqref{eq:smeared_vertex_operator_adjoint} is finite.
This implies that $Y(a,f)^+\subseteq Y(a,f)^*$ and thus $\mathcal{H}^\infty$ is an invariant core for the adjoints of all smeared vertex operators.

To sum up, we have proven that:
\begin{prop}   \label{prop:common_invariant_core_smeared_vertex_operators}
	Let $V$ be an energy-bounded unitary VOSA. Then $\mathcal{H}^\infty$ is a common invariant core for all smeared vertex operators and their adjoints.
\end{prop}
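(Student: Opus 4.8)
The plan is to assemble the statement from the facts already established for the smeared vertex operators themselves, and then to transfer them to the adjoints through the explicit adjoint formula. Fix a smeared vertex operator, say $Y(a,f)$ with $a\in V_\parzero$ and $f\in C^\infty(S^1)$ (the odd case $Y(b,g)$ with $b\in V_\parone$, $g\in C^\infty_\chi(S^1)$ being identical throughout). Lemma \ref{lem:common_core} already provides that $\mathcal{H}^\infty$ is a core for $Y(a,f)$ and that the energy estimate \eqref{eq:bound_smeared_vertex_ops} holds, while Lemma \ref{lem:h_infty_is_invariant} gives $Y(a,f)\mathcal{H}^\infty\subseteq\mathcal{H}^\infty$. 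Hence the first half of the claim, namely that $\mathcal{H}^\infty$ is a common \emph{invariant core} for all the smeared vertex operators, is immediate.

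Next I would treat the adjoints by starting from \eqref{eq:smeared_vertex_operator_adjoint}, which writes $Y(a,f)^+$ as a \emph{finite} linear combination of the smeared vertex operators $Y(\theta L_1^l a,\overline{f})$, $l\in\Zpluseq$ (finite since $L_1^l a\in V_{d_a-l}$ vanishes for $l$ large). Applying Lemma \ref{lem:common_core} and Lemma \ref{lem:h_infty_is_invariant} to each summand, $\mathcal{H}^\infty$ is a core for, and is left invariant by, every $Y(\theta L_1^l a,\overline{f})$; the same therefore holds for their finite combination $Y(a,f)^+$, which in particular obeys an energy bound of the form \eqref{eq:bound_smeared_vertex_ops}. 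Since the computation preceding \eqref{eq:smeared_vertex_operator_adjoint} yields $(Y(a,f)c\,|\,d)=(c\,|\,Y(a,f)^+d)$ for $c,d\in\mathcal{H}^\infty$, we get $Y(a,f)^+\subseteq Y(a,f)^*$, so $\mathcal{H}^\infty$ is invariant under the restriction of $Y(a,f)^*$, on which it coincides with $Y(a,f)^+$.

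The only point that is not purely formal, and the step I expect to be the main obstacle, is showing that $\mathcal{H}^\infty$ is a core for the \emph{full} Hilbert-space adjoint $Y(a,f)^*$, i.e.\ that $Y(a,f)^*=\overline{Y(a,f)^+\restriction_{\mathcal{H}^\infty}}$ rather than merely containing it. To close this gap I would exploit the energy bounds together with the commutation relations \eqref{eq:derivation_smeared_vo_f_t} and \eqref{eq:derivation_smeared_vo_f_t_odd} (equivalently the rotation covariance \eqref{eq:smeared_vertex_operator_f_t}), which express the commutator of $L_0$ with a smeared vertex operator as another smeared vertex operator of the same type. Concretely, given $d\in\operatorname{Dom}(Y(a,f)^*)$, I would introduce a mollification of $d$ by finite-energy cutoffs (the spectral projections of $L_0$ onto $\bigoplus_{\lambda\leq n}\mathcal{H}_\lambda$, whose ranges lie in $V\subseteq\mathcal{H}^\infty$) or by the smoothing operators $e^{-\varepsilon L_0}$; the covariance relations let these interact with $Y(a,f)^+$ in a controlled way, and the bound \eqref{eq:bound_smeared_vertex_ops} for $Y(a,f)^+$ then forces $Y(a,f)^+(\text{cutoff of }d)\to Y(a,f)^*d$, exhibiting $d$ as a graph-norm limit of vectors in $\mathcal{H}^\infty$. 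The symmetric roles of $Y(a,f)$ and $Y(a,f)^+$, reflected in the involutivity $Y(a,f)^{++}=Y(a,f)$, make the two estimates feed into one another and render the argument verbatim for the odd operators, completing the proof.
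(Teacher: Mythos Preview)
Your first two paragraphs reproduce exactly the paper's argument: Lemma~\ref{lem:common_core} and Lemma~\ref{lem:h_infty_is_invariant} give the invariant-core property for the $Y(a,f)$'s, and formula~\eqref{eq:smeared_vertex_operator_adjoint} expresses $Y(a,f)^+$ as a \emph{finite} sum of smeared vertex operators, whence the same conclusions transfer.

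The third paragraph is where you diverge from the paper, and it stems from reading more into the statement than is intended. In the paper, ``their adjoints'' means the operators $Y(a,f)^+$ of \eqref{eq:smeared_vertex_operator_adjoint}; these are themselves smeared vertex operators (by linearity in the first argument, $Y(a,f)^+=Y(c,\overline{f})$ for a suitable $c\in V_{p(a)}$), so $\mathcal{H}^\infty$ is an invariant core for them tautologically, via Lemmas~\ref{lem:common_core} and~\ref{lem:h_infty_is_invariant} applied to $c$. The paper does not assert, and nowhere uses, that $\mathcal{H}^\infty$ is a core for the \emph{full} Hilbert-space adjoint $Y(a,f)^*$; what is used downstream is only that $\mathcal{H}^\infty\subset\mathrm{Dom}(Y(a,f)^*)$ and $Y(a,f)^*\restriction_{\mathcal{H}^\infty}=Y(a,f)^+$. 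Your mollification sketch is a reasonable route toward the stronger statement $\overline{Y(a,f)^+}=Y(a,f)^*$, but it is extra work beyond what the proposition (as the paper intends it) requires.
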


\subsection{Strong graded-locality and the graded-local conformal net}
\label{subsection:strong_graded_locality}

In this second part, we introduce the central concept of \textit{strong graded locality} for a simple unitary VOSA $V$ and we define the graded-local conformal net on $S^1$ associated to it. We refer to Definition \ref{defin:hilbet_space_from_V} for some notations.

For any index set $\mathcal{I}$ and any family of closed densely defined operators 
$\{A_j\mid j\in\mathcal{I} \}$ on a given Hilbert space $\mathcal{G}$, define 
\begin{gather}  
	W^*(\{A_j\mid j\in\mathcal{I} \}) 
	:=
	\bigvee_{j\in\mathcal{I}} W^*(A_j)
	\label{eq:def_von_Neumann_alg_gen_family}\\
	W^*(A_j)
	:=
	\{
	B\in B(\mathcal{G})\mid BA_j\subseteq A_jB
	\,,\,\, B^*A_j\subseteq A_jB^* 
	\}' 
	\qquad\forall j\in\mathcal{I}
	\label{eq:def_von_Neumann_alg_gen}
\end{gather}
which are called the \textbf{von Neumann algebra generated by} 
$\{A_j\mid j\in\mathcal{I} \}$ and by $A_j$ respectively.
Recall that \eqref{eq:def_von_Neumann_alg_gen} is the smallest von Neumann algebra to which the operator $A_j$ is affiliated, see e.g.\ \cite[I.7.1.3]{Bla06} and \cite[Section 4.5 and Subsection 5.2.7]{Ped89} for details, see \cite[Section 2.2]{CKLW18} for a summary, see also \cite[Appendix B.1]{Gui19I}. Two closed densely defined operators 
$A$ and $B$ on a given Hilbert space $\mathcal{G}$ \textbf{commute strongly} if $W^*(A)\subset W^*(B)'$. \label{defin:operators_commute_strongly}
For later use, recall the following functional analytic fact:

\begin{prop}   \label{prop:commutation_von_neumann_algebras_generated}
	Let $A$ and $B$ be closed densely defined operators on a Hilbert space $\mathcal{H}$ and let $\mathcal{D}$ be a common invariant domain for them. Assume that $W^*(A)\subseteq W^*(B)'$. Then $ABa=BAa$ for all $a\in\mathcal{D}$.
\end{prop}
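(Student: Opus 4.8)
The plan is to approximate the possibly unbounded operator $A$ from within $W^*(A)$ by bounded operators, push these past $B$ using the commutation hypothesis, and then recover the unbounded identity $ABa=BAa$ by invoking the closedness of $B$. The whole difficulty is that $B$ is not continuous, so no naive passage to the limit is available; the truncation is designed precisely to arrange that $B$ is only ever evaluated on a convergent sequence, never on its limit.

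First I would set $M:=W^*(A)$ and $N:=W^*(B)$, so that the hypothesis reads $M\subseteq N'$. Writing the polar decomposition $A=U|A|$, recall that $U$ and all spectral projections $P_n:=E_{|A|}([0,n])$ of $|A|$ belong to $M$, so that the truncations $A_n:=AP_n=U|A|P_n$ are \emph{bounded} operators lying in $M$. Since $A_n\in M\subseteq N'=W^*(B)'$ and $B$ is affiliated to $N$, each $A_n$ commutes strongly with $B$; concretely, $A_n\,\mathrm{dom}(B)\subseteq\mathrm{dom}(B)$ and $A_nBb=BA_nb$ for all $b\in\mathrm{dom}(B)$. This ``strong commutation from the commutant'' step is the one place where I rely on the standard functional-analytic characterization of affiliation, as summarized in \cite[Section 2.2]{CKLW18}.

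Next, fix $a\in\mathcal{D}$. Invariance of $\mathcal{D}$ gives $Ba\in\mathcal{D}\subseteq\mathrm{dom}(A)$, and also $a\in\mathrm{dom}(A)\cap\mathrm{dom}(B)$, so all the expressions below are defined. Because $P_n$ commutes with $|A|$ and $P_n\to 1_\mathcal{H}$ strongly, for every $v\in\mathrm{dom}(A)=\mathrm{dom}(|A|)$ one has $A_nv=UP_n|A|v\to U|A|v=Av$ by continuity of $U$; applying this to $v=a$ and to $v=Ba$ yields $A_na\to Aa$ and $A_n(Ba)\to A(Ba)=ABa$. On the other hand, strong commutation gives $A_na\in\mathrm{dom}(B)$ and $B(A_na)=A_n(Ba)$, whence $B(A_na)\to ABa$.

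Finally I would close the argument using the closedness of $B$: the vectors $A_na\in\mathrm{dom}(B)$ satisfy $A_na\to Aa$ while $B(A_na)\to ABa$, so the pair $(Aa,ABa)$ lies in the graph of $B$. Hence $Aa\in\mathrm{dom}(B)$ and $BAa=ABa$, which is the assertion. The main obstacle, as noted, is exactly the unboundedness of $B$, which forbids taking the limit inside $B$ directly; the truncation by $A_n$ circumvents this by keeping $B$ applied to the convergent sequence $A_na$, after which closedness does the remaining work.
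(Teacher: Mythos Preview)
Your proof is correct and is the standard argument for this well-known fact. Note that the paper does not actually give a proof of this proposition: it is stated as a recalled ``functional analytic fact'' and left unproved, with a pointer to \cite[Section 2.2]{CKLW18} for background on affiliation. Your bounded truncation via the polar decomposition, combined with the closedness of $B$, is exactly the expected route.
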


Thanks to Proposition \ref{prop:common_invariant_core_smeared_vertex_operators}, we can give the following:

\begin{defin} \label{defin:the_net_A_V}
Let $(V,\scalar)$ be an energy-bounded unitary VOSA. Define a family of von Neumann algebras $\A_{(V,\scalar)}$ on the Hilbert space completion $\mathcal{H}:=\mathcal{H}_{(V,\scalar)}$ of $V$ by
$$ 
\A_{(V,\scalar)}(I):=W^*\left(\left\{
Y(a,f),\,\, Y(b,g) \left| \
\begin{array}{l}
	a\in V_\parzero \,,\,\, f\in C^\infty(S^1) \,,\,\,\mathrm{supp}f\subset I\\ 
	b\in V_\parone\,,\,\, g\in C^\infty_\chi(S^1) \,,\,\,\mathrm{supp}g\subset I
\end{array}
\right. \right\} \right)
\,\,\, \forall I\in\J
\,.
$$
\end{defin}

We can now prove the cyclicity of the vacuum vector $\Omega\in V$ for the family $\A_{(V,\scalar)}$, which is part of the axiom \textbf{(F)} in Section \ref{subsection:graded-local_conformal_nets}.

\begin{prop}  \label{prop:Omega_is_cyclic}
Let $(V,\scalar)$ be an energy-bounded unitary VOSA. The vacuum vector $\Omega$ of $V$ is cyclic for the von Neumann algebra
\begin{equation}
A_{(V,\scalar)}(S^1):=\bigvee_{I\in\J}\A_{(V,\scalar)}(I) \,.
\end{equation}
\end{prop}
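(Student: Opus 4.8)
The plan is to show that $\mathcal{H}_0:=\overline{\A_{(V,\scalar)}(S^1)\Omega}$ is all of $\mathcal{H}$ by proving that it already contains the dense subspace $V$. Write $\mathcal{M}:=\A_{(V,\scalar)}(S^1)=\bigvee_{I\in\J}\A_{(V,\scalar)}(I)$ and let $P_0$ be the orthogonal projection of $\mathcal{H}$ onto $\mathcal{H}_0$. Since $\mathcal{M}\mathcal{H}_0\subseteq\mathcal{H}_0$, the subspace $\mathcal{H}_0$ is $\mathcal{M}$-invariant, hence $P_0\in\mathcal{M}'$; moreover $\Omega=1_{\mathcal{H}}\Omega\in\mathcal{H}_0$, so that $P_0\Omega=\Omega$.

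First I would record the elementary affiliation fact. For every $I\in\J$, every $a\in V_\parzero$ and every $f\in C^\infty(S^1)$ with $\mathrm{supp}f\subset I$, the operator $Y(a,f)$ is by construction affiliated to $\A_{(V,\scalar)}(I)\subseteq\mathcal{M}$, so $W^*(Y(a,f))\subseteq\mathcal{M}$ and therefore $P_0\in\mathcal{M}'\subseteq W^*(Y(a,f))'$. Since $\Omega\in\mathcal{H}^\infty$ lies in the domain of $Y(a,f)$ by Proposition \ref{prop:common_invariant_core_smeared_vertex_operators} and $P_0$ commutes with the closed operator $Y(a,f)$, one has $Y(a,f)\Omega=Y(a,f)P_0\Omega=P_0Y(a,f)\Omega\in\mathcal{H}_0$. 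The same argument gives $Y(b,g)\Omega\in\mathcal{H}_0$ for all $b\in V_\parone$ and all $g\in C^\infty_\chi(S^1)$ with $\mathrm{supp}g\subset I$.

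Next I would remove the support restriction by a partition of unity. Given an arbitrary $f\in C^\infty(S^1)$ I write $f=\sum_{j=1}^m f_j$ with $\mathrm{supp}f_j$ contained in some $I_j\in\J$; by linearity of the smeared operators in the test function, $Y(a,f)\Omega=\sum_j Y(a,f_j)\Omega\in\mathcal{H}_0$. In the odd sector I decompose $g=\chi h$ with $h=\sum_j h_j$, $\mathrm{supp}h_j\subset I_j$, and use that each $\chi h_j\in C^\infty_\chi(S^1)$ with $\mathrm{supp}(\chi h_j)\subset I_j$ (as $\chi$ is unimodular), so that $Y(b,g)\Omega\in\mathcal{H}_0$ for every $g\in C^\infty_\chi(S^1)$. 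To extract the individual vectors I then specialize the test function to a single Fourier mode: for homogeneous even $a$ of weight $d_a\in\Z$ I take $f(z)=z^{-d_a}\in C^\infty(S^1)$, whose only nonzero coefficient is $\widehat{f}_{-d_a}=1$, whence $Y(a,f)\Omega=a_{-d_a}\Omega=a_{(-1)}\Omega=a$ and $V_\parzero\subseteq\mathcal{H}_0$; for homogeneous odd $b$ of weight $d_b\in\Z-\half$ I take $g(z)=z^{-d_b}=\chi(z)z^{-d_b-\half}\in C^\infty_\chi(S^1)$, giving $Y(b,g)\Omega=b_{-d_b}\Omega=b_{(-1)}\Omega=b$ and $V_\parone\subseteq\mathcal{H}_0$. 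Thus $V=V_\parzero\oplus V_\parone\subseteq\mathcal{H}_0$, and since $V$ is dense in $\mathcal{H}$, we conclude $\mathcal{H}_0=\mathcal{H}$.

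The only genuinely delicate points I expect are operator-theoretic bookkeeping rather than hard analysis: one must justify that $P_0$ commutes with the \emph{unbounded} closed operators $Y(a,f)$ in the correct sense (this is exactly affiliation of $Y(a,f)$ to $\mathcal{M}$, reducing the commutation $Y(a,f)\Omega=P_0Y(a,f)\Omega$ to $P_0\in W^*(Y(a,f))'$), and that the $\chi$-twist in the odd sector is compatible with a partition of unity so that the decomposed pieces remain in $C^\infty_\chi(S^1)$ with the prescribed supports. Everything else collapses to the vacuum identity $a_{(-1)}\Omega=a$ together with the linearity of $f\mapsto Y(a,f)$, so no energy-bound estimate beyond the mere existence of the smeared operators on $\mathcal{H}^\infty$ is needed.
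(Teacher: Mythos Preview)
Your proof is correct and uses the same core ingredients as the paper: affiliation of $Y(a,f)$ to the local algebras (hence $P_0\in W^*(Y(a,f))'$), a partition of unity to remove the support restriction on the test function, and the vacuum identity $a_{(-1)}\Omega=a$ recovered by specializing to the single Fourier mode $z^{-d_a}$.

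The one noteworthy difference is structural rather than mathematical. The paper's proof proceeds in three steps: it first shows $EV\subseteq V$ by splitting $L_0=Y(\nu,f_1)+Y(\nu,f_2)$ along a partition of unity and using the self-adjointness of $L_0$ to conclude $[E,L_0]=0$; it then identifies $EV=E\mathcal{H}\cap V$; and only then runs the affiliation argument to obtain $b_na\in EV$ for all $a\in EV$, $b\in V$, finally specializing $a=\Omega$. You bypass the first two steps entirely by applying the affiliation argument directly to $\Omega$, which is all that is required for cyclicity. This is a genuine simplification for the statement at hand; however, the intermediate fact $EV\subseteq V$ established in the paper's Step~1 is reused later (in the proofs of Theorem~\ref{theo:covariant_subnets_unitary_subalgebras} and Theorem~\ref{theo:gen_by_quasi_primary}), which explains why the authors chose the longer route.
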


\begin{proof}
Let $E$ be the projection from $\mathcal{H}$ onto the closed subspace $\overline{\A_{(V,\scalar)}(S^1)\Omega}\subset\mathcal{H}$. We want to prove that $EV$ is dense in $\mathcal{H}$, which implies the cyclicity of $\Omega$. 
We split the proof of this fact into the following three steps.

First, we prove that $EV\subseteq V$.
To do this, note that
$$
L_0a=Y(\nu,f)a=Y(\nu,f_1)a+Y(\nu,f_2)a \qquad\forall a\in V
$$
where $f(z)=1$ for all $z\in S^1$ and $f_j\in C^\infty(S^1,\R)$ are such that $f=f_1+f_2$ and $\mathrm{supp}f_j\subset I_j$ for some $I_j\in\J$. Note that $E\in \A_{(V,\scalar)}(S^1)'$ and that $L_0$ is a self-adjoint operator.
Hence, using \eqref{eq:smeared_vertex_operator_adjoint}, we have that
\begin{equation}
	\begin{split}
		(L_0b|Ea)
		&
		=(Y(\nu,f_1)b|Ea)+(Y(\nu,f_2)b|Ea) \\
		&
		=(Eb|Y(\nu,f_1)a)+ (Eb|Y(\nu,f_2)a) \\
		&
		=(Eb|L_0a)=(b|EL_0a)
		\qquad\forall a,b\in V \,.
	\end{split}
\end{equation}
This implies that if $a\in V$ is homogeneous, then $Ea$ is an eigenvector of $L_0$ in $\mathcal{H}$ with eigenvalue $d_a$. Then $EV\subseteq V$.

Second, we prove that $EV=E\mathcal{H}\cap V$.
On the one hand, $EV$ is contained in both $E\mathcal{H}$ and $V$ and thus in their intersection. On the other hand, if $Ea\in E\mathcal{H}\cap V$ with $a\in\mathcal{H}$, then 
$$
Ea=E^2a=E(Ea)\in EV
$$
and thus $E\mathcal{H}\cap V$ is contained in $EV$, that is, $EV=E\mathcal{H}\cap V$.

Third, we prove that $EV=V$. 
Let $a\in EV$ and $b\in V$. Suppose initially that $b$ is even. If $f\in C^\infty(S^1)$ with $\mathrm{supp}f\subset I\in\J$, then $Y(b,f)$ is affiliated with $\A_{(V,\scalar)}(I)$. This implies that 
$$
EY(b,f)a=Y(b,f)Ea=Y(b,f)a 
\,\,\,\Longrightarrow\,\,\,
Y(b,f)a\in E\mathcal{H}
$$
for all $f\in C^\infty(S^1)$ with $\mathrm{supp}f\subset I\in\J$. 
Therefore, we obtain that
$$
V\ni b_na=Y(b,f_n)a=Y(b,g_n)a+Y(b,h_n)a\in E\mathcal{H} \qquad\forall n\in\Z
$$
with $f_n(z)=z^{n}$ and $g_n, h_n\in C^\infty(S^1)$ such that 
$f_n=g_n+h_n$, $\mathrm{supp}g_n\subset I_g$ and $\mathrm{supp}h_n\subset I_h$ for some $I_g,I_h\in\J$.
Therefore,
\begin{equation}  \label{bna_in_EV}
b_na\in E\mathcal{H}\cap V= EV 
\qquad\forall b\in V_\parzero \,\,\,\forall a\in EV\,.
\end{equation}
Now, the identity operator is in $\A_{(V,\scalar)}(S^1)$ and thus $E\Omega=\Omega$, which implies that $\Omega\in EV$. Hence, we can choose $a=\Omega$ and $n=d_b$  in \eqref{bna_in_EV} to obtain that $b_{(-1)}\Omega=b\in EV$ for all $b\in V_\parzero$. In a very similar way, we can prove that $b\in EV$ for all $b\in V_\parone$, which implies that $EV=V$. This concludes the proof of the cyclicity of $\Omega$ because $V$ is dense in $\mathcal{H}$. 
\end{proof}

The first step to discuss the covariance of the family $\A_{(V,\scalar)}$ is to prove that it is generated by the quasi-primary vectors of $V$.

\begin{lem}   \label{lem:gen_by_qp}
Let $V$ be an energy-bounded unitary VOSA. Let $A$ be a bounded operator on $\mathcal{H}$ and let $I\in\mathcal{J}$. Then we have two cases: 
\begin{itemize}
\item[($\parzero$)] Let $a\in V_\parzero$. Then
$AY(a,f)\subset Y(a,f)A$ for all $f\in C^\infty(S^1)$ with $\mathrm{supp}f\subset I$ if and only if $(A^*c| Y(a,f)d)=(Y(a,f)^*c | Ad)$ for all $c,d\in V$ and all $f\in C^\infty(S^1,\R)$ with $\mathrm{supp}f\subset I$.

\item[($\parone$)] Let $b\in V_\parone$. Then
$AY(b,g)\subset Y(b,g)A$ for all $g\in C^\infty_\chi(S^1)$ with $\mathrm{supp}g\subset I$ if and only if $(A^*c| Y(b,g)d)=(Y(b,g)^*c | Ad)$ for all $c,d\in V$ and all $g\in C^\infty_\chi(S^1,\R)$ with $\mathrm{supp}g\subset I$.
\end{itemize}
\end{lem}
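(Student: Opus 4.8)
The plan is to prove both implications for the even case $(\parzero)$ in detail; the odd case $(\parone)$ then follows verbatim once one records that on $C^\infty_\chi(S^1)$ the conjugation $g\mapsto\overline{g}$ agrees with pointwise complex conjugation of values (since $\chi(z)\overline{zh(z)}=\overline{\chi(z)h(z)}$ for $\abs{z}=1$), so that $C^\infty_\chi(S^1,\R)$ is exactly the set of $\overline{\cdot}$-invariant functions and every $g\in C^\infty_\chi(S^1)$ splits as $g=g_1+ig_2$ with $g_1,g_2\in C^\infty_\chi(S^1,\R)$ supported in $\operatorname{supp}g$. Throughout I freely use that $\mathcal{H}^\infty$ is a common invariant core for all smeared vertex operators and their adjoints (Proposition \ref{prop:common_invariant_core_smeared_vertex_operators}), the energy estimates of Lemma \ref{lem:common_core}, the inclusion $Y(a,f)^+\subseteq Y(a,f)^*$ (from the discussion after \eqref{eq:smeared_vertex_operator_adjoint}), and the identity $V^k=\mathcal{H}^k$.

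For the ``only if'' direction I would argue directly. Assume $AY(a,f)\subset Y(a,f)A$ for every complex $f$ supported in $I$, and fix $c,d\in V\subseteq\mathcal{H}^\infty$ and a real $f$ with $\operatorname{supp}f\subset I$. Since $A$ is bounded, $(A^*c| Y(a,f)d)=(c| AY(a,f)d)$; applying the strong commutation to $d\in D(Y(a,f))$ gives $Ad\in D(Y(a,f))$ and $AY(a,f)d=Y(a,f)Ad$; finally, as $c\in D(Y(a,f)^*)$ and $Ad\in D(Y(a,f))$, the adjoint relation yields $(c| Y(a,f)Ad)=(Y(a,f)^*c| Ad)$. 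Combining these gives the weak relation.

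The ``if'' direction is the substantial one and proceeds by upgrading the weak relation to genuine domain preservation. First I would establish that, for every complex $f$ supported in $I$, the dense subspace $V$ is a core both for $Y(a,f)$ and for $Y(a,f)^*$: the energy estimates of Lemma \ref{lem:common_core} control $Y(a,f)$ and $Y(a,f)^*$ on $\mathcal{H}^\infty$ by a multiple of some $\norm{\cdot}_k$, while $V$ is $\norm{\cdot}_k$-dense in $\mathcal{H}^k\supseteq\mathcal{H}^\infty$ because $V^k=\mathcal{H}^k$; hence $V$ is graph-norm dense in the core $\mathcal{H}^\infty$ and therefore itself a core. Next, writing a complex $f=f_1+if_2$ with $f_1,f_2\in C^\infty(S^1,\R)$ supported in $I$ and using $\overline{f}=f_1-if_2$ together with the linearity of $g\mapsto Y(a,g)$, one has $Y(a,f)^+=Y(a,f_1)^+-iY(a,f_2)^+$ on $V$; feeding the real-$f$ hypothesis for $f_1$ and $f_2$ into this identity produces the complex weak relation $(Y(a,f)^*c| Ad)=(c| AY(a,f)d)$ for all $c,d\in V$.

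With these preparations, I would fix $d\in V$ and observe that the map $c\mapsto(Y(a,f)^*c| Ad)-(c| AY(a,f)d)$ is continuous in the graph norm of $Y(a,f)^*$ and vanishes on the core $V$, hence on all of $D(Y(a,f)^*)$; the characterization of the domain of the closed operator $Y(a,f)=Y(a,f)^{**}$ then gives $Ad\in D(Y(a,f))$ and $Y(a,f)Ad=AY(a,f)d$. Since $V$ is a core for $Y(a,f)$, $A$ is bounded and $Y(a,f)$ is closed, a routine approximation argument extends $AY(a,f)\subset Y(a,f)A$ from $V$ to all of $D(Y(a,f))$, completing the proof. The main obstacle throughout is the functional-analytic bookkeeping: verifying that $V$ is a core for the unbounded smeared operators and their adjoints in the correct topologies, and correctly converting the sesquilinear (``weak'') identities into honest domain preservation via the bidual characterization of closed operators. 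The algebraic manipulations with conjugate test functions and linearity in the smearing variable are routine by comparison.
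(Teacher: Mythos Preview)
Your argument is correct and, once Proposition~\ref{prop:common_invariant_core_smeared_vertex_operators} is taken at face value (specifically the claim that $\mathcal{H}^\infty$ is a core for $Y(a,f)^*$), the functional-analytic route via the bidual characterization of $Y(a,f)=Y(a,f)^{**}$ goes through cleanly. The reduction of the odd case to the even one by observing that $g\mapsto\overline g$ is pointwise conjugation on $C^\infty_\chi(S^1)$ is a nice touch.

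This is, however, a genuinely different strategy from the paper's. The paper follows \cite[Lemma~6.5]{CKLW18}, whose proof does \emph{not} invoke the core-for-adjoints property; instead it mollifies the bounded operator $A$ along the rotation subgroup, setting $A(\varphi_s):=\int\varphi_s(t)\,e^{itL_0}Ae^{-itL_0}\,\mathrm dt$, so that $A(\varphi_s)$ maps $\mathcal{H}^\infty$ into itself and one can compute directly on the common invariant domain before passing to the limit $s\to 0$. Rotation covariance \eqref{eq:rot_covariance} (resp.\ \eqref{eq:rot_covariance_odd} in the odd case) is what makes this mollification compatible with the weak relation, and is precisely why the paper singles out these formulae as the only place where the odd case differs. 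Your approach trades this analytic regularization for a single appeal to Proposition~\ref{prop:common_invariant_core_smeared_vertex_operators}; it is shorter, but it leans on the delicate assertion that $\mathcal{H}^\infty$ is a core for the \emph{adjoint}, whose justification in the paper is itself quite terse. The CKLW18 route is more self-contained on this point.
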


\begin{proof}
\begin{itemize}
\item[($\parzero$)] This is exactly the proof of \cite[Lemma 6.5]{CKLW18}.

\item[($\parone$)] We can use the same proof of \cite[Lemma 6.5]{CKLW18} with minus changes. Indeed, it is sufficient to use formula \eqref{eq:rot_covariance_odd} instead of \eqref{eq:rot_covariance} whenever the latter occurs.
\end{itemize} 
\end{proof}

\begin{prop}  \label{prop:gen_by_qp}
Let $V$ be a simple energy-bounded unitary VOSA. Let $A$ be a bounded operator on $\mathcal{H}$ and let $I\in\mathcal{J}$. Then we have that: $A\in\A_{(V,\scalar)}(I)'$ if and only if 
$$
(A^*c| Y(a,f)d)=(Y(a,f)^*c | Ad) 
\qquad\mbox{ and }\qquad
(A^*c| Y(b,g)d)=(Y(b,g)^*c | Ad)
$$  
for all quasi-primary $a\in V_\parzero$ and all quasi-primary $b\in V_\parone$, for all $c,d\in V$, for all $f\in C^\infty(S^1,\R)$ with $\mathrm{supp}f\subset I$ and all $g\in C^\infty_\chi(S^1,\R)$ with $\mathrm{supp}g\subset I$. In particular, for all $I\in \J$, $\A_{(V,\scalar)}(I)$ is equal to
$$
W^*\left(\left\{
Y(a,f) \,,\,\, Y(b,g)
\left| 
\begin{array}{l}
	a\in \bigcup_{k\in\Z} V_k \,,\,\, L_1a=0 \,,\,\, f\in C^\infty(S^1,\R) \,,\,\,\mathrm{supp}f\subset I\\
	b\in\bigcup_{k\in\Z-\half} V_k\,,\,\,L_1b=0  \,,\,\, g\in C^\infty_\chi(S^1,\R) \,,\,\,\mathrm{supp}g\subset I
\end{array}
\right. \right\} \right) \,.
$$
\end{prop}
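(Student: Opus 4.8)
The plan is to follow the strategy underlying the VOA statement \cite[Proposition 6.6]{CKLW18}, reducing membership in the commutant to the behaviour of the smeared vertex operators attached to quasi-primary vectors only. First I would record the functional-analytic reduction. By formula \eqref{eq:smeared_vertex_operator_adjoint}, the Hilbert-space adjoint of a generator $Y(a,f)$ (resp.\ $Y(b,g)$) of $\A_{(V,\scalar)}(I)$ is again a finite linear combination of smeared vertex operators of the same parity, attached to vectors $\theta L_1^l a$ and to the test function $\overline{f}$ (resp.\ $\overline{g}$), which is still supported in $I$. Hence the generating family is, up to linear combinations, stable under taking adjoints, so that a bounded $A$ lies in $\A_{(V,\scalar)}(I)'$ if and only if $AY(a,f)\subseteq Y(a,f)A$ for all $a\in V_\parzero$ and $f\in C^\infty(S^1)$ supported in $I$, and $AY(b,g)\subseteq Y(b,g)A$ for all $b\in V_\parone$ and $g\in C^\infty_\chi(S^1)$ supported in $I$. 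By Lemma \ref{lem:gen_by_qp} this is in turn equivalent to the two families of weak commutation relations in the statement holding for \emph{all} $a\in V_\parzero$, $b\in V_\parone$ and all real test functions supported in $I$. Thus everything reduces to showing that it suffices to impose these relations for quasi-primary $a$ and $b$.

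Next I would isolate the key identity relating $L_{-1}$ to differentiation of the test function. From $Y(L_{-1}a,z)=\frac{\mathrm{d}}{\mathrm{d}z}Y(a,z)$ one gets $(L_{-1}a)_n=-(n+d_a)a_n$, and combining this with $\widehat{f'}_n=in\widehat{f}_n$ a direct Fourier-coefficient computation yields, uniformly for homogeneous $a$ of either parity and test functions of the corresponding class,
\[
Y(L_{-1}a,f)=iY(a,f')-d_aY(a,f).
\]
Here $f'$ is again real and supported in $\mathrm{supp}f\subseteq I$, and lies in $C^\infty(S^1,\R)$ or $C^\infty_\chi(S^1,\R)$ according to the parity of $a$; in the odd case the vanishing of the boundary term in the relation $\widehat{g'}_n=in\widehat{g}_n$ is guaranteed by $\cos(n\pi)=0$ for $n\in\Z-\half$.

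Then I would run the reduction. Call a homogeneous vector \emph{good} if the weak commutation relation holds for $Y(a,f)$ (resp.\ $Y(b,g)$) for all real test functions supported in $I$. Since this relation is preserved under linear combinations — using $(\alpha S+\beta T)^+=\overline{\alpha}S^++\overline{\beta}T^+$ on $V$ together with the (anti)linearity of $\scalar$ and the algebraic adjoints from \eqref{eq:smeared_vertex_operator_adjoint} — the displayed identity shows that if $a$ is good then so is $L_{-1}a$. On the other hand, unitarity gives $L_{-1}=L_1^*$ by \eqref{eq:L_n_adj}, so each finite-dimensional weight space decomposes orthogonally as $V_n=\ker(L_1|_{V_n})\oplus L_{-1}V_{n-1}$; since $V$ is of CFT type by Proposition \ref{prop:characterisation_simplicity}, so that $V_n=\{0\}$ for $n<0$, iterating yields $V_n=\bigoplus_{k\geq0}L_{-1}^k\big(\ker L_1|_{V_{n-k}}\big)$, with $L_{\pm1}$ parity-preserving. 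Hence every homogeneous even (resp.\ odd) vector is a finite linear combination of vectors $L_{-1}^kw$ with $w$ quasi-primary even (resp.\ odd). Assuming the quasi-primary weak commutation relations, every such $w$ is good, hence so is every $L_{-1}^kw$, and therefore every $a\in V_\parzero$ and $b\in V_\parone$ by linearity. By the first paragraph this gives $A\in\A_{(V,\scalar)}(I)'$, proving the equivalence. Finally, writing $\B(I)$ for the von Neumann algebra generated by the quasi-primary smeared operators, one has $\B(I)\subseteq\A_{(V,\scalar)}(I)$ trivially, while the equivalence just proved shows $\B(I)'=\A_{(V,\scalar)}(I)'$; taking commutants gives the ``in particular'' statement.

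I expect the main obstacle to be the clean meshing of the first and third steps: one must keep the test functions real and supported in $I$ inside the correct function space when passing from $a$ to $L_{-1}a$, and one must make sure that the weak commutation relation — which literally involves the Hilbert-space adjoints $Y(a,f)^*$ — genuinely behaves linearly. The latter forces one to work throughout with the algebraic adjoints $Y(a,f)^+$ on $V$ supplied by \eqref{eq:smeared_vertex_operator_adjoint} and to invoke Lemma \ref{lem:gen_by_qp} as the bridge translating between strong commutation with the closed operators $Y(a,f)$ and these weak relations on the core $V$.
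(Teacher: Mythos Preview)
Your proof is correct and follows essentially the same approach as the paper, which simply refers to \cite[Proposition 6.6]{CKLW18} with the adjoint formula \eqref{eq:smeared_vertex_operator_adjoint} as the only needed modification. Your write-up in fact spells out that argument in full: the reduction via Lemma \ref{lem:gen_by_qp} to real test functions, the identity $Y(L_{-1}a,f)=iY(a,f')-d_aY(a,f)$, and the finite orthogonal decomposition $V_n=\ker(L_1|_{V_n})\oplus L_{-1}V_{n-1}$ (using CFT type from Proposition \ref{prop:characterisation_simplicity}) are exactly the ingredients of the cited proof, correctly adapted to both parities.
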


\begin{proof}
The proof of \cite[Proposition 6.6]{CKLW18} still works in this case, provided that we use the following precautions due to the presence of odd vectors. Indeed, we have to use the generalized formula for the adjoint smeared vertex operators, which we get by \eqref{eq:smeared_vertex_operator_adjoint}, namely
\begin{equation}
Y(a,f)c
=(-1)^{2d_a^2+d_a}Y(\theta a,f)^*c
=i^{-2d_a}Y(\theta Z^*a,f)^*c
\end{equation}
for all quasi-primary $a\in V$, for all $c\in V$ and all $f$ either in $C^\infty(S^1,\R)$ or $C^\infty_\chi(S^1,\R)$ depending on $p(a)$.
\end{proof}

From now on, suppose that the energy-bounded unitary VOSA $V$ is also \textit{simple}.
The next step is the definition of the representation of $\Diff^+(S^1)^{(\infty)}$ on $\mathcal{H}$. By Theorem \ref{theo:representations_Diff_Vir}, the positive-energy unitary representation of the Virasoro algebra on $V$, which we have from the conformal vector $\nu$ of the theory, gives rise to a positive-energy strongly continuous projective unitary representation of $\Diff^+(S^1)^{(\infty)}$, which factors through $\Diff^+(S^1)^{(2)}$ because $e^{i4\pi L_0}=1_\mathcal{H}$. In particular, as well-explained in \cite[Theorem 5.2.1 and Proposition 5.2.4]{Tol99}, for all $t\in\R$, all $f\in C^\infty(S^1,\R)$ and all $A\in B(\mathcal{H})$, we have that
\begin{equation}  \label{eq:repr_U_of_the_net_A_V}
U(\exp^{(2)}(tf))=e^{itY(\nu, f)}
\,, \quad
U(\exp^{(2)}(tf)) A
U(\exp^{(2)}(tf))^*
=
e^{itY(\nu, f)} A e^{-itY(\nu, f)}
\end{equation}
where $\exp^{(2)}(tf)$ is the lift to $\mathrm{Diff^+(S^1)^{(2)}}$ of the one-parameter subgroup $\exp(tf)$ generated by the real smooth vector field $f\frac{\mathrm{d}}{\mathrm{d}x}$.
Moreover, we have that $U(\gamma)\mathcal{H}^\infty=\mathcal{H}^\infty$ for all $\gamma\in\Diff^+(S^1)^{(2)}$ and that $U(r^{(2)}(t))=e^{it L_0}$, where $r^{(2)}(t)$ is the lift to $\mathrm{Diff^+(S^1)^{(2)}}$ of the rotation subgroup, where $t\in\R$ is the angle of rotation. In particular, $U(r^{(2)}(2\pi))=e^{i2\pi L_0}$ acts on $V$ as the parity operator $\Gamma_V$ and thus $U(r^{(2)}(2\pi))=\Gamma$.

\begin{rem}  \label{rem:differentiability_Diff_repr_H^infty}
For every real smooth vector field $f\frac{\mathrm{d}}{\mathrm{d}x}$, the unitary operators $U(\exp^{(2)}(tf))=e^{itY(\nu,f)}$ define a strongly continuous one-parameter unitary group on $\mathcal{H}$. Moreover, for any $c\in\mathcal{H}^\infty$, the function $\R\ni t\mapsto e^{itY(\nu,f)}c\in\mathcal{H}^\infty$ is differentiable with derivative $e^{itY(\nu,f)}iY(\nu,f)c$. Indeed, fix $c\in\mathcal{H}^\infty$ and $k\in\Zplus$. By \cite[Theorem 5.2.1]{Tol99}, for every $\gamma\in\Diff^+(S^1)^{(2)}$, $U(\gamma)$ preserves $\mathcal{H}^k$ and acts continuously on it. Choose a real smooth vector field $f\frac{\mathrm{d}}{\mathrm{d}x}$, if $\leftidx{_k}{\int}$ on $\mathcal{H}^k$ and $\leftidx{_0}{\int}$ on $\mathcal{H}$ are Riemann integrals, we have that for all $h>0$,
\begin{equation}
\begin{split}
\leftidx{_k}{\int}{_0^h} e^{i(t+s)Y(\nu,f)}iY(\nu,f)c\,\mathrm{d}s  
  &=
\leftidx{_0}{\int}{_0^h} e^{i(t+s)Y(\nu,f)}iY(\nu,f)c\,\mathrm{d}s  \\
  &=
e^{i(t+h)Y(\nu,f)}c-e^{itY(\nu,f)}c
\qquad\forall t\in\R\,.
\end{split}
\end{equation}
Set $A:=Y(\nu,f)$, for $h$ small enough, we have that
\begin{equation}
\begin{split}
\norm{\frac{e^{i(t+h)A}c-e^{itA}c}{h}-e^{itA}iAc}_k 
  &=
\frac{1}{h}\norm{\leftidx{_k}{\int}{_0^h} 
(e^{isA}-1_{\mathcal{H}^k})e^{itA}iAc
\,\mathrm{d}s}_k  \\
  &\leq
\frac{1}{h}\int_0^h \norm{
(e^{isA}-1_{\mathcal{H}^k})e^{itA}iAc
}_k\,\mathrm{d}s  \\
  &<
\frac{h}{h}\epsilon=\epsilon
\qquad\forall t\in\R\,.
\end{split}
\end{equation}
This means that $\R\ni t\mapsto e^{itY(\nu,f)}c\in\mathcal{H}^k$ is differentiable with derivative $e^{itY(\nu,f)}iY(\nu,f)c$. By the arbitrariness of $k$, we have the same result for $\mathcal{H}^\infty$ in place of $\mathcal{H}^k$.
\end{rem}

A further step is to define a representation of $\Diff^+(S^1)$ on $C^\infty(S^1)$ and one of $\Diff^+(S^1)^{(2)}$ on $C^\infty_\chi(S^1)$.
Let $\gamma\in\Diff^+(S^1)$ and consider the function $X_\gamma:S^1\rightarrow\R$ defined as in \cite[Eq.\ (117)]{CKLW18}, cf.\ \cite[Eq.\ (40)]{CKL08}, that is
\begin{equation} \label{eq:def_X_gamma}
X_\gamma(z):=-i\frac{\mathrm{d}}{\mathrm{d}x}\log(\gamma(e^{ix}))
\qquad
z=e^{ix}\,, \,\,\, x\in(-\pi,\pi] \,.
\end{equation}
In other terms, $X_\gamma(z)=\frac{\mathrm{d}\phi_\gamma}{\mathrm{d}x}(x)$ for all $z=e^{ix}$ with $x\in(-\pi,\pi] $,
where $\phi_\gamma\in \Diff^+(S^1)^{(\infty)}$ is any representative of the $2\pi\Z$-class of $\gamma$, that is it satisfies $\gamma(e^{ix})=e^{i\phi_\gamma(x)}$ for all $x\in(-\pi,\pi]$, see Section \ref{subsection:diff_group}. It follows that $X_\gamma(z)>0$ for all $z\in S^1$. Furthermore, it is easy to prove that $X_\gamma\in C^\infty(S^1)$ and that
\begin{equation} \label{eq:X_gamma1_gamma2}
X_{\gamma_1\gamma_2}(z)=X_{\gamma_1}(\gamma_2(z))X_{\gamma_2}(z)
\qquad
\forall \gamma_1,\gamma_2\in\Diff^+(S^1)
\,\,\, \forall z\in S^1\,.
\end{equation}
For every $d\in\half\Zplus$, we define a family of continuous operators $\{\beta_d(\gamma)\mid\gamma\in\Diff^+(S^1)\}$ on the Fréchet space $C^\infty(S^1)$ by, cf.\ \cite[Eq.\  (43)]{CKL08} and \cite[Eq.\ (119)]{CKLW18}: 
\begin{equation}  \label{eq:family_cont_ops_diff}
(\beta_d(\gamma)f)(z):= \left[X_\gamma(\gamma^{-1}(z))\right]^{d-1}f(\gamma^{-1}(z))
\qquad
\forall f\in C^\infty(S^1)
\,\,\, \forall z\in S^1\,.
\end{equation}
Moreover, for all $d\in\half\Zplus$, $\beta_d$ defines a strongly continuous representation of $\Diff^+(S^1)$ on $C^\infty(S^1)$, preserving the subspace of real-valued functions $C^\infty(S^1, \R)$. It is not difficult to prove that:

\begin{lem}  \label{lem:beta_d_differentiable}
For all $f_1\in C^\infty(S^1,\R)$ and all $f_2\in C^\infty(S^1)$, the map 
$$
\R\ni t\longmapsto\beta_d(\exp(tf_1))f_2\in C^\infty(S^1)
$$ 
is differentiable and
\begin{equation}  \label{eq:derivative_beta_d_exp}
\frac{\mathrm{d}}{\mathrm{d}t}\left[\beta_d(\exp(tf_1))f_2\right]_{t=0}
=(d-1)f_1'f_2-f_1f_2' \,.
\end{equation}
\end{lem}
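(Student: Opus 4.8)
The plan is to realize $\beta_d(\exp(tf_1))$ through the flow of the vector field $f_1\frac{\mathrm{d}}{\mathrm{d}x}$ and reduce the statement to the joint smoothness of that flow. Write $\gamma_t:=\exp(tf_1)$ and let $\phi_t:=\phi_{\gamma_t}$ be the representative lift in $\Diff^+(S^1)^{(\infty)}$, regarded as a diffeomorphism of $\R$ with $\phi_t(x+2\pi)=\phi_t(x)+2\pi$ and $\phi_t'(x)>0$. By definition of the exponential map, $(t,x)\mapsto\phi_t(x)$ is the flow of $f_1\frac{\mathrm{d}}{\mathrm{d}x}$, so it solves $\partial_t\phi_t(x)=f_1(\phi_t(x))$ with $\phi_0=\mathrm{id}_\R$, where $f_1$ is read as the $2\pi$-periodic function $x\mapsto f_1(e^{ix})$. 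From \eqref{eq:def_X_gamma} we have $X_{\gamma_t}(e^{ix})=\phi_t'(x)$ and $\gamma_t^{-1}(e^{ix})=e^{i\phi_t^{-1}(x)}$, so using the abuse of notation $f_2(x):=f_2(e^{ix})$, formula \eqref{eq:family_cont_ops_diff} reads
\begin{equation}
(\beta_d(\gamma_t)f_2)(e^{ix})
=\left[\phi_t'(\phi_t^{-1}(x))\right]^{d-1}f_2(\phi_t^{-1}(x))
=:G(t,x)\,.
\end{equation}

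First I would settle differentiability as a curve into the Fréchet space $C^\infty(S^1)$. Standard ODE theory gives that the flow $(t,x)\mapsto\phi_t(x)$ is jointly smooth, $2\pi$-translation covariant in $x$, with $\phi_t'>0$; the inverse function theorem with parameters then makes $(t,x)\mapsto\phi_t^{-1}(x)$ jointly smooth, and hence so are $(t,x)\mapsto\phi_t'(x)$ and $G$. Since $G(t,\cdot)$ is $2\pi$-periodic in $x$ for every $t$, the standard principle that a jointly smooth, $x$-periodic function yields a smooth curve into $C^\infty(S^1)$ shows that $t\mapsto G(t,\cdot)=\beta_d(\gamma_t)f_2$ is differentiable with derivative $x\mapsto\partial_t G(t,x)$. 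Concretely, writing $\tfrac1s[G(s,\cdot)-G(0,\cdot)]=\int_0^1\partial_sG(\sigma s,\cdot)\,\mathrm{d}\sigma$ and using uniform continuity of each $\partial_x^k\partial_s G$ on compacta, the difference quotients converge in every seminorm of $C^\infty(S^1)$. I expect this passage from pointwise to Fréchet-space differentiability — i.e.\ the uniform control of all $x$-derivatives of the difference quotients — to be the only genuine subtlety; everything else is a routine chain-rule computation.

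It then remains to evaluate $\partial_tG(t,x)$ at $t=0$. Differentiating $\phi_t(\phi_t^{-1}(x))=x$ in $t$ and using $\phi_0=\mathrm{id}$, $\dot\phi_0=f_1$, $\phi_0'=1$ gives $\partial_t\phi_t^{-1}(x)\big|_{t=0}=-f_1(x)$. Differentiating the flow equation in $x$ yields $\partial_t\phi_t'=(f_1'\circ\phi_t)\,\phi_t'$, whence $\partial_t\phi_t'(x)\big|_{t=0}=f_1'(x)$; together with $\phi_0''=0$ this gives $\partial_t\big[\phi_t'(\phi_t^{-1}(x))\big]_{t=0}=f_1'(x)$. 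Applying the product and chain rules to $G(t,x)=[\phi_t'(\phi_t^{-1}(x))]^{d-1}f_2(\phi_t^{-1}(x))$ at $t=0$, where the bracket equals $1$, the two contributions are $(d-1)f_1'(x)f_2(x)$ and $f_2'(x)\cdot(-f_1(x))$, producing exactly $(d-1)f_1'f_2-f_1f_2'$, which is \eqref{eq:derivative_beta_d_exp}. (Differentiability and the analogous formula at a general $t$ also follow directly from the group law $\beta_d(\gamma_{t+s})=\beta_d(\gamma_s)\beta_d(\gamma_t)$ applied to $h:=\beta_d(\gamma_t)f_2\in C^\infty(S^1)$.)
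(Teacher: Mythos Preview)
Your argument is correct: you correctly rewrite $\beta_d(\exp(tf_1))f_2$ via the flow $\phi_t$ of the vector field $f_1\frac{\mathrm d}{\mathrm dx}$, use joint smoothness of flows to get Fr\'echet differentiability of the curve, and your chain-rule computation at $t=0$ yields exactly $(d-1)f_1'f_2-f_1f_2'$. The paper does not actually supply a proof here---it merely precedes the lemma with ``It is not difficult to prove that'' and leaves it to the reader---so your write-up is a valid fleshing-out of precisely the standard computation the authors had in mind.
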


To define a representation of $\Diff^+(S^1)^{(2)}$ on $C^\infty_\chi(S^1)$, consider for every $\gamma\in\Diff^+(S^1)^{(2)}$, a representative of its $4\pi\Z$-class of diffeomorphisms $\phi_\gamma\in\Diff^+(S^1)^{(\infty)}$, so that $\gamma(e^{i\frac{x}{2}})=e^{i\frac{\phi_\gamma(x)}{2}}$ for all $x\in(-2\pi,2\pi]$, see Section \ref{subsection:diff_group}. We define the function $Y_\gamma:S^1\rightarrow\C$ by
\begin{equation}
Y_\gamma(z):=e^{i\frac{\phi_\gamma(x)-x}{2}}  \,,
\qquad z=e^{ix} \,, \,\,\, x\in(-\pi,\pi]
\end{equation}
which is well-defined because it does not depend on the choice of the representative in the class of diffeomorphisms $\{\phi_\gamma+4k\pi\mid k\in\Z\}$.

\begin{lem}  \label{lem:Y_gamma_cocycle}
$Y_\gamma\in C^\infty(S^1)$ for all $\gamma\in\Diff^+(S^1)^{(2)}$ and
\begin{equation}    \label{eq:Y_gamma_cocycle}
	Y_{\gamma_1\gamma_2}(z)
	=Y_{\gamma_1}(\dot{\gamma_2}(z))Y_{\gamma_2}(z)
	\qquad\forall \gamma_1,\gamma_2\in\Diff^+(S^1)^{(2)}
	\,\,\,\forall  z\in S^1
	\,.
\end{equation}
\end{lem}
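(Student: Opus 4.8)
The plan is to reduce both claims to elementary properties of the lift $\phi_\gamma$. First I would record the basic fact that $\psi_\gamma(x) := \phi_\gamma(x) - x$ is a smooth, $2\pi$-periodic real function of $x \in \R$: smoothness is inherited from $\phi_\gamma \in \Diff^+(S^1)^{(\infty)} \subset C^\infty(\R,\R)$, while the periodicity $\psi_\gamma(x+2\pi) = \psi_\gamma(x)$ follows immediately from $\phi_\gamma(x+2\pi) = \phi_\gamma(x) + 2\pi$. Since $Y_\gamma(e^{ix}) = e^{i\psi_\gamma(x)/2}$ and $\psi_\gamma$ is $2\pi$-periodic, the right-hand side is a $2\pi$-periodic smooth function of $x$, hence descends to a well-defined element $Y_\gamma \in C^\infty(S^1)$. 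The same computation shows independence of the chosen representative: replacing $\phi_\gamma$ by $\phi_\gamma + 4k\pi$ multiplies $Y_\gamma$ by $e^{2k\pi i} = 1$, consistent with the remark preceding the statement.

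The crucial observation for the cocycle identity is that, because $\psi_\gamma$ is $2\pi$-periodic, the formula $Y_\gamma(e^{iy}) = e^{i\psi_\gamma(y)/2}$ is valid for \emph{every} $y \in \R$ representing the given point of $S^1$, not only for $y \in (-\pi,\pi]$. Next I would choose representatives so that $\phi_{\gamma_1\gamma_2} = \phi_{\gamma_1}\circ\phi_{\gamma_2}$, which is permissible up to the $4\pi\Z$-center as recalled in Section \ref{subsection:diff_group} and harmless since $Y_\gamma$ is representative-independent by the previous paragraph. Writing $z = e^{ix}$ and using $\dot{\gamma_2}(z) = e^{i\phi_{\gamma_2}(x)}$, the key observation lets me evaluate $Y_{\gamma_1}(\dot{\gamma_2}(z)) = e^{i(\phi_{\gamma_1}(\phi_{\gamma_2}(x)) - \phi_{\gamma_2}(x))/2}$ using the \emph{unreduced} angle $\phi_{\gamma_2}(x)$. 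Multiplying by $Y_{\gamma_2}(z) = e^{i(\phi_{\gamma_2}(x) - x)/2}$ then telescopes to $e^{i(\phi_{\gamma_1}(\phi_{\gamma_2}(x)) - x)/2} = Y_{\gamma_1\gamma_2}(z)$, which is exactly \eqref{eq:Y_gamma_cocycle}.

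The only delicate point --- and the step I would treat most carefully --- is precisely this bookkeeping of representatives: one must justify using $\phi_{\gamma_2}(x)$ rather than its reduction into $(-\pi,\pi]$ when plugging into $Y_{\gamma_1}$, since otherwise the two exponents would not obviously combine. The $2\pi$-periodicity of $\psi_{\gamma_1}$ is what removes this ambiguity, and it is the analogue for the ``half-angle'' cocycle $Y_\gamma$ of the multiplicative cocycle relation \eqref{eq:X_gamma1_gamma2} already established for $X_\gamma$; indeed the argument runs in complete parallel to that case.
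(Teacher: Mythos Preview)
Your proof is correct and follows essentially the same approach as the paper. The only cosmetic difference is that you package the key step as ``$\psi_\gamma(x)=\phi_\gamma(x)-x$ is $2\pi$-periodic, so $Y_\gamma(e^{iy})=e^{i\psi_\gamma(y)/2}$ holds for all $y\in\R$,'' whereas the paper introduces an explicit shift $k_{\gamma_2}(x)\in 2\pi\Z$ to bring $\phi_{\gamma_2}(x)$ back into $(-\pi,\pi]$ and then invokes $\phi_{\gamma_1}(y+2k\pi)=\phi_{\gamma_1}(y)+2k\pi$ to undo it; these are the same observation expressed two ways.
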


\begin{proof}
It is useful to recall that every $\phi\in\Diff^+(S^1)^{(\infty)}$ satisfies
\begin{equation} \label{eq:2pi_condition_for_phi}
\phi(x+2k\pi)=\phi(x)+2k\pi 
\qquad
\forall x\in\R \,\,\, \forall k\in\Z \,.
\end{equation}
For all $\gamma\in\Diff^+(S^1)^{(2)}$, $\widetilde{Y_\gamma}(x):=Y_\gamma(e^{ix})$ for all $x\in\R$ is clearly an infinitely differentiable function. Moreover, it is $2\pi$-periodic, indeed
\begin{equation}
\widetilde{Y_\gamma}(x+2\pi)
=e^{i\frac{\phi_\gamma(x+2\pi)-(x+2\pi)}{2}}
=e^{i\frac{\phi_\gamma(x)+2\pi-x-2\pi}{2}}
=e^{i\frac{\phi_\gamma(x)-x}{2}}
=\widetilde{Y_\gamma}(x)
\qquad\forall x\in\R
\end{equation} 
where we have used \eqref{eq:2pi_condition_for_phi} for the second equality. Therefore, $Y_\gamma\in C^\infty(S^1)$ for all $\gamma\in\Diff^+(S^1)^{(2)}$.
To prove \eqref{eq:Y_gamma_cocycle}, let $\gamma_1,\gamma_2\in\Diff^+(S^1)^{(2)}$. Write $z=e^{ix}$ with $x\in(-\pi,\pi]$ and for every such $x$, consider $k_{\gamma_2}(x)\in 2\pi\Z$ such that $\phi_{\gamma_2}(x)+k_{\gamma_2}(x)$ is in $(-\pi,\pi]$. Hence, we have that
\begin{equation}
	\begin{split}
Y_{\gamma_1\gamma_2}(z)
&= e^{i\frac{\phi_{\gamma_1}(\phi_{\gamma_2}(x))-x}{2}} 
= e^{i\frac{\phi_{\gamma_1}(\phi_{\gamma_2}(x))-\phi_{\gamma_2}(x)}{2}} 
e^{i\frac{\phi_{\gamma_2}(x)-x}{2}} \\
&= e^{i\frac{\phi_{\gamma_1}(\phi_{\gamma_2}(x)+k_{\gamma_2}(x))-(\phi_{\gamma_2}(x)+k_{\gamma_2}(x))}{2}} Y_{\gamma_2}(z)
= Y_{\gamma_1}(e^{i(\phi_{\gamma_2}(x)+k_{\gamma_2}(x))}) Y_{\gamma_2}(z) \\
&= Y_{\gamma_1}(\dot{\gamma_2}(z)) Y_{\gamma_2}(z)
\qquad\forall z\in S^1
	\end{split}
\end{equation} 
where we have used the group properties for the representative, see Section \ref{subsection:diff_group}, for the first equality and \eqref{eq:2pi_condition_for_phi} for the third one.
\end{proof}  

The following definition is inspired by \cite[Appendix A]{Boc96} and \cite[Section 1.1]{Lok94} (cf.\ also \cite[Section 2 and Section 3]{Pal}).
For all $d\in\Zplus-\half$, we define a family of continuous operators $\{\alpha_d(\gamma)\mid \gamma\in\Diff^+(S^1)^{(2)}\}$ on the Fréchet space $C^\infty_\chi(S^1)$ in the following way: let $g=\chi h$ with $h\in C^\infty(S^1)$ and define
\begin{equation} \label{eq:family_cont_ops_diff^2}
(\alpha_d(\gamma)g)(z):=
\chi(z)Y_{\gamma^{-1}}(z)(\beta_d(\dot{\gamma})h)(z)
\qquad \forall z\in S^1 \,.
\end{equation}
By Lemma \ref{lem:Y_gamma_cocycle}, $\alpha_d$ defines a strongly continuous representation of $\Diff^+(S^1)^{(2)}$ on $C^\infty_\chi(S^1)$, which preserves $C^\infty_\chi(S^1,\R)$ as it is explained in the following Remark \ref{rem:epsilon_gamma_z}.

\begin{rem}  \label{rem:epsilon_gamma_z}
Let $\gamma$ be a generic element in $\Diff^+(S^1)^{(2)}$ and write every $z\in S^1$ as $e^{ix}$ for the unique $x\in(-\pi,\pi]$. Then we have that 
\begin{equation}   \label{eq:Y_gamma_times_chi}
Y_\gamma(z)\chi(z)
=e^{i\frac{\phi_\gamma(x)}{2}}
=\gamma(e^{i\frac{x}{2}})=\epsilon_\gamma(z) \chi(\dot{\gamma}(z))
\qquad \forall z\in S^1
\end{equation}
where $\epsilon_\gamma(z)\in\{\pm1\}$ is defined as follows: for all $z\in S^1$, there exist a unique $\phi_{\dot{\gamma}}$ in $\{\phi_\gamma+2k\pi\mid k\in\Z\}$ such that $\phi_{\dot{\gamma}}(x)$ is in $(-\pi,\pi]$; then
\begin{equation}  \label{eq:def_epsilon_gamma}
\epsilon_\gamma(z):=e^{i\frac{\phi_{\dot{\gamma}}(x)-\phi_\gamma(x)}{2}}
\qquad\forall z=e^{ix}\in S^1\,.
\end{equation}
Note that definition \eqref{eq:def_epsilon_gamma} does not depend on the representative $\phi_\gamma$. Thus, we can rewrite \eqref{eq:family_cont_ops_diff^2} as
\begin{equation}  \label{eq:family_cont_ops_diff^2_epsilon}
(\alpha_d(\gamma)g)(z)=
\left[X_{\dot{\gamma}}(\dot{\gamma}^{-1}(z))\right]^{d-1}
\epsilon_{\gamma^{-1}}(z)
g(\dot{\gamma}^{-1}(z))
\qquad\forall g\in C^\infty_\chi(S^1) \,\,\,  \forall z\in S^1 
\end{equation}
which clearly assures us that $\alpha_d(\gamma)$ preserves $C_\chi^\infty(S^1,\R)$ for all $\gamma\in\Diff^+(S^1)^{(2)}$.
Finally, note that \eqref{eq:family_cont_ops_diff^2_epsilon} corresponds to the one in \cite[Eq.\  (78)]{Boc96} for $\Mob(S^1)$.
\end{rem} 

With a standard argument, we can prove the following result:

\begin{lem} \label{lem:alpha_d_differentiable}
For all $f\in C^\infty(S^1,\R)$ and all $g\in C^\infty_\chi(S^1)$, the map 
$$
\R\ni t\longmapsto\alpha_d(\exp^{(2)}(tf))g
\in C^\infty_\chi(S^1)
$$ 
is differentiable and
\begin{equation}
\frac{\mathrm{d}}{\mathrm{d}t}\left[
\alpha_d(\exp^{(2)}(tf))g\right]_{t=0}
=(d-1)f'g-fg' \,.
\end{equation}
\end{lem}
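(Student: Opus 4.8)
The plan is to differentiate the defining factorization \eqref{eq:family_cont_ops_diff^2} of $\alpha_d$ term by term, thereby reducing everything to the already-established differentiability of $\beta_d$ (Lemma \ref{lem:beta_d_differentiable}) together with the differentiability of the scalar cocycle $Y_\gamma$ from Lemma \ref{lem:Y_gamma_cocycle}. Writing $g=\chi h$ with $h\in C^\infty(S^1)$ and setting $\gamma_t:=\exp^{(2)}(tf)$, whose image under the covering map is $\dot\gamma_t=\exp(tf)$ and whose inverse is $\gamma_t^{-1}=\exp^{(2)}(-tf)$, formula \eqref{eq:family_cont_ops_diff^2} reads
\begin{equation}
\alpha_d(\gamma_t)g=\chi\,Y_{\gamma_t^{-1}}\,\beta_d(\exp(tf))h \,.
\end{equation}
Here the first factor $\chi$ is independent of $t$, the last factor is differentiable at $t=0$ with derivative $(d-1)f'h-fh'$ by Lemma \ref{lem:beta_d_differentiable} (applied with $f_1=f$, $f_2=h$), so it only remains to differentiate $Y_{\gamma_t^{-1}}$ and to assemble the pieces by the product rule for maps into the Fréchet space $C^\infty(S^1)$.

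For the cocycle, I would use $Y_\gamma(z)=e^{i(\phi_\gamma(x)-x)/2}$ for $z=e^{ix}$. Since $\gamma_t^{-1}$ is represented by the flow $\phi_{-t}$, we have $Y_{\gamma_t^{-1}}(e^{ix})=e^{i(\phi_{-t}(x)-x)/2}$, which equals $1$ at $t=0$ because $\phi_0=\mathrm{id}$. The infinitesimal generator of the flow is $f$ itself, i.e.\ $\frac{\mathrm{d}}{\mathrm{d}t}\phi_t(x)|_{t=0}=f(x)$ — this is exactly the normalization already forced by the derivative formula in Lemma \ref{lem:beta_d_differentiable}, as one checks by differentiating $\beta_d$ directly. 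Hence
\begin{equation}
\frac{\mathrm{d}}{\mathrm{d}t}\left[Y_{\gamma_t^{-1}}\right]_{t=0}(e^{ix})
=\frac{i}{2}\,\frac{\mathrm{d}}{\mathrm{d}t}\phi_{-t}(x)\Big|_{t=0}
=-\frac{i}{2}f \,.
\end{equation}

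Combining the three factors by the product rule (using $Y_{\gamma_0^{-1}}=1$ and $\beta_d(\exp 0)h=h$) and then writing $\chi h=g$, the derivative at $t=0$ becomes
\begin{equation}
\chi\left(-\tfrac{i}{2}f\right)h+\chi\bigl[(d-1)f'h-fh'\bigr]
=-\tfrac{i}{2}f\,g+(d-1)f'g-f\,\chi h' \,.
\end{equation}
The final step is to invoke the identity $g'=\tfrac{i}{2}g+\chi h'$ (the definition of $g'$ for $g=\chi h$) to substitute $\chi h'=g'-\tfrac{i}{2}g$; the two phase terms $\mp\tfrac{i}{2}fg$ then cancel and one is left with $(d-1)f'g-fg'$, as required. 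This cancellation is the one structural point genuinely specific to the odd, $\chi$-twisted setting, and it is precisely what makes $\alpha_d$ the correct intertwiner for half-integer conformal weights.

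The only non-formal step, concealed in the phrase ``with a standard argument'', is upgrading the pointwise computations above to differentiability in the Fréchet topology of $C^\infty_\chi(S^1)$: convergence of the difference quotients in every seminorm $\norm{\cdot}_s$ simultaneously, together with the product rule for $C^\infty(S^1)$-valued differentiable maps. I expect this to be the main (though routine) obstacle; it is handled exactly as in Lemma \ref{lem:beta_d_differentiable}, by controlling each difference quotient with a Taylor remainder that is uniform on compact $t$-intervals, using the joint smoothness of the flow $\phi_t$ in $(t,x)$, and then multiplying the factors.
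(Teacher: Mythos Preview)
Your proposal is correct and is precisely the standard argument the paper alludes to but omits: factor $\alpha_d(\gamma_t)g=\chi\,Y_{\gamma_t^{-1}}\,\beta_d(\dot\gamma_t)h$, differentiate each factor (invoking Lemma~\ref{lem:beta_d_differentiable} for the last one), and observe the cancellation between the $-\tfrac{i}{2}fg$ coming from the cocycle and the $+\tfrac{i}{2}fg$ hidden in $f\chi h'=fg'-\tfrac{i}{2}fg$. The paper gives no proof beyond the phrase ``with a standard argument'', so there is nothing to compare against; your write-up is exactly what that phrase was meant to cover, including the honest remark that the Fr\'echet-topology upgrade is routine and handled as for $\beta_d$.
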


The following proposition is the key result to prove the M{\"o}bius covariance of the family of von Neumann algebras $\A_{(V,\scalar)}$.
\begin{prop}   \label{prop:mob_covariance_qp}
Let $V$ be a simple energy-bounded unitary VOSA. Then we have that: for all $I\in\J$,
\begin{itemize}
\item[($\parzero$)] if $a\in V_\parzero$ is a quasi-primary vector then $U(\gamma)Y(a,f)U(\gamma)^*=Y(a,\beta_{d_a}(\gamma)f)$ for all $f\in C^\infty(S^1)$ with $\mathrm{supp}f\subset I$ and all $\gamma \in \Mob(S^1)$.

\item[($\parone$)] if $b\in V_\parone$ is a quasi-primary vector then $U(\gamma)Y(b,g)U(\gamma)^*=Y(b,\alpha_{d_b}(\gamma)g)$ for all $g\in C_\chi^\infty(S^1)$ with $\mathrm{supp}g\subset I$ and all $\gamma\in\Mob(S^1)^{(2)}$.
\end{itemize}
\end{prop}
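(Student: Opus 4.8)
The plan is to follow the strategy of \cite[Proposition 6.9]{CKLW18}: first reduce the claim to the one-parameter subgroups generating the Möbius group, then verify the infinitesimal version of covariance through a commutator identity, and finally integrate. Recall from the discussion around \eqref{eq:repr_U_of_the_net_A_V} that $U(\exp^{(2)}(tv))=e^{itY(\nu,v)}$ for every real smooth vector field $v\frac{\mathrm{d}}{\mathrm{d}x}$. For the Möbius subgroups the relevant vector fields are those generating rotations, dilations and translations, for which $Y(\nu,v)$ is a linear combination of $L_{-1},L_0,L_1$ only. Since $\Mob(S^1)$ (resp.\ $\Mob(S^1)^{(2)}$) is generated by these one-parameter subgroups, and since $\beta_{d_a}$ (resp.\ $\alpha_{d_b}$) is a representation by \eqref{eq:X_gamma1_gamma2} (resp.\ by Lemma \ref{lem:Y_gamma_cocycle}), it suffices to prove the identity for a single such one-parameter subgroup $t\mapsto\gamma_t:=\exp^{(2)}(tv)$; the general case then follows by composing, using $U(\gamma_1\gamma_2)=U(\gamma_1)U(\gamma_2)$ and that Möbius maps send intervals to intervals.

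For part ($\parzero$) fix such a $v$ and a quasi-primary $a\in V_\parzero$, and consider, for $b,c\in\mathcal{H}^\infty$, the quadratic form $t\mapsto (b|\,U(\gamma_{-t})\,Y(a,\beta_{d_a}(\gamma_t)f)\,U(\gamma_t)\,c)$; since its value at $t=0$ is $(b|Y(a,f)c)$, it is enough to show it is constant. Moving the two unitaries onto the vectors by unitarity and writing $b_t:=e^{itY(\nu,v)}b$, $c_t:=e^{itY(\nu,v)}c$ (both in $\mathcal{H}^\infty$, with derivatives $iY(\nu,v)b_t$ and $iY(\nu,v)c_t$ by Remark \ref{rem:differentiability_Diff_repr_H^infty}), I would differentiate $(b_t|Y(a,\beta_{d_a}(\gamma_t)f)c_t)$, the interchange of limits being justified by Lemma \ref{lem:common_core}. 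The two unitaries contribute $-i(b_t|[Y(\nu,v),Y(a,\beta_{d_a}(\gamma_t)f)]c_t)$, while differentiating the test function contributes $(b_t|Y(a,\tfrac{\mathrm{d}}{\mathrm{d}t}\beta_{d_a}(\gamma_t)f)c_t)$. Setting $f_t:=\beta_{d_a}(\gamma_t)f$, the first term reduces, through the key commutator identity
\begin{equation}
[Y(\nu,v),Y(a,h)]=-i\,Y\big(a,(d_a-1)v'h-vh'\big),
\end{equation}
to $-(b_t|Y(a,(d_a-1)v'f_t-vf_t')c_t)$, whereas by the group law and Lemma \ref{lem:beta_d_differentiable} the second equals $+(b_t|Y(a,(d_a-1)v'f_t-vf_t')c_t)$, since the infinitesimal generator of $\beta_{d_a}(\gamma_\bullet)$ commutes with $\beta_{d_a}(\gamma_t)$. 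The two cancel, so the form is constant; as $\mathcal{H}^\infty$ is a common core (Proposition \ref{prop:common_invariant_core_smeared_vertex_operators}) the operator identity of ($\parzero$) follows.

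It remains to establish the commutator identity, which is the computational heart. Expanding $Y(\nu,v)=\sum_m\widehat{v}_m L_m$ and $Y(a,h)=\sum_n\widehat{h}_n a_n$ on $V$, and using $\widehat{v}_m=0$ for $\abs{m}\geq2$ for Möbius $v$, the only brackets $[L_m,a_n]$ that occur have $m\in\{-1,0,1\}$; for these, \eqref{eq:cr_l0}, \eqref{eq:cr_l-1} and \eqref{eq:cr_l1} together with $L_1a=0$ give $[L_m,a_n]=((d_a-1)m-n)a_{m+n}$. Comparing the resulting Fourier coefficients with those of $(d_a-1)v'h-vh'$ (recall $\widehat{v'}_m=im\widehat{v}_m$) yields the identity. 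This is precisely the point where quasi-primarity is sufficient and full primarity is not needed, since the modes $L_m$ with $\abs{m}\geq2$ never enter. Part ($\parone$) is then carried out verbatim, replacing $\beta_{d_a}$ by $\alpha_{d_b}$, $C^\infty(S^1)$ by $C_\chi^\infty(S^1)$, $\Mob(S^1)$ by $\Mob(S^1)^{(2)}$, and Lemma \ref{lem:beta_d_differentiable} by Lemma \ref{lem:alpha_d_differentiable}; since $\nu$ is even, the graded commutator with $Y(b,g)$ is again the ordinary one, and the mode computation is identical with $n\in\Z-\half$. The main obstacle is not conceptual but the functional-analytic bookkeeping: justifying the termwise differentiation under the pairing, keeping all vectors within the common invariant core $\mathcal{H}^\infty$, and correctly tracking the half-integer Fourier conventions together with the signs $\epsilon_\gamma$ of \eqref{eq:def_epsilon_gamma} in the odd case.
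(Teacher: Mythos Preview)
Your proposal is correct and follows essentially the same approach as the paper: reduce to the one-parameter subgroups $\exp^{(2)}(tl)$ generating $\Mob(S^1)^{(2)}$, establish the commutator identity $i[Y(\nu,l),Y(b,g)]c=Y(b,(d_b-1)l'g-lg')c$ from \eqref{eq:cr_l0}--\eqref{eq:cr_l1} using quasi-primarity, and integrate via the derivative of the $\beta_d$/$\alpha_d$ action given in Lemmas \ref{lem:beta_d_differentiable} and \ref{lem:alpha_d_differentiable}. The only difference is packaging: the paper sets $c(t):=Y(b,\alpha_{d_b}(\exp^{(2)}(tl))g)\,U(\exp^{(2)}(tl))c$ and shows it solves the Cauchy problem $\dot c(t)=iY(\nu,l)c(t)$ with $c(0)=Y(b,g)c$, invoking uniqueness of solutions to conclude $c(t)=U(\exp^{(2)}(tl))Y(b,g)c$, whereas you show constancy of the matrix elements $(b_t|Y(a,f_t)c_t)$ and then pass to operators via the common core; these are equivalent formulations of the same differentiation argument.
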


\begin{proof}
The proof of $(\parone)$ is an adaptation of \cite[pp.\  1100--1103]{CKL08}, similar to what is done in the proof of \cite[Proposition 6.4]{CKLW18}, which is exactly the case $(\parzero)$. 

First, from equations \eqref{eq:cr_l0}--\eqref{eq:cr_l1}, we see that for every quasi-primary vector $b\in V$ 
\begin{equation}  \label{eq:cr_lm}
[L_m,b_n]=((d_b-1)m-n)b_{m+n}  
\qquad\forall m\in\{-1,0,1\}\,.
\end{equation}
Second, note that $\Mob(S^1)$ is generated by the one-parameter groups generated by the exponential map of the three real smooth vector fields $l_m\frac{\mathrm{d}}{\mathrm{d}x}$ given by
\begin{equation}  \label{eq:fm_gen_mob}
\begin{split}
l_0(z)=1 \,, &\qquad Y(\nu,l_0)=L_0\,,  \\
l_1(z)=\frac{z+z^{-1}}{2} \,, &\qquad
Y(\nu,l_1)=\frac{L_1+L_{-1}}{2}\,,  \\
l_{-1}(z)=\frac{z-z^{-1}}{2i}  \,, &\qquad
Y(\nu,l_{-1})= \frac{L_1-L_{-1}}{2i} \,.
\end{split}
\end{equation}
In other words, $\Mob(S^1)$ is generated by $\exp(tl)$ with $l\in C^\infty(S^1,\R)$ and such that $Y(\nu, l)$ is a linear combination of $L_m$ with $m\in\{-1,0,1\}$. Consequently, $\Mob(S^1)^{(2)}$ is generated by the corresponding one-parameter subgroups $\exp^{(2)}(tl)$. 

Fix a quasi-primary odd vector $b\in V$, a function $g\in C_\chi^\infty(S^1)$ and consider any $l\in C^\infty(S^1,\R)$ such that $\exp^{(2)}(tl)\in\Mob(S^1)^{(2)}$ as explained above. 
By Proposition \ref{prop:common_invariant_core_smeared_vertex_operators}, $\mathcal{H}^\infty$ is an invariant core for all the smeared vertex operators and thus we can write, using \eqref{eq:cr_lm} and \eqref{eq:fm_gen_mob},
\begin{equation}  \label{eq:cr_Y_fm_Ya_f}
i[Y(\nu,l), Y(b,g)]c=Y(b,(d_b-1)l'g-lg')c
\qquad
\forall c\in \mathcal{H}^\infty \,.
\end{equation}
By Lemma \ref{lem:alpha_d_differentiable} and Lemma \ref{lem:common_core}, we have that the map 
$$
\R\ni t\mapsto Y(b,\alpha_{d_b}(\exp^{(2)}(tl))g)c\in\mathcal{H}^\infty
$$ 
is differentiable on $\mathcal{H}$ for all $c\in\mathcal{H}^\infty$. Moreover
\begin{equation} \label{eq:cr_Y_and_derivative}
\frac{\mathrm{d}}{\mathrm{d}t}\left[ Y(b,\alpha_{d_b}(\exp^{(2)}(tl))g)c \right]_{t=0}= i[Y(\nu,l), Y(b,g)]c
\end{equation}
by equation \eqref{eq:cr_Y_fm_Ya_f}.

For every $c\in\mathcal{H}^\infty$ define
\begin{equation}
c(t):=Y(b,\alpha_{d_b}(\exp^{(2)}(tl))g)U(\exp^{(2)}(tl))c
\end{equation}
which is a well-defined vector in $\mathcal{H}^\infty$ because this core is invariant for the smeared vertex operators and the representation $U$. Using again Lemma \ref{lem:common_core}, Lemma \ref{lem:h_infty_is_invariant} and Remark \ref{rem:differentiability_Diff_repr_H^infty}, it is possible to prove the differentiability of $t\mapsto c(t)$ on $\mathcal{H}^\infty$ and using \eqref{eq:cr_Y_and_derivative}, that
\begin{equation}
\left. \frac{\mathrm{d}}{\mathrm{d}t}c(t)\right|_{t=0}=i Y(\nu, l)c \,.
\end{equation}
This means that $c(t)$ satisfies the Cauchy problem on $\mathcal{H}^\infty$
\begin{equation}
\left\{
\begin{array}{l}
\frac{\mathrm{d}}{\mathrm{d}t}c(t)=iY(\nu,l)c(t) \\
c(0)=Y(b,g)c
\end{array} \right.
\end{equation}
whose unique solution is given by
\begin{equation}
U(\exp^{(2)}(tl))Y(b,g)c \,.
\end{equation}
It follows by the definition of $c(t)$ that
\begin{equation}
Y(b,\alpha_{d_b}(\exp^{(2)}(tl))g)U(\exp^{(2)}(tl))c
=c(t)
=U(\exp^{(2)}(tl))Y(b,g)c 
\qquad\forall t\in\R\,.
\end{equation}
Hence, by the arbitrariness of $c$ in the core $\mathcal{H}^\infty$, we can conclude that
\begin{equation}
U(\exp^{(2)}(tl))Y(b,g)U(\exp^{(2)}(-tl))
=Y(b,\alpha_{d_b}(\exp^{(2)}(tl))g) 
\end{equation}
which is the desired result as $\Mob^{(2)}(S^1)$ is generated by the subset $\{\exp^{(2)}(tl)\mid t\in\R\}$.
\end{proof}

\begin{rem}  \label{rem:diff_covariance_primary_elems}
As in \cite[Proposition 6.4]{CKLW18}, substituting quasi-primary vectors with only primary ones, we can prove the covariance properties $(\parzero)$ and $(\parone)$ in Proposition \ref{prop:mob_covariance_qp} for all $\gamma\in\Diff^+(S^1)$ and all $\gamma\in\Diff^+(S^1)^{(2)}$ respectively.
\end{rem}

\begin{prop}  \label{prop:mob_covariance_of_A_V}
Let $V$ be a simple energy-bounded unitary VOSA and $\A_{(V,\scalar)}$ be the associated family of von Neumann algebras on $S^1$. Then $\A_{(V,\scalar)}$ is an irreducible graded-local M\"{o}bius covariant net on $S^1$ except for the graded-locality, which might not hold.
\end{prop}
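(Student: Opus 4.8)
The plan is to verify the axioms (A)–(E) of a graded-local Möbius covariant net (minus locality) one at a time for the family $\A_{(V,\scalar)}$, using the results already established in Sections \ref{subsection:energy_bounds} and \ref{subsection:strong_graded_locality}. The structure follows \cite[Theorem 6.8]{CKLW18} adapted to the super case, and several axioms are already in hand. Indeed, isotony (A) is immediate from the definition of $\A_{(V,\scalar)}(I)$ as the von Neumann algebra generated by smeared vertex operators $Y(a,f),Y(b,g)$ with support in $I$: enlarging $I$ only enlarges the generating set. Cyclicity of $\Omega$ in (D) is exactly Proposition \ref{prop:Omega_is_cyclic}. Positivity of the energy (C) holds because $L_0$ is the conformal Hamiltonian of the positive-energy representation of $\Vir$ coming from the conformal vector $\nu$, integrated via Theorem \ref{theo:representations_Diff_Vir}.

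The substantive work is in establishing Möbius covariance (B) and the half of graded locality (E) that does not involve commutation relations. For covariance, I would first recall from the discussion around \eqref{eq:repr_U_of_the_net_A_V} that the unitary representation $U$ of $\Mob(S^1)^{(\infty)}$ (factoring through $\Mob(S^1)^{(2)}$ since $e^{i4\pi L_0}=1_\mathcal{H}$) acts on $\mathcal{H}$. By Proposition \ref{prop:gen_by_qp}, each $\A_{(V,\scalar)}(I)$ is generated by smeared quasi-primary fields, so it suffices to check covariance on these generators. For a quasi-primary even $a$ and odd $b$, Proposition \ref{prop:mob_covariance_qp} gives precisely
\[
U(\gamma)Y(a,f)U(\gamma)^*=Y(a,\beta_{d_a}(\gamma)f),\qquad
U(\gamma)Y(b,g)U(\gamma)^*=Y(b,\alpha_{d_b}(\gamma)g),
\]
and since $\beta_{d_a}(\gamma)$ and $\alpha_{d_b}(\gamma)$ preserve support inside $\dot{\gamma}I$ (being built from the diffeomorphism action via \eqref{eq:family_cont_ops_diff} and \eqref{eq:family_cont_ops_diff^2}), conjugation by $U(\gamma)$ maps the generators of $\A_{(V,\scalar)}(I)$ onto those of $\A_{(V,\scalar)}(\dot{\gamma}I)$. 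This yields $U(\gamma)\A_{(V,\scalar)}(I)U(\gamma)^*=\A_{(V,\scalar)}(\dot{\gamma}I)$.

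For the grading axiom (E), the operator $\Gamma$ extending $\Gamma_V$ satisfies $\Gamma\Omega=\Omega$ since $\Omega\in V_\parzero$, and $\Gamma=U(r^{(2)}(2\pi))=e^{i2\pi L_0}$ as noted after \eqref{eq:repr_U_of_the_net_A_V}. The invariance $\Gamma\A_{(V,\scalar)}(I)\Gamma=\A_{(V,\scalar)}(I)$ then follows because $\Gamma$ is a special Möbius-type rotation already covered by covariance (rotation by $2\pi$ fixes each interval $I$ setwise in the relevant sense), or more directly because $\Gamma Y(a,f)\Gamma=(-1)^{p(a)}Y(a,f)$ on homogeneous generators, which leaves each generating von Neumann algebra invariant. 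I emphasize that the second condition in \eqref{eq:graded-locality_nets}, namely $\A(I')\subseteq Z\A(I)'Z^*$, is the genuine locality statement and is \emph{explicitly excluded} from this proposition; it is deferred to the notion of strong graded locality. Irreducibility, i.e.\ $\A_{(V,\scalar)}(S^1)=B(\mathcal{H})$, follows from the cyclicity and separating property of $\Omega$ together with the fact that $\Omega$ is the unique (up to scalar) $U$-invariant vector, which holds because $V_0=\C\Omega$ by simplicity (Proposition \ref{prop:characterisation_simplicity}) forces $\mathrm{Ker}(L_0)=\C\Omega$.

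\textbf{The main obstacle} I anticipate is purely technical rather than conceptual: ensuring that conjugation by $U(\gamma)$ genuinely sends the \emph{von Neumann algebra} (not merely the generating operators) onto the correct target, which requires the covariance identities of Proposition \ref{prop:mob_covariance_qp} to hold as identities of closed operators sharing the common invariant core $\mathcal{H}^\infty$ (Proposition \ref{prop:common_invariant_core_smeared_vertex_operators}), so that affiliation is preserved under conjugation. One must also confirm that $\beta_d(\gamma)$ and $\alpha_d(\gamma)$ map $C^\infty(S^1)$ and $C^\infty_\chi(S^1)$ functions supported in $I$ to functions supported in $\dot{\gamma}I$; this is where the explicit diffeomorphism formulas and the cocycle property \eqref{eq:Y_gamma_cocycle} are needed, together with the support-transformation behaviour of the Cayley-type composition $f\circ\gamma^{-1}$. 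All of these are established or routine given the earlier lemmas, so the proof is essentially an assembly of Proposition \ref{prop:Omega_is_cyclic}, Proposition \ref{prop:gen_by_qp}, and Proposition \ref{prop:mob_covariance_qp}, with the caveat that locality is deliberately left open.
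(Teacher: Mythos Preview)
Your proposal is correct and follows essentially the same approach as the paper's proof: isotony is immediate, M\"{o}bius covariance is obtained by combining Proposition~\ref{prop:gen_by_qp} (reduction to quasi-primary generators) with Proposition~\ref{prop:mob_covariance_qp}, cyclicity is Proposition~\ref{prop:Omega_is_cyclic}, and both positivity of the energy and irreducibility ultimately rest on simplicity implying CFT type via Proposition~\ref{prop:characterisation_simplicity}. One small remark: your justification of (C) invokes Theorem~\ref{theo:representations_Diff_Vir}, but that theorem already \emph{assumes} a positive-energy Virasoro representation as input, so the actual source of positivity is the fact that simplicity forces $V_n=\{0\}$ for $n<0$ (Proposition~\ref{prop:characterisation_simplicity})---which you correctly cite later for irreducibility, so the ingredient is present, just slightly misplaced.
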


\begin{proof}
We need to prove that $\A_{(V,\scalar)}$ satisfies properties \textbf{(A)}--\textbf{(D)} as in Section \ref{subsection:graded-local_conformal_nets}. The isotony \textbf{(A)} is clear from Definition \ref{defin:the_net_A_V} of the net. The M{\"o}bius covariance \textbf{(B)} with respect to the representation $U$ as defined at page \pageref{eq:repr_U_of_the_net_A_V} is proved thanks to Proposition \ref{prop:mob_covariance_qp} together with the fact that $\A_{(V,\scalar)}$ is generated by quasi-primary vectors as showed by Proposition \ref{prop:gen_by_qp}. As far $U$ is a positive-energy representation, as required by \textbf{(C)}, follows by simplicity as Proposition \ref{prop:characterisation_simplicity} implies that $V$ is of CFT type. 
Moreover, $\Omega$ is a vacuum vector for $\A_{(V,\scalar)}$ as it is $U$-invariant and cyclic by Proposition \ref{prop:Omega_is_cyclic}, so proving \textbf{(D)}. Since $V$ is of CFT type, $\Omega$ must be the unique vacuum vector and thus $\A_{(V,\scalar)}$ is irreducible, concluding the proof.
\end{proof}

As Proposition \ref{prop:mob_covariance_of_A_V} above suggests, it is unknown whether the net $\A_{(V,\scalar)}$ satisfies the axiom of graded-locality \textbf{(E)} as in Section \ref{subsection:graded-local_conformal_nets}, despite the locality of vertex operators. 
Indeed, by a result of E. Nelson, see \cite[Section 10]{Nel59} and see also \cite[Section VIII.5]{RS80}, there exist
two von Neumann algebras $W^*(A)$ and $W^*(B)$, generated by two unbounded self-adjoint operators $A$ and $B$, which \textit{commute} in a common invariant core, but such that $W^*(A)$ is not a subset of $W^*(B)'$. This implies that the locality of vertex operators does not automatically assures the graded-locality of the family of von Neumann algebras $\A_{(V,\scalar)}$. 
Therefore, we need to introduce the following important definition, see Definition \ref{defin:hilbet_space_from_V} for the notation.

\begin{defin}  \label{def:strongly_graded_local}
A unitary VOSA $(V,\scalar)$ is called \textbf{strongly graded-local} if it is energy-bounded and $\A_{(V,\scalar)}(I')\subseteq Z\A_{(V,\scalar)}(I)'Z^*$ for all $I\in\J$.
(Of course, a strongly graded-local VOA is a strongly local VOA in the meaning of \cite[Definition 6.7]{CKLW18}.)
\end{defin}

Therefore, assuming the strong graded locality, we have the desired irreducible graded-local conformal net:
\begin{theo}  \label{theo:diffeo_covariant_net_from_V}
Let $(V,\scalar)$ be a simple strongly graded-local unitary VOSA. Then the family $\A_{(V,\scalar)}$ is an irreducible graded-local conformal net on $S^1$.
\end{theo}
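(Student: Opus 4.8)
The plan is to verify that under the strong graded-locality hypothesis, the family $\A_{(V,\scalar)}$ satisfies all the axioms \textbf{(A)}--\textbf{(F)} of a graded-local conformal net. Most of the work has already been done: Proposition \ref{prop:mob_covariance_of_A_V} establishes that $\A_{(V,\scalar)}$ is an irreducible graded-local M\"{o}bius covariant net \emph{except possibly for graded-locality}, i.e.\ it satisfies axioms \textbf{(A)}--\textbf{(D)} together with the irreducibility and positivity of energy. So the two remaining tasks are the graded-locality axiom \textbf{(E)} and the diffeomorphism covariance axiom \textbf{(F)}.

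First I would dispose of graded-locality \textbf{(E)}. The grading unitary $\Gamma$ is the extension to $\mathcal{H}$ of $\Gamma_V$ (Definition \ref{defin:hilbet_space_from_V}), which is self-adjoint with $\Gamma\Omega=\Omega$, and $Z$ is defined accordingly so that \eqref{eq:defin_Z_net} holds. The condition $\Gamma\A_{(V,\scalar)}(I)\Gamma=\A_{(V,\scalar)}(I)$ follows from the fact that $\Gamma=U(r^{(2)}(2\pi))$ implements the rotation by $2\pi$, which preserves each interval $I$ and commutes appropriately with the smeared vertex operators generating $\A_{(V,\scalar)}(I)$; concretely, one checks that $\Gamma Y(a,f)\Gamma$ equals $\pm Y(a,f)$ according to the parity of $a$, so $\Gamma$ normalizes each local algebra. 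The second half of \eqref{eq:graded-locality_nets}, namely $\A_{(V,\scalar)}(I')\subseteq Z\A_{(V,\scalar)}(I)'Z^*$, is \emph{exactly} the strong graded-locality assumption in Definition \ref{def:strongly_graded_local}. Thus \textbf{(E)} holds by hypothesis, and combined with Proposition \ref{prop:mob_covariance_of_A_V} we conclude that $\A_{(V,\scalar)}$ is an irreducible graded-local M\"{o}bius covariant net.

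The remaining and more substantial task is diffeomorphism covariance \textbf{(F)}. The representation $U$ of $\Diff^+(S^1)^{(\infty)}$ (factoring through $\Diff^+(S^1)^{(2)}$ since $e^{i4\pi L_0}=1_\mathcal{H}$) is already available from the conformal vector $\nu$ via Theorem \ref{theo:representations_Diff_Vir}, and it extends the M\"{o}bius representation. For \eqref{diff_covar}, the covariance $U(\gamma)\A_{(V,\scalar)}(I)U(\gamma)^{-1}=\A_{(V,\scalar)}(\dot{\gamma}I)$, I would invoke Remark \ref{rem:diff_covariance_primary_elems}: for \emph{primary} vectors the covariance formulas $(\parzero)$ and $(\parone)$ of Proposition \ref{prop:mob_covariance_qp} hold for all $\gamma\in\Diff^+(S^1)$ and $\gamma\in\Diff^+(S^1)^{(2)}$ respectively. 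Since $\beta_d(\gamma)$ and $\alpha_d(\gamma)$ map functions supported in $I$ to functions supported in $\dot{\gamma}I$, conjugation by $U(\gamma)$ carries the generating smeared vertex operators of $\A_{(V,\scalar)}(I)$ into those of $\A_{(V,\scalar)}(\dot{\gamma}I)$, giving one inclusion; applying the same to $\gamma^{-1}$ yields equality. One must take care that $\A_{(V,\scalar)}(I)$ is generated by primary (or at least quasi-primary) fields, which is guaranteed by Proposition \ref{prop:gen_by_qp}, so that covariance on generators propagates to the whole von Neumann algebra.

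The genuinely delicate point is the second diffeomorphism-covariance condition \eqref{diff_covar_graded_local}, which requires that $U(\gamma)$ act trivially on $\A_{(V,\scalar)}(I')$ whenever $\gamma\in\Diff(I)^{(\infty)}$ is localized in $I$. The natural strategy is: for $\gamma\in\Diff(I)^{(\infty)}$ one has $\dot{\gamma}$ acting as the identity on $I'$, so by Remark \ref{rem:diffeomorphisms_generated_by_exp} we may write $\gamma$ as a finite product of exponentials $\exp^{(2)}(tf)$ with $\operatorname{supp}f\subset I$. Then $U(\exp^{(2)}(tf))=e^{itY(\nu,f)}$ by \eqref{eq:repr_U_of_the_net_A_V}, and the smeared energy-momentum operator $Y(\nu,f)$ is (affiliated to) $\A_{(V,\scalar)}(I)$; by strong graded-locality and twisted Haag duality \eqref{eq:twisted_haag_duality} it commutes suitably with every element of $\A_{(V,\scalar)}(I')$, so that $e^{itY(\nu,f)}$ fixes $\A_{(V,\scalar)}(I')$ pointwise under conjugation. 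I expect the main obstacle here to be the careful functional-analytic justification that the \emph{unbounded} generator $Y(\nu,f)$ being affiliated to $\A_{(V,\scalar)}(I)$ actually implies that its unitary exponential commutes with all of $\A_{(V,\scalar)}(I')$ --- precisely the ``touchy business'' of strong commutativity of unbounded operators emphasized in the introduction. This is where strong graded-locality (rather than mere graded-locality of the smeared fields) is essential, and I would handle it by passing through the commutation of the generated von Neumann algebras via Proposition \ref{prop:commutation_von_neumann_algebras_generated}, exactly as the twisted locality structure permits, mirroring the corresponding argument in the local case of \cite{CKLW18}.
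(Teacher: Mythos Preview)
Your treatment of axioms \textbf{(A)}--\textbf{(E)} is correct and matches the paper. Your argument for \eqref{diff_covar_graded_local} is also essentially right and is in fact the first step of the paper's own proof: since $U(\exp^{(2)}(tf))=e^{itY(\nu,f)}$ and $Y(\nu,f)$ is affiliated to $\A_{(V,\scalar)}(I)$, one has $U(\gamma)\in\A_{(V,\scalar)}(I)$ for every $\gamma\in\Diff(I)^{(2)}$; because $\nu$ is even, $U(\gamma)$ commutes with $Z$, and strong graded-locality then gives commutation with $\A_{(V,\scalar)}(I')$.

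Your argument for \eqref{diff_covar}, however, has a real gap. You invoke Remark \ref{rem:diff_covariance_primary_elems}, which furnishes the full diffeomorphism covariance formula only for \emph{primary} vectors, but then appeal to Proposition \ref{prop:gen_by_qp}, which shows the local algebras are generated by \emph{quasi-primary} vectors. These do not coincide: a quasi-primary vector that is not primary (for instance $\nu$ itself) does not satisfy $U(\gamma)Y(a,f)U(\gamma)^*=Y(a,\iota_{d_a}(\gamma)f)$ for general $\gamma\in\Diff^+(S^1)^{(2)}$, so your step ``covariance on generators propagates to the whole von Neumann algebra'' does not go through. The paper acknowledges that this route \emph{can} be repaired (see Remark \ref{rem:diffeo_covariance_without_strong_graded-locality}), but only by invoking the later and much deeper Theorem \ref{theo:gen_by_quasi_primary} to show that primary vectors together with $\nu$ already generate the full net; Proposition \ref{prop:gen_by_qp} alone is not enough.

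The paper instead derives \eqref{diff_covar} by a short M\"obius-factorization argument that uses strong graded-locality directly and avoids primary covariance entirely. Having established $U(\Diff(I)^{(2)})\subset\A_{(V,\scalar)}(I)$, one takes $\gamma$ with $\dot\gamma I=I$ and, for each $J\supset\overline{I}$, chooses $\gamma^J\in\Diff(J)^{(2)}$ agreeing with $\gamma$ on $I$; then $\gamma^{-1}\gamma^J\in\Diff(I')^{(2)}$, so $U(\gamma^{-1}\gamma^J)\in\A_{(V,\scalar)}(I')\subseteq\A_{(V,\scalar)}(I)'$, whence $U(\gamma)\A_{(V,\scalar)}(I)U(\gamma)^*=U(\gamma^J)\A_{(V,\scalar)}(I)U(\gamma^J)^*\subseteq\A_{(V,\scalar)}(J)$. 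External continuity gives $U(\gamma)\A_{(V,\scalar)}(I)U(\gamma)^*\subseteq\A_{(V,\scalar)}(I)$, and an arbitrary $\gamma$ is handled by writing it as a M\"obius transformation (for which covariance is already known) composed with one preserving $I$.
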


\begin{proof}
We have already proved properties \textbf{(A)-(D)} and irreducibility as in Section \ref{subsection:graded-local_conformal_nets} by Proposition \ref{prop:mob_covariance_of_A_V}. Hence, it remains to prove properties \textbf{(E)} and \textbf{(F)}. 

It is easy to prove that $\Gamma\A_{(V,\scalar)}(I)\Gamma=\A_{(V,\scalar)}(I)$ for all $I\in\J$ and thus the graded-locality \textbf{(E)} is assured by Definition \ref{def:strongly_graded_local}. 

To prove the diffeomorphism covariance \textbf{(F)} of $\A_{(V,\scalar)}$, we could proceed as explained in Remark \ref{rem:diffeo_covariance_without_strong_graded-locality}. Note that that proof does not use the strong graded locality, but it requires further results such as Theorem \ref{theo:gen_by_quasi_primary} and Remark \ref{rem:diff_covariance_primary_elems}. Therefore, as a matter of convenience, we present here below a simpler proof, which instead makes use of strong graded locality.
This alternative proof is obtained by adapting the argument in the proof of \cite[Proposition 3.7 (b)]{Car04}, cf.\ also the proof of \cite[Theorem 33]{CKL08}. The adaptation is as follows. By Remark \ref{rem:diffeomorphisms_generated_by_exp} and Definition \ref{defin:the_net_A_V}, we have that, for any interval $I\in\J$, $U(\gamma)\in \A_{(V,\scalar)}(I)$ for all $\gamma\in \Diff(I)^{(2)}$. Now, fix an interval $I\in\J$ and let $\gamma\in\Diff^+(S^1)^{(2)}$ be such that $\gamma I=I$ (for the sake of readability we omit the upper dot on $\gamma$). For all $J\in \J$ containing $\overline{I}$, it is possible to find $\gamma^J\in\Diff(J)^{(2)}$ such that $\gamma^J\restriction_I=\gamma\restriction_I$ and $\gamma^{-1}\gamma^J\in \Diff(I')^{(2)}$. The latter implies that $U(\gamma^{-1}\gamma^J)\in \A_{(V,\scalar)}(I')\subseteq \A_{(V,\scalar)}(I)'$ by the argument above, the strong graded locality of $V$ and the fact that every operator $U(g)$ commutes with the operator $Z$. We then have that
\begin{equation}
	\begin{split}
		\A_{(V,\scalar)}(J)
		&\supseteq
		U(\gamma^J)\A_{(V,\scalar)}(I)U(\gamma^J)^* \\
		&=
		U(\gamma)U(\gamma^{-1}\gamma^J)
		\A_{(V,\scalar)}(I)U(\gamma^{-1}\gamma^J)^*U(\gamma)^* \\
		&=
		U(\gamma)\A_{(V,\scalar)}(I)
		U(\gamma^{-1}\gamma^J)
		U(\gamma^{-1}\gamma^J)^*U(\gamma)^* \\ 
		&=
		U(\gamma)\A_{(V,\scalar)}(I)U(\gamma)^*
		\,.
	\end{split}
\end{equation}
By the external continuity \eqref{eq:external_continuity}, we have that 
\begin{equation}
	\label{eq:proof_diffeo_covariance_one_direction}
	U(\gamma)\A_{(V,\scalar)}(I)U(\gamma)^*\subseteq 
	\A_{(V,\scalar)}(I)
	\qquad
	\forall
	\gamma\in\Diff^+(S^1)^{(2)}
	\,\,\,\mbox{ s.t. } \,\,\,
	\gamma I=I \,.
\end{equation}
Now, let $\gamma$ be an arbitrary diffeomorphism in $\Diff^+(S^1)^{(2)}$. We can always find $\widehat{\gamma}\in\Mob(S^1)^{(2)}$ such that $\widehat{\gamma}I=\gamma I$. Then we have that
\begin{equation}
	\begin{split}
		U(\gamma)\A_{(V,\scalar)}(I) U(\gamma)^*
		&=
		U(\widehat{\gamma})U(\widehat{\gamma}^{-1}\gamma) 
		\A_{(V,\scalar)}(I)
		U(\widehat{\gamma}^{-1}\gamma)^*
		U(\widehat{\gamma})^* \\
		&\subseteq
		U(\widehat{\gamma})
		\A_{(V,\scalar)}(I)
		U(\widehat{\gamma})^* \\
		&=
		\A_{(V,\scalar)}(\widehat{\gamma}I)
		=\A_{(V,\scalar)}(\gamma I)
		\,,
	\end{split}
\end{equation}
where we have used: \eqref{eq:proof_diffeo_covariance_one_direction} for the second step because $\widehat{\gamma}^{-1}\gamma I=I$; the M\"{o}bius covariance of $\A_{(V,\scalar)}$ for the second equality. By the arbitrariness of $I\in\J$, we can conclude that $\A_{(V,\scalar)}$ is diffeomorphism covariant.
\end{proof}

\begin{theo}  \label{theo:unicity_irr_graded-local_cn}
	Let $(V,\scalar_V)$ and $(W,\scalar_W)$ be two simple strongly graded-local unitary VOSAs. Then $V$ and $W$ are isomorphic VOSAs if and only if $\A_{(V,\scalar_V)}$ and $\A_{(W,\scalar_W)}$ are isomorphic graded-local conformal nets. In particular, if $\scalar$ and $\curlyscalar$ are two unitary structures on $V$, then $\A_{(V,\scalar)}$ and $\A_{(V,\curlyscalar)}$ are isomorphic. 
\end{theo}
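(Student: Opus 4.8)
The plan is to treat the two implications separately: the forward (``only if'') direction is a transport-of-structure argument, once a given VOSA isomorphism has been upgraded to a unitary one, while the backward (``if'') direction rests on the reconstruction of the VOSA from its net carried out in Section \ref{section:back}. The final ``in particular'' clause will then be a one-line consequence of the forward direction applied to the identity map.

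For the forward direction I would start from a VOSA isomorphism $\phi\colon V\to W$, which a priori need not be unitary, and first upgrade it. Pulling back the scalar product of $W$ along $\phi$, i.e.\ setting $\{a|b\}:=(\phi(a)|\phi(b))_W$, produces a second unitary structure on $V$ (with PCT operator $\phi^{-1}\theta_W\phi$), since $\phi$ respects all of the VOSA structure. As $V$ is simple, Proposition \ref{prop:uniqueness_unitary_structure}(i) supplies $h\in\Aut(V)$ with $\{a|b\}=(h(a)|h(b))_V$, so that $\psi:=\phi\circ h^{-1}$ is a \emph{unitary} VOSA isomorphism $V\to W$. Replacing $\phi$ by $\psi$, I may assume $(\phi(a)|\phi(b))_W=(a|b)_V$, whence $\phi$ extends to a unitary $\Phi\colon\mathcal{H}_{(V,\scalar_V)}\to\mathcal{H}_{(W,\scalar_W)}$ with $\Phi\Omega=\Omega$.

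Because $\phi$ respects the $(n)$-product and is parity-preserving, it intertwines the modes, $\Phi a_{(n)}\Phi^{-1}=\phi(a)_{(n)}$ on the respective cores, and, commuting with all $L_n$, it preserves the smooth domains $\mathcal{H}^\infty$. A direct computation on $V$ using \eqref{eq:def_smeared_vertex_operators} then gives $\Phi Y(a,f)\Phi^{-1}=Y(\phi(a),f)$ for $a\in V_\parzero$ and $\Phi Y(b,g)\Phi^{-1}=Y(\phi(b),g)$ for $b\in V_\parone$, for all admissible test functions. Since $\phi$ maps $V_\parzero$ onto $W_\parzero$ and $V_\parone$ onto $W_\parone$, these operators generate $\A_{(W,\scalar_W)}(I)$ as $a,b,f,g$ vary with support in $I$ (Definition \ref{defin:the_net_A_V}); as $\Phi$ is unitary it carries generators to generators, yielding $\Phi\A_{(V,\scalar_V)}(I)\Phi^*=\A_{(W,\scalar_W)}(I)$ for every $I\in\J$, i.e.\ the desired net isomorphism.

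For the backward direction I would invoke the reconstruction procedure of Section \ref{section:back}: a simple strongly graded-local VOSA is recovered, canonically and as a subspace of its Hilbert space, from the associated graded-local conformal net together with its vacuum and $L_0$-grading. A net isomorphism $\Phi$ preserves the vacuum and, by uniqueness of the diffeomorphism representation together with Proposition \ref{prop:automorphisms_and_symmetries}, intertwines the representations $U$ and hence $L_0$ and $\Gamma$; transporting the recovered data along $\Phi$ therefore identifies $V$ with $W$ as VOSAs. Finally, the ``in particular'' statement follows by applying the forward direction to $\mathrm{id}_V\colon(V,\scalar)\to(V,\curlyscalar)$, which is a VOSA isomorphism. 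The main obstacle is precisely the backward direction: it is not self-contained here and rests on the reconstruction theorem of Section \ref{section:back}, which is the genuinely hard input; the forward direction, by contrast, is a routine transport of structure once the unitarization via Proposition \ref{prop:uniqueness_unitary_structure} is in place.
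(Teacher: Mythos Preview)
Your forward direction matches the paper's argument essentially step for step: unitarize the given VOSA isomorphism via Proposition \ref{prop:uniqueness_unitary_structure}, extend to a Hilbert space unitary, and transport the smeared vertex operators. There is nothing to add there.

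For the backward direction, however, you and the paper take genuinely different routes. You defer to the reconstruction machinery of Section \ref{section:back} (the Fredenhagen--J\"or{\ss} fields and Theorem \ref{theo:back}), arguing that a net isomorphism transports the recovered VOSA data. This is legitimate, and in fact the paper itself remarks at the opening of Section \ref{section:back} that its content ``can be considered as a different proof of the `if' part'' of the present theorem. The paper, by contrast, gives a direct and self-contained argument at this point in the text: from the net isomorphism $\phi$ one gets $\phi L_n=L_n\phi$ for $n\in\{-1,0,1\}$ via \eqref{eq:uniqueness_mob_symmetries}, hence $\phi(V)=\phi(\mathcal{H}^{\mathrm{fin}})=W$; the conjugated fields $\phi Y_V(a,z)\phi^{-1}$ are then shown to be mutually local with all $Y_W(b,z)$ in the Wightman sense by graded locality of both nets and Proposition \ref{prop:commutation_von_neumann_algebras_generated}, upgraded to vertex-algebra locality by Proposition \ref{prop:wightman_locality}; Kac's uniqueness theorem then forces $\phi Y_V(a,z)\phi^{-1}=Y_W(\phi(a),z)$. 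Finally, $\phi(\nu^V)=\nu^W$ is obtained from Proposition \ref{prop:conformal_vector_bilinear_form}, not from intertwining the full diffeomorphism representation. Your approach is shorter to state but rests on the heavier later machinery; the paper's approach is logically earlier, and also isolates precisely which ingredients (Wightman locality $\leftrightarrow$ vertex locality, and the uniqueness of the conformal vector given $L_0$) are actually needed.
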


\begin{proof}
In the following, we use $V$ and $W$ as upper and lower indices to denote the structural mathematical objects related to the VOSAs $V$ and $W$ respectively.	

Let $\varphi$ be an isomorphism between $V$ and $W$. It is not difficult to check that $\curlyscalar_W:=(\varphi^{-1}(\cdot)|\varphi^{-1}(\cdot))_V$ with PCT operator $\varphi\theta_V \varphi^{-1}$ is a unitary structure for $W$. By Proposition \ref{prop:uniqueness_unitary_structure}, there exists $h\in\Aut(W)$	such that $\curlyscalar_W=(h(\cdot)|h(\cdot))_W$. This implies that $h \varphi$ is a unitary isomorphism between $(V,\scalar_V)$ and $(W,\scalar_W)$, and $h \varphi$ uniquely extends to a unitary operator $\phi$ between $\mathcal{H}_{(V,\scalar_V)}$ and $\mathcal{H}_{(W,\scalar_W)}$. Moreover, $\phi(\Omega^V)=\Omega^W$ and $\phi L_0^V=L_0^W\phi$. The latter implies that $\phi(\mathcal{H}_{(V,\scalar_V)}^\infty)=\mathcal{H}_{(W,\scalar_W)}^\infty$ and thus $\phi Y_V(a,f)\phi^{-1}=Y_W(\phi (a),f)$ for all $a\in V_\parzero\cup V_\parone$ and all $f$ in either $C^\infty(S^1)$ or $C_\chi^\infty(S^1)$ depending on $p(a)$. This implies that $\phi\A_{(V,\scalar_V)}(I)\phi^{-1}=\A_{(W,\scalar_W)}(I)$ for all $I\in\J$. To sum up, $\phi$ realizes an isomorphism between $\A_{(V,\scalar_V)}$ and $\A_{(W,\scalar_W)}$. In particular, we have that $\A_{(V,\scalar)}$ and $\A_{(V,\curlyscalar)}$ are isomorphic.
	
Vice versa, let $\phi:\mathcal{H}_{(V,\scalar_V)}\to \mathcal{H}_{(W,\scalar_W)}$ be an isomorphism between $\A_{(V,\scalar_V)}$ and $\A_{(W,\scalar_W)}$. Then $\phi(\Omega^V)=\Omega^W$ and as a consequence of \eqref{eq:uniqueness_mob_symmetries}, $\phi L_n^V=L_n^W\phi$ for all $n\in\{-1,0,1\}$. It follows that, $\phi(V)=\phi(\mathcal{H}_{(V,\scalar_V)}^{\textrm{fin}})=\mathcal{H}_{(W,\scalar_W)}^{\textrm{fin}}=W$, where $\mathcal{H}_{(V,\scalar_V)}^{\textrm{fin}}$ and $\mathcal{H}_{(W,\scalar_W)}^{\textrm{fin}}$ denote the subspaces of $\mathcal{H}_{(V,\scalar_V)}$ and $\mathcal{H}_{(W,\scalar_W)}$ respectively of finite energy vectors, i.e., linear combinations of eigenvectors of $L_0^V$ and $L_0^W$ respectively.
We also have that 
	$$
	\phi Y_V(a,z)\phi^{-1}(\Omega^W)=\phi Y_V(a,z)\Omega^V=\phi e^{zL_{-1}^V}a
	=e^{zL_{-1}^W}\phi(a)
	\qquad\forall a\in V \,.
	$$
Now, note that for any $a\in V$, the formal series $\phi Y_V(a,z)\phi^{-1}$ is a field on $W$. Moreover, for any $b\in W$, $\phi Y_V(a,z)\phi^{-1}$ and $Y_W(b,z)$ are mutually local in the Wightman sense thanks to the graded locality of $\A_{(V,\scalar_V)}$ and of $\A_{(W,\scalar_W)}$ with Proposition \ref{prop:commutation_von_neumann_algebras_generated}.
Thus, they are mutually local in the vertex superalgebra sense thanks to Proposition \ref{prop:wightman_locality}. By the uniqueness theorem for vertex superalgebras \cite[Theorem 4.4]{Kac01}, it follows that $\phi Y_V(a,z)\phi^{-1}=Y_W(\phi(a),z)$ for all $a\in V$, that is $\phi$ respects the $(n)$-product. 
As a consequence, we have that $\phi(\nu^V)$ is a conformal vector for $W$ and since $\phi(V_n)=W_n$ for all $n\in\half\Zpluseq$, we have that $(\phi(\nu^V))_{(1)}=\nu^W_{(1)}$. Moreover, $(\phi(\cdot),\phi(\cdot))_W$ is a non-degenerate invariant bilinear form on $W$ with respect to $\phi(\nu^V)$. By Proposition \ref{prop:conformal_vector_bilinear_form},  $\phi(\nu^V)=\nu^W$ and thus $\phi$ restricts to a VOSA isomorphism between $V$ and $W$. (Note also that as $\phi$ is unitary, it restricts to a unitary VOSA isomorphism.)
\end{proof}

\begin{defin}
$\A_V$ denotes the unique, up to isomorphism, irreducible graded-local conformal net arising from a simple strongly graded-local unitary VOSA $V$ as given by Theorem \ref{theo:diffeo_covariant_net_from_V} and Theorem \ref{theo:unicity_irr_graded-local_cn}.
\end{defin}

\begin{rem}
	If $(V,\scalar)$ is a (not necessarily simple) unitary VOSA, then it is a direct sum of unitary VOSAs of CFT type $\{(V^j,\scalar_j)\}_{j=1}^N$ by (i) of Proposition \ref{prop:decomposing_unitary_VOSAs}. Moreover, if $V$ is also strongly graded-local, then $\A_{(V,\scalar)}$ is the direct sum $\bigoplus_{j=1}^N\A_{V^j}$ of irreducible graded-local conformal nets and it is also independent, up to isomorphism, of the choice of the scalar product $\scalar$, cf.\ also \cite[Lemma 2.1]{KL04}.
\end{rem}

We also get that:
\begin{theo}  \label{theo:AutV_AutNet}
Let $V$ be a simple strongly graded-local unitary VOSA. Then $\Aut(\A_V)=\Aut_{\scalar}(V)$. If $\Aut(V)$ is compact, then $\Aut(\A_V)=\Aut_{\scalar}(V)=\Aut(V)$.
\end{theo}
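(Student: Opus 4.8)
The plan is to establish the equality $\Aut(\A_V)=\Aut_{\scalar}(V)$ by proving the two inclusions separately, in each case specializing to a single VOSA the argument already carried out in the proof of Theorem~\ref{theo:unicity_irr_graded-local_cn} (that is, setting $W=V$ and taking a net automorphism in place of a net isomorphism). Once this equality is in hand, the second assertion will follow at once from the characterization of $\Aut(V)$ in Theorem~\ref{theo:characterization_aut_group}.

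For the inclusion $\Aut_{\scalar}(V)\subseteq\Aut(\A_V)$, I would take $g\in\Aut_{\scalar}(V)$. Since $g$ preserves the scalar product it extends to a unitary operator $\phi$ on $\mathcal{H}$ fixing $\Omega$; being a VOSA automorphism, $g$ fixes $\nu$ and hence commutes with $L_0$, so $\phi$ preserves each eigenspace $V_n$ and the common core $\mathcal{H}^\infty$. As $g$ is parity-preserving and respects the $(n)$-product, one checks that $\phi Y(a,f)\phi^{-1}=Y(g(a),f)$ for all homogeneous $a$ and all admissible test functions $f$, exactly as in the first half of the proof of Theorem~\ref{theo:unicity_irr_graded-local_cn}. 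Since $g(a)$ lies in the same homogeneous subspace as $a$, the unitary $\phi$ permutes the generators of $\A_V(I)$, whence $\phi\A_V(I)\phi^{-1}=\A_V(I)$ for all $I\in\J$ and $\phi\in\Aut(\A_V)$.

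For the reverse inclusion $\Aut(\A_V)\subseteq\Aut_{\scalar}(V)$, I would take $\phi\in\Aut(\A_V)$. By Proposition~\ref{prop:automorphisms_and_symmetries}, $\phi$ commutes with $U(\gamma)$ for all $\gamma\in\Diff^+(S^1)^{(\infty)}$; in particular it commutes with $L_0$ and $L_{\pm1}$, so it preserves the finite-energy space, $\phi(V)=V$, and satisfies $\phi L_n\phi^{-1}=L_n$ for $n\in\{-1,0,1\}$. Now I would run the second half of the proof of Theorem~\ref{theo:unicity_irr_graded-local_cn}: from $\phi Y(a,z)\phi^{-1}\Omega=e^{zL_{-1}}\phi(a)$ together with the mutual Wightman locality of $\phi Y(a,z)\phi^{-1}$ with every $Y(b,z)$ — extracted from the graded locality of $\A_V$ via Proposition~\ref{prop:commutation_von_neumann_algebras_generated} and Proposition~\ref{prop:wightman_locality} — the uniqueness theorem for vertex superalgebras \cite[Theorem~4.4]{Kac01} yields $\phi Y(a,z)\phi^{-1}=Y(\phi(a),z)$, so $\phi$ respects the $(n)$-product. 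Since $\phi$ preserves the grading, $\phi(\nu)$ is a conformal vector with $(\phi(\nu))_{(1)}=L_0=\nu_{(1)}$ and carries the non-degenerate invariant bilinear form $(\phi(\cdot),\phi(\cdot))$, so $\phi(\nu)=\nu$ by Proposition~\ref{prop:conformal_vector_bilinear_form}. Thus $\phi$ restricts to a VOSA automorphism of $V$, unitary because $\phi$ is unitary on $\mathcal{H}$, and hence $\phi\in\Aut_{\scalar}(V)$.

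The main obstacle is exactly the passage, in the reverse inclusion, from the operator-algebraic graded locality of $\A_V$ to mutual locality of the formal series in the vertex-superalgebra sense, since commutativity of unbounded smeared operators on a common core does not by itself entail commutativity of the generated von Neumann algebras; this is resolved by the strong graded locality built into $\A_V$ together with Proposition~\ref{prop:commutation_von_neumann_algebras_generated} and Proposition~\ref{prop:wightman_locality}, precisely as in Theorem~\ref{theo:unicity_irr_graded-local_cn}. Finally, for the last assertion, if $\Aut(V)$ is compact then $\Aut_{\scalar}(V)=\Aut(V)$ by the equivalence (ii)$\Leftrightarrow$(iv) of Theorem~\ref{theo:characterization_aut_group} (applicable since $V$ is simple), and combining this with $\Aut(\A_V)=\Aut_{\scalar}(V)$ gives $\Aut(\A_V)=\Aut_{\scalar}(V)=\Aut(V)$.
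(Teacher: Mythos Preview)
Your proposal is correct and follows essentially the same approach as the paper, which instructs one to adapt \cite[Theorem 6.9]{CKLW18} while explicitly referencing the proof of Theorem~\ref{theo:unicity_irr_graded-local_cn}; the only cosmetic difference is that the paper points to Corollary~\ref{cor:when_vosa_automorphism_preservs_nu} for the step $\phi(\nu)=\nu$, whereas you invoke the underlying Proposition~\ref{prop:conformal_vector_bilinear_form} directly, exactly as in the proof of Theorem~\ref{theo:unicity_irr_graded-local_cn}.
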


\begin{proof}
The proof of \cite[Theorem 6.9]{CKLW18} can be used with the following prescriptions. We have to replace \cite[Proposition A.1, Corollary 4.11 and Theorem 5.21]{CKLW18} there
with Proposition \ref{prop:wightman_locality}, Corollary \ref{cor:when_vosa_automorphism_preservs_nu} and Theorem \ref{theo:characterization_aut_group} respectively, whenever the former occur, cf.\ the proof of Theorem \ref{theo:unicity_irr_graded-local_cn}.
\end{proof}

We end the current section with the following conjecture:
\begin{conj}  
	\label{conj:every_vosa_is_stongly_graded-local}
	Every simple unitary VOSA $V$ is strongly graded-local and thus it gives rise to a unique, up to isomorphism, irreducible graded-local conformal net $\A_V$.
\end{conj}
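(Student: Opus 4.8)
The plan is to reduce Conjecture \ref{conj:every_vosa_is_stongly_graded-local} to two successive problems and then to isolate the genuinely hard analytic core. By Definition \ref{def:strongly_graded_local}, strong graded locality consists of (i) energy-boundedness of $V$ and (ii) the operator-algebraic graded-locality inclusion $\A_{(V,\scalar)}(I')\subseteq Z\A_{(V,\scalar)}(I)'Z^*$ for all $I\in\J$. For step (i) I would first invoke Theorem \ref{theo:V_energy-bounded_iff_V_0_is}: since $V$ is simple, it is energy-bounded if and only if its even part $V_\parzero$ is an energy-bounded unitary subalgebra. Thus the energy-boundedness half of the conjecture collapses onto the corresponding assertion for the unitary VOA $V_\parzero$, so that nothing genuinely new is required in the fermionic direction and the problem is no harder than the bosonic energy-boundedness question already implicit in \cite{CKLW18}.

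For step (ii), assuming energy bounds, I would exploit the generation structure. By Example \ref{ex:even_simple_odd_irreducible} a simple VOSA is generated by $V_\parzero$ together with a single non-zero odd vector, and by Proposition \ref{prop:gen_by_qp} the algebras $\A_{(V,\scalar)}(I)$ are already generated by the smeared quasi-primary fields. I would then feed these generators into the strong graded-locality criterion of Theorem \ref{theo:gen_by_quasi_primary}, reducing the required inclusion to the mutual graded commutation, in the von Neumann sense, of finitely many families of smeared quasi-primary Bose and Fermi fields localized in disjoint intervals. The Fermi case is where the grading unitary $\Gamma$ and the twist $Z$ enter, and here one must check that the anticommutativity of the vertex operators on the common core $\mathcal{H}^\infty$ is compatible with the twisted commutant $Z\A_{(V,\scalar)}(I)'Z^*$; this is a bookkeeping refinement of the Bose argument rather than a conceptual novelty.

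The hard part — and the reason the statement remains a conjecture rather than a theorem — is precisely the passage from commutativity on the invariant core $\mathcal{H}^\infty$ to commutativity of the generated von Neumann algebras. As recalled after Proposition \ref{prop:mob_covariance_of_A_V}, Nelson's example forbids any automatic implication of this kind for unbounded operators. The established route, which I would pursue, is to upgrade weak commutativity to strong commutativity under the extra hypothesis of linear energy bounds, via a commutator-theorem argument of Driessler-Fr\"ohlich type, supplemented by the Bisognano-Wichmann property for smeared vertex operators acting on the vacuum established in Section \ref{appendix:action_dilation_subgroup}, which brings Tomita-Takesaki modular theory to bear on the affiliation of the fields to the net. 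The obstruction to carrying this through for an \emph{arbitrary} simple unitary VOSA is that one has no uniform control on the order of the energy bounds satisfied by a generating set: the currently available criteria apply model by model (as in the lattice and Moonshine cases treated later in this paper), and there is at present no argument producing suitably well-behaved generators, or bypassing them, in full generality. A general proof would therefore require either a new, bound-free mechanism forcing strong commutativity directly from VOSA locality, or a structural theorem guaranteeing linearly energy-bounded quasi-primary generators for every simple unitary VOSA.
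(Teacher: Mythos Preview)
The statement is a \emph{conjecture}, not a theorem: the paper offers no proof and explicitly leaves it open. Your proposal correctly recognises this and, rather than claiming a proof, gives an accurate diagnosis of the two subproblems (energy-boundedness via Theorem \ref{theo:V_energy-bounded_iff_V_0_is} and the twisted-commutant inclusion via Theorem \ref{theo:gen_by_quasi_primary}) together with the genuine analytic obstruction (Nelson's phenomenon, discussed in the paper after Proposition \ref{prop:mob_covariance_of_A_V}); this matches the paper's own framing of why strong graded locality must presently be verified model by model.
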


\section{A Bisognano-Wichmann property for smeared vertex operators}
\label{appendix:action_dilation_subgroup}

This whole section is dedicated to the proof of Theorem \ref{theo:delta_one_half}, which is crucial in the proof of Theorem \ref{theo:gen_by_quasi_primary} and in the development of the theory in Section \ref{section:back}. 

\begin{notat} \label{notations_conventions}
We identify the spaces of square-integrable functions $L^2(S^1)$ and $L^2([-\pi,\pi])$ through the isometric isomorphism $f(z)\mapsto f^\vee(x):=f(e^{ix})$ with $z=e^{ix}$ for some $x\in(-\pi,\pi]$.
With an abuse of notation, we still use $f$ in place of $f^\vee$. 
The convention for the convolution product $\ast$ for $L^2([-\pi,\pi])$-functions is
\begin{equation}
	(f\ast g)(x):=\frac{1}{2\pi}\int_{-\pi}^{\pi} f(y)g(x-y)\,\mathrm{d}y
	\qquad \forall x\in[-\pi,\pi] 
\end{equation} 
and thus for the Fourier coefficients as in \eqref{eq:fourier_coeff_S1}, we have that
\begin{equation}  \label{eq:convolution_fourier}
	\widehat{(f\ast g)}_n=\widehat{f}_n\widehat{g}_n 
	\qquad\forall n\in\frac12\Z \,.
\end{equation}
The convention for the Fourier transform for $L^2(\R)$-functions is
 \begin{equation}
 	\widehat{f}(p):=\frac{1}{\sqrt{2\pi}} \int_\R 
 	f(x)e^{-ipx}\,\mathrm{d}x  
 	\qquad \forall p\in\R \,.
 \end{equation}
where $\int_\R$ stands for $\int_{-\infty}^{+\infty}$.
Finally, $\norm{\cdot}_1$ will denote the usual norm on $L^1(\R)$.
\end{notat}

Preliminarily, we prove some facts about the relationship between the real and the complex picture.
To do that, recall the notation in \eqref{eq:test_funct_spaces_chi} and consider the dense subspace $C_c^\infty(\pSone)$ of $C^\infty(S^1)\cap C_\chi^\infty(S^1)=C^\infty(\pSone)$ of complex-valued functions with compact support in the punctured circle $\pSone$.
Moreover, let $C_c^\infty(\R)$ be the space of infinitely differentiable complex-valued functions with compact support in $\R$ respectively.
Then we define isomorphisms between $C_c^\infty(\pSone)$ and $C_c^\infty(\R)$ through the Cayley transform \eqref{eq:cayley_transform}, that is the diffeomorphism $C:S^1\backslash\left\{-1\right\}\longrightarrow\R$ defined by
$$
C(z):=2i\frac{1-z}{1+z} \,, \qquad C^{-1}(x)=\frac{1+\frac{i}{2}x}{1-\frac{i}{2}x}  \,.
$$
Then for all $d\in\half\Z$, there is an isomorphism given by:
\begin{equation}
	\begin{split}
		C_c^\infty(S^1\backslash\left\{-1\right\})  
		&\xleftrightarrow{\hspace{1cm}}
		C_c^\infty(\R) \\
		h(z) 
		&\xrightarrow{\hspace{1cm}}
		h^\R(x):=\left(1+\frac{x^2}{4}\right)^{d-1}h\left( C^{-1}(x)\right) \\
		\left(\frac{(1+z)^2}{4z}\right)^{d-1}h\left(C(z)\right) 
		=: h^\C(z)
		&\xleftarrow{\hspace{1cm}}
		h(x) \,.
	\end{split}
\end{equation}

\begin{rem}   \label{rem:nets_on_R_and_cover}
	We highlight that $C^\infty_c(\pSone)\cong C^\infty_c(\R)$ would have been a perfectly fine choice as test function space to define smeared vertex operators as in \eqref{eq:def_smeared_vertex_operators}. Accordingly, following the argument in Section \ref{section:construction_net}, we would have obtained a family of von Neumann algebras on $\R$ (instead of $S^1$) associated to every simple strongly graded-local unitary VOSA $V$. Of course, this family turns out to be the \textit{restriction} of $\A_V$ to a net on $\R$ in the meaning of \cite[Section 3.1]{CKL08}. Similarly, choosing the space of infinite differentiable complex-valued functions on the double cover $S^{1(2)}$ as test function space to define smeared vertex operators, it would have brought to the \textit{promotion} of $\A_V$ to a net on $S^{(1)2}$ in the meaning of \cite[Section 3.2]{CKL08}. See \cite[Section 3.3]{Gau21} for details.
\end{rem}

Therefore, we have the following result.

\begin{prop}  \label{prop:from_complex_to_real}
	Let $(V,\scalar)$ be an energy-bounded unitary VOSA. Let $a$ and $b$ be  quasi-primary vectors in $V$ and $f,g\in C_c^\infty(S^1\backslash\left\{-1\right\})$. We have the following formula for the ``two-point function'':
	\begin{equation}  \label{eq:2-point_funct}
		(Y(a,f)\Omega| Y(b,g)\Omega) = 
		\frac{(a| b)\delta_{d_a,d_b}}{2\pi(2d_a-1)!}
		\int_0^{+\infty} \overline{\widehat{f^\R}(-p)}\widehat{g^\R}(-p)p^{2d_a-1}\mathrm{d}p \,.
	\end{equation}
\end{prop}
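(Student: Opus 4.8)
The plan is to reduce the statement to a purely scalar identity and then prove that identity by transporting everything to the real line through the Cayley transform.

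First I would insert the creation formula $Y(a,z)\Omega=e^{zL_{-1}}a$ into the definition of the smeared operators. For a homogeneous quasi-primary $a$ of weight $d_a$ this gives
\[
Y(a,f)\Omega=\sum_{k\in\Zpluseq}\widehat f_{-d_a-k}\,\frac{L_{-1}^k a}{k!},
\]
and analogously for $Y(b,g)\Omega$; both series converge absolutely in $\mathcal H$ since $f,g$ are smooth (rapidly decreasing Fourier coefficients) and $V$ is energy-bounded. I would then evaluate the inner product term by term. The algebraic heart is the $\mathfrak{sl}(2,\C)$ identity $L_1L_{-1}^k c=k(2d_c+k-1)L_{-1}^{k-1}c$, valid whenever $L_1c=0$ and $c$ has weight $d_c$; it follows from $[L_1,L_{-1}]=2L_0$. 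Iterating it and using the adjoint relation $(L_na|b)=(a|L_{-n}b)$ from unitary M\"obius symmetry together with $L_1a=L_1b=0$ yields
\[
(L_{-1}^k a\mid L_{-1}^l b)=\delta_{k,l}\,\delta_{d_a,d_b}\,k!\,\frac{(2d_a+k-1)!}{(2d_a-1)!}\,(a\mid b).
\]
The vanishing of the off-diagonal terms is what forces $d_a=d_b$ and produces the Kronecker delta. Writing $d:=d_a=d_b$, the two-point function collapses to
\[
(Y(a,f)\Omega\mid Y(b,g)\Omega)=(a\mid b)\,\delta_{d_a,d_b}\sum_{k\in\Zpluseq}\binom{2d+k-1}{k}\overline{\widehat f_{-d-k}}\;\widehat g_{-d-k},
\]
so it remains to prove the scalar identity obtained by equating this sum with the right-hand side of \eqref{eq:2-point_funct} divided by $(a\mid b)\delta_{d_a,d_b}$ (if $d_a\neq d_b$ both sides vanish).

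For the scalar identity I would exploit the generating function $\sum_{k\in\Zpluseq}\binom{2d+k-1}{k}\zeta^k=(1-\zeta)^{-2d}$. After a radial regularization (replacing $e^{ix},e^{iy}$ by $re^{ix},re^{iy}$ with $r\to1^-$, so that $|\zeta|=|e^{i(y-x)}|<1$), the mode sum becomes
\[
\frac{1}{(2\pi)^2}\int_{-\pi}^{\pi}\!\int_{-\pi}^{\pi}\overline{f(e^{ix})}\,g(e^{iy})\,e^{-idx}e^{idy}\,(1-e^{i(y-x)})^{-2d}\,\mathrm{d}x\,\mathrm{d}y.
\]
I would then substitute the Cayley transform $e^{ix}=C^{-1}(u)$, $e^{iy}=C^{-1}(v)$. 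Using the elementary identities $1+\tfrac{u^2}{4}=\tfrac{4z}{(1+z)^2}$ (so that $\mathrm{d}x=(1+\tfrac{u^2}{4})^{-1}\mathrm{d}u$) and $z-w=\tfrac{4i(u-v)}{(2-iu)(2-iv)}$, one checks that all the rational factors in $(2\pm iu)$ and $(2\pm iv)$ coming from the weight factors defining $f^\R,g^\R$, from the Jacobians, and from the numerator of the kernel cancel identically, leaving only a constant $i^{-2d}$. The double integral thus reduces to
\[
\frac{i^{-2d}}{(2\pi)^2}\int_{\R}\!\int_{\R}\overline{f^\R(u)}\,g^\R(v)\,(u-v)^{-2d}\,\mathrm{d}u\,\mathrm{d}v,
\]
where the radial regularization has turned $(u-v)^{-2d}$ into the boundary value $(u-v-i0)^{-2d}$. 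Finally I would pass to Fourier space on $\R$: by the Gelfand--Shilov formula the distribution $(x-i0)^{-2d}$ has Fourier transform supported on a single half-line, with density proportional to $p^{2d-1}/(2d-1)!$. Applying Parseval to this convolution kernel and tracking the constants, the phases $i^{\pm2d}$ cancel and one is left precisely with
\[
\frac{1}{2\pi(2d-1)!}\int_0^{+\infty}\overline{\widehat{f^\R}(-p)}\,\widehat{g^\R}(-p)\,p^{2d-1}\,\mathrm{d}p,
\]
which is \eqref{eq:2-point_funct}. Here the hypothesis $f,g\in C_c^\infty(\pSone)$ is essential: it guarantees that $f^\R,g^\R$ have compact support on $\R$, so their Fourier transforms are Schwartz and the interchanges of sum, integral and Fourier transform are legitimate.

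The main obstacle is precisely the regularization bookkeeping linking the two pictures. The mode sum is an honest absolutely convergent series, but rewriting it as the singular double integral and, above all, identifying the correct sign of the $i0$ prescription that survives the Cayley change of variables is the delicate point: it is the positivity of the energy (the fact that only the modes $a_{-d-k}\Omega$ with $k\geq0$ occur) that dictates this sign and hence selects $\int_0^{+\infty}$ rather than $\int_{-\infty}^{0}$. Everything else is the routine, if lengthy, verification of the cancellation of the conformal-weight factors under the Cayley transform.
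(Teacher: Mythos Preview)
Your proposal is correct and follows essentially the same four-step architecture as the paper: compute the mode sum via $Y(a,z)\Omega=e^{zL_{-1}}a$ and the $\mathfrak{sl}(2,\C)$ identity (the paper's step \textbf{[A]}); resum the binomial series into the singular kernel $(z-w)^{-2d}$ with a regularization (step \textbf{[B]}); pull through the Cayley transform, where the conformal-weight factors cancel (step \textbf{[C]}); and finish with Fourier analysis on the line (step \textbf{[D]}).

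The only substantive difference is in implementation. The paper keeps an explicit regulator $(z-(1-\epsilon)w)^{-2d}$ throughout, tracks it across the Cayley transform, and in step \textbf{[D]} computes the one-dimensional Fourier transform by Jordan's lemma and residues, taking the $\epsilon\to0^+$ limit at the end. You instead pass immediately to the boundary-value distribution $(u-v-i0)^{-2d}$ and invoke the Gelfand--Shilov formula for its Fourier transform. Both routes are valid; yours is more compact but relies on a quoted distributional identity, while the paper's is self-contained. One small slip: your description of the radial regularization (``replacing $e^{ix},e^{iy}$ by $re^{ix},re^{iy}$'') does not actually make $|\zeta|<1$ since the ratio is unchanged; you need an asymmetric deformation such as the paper's $w\mapsto(1-\epsilon)w$, but this is a cosmetic fix.
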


\begin{proof}[Proof of Proposition \ref{prop:from_complex_to_real}]
	
	The proof is given by the following sequence of equalities \textbf{[A]}-\textbf{[D]}, which we prove separately below:
	\begin{eqnarray}
		(Y(a,f)\Omega | Y(b,g)\Omega) 
		&\overset{\textbf{[A]}}{=}& 
		(a| b)\delta_{d_a,d_b}\sum_{\substack{n\in\Z-d_a \\ n\leq -d_a}}
		\left(\begin{array}{c}
			d_a-n-1\\ -n-d_a
		\end{array}\right) \overline{\widehat{f}_n} \widehat{g}_n \\
		&\overset{\textbf{[B]}}{=}&
		(a| b)\delta_{d_a,d_b}\lim_{\epsilon\rightarrow 0^+}\oint_{S^1}\oint_{S^1}\frac{z^{d_a}w^{d_a}}{(z-(1-\epsilon)w)^{2d_a}}g(w)\overline{f(z)}\frac{\mathrm{d}w}{2\pi i w}\frac{\mathrm{d}z}{2\pi i z} \\
		&\overset{\textbf{[C]}}{=}& 
		\frac{(a| b)\delta_{d_a,d_b}}{(2\pi)^2i^{2d_a}}\lim_{\epsilon\rightarrow 0^+}\int_\R\int_\R\frac{g^\R(y)\overline{f^\R(x)}}{[(1-\frac{\epsilon}{2}-i\frac{\epsilon}{4}y)x-y(1-\frac{\epsilon}{2})-i\epsilon]^{2d_a}}\,\mathrm{d}y\,\mathrm{d}x \\
		&\overset{\textbf{[D]}}{=}& 
		\frac{(a| b)\delta_{d_a,d_b}}{2\pi(2d_a-1)!}\int_0^{+\infty}\overline{\widehat{f^\R}(-p)}\widehat{g^\R}(-p)p^{2d_a-1}\,\mathrm{d}p \,.
	\end{eqnarray}
	
	\textit{Proof of} \textbf{[A]}. From \cite[Eq.\  (4.1.2)]{Kac01}, we have that $Y(a,z)\Omega=e^{zL_{-1}}a$ for all $a\in V$, which means that
	$$
	\sum_{n\in\Z-d_a}a_n\Omega z^{-n-d_a}=\sum_{l=0}^{+\infty}\frac{(L_{-1})^la}{l!}z^l= \sum_{\substack{n\in\Z-d_a \\ n\leq -d_a}} \frac{(L_{-1})^{-n-d_a}a}{(-n-d_a)!}z^{-n-d_a} \,.
	$$
	This implies that for any $a\in V$, we have that 
	$$
	a^{-n-d_a}:=a_n\Omega=\left\{\begin{array}{lr}
		\frac{(L_{-1})^{-n-d_a}a}{(-n-d_a)!} &  n\leq-d_a \\
		0 & n>-d_a
	\end{array}\right. \,.
	$$
	It follows that
	\begin{equation}
		\begin{split}
		(Y(a,f)\Omega | Y(b,g)\Omega) &=
		(\sum_{n\in\Z-d_a}\widehat{f}_na_n\Omega | \sum_{m\in\Z-d_b}\widehat{g}_mb_m\Omega) \\
		&= \sum_{n\in\Z-d_a}\sum_{m\in\Z-d_b} \overline{\widehat{f}_n}\widehat{g}_m (a_n\Omega | b_m\Omega) \\
		&= \sum_{\substack{n\in\Z-d_a \\ n\leq -d_a}} \sum_{\substack{m\in\Z-d_b \\ m\leq -d_b}} \overline{\widehat{f}_n}\widehat{g}_m (a^{-n-d_a}| b^{-m-d_b}) \\
		&= (a| b)\delta_{d_a,d_b}\sum_{\substack{n\in\Z-d_a \\ n\leq -d_a}} \left(\begin{array}{c}
			d_a-n-1\\ -n-d_a
		\end{array}\right) \overline{\widehat{{f}}_n} \widehat{g}_n
		\end{split}
	\end{equation} 
	where we have used \eqref{eq:L_n_adj} and the formula $L_1c^{-m-d_c}=(d_c-m-1)b^{-m-d_c-1}$ for all quasi-primary $c\in V$ and all $m\in\Z-d_c$, which can be easily deduced by an induction argument, for the last equality.
	
	\textit{Proof of} \textbf{[B]}.
	For all $1>\epsilon>0$ and $d_a\in\half\Zplus$, consider the integral 
	\begin{equation} \label{integral_[B]}
		\oint_{S^1}\oint_{S^1}\frac{z^{d_a}w^{d_a}}{(z-(1-\epsilon)w)^{2d_a}}g(w)\overline{f(z)} \, \frac{\mathrm{d}w}{2\pi i w}\frac{\mathrm{d}z}{2\pi i z} \,.
	\end{equation}
	where, for $d_a\in\Zplus-\half$, the integrand has been extended to zero to $S^1\times S^1$ thanks to the smoothness and the compact support of $f$ and $g$.
	By the change of variables $e^{ix}:=z$ and $e^{iy}:=w$ with $x,y\in[-\pi,\pi]$, we rewrite \eqref{integral_[B]} as
	\begin{equation}   \label{int_[B]_change_var}
		\int_{-\pi}^\pi\int_{-\pi}^\pi\frac{e^{i(x-y)d_a}}{(e^{i(x-y)}-(1-\epsilon))^{2d_a}}g(e^{iy})\overline{f(e^{ix})} \, \frac{\mathrm{d}y}{2\pi}\frac{\mathrm{d}x}{2\pi} \,.
	\end{equation}
	For all $1>\epsilon>0$ and $d_a\in\half\Zplus$, define the following functions in $L^2(S^1)\cong L^2([-\pi,\pi])$
	$$
	h_{\epsilon,d_a}(z):=\frac{z^{d_a}}{(z-(1-\epsilon))^{2d_a}} \,.
	$$
	Recall that $\ast$ denotes the convolution between functions in $L^2([-\pi,\pi])$. Then we can rewrite the integral \eqref{int_[B]_change_var} as
	\begin{equation} \label{int_[B]_parseval}
		\int_{-\pi}^\pi (g\ast h_{\epsilon,d_a})(x)\overline{f(x)} \, \frac{\mathrm{d}x}{2\pi} 
		= \sum_{n\in\Z-d_a} \widehat{(g\ast h_{\epsilon,d_a})}_n\overline{\widehat{f}_n} 
		= \sum_{n\in\Z-d_a} \widehat{g}_n \widehat{(h_{\epsilon,d_a})}_n\overline{\widehat{f}_n} 
	\end{equation}
	where we have used Parseval's Theorem \cite[(6) of Section 4.26]{Rud87} first and the property \eqref{eq:convolution_fourier} of the convolution after. 
	It remains to calculate the Fourier coefficients of the function $h_{\epsilon,d_a}$, which are
	\begin{equation}   \label{eq:fourier_coeff_h_epsilon_d_a}
		\begin{split}  
			\widehat{(h_{\epsilon,d_a})}_n 
			&= \oint_{S^1}
			 \frac{z^{d_a-n-1}}{(z-(1-\epsilon))^{2d_a}}\frac{\mathrm{d}z}{2\pi i} \\
			&=
			\left\{\begin{array}{lr}
				\mathrm{Res}(h_{\epsilon,d_a} z^{-n-1},1-\epsilon) & n<d_a \\
				\mathrm{Res}(h_{\epsilon,d_a} z^{-n-1},0)+\mathrm{Res}(h_{\epsilon,d_a} z^{-n-1},1-\epsilon) & n\geq d_a
			\end{array}\right.
		\quad \forall n\in\Z-d_a \,.
		\end{split}
	\end{equation}
	Next, we calculate that
	\begin{equation}  \label{res_1-epsilon}
		\begin{split}
			\mathrm{Res}(h_{\epsilon,d_a} z^{-n-1} &, 1-\epsilon) 
			= 
			\frac{1}{(2d_a-1)!}\lim_{z\rightarrow 1-\epsilon}\left[\frac{\mathrm{d}^{2d_a-1}}{\mathrm{d}z^{2d_a-1}} \left(z^{d_a-n-1}\right)\right] \\
			&= 
			\left\{\begin{array}{lr}
				\left(\begin{array}{c}
					d_a-n-1 \\ -n-d_a
				\end{array}\right)
				(1-\epsilon)^{-n-d_a} & n\leq-d_a \\
				0 & -d_a<n<d_a \\
				\left(\begin{array}{c}
					d_a+n-1 \\ n-d_a 
				\end{array}\right) 
				(-1)^{2d_a-1}(1-\epsilon)^{-n-d_a} & n\geq d_a 
			\end{array}\right.  
		\end{split}
	\end{equation}
	whereas, for $n\geq d_a$, we have that
	\begin{equation}   \label{res_0}
		\begin{split}
			\mathrm{Res}(h_{\epsilon,d_a} z^{-n-1}, 0) 
			&= 
			\frac{1}{(n-d_a)!}\lim_{z\rightarrow 0}\left[\frac{\mathrm{d}^{n-d_a}}{\mathrm{d}z^{n-d_a}} \left(\frac{1}{(z-(1-\epsilon))^{2d_a}}\right)\right] \\
			&= (-1)^{2d_a}(1-\epsilon)^{-n-d_a}
			\left(\begin{array}{c}
				n+d_a-1 \\ n-d_a 
			\end{array}\right).
		\end{split}
	\end{equation}
	From \eqref{res_1-epsilon} and \eqref{res_0}, we obtain that
	\begin{equation}   \label{int_[B]_h_epsilon}
		\widehat{(h_{\epsilon,d_a})}_n=
		\left\{\begin{array}{lr}
			\left(\begin{array}{c}
				d_a-n-1 \\ -n-d_a
			\end{array}\right)
			(1-\epsilon)^{-n-d_a} & n\leq-d_a \\
			0  & n> -d_a 
		\end{array}\right.
	\end{equation}
	Putting \eqref{int_[B]_h_epsilon} in \eqref{int_[B]_parseval}, we have that the right hand side of equality \textbf{[B]} is equal to
	\begin{equation}  \label{int_[B]_sum}
		(a| b)\delta_{d_a,d_b}\lim_{\epsilon\rightarrow 0^+}
		\sum_{\substack{n\in\Z-d_a \\ n\leq -d_a}}(1-\epsilon)^{-n-d_a}
		\left(\begin{array}{c}
			d_a-n-1 \\ -n-d_a 
		\end{array}\right) \overline{\widehat{f}_n}\widehat{g}_n \,.
	\end{equation}
	Therefore, noting that the binomial product in \eqref{int_[B]_sum} is a polynomial in the variable $n$ with degree $2d_a-1$ and that $\widehat{f}_n$ and $\widehat{g}_n$ are rapidly decaying, we can swap the limit with the series in \eqref{int_[B]_sum} to get the proof of equality \textbf{[B]}.
	
	\textit{Proof of} \textbf{[C]}. Just apply the change of variables given by the Cayley transform, that is $x:=C(z)$ and $y:=C(w)$.
	
	\textit{Proof of} \textbf{[D]}. Fix $d_a\in\half\Zplus$. For all $0<\epsilon<1$ and all $y\in\R$, set the $L^2(\R)$-function
	$$
	q_{\epsilon,y}(x):=\frac{
	\beta_\epsilon(y)}{[x-\alpha_\epsilon(y)]^{2d_a}}
	\qquad\forall x\in\R
	$$
	where
	$$
	\alpha_\epsilon(y):=\frac{y(1-\frac{\epsilon}{2})+i\epsilon}{1-\frac{\epsilon}{2}-i\frac{\epsilon}{4}y} 
	\qquad\mbox{ and }\qquad
	\beta_\epsilon(y):=\left(1-\frac{\epsilon}{2}-i\frac{\epsilon}{4}y\right)^{-2d_a}\,.
	$$
	Now we can use the Plancherel's Theorem, see \cite[Theorem IX.6]{RS75}, so that
	\begin{equation}  \label{[D]:plancherel_thrm}
		\int_\R q_{\epsilon,y}(x)\overline{f^\R(x)} \, \mathrm{d}x
		= \int_\R \widehat{q_{\epsilon,y}}(p)\overline{\widehat{f^\R}(p)} \,\mathrm{d}p \,.
	\end{equation}
	To calculate $\widehat{q_{\epsilon,y}}$, note that $\mathrm{Im}(\alpha_\epsilon(y))>0$ for all $0<\epsilon<1$ and all $y\in\R$. Thus, we can use the well-known procedure by Jordan's Lemma and Residue Theorem (see e.g.\ \cite[Lemma 4.2.2]{ AF03}), to obtain that
	\begin{equation} \label{[D]:res_q}
		\widehat{q_{\epsilon,y}}(p)=
		\left\{\begin{array}{lr}
			\sqrt{2\pi}i\mathrm{Res}(q_{\epsilon,y}e^{-ipz},\alpha_\epsilon(y)) & p<0 \\
			0 & p>0
		\end{array}\right.
	\end{equation}
	where we have that for all $p<0$,
	\begin{equation}  \label{[D]:res_alpha_epsilon_y}
		\begin{split}
			\mathrm{Res}(q_{\epsilon,y}e^{-ipz},\alpha_\epsilon(y))
			&=
			\frac{\beta_\epsilon(y)}{(2d_a-1)!}\lim_{z\rightarrow\alpha_\epsilon(y)}\left[
			\frac{\mathrm{d}^{2d_a-1}}{\mathrm{d}z^{2d_a-1}}(e^{-ipz}) \right] \\
			&=
			\beta_\epsilon(y)
			\frac{(-ip)^{2d_a-1}e^{-ip\alpha_\epsilon(y)}}{(2d_a-1)!} \,.
		\end{split}
	\end{equation}
	Therefore, using \eqref{[D]:res_q} and \eqref{[D]:res_alpha_epsilon_y} in \eqref{[D]:plancherel_thrm}, we have that
	$$
	\int_\R q_{\epsilon,y}(x)\overline{f^\R(x)} \, \mathrm{d}x
	= \frac{(-i)^{2d_a+2}\sqrt{2\pi}}{(2d_a-1)!}
	\int_{-\infty}^0 \beta_\epsilon(y)
	e^{-ip\alpha_\epsilon(y)}\overline{\widehat{f^\R}(p)}p^{2d_a-1} \,\mathrm{d}p \,.
	$$
	Thus, the left hand side of \textbf{[D]} is equal to
	\begin{equation} \label{[D]:int_plancherel_thrm}
		\frac{(a| b)\delta_{d_a,d_b}(-1)^{2d_a+1}}{(2d_a-1)!(2\pi)^{3/2}}
		\lim_{\epsilon\rightarrow 0^+}
		\int_\R\int_{-\infty}^0  
		e^{-ip\alpha_\epsilon(y)}p^{2d_a-1} \overline{\widehat{f^\R}(p)}\,\mathrm{d}p \, \beta_\epsilon(y) g^\R(y)\,\mathrm{d}y \,.
	\end{equation}
	We can swap the limit with the integrals in \eqref{[D]:int_plancherel_thrm} because $f^\R, g^\R$ have compact support in $\R$ and $\abs{e^{-ip\alpha_\epsilon(y)}}\leq 1$ for all $0<\epsilon<1$, all $y\in\R$ and all $p\in(-\infty, 0)$. Moreover, for all $y\in\R$, $\alpha_\epsilon(y)\rightarrow y$ and $\beta_\epsilon(y)\rightarrow 1$ as $\epsilon\rightarrow 0^+$. Therefore, \eqref{[D]:int_plancherel_thrm} is equal to
	\begin{equation} 
		\frac{(a| b)\delta_{d_a,d_b}(-1)^{2d_a+1}}{2\pi(2d_a-1)!}
		\int_{-\infty}^0 
		\overline{\widehat{f^\R}(p)} \widehat{g^\R}(p) p^{2d_a-1}\,\mathrm{d}p   
	\end{equation}
	which completes the proof after exchanging the variable $p$ with $-p$.
\end{proof}

For all $d\in\half\Zplus$ and all $\gamma\in\mathrm{M\ddot{o}b}(S^1)$, we rewrite the map in \eqref{eq:family_cont_ops_diff} as:
\begin{equation}   \label{eq:def_action_mob_function}
	(\beta_d(\gamma)f)(z)=
	\left[\frac{\mathrm{d}\gamma}{\mathrm{d}z}(\gamma^{-1}(z))\frac{\gamma^{-1}(z)}{z}\right]^{d-1}f(\gamma^{-1}(z))
	\qquad \forall f\in C^\infty(S^1) \,.
\end{equation}  
Note that every $\beta_d(\gamma)$ preserves the subspace $C_c^\infty(\pSone)$.
Moreover, if for all $d\in\half\Zplus$, $\alpha_d$ is as in \eqref{eq:family_cont_ops_diff^2}, then $\alpha_d(\gamma)(f)=\beta_d(\dot{\gamma})(f)$ for all $\gamma\in\Mob(S^1)^{(2)}$ and all $f\in C^\infty_c(\pSone)$, cf.\ Remark \ref{rem:nets_on_R_and_cover}.
Now, a dilation $\delta(\lambda)\in\mathrm{M\ddot{o}b}(S^1)$ of parameter $\lambda\in\R$, see \eqref{eq:dilation_action_circle}, is given by the formulae: for all $z\in S^1$,
\begin{equation} \label{eq:def_dilatation}
	\delta(\lambda)(z):=\frac{z\cosh(\lambda/2)-\sinh(\lambda/2)}{-z\sinh(\lambda/2)+\cosh(\lambda/2)}
	\quad\mbox{ and }\quad
	\delta(\lambda)^{-1}(z)=\frac{z\cosh(\lambda/2)+\sinh(\lambda/2)}{z\sinh(\lambda/2)+\cosh(\lambda/2)}  \,.
\end{equation}
Note that every dilation $\delta(\lambda)$ preservers the point $-1\in S^1$ and thus it preserves $\pSone$ too.
Moreover, a straightforward calculation give us that
\begin{equation}
	(\beta_d(\delta(\lambda))f)(z)=
	\left[\frac{1+z^2}{2z}\sinh(\lambda)+\cosh(\lambda)\right]^{d-1}f(\delta(\lambda)^{-1}(z))
\end{equation}
for all $d\in\Zplus$, all $\lambda\in\R$, all $f\in C^\infty(S^1)$ and all $z\in S^1$.
Thus, we can state the following:

\begin{cor}    \label{cor:2-point_funct_dilatation}
	Let $(V,\scalar)$ be an energy-bounded unitary VOSA. Let $a$ and $b$ be quasi-primary vectors in $V$ and $f,g\in C_c^\infty(S^1\backslash\left\{-1\right\})$. Then for all $\lambda\in\R$,
	\begin{equation}  
		(Y(a,\beta_{d_a}(\delta(\lambda))f)\Omega| Y(b,g)\Omega)= 
		\frac{(a| b)\delta_{d_a,d_b}}{2\pi(2d_a-1)!}
		\int_0^{+\infty} \overline{e^{\lambda d_a}\widehat{f^\R}(-e^\lambda p)}\widehat{g^\R}(-p)p^{2d_a-1}\mathrm{d}p  \,.
	\end{equation}
\end{cor}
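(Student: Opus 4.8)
The plan is to deduce the statement directly from Proposition~\ref{prop:from_complex_to_real}, applied not to $f$ but to $\beta_{d_a}(\delta(\lambda))f$. This substitution is legitimate: each dilation $\delta(\lambda)$ fixes the point $-1\in S^1$, so it preserves the punctured circle $\pSone$, and $\beta_{d_a}(\delta(\lambda))$ maps $C_c^\infty(\pSone)$ into itself; hence $\beta_{d_a}(\delta(\lambda))f$ is an admissible test function for the two-point formula \eqref{eq:2-point_funct}. Substituting it there, the whole statement reduces to identifying the Fourier transform $\widehat{(\beta_{d_a}(\delta(\lambda))f)^\R}(-p)$, and the claim becomes the single identity
$$
\widehat{(\beta_{d_a}(\delta(\lambda))f)^\R}(-p)=e^{\lambda d_a}\,\widehat{f^\R}(-e^\lambda p)
\qquad\forall p\in\R\,.
$$

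First I would pass to the real picture and establish the intertwining identity
$$
(\beta_{d_a}(\delta(\lambda))f)^\R(x)=e^{(d_a-1)\lambda}\,f^\R(e^{-\lambda}x)
\qquad\forall x\in\R\,,
$$
which says that, transported through the Cayley transform, the weight-$d_a$ circle action of $\delta(\lambda)$ becomes the weight-$d_a$ action of the real dilation $x\mapsto e^\lambda x$. This is the heart of the argument and I would verify it by direct substitution. Writing $z=C^{-1}(x)$ and setting $w:=\tfrac{i}{2}x$ one computes $\tfrac{1+z^2}{2z}=\tfrac{1-x^2/4}{1+x^2/4}$, so that the factor in the explicit formula for $\beta_{d_a}(\delta(\lambda))$ recalled just before the statement simplifies to
$$
\frac{1+z^2}{2z}\sinh\lambda+\cosh\lambda
=\frac{e^\lambda+\tfrac{x^2}{4}e^{-\lambda}}{1+x^2/4}\,.
$$
Combining this with $\delta(\lambda)^{-1}(C^{-1}(x))=C^{-1}(e^{-\lambda}x)$ (a consequence of $C\delta(\lambda)C^{-1}(x)=e^\lambda x$, noted after \eqref{eq:cayley_transform}) and with the definitions of $f^\R$ and of the map $(\cdot)^\R$, all the weight factors $(1+x^2/4)^{d_a-1}$ and $(1+e^{-2\lambda}x^2/4)^{d_a-1}$ cancel and leave precisely $e^{(d_a-1)\lambda}$.

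Finally I would take the Fourier transform of the intertwining identity. With the convention $\widehat{\phi}(p)=\tfrac{1}{\sqrt{2\pi}}\int_\R\phi(x)e^{-ipx}\,\mathrm{d}x$, the scaling rule gives $\widehat{f^\R(e^{-\lambda}\,\cdot\,)}(p)=e^\lambda\widehat{f^\R}(e^\lambda p)$, whence $\widehat{(\beta_{d_a}(\delta(\lambda))f)^\R}(p)=e^{d_a\lambda}\widehat{f^\R}(e^\lambda p)$; evaluating at $-p$ yields the displayed identity, and inserting it into \eqref{eq:2-point_funct} produces the asserted formula (note $\lambda,d_a\in\R$, so $\overline{e^{\lambda d_a}}=e^{\lambda d_a}$). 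The only delicate point is the bookkeeping in the second step: one must keep track of three distinct weight factors --- the one built into the map $f\mapsto f^\R$, the Jacobian factor in the $\beta_{d_a}$-action, and the one implicit in rewriting $f(C^{-1}(e^{-\lambda}x))$ in terms of $f^\R$ --- and check that they telescope exactly to $e^{(d_a-1)\lambda}$ with no residual $x$-dependence.
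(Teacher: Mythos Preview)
Your proof is correct and follows essentially the same approach as the paper's own proof: apply Proposition~\ref{prop:from_complex_to_real} to $\beta_{d_a}(\delta(\lambda))f$, establish the intertwining identity $(\beta_{d_a}(\delta(\lambda))f)^\R(x)=e^{(d_a-1)\lambda}f^\R(e^{-\lambda}x)$, and then take the Fourier transform using the scaling rule. You have simply supplied more detail in the verification of the intertwining identity, which the paper leaves as a ``straightforward calculation''.
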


\begin{proof}
	A straightforward calculation gives that
	$$
	\left(\beta_{d_a}(\delta(\lambda))f\right)^\R(x)=e^{\lambda(d_a-1)}f^\R(e^{-\lambda}x)
	\qquad
	\lambda\in\R\,\,\,
	\forall x\in\R \,.
	$$
	Thus, the result follows by Proposition \ref{prop:from_complex_to_real} and observing that
	$$
	\widehat{\left[\left(\beta_{d_a}(\delta(\lambda))f\right)^\R\right]}(p)=e^{\lambda(d_a-1)}\widehat{\left[f^\R(e^{-\lambda}\,\cdot\,)\right]}(p)=e^{\lambda d_a}\widehat{f^\R}(e^\lambda p) 
	\qquad
	\lambda\in\R\,\,\,
	\forall p\in\R \,.
	$$
\end{proof}

Now, we are ready to formulate the main theorem of this section:

\begin{theo}   \label{theo:delta_one_half}
	Let $a$ be a quasi-primary vector of a simple energy-bounded unitary VOSA $V$. Define the operator $K:=i\pi\overline{(L_1-L_{-1})}$ and let $f\in C_c^\infty(S^1 \backslash \left\{-1\right\})$ with $\mathrm{supp}f\subset S^1_+$. Then $Y(a,f)\Omega$ is in the domain of the operator $e^\frac{K}{2}$ and 
	\begin{equation}
		e^\frac{K}{2}Y(a,f)\Omega=i^{2d_a}Y(a,f\circ j)\Omega
	\end{equation}
	where $j(z)=\overline{z}=z^{-1}$ for all $z\in S^1$. 
\end{theo}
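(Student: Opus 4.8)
The plan is to establish the identity weakly, testing against the total set of vectors $Y(b,g)\Omega$ with $b\in V$ quasi-primary and $g\in C_c^\infty(\pSone)$, and then to upgrade the resulting matrix-element analytic continuation to the asserted operator statement. That this set is total follows from the cyclicity of $\Omega$ (Proposition \ref{prop:Omega_is_cyclic}) together with the generation of $\A_{(V,\scalar)}$ by quasi-primary fields (Proposition \ref{prop:gen_by_qp}): since $L_1=(L_{-1})^*$ on the finite-dimensional spaces $V_n$, one has $V_n=\big(\Ker(L_1)\cap V_n\big)\oplus L_{-1}V_{n-1}$, so every vector is a combination of descendants $L_{-1}^kb$ of quasi-primaries $b$, and these appear as $Y(b,g)\Omega=\sum_n\widehat{g}_n\,b_n\Omega$ with $b_n\Omega$ proportional to $L_{-1}^{-n-d_b}b$; varying $g$ the closed span of these vectors is all of $\mathcal H$. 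Write $A:=\half\overline{(L_1-L_{-1})}$ for the skew-adjoint generator of the dilation subgroup, so that $U(\delta(\lambda))=e^{\lambda A}$ (cf.\ \eqref{eq:repr_U_of_the_net_A_V} and \eqref{eq:fm_gen_mob}) and $K=2\pi i A$, whence $e^{K/2}=e^{i\pi A}$ is the analytic continuation of the orbit $\lambda\mapsto U(\delta(\lambda))$ to $\lambda=i\pi$.

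First I would fix quasi-primary $a,b$ and $g$, set $\eta:=Y(b,g)\Omega$ and $\xi:=Y(a,f)\Omega$, and study
\[
\psi(\lambda):=(\eta\mid U(\delta(\lambda))\xi)=(\eta\mid Y(a,\beta_{d_a}(\delta(\lambda))f)\Omega),
\]
the second equality being the covariance of Proposition \ref{prop:mob_covariance_qp} together with $U(\delta(\lambda))\Omega=\Omega$ (for odd $a$ one works in $\Mob(S^1)^{(2)}$ and uses $\alpha_{d_a}(\gamma)=\beta_{d_a}(\dot\gamma)$ on $C_c^\infty(\pSone)$). Taking complex conjugates in Corollary \ref{cor:2-point_funct_dilatation} gives, for real $\lambda$,
\[
\psi(\lambda)=\frac{(b\mid a)\,\delta_{d_a,d_b}}{2\pi(2d_a-1)!}\int_0^{+\infty} e^{\lambda d_a}\,\widehat{f^\R}(-e^\lambda p)\,\overline{\widehat{g^\R}(-p)}\,p^{2d_a-1}\,\mathrm{d}p .
\]
The crucial point is the hypothesis $\mathrm{supp}f\subset S^1_+$: under the Cayley transform \eqref{eq:cayley_transform} the upper semicircle maps onto $(0,+\infty)$, so $\mathrm{supp}f^\R\subset(0,+\infty)$ and hence $\widehat{f^\R}$ extends to a function holomorphic in the open lower half-plane, continuous and rapidly decaying up to the real axis (Paley--Wiener). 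Since for $\mathrm{Im}\,\lambda\in(0,\pi)$ and $p>0$ the point $-e^\lambda p$ lies in the lower half-plane, the displayed integral defines a holomorphic continuation $\psi$ to the strip $0<\mathrm{Im}\,\lambda<\pi$, continuous and bounded on its closure by dominated convergence. Evaluating at $\lambda=i\pi$, where $-e^{i\pi}p=p$ and $e^{i\pi d_a}=i^{2d_a}$, and using the elementary identity $(f\circ j)^\R(x)=f^\R(-x)$ (so that $\widehat{(f\circ j)^\R}(-p)=\widehat{f^\R}(p)$), one obtains precisely $\psi(i\pi)=i^{2d_a}(\eta\mid Y(a,f\circ j)\Omega)$, which identifies the candidate boundary value.

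The main obstacle is to pass from this weak continuation to the operator statement, i.e.\ to conclude $\xi\in\mathrm{Dom}(e^{K/2})$ with $e^{K/2}\xi=i^{2d_a}Y(a,f\circ j)\Omega$. For this I would first establish a uniform bound $|\psi(\lambda)|\le C_f(\lambda)\norm{\eta}$ on the closed strip: the Cauchy--Schwarz inequality in the measure $p^{2d_a-1}\mathrm{d}p$ bounds $|\psi(\lambda)|$ by an $L^2(p^{2d_a-1}\mathrm{d}p)$-norm of $e^{\lambda d_a}\widehat{f^\R}(-e^\lambda\,\cdot\,)$, finite and bounded on the closed strip by the Paley--Wiener decay, times $\big(\int_0^\infty|\widehat{g^\R}(-p)|^2p^{2d_a-1}\mathrm{d}p\big)^{1/2}$, and the latter is a constant multiple of $\norm{\eta}$ by the two-point formula of Proposition \ref{prop:from_complex_to_real}. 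By the Riesz representation theorem this yields, for each $\lambda$ in the closed strip, a vector $\Phi(\lambda)$ with $(\eta\mid\Phi(\lambda))=\psi(\lambda)$ for all $\eta$ in the total set and $\sup_\lambda\norm{\Phi(\lambda)}<+\infty$; as $\lambda\mapsto(\eta\mid\Phi(\lambda))$ is holomorphic for each such $\eta$, local boundedness upgrades weak to strong holomorphy, while $\Phi(t)=U(\delta(t))\xi$ on the real boundary. The standard analytic-continuation criterion for one-parameter unitary groups (via the spectral theorem for $-iA$) then gives $\xi\in\mathrm{Dom}(e^{i\pi A})=\mathrm{Dom}(e^{K/2})$ and $e^{K/2}\xi=\Phi(i\pi)$, and comparing $(\eta\mid\Phi(i\pi))=\psi(i\pi)=i^{2d_a}(\eta\mid Y(a,f\circ j)\Omega)$ over the total set of $\eta$ forces $\Phi(i\pi)=i^{2d_a}Y(a,f\circ j)\Omega$. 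I expect the delicate points to be the uniformity of $C_f(\lambda)$ as $\mathrm{Im}\,\lambda\to\pi$ (requiring the Paley--Wiener decay uniformly up to the boundary line) and the careful justification of the weak-to-strong holomorphy upgrade, rather than the geometric mechanism, which is transparent: in the real-line picture dilations act by $x\mapsto e^\lambda x$, and their continuation to $\lambda=i\pi$ is exactly the reflection $x\mapsto-x$ implementing $j$.
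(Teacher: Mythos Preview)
Your strategy—analytically continuing $(\eta\mid U(\delta(\lambda))\xi)$ via the real-line integral representation of Corollary~\ref{cor:2-point_funct_dilatation} and reading off the boundary value at $\lambda=i\pi$—is exactly the paper's, and your Paley--Wiener analysis and identification of the boundary value are correct. The paper's endgame differs only in the functional-analytic tool: rather than constructing a vector-valued holomorphic extension $\Phi(\lambda)$ via Riesz and invoking a spectral domain criterion, it tests against \emph{entire analytic vectors} $\varphi$ for the one-parameter group $e^{itK}$ (dense by \cite[Corollary~2.5.23]{BR02}), for which $z\mapsto(e^{-i\bar zK}\varphi\mid\xi)$ is automatically holomorphic; identifying this with the integral on $\R$ and evaluating at $z=-i/2$ gives $(e^{K/2}\varphi\mid\xi)=(\varphi\mid i^{2d_a}Y(a,f\circ j)\Omega)$ on a dense subset of $\mathrm{Dom}(e^{K/2})$, whence the domain statement by self-adjointness of $e^{K/2}$.

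Your Riesz step, however, has a genuine gap: a bound $|\psi_\eta(\lambda)|\le C\norm{\eta}$ for each individual $\eta=Y(b,g)\Omega$ in a total set does \emph{not} produce a bounded antilinear functional on $\mathcal H$—you need the bound on the linear span, and that does not follow from the bound on a total set (think of $e_n\mapsto 1$ on $\ell^2$). The fix, which is exactly how the paper organises things, is to work inside the closed cyclic subspace $\mathcal H_a:=\overline{\{Y(a,g)\Omega:g\in C_c^\infty\}}$: the dilation group preserves it, the two-point formula of Proposition~\ref{prop:from_complex_to_real} makes $Y(a,g)\Omega\mapsto\widehat{g^\R}|_{(0,\infty)}$ an isometry (up to a constant) into $L^2((0,\infty),p^{2d_a-1}\mathrm dp)$, and \emph{there} Cauchy--Schwarz does give an honest bounded functional, hence a vector $\Phi(\lambda)\in\mathcal H_a$; one then extends by zero on $\mathcal H_a^\perp$, which is legitimate since $U(\delta(\lambda))\xi\in\mathcal H_a$ for real $\lambda$. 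With this correction either your spectral-criterion endgame or the paper's analytic-vector endgame closes the proof.
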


Until the end of the current section, $a$ and $b$ are two fixed quasi-primary vectors in $V$ and we use the notation as in Definition \ref{defin:hilbet_space_from_V}.
Furthermore, let $U$ be the positive-energy strongly continuous unitary representation of $\Mob(S^1)^{(2)}$ on $\mathcal{H}$ induced by the conformal vector $\nu$ of $V$ as in p.\  \pageref{eq:repr_U_of_the_net_A_V}. 
Then see Section \ref{subsection:diff_group} for notation, we have that 
\begin{equation} \label{eq:K_and_U}
	e^{itK}=U(\delta^{(2)}(-2\pi t)) 
	\qquad\forall t\in\R 
	\,.
\end{equation} 
Since $\beta_d$ and $\alpha_d$ coincide on $C^\infty_c(\pSone)$, we obtain by Proposition \ref{prop:mob_covariance_qp} and Corollary \ref{cor:2-point_funct_dilatation}, that for all $f,g\in C_c^\infty(S^1\backslash\left\{-1\right\})$
\begin{equation}  \label{eq:K_and_U_scalar_prod}
	\begin{split}
		(Y(b,g)\Omega| e^{itK}Y(a,f)\Omega )
		&=(Y(b,g)\Omega| U(\delta^{(2)}(-2\pi t))Y(a,f)\Omega) \\
		&=C_{b,a}
		\int_0^{+\infty} 
		\overline{\widehat{g^\R}(-p)}
		e^{-2\pi td_a}\widehat{f^\R}(-e^{-2\pi t} p)
		p^{2d_a-1}\mathrm{d}p 
	\end{split}
\end{equation}
for all $t\in\R$ and $C_{b,a}:=\frac{(b| a)\delta_{d_a,d_b}}{2\pi(2d_a-1)!}$. Our aim is to extend \eqref{eq:K_and_U_scalar_prod} by analyticity to some domain in $\C$ to get the desired expression for $e^\frac{K}{2}Y(a,f)\Omega$ by a limit procedure.
To further simplify the notation, for all $x\in V$ set the function
\begin{equation}
	\phi_x(\cdot)\Omega: C_c^\infty(\R)\ni h \xmapsto{\hspace{1.5cm}}
	\phi_x(h)\Omega:=Y(x,h^\C\circ j)\Omega \in\mathcal{H} \,.
\end{equation}
Note that for all $h\in C_c^\infty(\pSone)$, $(h\circ j)^\R(t)=h^\R\circ j_C$, where $j_C(t):=(CjC^{-1})(t)=-t$ for all $t\in\R$. Therefore, excluding the case $\delta_{d_a,d_b}=0$, we obtain the following formula: for all $f,g\in C_c^\infty(\R)$ and $t\in\R$
\begin{equation}   \label{eq:expitK_action_fourier_prod_scal}
	\begin{split}
		F_{g,f}^{d_a}(t)
		:=C_{b,a}^{-1}(\phi_b(g)\Omega| e^{itK}\phi_a(f)\Omega) 
		& =C_{b,a}^{-1}
		(Y(b,g^\C\circ j)\Omega| e^{itK}Y(a,f^\C\circ j)\Omega) \\
		& = \int_0^{+\infty} 
		\overline{\widehat{g}(p)} 
		e^{-2\pi td_a} \widehat{f}(e^{-2\pi t}p)
		p^{2d_a-1} \mathrm{d} p \,.      
	\end{split}
\end{equation}
The aim is to find a suitable range for $z\in\C$, where the following expression makes sense as analytic function:
\begin{equation}  \label{eq:defin_F_g,f(z)}
	F_{g,f}^{d_a}(z):=  \int_0^{+\infty} 
	\overline{\widehat{g}(p)} 
	e^{-2\pi zd_a} \widehat{f}(e^{-2\pi z}p)
	p^{2d_a-1} \mathrm{d} p    
\end{equation}
for all $f,g\in C_c^\infty(\R)$ with $\mathrm{supp}f\subset (-\infty,0)$ and
\begin{equation}  \label{eq:holo_four_trans}
	\widehat{f}(\zeta):=\frac{1}{\sqrt{2\pi}}
	\int_\R f(x)e^{-i\zeta x}\mathrm{d} x
	\qquad\forall \zeta\in\C
\end{equation}
which is an entire function by \cite[Section 19.1(b)]{Rud87}. 	
Set $D:=\left\{z\in\C \mid -\half<\mathrm{Im}(z)< 0\right\}$ and consider its closure $\overline{D}$, then we have the following: 

\begin{prop}   \label{prop:properties_F_g,f(z)}
For all $f,g\in C_c^\infty(\R)$ with $\mathrm{supp}f\subset (-\infty,0)$, $F_{g,f}^{d_a}:\overline{D}\to \C$ is a well-defined continuous function on $\overline{D}$ and it is the unique analytic extension on $D$ of $F_{f,g}^{d_a}:\R\to \C$.
	Moreover, for all $z\in\overline{D}$, $F_{g,f}^{d_a}(z)$ is linear in $f$ and antilinear in $g$. 
\end{prop}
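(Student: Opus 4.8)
The plan is to establish convergence first, then analyticity, and finally the uniqueness and the (anti)linearity. The crucial structural input is that $f$ has support in $(-\infty,0)$, which via the Paley--Wiener philosophy forces the entire extension $\widehat{f}$ in \eqref{eq:holo_four_trans} to decay rapidly on the \emph{closed upper half-plane}. Concretely, first I would check that for $z\in\overline{D}$ and $p>0$ the argument $\zeta:=e^{-2\pi z}p$ lies in $\{\zeta\in\C\mid\mathrm{Im}(\zeta)\geq0\}$: writing $z=s+it$ with $t\in[-\half,0]$ one has $\mathrm{Im}(e^{-2\pi z})=-e^{-2\pi s}\sin(2\pi t)\geq0$ since $2\pi t\in[-\pi,0]$, hence $\mathrm{Im}(\zeta)=p\,\mathrm{Im}(e^{-2\pi z})\geq0$. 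Then, integrating by parts $N$ times in \eqref{eq:holo_four_trans} and using $\mathrm{supp}f\subset(-\infty,0)$ together with $|e^{-i\zeta x}|=e^{x\,\mathrm{Im}(\zeta)}\leq1$ for $x<0$ and $\mathrm{Im}(\zeta)\geq0$, I obtain a bound $|\widehat{f}(\zeta)|\leq C_N(1+|\zeta|)^{-N}$ valid uniformly on the closed upper half-plane, for every $N\in\Zpluseq$.

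Combining this with the fact that $\widehat{g}$ is a Schwartz function on $\R$, that $e^{-2\pi z d_a}$ is bounded on compact subsets of $\overline{D}$, and that $p^{2d_a-1}$ has non-negative exponent (so there is no singularity at $p=0$), the integrand in \eqref{eq:defin_F_g,f(z)} is dominated, locally uniformly in $z\in\overline{D}$, by an integrable function of $p$; hence $F^{d_a}_{g,f}(z)$ is well-defined on $\overline{D}$. The same local uniform domination, via dominated convergence, yields continuity of $F^{d_a}_{g,f}$ on $\overline{D}$. For analyticity on $D$ I would note that for each fixed $p>0$ the map $z\mapsto e^{-2\pi z d_a}\widehat{f}(e^{-2\pi z}p)$ is entire (a composition of entire functions), so Morera's theorem together with Fubini's theorem (to interchange a closed-contour integral in $z$ with the $p$-integral, justified by the domination above) shows that $F^{d_a}_{g,f}$ is holomorphic on the open strip $D$.

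Since setting $z=t\in\R$ in \eqref{eq:defin_F_g,f(z)} reproduces exactly \eqref{eq:expitK_action_fourier_prod_scal}, the continuous function $F^{d_a}_{g,f}$ restricts on the real boundary line of $\overline{D}$ to the original real-variable function, so it is an analytic extension. Uniqueness follows from boundary uniqueness: if $G$ is holomorphic on $D$, continuous on $\overline{D}$ and vanishes on $\R$, then $G\equiv0$, as one sees by Schwarz reflection across $\R$ (the reflected function $z\mapsto\overline{G(\overline z)}$ glues to $G$ to give a function holomorphic on $\{-\half<\mathrm{Im}(z)<\half\}$ and vanishing on $\R$, hence identically zero by the identity theorem). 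Finally, linearity in $f$ and antilinearity in $g$ are immediate from \eqref{eq:defin_F_g,f(z)}, since $f\mapsto\widehat{f}$ is linear while $g\mapsto\overline{\widehat{g}}$ is antilinear, and integration in $p$ preserves these properties.

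The main obstacle is the first paragraph: pinning down that the argument $e^{-2\pi z}p$ stays in the closed upper half-plane for \emph{all} $z\in\overline{D}$ and $p>0$, and deriving the uniform Paley--Wiener decay of $\widehat{f}$ there from the one-sided support of $f$. Everything after that is a routine application of standard parameter-dependent-integral and boundary-uniqueness theorems.
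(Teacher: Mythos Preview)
Your argument is correct and rests on the same key observation as the paper: the support condition $\mathrm{supp}f\subset(-\infty,0)$ yields, via integration by parts, uniform polynomial decay of $\widehat f$ on the closed upper half-plane, and for $z\in\overline D$ the argument $e^{-2\pi z}p$ stays there. The paper packages this as the bound \eqref{eq:f_z_rap_decr} on the Schwartz seminorms of $p\mapsto\widehat f(e^{-2\pi z}p)$; you state it as a Paley--Wiener-type estimate $|\widehat f(\zeta)|\leq C_N(1+|\zeta|)^{-N}$, but the content is identical. The execution then diverges slightly: the paper proves continuity by an explicit $\epsilon/2$ splitting of $\int_0^\infty$ at a threshold $p_0$, and proves analyticity by computing $\partial_z\widehat f_z(p)$ directly and checking it remains rapidly decreasing; you instead invoke dominated convergence for continuity and Morera--Fubini for analyticity. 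Your route is a bit more streamlined and avoids the ad hoc estimates, while the paper's is more self-contained. For uniqueness, both arguments use Schwarz reflection across $\R$ followed by the identity theorem, and the (anti)linearity is immediate in both.
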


\begin{proof}
		Throughout the proof, $f$ and $g$ are functions in $C_c^\infty(\R)$ with $\mathrm{supp}f\subset (-\infty,0)$. Furthermore, we set $A(z):=e^{-2\pi zd_a}$.
	
First, we prove that for all $z\in \overline{D}$, $\widehat{f}_z(p):=\widehat{f}(e^{-2\pi z}p)$ is a rapidly decreasing function of $p$ on $(0,+\infty)$: 
\begin{equation}  \label{eq:f_z_rap_decr}
	\begin{split}
		\norm{\widehat{f}_z}_{\alpha,\beta} 
		& :=
		\sup_{p\in(0, +\infty)} \abs{p^\alpha\frac{\mathrm{d}^\beta \widehat{f}_z}{\mathrm{d}x^\beta}(p)} \\
		& =
		\sup_{p\in(0, +\infty)}\abs{\frac{1}{\sqrt{2\pi}}
			\int_\R f(x)p^\alpha \frac{\mathrm{d}^\beta}{\mathrm{d}x^\beta}e^{-ie^{-2\pi z} px}\mathrm{d} x} \\
		&= 
		\sup_{p\in(0, +\infty)}\abs{\frac{1}{\sqrt{2\pi}}
			\int_{-\infty}^0 x^\beta f(x)p^\alpha(-ie^{-2\pi z})^\beta e^{-ie^{-2\pi z} px}\mathrm{d} x} \\
		&= 
		\sup_{p\in(0, +\infty)}\abs{\frac{(-1)^\alpha}{\sqrt{2\pi}}
			\int_{-\infty}^0\frac{\mathrm{d}^\alpha (x^\beta f)}{\mathrm{d}x^\alpha}(x)(-ie^{-2\pi z})^{\beta-\alpha} e^{-ie^{-2\pi z} px}\mathrm{d} x } \\
		& = 
		\sup_{p\in(0, +\infty)}\abs{\frac{e^{2\pi(\alpha-\beta)z}}{\sqrt{2\pi}}
			\int_{-\infty}^0\frac{\mathrm{d}^\alpha (x^\beta f)}{\mathrm{d}x^\alpha}(x)e^{-ie^{-2\pi z} px}\mathrm{d} x}\\
		& = 
		\sup_{p\in(0, +\infty)}\abs{ \frac{e^{2\pi(\alpha-\beta)z}}{\sqrt{2\pi}}
			\int_{-\infty}^0 \frac{\mathrm{d}^\alpha (x^\beta f)}{\mathrm{d}x^\alpha}(x) e^{-ie^{-2\pi\mathrm{Re}(z)}e^{-i2\pi\mathrm{Im}(z)} px}\mathrm{d} x }  \\
		& \leq 
		\sup_{p\in(0, +\infty)} \frac{\abs{ e^{2\pi(\alpha-\beta)z}}}{\sqrt{2\pi}}
		\int_{-\infty}^0 \abs{\frac{\mathrm{d}^\alpha (x^\beta f)}{\mathrm{d}x^\alpha}(x)} e^{-e^{-2\pi\mathrm{Re}(z)}\sin\left(2\pi \mathrm{Im}(z)\right) px}\mathrm{d} x \\
		& \leq
		\frac{\abs{e^{2\pi(\alpha-\beta)z}}}{\sqrt{2\pi}}
		\int_{-\infty}^0 \abs{\frac{\mathrm{d}^\alpha (x^\beta f)}{\mathrm{d}x^\alpha}(x)} \mathrm{d} x  \\
		&=
		\frac{\norm{ \frac{\mathrm{d}^\alpha (x^\beta f)}{\mathrm{d}x^\alpha}}_1}{\sqrt{2\pi}}\abs{ e^{2\pi(\alpha-\beta)z}}<+\infty
		\qquad\forall \alpha,\beta\in\Zpluseq
	\end{split}
\end{equation}
where we have used the integration by parts $\alpha$ times for the fourth equality. This assures us the convergence of the integral in \eqref{eq:defin_F_g,f(z)} for all $z\in \overline{D}$, proving that $F_{g,f}^{d_a}$ is a well-defined function. Moreover, for all $z\in\overline{D}$, $F_{g,f}^{d_a}(z)$ is linear in $f$ and antilinear in $g$ by the properties of the scalar product. 
	
	Now, we prove that for all $f,g$, $F_{g,f}^{d_a}$ is continuous on $\overline{D}$. Let $\zeta\in\overline{D}$ and let $D_\zeta^r$ be a closed disk of radius $r>0$ centred in $\zeta$. Without lose of generality, consider as range for $z$ the bounded region $D_\zeta:=D_\zeta^r\cap \overline{D}$. Our aim is to prove that for all $\epsilon>0$ there exist $\delta>0$ such that
	\begin{equation}   \label{eq:continuity_F_g,f}
		\abs{ F_{g,f}^{d_a}(z)- F_{g,f}^{d_a}(\zeta) } 
		=\abs{ \int_0^{+\infty} 
			\overline{\widehat{g}(p)}
			\left(A(z) \widehat{f}_z(p)-A(\zeta)\widehat{f}_\zeta(p)\right) 
			p^{2d_a-1} \mathrm{d} p  }
		<\epsilon 
	\end{equation}
	whenever $\abs{z-\zeta}<\delta$. On the one hand, note that there exists a $p_0>0$ such that
	\begin{equation}  \label{eq:continuity_F_g,f_part1}
		\abs{ \int_{p_0}^{+\infty} 
			\overline{\widehat{g}(p)} 
			\left(A(z) \widehat{f}_z(p)-A(\zeta)\widehat{f}_\zeta(p)\right)
			p^{2d_a-1} \mathrm{d} p  } 
		<\frac{\epsilon}{2} 
		\qquad \forall z\in D_\zeta \,.
	\end{equation}
	Indeed, since $g\in C_c^\infty(\R)$, $\widehat{g}$ is a rapidly decreasing function and thus $L^1$-integrable. Accordingly, we can choose a $p_0>0$ such that 
	\begin{equation}  
		\int_{p_0}^{+\infty}\abs{\widehat{g}(p)}p^{2d_a-1} \mathrm{d} p
		<
		\left[ 2 \norm{ f }_1
		\max_{z\in D_\zeta}\abs{A(z)} \right]^{-1} \frac{\epsilon}{2} \,.
	\end{equation}
	Thus, the left hand side of \eqref{eq:continuity_F_g,f_part1} is bounded by
	\begin{equation}
		\begin{split}
			\int_{p_0}^{+\infty} 
			&
			\abs{\widehat{g}(p)}
			\int_{-\infty}^0 \abs{f(x)}
			\abs{ A(z)e^{-ie^{-2\pi z}px} -A(\zeta)e^{-ie^{-2\pi \zeta}px} }\mathrm{d}x \,
			p^{2d_a-1} \mathrm{d} p    \\
			&\leq      
			\int_{p_0}^{+\infty}
			\abs{\widehat{g}(p)}
			\int_{-\infty}^0 \abs{f(x)}
			\left(\,\abs{ A(z)} 
			+\abs{ A(\zeta)} \right)
			\mathrm{d}x \,
			p^{2d_a-1} \mathrm{d} p \\
			&\leq 2 \norm{ f }_1
			\max_{z\in D_\zeta} \abs{ A(z)}
			\int_{p_0}^{+\infty} \abs{\widehat{g}(p)}p^{2d_a-1} \mathrm{d} p< \frac{\epsilon}{2} 
			\qquad \forall z\in D_\zeta \,.
		\end{split}
	\end{equation}
	On the other hand, let $x_0<0$ be such that $\mathrm{supp}f\subset(x_0,0)$, then
	\begin{equation}   \label{eq:continuity_F_g,f_part2.1}
		\begin{split}
			& \abs{
				\int_0^{p_0} 
				\overline{\widehat{g}(p)}
				\left( A(z) \widehat{f}_z(p)-A(\zeta)\widehat{f}_\zeta(p)\right) 
				p^{2d_a-1} \mathrm{d} p  } \\
			&\leq  
			\int_0^{p_0} 
			\abs{\widehat{g}(p)}
			\int_{x_0}^0 \abs{f(x)}
			\abs{ A(z)e^{-ie^{-2\pi z}px} -A(\zeta)e^{-ie^{-2\pi \zeta}px} }\mathrm{d}x \,
			p^{2d_a-1} \mathrm{d} p    \\
			&\leq    
			M_\zeta(z)
			\int_0^{p_0}\abs{\widehat{g}(p)}p^{2d_a-1} \mathrm{d} p 
			\int_\R \abs{f(x)}\mathrm{d}x
			\qquad \forall z\in D_\zeta
		\end{split}
	\end{equation}
	where $M_\zeta(z)$ is the function on $D_\zeta$ defined by
	$$
	M_\zeta(z):=
	\max_{\substack{x_0\leq x\leq 0 \\ 0\leq p\leq p_0}}
	\abs{ A(z)e^{-ie^{-2\pi z}px} -A(\zeta)e^{-ie^{-2\pi \zeta}px} }.
	$$
	Note that $\abs{ A(z)e^{-ie^{-2\pi z}px} -A(\zeta)e^{-ie^{-2\pi \zeta}px}}$ is a continuous function in the three variables $(z,p,x)$ on the compact domain $D_\zeta\times[0,p_0]\times[x_0,0]$, which assures us that $M_\zeta(z)$ is non-negative, well-defined and continuous on $D_\zeta$.
	Thus, $M_\zeta(\zeta)=0$ and by continuity, there exists $\delta>0$ such that
	\begin{equation}  \label{eq:continuity_F_g,f_part2.2}
		M_\zeta(z) <
		\left[ \norm{ f }_1 \int_0^{p_0} \abs{ \widehat{g}(p) }p^{2d_a-1} \mathrm{d} p \right]^{-1} \frac{\epsilon}{2}
	\end{equation}
	for all $z\in D_\zeta$ such that $\abs{z-\zeta}<\delta$. Hence, using equation \eqref{eq:continuity_F_g,f_part2.2} in \eqref{eq:continuity_F_g,f_part2.1}, we obtain that
	\begin{equation}  \label{eq:continuity_F_g,f_part2.3}
		\abs{
			\int_0^{p_0} \overline{\widehat{g}(p)}
			\left( A(z) \widehat{f}_z(p)-A(\zeta)\widehat{f}_\zeta(p)\right) 
			p^{2d_a-1} \mathrm{d} p  
		} <\frac{\epsilon}{2}
	\end{equation}
	for all $z\in D_\zeta$ such that $\abs{z-\zeta}<\delta$.
	Therefore, the continuity of $F_{g,f}^{d_a}$ on $\overline{D}$ is proved estimating \eqref{eq:continuity_F_g,f} by \eqref{eq:continuity_F_g,f_part1} and \eqref{eq:continuity_F_g,f_part2.3}.
	
	To prove analyticity, note that 
	\begin{equation}   \label{eq:partial_f_z_rapid_decr}
		\begin{split}
			\frac{\mathrm{d}}{\mathrm{d} z}\left[\widehat{f}(e^{-2\pi z}p)\right]
			&= -2\pi e^{-2\pi z}p  \left. \frac{\mathrm{d} \widehat{f}}{\mathrm{d} \zeta}\right|_{\zeta=e^{-2\pi z}p}\\
			&=-2\pi e^{-2\pi z}p \frac{1}{\sqrt{2\pi}}
			\int_\R f(x)(-ix)e^{-i e^{-2\pi z}p x}\mathrm{d} x \\
			&=-2\pi p \frac{1}{\sqrt{2\pi}}
			\int_\R f(x)(-ie^{-2\pi z}x)e^{-i e^{-2\pi z}p x}\mathrm{d} x \\
			&=-2\pi p\frac{\mathrm{d}\widehat{f}_z}{\mathrm{d}p}(p)
		\end{split}
	\end{equation}
	for all $p\in(0,+\infty)$, which is a rapidly decreasing function of $p$ on $(0,+\infty)$ by \eqref{eq:f_z_rap_decr}. Therefore, for all $f,g$, $\frac{\partial}{\partial z}F_{g,f}^{d_a}$ exists on $D$, i.e., for all $f,g$, $F_{g,f}^{d_a}$ is analytic on $D$. 
	It remains to show that $F_{g,f}^{d_a}(z)$ is the unique analytic extension on $D$ of $F_{g,f}^{d_a}(t)$ for $t\in\R$. Let $G$ an analytic extension on $D$ and set $H(z):=F_{g,f}^{d_a}(z)-G(z)$. $H(t)=0$ for all $t\in\R$, then by the Schwarz reflection principle (cf.\  \cite[p.\  76]{SW64}), $H$ extends to $D\cup\R\cup D^\mathrm{conj}$, where $D^\mathrm{conj}$ are the set of complex conjugates of elements of $D$. Then $H(z)=0$ in $D$ by the identity theorem (cf.\  \cite[p.\  122]{ AF03}), i.e. for all $f,g$, $F_{g,f}^{d_a}(z)$ is the unique analytic extension on $D$ of $F_{g,f}^{d_a}(t)$. 
\end{proof}

A further step in the proof of Theorem \ref{theo:delta_one_half} is to use the function $F_{g,f}^{d_a}(z)$ to define an antilinear functional on $\mathcal{H}$. To this end, define the Hilbert spaces
\begin{equation}  \label{eq:defin_hilbert_spaces}
	\mathcal{H}_a :=\overline{\left\{\phi_a(g)\Omega \mid g\in C_c^\infty(\R) \right\}}^{\norm{\cdot}}\subset\mathcal{H} \,,
	\qquad
	\widehat{\mathcal{H}}_a :=L^2((0,+\infty),p^{2d_a-1}\mathrm{d}p)
\end{equation}
and denote by $\norm{\cdot}_{2,a}$ the norm of $\widehat{\mathcal{H}}_a$. Note that $\norm{\phi_a(g)\Omega}=\abs{C_{a,a}}\norm{\widehat{g}}_{2,a}$ by \eqref{eq:expitK_action_fourier_prod_scal}. In particular, if $\left\{\phi_a(g_n)\Omega\right\}_{n\in\Zpluseq}$ is a convergent sequence in $\mathcal{H}$, then $\left\{ \widehat{g_n}\right\}_{n\in\Zpluseq}$ will be a convergent sequence in $\widehat{\mathcal{H}}_a$. Let $\varphi\in\mathcal{H}_a$ and pick any sequence $\left\{ \widehat{g_n}\right\}_{n\in\Zpluseq}$ convergent to some $g_\varphi\in\widehat{\mathcal{H}}_a$ and such that $\varphi=\lim_n \phi_a(g_n)\Omega$. Define
\begin{equation}   \label{eq:defin_F_varphi,f(z)}
	F_{\varphi,f}^{d_a}(z):=
	\int_0^{+\infty} 
	\overline{g_\varphi(p)} 
	e^{-2\pi zd_a} \widehat{f}(e^{-2\pi z}p)
	p^{2d_a-1} \mathrm{d} p
\end{equation}
where $f\in C_c^\infty(\R)$ with $\mathrm{supp} f\subset (-\infty,0)$ and $z\in\overline{D}$. Then the content of the following result is an extension of the properties proved in Proposition \ref{prop:properties_F_g,f(z)} to $F_{\varphi,f}^{d_a}(z)$.

\begin{prop}  \label{prop:properties_F_varphi,f(z)}
	For all $f\in C_c^\infty(\R)$ with $\mathrm{supp} f\subset (-\infty,0)$ and all $\varphi\in\mathcal{H}_a$, $F_{\varphi,f}^{d_a}:\overline{D}\to\C$ is a well-defined function, analytic on $D$ and continuous on $\overline{D}$.
 	Moreover, for all $z\in\overline{D}$, $F_{\varphi,f}^{d_a}(z)$ is linear in $f$, antilinear in $\varphi$ and $F_{\varphi,f}^{d_a}(z)=F_{g,f}^{d_a}(z)$ whenever $\varphi=\phi_a(g)\Omega$ for some $g\in C_c^\infty(\R)$.
\end{prop}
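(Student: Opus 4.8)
The plan is to reduce everything to the already-established Proposition \ref{prop:properties_F_g,f(z)} together with an approximation argument in $\widehat{\mathcal{H}}_a$. First I would check that $g_\varphi$ is genuinely determined by $\varphi$ alone, so that \eqref{eq:defin_F_varphi,f(z)} is unambiguous. This follows from the isometric identity $\norm{\phi_a(g)\Omega}=\abs{C_{a,a}}\norm{\widehat{g}}_{2,a}$: if $\phi_a(g_n)\Omega\to\varphi$ and $\phi_a(g_n')\Omega\to\varphi$ with $g_n,g_n'\in C_c^\infty(\R)$, then $\{\widehat{g_n}\}$ and $\{\widehat{g_n'}\}$ are Cauchy in $\widehat{\mathcal{H}}_a$ with a common limit $g_\varphi$, since $\norm{\widehat{g_n}-\widehat{g_n'}}_{2,a}=\abs{C_{a,a}}^{-1}\norm{\phi_a(g_n)\Omega-\phi_a(g_n')\Omega}\to0$ (here $C_{a,a}=(a|a)/2\pi(2d_a-1)!\neq0$ for $a\neq0$, the case $a=0$ being trivial). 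Writing $A(z):=e^{-2\pi z d_a}$ and $\widehat{f}_z(p):=\widehat{f}(e^{-2\pi z}p)$ as in the proof of Proposition \ref{prop:properties_F_g,f(z)}, the estimate \eqref{eq:f_z_rap_decr} shows $A(z)\widehat{f}_z\in\widehat{\mathcal{H}}_a=L^2((0,+\infty),p^{2d_a-1}\mathrm{d}p)$ for every $z\in\overline{D}$; hence the integral \eqref{eq:defin_F_varphi,f(z)} is the $\widehat{\mathcal{H}}_a$-inner product of $g_\varphi$ against $A(z)\widehat{f}_z$ and converges by Cauchy--Schwarz. This representation makes well-definedness, linearity in $f$, antilinearity in $\varphi$ (as $\varphi\mapsto g_\varphi$ is linear while $g_\varphi$ enters conjugated), and the identity $F_{\varphi,f}^{d_a}=F_{g,f}^{d_a}$ for $\varphi=\phi_a(g)\Omega$ (take the constant sequence, so $g_\varphi=\widehat{g}$) all immediate.

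For continuity and analyticity I would argue by uniform approximation on compact sets. Fix $f$ and pick $g_n\in C_c^\infty(\R)$ with $\phi_a(g_n)\Omega\to\varphi$, so $\widehat{g_n}\to g_\varphi$ in $\widehat{\mathcal{H}}_a$. By Proposition \ref{prop:properties_F_g,f(z)} each $F_{g_n,f}^{d_a}=F_{\phi_a(g_n)\Omega,f}^{d_a}$ is continuous on $\overline{D}$ and analytic on $D$, and by Cauchy--Schwarz
\begin{equation}
\abs{F_{\varphi,f}^{d_a}(z)-F_{g_n,f}^{d_a}(z)}\leq \norm{g_\varphi-\widehat{g_n}}_{2,a}\,\norm{A(z)\widehat{f}_z}_{2,a}\,,
\end{equation}
so it suffices to bound $\norm{A(z)\widehat{f}_z}_{2,a}$ uniformly on each compact $D_\zeta:=D_\zeta^r\cap\overline{D}$. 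This is the key technical point and the main obstacle: $\norm{A(z)\widehat{f}_z}_{2,a}$ is \emph{not} bounded on all of $\overline{D}$, since $\abs{A(z)}=e^{-2\pi d_a\mathrm{Re}(z)}$ and the higher seminorms of $\widehat{f}_z$ grow as $\mathrm{Re}(z)\to\pm\infty$. On $D_\zeta$, however, $\mathrm{Re}(z)$ stays bounded, and splitting $\int_0^{+\infty}=\int_0^1+\int_1^{+\infty}$ and dominating the two pieces by the seminorms $\norm{\widehat{f}_z}_{0,0}$ and $\norm{\widehat{f}_z}_{\alpha,0}$ (with $\alpha>d_a$, so that $p^{2d_a-1-2\alpha}$ is integrable near $+\infty$) from \eqref{eq:f_z_rap_decr} yields $\sup_{z\in D_\zeta}\norm{A(z)\widehat{f}_z}_{2,a}<+\infty$. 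Hence $F_{g_n,f}^{d_a}\to F_{\varphi,f}^{d_a}$ uniformly on each $D_\zeta$.

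The conclusion then follows from standard facts: a locally uniform limit of continuous functions is continuous, giving continuity of $F_{\varphi,f}^{d_a}$ on $\overline{D}$, while a locally uniform limit of holomorphic functions is holomorphic (Weierstrass' theorem), giving analyticity on $D$. I expect the only genuinely delicate step to be the uniform-on-compacts bound for $\norm{A(z)\widehat{f}_z}_{2,a}$; once the weighted $L^2$-norm is controlled by the Schwartz seminorms already computed in \eqref{eq:f_z_rap_decr}, the remainder is routine.
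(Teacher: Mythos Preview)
Your proposal is correct and follows essentially the same route as the paper: both rely on the isometry $\norm{\phi_a(g)\Omega}=\abs{C_{a,a}}\norm{\widehat g}_{2,a}$ to make $g_\varphi$ well-defined, then use Cauchy--Schwarz in $\widehat{\mathcal H}_a$ together with a uniform bound $\sup_{z\in D_\zeta}\abs{A(z)}\norm{\widehat f_z}_{2,a}<\infty$ on compacts to approximate $F_{\varphi,f}^{d_a}$ by $F_{g_n,f}^{d_a}$.

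The one noteworthy difference is how analyticity on $D$ is obtained. The paper differentiates under the integral sign directly, invoking \eqref{eq:partial_f_z_rapid_decr} to see that $\partial_z[\overline{g_\varphi(p)}A(z)\widehat f_z(p)]$ is still integrable against $p^{2d_a-1}\mathrm{d}p$. You instead deduce analyticity from the locally uniform convergence $F_{g_n,f}^{d_a}\to F_{\varphi,f}^{d_a}$ via Weierstrass' theorem, which is a cleaner packaging: the same uniform bound gives both continuity on $\overline D$ and analyticity on $D$ in one stroke. Your explicit splitting $\int_0^\infty=\int_0^1+\int_1^\infty$ to control $\norm{\widehat f_z}_{2,a}$ is also a good touch; the paper's corresponding estimate \eqref{eq:continuity_F_varphi,f_part1} bounds the whole integral by $\int_0^\infty p^{2d_a-1-2\alpha}\,\mathrm{d}p$ with $\alpha>d_a$, which as written diverges at $p=0$, so your version is in fact the correct way to carry this out.
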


\begin{proof}
	Throughout the proof, $f\in C_c^\infty(\R)$ with $\mathrm{supp} f\subset (-\infty,0)$, $z\in\overline{D}$ and $\varphi\in\mathcal{H}_a$, unless it is differently specified. Furthermore, we set $A(z):=e^{-2\pi zd_a}$.
	
	Let $\left\{ \widehat{g_n}\right\}_{n\in\Zpluseq}$ be a sequence convergent to some $g_\varphi\in\widehat{\mathcal{H}}_a$ and such that $\varphi=\lim_n \phi_a(g_n)\Omega$. To prove that $F_{\varphi,f}^{d_a}$ is a well-defined function, we have to show that the integral in \eqref{eq:defin_F_varphi,f(z)} exists and that for all $\varphi\in\mathcal{H}_a$, $g_\varphi$ is independent of the choice of the sequence $\left\{ \widehat{g_n}\right\}_{n\in\Zpluseq}$. Equation \eqref{eq:f_z_rap_decr} says us that for all $f,\varphi,z$, $F_{\varphi,f}^{d_a}(z)$ exists. To prove the independence from the sequence, let $\left\{ \widehat{l_n}\right\}_{n\in\Zpluseq}$ be another sequence convergent to $l_\varphi\in\widehat{\mathcal{H}}_a$ and such that $\varphi=\lim_n \phi_a(l_n)\Omega$. Then for all $\epsilon>0$, there exists $N>0$ such that
	\begin{equation}
		\begin{split}
			\norm{ g_\varphi-l_\varphi}_{2,a}
			&\leq
			\norm{ g_\varphi- \widehat{g_n}}_{2,a}+
			\norm{ \widehat{g_n-l_n}}_{2,a}+
			\norm{ \widehat{l_n}- l_\varphi}_{2,a}  \\
			&=
			\norm{ g_\varphi- \widehat{g_n}}_{2,a}+
			\abs{C_{a,a}}^{-1}\norm{ \phi_a(g_n-l_n)\Omega}+
			\norm{ \widehat{l_n}- l_\varphi}_{2,a}  \\
			&=
			\norm{ g_\varphi- \widehat{g_n}}_{2,a}+
			\abs{C_{a,a}}^{-1}\norm{ \phi_a(g_n)\Omega-\phi_a(l_n)\Omega}+
			\norm{ \widehat{l_n}- l_\varphi}_{2,a}  \\
			&<\frac{\epsilon}{3}+\frac{\epsilon}{3}+\frac{\epsilon}{3}=\epsilon  \qquad\forall n\geq N
		\end{split}
	\end{equation}
	where we have used \eqref{eq:expitK_action_fourier_prod_scal} in the second raw. It follows that $F_{\varphi, f}^{d_a}$ is well-defined.
	
	Equation \eqref{eq:partial_f_z_rapid_decr} implies that for all $f,\varphi$, $F_{\varphi,f}^{d_a}$ is analytic and thus continuous on $D$.
	To prove that for all $f,\varphi$, $F_{\varphi,f}^{d_a}$ is continuous on $\overline{D}$, let $\zeta\in\overline{D}$ and without loss of generality, consider $D_\zeta:=D_\zeta^r\cap\overline{D}$ as range for $z$, where $D_\zeta^r$ is a closed disk of radius $r>0$ centred in $\zeta$. Our aim is to prove that for all $\epsilon>0$ there exist $\delta>0$ such that
	\begin{equation}   \label{eq:continuity_F_varphi,f}
		\abs{ F_{\varphi,f}^{d_a}(z)- F_{\varphi,f}^{d_a}(\zeta) } 
		<\epsilon 
	\end{equation}
	whenever $\abs{z-\zeta}<\delta$. 
	We have that for all $f$
	\begin{equation}  \label{eq:continuity_F_varphi,f_part1}
		\begin{split}
			\sup_{z\in D_\zeta}\norm{ \widehat{f}_z}_{2,a}
			&=
			\sup_{z\in D_\zeta}\left(\int_0^{+\infty}\abs{ \widehat{f}_z(p)}^2p^{2d_a-1}\mathrm{d}p\right)^\frac{1}{2}  \\
			&\leq 
			\sup_{z\in D_\zeta}\frac{\norm{ \frac{\mathrm{d}^{\alpha}f}{\mathrm{d}x^\alpha}}_1}{\sqrt{2\pi}} \abs{ e^{2\pi\alpha z} }
			\left(\int_0^{+\infty} p^{2d_a-1-2\alpha}\mathrm{d}p\right)^\frac{1}{2}
			<+\infty 
		\end{split}
	\end{equation}
	where we have used \eqref{eq:f_z_rap_decr} with $\beta=0$ and a positive integer $\alpha>d_a$. Therefore, using H{\"o}lder's inequality, there exists $N>0$ such that
	\begin{equation}  \label{eq:continuity_F_varphi,f_part2}
		\abs{ F_{g_n,f}^{d_a}(z)-F_{\varphi,f}^{d_a}(z) }\leq
		\norm{ \widehat{g_n}-g_\varphi}_{2,a}
		\abs{A(z)} \norm{ \widehat{f}_z}_{2,a}
		<\frac{\epsilon}{3}  
		\qquad\forall z\in D_\zeta
	\end{equation}
	for all $n>N$. Then choosing $n>N$, we can conclude that
	\begin{equation} \label{eq:continuity_F_varphi,f_part3}
		\begin{split}
			\abs{F_{\varphi,f}^{d_a}(z) -F_{\varphi,f}^{d_a}(\zeta)}
			&\leq 
			\abs{ F_{\varphi,f}^{d_a}(z)- F_{g_n,f}^{d_a}(z)} +
			\abs{ F_{g_n,f}^{d_a}(z)-F_{g_n,f}^{d_a}(\zeta)} + 
			\abs{ F_{g_n,f}^{d_a}(\zeta)-F_{\varphi,f}^{d_a}(\zeta) } \\
			&< 
			\frac{\epsilon}{3}+\frac{\epsilon}{3}+\frac{\epsilon}{3}
			=\epsilon
		\end{split}
	\end{equation} 
	whenever $\abs{z-\zeta}<\delta$ with $\delta>0$ given by the continuity \eqref{eq:continuity_F_g,f}.
	
	The remaining part is a straightforward consequence of the definition of $F_{\varphi,f}^{d_a}(z)$ and of what above.
	
\end{proof}

Finally, we can prove Theorem \ref{theo:delta_one_half}:

\begin{proof}[Proof of Theorem \ref{theo:delta_one_half}]
The following argument is inspired by \cite[Lemma 3.5]{Ara76}.
	Let $a\in V$ be a quasi-primary vector with conformal weight $d_a$. Let $z\in\overline{D}$, $\varphi\in\mathcal{H}_a$ as defined in \eqref{eq:defin_hilbert_spaces} and $f\in C_c^\infty(\R)$ with $\mathrm{supp}f\subset(-\infty,0)$. Consider $F_{\varphi,f}^{d_a}(z)$ as in \eqref{eq:defin_F_varphi,f(z)}, which is well-defined as stated in Proposition \ref{prop:properties_F_varphi,f(z)}.
	
	First, note that $\mathcal{H}=\mathcal{H}_a\oplus(\mathcal{H}_a)^\perp$ with respect to $\scalar$ because $\mathcal{H}_a$ is a closed subspace of $\mathcal{H}$. Accordingly, we set $F_{\varphi^\perp,f}^{d_a}(z)=0$ for all $\varphi^\perp\in(\mathcal{H}_a)^\perp$ and we extend it to $\mathcal{H}$ by antilinearity.
	Consider the group of unitary operators on $\mathcal{H}$ given by $e^{itK}$ for all $t\in\R$ and looking at their action on $\phi_a(f)\Omega$, note that it preserve $\mathcal{H}_a$ and consequently its orthogonal complement $(\mathcal{H}_a)^\perp$ too. From \cite[Corollary 2.5.23]{BR02}, it follows that there exist $D(e^{itK})^a_\mathrm{ea}\subset\mathcal{H}_a$ and $D(e^{itK})^{a,\perp}_\mathrm{ea}\subset(\mathcal{H}_a)^\perp$, two norm-dense subsets of entire analytic elements for the one-parameter group of isometries $t\mapsto e^{itK}$. Therefore, $D(K)_\mathrm{ea}:=D(e^{itK})^a_\mathrm{ea}\oplus D(e^{itK})^{a,\perp}_\mathrm{ea}$ is a norm-dense subset of $\mathcal{H}$.
	
	Now, let $\varphi=\psi+\psi^\perp\in D(K)_\mathrm{ea}$ and $\left\{\phi_a(g_n)\Omega\right\}_{n\in\Zpluseq}$ be a sequence in $D(e^{itK})^a_\mathrm{ea}$ such that $\psi=\lim_n\phi_a(g_n)\Omega$. Then for all $t\in\R$, we have that
	\begin{equation}  \label{eq:equality_on_R}
		\begin{split}
			(e^{-itK}\varphi| \phi_a(f)\Omega)
			&=
			(e^{-itK}\psi| \phi_a(f)\Omega)+
			(e^{-itK}\psi^\perp| \phi_a(f)\Omega)\\  
			&=
			\lim_{n\rightarrow +\infty} 
			(e^{-itK}\phi_a(g_n)\Omega| \phi_a(f)\Omega) \\
			&=
			\lim_{n\rightarrow +\infty}
			C_{a,a} F_{\phi_a(g_n)\Omega,f}^{d_a}(t) \\
			&=
			C_{a,a} \left(F_{\psi,f}^{d_a}(t)+F_{\psi^\perp,f}^{d_a}(t)\right) \\
			&=
			C_{a,a} F_{\varphi,f}^{d_a}(t)  
		\end{split}
	\end{equation}
	where we have used \eqref{eq:expitK_action_fourier_prod_scal} ($K$ is self-adjoint), the orthogonality condition and the continuity given by \eqref{eq:continuity_F_varphi,f_part2}. Thus, combine the Schwarz reflection principle and the identity theorem as in the proof of Proposition \ref{prop:properties_F_g,f(z)}, we have that for all $f$ and all $\varphi\in D(K)_\mathrm{ea}$,
	\begin{equation}  \label{eq:F_and_exp_K}
		(e^{-i\overline{z}K}\varphi| \phi_a(f)\Omega)
		= C_{a,a} F_{\varphi, f}^{d_a}(z)  
		\qquad \forall z\in D\cup\R
	\end{equation}
	Now, using the analytic continuation to the boundary of \eqref{eq:F_and_exp_K}, we obtain that
	\begin{equation}  \label{eq:equality_-i/2}
		\begin{split}
			(e^\frac{K}{2}\varphi| \phi_a(f)\Omega)
			&=  
			C_{a,a} F_{\varphi, f}(-i/2) \\
			&=
			\lim_{n\rightarrow +\infty}
			C_{a,a} F_{\phi_a(g_n)\Omega,f}^{d_a}(-i/2)   \\
			&=
			\lim_{n\rightarrow +\infty}
			C_{a,a} \int_0^{+\infty} 
			\overline{\widehat{g_n}(p)} 
			e^{i\pi d_a} \widehat{f}(-p)
			p^{2d_a-1} \mathrm{d} p \\
			&=
			\lim_{n\rightarrow +\infty}
			(\phi_a(g_n)\Omega| e^{i\pi d_a}Y(a,f^\C)\Omega) \\
			&= 
			(\varphi| e^{i\pi d_a}Y(a,f^\C)\Omega)
			\qquad\forall \varphi\in D(K)_\mathrm{ea}  
		\end{split}
	\end{equation}
	where we have used again the orthogonality condition and the continuity given by \eqref{eq:continuity_F_varphi,f_part2} as in \eqref{eq:equality_on_R}. Note that $e^\frac{K}{2}$ is self-adjoint because $K$ is. Thus, \eqref{eq:equality_-i/2} implies, by definition of adjoint operator, that $\phi_a(f)\Omega$ is in the domain of $e^\frac{K}{2}$ and that
	$$
	e^\frac{K}{2}Y(a,f^\C\circ j)\Omega
	=e^\frac{K}{2}\phi_a(f)\Omega
	=e^{i\pi d_a}Y(a,f^\C)\Omega
	=i^{2d_a}Y(a,f^\C)\Omega
	$$
	for all $f\in C_c^\infty(\R)$ with $\mathrm{supp}f\subset(-\infty,0)$. This is equivalent to
	$$
	e^\frac{K}{2}Y(a,f\circ j)\Omega
	=i^{2d_a}Y(a,f)\Omega
	$$
	for all $f\in C^\infty_c(\pSone)$ with $\mathrm{supp}f\subset S^1_-$, which implies the desired result.
\end{proof}

\section{Further results on the correspondence}
\label{section:further_results}

We present some further results on the correspondence $V\mapsto \A_V$ as shown in Section \ref{section:construction_net}. Beyond the general interest, these results will be useful in the production of examples in  Section \ref{section:examples}.

We fix the following notation: 
let $p\in\Z_2$, $d\in\half\Zpluseq$ and $\mathcal{S}$ be any subset of a vertex superalgebra $V$, then
\begin{align}  \label{eq:notation_even_odd}
	\begin{array}{c}
		C_p^\infty(S^1) 
		:=\left\{\begin{array}{lcl}
			C^\infty(S^1) & \mbox{if} & p=\parzero \\
			C_\chi^\infty(S^1) & \mbox{if} & p=\parone
				  \end{array}\right. 
	\quad
		\iota_d(\gamma) 
		:=\left\{ \begin{array}{lcl}
			\beta_d(\dot{\gamma})  & \mbox{if} &  d\in\Z   \\
			\alpha_d(\gamma)  & \mbox{if} &  d\in\Z-\half 
					\end{array}\right. 
\,\,\,\forall \gamma\in G \vspace{0.2cm}\\
\mathcal{S}_\parzerone:=(\mathcal{S}\cap V_\parzero)\cup(\mathcal{S}\cap V_\parone)
\end{array}
\end{align}
where $G$ can be $\Mob(S^1)^{(2)}$ or $\Diff^+(S^1)^{(2)}$, see Section \ref{subsection:diff_group}, \eqref{eq:test_funct_spaces_chi}, \eqref{eq:family_cont_ops_diff} and \eqref{eq:family_cont_ops_diff^2} for notation.

\subsection{Unitary subalgebras and covariant subnets}
\label{subsection:unitary_subalgebras_covariant_subnets}
In the current section, we show that there is a one-to-one correspondence between unitary subalgebras of a simple strongly graded-local unitary VOSA $V$ and covariant subnets of the associated irreducible graded-local conformal net $\A_V$. As a consequence, we also prove that the correspondence $V\mapsto \A_V$ preserves the fixed point and the coset constructions. See Section \ref{subsection:covariant_subnets} and Section \ref{subsection:unitary_subalgebras} for notation.

\begin{theo}  \label{theo:covariant_subnets_unitary_subalgebras}
Let $V$ be a simple strongly graded-local unitary VOSA. If $W$ is a unitary subalgebra, then the simple unitary VOSA $W$ is strongly graded-local and $\A_W$ embeds canonically as a covariant subnet of $\A_V$. Conversely, if $\B$ is a M\"{o}bius covariant subnet of $\A_V$, then $W:=\mathcal{H}_\B\cap V$ is a unitary subalgebra of $V$ such that $\A_W=\B_{e_\B}$. In other terms, the map $W\mapsto\A_W$ gives a one-to-one correspondence between unitary subalgebras of $V$ and covariant subnets of $\A_V$.
\end{theo}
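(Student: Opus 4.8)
The plan is to prove the theorem in two directions, each building on the machinery already established in the paper. The forward direction starts from a unitary subalgebra $W \subseteq V$ and must produce a covariant subnet of $\A_V$; the reverse direction starts from a covariant subnet $\B$ and recovers a unitary subalgebra. The crux will be verifying that the two assignments are mutually inverse, which requires identifying the Hilbert space $\mathcal{H}_W := \mathcal{H}_{(W,\scalar\restriction_W)}$ with the subspace $\mathcal{H}_\B = \overline{\B(S^1)\Omega}$ and matching the smeared vertex operators accordingly.

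For the forward direction, I would first invoke Proposition \ref{prop:unitary_structure_subalgebras} to guarantee that $W$ is itself a simple unitary VOSA with conformal vector $\nu^W = e_W\nu$ and PCT operator $\theta\restriction_W$. Since $W$ is a unitary subalgebra of the energy-bounded $V$, every generator of $W$ inherits energy bounds from $V$, so $W$ is energy-bounded by Proposition \ref{prop:energy_boundedness_by_generators}. The key analytic point is that the smeared vertex operators $Y_W(a,f)$ for $a\in W$ are exactly the restrictions of $Y(a,f)$ to $\mathcal{H}_W \subseteq \mathcal{H}$, which follows because $e_W$ commutes with $L_0$ and with $Y(a,z)$ for all $a\in W$ (Proposition \ref{prop:unitary_structure_subalgebras}), so $\mathcal{H}_W$ is the closure of $W$ inside $\mathcal{H}$ and is invariant under these operators. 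This identifies $\A_W(I)$ with the von Neumann algebra $\A_V(I)_W$ generated by $\{Y(a,f)\restriction_{\mathcal{H}_W}\}$ for $a\in W$. Strong graded locality of $W$ then follows from that of $V$: graded-local commutation of the generating smeared operators on $\mathcal{H}$ restricts to $\mathcal{H}_W$, giving $\A_W(I') \subseteq Z_W\A_W(I)'Z_W^*$. Defining $\B(I) := W^*(\{Y(a,f) \mid a\in W,\ \mathrm{supp}f\subset I\})$ acting on $\mathcal{H}$, one checks the three subnet axioms of Section \ref{subsection:covariant_subnets} (isotony, $\B(I)\subseteq\A_V(I)$, and M\"obius covariance via Proposition \ref{prop:mob_covariance_qp}), and then $\B_{e_\B} = \A_W$ by construction.

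For the reverse direction, given a M\"obius covariant subnet $\B \subseteq \A_V$, I set $W := \mathcal{H}_\B \cap V$ and must show $W$ is a unitary subalgebra. By Proposition \ref{prop:covariant_subnets}, $\B_{e_\B}$ is an irreducible graded-local M\"obius covariant net, and since $\A_V$ is conformal, $\B$ is diffeomorphism covariant, making $\B_{e_\B}$ a graded-local conformal net on $\mathcal{H}_\B$. The projection $e_\B$ lies in $U(\Mob(S^1)^{(\infty)})'$, hence commutes with $L_0$, so $\mathcal{H}_\B$ is spanned by its finite-energy vectors and $W = \mathcal{H}_\B\cap V$ is a graded subspace with $L_0 W\subseteq W$. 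To see $W$ is a vertex subalgebra closed under $\theta$ and $L_1$ (the criterion of Proposition \ref{prop:characterisation_unitary_subalgebras}), I would use that $e_\B$ commutes with the smeared vertex operators generating $\B$ and with the M\"obius representation: this yields $\Omega\in W$, $L_1 W\subseteq W$, and $\theta(W)\subseteq W$ because $\theta$ is implemented (up to the antiunitary reflection $U(j_I)$) by the modular data preserving $\B$. The fact that $a_{(n)}b\in W$ for $a,b\in W$ is extracted by smearing and taking Fourier components, mimicking the third step of the proof of Proposition \ref{prop:Omega_is_cyclic}.

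\textbf{The hard part} will be the injectivity-surjectivity matching, i.e. showing $\A_W = \B_{e_\B}$ for the $W$ just constructed and $\mathcal{H}_\B\cap V = W$ for the $\B$ of the forward map. The essential obstruction is proving that the covariant subnet $\B$ is in fact \emph{generated} by the smeared vertex operators $Y(a,f)$ with $a\in W = \mathcal{H}_\B\cap V$ — that no extra operators appear and none are missing. My plan is to lean on the reconstruction results of Section \ref{section:back} (in particular the strong graded locality of $V$ together with Proposition \ref{prop:commutation_von_neumann_algebras_generated} and the uniqueness theorem for vertex superalgebras \cite[Theorem 4.4]{Kac01}, as used in the proof of Theorem \ref{theo:unicity_irr_graded-local_cn}): one shows that every bounded operator affiliated to $\B(I)$ commutes with the correct smeared operators and conversely, using the Bisognano-Wichmann property of Theorem \ref{theo:delta_one_half} to control the action on $\Omega$ and to identify $\mathcal{H}_\B$ with $\mathcal{H}_W$. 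This is precisely the analytic subtlety that distinguishes the graded-local case from the local one, so I would cite the analogous argument in \cite[Section 6.3]{CKLW18} and supply the modifications due to the grading unitary $Z$ and the half-integer moded fields.
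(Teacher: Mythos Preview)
Your forward direction is essentially sound, though it differs in an inessential way from the paper's route. The paper defines the candidate subnet as $\B_W(I) := \A_V(I)\cap\{e_W\}'$ and then uses twisted Haag duality on $\mathcal{H}_W$ to squeeze $\A_W(I)$ between $\B_W(I)_{e_W}$ and itself, whereas you generate $\B(I)$ directly from the smeared operators $\{Y(a,f):a\in W\}$. Both lead to the same conclusion; your version is perhaps more direct but the paper's yields the identification $\A_W = \B_W|_{e_W}$ in one clean stroke via Haag duality.

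The genuine gap is in your reverse direction. You write that to show $W$ is a vertex subalgebra you would ``use that $e_\B$ commutes with the smeared vertex operators generating $\B$''. This is circular: you have not yet established that $\B$ is generated by any smeared vertex operators at all---$\B$ is given to you abstractly as a covariant subnet, and the whole point is to prove that the operators $Y(a,f)$ with $a\in W=\mathcal{H}_\B\cap V$ are affiliated with $\B(I)$ (equivalently, commute with $e_\B$). The paper's proof supplies exactly this missing step: for $a\in W$ it first shows $a_{(n)}\Omega\in W$ by induction using $[L_{-1},a_{(-n)}]$, hence $Y(a,f)\Omega\in\mathcal{H}_\B$; then it invokes the vacuum-preserving normal conditional expectation $\epsilon_{I'}:\A_V(I')\to\B(I')$ (from \cite{Lon03}) together with the identity $\epsilon_{I'}(A)e_\B=e_\B A e_\B$ to compute $Y(a,f)e_\B ZAZ^*\Omega = e_\B Y(a,f)ZAZ^*\Omega$ for $A\in\A_V(I')$. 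Since $Z\A_V(I')Z^*\Omega$ is a core for $Y(a,f)$ (a small modification of \cite[Proposition 7.3]{CKLW18}), this gives $[Y(a,f),e_\B]=0$. Only then does the argument ``mimicking Proposition \ref{prop:Omega_is_cyclic}'' go through to give $a_nb,\,a_n^+b\in W$.

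You also misidentify the hard part. Once $Y(a,f)$ is known to commute with $e_\B$, the bijectivity matching $\A_W=\B_{e_\B}$ is immediate from the forward direction (as the paper notes in one line at the end). Your proposed route through Section \ref{section:back} and Bisognano--Wichmann is vague and does not obviously substitute for the conditional-expectation argument: the FJ operators for $\B$ live on $\mathcal{H}_\B$, not on $\mathcal{H}$, and relating them to $Y(a,f)$ on $\mathcal{H}$ is precisely what the conditional expectation accomplishes.
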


\begin{proof}
The first and the second statement are obtained by adapting the proofs of \cite[Theorem 7.1 and Theorem 7.4]{CKLW18} respectively. 
We use the notation as given in Definition \ref{defin:hilbet_space_from_V} and in \eqref{eq:notation_even_odd}.

Suppose that $W$ is a unitary subalgebra of $V$. 
By proposition \ref{prop:unitary_structure_subalgebras}, 
$W$ with the restriction of $\scalar$ is a simple unitary VOSA.
Moreover, by the definition of vertex subalgebra, for all $a\in W$, the vertex operator $Y_W(a,z)$ of $W$ is equal to the restriction of the vertex operator $Y(a,z)$ of $V$ to $W$. It follows that $W$ is a simple energy-bounded unitary VOSA. 

If $e_W$ is the orthogonal projection onto the Hilbert space closure $\mathcal{H}_W$ of $W$, then $W=e_WV=\mathcal{H}_W\cap V$. Hence, for all $a\in W_\parzerone$, all $f\in C_{p(a)}^\infty(S^1)$ and all $b\in V$, we have that
$$
Y(a,f)e_Wb\in\mathcal{H}_W
\,,\qquad
Y(a,f)^*e_Wb \in\mathcal{H}_W \,.
$$
Therefore,
\begin{equation}
	\begin{split}
		(b|e_WY(a,f)c)
		&
		=(Y(a,f)^*e_Wb|c)
		=(Y(a,f)^*e_Wb|e_Wc) \\
		&
		=(e_Wb|Y(a,f)e_Wc)
		=(b|Y(a,f)e_Wc)
		\qquad\forall a\in W_\parzerone
		\,\,\,\forall b,c\in V \,.
	\end{split}
\end{equation}
Recall that $V$ is a core for every smeared vertex operator $Y(a,f)$ and thus the calculation above implies that every $Y(a,f)$ with $a\in W_\parzerone$ and $f\in C_{p(a)}^\infty(S^1)$ commutes with $e_W$. By the definition of vertex subalgebra and by Proposition \ref{prop:characterisation_unitary_subalgebras}, $L_{-1}, L_0$ and $L_1$ preserve $W$ and thus we can prove that they commute with $e_W$, just proceeding as above. This implies that $e_W\in U(\Mob(S^1)^{(\infty)})'$.
Therefore, the isotonous family $\B_W$ of $\A_V$ defined by
$$
\B_W(I):=\A_V(I)\cap\{e_W\}'
\qquad\forall I\in\J
$$
is a M\"{o}bius covariant subnet of $\A_V$. Moreover, for all $I\in \J$, every smeared vertex operator $Y(a,f)$ with $a\in W_\parzerone$ and $f\in C_{p(a)}^\infty(S^1)$ such that $\mathrm{supp} f\subset I$ is affiliated with $\B_W(I)$.
Consequently, $\mathcal{H}_{\B_W}=\mathcal{H}_W$ and thus $\B_W$ becomes irreducible when restricted to $\mathcal{H}_W$ by Proposition \ref{prop:covariant_subnets}. By twisted Haag duality \eqref{eq:twisted_haag_duality}, we have that
$$
Z(\B_W(I)_{e_W})'Z^*=Z^*(\B_W(I)_{e_W})'Z=\B_W(I')_{e_W}
\qquad \forall I\in\J
\,.
$$
Moreover,
$$
\mathcal{D}(Y_W(a,f))=e_W\mathcal{D}(Y(a,f))
=\mathcal{D}(Y(a,f))\cap\mathcal{H}_W
\qquad\forall a\in W_\parzerone
\,\,\,\forall f\in C_{p(a)}^\infty(S^1)
$$
because every $Y(a,f)$ commutes with $e_W$ and it coincides with $Y_W(a,f)$ on $W$. 
Accordingly, for every $a\in W_\parzerone$ and every $I\in\J$, $Y_W(a,f)$ must be affiliated with $(Z\B_W(I')_{e_W}Z^*)'=\B_W(I)_{e_W}$ whenever $f\in C_{p(a)}^\infty(S^1)$ has $\mathrm{supp}f\subset I$.
Thus, for all $I\in\J$, the von Neumann algebra on $\mathcal{H}_W$
$$
\A_W(I):=W^*\left(\left\{
Y_W(a,f) \mid a\in W_\parzerone \,,
\,\,\, f\in C_{p(a)}^\infty(S^1) 
\,\,\, \mathrm{supp}f\subset I
\right\}\right)
$$
is contained in $\B_W(I)_{e_W}$. 
This means that $W$ defines a simple strongly graded-local unitary VOSA because $\B_W(\cdot)_{e_W}$ is a graded-local M\"{o}bius covariant net.
Thus,  
by Theorem \ref{theo:diffeo_covariant_net_from_V}, 
$\A_W$ is an irreducible graded-local conformal net and by twisted Haag duality \eqref{eq:twisted_haag_duality},
$$
\A_W(I')\subseteq\B_W(I')_{e_W}=Z\B_W(I)_{e_W}'Z^*
\subseteq
Z\A_W(I)'Z^*=\A_W(I')
\qquad \forall I\in\J
$$
which  implies that $\A_W(I)=\B_W(I)_{e_W}$ for all $I\in\J$, so concluding the proof of the first part.

Conversely, suppose that $\B$ is a M\"{o}bius covariant subnet of $\A_V$ and set $W:=\mathcal{H}_\B\cap V$.
Both $\Omega\in W$   
and $L_nW\subseteq W$ for all $n\in\{-1,0,1\}$ 
because $\mathcal{H}_\B$ is globally invariant for the unitary representation $U$ of $\Mob(S^1)^{(\infty)}$ on $\mathcal{H}$. 
Now, if $a\in W$, then $a_{(-1)}\Omega=a\in W$ and 
$$
a_{(-n-1)}\Omega
=\frac{1}{n}[L_{-1},a_{(-n)}]\Omega
=\frac{1}{n}L_{-1}a_{(-n)}\Omega
\qquad\forall n\in\Z
$$ 
thanks to the commutation relation \eqref{eq:cr_l-1}. Using the inductive step, we can deduce that $a_{(n)}\Omega$ is in $W$ for all $n\in\Z$. 
It follows that $Y(a,f)\Omega\in \mathcal{H}_\B$ for all $a\in W_\parzerone$ and all $f\in C_{p(a)}^\infty(S^1)$. 
Let   $e_\B$  be the orthogonal projection of $\mathcal{H}$ onto $\mathcal{H}_\B$ and for any $I\in\J$, let $\epsilon_{I'}$ be the unique vacuum-preserving normal conditional expectation of $\A_V(I')$ onto $\B(I')$, see e.g. \cite[Lemma 13]{Lon03}. If $a\in W_\parzerone$, $f\in C_{p(a)}^\infty(S^1)$ with $\mathrm{supp}f\subset I$ and $A\in\A_V(I')$, we have that
\begin{equation}
	\begin{split}
		Y(a,f)e_\B ZAZ^*\Omega 
		&=
		Y(a,f)Ze_\B AZ^*\Omega 
		=Y(a,f)Z\epsilon_{I'}(A)Z^*\Omega \\
		&=
		Z\epsilon_{I'}(A)Z^* Y(a,f)\Omega 
		=Ze_\B AZ^* Y(a,f)\Omega  \\
		&=
		e_\B ZAZ^* Y(a,f)\Omega 
		=e_\B Y(a,f)ZAZ^*\Omega
	\end{split}
\end{equation}
where we have used  that $\epsilon_{I'}(A)e_\B=e_\B Ae_\B$ and that $\epsilon_{I'}(A)\in\B(I')$. 
By a slight modification of \cite[Proposition 7.3]{CKLW18}, $Z\A_V(I')Z^*\Omega$ is a core for every $Y(a,f)$ with $a\in W_\parzerone$ and $f\in C_{p(a)}^\infty(S^1)$ such that $\mathrm{supp}f\subset I$, so that every such $Y(a,f)$ commutes with $e_\B$. 
Consequently, $Y(a,f)$ and $Y(a,f)^*$ with $\mathrm{supp}f\subset I$ are affiliated with $\B(I)=\A_V(I)\cap\{e_\B\}'$. 
If $f\in C_{p(a)}^\infty(S^1)$ (that is without any restriction on the support), we can find $g,h\in C_{p(a)}^\infty(S^1)$ such that $f=g+h$, $\mathrm{supp}g\subset I_g$ and $\mathrm{supp}h\subset I_h$ for some $I_g,I_h\in\J$. Then $Y(a,f)c=Y(a,g)c+Y(a,h)c$ for all $c\in\mathcal{H}^\infty$ thanks to the linearity of the maps in \eqref{eq:smeared_vertex_operator_distribution}. Hence, $Y(a,f)$ commutes with $e_\B$ for all $a\in W_\parzerone$ and all $f\in C_{p(a)}^\infty(S^1)$ because $\mathcal{H}^\infty$ is a core for all smeared vertex operators. 
Now, proceeding similarly to the proof of Proposition \ref{prop:Omega_is_cyclic},
we can prove that $a_nb\in W$ and $a_n^+b\in W$ for all $a,b\in W$. Recalling that $L_0W\subseteq W$,   we conclude that $W$ is a unitary subalgebra of $V$. From the first part, it follows also that $\A_W=\B_{e_\B}$, so concluding the proof.
\end{proof}

Implementing the same proof of \cite[Proposition 7.6]{CKLW18}, we get that:
\begin{prop}   \label{prop:fixed_point_correspondence}
Let $V$ be a simple strongly graded-local unitary VOSA. If $G$ is a closed subgroup of $\Aut_{\scalar}(V)=\Aut(\A_V)$, then $\A_{V^G}=\A_V^G$.
\end{prop}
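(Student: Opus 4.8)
The plan is to exploit the one-to-one correspondence of Theorem \ref{theo:covariant_subnets_unitary_subalgebras} in its \emph{converse} direction, so as to avoid any direct comparison of the local algebras. First I would record the standing facts: since $\Aut_{\scalar}(V)$ is compact by Theorem \ref{theo:characterization_aut_group} and coincides with $\Aut(\A_V)$ by Theorem \ref{theo:AutV_AutNet}, the closed subgroup $G$ is compact, so $\A_V^G$ is a well-defined graded-local conformal net by Example \ref{ex:fixed_point_subnet} and $V^G$ is a unitary subalgebra by Example \ref{ex:fixed_point_subalgebra}. Moreover $\A_V^G$ is a M\"obius covariant subnet of $\A_V$: each $g\in G$, viewed as a unitary $U_g$ on $\mathcal{H}$, commutes with $U(\gamma)$ for all $\gamma\in\Mob(S^1)^{(\infty)}$ by Proposition \ref{prop:automorphisms_and_symmetries}, whence $U(\gamma)\A_V^G(I)U(\gamma)^*=\A_V^G(\dot{\gamma}I)$ for all $I\in\J$.

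By the converse part of Theorem \ref{theo:covariant_subnets_unitary_subalgebras}, the covariant subnet $\A_V^G$ is then associated to the unitary subalgebra $W:=\mathcal{H}_{\A_V^G}\cap V$ and satisfies $\A_W=(\A_V^G)_{e_{\A_V^G}}$. The whole argument thus reduces to the identity $W=V^G$: feeding this back into the one-to-one correspondence gives $\A_{V^G}=\A_W=(\A_V^G)_{e_{\A_V^G}}$, which is exactly the claim. The easy inclusion is $W\subseteq V^G$. Setting $\mathcal{H}^G:=\{v\in\mathcal{H}\mid U_gv=v\ \forall g\in G\}$, for every $A\in\A_V^G(S^1)$ one has $U_gA\Omega=U_gAU_g^*\Omega=A\Omega$, so that $\mathcal{H}_{\A_V^G}=\overline{\A_V^G(S^1)\Omega}\subseteq\mathcal{H}^G$; hence any $a\in W$ is a $G$-fixed vector lying in $V$, i.e.\ $a\in V^G$.

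The real work is the reverse inclusion $V^G\subseteq W$, i.e.\ showing that fixed vectors lie in the vacuum subspace of the fixed-point net. I would take a homogeneous $a\in V^G$ (a general element decomposes into homogeneous summands, still $G$-fixed by grading-invariance of the automorphisms). Since each $U_g$ restricts to the VOSA automorphism $g$ and intertwines smeared vertex operators, $U_gY(a,f)U_g^{-1}=Y(g(a),f)=Y(a,f)$ for every admissible test function $f\in C^\infty_{p(a)}(S^1)$; thus $Y(a,f)$ commutes strongly with each $U_g$ and is therefore affiliated with $\A_V(I)\cap\{U_g\mid g\in G\}'=\A_V^G(I)$ whenever $\mathrm{supp}f\subset I$. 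A standard polar-decomposition and spectral-projection argument then yields $Y(a,f)\Omega\in\overline{\A_V^G(I)\Omega}\subseteq\mathcal{H}_{\A_V^G}$. Finally, $\mathcal{H}_{\A_V^G}$ is invariant under the rotations $e^{itL_0}=U(r^{(\infty)}(t))$, hence preserved by the spectral projection $P_{d_a}$ of $L_0$ onto $V_{d_a}$; applying $P_{d_a}$ to $Y(a,f)\Omega=\sum_n\widehat{f}_n\, a_n\Omega$ isolates the single term $\widehat{f}_{-d_a}\,a$, so choosing $f$ with $\widehat{f}_{-d_a}\neq0$ gives $a\in\mathcal{H}_{\A_V^G}$, and therefore $a\in W$.

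The step I expect to be the genuine obstacle — everything else being bookkeeping with the correspondence theorem — is precisely this affiliation-and-extraction argument establishing $V^G\subseteq\mathcal{H}_{\A_V^G}$. The only point requiring extra care in the super setting, relative to \cite[Proposition 7.6]{CKLW18}, is to use the correct test-function space $C^\infty_{p(a)}(S^1)$ throughout (that is, $C^\infty_\chi(S^1)$ for odd $a$), so that the statements about smeared operators, their Fourier expansions and their affiliation remain valid for Fermi vectors.
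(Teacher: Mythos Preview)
Your proof is correct and follows the same approach the paper indicates (namely, adapt \cite[Proposition 7.6]{CKLW18}): invoke the bijection of Theorem \ref{theo:covariant_subnets_unitary_subalgebras} and verify that $\mathcal{H}_{\A_V^G}\cap V=V^G$. Your affiliation-and-extraction argument for $V^G\subseteq\mathcal{H}_{\A_V^G}$ is fine; a slightly quicker variant is to average $A\in\A_V(I)$ over the compact group $G$ to see directly that $\mathcal{H}_{\A_V^G}=\mathcal{H}^G$, whence $W=\mathcal{H}^G\cap V=V^G$ without the Fourier-coefficient step.
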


Theorem \ref{theo:covariant_subnets_unitary_subalgebras} allows us to prove the following result about the coset construction.
\begin{prop}   \label{prop:correspondence_coset_constructions}
Let $V$ be a simple strongly graded-local unitary VOSA and let $W$ be a unitary subalgebra. Then $\A_{W^c}=\A_W^c$. 
\end{prop}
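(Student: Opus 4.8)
The plan is to prove the two inclusions $\A_{W^c} \subseteq \A_W^c$ and $\A_W^c \subseteq \A_{W^c}$ by exploiting the correspondence already established in Theorem \ref{theo:covariant_subnets_unitary_subalgebras}, which guarantees that both $W$ and the coset subalgebra $W^c$ (which is a unitary subalgebra by Proposition \ref{prop:properties_coset_subalgebra}) give rise to covariant subnets $\A_W$ and $\A_{W^c}$ of $\A_V$ realized on $\mathcal{H}$ via their respective projections. The coset subnet $\A_W^c$ is by definition (see \eqref{eq:defin_coset_subnet}) the subnet of $\A_V$ given by $\A_W^c(I) = \{A \in \A_V(I) \mid [A,B]=0 \,\,\forall B \in \A_W(S^1)\}$, so the whole proof reduces to identifying, interval by interval, the local von Neumann algebras of $\A_{W^c}$ with the relative commutant of $\A_W(S^1)$ inside $\A_V(I)$.

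First I would establish the inclusion $\A_{W^c} \subseteq \A_W^c$. Fix $I \in \J$. An element $a \in (W^c)_{\parzerone}$ satisfies $[Y(a,z),Y(b,w)]=0$ for all $b \in W$ in the vertex superalgebra sense, hence the smeared operators $Y(a,f)$ (with $\mathrm{supp}f \subset I$) and $Y(b,g)$ (with arbitrary support) commute on the common invariant core $\mathcal{H}^\infty$. The key point is to upgrade this commutation-on-a-core to strong commutation of the generated von Neumann algebras, i.e.\ to show $Y(a,f)$ is affiliated with $\A_W(S^1)'$. This is precisely the type of strong-commutativity statement controlled by the strong graded locality of $V$ together with Proposition \ref{prop:commutation_von_neumann_algebras_generated} and the analytic tools behind Theorem \ref{theo:gen_by_quasi_primary} (linear energy bounds for quasi-primary generators); since $\A_{W^c}$ is generated by such smeared operators for $a \in W^c$, one concludes $\A_{W^c}(I) \subseteq \A_V(I) \cap \A_W(S^1)' = \A_W^c(I)$.

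Next I would prove the reverse inclusion $\A_W^c \subseteq \A_{W^c}$, which is the substantive direction. Here I would apply Theorem \ref{theo:covariant_subnets_unitary_subalgebras} in its converse form: the coset subnet $\B := \A_W^c$ is a M\"obius covariant subnet of $\A_V$, so it arises from a unitary subalgebra $\widetilde{W} := \mathcal{H}_{\B} \cap V$ with $\A_{\widetilde{W}} = \B_{e_{\B}}$. It therefore suffices to show $\widetilde{W} = W^c$. The inclusion $W^c \subseteq \widetilde{W}$ follows from the first part (every vector of $W^c$ lands in $\mathcal{H}_{\B}$). For the opposite inclusion, I would take $a \in \widetilde{W}$ and show that $Y(a,z)$ commutes with $Y(b,z)$ for every $b \in W$ in the vertex algebra sense: the affiliation $Y(a,f) \in \B(I) = \A_W^c$-local algebra forces strong commutation of $Y(a,f)$ with every $Y(b,g)$, and then Proposition \ref{prop:wightman_locality} converts Wightman (strong) locality back into vertex-superalgebra locality, giving $b_{(j)}a = 0$ for all $j \in \Zpluseq$ and all $b \in W$, i.e.\ $a \in W^c$ by \eqref{eq:defin_coset_subalgebra}.

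The main obstacle I anticipate is the passage between the operator-algebraic notion of commutation (affiliation with a relative commutant, strong commutativity of unbounded operators) and the formal-algebraic notion of locality of vertex operators, in both directions. This is exactly the ``touchy business'' flagged in the introduction: commutation on a common core does not automatically yield commutation of the generated von Neumann algebras, so one genuinely needs strong graded locality plus the energy-bounds/Bisognano–Wichmann machinery (Theorem \ref{theo:gen_by_quasi_primary}, Proposition \ref{prop:wightman_locality}) rather than the bare locality axiom. I expect the cleanest exposition to follow the proof of \cite[Proposition 7.7]{CKLW18} closely, inserting the graded (anti)commutator wherever Fermi generators appear and invoking Proposition \ref{prop:gen_by_qp} to reduce to quasi-primary generators throughout.
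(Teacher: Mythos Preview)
Your reverse inclusion $\A_W^c \subseteq \A_{W^c}$ is essentially the paper's argument: apply Theorem \ref{theo:covariant_subnets_unitary_subalgebras} to the covariant subnet $\A_W^c$ to get a unitary subalgebra $\widetilde W$, then use affiliation with $Z\A_W(S^1)'Z^*$, Proposition \ref{prop:commutation_von_neumann_algebras_generated}, and (implicitly) Proposition \ref{prop:wightman_locality} to conclude $\widetilde W\subseteq W^c$.

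The forward inclusion $\A_{W^c}\subseteq\A_W^c$, however, has a genuine gap. You need $Y(a,f)$ (for $a\in W^c$, $\operatorname{supp}f\subset I$) to commute strongly with $Y(b,g)$ for \emph{every} $b\in W$ and \emph{every} $g$, including those whose support meets $I$. Strong graded locality of $V$ only controls disjoint supports, and Proposition \ref{prop:commutation_von_neumann_algebras_generated} goes the wrong way (it deduces commutation on a domain from strong commutation, not conversely). The phrase ``linear energy bounds for quasi-primary generators'' is also inaccurate: generic quasi-primary vectors satisfy only polynomial energy bounds, so the Driessler--Fr\"ohlich argument does not apply to them.

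The paper circumvents this by a different device. From $\nu=\nu^W+\nu^{W^c}$ one gets two commuting positive-energy Virasoro representations $U_W$, $U_{W^c}$ on $\mathcal H$ with $U=U_WU_{W^c}$; since $Y(\nu^W,f)$ and $Y(\nu^{W^c},f)$ \emph{do} satisfy linear energy bounds, Driessler--Fr\"ohlich gives strong commutation here. A Cauchy-problem argument then shows $U_{W^c}(\gamma)$ commutes with every smeared field from $W$ (and symmetrically $U_W(\gamma)$ with every smeared field from $W^c$). Finally, for $A\in\A_{W^c}(I)$ and $B\in\A_W(I_1)$ one picks $\gamma$ with $\dot\gamma I_1=I'$ and uses $U_W(\gamma)BU_W(\gamma)^*=U(\dot\gamma)BU(\dot\gamma)^*\in\A_W(I')$ together with graded locality to get $AZBZ^*=ZBZ^*A$. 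This transport trick is the missing idea in your proposal: it reduces the overlapping-support case to the disjoint-support case, where strong graded locality is available.
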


\begin{proof}
This proof is adapted from the one of \cite[Proposition 7.8]{CKLW18}. We use the notation specified in \eqref{eq:notation_even_odd}.
 
First note that $\A_W$ and $\A_{W^c}$ are covariant subnets of $\A_V$ thanks to Proposition \ref{prop:properties_coset_subalgebra} and Theorem \ref{theo:covariant_subnets_unitary_subalgebras}. By Proposition \ref{prop:unitary_structure_subalgebras}, we have two positive-energy unitary representations of the Virasoro algebra on $V$ via the operators $L_n^W$ and $L_n^{W^c}$ with $n\in\Z$. Thus, we can extend them to two positive-energy strongly continuous projective unitary representations $U_W$ and $U_{W^c}$ respectively on $\mathcal{H}$ of $\Diff^+(S^1)^{(\infty)}$ in the usual way, cf.\  p.\  \pageref{eq:repr_U_of_the_net_A_V} and references therein. In particular, we have that $U_X(\exp^{(\infty)}(tf))=e^{itY(\nu^X,f)}$ for all $f\in C^\infty(S^1,\R)$ with $X\in \{W, W^c\}$. Now, we give some properties of these representations. For every $f\in C^\infty(S^1,\R)$, both $Y(\nu^W,z)$ and $Y(\nu^{W^c},z)$ satisfy linear energy bounds ($k=1$ in \eqref{eq:bound_smeared_vertex_ops}). Furthermore, by Proposition \ref{prop:properties_coset_subalgebra}, the vertex operators $Y(\nu^W,z)$ and $Y(\nu^{W^c},z)$ commute. Thanks to these two properties, we can apply the argument given in \cite[p.\ 113]{BS90}, based on \cite[Theorem 3.1]{DF77}, see also \cite[Theorem 19.4.4]{GJ87}, \cite[Theorem C.2]{Tan16} and \cite[Theorem 3.4]{CTW22}, to conclude that also $U_W(\gamma)$ and $U_{W^c}(\gamma)$ commute for all $\gamma\in\Diff^+(S^1)^{(\infty)}$. Moreover, we have that 
\begin{equation}  \label{eq:decomposition_U}
U(\gamma)=U_W(\gamma)U_{W^c}(\gamma) 
\qquad\forall \gamma\in\Diff^+(S^1)^{(\infty)}\,.
\end{equation}
Let $Y(a,g)$ be any smeared vertex operator with $a\in W_\parzerone$ and $g\in C_{p(a)}^\infty(S^1)$, $f\in C^\infty(S^1,\R)$ and $c\in\mathcal{H}^\infty$. For every $t\in\R$, define the vector in $\mathcal{H}^\infty$ $$
c(t):=Y(a,g)U_{W^c}(\exp^{(\infty)}(tf))c
=Y(a,g)e^{itY(\nu^{W^c},f)}c \,.
$$  
Due to Proposition \ref{prop:properties_coset_subalgebra}, $[Y(\nu^{W^c},f),Y(a,g)]=0$ and thus, from Lemma \ref{lem:common_core}, Lemma \ref{lem:h_infty_is_invariant} and Remark \ref{rem:differentiability_Diff_repr_H^infty}, it follows that $t\mapsto c(t)$ is differentiable on $\mathcal{H}^\infty$ and satisfies the Cauchy problem on $\mathcal{H}^\infty$:
$$
\left\{
\begin{array}{l}
\frac{\mathrm{d}}{\mathrm{d}t}c(t)=iY(\nu^{W^c},f)c(t) \\
c(0)=Y(a,g)c
\end{array} \right.
$$
which has $e^{itY(\nu^{W^c},f)}Y(a,g)c$ as unique solution. Therefore, for all $t\in\R$,
$$
Y(a,g)U_{W^c}(\exp^{(\infty)}(tf))c= c(t)
=e^{itY(\nu^{W^c},f)}Y(a,g)c
=U_{W^c}(\exp^{(\infty)}(tf))Y(a,g)c \,.
$$
By the arbitrariness of the choices made above, we have that $Y(a,g)$ and $U_{W^c}(\exp^{(\infty)}(tf))$ commute for all $a\in W_\parzerone$, all $g\in C_{p(a)}^\infty(S^1)$, all $f\in C^\infty(S^1,\R)$ and all $t\in\R$, that is, $U_{W^c}(\exp^{(\infty)}(tf))\in\A_W(I)'$ for all $I\in\J$. By \eqref{eq:decomposition_U}, what above implies that
$$
U(\exp^{(2)}(tf))AU(\exp^{(2)}(tf))^*
=U_W(\exp^{(\infty)}(tf))AU_W(\exp^{(\infty)}(tf))^*
$$
for all $t\in\R$, all $f\in C^\infty(S^1,\R)$, all $A\in\A_W(I)$ and all $I\in\J$.
Similarly, we can prove that $U_W(\exp^{(\infty)}(tf))\in\A_{W^c}(I)'$ for all $t\in\R$, all $f\in C^\infty(S^1,\R)$ and all $I\in\J$. Pick $I\in\J$ and $A\in\A_{W^c}(I)$. For any $I_1\in\J$, let $B\in\A_W(I_1)$ and choose a composition $\gamma$ of exponentials of vector fields such that $\dot{\gamma} I_1=I'$. Thus, we have that
\begin{equation}
\begin{split}
AZBZ^*
 &=U_W(\gamma)^*AU_W(\gamma)ZBZ^*
=U_W(\gamma)^*AZU_W(\gamma)BZ^* \\
 &=U_W(\gamma)^*AZB^\gamma U_W(\gamma)Z^*
=U_W(\gamma)^*AZB^\gamma Z^* U_W(\gamma) \\
 &=U_W(\gamma)^*ZB^\gamma Z^* AU_W(\gamma)
=ZBZ^*U_W(\gamma)^* AU_W(\gamma) \\ 
 &=ZBZ^* A
\end{split}
\end{equation}
where 
$$
B^\gamma:=U_W(\gamma)BU_W(\gamma)^*
=U(\dot{\gamma})BU(\dot{\gamma})^*
\in\A_W(I')\subseteq\A_V(I') \,.
$$
The above means that for every $I\in\J$, every $A\in\A_{W^c}(I)$ commutes with $\A_W(I_1)$ for all $I_1\in\J$, that is $\A_{W^c}\subseteq \A_W^c$. 

Conversely, there exists a unitary subalgebra $\widetilde{W}$ of $V$ such that $\A_W^c=\A_{\widetilde{W}}$ by Theorem \ref{theo:covariant_subnets_unitary_subalgebras}. As a consequence, for every $a\in\widetilde{W}_\parzerone$ and every $f\in C_{p(a)}^\infty(S^1)$, $Y(a,f)$ is affiliated with $Z\A_W(S^1)'Z^*$. This implies that $[Y(a,z),Y(b,w)]=0$ for all $b\in W$ by Proposition \ref{prop:commutation_von_neumann_algebras_generated}, that is, $a\in W^c$. Therefore, we can conclude that $\widetilde{W}\subseteq W^c$ and thus $\A_W^c\subseteq \A_{W^c}$, so finishing the proof.
\end{proof}

\subsection{Strong graded locality through generators}
\label{subsection:strong_graded-local_quasi-prim_generators}
In the current section, we prove that a simple energy-bounded unitary VOSA is strongly graded-local if the family of von Neumann algebras affiliated to a set of quasi-primary generators of the VOSA is graded-local. One of the consequences is that tensor products of simple strongly graded-local unitary VOSAs are strongly graded-local.

Let $V$ be a simple energy-bounded unitary VOSA and let $\F$ be a subset of $V$. Then for every $I\in\J$ we define a von Neumann subalgebra of $\A_V(I)$ by
\begin{equation}   \label{eq:defin_net_gen_by_subset_F}
\A_\F(I):=W^*\left(\left\{
Y(a,f),\,\, Y(b,g) \left| 
\begin{array}{l}
	a\in V_\parzero\cap\F \,,\,\, f\in C^\infty(S^1) \,,\,\,\mathrm{supp}f\subset I\\ 
	b\in V_\parone\cap\F\,,\,\, g\in C^\infty_\chi(S^1) \,,\,\,\mathrm{supp}g\subset I
\end{array}
\right.\right\} \right)
\,.
\end{equation}

\begin{theo}  \label{theo:gen_by_quasi_primary}
Let $\F$ be a subset of a simple energy-bounded unitary VOSA $V$ and assume that $\F$ contains only quasi-primary vectors. Moreover, suppose that $\F$ generates $V$ and that there exists an $I\in\J$ such that $\A_\F(I')\subseteq Z\A_\F(I)'Z^*$. Then $V$ is strongly graded-local and $\A_\F(I)=\A_V(I)$ for all $I\in\mathcal{J}$.
\end{theo}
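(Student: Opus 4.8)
The plan is to prove that the graded-local subnet $\A_\F$ generated by $\F$ already exhausts $\A_V$; strong graded locality of $V$ is then immediate, since $\A_V=\A_\F$ inherits the locality that $\A_\F$ satisfies by hypothesis. First I would upgrade the hypothesis from a single interval to all of them. As $\F$ consists of quasi-primary vectors, Proposition \ref{prop:mob_covariance_qp} gives $U(\gamma)Y(a,f)U(\gamma)^{*}=Y(a,\iota_{d_a}(\gamma)f)$ for $\gamma\in\Mob(S^1)^{(2)}$, with $\iota_{d_a}$ as in \eqref{eq:notation_even_odd}, and the support of $\iota_{d_a}(\gamma)f$ is $\dot\gamma(\mathrm{supp}f)$; hence $U(\gamma)\A_\F(I)U(\gamma)^{*}=\A_\F(\dot\gamma I)$. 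Since $U$ commutes with the central element $\Gamma=U(r^{(\infty)}(2\pi))$, and therefore with $Z$, transporting the assumed inclusion by a Möbius map sending the given $I$ to an arbitrary $\tilde I$ yields $\A_\F(\tilde I')\subseteq Z\A_\F(\tilde I)'Z^{*}$ for every $\tilde I\in\J$.

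Next I would check that $\A_\F$, together with $U$, $\Omega$ and $\Gamma$, is an irreducible graded-local Möbius covariant net. Isotony and Möbius covariance are clear, positivity of the energy and CFT type follow from simplicity via Proposition \ref{prop:characterisation_simplicity}, and graded locality is the previous step; the only point needing work is cyclicity of $\Omega$ for $\A_\F(S^1)$. This I would obtain by rerunning the argument of Proposition \ref{prop:Omega_is_cyclic} with $E$ the projection onto $\overline{\A_\F(S^1)\Omega}$: its three steps now show $b_{(n)}(EV)\subseteq EV$ for every $b\in\F$, and since $\Omega\in EV$ and the iterated modes of $\F$ applied to $\Omega$ span $W(\F)=V$, one concludes $EV=V$. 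Consequently $\A_\F$ enjoys the Reeh--Schlieder theorem, twisted Haag duality $\A_\F(S^1_+)'=Z^{*}\A_\F(S^1_-)Z$, and the Bisognano--Wichmann property of \cite{CKL08}, so that the modular operator of $(\A_\F(S^1_+),\Omega)$ satisfies $\Delta^{it}=U(\delta^{(\infty)}(-2\pi t))=e^{itK}$ with $K$ as in Theorem \ref{theo:delta_one_half}, and $U(j_{S^1_+})=ZJ$ as in \eqref{eq:B-W_reflection}.

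The heart of the proof is to show that every $Y(c,f)$ with $c\in V$ and $\mathrm{supp}f\subset I$ is affiliated with $\A_\F(I)$, which gives $\A_V(I)\subseteq\A_\F(I)$ and hence equality. Using that $V$, as a positive-energy unitary Virasoro module, is spanned by the $L_{-1}$-descendants of its quasi-primary vectors, and that $Y(L_{-1}q,\cdot)$ arises from $Y(q,\cdot)$ by a support-preserving differentiation of the test function, I would reduce to a quasi-primary $q\in V$ and $\mathrm{supp}f\subset S^1_+$. For such $q$, Theorem \ref{theo:delta_one_half} records the Bisognano--Wichmann behaviour of the generating vectors, $\Delta^{1/2}Y(q,f)\Omega=e^{K/2}Y(q,f)\Omega=i^{2d_q}Y(q,f\circ j)\Omega$. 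On the other hand, the locality axiom of $V$ gives the weak graded-commutation of $Y(q,f)$ with the fields $Y(a',g)$, $a'\in\F$ and $\mathrm{supp}g\subset S^1_-$, which generate $Z^{*}\A_\F(S^1_-)Z=\A_\F(S^1_+)'$, on the common invariant core $\mathcal{H}^\infty$. Combining this weak commutation with the geometric modular action just recorded, through a Tomita--Takesaki argument of Bisognano--Wichmann type (in the spirit of \cite{DF77}), upgrades it to $W^{*}(Y(q,f))\subseteq\A_\F(S^1_+)$. Möbius covariance transports this to every interval, and the descendant reduction then yields affiliation of all $Y(c,f)$.

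Together with the trivial inclusion $\A_\F(I)\subseteq\A_V(I)$ this gives $\A_\F(I)=\A_V(I)$ for all $I\in\J$, whence $V$ is strongly graded-local. I expect the main obstacle to be exactly the passage from the weak, vertex-algebraic graded-locality of the smeared fields to the strong commutation of the generated von Neumann algebras: as Nelson's example shows, this implication fails without further input, and it is precisely here that Theorem \ref{theo:delta_one_half} is indispensable, supplying the analyticity and geometric modular transformation of the vectors $Y(q,f)\Omega$ that drive the Tomita--Takesaki upgrade. The reverse implication, strong commutation $\Rightarrow$ weak commutation on a common core, recorded in Proposition \ref{prop:commutation_von_neumann_algebras_generated}, is by contrast the easy direction and is used only for internal consistency checks.
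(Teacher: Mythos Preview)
Your overall architecture matches the paper's: upgrade the graded-locality hypothesis to all intervals by M\"obius covariance, show $\A_\F$ is an irreducible graded-local M\"obius covariant net with cyclic vacuum, then prove $\A_V(I)\subseteq\A_\F(I)$ by establishing affiliation of smeared quasi-primary fields. The cyclicity argument you sketch is fine.

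The genuine gap is at the heart. Theorem \ref{theo:delta_one_half} gives you $\Delta^{1/2}Y(q,f)\Omega=i^{2d_q}Y(q,f\circ j)\Omega$, but any Tomita--Takesaki affiliation argument needs the full Tomita relation $SY(q,f)\Omega=Y(q,f)^*\Omega$, i.e.\ you must know how the modular conjugation $J$ acts. You have no control of $J$: it is defined abstractly from the pair $(\A_\F(S^1_+),\Omega)$, and nothing in your outline connects it to the VOSA data. The paper spends two of its five steps on exactly this. It first shows that $ZJ$ preserves $V$ and that the conjugated fields $\Phi_a(z)=ZJ\,Y(a,z)\,ZJ$ are mutually local with all $Y(b,z)$ (using the graded locality of $\A_\F$, Proposition \ref{prop:wightman_locality}, Dong's Lemma, and Kac's uniqueness theorem \cite[Theorem 4.4]{Kac01}), so that $ZJ$ is an antilinear VOSA automorphism of $V$. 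Only then does Theorem \ref{theo:delta_one_half} force $ZJ=\theta$, from which $SY(q,f)\Omega=Y(q,f)^*\Omega$ follows for every quasi-primary $q$. Without this identification your ``Tomita--Takesaki upgrade'' has no input to run on.

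Your appeal to \cite{DF77} is also off target: Driessler--Fr\"ohlich promotes weak commutation to strong when one of the two fields satisfies \emph{linear} energy bounds, which neither the arbitrary quasi-primary $q$ nor the elements of $\F$ are assumed to do. The paper's Step~5 uses instead the Tomita relation just obtained, a regularization $A\mapsto A(\varphi_s)$ sending $\A_\F(I')$ into operators preserving $\mathcal{H}^\infty$, and the fact that $\mathcal{P}_\F(S^1_-)\Omega$ is a core for every $(L_0+1_\mathcal{H})^k$ and hence for $Y(q,f)$. For $\mathcal{P}_\F$ to be a $*$-algebra one needs $\theta(\F)\subseteq\F$, which is why the paper's Step~1 first enlarges $\F$ to $\F\cup\theta(\F)$ and checks $\A_\F=\A_{\F\cup\theta(\F)}$; this preliminary reduction is also missing from your outline.
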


\begin{proof}
The proof is an adaptation of the one of \cite[Theorem 8.1]{CKLW18}, which we are going to organize in five parts. We use the notation fixed in Definition \ref{defin:hilbet_space_from_V} and in \eqref{eq:notation_even_odd}.

\textit{Step 1}. 
We prove that $\A_\F(I)=\A_{\F\cup\theta(\F)}(I)$ for all $I\in\J$. Indeed, let $a\in\F$ and $f\in C_{p(a)}^\infty(S^1)$  with $\mathrm{supp}f\subset I\in \J$, then $Y(a,f)$ is affiliated to $\A_\F(I)$. Let $A\in B(\mathcal{H})$ be such that $Y(a,f)$ commutes with $A$ and $A^*$. Then we have
\begin{equation} 
	\begin{split}
		(A^*c| Y(\theta (a),f)d)
		&=
		\overline{(Y(\theta (a),f)d| A^*c)}
		=(-1)^{2d_a^2+d_a}\overline{(Y(a,f)^*d| A^*c)} \\
		&=
		(-1)^{2d_a^2+d_a}\overline{(Ad| Y(a,f)c)}
		=(-1)^{2d_a^2+d_a}(Y(a,f)c| Ad) \\
		&=
		(Y(\theta (a),f)^*c| Ad)
		\qquad\forall c,d\in V
	\end{split}
\end{equation}
where we have used \eqref{eq:smeared_vertex_operator_adjoint} for the second and the last equality, whereas Lemma \ref{lem:gen_by_qp} for the third one. The equality above is equivalent to say that $Y(\theta (a), f)$ commutes with $A$ and $A^*$ by Lemma \ref{lem:gen_by_qp}. Thus, we can conclude that $Y(a,f)$ and $Y(\theta(a), f)$ are affiliated to the same von Neumann algebra, that is, $\A_\F(I)=\A_{\F\cup\theta(\F)}(I)$ as desired. From now onwards, we suppose, without loss of generality, that $\F=\theta(\F)$. 

\textit{Step 2}. We prove that $\A_\F$ defines an irreducible graded-local M\"{o}bius covariant net on $S^1$ acting on $\mathcal{H}$.
First of all, $\A_\F$ is clearly isotonous and it is also M\"{o}bius covariant thanks to Proposition \ref{prop:mob_covariance_qp}. It follows that 
\begin{equation}
	\A_\F(I')\subseteq Z\A_\F(I)'Z^* \qquad\forall I\in\J\,.
\end{equation}
Now, define for all $I\in\J$, $\mathcal{P}_\F(I)$ as the algebra generated by all the smeared vertex operators $Y(a,f)$ where $a\in\F$ and $f\in C_{p(a)}^\infty(S^1)$ with $\mathrm{supp}f\subset I$. From the $\theta$-invariance of $\F$, we deduce that all $\mathcal{P}_\F(I)$ are $*$-algebras and moreover they have $\mathcal{H}^\infty$ as invariant domain. Thus, it makes sense to define for all $I\in\J$, $\mathcal{H}_\F(I)$ as the closure of $\mathcal{P}_\F(I)\Omega$. By Proposition \ref{prop:mob_covariance_qp} and recalling that $\mathcal{H}^\infty$ is an invariant domain for all $U(\gamma)$ with $\gamma\in\Mob(S^1)^{(2)}$, we obtain that
\begin{equation}
	\begin{split}
		U(\gamma)Y(a^n,f^n)\cdots Y(a^1,f^1)\Omega 
		&=U(\gamma)Y(a^n,f^n)U(\gamma)^*U(\gamma)\cdots Y(a^1,f^1)U(\gamma)^*\Omega  \\
		&=Y(a^n, \iota_{d_{a^n}}(\gamma)f^n)\cdots Y(a^1, \iota_{d_{a^1}}(\gamma)f^1)\Omega
	\end{split}
\end{equation}
for all $\gamma\in\Mob(S^1)^{(2)}$, where $\{a^m,f^m\}$ is any finite collection  of vectors in $\F$ and functions in $C_{p(a^m)}^\infty(S^1)$ respectively. Passing to the closure, it follows that $U(\gamma)\mathcal{H}_\F(I)=\mathcal{H}_\F(\dot{\gamma} I)$ for all $\gamma\in\Mob(S^1)^{(2)}$ and all $I\in\J$. Then
\begin{equation}   \label{eq:defin_H_F}
	\overline{
		\bigcup_{\gamma\in\Mob(S^1)^{(2)}} U(\gamma)\mathcal{H}_\F(I)
	}=
	\mathcal{H}_\F:
	=\overline{\mathcal{P}_\F\Omega}
\end{equation}
where $\mathcal{P}_\F$ is the $*$-algebra with $\mathcal{H}^\infty$ as invariant core, generated by all smeared vertex operators $Y(a,f)$ with $a\in\F$ and $f\in C_{p(a)}^\infty(S^1)$ without any restriction on the support. Moreover, noting that for all $a\in\F$ there exists a $f\in C_{p(a)}^\infty(S^1)$ such that
$$
a=a_{(-1)}\Omega=Y(a,f)\Omega  
$$
we can conclude that $V\subset \mathcal{H}_\F$ by the fact that $\F$ is generating. Therefore, $\mathcal{H}_\F$ is equal to $\mathcal{H}$. Now, we adapt the proof of \cite[Theorem 1]{Bor68}, cf. also \cite[Theorem 3.2.1]{Lon08}, to prove that $\mathcal{H}_\F(I)=\mathcal{H}$ for all $I\in\J$, that is, a Reeh-Schlieder property for fields. Fix an $I\in\J$ and consider $v\in\mathcal{H}$ orthogonal to $\mathcal{H}_\F(I)$. We want to prove that $v=0$. Let $I_0\in\J$ whose closure is contained in $I$, thus there exists a neighbourhood $N$ of $0$ small enough such that $r(t)I_0\subset I$ for all $t\in N$. We have
\begin{equation}
	F(t):=(v| U(r^{(2)}(t))w)=0 
	\qquad \forall t\in N \,\,\,\forall w\in\mathcal{H}_\F(I_0)
\end{equation}
because $U(r^{(2)}(t))w\in\mathcal{H}_\F(I)$ as we have proved above. The generator of the rotation subgroup is positive by construction and therefore by \cite[Theorem 11.4.1]{HP57}, $F$ can be extended to a continuous function, still called $F$, on the upper half-plane $\{z\in\C\mid\mathrm{Im}(z)\geq 0\}$, which is analytic on $\{z\in\C\mid\mathrm{Im}(z)>0\}$. By the identity theorem \cite[p.\ 122]{AF03}, $F=0$, which implies that $v$ is orthogonal to $U(r^{(2)}(t))\mathcal{H}_\F(I)$ for all $t\in\R$. Now, the generator of the translation subgroup of $\Mob(S^1)^{(2)}$ is also positive, see e.g.\ \cite[Lemma 2.6]{GF93}, cf.\ \cite[Proposition 1.4.1]{Lon08}. Hence, we can apply the same argument above for the generator of rotations to the generator of translations and consequently conclude that $v$ is orthogonal to $U(\gamma)\mathcal{H}_\F(I)$ for all $\gamma\in\Mob(S^1)^{(2)}$, just recalling that $\Mob(S^1)^{(2)}$ is generated by rotations and translations. By \eqref{eq:defin_H_F}, $v$ is orthogonal to $\mathcal{H}_\F=\mathcal{H}$, that is, $v=0$. This shows that $\mathcal{H}_\F(I)=\mathcal{H}$ for all $I\in\J$. 
Now, let $a\in\F$ and $Y(a,f)$ be affiliated with $\A_\F(I)$ for some $I\in\J$. There exists a sequence of operators $\{A_n\}\subset \A_\F(I)$ such that $\lim_{n\rightarrow+\infty}A_nc=Y(a,f)c$ for all $c\in\mathcal{H}^\infty$, see e.g.\ \cite[Section 2.2]{CKLW18}. Therefore, it is easy to see that $\overline{\A_\F(I)\Omega}\cap\mathcal{H}^\infty$ is invariant for the left action of $\mathcal{P}_\F(I)$. It follows that $\mathcal{P}_\F(I)\Omega\subseteq \overline{\A_\F(I)\Omega}$ and thus $\overline{\A_\F(I)\Omega}=\mathcal{H}$ for all $I\in\J$. This means that the vacuum vector $\Omega$ is cyclic with respect to the net $\A_\F$, which then defines an irreducible graded-local M\"{o}bius covariant net on $S^1$ acting on $\mathcal{H}$.

\textit{Step 3}.
We use the last three parts to prove that $\A_V(I)\subseteq \A_\F(I)$ for all $I\in\J$, which will conclude the proof. Thanks to the M\"{o}bius covariance of the net $\A_V$ given by Proposition \ref{prop:mob_covariance_of_A_V}, it is sufficient to prove the inclusion above for the upper semicircle $S^1_+\in\J$ only. Accordingly, consider the Tomita-Takesaki modular theory associated to the von Neumann algebra $\A_\F(S^1_+)$ and the vacuum vector $\Omega$, that is, the modular operator $\Delta$, the modular conjugation $J$ and the Tomita operator $S=J\Delta^\half$, see \cite[Definition 2.5.10]{BR02}. We are going to prove that $ZJ$ defines an antilinear VOSA automorphism of $V$.
Let $j$ be the orientation-reversing isometry of $S^1$, that is, $j(z)=\overline{z}=z^{-1}$ for all $z\in S^1$. By the Bisognano-Wichmann property, see p.\ \pageref{bisognano-wichmann_property}, $U$ extends to an antiunitary representation of $\Mob(S^1)^{(\infty)}\ltimes\Z_2$, which we still denote by $U$, such that $U(j)=ZJ$ and
\begin{align}  
	ZJ\A_\F(I)ZJ &=\A_\F(j(I)) 
	\qquad \forall I\in\J
	\label{eq:action_j_on_A_F}\\
	ZJU(\gamma)ZJ &=U(j\circ\dot{\gamma}\circ j)  
	\qquad \forall \gamma\in\Mob(S^1)^{(\infty)} \,.
	\label{eq:repr_Mob_ltimes_j}
\end{align}
We obtain that $L_n$ commutes with $ZJ$ for every $n\in\{-1,0,1\}$ (consider the one-parameter subgroups $\exp(tl)$ as in the proof of Proposition \ref{prop:mob_covariance_qp} in \eqref{eq:repr_Mob_ltimes_j} and derive them for $t=0$). Proceeding with an argument as in the first part of the proof of Proposition \ref{prop:Omega_is_cyclic}, we have that $ZJV\subseteq V$. This implies that for all $a\in V$, the formal series
\begin{equation}
	\Phi_a(z):=\sum_{n\in\Z}ZJa_{(n)}ZJ z^{-n-1}
\end{equation}
is a well-defined field on $V$ such that
\begin{equation}
	[L_{-1},\Phi_a(z)]=\frac{\mathrm{d}}{\mathrm{d}z}\Phi_a(z)
	\,,\quad
	\Phi_a(z)\Omega|_{z=0}=ZJa
	\,,\quad
	\Phi_a(z)\Omega=e^{zT}ZJa \,.
\end{equation}
If $a\in V$ is homogeneous, define the following operators acting on $V$:
$$
\Phi_a(f)c:=\sum_{n\in\Z-d_a} \widehat{f}_n ZJa_nZJ c
\qquad
\forall f\in C_{p(a)}^\infty(S^1)
\,\,\,\forall c\in V\,.
$$  
Note that every $\Phi_a(f)$ is closable (and we use the same symbol for its closure) thanks to the energy bounds and that they have $\mathcal{H}^\infty$ as common invariant core. Moreover, $ZJ$ preserves $\mathcal{H}^\infty$ too as it is an antilinear isometry which commutes with $L_0$.
Thus, we have that
\begin{equation} 
	\Phi_a(f)c=ZJY(a,\overline{f}\circ j)ZJc
	\qquad
	\forall f\in C_{p(a)}^\infty(S^1)
	\,\,\,\forall c\in \mathcal{H}^\infty\,.
\end{equation}
Now, $Y(a,\overline{f}\circ j)$ is affiliated to $\A_\F(j(I))$ whenever $\mathrm{supp}f\subset I$ for some $I\in\J$. Therefore, if $\mathrm{supp}f\subset I$ for some $I\in\J$, then $ZJY(a,\overline{f}\circ j)ZJ=\Phi_a(f)$ is affiliated to $ZJ\A_\F(j(I))ZJ$, which is exactly $\A_\F(I)$ thanks to \eqref{eq:action_j_on_A_F}. 
Now, by the graded locality of $\A_\F$ and by Proposition \ref{prop:commutation_von_neumann_algebras_generated}, we can prove that for all $a,b\in\F$, $\Phi_a(z)$ and $Y(b,z)$ are mutually local in the Wightman sense as defined in Appendix \ref{appendix:wightman_locality}. By Proposition \ref{prop:wightman_locality}, for all $a,b\in\F$, $\Phi_a(z)$ and $Y(b,z)$ are mutually local in the vertex superalgebra sense.
Combining the generating property of $\F$ and Dong's Lemma \cite[Lemma 3.2]{Kac01}, we can then prove that $\Phi_a(z)$ and $Y(b,z)$ are mutually local (in the vertex superalgebras sense) for all $a\in\F$ and all $b\in V$. Noting that $\Phi_a(z)=ZJY(a,z)ZJ$, we have that also $Y(a,z)$ and $\Phi_b(z)$ are mutually local for all $a\in \F$ and all $b\in V$. Thus, using again that $\F$ generates $V$ and Dong's Lemma \cite[Lemma 3.2]{Kac01}, we can conclude that $\Phi_a(z)$ and $Y(b,z)$ are mutually local for all $a, b\in V$. Using the uniqueness theorem for vertex superalgebras \cite[Theorem 4.4]{Kac01}, it follows that $\Phi_a(z)=Y(ZJa,z)$ for all $a\in V$ and thus $ZJ$ defines an antilinear automorphism of the VOSA $V$.

\textit{Step 4}.
Now, we want to prove the useful formula:
\begin{equation} \label{eq:S_acts_smeared_vertex_ops}
	SY(a,f)\Omega=Y(a,f)^*\Omega
\end{equation}
for all quasi-primary element $a\in V$ and all $f\in C_{p(a)}^\infty(S^1)$ with $\mathrm{supp}f\subset S^1_+$.
To this aim, fix $a\in\F$ and $f\in C_{p(a)}^\infty(S^1)$ with $\mathrm{supp}f\subset S^1_+$. We have that
\begin{equation}
	\theta Z\Delta^\frac12 Y(a,f)\Omega
	=\theta Ze^\frac{K}{2} Y(a,f)\Omega
	=\theta Z i^{2d_a} Y(a,f\circ j)\Omega
	=Y(a,f)^*\Omega
	=J\Delta^\frac12 Y(a,f)\Omega
\end{equation}
where we have used: the Bisognano-Wichmann property \eqref{eq:B-W_dilations} for $\A_\F$ for the first equality; Theorem \ref{theo:delta_one_half} (see p.\ \pageref{notations_conventions} for notations) for the second one; equation \eqref{eq:smeared_vertex_operator_adjoint} for the third one; the fact that $Y(a,f)$ is affiliated with $\A_\F(S^1_+)$ for the last one, cf.\ \cite[Proposition 2.5.9]{BR02}.
Consequently, using \eqref{eq:theta_z}, we have that
\begin{equation} \label{eq:ZJDelta_thetaDelta}
	ZJ\Delta^\frac12 Y(a,f)\Omega = \theta \Delta^\frac12 Y(a,f)\Omega \,.
\end{equation} 
Using that $ZJ$ and $\theta$ both commute with $L_n$ for all $n\in\{-1,0,1\}$ and again the Bisognano-Wichmann property \eqref{eq:B-W_dilations} for $\A_\F$, we prove that $ZJ\Delta^\frac12 ZJ$ and $\theta\Delta^\frac12\theta$ are both equal to $\Delta^{-\frac12}$. This implies from \eqref{eq:ZJDelta_thetaDelta} that $ZJY(a,f)\Omega$ is equal to $\theta Y(a,f)\Omega$. Again $ZJ$ and $\theta$ both commute with $L_0$ and thus $ZJY(a,\iota_{d_a}(r^{(2)}(t))f)\Omega$ is equal to $\theta Y(a,\iota_{d_a}(r^{(2)}(t))f)\Omega$ for all $t\in\R$. Therefore, using a partition of unity, we prove that $ZJY(a,f)\Omega$ is equal to $\theta Y(a,f)\Omega$ for all $f\in C_{p(a)}^\infty(S^1)$ and consequently it must be $(ZJ)(a)=\theta (a)$. From the arbitrariness of $a\in\F$, which generates $V$ and the fact that $ZJ$ and $\theta$ are antilinear automorphism, it follows that $ZJ=\theta$. By Theorem \ref{theo:delta_one_half}, we can conclude that for every quasi-primary element $a\in V$ and every $f\in C_{p(a)}^\infty(S^1)$ with $\mathrm{supp}f\subset S^1_+$, $Y(a,f)\Omega$ is in the domain of the operator $S$ and equation \eqref{eq:S_acts_smeared_vertex_ops} holds.

\textit{Step 5}.
In this last step, we are going to prove that $Y(a,f)$ is affiliated to $\A_\F(I)$ whenever $a\in V$ is any quasi-primary vector, $f\in C_{p(a)}^\infty(S^1)$ with $\mathrm{supp}f\subset S^1_+$ and $I\in\J$ containing the closure of $S^1_+$. This will bring us to conclude that $\A_V(I)\subseteq \A_\F(I)$ for all $I\in\J$, proving the theorem.
Let $I\in\J$ containing the closure of $S^1_+$ and let $A\in\A_\F(I')\subseteq\A_\F(S^1_-)$. Then by the M\"{o}bius covariance of $\A_\F$, there exists $\delta>0$ such that $e^{itL_0}Ae^{-itL_0}\in \A_\F(S^1_-)$ for all $t\in(-\delta,\delta)$. Proceeding similarly to the proof of \cite[Lemma 6.5]{CKLW18} (cf.\ the proof of Lemma \ref{lem:gen_by_qp}), it is possible, for all $s\in(0,\delta)$, to construct an operator $A(\varphi_s)$ such that (cf.\ \cite[Lemma 5.3]{DSW86} for a similar argument)
\begin{equation}   \label{eq:properties_A(varphi_s)}
	A(\varphi_s)\in\A_\F(S^1_-)
	\,,\quad
	A(\varphi_s)c\in\mathcal{H}^\infty
	\quad\forall c\in \mathcal{H}^\infty
	\,,\quad
	\lim_{s\rightarrow0}A(\varphi_s)c=Ac  
	\quad\forall c\in\mathcal{H}
	\,.
\end{equation}
Now, let $X_1, X_2\in\mathcal{P}_\F(S^1_-)$ and $B\in\A_\F(S^1_+)\subseteq Z\A_\F(S^1_-)'Z^*$ by graded locality, we have:
\begin{equation}
	\begin{split}
	(ZX_1^*A(\varphi_s)X_2\Omega| SB\Omega) 
	&= (ZX_1^*A(\varphi_s)X_2\Omega| B^*\Omega)  \\
	&= (ZZ^*BZX_1^*A(\varphi_s)X_2\Omega| \Omega) \\
	&= (ZX_1^*A(\varphi_s)X_2Z^*B\Omega| \Omega) \\
	&= (B\Omega| ZX_2^*A(\varphi_s)^*X_1\Omega) \,. 
	\end{split}
\end{equation}
where we have used \eqref{eq:properties_A(varphi_s)}, the properties of $Z$ as in Section \ref{subsection:basic_definitions_VOSA} and the commutation relation between smeared vertex operators and affiliated bounded operators. Because $S$ is antilinear, the equation above implies that $ZX_1^*A(\varphi_s)X_2\Omega$ is in the domain of $S^*$ and
\begin{equation}   \label{eq:action_Sstar}
	S^*ZX_1^*A(\varphi_s)X_2\Omega= ZX_2^*A(\varphi_s)^*X_1\Omega\,.
\end{equation}
Fix a quasi-primary vector $a\in V$ and a function $f\in C_{p(a)}^\infty(S^1)$ with $\mathrm{supp}f\subset S^1_+$. For every $X_1, X_2\in\mathcal{P}_\F(S^1_-)$ and every $s\in(0,\delta)$, we have the following equalities
\begin{equation}
	\begin{split}
	(X_1\Omega| A(\varphi_s)Z^*Y(a,f)ZX_2\Omega)
	&= (X_1\Omega| A(\varphi_s)X_2Z^*Y(a,f)\Omega) \\
	&= (ZX_2^*A(\varphi_s)^*X_1\Omega| Y(a,f)\Omega) \\
	&= (S^*ZX_1^*A(\varphi_s)X_2\Omega| Y(a,f)\Omega) \\
	&= (SY(a,f)\Omega| ZX_1^*A(\varphi_s)X_2\Omega) \\
	&= (Y(a,f)^*\Omega| ZX_1^*A(\varphi_s)X_2\Omega) \\
	&= (X_1Z^*Y(a,f)^*Z\Omega| A(\varphi_s)X_2\Omega) \\
	&= (Z^*Y(a,f)^*ZX_1\Omega| A(\varphi_s)X_2\Omega) \\
	&= (X_1\Omega| Z^*Y(a,f)ZA(\varphi_s)X_2\Omega)
	\end{split}
\end{equation}
where we have used: the twisted Haag duality \eqref{eq:twisted_haag_duality} for $\A_\F$ with Proposition \ref{prop:commutation_von_neumann_algebras_generated} for the first and for the seventh equality; \eqref{eq:action_Sstar} for the third one; equation \eqref{eq:S_acts_smeared_vertex_ops} for the fifth one; the usual properties of $Z$ as in Section \ref{subsection:basic_definitions_VOSA} and \eqref{eq:properties_A(varphi_s)} in general. As we have proved in \textit{Step 2} that $\mathcal{P}_\F(S^1_-)\Omega$ is dense in $\mathcal{H}$ and therefore
\begin{equation}
	A(\varphi_s)Z^*Y(a,f)ZX\Omega=Z^*Y(a,f)ZA(\varphi_s)X\Omega
	\qquad\forall X\in\mathcal{P}_\F(S^1_-)
	\,\,\,\forall s\in(0,\delta)
\end{equation}
which implies, thanks to the properties in \eqref{eq:properties_A(varphi_s)}, that $AX\Omega$ is in the domain of $Z^*Y(a,f)Z$ for all $X\in\mathcal{P}_\F(S^1_-)$ and that
\begin{equation} \label{eq:A_commutes_Zstar_Y_Z}
	AZ^*Y(a,f)ZX\Omega=Z^*Y(a,f)ZAX\Omega
	\qquad\forall X\in\mathcal{P}_\F(S^1_-)\,.
\end{equation}
Moreover, $\mathcal{P}_\F(S^1_-)\Omega$ is a core for all $(L_0+1_\mathcal{H})^k$ with $k\in\Zplus$. Indeed, choose $I_1\in\J$ such that $\overline{I_1}\subset S^1_-$. Then there exists a positive real number $\delta$ such that $e^{it}I_1\subset S^1_-$ for all $t\in(-\delta,\delta)$. By the M\"{o}bius covariance of the vertex operators, $U(r^{(2)}(t))\mathcal{P}_\F(I_1)\Omega\subseteq\mathcal{P}_\F(S^1_-)\Omega$ for all $t\in(-\delta,\delta)$, see \textit{Step 2}. By \cite[Lemma 7.2]{CKLW18}, it follows that $\mathcal{P}_\F(S^1_-)\Omega$ is a core for every $(L_0+1_\mathcal{H})^k$ with $k\in\Zplus$ and thus it is also a core for $Z^*Y(a,f)Z$ thanks to \eqref{eq:bound_smeared_vertex_ops}.
Therefore, \eqref{eq:A_commutes_Zstar_Y_Z} implies that $AZ^*Y(a,f)Z\subseteq Z^*Y(a,f)ZA$. 
By the arbitrariness of the choices made above, it follows that $Y(a,f)$ is affiliated with $Z\A_\F(I')'Z^*=\A_\F(I)$ (twisted Haag duality \eqref{eq:twisted_haag_duality}) for all $I\in\J$ which contains the closure of $S^1_+$, all quasi-primary $a\in V$ and all $f\in C_{p(a)}^\infty(S^1)$ with $\mathrm{supp}f\subset S^1_+$.  Applying Proposition \ref{prop:gen_by_qp}, we get that $\A_V(S^1_+)\subseteq \A_\F(I)$ for all $I\in\J$ which contains the closure of $S^1_+$. The external continuity \eqref{eq:external_continuity} for $\A_\F$ concludes the proof as:
\begin{equation}
	\A_V(S^1_+)\subseteq\bigcap_{\overline{S^1_+}\subset I}\A_\F(I)=\A_\F(S^1_+) \,.
\end{equation}
\end{proof}

\begin{rem}   \label{rem:diffeo_covariance_without_strong_graded-locality}
Let $V$ be a simple energy-bounded VOSA. As a representation of the Virasoro algebra, $V$ is spanned by the conformal vector and by primary elements, see e.g.\ \cite[Remark 3.9]{CTW22}. Then thanks to Theorem \ref{theo:gen_by_quasi_primary} and Remark \ref{rem:diff_covariance_primary_elems}, we can deduce the diffeomorphism covariance of the net $\A_{(V,\scalar)}$ without using the strong graded locality assumption as it is done in the proof of Theorem \ref{theo:diffeo_covariant_net_from_V}.
\end{rem}

\begin{cor}   
\label{cor:strongly_graded-local_tensor_product}
Let $V^1$ and $V^2$ be simple strongly graded-local unitary VOSAs. Then $V^1\hat{\otimes}V^2$ is strongly graded-local and $\A_{V^1\hat{\otimes}V^2}=\A_{V^1}\hat{\otimes}\A_{V^2}$.
\end{cor}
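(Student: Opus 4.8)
The plan is to reduce the statement to the generation criterion of Theorem \ref{theo:gen_by_quasi_primary} and then to recognize the resulting net as the graded tensor product net. First I would record that $V^1\hat{\otimes}V^2$ is a \emph{simple} energy-bounded unitary VOSA: energy-boundedness is Corollary \ref{cor:energy_bound_graded_tensor_product}, unitarity of the graded tensor product was noted in Section \ref{subsection:basic_definitions_VOSA}, and simplicity follows from Proposition \ref{prop:characterisation_simplicity}, since each $V^j$ is of CFT type and hence $(V^1\hat{\otimes}V^2)_0 = V^1_0\otimes V^2_0 = \C(\Omega^1\otimes\Omega^2)$. Throughout, the Hilbert space completion of $V^1\hat{\otimes}V^2$ is $\mathcal{H}_1\otimes\mathcal{H}_2$ with vacuum $\Omega^1\otimes\Omega^2$ and grading unitary $\Gamma_1\otimes\Gamma_2$, which is precisely the data carried by the graded tensor product net $\A_{V^1}\hat{\otimes}\A_{V^2}$ and, via \eqref{eq:graded_tensor_product_Z}, fixes the associated operator $Z$.

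Next I would choose the generating family. For $j\in\{1,2\}$ let $\F^j$ be a set of quasi-primary vectors generating $V^j$ (such a set exists, e.g.\ the conformal vector together with primary vectors, cf.\ Remark \ref{rem:diffeo_covariance_without_strong_graded-locality}), and set
\[
\F := \{a^1\otimes\Omega^2 \mid a^1\in\F^1\} \cup \{\Omega^1\otimes a^2 \mid a^2\in\F^2\}.
\]
Since $\Omega^1,\Omega^2$ are primary and $L_1 = L_1^1\otimes 1 + 1\otimes L_1^2$ on the tensor product, every element of $\F$ is quasi-primary, and $\F$ generates $V^1\hat{\otimes}V^2$. From the definition of $Y_1\hat{\otimes}Y_2$ one computes $Y(a^1\otimes\Omega^2,z) = Y_1(a^1,z)\otimes 1$ and $Y(\Omega^1\otimes a^2,z) = \Gamma_{V_1}^{p(a^2)}\otimes Y_2(a^2,z)$, so after smearing these become $Y_{V^1}(a^1,f)\otimes 1$ and $\Gamma_1^{p(a^2)}\otimes Y_{V^2}(a^2,f)$ on the common invariant core. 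Applying Theorem \ref{theo:gen_by_quasi_primary} to each strongly graded-local factor gives $\A_{\F^j}(I)=\A_{V^j}(I)$ for all $I$, so the first family generates $\hat{\A_1}(I)=\A_{V^1}(I)\otimes 1$ and the second generates $\hat{\A_2}(I)$ in the notation of the graded tensor product construction \eqref{eq:defin_graded_tensor_product}. I would then conclude
\[
\A_\F(I) = \hat{\A_1}(I)\vee\hat{\A_2}(I) = (\A_{V^1}\hat{\otimes}\A_{V^2})(I) \qquad \forall I\in\J .
\]

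With this identification the remaining hypothesis of Theorem \ref{theo:gen_by_quasi_primary} is immediate: the graded tensor product of two graded-local conformal nets is again a graded-local conformal net (see \eqref{eq:defin_graded_tensor_product} and \cite[Section 2.6]{CKL08}), hence it satisfies graded locality \eqref{eq:graded-locality_nets}, giving $\A_\F(I')\subseteq Z\A_\F(I)'Z^*$ for every $I\in\J$. The theorem then yields at once that $V^1\hat{\otimes}V^2$ is strongly graded-local and that $\A_{V^1\hat{\otimes}V^2}(I) = \A_\F(I) = (\A_{V^1}\hat{\otimes}\A_{V^2})(I)$ for all $I$, which is the claim.

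The main obstacle I expect lies in the identification $\A_\F(I) = (\A_{V^1}\hat{\otimes}\A_{V^2})(I)$, and specifically in handling the grading twist on the odd part of the second factor. One must verify that the operators $\Gamma_1\otimes Y_{V^2}(a^2,f)$ arising from odd $a^2$ match exactly the $\Gamma_1\otimes B_2$ generators in \eqref{eq:defin_graded_tensor_product} while the even ones give $1\otimes Y_{V^2}(a^2,f)$, so that the two families generate $\hat{\A_1}(I)$ and $\hat{\A_2}(I)$ respectively; this uses \eqref{eq:graded_tensor_product_Z} to keep track of $Z_{V^1\hat{\otimes}V^2}$ versus $Z_{V^1}$ and $Z_{V^2}$, together with the compatibility of the smooth-vector domain $\mathcal{H}^\infty$ of the tensor product with those of the factors, ensuring that affiliation and the generation of von Neumann algebras pass correctly through the tensor product. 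Once this bookkeeping is settled, the rest is a direct appeal to the generation criterion.
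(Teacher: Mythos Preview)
Your proposal is correct and follows essentially the same route as the paper: choose the quasi-primary generating set $\F=(\F^1\otimes\Omega^2)\cup(\Omega^1\otimes\F^2)$, identify $\A_\F(I)$ with $(\A_{V^1}\hat{\otimes}\A_{V^2})(I)$, verify graded locality of $\A_\F$, and apply Theorem \ref{theo:gen_by_quasi_primary}. The only cosmetic difference is that the paper checks graded locality by an explicit computation via twisted Haag duality \eqref{eq:twisted_haag_duality} and \eqref{eq:graded_tensor_product_Z}, whereas you invoke directly that the graded tensor product of graded-local conformal nets is graded-local; both are fine.
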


\begin{proof}
$V^1\hat{\otimes}V^2$ is energy-bounded by Corollary \ref{cor:energy_bound_graded_tensor_product}. Call $\F^1$ and $\F^2$ the sets of all quasi-primary vectors of $V^1$ and $V^2$ respectively. $V^1\hat{\otimes}V^2$ is generated by the set of quasi-primary vectors $\F:=(\F^1\otimes\Omega^2)\cup(\Omega^1\otimes \F^2)$. Moreover, $\A_\F(I)=\A_{V^1}(I)\hat{\otimes}\A_{V^2}(I)$ for all $I\in\J$. In particular, if we call $Z_1,Z_2$ and $Z$ the operators \eqref{eq:defin_Z_net} for $\A_{V^1}, \A_{V^2}$ and $\A_{V^1}\hat{\otimes}\A_{V^2}$ respectively, then we have that
\begin{equation}
\begin{split}
\A_\F(I')
  &=
\A_{V^1}(I')\hat{\otimes}\A_{V^2}(I') 
=\left(Z_1\A_{V^1}(I)'Z_1^*\right)\hat{\otimes}
\left(Z_2\A_{V^2}(I)'Z_2^*\right)  \\
  &\subseteq 
Z\left(\A_{V^1}(I)\hat{\otimes}\A_{V^2}(I)\right)'Z^*  
=Z\A_\F(I)'Z^*
\qquad\forall I\in\J
\end{split}
\end{equation}
where we have used the twisted Haag duality \eqref{eq:twisted_haag_duality} and \eqref{eq:graded_tensor_product_Z}. Then we get the desired result just applying Theorem \ref{theo:gen_by_quasi_primary}.
\end{proof}

Here below, another useful consequence of Theorem \ref{theo:gen_by_quasi_primary}:
\begin{theo}   \label{theo:gen_by_V1/2_V1_F}
Let $V$ be a simple unitary VOSA generated by $V_\half\cup V_1\cup\F$ where $\F\subseteq V_2$ is a family of quasi-primary $\theta$-invariant Virasoro vectors. Then $V$ is strongly graded-local and $\A_V=\A_{V_\half\cup V_1\cup\F}$.
\end{theo}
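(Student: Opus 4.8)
The plan is to combine the two tools already available: the energy boundedness provided by Proposition \ref{prop:gen_by_V1/2_V1} and the strong-graded-locality criterion of Theorem \ref{theo:gen_by_quasi_primary}. Write $\mathcal{G}:=V_\half\cup V_1\cup\F$. First I would record that, by Proposition \ref{prop:gen_by_V1/2_V1}, $V$ is energy-bounded; moreover its proof shows that every $a\in V_\half$ satisfies $0$-th order energy bounds, so its smeared fields are \emph{bounded}, while every $a\in V_1\cup\F$ satisfies \emph{linear} energy bounds. Next I would verify that every element of $\mathcal{G}$ is quasi-primary. Since $V$ is simple, it is of CFT type by Proposition \ref{prop:characterisation_simplicity}, whence $V_n=\{0\}$ for $n<0$ and $V_0=\C\Omega$. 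Thus $L_1V_\half\subseteq V_{-\half}=\{0\}$, and for $a\in V_1$ we have $L_1a\in V_0=\C\Omega$ with $(L_1a|\Omega)=(a|L_{-1}\Omega)=0$ by \eqref{eq:L_n_adj}, forcing $L_1a=0$; the vectors of $\F$ are quasi-primary by hypothesis. Hence $\mathcal{G}$ is a generating set consisting of quasi-primary vectors, and by Theorem \ref{theo:gen_by_quasi_primary} it only remains to exhibit one interval $I\in\J$ with $\A_{\mathcal{G}}(I')\subseteq Z\A_{\mathcal{G}}(I)'Z^*$.

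To establish this graded locality of the generating net I would fix $I=S^1_+$ and reduce the inclusion to the strong graded commutativity of the generating smeared fields $Y(a,g_1)$ and $Y(b,g_2)$ with $a,b\in\mathcal{G}$, $\mathrm{supp}\,g_1\subset I$ and $\mathrm{supp}\,g_2\subset I'$. The first, purely algebraic, step is to show that these fields graded-commute on the common invariant core $\mathcal{H}^\infty$: smearing the locality relation $(z-w)^N[Y(a,z),Y(b,w)]=0$ against functions with disjoint support annihilates the graded commutator on $V$, and the continuity estimates of Lemma \ref{lem:common_core} propagate this to all of $\mathcal{H}^\infty$. The $Z$-conjugation (Klein transformation) then reinterprets this as the \emph{ordinary} commutativity on $\mathcal{H}^\infty$ of $Z^*Y(b,g_2)Z$ with $Y(a,g_1)$, since it turns the odd--odd anticommutator into a commutator and leaves the even sectors unchanged.

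The main obstacle is the functional-analytic upgrade from commutation on the core to strong commutation of the generated von Neumann algebras, i.e.\ $W^*(Y(a,g_1))\subseteq W^*(Z^*Y(b,g_2)Z)'$: as the Nelson phenomenon recalled after Proposition \ref{prop:mob_covariance_of_A_V} shows, commutation on a common invariant core does not suffice by itself. Here the two regimes of energy bounds do the work. When one of the two fields is bounded --- which covers every pair involving a weight-$\half$ fermion --- strong commutativity follows at once, since a bounded operator commuting on $\mathcal{H}^\infty$ with the essentially self-adjoint Hermitian parts of its partner lies in the commutant of the von Neumann algebra the partner generates. For every remaining pair both fields satisfy linear energy bounds, so I would pass to their self-adjoint real and imaginary parts (essentially self-adjoint on $\mathcal{H}^\infty$ by the standard commutator theorem with reference operator $L_0+1_\mathcal{H}$) and apply the strong commutativity theorem for local fields obeying linear energy bounds, exactly as in \cite[p.\ 113]{BS90} based on \cite[Theorem 3.1]{DF77} (cf.\ \cite[Theorem 19.4.4]{GJ87} and \cite[Theorem 3.4]{CTW22}).

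Collecting these facts yields $\A_{\mathcal{G}}(I')\subseteq Z\A_{\mathcal{G}}(I)'Z^*$ for $I=S^1_+$, so the hypotheses of Theorem \ref{theo:gen_by_quasi_primary} are met with this quasi-primary generating set $\mathcal{G}$. That theorem then gives simultaneously that $V$ is strongly graded-local and that $\A_{\mathcal{G}}(I)=\A_V(I)$ for all $I\in\J$, that is $\A_V=\A_{V_\half\cup V_1\cup\F}$, as claimed.
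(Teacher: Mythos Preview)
Your proof is correct and follows essentially the same strategy as the paper: energy bounds from Proposition \ref{prop:gen_by_V1/2_V1}, Wightman locality of the generators (the paper cites Proposition \ref{prop:wightman_locality} explicitly for this step), the Driessler--Fr\"ohlich/Glimm--Jaffe argument to upgrade to strong graded commutativity, and finally Theorem \ref{theo:gen_by_quasi_primary}. Your case split according to whether a weight-$\half$ fermion is involved is harmless but unnecessary: $0$-th order energy bounds are a fortiori linear energy bounds, so the paper simply treats all generators uniformly and applies the linear-energy-bound argument once.
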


\begin{proof}
We use the notation introduced in \eqref{eq:notation_even_odd}.
First, 
by Proposition \ref{prop:gen_by_V1/2_V1} and its proof, $V$ is energy-bounded and in particular, every smeared vertex operator $Y(a,f)$ with $a\in V_\frac12\cup V_1\cup\F$ and $f\in C_{p(a)}^\infty(S^1)$ satisfies linear energy bounds.
Second,
using Proposition \ref{prop:wightman_locality}, we have that the vertex operators $Y(a,z)$ with $a\in V_\frac12\cup V_1\cup\F$ are mutually local in the Wightman sense.
Therefore, we can use the argument in \cite[p.\ 113]{BS90}, based on \cite[Theorem 3.1]{DF77}, see also \cite[Theorem 19.4.4]{GJ87}, \cite[Theorem C.2]{Tan16} and \cite[Theorem 3.4]{CTW22}, to show that the family of von Neumann algebras $\A_{V_\frac12\cup V_1\cup\F}(I)$ with $I\in\J$ satisfies the graded-locality condition required by Theorem \ref{theo:gen_by_quasi_primary}. Therefore, we can conclude that $V$ is strongly graded-local and that $\A_{V_\frac12\cup V_1\cup\F}=\A_V$ by the same theorem.
\end{proof}

The following result is inspired by the proof of \cite[Theorem 5.5]{CGGH23}.

\begin{theo}  \label{theo:V_strongly_graded-local_iff_V_0_and_v_are}
	Let $V$ be a simple energy-bounded unitary VOSA.
	Then $V$ is strongly graded-local if and only if $V_\parzero$ is a simple strongly local unitary VOA and there exists a quasi-primary vector $v\in V_\parone$ such that $\A_{\{v\}}(I')\subseteq Z\A_{V_\parzero\cup\{v\}}(I)'Z^*$ for some $I\in\J$.
\end{theo}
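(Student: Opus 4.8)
The plan is to prove both implications, with the ``only if'' direction being a short consequence of the subnet machinery and the ``if'' direction relying on the quasi-primary criterion Theorem~\ref{theo:gen_by_quasi_primary}. Throughout I use the notation of Definition~\ref{defin:hilbet_space_from_V} and write $\mathcal{H}_{V_\parzero}=\overline{V_\parzero}$ and $\mathcal{H}_{V_\parone}=\overline{V_\parone}$, so that $\mathcal{H}=\mathcal{H}_{V_\parzero}\oplus\mathcal{H}_{V_\parone}$, the even fields are parity-preserving and hence commute with $e_{V_\parzero}=(1_\mathcal{H}+\Gamma)/2$. For the ``only if'' direction, assume $V$ is strongly graded-local. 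Then $V_\parzero=V^{\{1_V,\Gamma_V\}}$ is a unitary subalgebra, so Theorem~\ref{theo:covariant_subnets_unitary_subalgebras} shows that $V_\parzero$ is strongly graded-local; since $V_\parzero$ is a VOA (here $\Gamma_{V_\parzero}=1$, $Z=1$) this means it is strongly local, and it is simple by Example~\ref{ex:even_simple_odd_irreducible}. For the vector $v$, pick any non-zero $v\in V_\parone$ of lowest conformal weight; it is primary, hence quasi-primary, because $L_nv$ for $n\geq1$ would lie in a strictly lower weight space of $V_\parone$, which vanishes. Finally, from $\A_{\{v\}}(I')\subseteq\A_V(I')$ and $\A_{V_\parzero\cup\{v\}}(I)\subseteq\A_V(I)$ together with strong graded locality of $V$ we obtain, for every $I\in\J$,
$$
\A_{\{v\}}(I')\subseteq\A_V(I')\subseteq Z\A_V(I)'Z^*\subseteq Z\A_{V_\parzero\cup\{v\}}(I)'Z^*,
$$
which is the required condition. (If $V_\parone=\{0\}$ then $V=V_\parzero$ and the statement is trivial.)

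For the ``if'' direction, set $\F:=\F_0\cup\{v\}$, where $\F_0$ denotes the set of quasi-primary vectors of $V_\parzero$. The vertex subalgebra generated by $\F_0$ is $T$-invariant and contains every quasi-primary vector, hence, since a unitary positive-energy Virasoro module is spanned by the $L_{-1}=T$-descendants of its quasi-primary vectors, it equals $V_\parzero$; combined with Example~\ref{ex:even_simple_odd_irreducible} this shows $\F$ generates $V$, and $\F$ consists of quasi-primary vectors. By Theorem~\ref{theo:gen_by_quasi_primary} it then suffices to produce one $I\in\J$ with $\A_\F(I')\subseteq Z\A_\F(I)'Z^*$. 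By Proposition~\ref{prop:mob_covariance_qp} the net $\A_\F$ is M\"obius covariant, so conjugating the assumed inclusion by a suitable $U(\gamma)$ I may take $I=S^1_+$, $I'=S^1_-$. Since $\F_0\subseteq V_\parzero$ gives $\A_\F(S^1_+)\subseteq\A_{V_\parzero\cup\{v\}}(S^1_+)$, the inclusion for the odd generators $Y(v,g)$ supported in $S^1_-$ follows directly from the hypothesis (it already controls the graded commutation of odd$(S^1_-)$ fields with both even$(S^1_+)$ and odd$(S^1_+)$ fields). The cross term even$(S^1_-)$ versus odd$(S^1_+)$ then follows by conjugating the relation odd$(S^1_-)$ versus even$(S^1_+)$—an ordinary commutation, one factor being Bose—by the rotation $r^{(2)}(\pi)$, which swaps $S^1_+$ and $S^1_-$ and preserves the parity of smeared fields.

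The remaining and central point is the even$(S^1_-)$--even$(S^1_+)$ commutation on all of $\mathcal{H}$. On $\mathcal{H}_{V_\parzero}$ it holds because the even fields restrict there to the vacuum net of $V_\parzero$, which is local by hypothesis. The obstacle is $\mathcal{H}_{V_\parone}$, where the even fields act through the module $V_\parone$ and strong locality of $V_\parzero$ does not by itself control this ``charged'' sector. I would resolve this by first upgrading the even--odd commutation of the previous paragraph, via M\"obius covariance of $\A_\F$, to all cuts simultaneously, so that for every interval $K$ an odd field $Y(v,g)$ with $\mathrm{supp}\,g\subset K'$ commutes with every even field supported in $K$. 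Then, given even generators $P=Y(a,f)$ with $\mathrm{supp}\,f\subset S^1_+$ and $Q=Y(b,h)$ with $\mathrm{supp}\,h\subset S^1_-$, I choose a small interval $J_0$ disjoint from both supports and consider vectors $\eta=Y(v,g)\xi$ with $\mathrm{supp}\,g\subset J_0$ and $\xi\in\mathcal{H}_{V_\parzero}$; by a Reeh--Schlieder argument for the odd field (identical to Step~2 in the proof of Theorem~\ref{theo:gen_by_quasi_primary}, using only M\"obius covariance and positivity of the energy) these are dense in $\mathcal{H}_{V_\parone}$. Since $J_0$ is disjoint from both supports, $Y(v,g)$ commutes with $P$ and with $Q$, whence $[P,Q]\eta=Y(v,g)[P,Q]\xi=0$ because $[P,Q]$ vanishes on $\mathcal{H}_{V_\parzero}$.

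This would give even--even commutation on $\mathcal{H}_{V_\parone}$, hence on all of $\mathcal{H}$, completing the verification of the hypothesis of Theorem~\ref{theo:gen_by_quasi_primary}, which then yields strong graded locality of $V$. The delicate step, and the one deserving the most care, is precisely this transport of even--even locality from the vacuum sector $\mathcal{H}_{V_\parzero}$ to the Fermi sector $\mathcal{H}_{V_\parone}$: one must handle the passage from commutation on a common core to strong (von Neumann) commutation, pin down the exact Reeh--Schlieder statement for the odd field, and make sure the density of the vectors $\eta$ is genuinely available before invoking Theorem~\ref{theo:gen_by_quasi_primary} (so as to avoid any circular use of its conclusion). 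I expect the rest to be routine, following the pattern of \cite[Theorem 5.5]{CGGH23}.
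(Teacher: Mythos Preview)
Your overall strategy matches the paper's: reduce to the criterion of Theorem~\ref{theo:gen_by_quasi_primary}, observe that the only nontrivial ingredient is even--even locality on the odd sector $\mathcal{H}_{V_\parone}$, and transport that locality from $\mathcal{H}_{V_\parzero}$ using the odd field together with a Reeh--Schlieder argument. You also cite the same model proof, \cite[Theorem~5.5]{CGGH23}.

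The one substantive difference---and it is exactly what closes the gap you flag at the end---is that the paper never works with the unbounded $P=Y(a,f)$ and $Q=Y(b,h)$. Instead it fixes $I_1,I_2\in\J$ with $\overline{I_1}\subset I$, $\overline{I_2}\subset I'$, takes arbitrary \emph{bounded} $A_1\in\B(I_1)$, $A_2\in\B(I_2)$ (where $\B$ is the family generated on $\mathcal{H}$ by the even smeared fields), and studies $M:=V\cap\Ker([A_1,A_2])$. Since the even operators commute with the projection $E$ onto $\mathcal{H}_{V_\parzero}$ and $E\B(I_j)E=\A_{V_\parzero}(I_j)$, strong locality of $V_\parzero$ gives $V_\parzero\subseteq M$. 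The hypothesis (with $\{v\}$ enlarged to $\F:=\{v,\theta(v)\}$) yields $\A_\F(J)\subseteq\B(I_j)'$ for any $J$ disjoint from both $I_j$, so $[A_1,A_2]Y(a,g)b=Y(a,g)[A_1,A_2]b$ for $a\in\F$, $b\in M$ and $g$ supported in $J$; a Reeh--Schlieder argument \emph{for each fixed $b$} shows that the closure of $\{Y(a,g)b:\mathrm{supp}\,g\subset J\}$ contains all $a_{(n)}b$, hence $M$ is a $W(\F)$-submodule of $V$ containing $V_\parzero$. Skew-symmetry and irreducibility of $V_\parone$ as a $V_\parzero$-module (Example~\ref{ex:even_simple_odd_irreducible}) then force $V_\parone\subset M$, so $M=V$ and $[A_1,A_2]=0$. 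This proves $\B$ is local on $\mathcal{H}$ in the strong (von Neumann) sense directly, and Theorem~\ref{theo:gen_by_quasi_primary} is applied with generators $\F_\parzero\cup\F$, where $\F_\parzero$ consists of primary vectors together with $\nu$.

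In short: by testing the bounded commutator $[A_1,A_2]$ against a complete module-theoretic set of vectors, the paper sidesteps the ``core commutation $\Rightarrow$ strong commutation'' problem entirely. Your density claim for the vectors $Y(v,g)\xi$ is the same Reeh--Schlieder statement the paper uses (applied per vector rather than globally), and it becomes unproblematic once one deals with bounded $A_1,A_2$ rather than smeared fields.
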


\begin{proof}
We use the notation introduced in Definition \ref{defin:hilbet_space_from_V} for $V$, as well as in \eqref{eq:notation_even_odd} and in \eqref{eq:defin_net_gen_by_subset_F}.	
	
If $V$ is strongly graded-local, then the result follows from Theorem \ref{theo:covariant_subnets_unitary_subalgebras}.

Vice versa, set $\F:=\{v, \theta(v)\}$.	By Step 1 in the proof of Theorem \ref{theo:gen_by_quasi_primary}, $\A_{\{v\}}(I)=\A_{\F}(I)$ for all $I\in\J$. Moreover, $\A_\F$ is isotonous and it is also M\"{o}bius covariant by Proposition \ref{prop:mob_covariance_qp}. Similarly, $\A_{V_\parzero\cup\F}$ is isotonous and M\"{o}bius covariant too. Thus, by M\"{o}bius covariance, $\A_\F(I')\subseteq Z\A_{V_\parzero\cup \F}(I)'Z^*$ for all $I\in\J$.	
Call $W$ the subalgebra $W(\F)$ generated by $\F$. Using \eqref{eq:cr_l1}, it is not difficult to show that $W$ is $L_1$-invariant and thus $W$ is a unitary subalgebra of $V$ by Proposition \ref{prop:characterisation_unitary_subalgebras}.
	
Let $\A_{V_\parzero}$ be the irreducible (local) conformal net arising from the simple strongly local unitary VOA $V_\parzero$. Moreover, as $V$ is energy-bounded, we can define an isotonous family of von Neumann algebras on $\mathcal{H}$ from the smeared vertex operators $Y(a,f)$ for all $a\in V_\parzero$ and all $f\in C^\infty(S^1)$. 
To avoid confusion, we call it $\B$ instead of using the notation fixed in \eqref{eq:defin_net_gen_by_subset_F}.
By Proposition \ref{prop:mob_covariance_qp}, we have that $\B$ is also M\"{o}bius coviant. In the following, we are going to prove that it is (graded-)local too.

Let $E$ be the projection from $\mathcal{H}$ onto the Hilbert space completion $\mathcal{H}_\parzero$ of $V_\parzero$ with respect to $\scalar$. Proceeding as in the third step of the proof of \cite[Theorem 5.5]{CGGH23}, we prove that $E\B(I)E=\A_{V_\parzero}(I)$ for all $I\in\J$. 

Fix $I\in\J$ and let $I_1, I_2\in\J$ be such that $\overline{I_1}\subset I$ and $\overline{I_2}\subset I'$. 
Consider $A_1\in\B(I_1)$ and $A_2\in\B(I_2)$ and set $M:=V\cap\Ker([A_1,A_2])$. Since $EA_jE\in\A_{V_\parzero}(I_j)$ for all $j\in\{1,2\}$, we have that $[A_1,A_2]b=0$ for all $b\in V_\parzero$, that is $V_\parzero\subseteq M$. Furthermore, by hypotheses, if $J\in\J$ is such that $J\cap I_j=\emptyset$ for all $j\in\{1,2\}$, then $[A_1,A_2]Y(a,f)b=Y(a,f)[A_1,A_2]b=0$ for all $a\in \F$, all $f\in C^\infty_\chi(S^1)$ with $\mathrm{supp}f\subset J$ and all $b\in M$.
Now, fix any $J\in \J$ as above and for all $c\in V$, let $\mathcal{K}_c$ be the closed subspace generated by vectors of type $Y(a,f)c$ where $a\in \F$ and $f\in C^\infty_\chi(S^1)$ with $\mathrm{supp}f\subset J$. By a Reeh-Schlieder argument as in the Step 2 of the proof of Theorem \ref{theo:gen_by_quasi_primary}, we can prove that $\mathcal{K}_c$ contains all the vectors of type $Y(a,f)c$ with $a\in \F$ and $f\in C^\infty_\chi(S^1)$. This implies that $a_{(n)}c\in \mathcal{K}_c$ for all $a\in \F$ and all $n\in \Z$, so that $a_{(n)}b\in M$ for all $a\in \F$, all $n\in \Z$ and all $b\in M$. It follows from the Borcherds identity \eqref{eq:borcherds_id} that $M$ is a $W$-submodule of $V$ containing $V_\parzero$ and thus $M$ contains $W\cdot V_\parzero$, that is the linear span of vectors of type $a_{(n)}b$ with $a\in W$, $n\in\Z$ and $b\in V_\parzero$. By skew-symmetry \eqref{eq:skew-symmetry}, $V_\parzero\cdot W\subseteq M$, that is the linear span of vectors of type $b_{(n)}a$ with $b\in V_\parzero$, $n\in\Z$ and $a\in W$. Hence, $V_\parone\subset M$, see Example \ref{ex:even_simple_odd_irreducible}, so that $V\subseteq M$. This implies that $[A_1,A_2]=0$ and thus $\B(I_2)\subseteq \B(I_1)'$. By M\"{o}bius covariance, it follows that $\B(I')\subseteq \B(I)'$, that is $\B$ is local.

We know $V_\parzero$ is generated by a set of primary vectors and the conformal vector $\nu$ (as it is a sum of irreducible positive-energy unitary representation of the underlying Virasoro algebra, see e.g.\ \cite[Remark 3.9]{CTW22}), say $\F_\parzero$. Hence, we can conclude that $V$ is strongly graded-local by applying Theorem \ref{theo:gen_by_quasi_primary} to the subset of quasi-primary generators $\F_\parzero\cup\F$.
\end{proof}

\begin{rem}
	With no major difficulties, Theorem \ref{theo:V_strongly_graded-local_iff_V_0_and_v_are} can be proved replacing $V_\parzero$ and $v$ with a generic unitary subalgebra $W$ and a generic subset of quasi-primary vectors $\F$ of $V$ respectively, and asking in addition that $V$ is generated by $W\cup \F$ as $W$-module (this last condition is automatic when $W=V_\parzero$ and $\F=\{v\}$ by Example \ref{ex:even_simple_odd_irreducible}).
\end{rem}

\section{Examples and superconformal structures}  \label{section:examples}

At the present, we are ready to construct the most famous examples of graded-local conformal nets starting from VOSAs through the machinery which we have developed throughout the present paper. Most of the VOSA examples are constructed starting from Lie superalgebras (see \cite{Kac77}), following the general theory developed in \cite[Section 4.7 and Section 4.10]{Kac01}; further useful references will usually be given for every specific example. Moreover, we introduce the superconformal structure in both frameworks, showing how they are related. We conclude by proving the strong graded locality of VOSAs, which gives us new models of irreducible graded-local conformal nets, but relying also on tools and results coming from the representation theory of VOAs.

\begin{ex}[\textit{The free fermion}]
\label{ex:free_fermion}
Let $A$ be a one-dimensional $\C$-vector space equipped with a non-degenerate symmetric bilinear form $\Bilinear$. We can always identify $A$ with $\C\varphi$, where $\varphi\in A$ is such that $\B(\varphi,\varphi)=1$.
As in \cite[Section 2.5]{Kac01}, we consider the \textit{Clifford affinization} of $A$, i.e., the Lie superalgebra
\begin{equation}
\mathfrak{f}:=A\otimes_\C \C[t,t^{-1}]\oplus \C K
\end{equation}
with commutation relations:
\begin{equation}   \label{eq:cr_fermion_algebra}
	[\mathfrak{f},K] =0 \,,\qquad
[\alpha\varphi_n,\beta\varphi_m] =\alpha\beta\delta_{n,-m}K
\qquad
\forall \alpha,\beta\in\C
\,\,\,
\forall n,m\in\Z-\half 
\end{equation}
where $\varphi_n:=\varphi\otimes t^{n-\half}$ for all $n\in\Z-\half$. $\mathfrak{f}$ is known as \textit{fermion algebra} and we sometimes call $\varphi$ the \textit{free fermion vector} of $A$.
Let $U(\mathfrak{f})$ be the universal enveloping algebra associated to $\mathfrak{f}$. We denote the generalized Verma module $V^1(\mathfrak{f})$ associated to $\mathfrak{f}$ by
\begin{equation}
F:=V^1(\mathfrak{f})=U(\mathfrak{f})/
\langle 
K-1 
\,, \,\,\,
\varphi_{n_s}\cdots\varphi_{n_1}
\mid n_j\geq\half 
\rangle
\,.
\end{equation} 
$V^1(\mathfrak{f})$ has a structure of simple VOSA, see e.g.\ \cite[Theorem 3.6, Theorem 4.7 and Proposition 4.10(b)]{Kac01}, called \textit{neutral} or \textbf{real free fermion VOSA}, which we sum up in the following. $F$ is linearly generated by the vacuum vector $\Omega$, which is the class of $1\in\C$ in $F$, and the following ordered strings of vectors in $U(\mathfrak{f})$ with related parities:
$$
\left\{
\varphi_{n_s}\cdots\varphi_{n_1}\Omega \mid 
-\half\geq n_1\geq\cdots\geq n_s
\right\},
\quad
p(\varphi_{n_s}\cdots\varphi_{n_1}\Omega)=\overline{s}\in\Z_2\,.
$$ 
The state-field correspondence can be derived from
\begin{equation}
Y(\varphi_{-\half}\Omega,z)
=\sum_{n\in\Z}(\varphi_{-\half}\Omega)_{(n)}z^{-n-1}  
\,,\qquad
(\varphi_{-\half}\Omega)_{(n)}:=\varphi_{n+\half} \quad\forall n\in\Z\,.
\end{equation}
The vector $\nu:=\half\varphi_{-\frac{3}{2}}\varphi_{-\half}\Omega$ gives a Virasoro algebra with central charge $c=\half$, where
\begin{equation}
\begin{split}
L_0\varphi_{-n_s}\cdots\varphi_{-n_1}\Omega
 &=
\left(\sum_{j=1}^s n_j\right)
\varphi_{-n_s}\cdots\varphi_{-n_1}\Omega 
\qquad\forall s\in\Zplus \,\,\,\forall n_1,\dots, n_s\in\Zplus-\half\\
L_{-1} \varphi_{-n}\Omega
 &=
n\varphi_{-n-1}\Omega
\quad \forall n\in\Zplus-\half\,. 
\end{split}
\end{equation}
Clearly, $F$ is generated by the $L_0$-eigenspace $F_\half=\C\varphi_{-\half}\Omega$, where $\varphi_{-\half}\Omega$ is a primary vector.
It is known, see \cite[Lecture 4]{KR87} and \cite[Section 4]{KT85}, that $F$ has a scalar product $\scalar$ such that $(\Omega|\Omega) =1$ and
\begin{equation}
\begin{split}
(\varphi_na|b) 
 &= 
(a|\varphi_{-n}b) 
\qquad
\forall n\in\Z-\half
\,\,\,
\forall a,b\in F\\
(L_ma|b) 
 &= 
(a|L_{-m}b) 
\qquad
\forall m\in\Z
\,\,\,
\forall a,b\in F
\,.
\end{split}
\end{equation}
Therefore, $F$ has a unitary structure by Proposition \ref{prop:simple_unitary_vosa_gen_by_qp_hermitian_fields}, which is even unique, up to unitary isomorphism, by simplicity, see Proposition \ref{prop:uniqueness_unitary_structure}. Alternatively, the unitarity of $F$ can be established as in \cite[Section 2.3]{AL17}.
To sum up, $F$ is a simple unitary VOSA generated by a primary vector of conformal weight $\half$ and thus it is strongly graded-local thanks to Theorem \ref{theo:gen_by_V1/2_V1_F}. This means that $\mathcal{F}:=\A_F$ is an irreducible graded-local conformal net, actually known as the \textit{neutral} or \textbf{real free fermion net}. Finally, note that $\mathcal{F}$ is exactly the net of graded-local algebras of the chiral Ising model affiliated with the Majorana field, which gives the generators of the Araki's self-dual CAR-algebra, see \cite[Section 2.1]{Boc96}
and \cite[Section 2]{Ara70}. In particular, the Majorana field $\psi(z)$ in \cite[Section 2.1]{Boc96} is our vertex operator $Y(\varphi_{-\half}\Omega,z)$.
\end{ex}

\begin{ex}[$d$ \textit{free fermions}] \label{ex:d_free_fermions}
By Corollary \ref{cor:strongly_graded-local_tensor_product}, $F^d:=\hat{\bigotimes}_{j=1}^dF$ for $d\in\Zplus$ is a simple strongly graded-local unitary VOSA with central charge $c=\frac{d}{2}$, which we call the $d$ \textbf{free fermions VOSA}. Furthermore, we have an irreducible graded-local conformal net:
\begin{equation}
\mathcal{F}^d:=\hat{\bigotimes}_{j=1}^d\mathcal{F}
=\mathcal{A}_{F^d} \,,
\end{equation}
called the $d$ \textbf{free fermions net}. In particular, $\mathcal{F}^2$ is known as the \textit{charged} or \textbf{complex free fermion net}. It is worthwhile to note that $F^d$ can be alternatively realized through a procedure similar to the one in Example \ref{ex:free_fermion}, starting with a $d$-dimensional $\C$-vector space $A$ equipped with a non-degenerate symmetric bilinear form, see e.g.\ \cite[Section 4.3]{Li96}, \cite[Proposition 4.10(b)]{Kac01} and \cite[Section 2.3]{AL17}. 
\end{ex}

\begin{rem} It follows from Example \ref{ex:d_free_fermions} and Theorem \ref{theo:covariant_subnets_unitary_subalgebras} that every unitary subalgebra $W$ of the 
free fermion VOSA $F^d$ is strongly graded-local. In particular, if $W$  is contained in the even subalgebra $F^d_\parzero$ of $F^d$ then $W$ is a strongly local \textit{VOA}. Actually, the latter fact can be proved directly in the setting of \cite{CKLW18} without using the theory developed in this paper, see \cite{CWX}.
\end{rem}

\begin{rem}
	In \cite{Ten17}, see also \cite{Ten19, Ten19b, Ten24}, the author construct a chiral CFT model of the charged free fermion using the Graeme Segal approach to CFT \cite{Seg04}, called \textit{Segal CFT}. 
	Without going into details, it is shown that Segal's approach allows to construct graded-local conformal nets from simple unitary VOSAs through the so called \textit{bounded localized vertex operators}, see \cite[Definition 4.7 and Proposition 4.8]{Ten19}, and a charged free fermion net is given.
	This also allows to construct several other chiral CFT models in the Haag-Kastler approach, as free bosons, some of WZW, lattice, Virasoro and super-Virasoro models, see \cite[Example 4.14 -- Example 4.17]{Ten19}, using that some of the VOSA versions of those models can be included, as unitary subalgebras, in some $d$ free fermions VOSA $F^d$ of Example \ref{ex:d_free_fermions}.
	Thanks to Example \ref{ex:d_free_fermions}, Theorem \ref{theo:covariant_subnets_unitary_subalgebras}, Corollary \ref{cor:strongly_graded-local_tensor_product} and their Segal's CFT counterparts, see \cite[Remark 4.18]{Ten19}, the graded-local conformal nets constructed via the two different approaches (from the same simple unitary VOSAs) actually coincide.
\end{rem}

\begin{ex}[$N=1$ \textit{super-Virasoro models}]  \label{ex:N=1_super-Virasoro}
The \textit{Neveu-Schwarz} or $N=1$ \textit{super-Virasoro algebra} $NS$ is the Lie superalgebra
\begin{equation}
NS:=
\overbrace{
\bigoplus_{m\in\Z}\C L_m\oplus
\C C
}^\mathrm{even}
\oplus
\overbrace{
\bigoplus_{n\in\Z-\half}\C G_n 
}^\mathrm{odd}
\end{equation}
with the commutation relations:
\begin{equation}  \label{eq:NS_cr}
\begin{split}
[L_m,L_n]  &:=
(m-n)L_{m+n}+\frac{m^3-m}{12}\delta_{m,-n}C
\qquad
\forall m,n\in\Z \\
[L_m,G_n] &:= 
\left(\frac{m}{2}-n\right) G_{m+n}
\qquad
\forall m\in\Z
\,\,\,
\forall n\in\Z-\half \\
[G_m,G_n] &:=
2L_{m+n}+\frac{1}{3}\left(m^2-\frac{1}{4}\right)\delta_{m,-n}C
\qquad
\forall m,n\in\Z-\half \\
[NS,C] &:= 0 \,.
\end{split}
\end{equation}
Let $U(NS)$ be the universal enveloping algebra of $NS$ and define, for every $c\in\C$, the $NS$-module
\begin{equation}
\widetilde{V}^c(NS):=U(NS)/
\langle
\bigoplus_{m\in\Zpluseq-1}\C L_m\oplus
\bigoplus_{n\in\Zpluseq-\half}\C G_n 
\,,\,\,\,
C-c
\rangle  \,.
\end{equation}
Let $J^c$ be the maximal $NS$-submodule of $\widetilde{V}^c(NS)$, then the quotient $NS$-module $V^c(NS):=\widetilde{V}^c(NS)/J^c$ has a unique structure of simple VOSA with central charge $c\in\C$ by \cite[Theorem 4.7 and Lemma 5.9]{Kac01}, see also \cite[Section 4.2]{Li96} and \cite[Section 3.1]{KW94}. They are known as \textit{Neveu-Schwarz} or \textbf{$N=1$ super-Virasoro VOSAs}. More specifically, every $V^c(NS)$ is generated by the conformal vector $\nu=L_{-2}\Omega$ and the so called \textbf{superconformal vector} $\tau:=G_{-\frac{3}{2}}\Omega$ with state-field correspondence:
\begin{equation}
Y(\nu,z)=\sum_{n\in\Z}L_nz^{-n-2}
\,,\qquad
Y(\tau,z)=\sum_{n\in\Z-\half}G_nz^{-n-\frac{3}{2}} \,.
\end{equation}
In particular, from the commutation relations \eqref{eq:NS_cr}, we have that $\tau$ is a primary vector:
\begin{equation}  \label{eq:tau_is_primary}
L_1\tau =-[G_{-\frac{3}{2}},L_1]\Omega=2G_{-\half}\Omega=0 
\,,\quad
L_2\tau =-[G_{-\frac{3}{2}},L_2]\Omega=\frac{5}{2}G_\half\Omega=0 \,.
\end{equation}
$L_0$ produces a $\half\Z$-grading of the vector space given by the following action on the ordered elements: for any positive integers $s,r$, any integers $m_s\leq\cdots\leq m_1\leq-2$ and any semi-integers $n_r\leq\cdots\leq n_1\leq-\frac{3}{2}$
\begin{equation}
L_0L_{m_s}\cdots L_{m_1}G_{n_r}\cdots G_{n_1}\Omega
=\left(-\sum_{j=1}^s m_j-\sum_{j=1}^r n_j\right)
L_{m_s}\cdots L_{m_1}G_{n_r}\cdots G_{n_1}\Omega
\,.
\end{equation}
Moreover, by \cite{FQS85}, \cite[Section 4]{GKO86}, \cite[Theorem 5.1]{KW86}, if 
\begin{equation}  \label{eq:values_cc_unitary_NS}
\mbox{ either } \quad
c\geq\frac{3}{2}
\quad
\mbox{ or }
\quad
c=\frac{3}{2}\left(
1-\frac{8}{m(m+2)}
\right) 
\quad
\mbox{for}
\quad
m> 2 
\end{equation}
then there exists a scalar product on $V^c(NS)$ with respect to which $Y(\nu,z)$ and $Y(\tau,z)$ are Hermitian fields. Using Proposition \ref{prop:simple_unitary_vosa_gen_by_qp_hermitian_fields}, $V^c(NS)$ has a unitary structure provided that $c$ is as in \eqref{eq:values_cc_unitary_NS}, see also \cite[Section 2.2]{AL17} for a different proof of the unitarity.
It follows from \cite[Eq.s (26) and (27)]{CKL08} and Proposition \ref{prop:energy_boundedness_by_generators} that every $V^c(NS)$ is energy-bounded. 
The family $\A_{\{\nu,\tau\}}$ of von Neumann subalgebras of $\A_{V^c(NS)}$ defined as in \eqref{eq:defin_net_gen_by_subset_F}, coincides with the so called $N=1$ \textbf{Super-Virasoro net} $\mathrm{SVir}_c$, defined in \cite[Eq.\ (32) and Theorem 33]{CKL08}, which satisfies the graded locality, see \cite[Eq.\ (33) and Theorem 33]{CKL08}. 
By Theorem \ref{theo:gen_by_quasi_primary}, we can conclude that for every $c$ as in \eqref{eq:values_cc_unitary_NS}, $V^c(NS)$ is strongly graded-local and $\A_{V^c(NS)}=\mathrm{SVir}_c$.
\end{ex}

The example of the $N=1$ super-Virasoro models enables us to talk about the superconformal structure which we naturally encounter in both the graded-local conformal net and the VOSA setting. In other words, we can wonder whether or not a given VOSA or a graded-local conformal net contains a $N=1$ super-Virasoro model. More precisely:

\begin{defin}    \label{defin:superconformal_net}
(See \cite[Definition 2.11]{CHL15}).
A graded-local conformal net $\A$ with central charge $c\in\C$ is said to be $N=1$ \textbf{superconformal} if it contains $\mathrm{SVir}_c$ as M\"{o}bius covariant subnet and
\begin{equation}
U(\Diff(I)^{(\infty)})\subset\mathrm{SVir}_c(I)\subseteq \A(I) \qquad \forall I\in\J 
\end{equation}
that is, $\mathrm{SVir}_c$ contains the Virasoro subnet $\mathrm{Vir}_c$ generated by the conformal symmetries, see \cite[Example 8.4]{CKLW18}.
\end{defin}
 
\begin{defin}    \label{defin:superconformal_vosa}
(See \cite[Definition 5.9]{Kac01}).
A VOSA $V$ with central charge $c\in\C$ is said to be $N=1$ \textbf{superconformal} if there exists a superconformal vector $\tau$ associated to the conformal vector $\nu$, that is, an odd vector whose operators $G_n$ with $n\in\Z-\half$, given by $Y(\tau,z)=\sum_{n\in\Z-\half}G_nz^{-n-\frac{3}{2}}$, satisfy the $N=1$ super-Virasoro algebra relations \eqref{eq:NS_cr}. 
Furthermore, if $V$ is unitary, then we say that $V$ is a \textbf{unitary $N=1$ superconformal VOSA} if $W(\{\nu,\tau\})$ is a unitary subalgebra.
\end{defin}

 Clearly, if $c$ is in the unitary series \eqref{eq:values_cc_unitary_NS}, then $V^c(NS)$ as in Example \ref{ex:N=1_super-Virasoro} is a simple unitary $N=1$ superconformal VOSA. 

\begin{lem}   \label{lem:automorphisms_NS}
Let $V^c(NS)$ be the $N=1$ super-Virasoro VOSA with central charge $c\in\C$. If $g$ is either a linear or an antilinear VOSA automorphism of $V^c(NS)$, then $g(\tau)=\pm \tau$. If $c$ is in the unitary series \eqref{eq:values_cc_unitary_NS}, then $V^c(NS)$ has a unique unitary structure and the unique PCT operator acts as the identity on the generators. Furthermore, 
$$
\Aut(\mathrm{SVir}_c)=\Aut_{\scalar}(V^c(NS))=\Aut(V^c(NS))\cong\Z_2 \,.
$$
\end{lem}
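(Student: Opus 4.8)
The plan is to prove the three assertions of Lemma \ref{lem:automorphisms_NS} in order, starting with the action on $\tau$. First I would use that any VOSA automorphism $g$ (linear or antilinear) fixes the vacuum $\Omega$ and the conformal vector $\nu$ and commutes with every $L_n$, hence preserves the grading $V^c(NS)_n = \Ker(L_0 - n1_V)$. Since $V^c(NS)$ is generated by $\nu$ and $\tau$ with $\tau$ the superconformal vector, the odd part of $V^c(NS)$ in conformal weight $\frac{3}{2}$ is exactly $\C\tau$ (this is forced by the structure of the Neveu-Schwarz module: the lowest-weight odd vector is $G_{-3/2}\Omega = \tau$ and the weight-$\frac32$ subspace is one-dimensional). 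Because $g$ preserves both parity and the weight-$\frac32$ eigenspace of $L_0$, we get $g(\tau) = \lambda\tau$ for some scalar $\lambda$. To pin down $\lambda = \pm 1$, I would apply $g$ to the defining relation coming from the bracket $[G_{-1/2},G_{-1/2}]$ or more simply to $\tau_{(2)}\tau = G_{-1/2}G_{-1/2}\Omega$-type products; concretely, since $g$ respects the $(n)$-product, applying $g$ to an identity of the form $\tau_{(k)}\tau = (\text{nonzero multiple of }\nu\text{ or }\Omega)$ forces $\lambda^2 = 1$ in the linear case (and $\abs{\lambda}^2$ equal to a real positive constant fixing $\lambda = \pm1$ after the relation is made explicit) in the antilinear case. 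The cleanest relation to use is the one expressing $c$ (or $\nu$) through a quadratic expression in $\tau$, so that $g(\nu)=\nu$ together with $g(\tau)=\lambda\tau$ yields $\lambda^2 = 1$.

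Second, for the uniqueness of the unitary structure when $c$ lies in the series \eqref{eq:values_cc_unitary_NS}, I would invoke Theorem \ref{theo:characterization_aut_group}: it suffices to show $\Aut(V^c(NS)) = \Aut_{\scalar}(V^c(NS))$, equivalently that $\Aut(V^c(NS))$ is compact (in fact finite). By the first part, every automorphism sends $\tau \mapsto \pm\tau$ and $\nu\mapsto\nu$, so $\Aut(V^c(NS))$ is a subgroup of $\Z_2$ (generators determine the automorphism since $\nu,\tau$ generate $V^c(NS)$). A finite group is compact, so Theorem \ref{theo:characterization_aut_group} gives uniqueness of the normalized invariant scalar product and hence, by Proposition \ref{prop:relation_pct_scalar_product}, uniqueness of the PCT operator $\theta$. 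Since the scalar product with respect to which $Y(\nu,z)$ and $Y(\tau,z)$ are Hermitian already exists (cited in Example \ref{ex:N=1_super-Virasoro}), and Hermiticity of the generating quasi-primary fields means $\theta$ fixes $\nu$ and $\tau$ (use \eqref{eq:an_star_qp}: a Hermitian quasi-primary field corresponds to a $Z_V\theta$-fixed, equivalently $\theta$-fixed up to the explicit phase, generator), I would conclude $\theta$ acts as the identity on the generators, hence $\theta = 1_V$ on all of $V^c(NS)$ by multiplicativity of $\theta$ on the $(n)$-product.

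Third, for the group identifications, I would assemble the pieces. We have just shown $\Aut(V^c(NS)) \subseteq \Z_2$; to see it equals $\Z_2$ I would exhibit the nontrivial automorphism $g$ with $g(\tau) = -\tau$, $g(\nu) = \nu$ — this is well-defined because it is precisely the parity operator $\Gamma_V$ restricted appropriately, or more directly the automorphism induced by $G_n \mapsto -G_n$, $L_n \mapsto L_n$, which respects \eqref{eq:NS_cr} and hence extends to a VOSA automorphism. Thus $\Aut(V^c(NS)) \cong \Z_2$, and since this group is finite (hence compact) the final clause of Theorem \ref{theo:characterization_aut_group} gives $\Aut_{\scalar}(V^c(NS)) = \Aut(V^c(NS))$. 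Finally, strong graded locality of $V^c(NS)$ was established in Example \ref{ex:N=1_super-Virasoro} with $\A_{V^c(NS)} = \mathrm{SVir}_c$, so Theorem \ref{theo:AutV_AutNet} gives $\Aut(\mathrm{SVir}_c) = \Aut_{\scalar}(V^c(NS))$, completing the chain of isomorphisms.

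The main obstacle I anticipate is the first step: rigorously establishing that the weight-$\frac32$ odd subspace is exactly $\C\tau$ and extracting the correct explicit quadratic relation that forces $\lambda = \pm 1$ (especially handling the antilinear case, where one must track complex conjugation of the structure constants in \eqref{eq:NS_cr} and of the central charge $c$). Everything downstream is a clean application of the already-established machinery (Theorems \ref{theo:characterization_aut_group} and \ref{theo:AutV_AutNet}, Proposition \ref{prop:relation_pct_scalar_product}), but correctly identifying that $g$ is determined by its action on $\{\nu,\tau\}$ and that $\lambda^2=1$ requires care with the module structure of the Neveu-Schwarz algebra.
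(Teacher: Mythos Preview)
Your proposal is correct and essentially the same as the paper's proof. The explicit quadratic relation the paper uses is $\tau_{(0)}\tau = G_{-1/2}\tau = 2\nu$ (from \eqref{eq:NS_cr}), which immediately gives $\alpha^2 = 1$ in both the linear and antilinear cases with no dependence on $c$, so your anticipated obstacle about tracking complex conjugation of structure constants does not arise; the only minor slip is the phrase ``$\theta = 1_V$'', since $\theta$ is antilinear---it acts as the identity on the real PBW basis, not on the whole complex space.
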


\begin{proof}
By definition, $V^c(NS)_\frac{3}{2}=\C\tau$, where $\tau=G_{-\frac{3}{2}}\Omega$. This means that there exists $\alpha\in\C\setminus\{0\}$ such that $g(\tau)=\alpha\tau$. By the commutation relations \eqref{eq:NS_cr}, see \cite[Proposition 5.9(i)]{Kac01}, we have that $G_{-\half}\tau=2\nu$ and thus
$$
\alpha^2G_{-\half}\tau=g(G_{-\half}\tau)
=2g(\nu)=2\nu=G_{-\half}\tau
$$
which implies that $\alpha=\pm 1$ as desired. This also implies that any PCT operator $\theta$ satisfies $\theta(\tau)=\tau$ by \eqref{eq:an_cross} and the fact that $Y(\tau,z)$ is a Hermitian field. Any PCT operator must be the antilinear vector space automorphism of $V^c(NS)$ acting as the identity on the generators $L_{m_s}\cdots L_{m_1}G_{n_r}\cdots G_{n_1}\Omega$, see Example \ref{ex:N=1_super-Virasoro}. 
(Cf.\ \cite[Section 2.3]{AL17}.)
Thus, any PCT operator commutes with every $g\in\Aut(V^c(NS))\cong \Z_2$, so that the remaining part follows from Theorem \ref{theo:characterization_aut_group} and from Theorem \ref{theo:AutV_AutNet}.
\end{proof}

Now, we can prove the following correspondence:
\begin{theo}   \label{theo:superconformal_correspondence}
Let $V$ be a simple unitary VOSA. Then we have the following:
\begin{itemize}
	\item[(i)] If $V$ is $N=1$ superconformal, then $V$ is unitary $N=1$ superconformal if and only if the superconformal vector is PCT-invariant. In both cases, $W(\{\nu,\tau\})$ is (up to isomorphism) the $N=1$ super-Virasoro VOSA $V^c(NS)$ for some $c$ in the unitary series \eqref{eq:values_cc_unitary_NS}.
	
	\item[(ii)] If $V$ is strongly graded-local, then $V$ is unitary $N=1$ superconformal if and only if $\A_V$ is $N=1$ superconformal.
\end{itemize}
\end{theo}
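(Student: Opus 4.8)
\textbf{Proof strategy for Theorem \ref{theo:superconformal_correspondence}.}

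The plan is to prove the two statements separately, treating (i) first since (ii) will rely on it together with the generator-based locality machinery of Section \ref{section:further_results}. For part (i), suppose $V$ is $N=1$ superconformal with superconformal vector $\tau$. The vertex subalgebra $W(\{\nu,\tau\})$ generated by the conformal and superconformal vectors carries a representation of the Neveu-Schwarz algebra by construction, so it is a quotient of some $\widetilde{V}^c(NS)$ and, being contained in the simple unitary VOSA $V$, it inherits non-degeneracy of the restricted bilinear form; hence $W(\{\nu,\tau\})$ is isomorphic to the simple $N=1$ super-Virasoro VOSA $V^c(NS)$. First I would observe that $W(\{\nu,\tau\})$ is $L_1$-invariant: indeed $\nu$ is quasi-primary and $\tau$ is primary (cf. \eqref{eq:tau_is_primary}), so $L_1$ maps the generators into $W(\{\nu,\tau\})$, and then Proposition \ref{prop:characterisation_unitary_subalgebras} reduces unitarity of the subalgebra to the single condition $\theta(W(\{\nu,\tau\}))\subseteq W(\{\nu,\tau\})$. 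Since $\theta$ always fixes $\nu$, the subalgebra is $\theta$-invariant precisely when $\theta(\tau)\in W(\{\nu,\tau\})$; because $V^c(NS)_{3/2}=\C\tau$ and $\theta$ is grading-preserving, $\theta(\tau)$ is a scalar multiple of $\tau$, and by the antiunitarity of $\theta$ together with Lemma \ref{lem:automorphisms_NS} this multiple is $\pm1$. Thus $\theta$-invariance of the subalgebra is equivalent to $\theta(\tau)=\pm\tau$, and a sign normalization argument (replacing $\tau$ by $i\tau$ would violate parity, so the only freedom is $\pm$) forces the genuinely meaningful condition to be PCT-invariance $\theta(\tau)=\tau$. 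That the resulting $c$ lies in the unitary series \eqref{eq:values_cc_unitary_NS} follows because the unitary structure on $W(\{\nu,\tau\})\cong V^c(NS)$ exists, and those are exactly the admissible central charges.

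For part (ii), assume $V$ is simple strongly graded-local. If $V$ is unitary $N=1$ superconformal, then by (i) the subalgebra $W(\{\nu,\tau\})\cong V^c(NS)$ is a \emph{unitary} subalgebra of $V$, so Theorem \ref{theo:covariant_subnets_unitary_subalgebras} gives that $\A_{W(\{\nu,\tau\})}=\A_{V^c(NS)}$ embeds as a covariant subnet of $\A_V$. By Example \ref{ex:N=1_super-Virasoro} this subnet is exactly $\mathrm{SVir}_c$, and the Virasoro subnet $\mathrm{Vir}_c$ generated by the conformal symmetries $U(\Diff(I)^{(\infty)})$ sits inside $\mathrm{SVir}_c$ because $\nu\in W(\{\nu,\tau\})$; this is precisely the defining condition in Definition \ref{defin:superconformal_net}, so $\A_V$ is $N=1$ superconformal. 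Conversely, if $\A_V$ is $N=1$ superconformal, then $\mathrm{SVir}_c$ is a covariant subnet of $\A_V$, and by the inverse direction of Theorem \ref{theo:covariant_subnets_unitary_subalgebras} the space $W:=\mathcal{H}_{\mathrm{SVir}_c}\cap V$ is a unitary subalgebra of $V$ with $\A_W=\mathrm{SVir}_c$. Since $\A_W\cong\A_{V^c(NS)}$ and the correspondence $W\mapsto\A_W$ is one-to-one on unitary subalgebras, Theorem \ref{theo:unicity_irr_graded-local_cn} identifies $W$ with $V^c(NS)$ as a VOSA; in particular $W$ contains a superconformal vector $\tau$ which, being a unitary subalgebra generator, is automatically PCT-invariant, so $V$ is unitary $N=1$ superconformal by (i).

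The main obstacle I anticipate is the converse direction of (ii): extracting a genuine superconformal \emph{vector} $\tau\in V$ from the abstract subnet isomorphism $\A_W\cong\mathrm{SVir}_c$. The point is that the net-level identification only tells us $W$ is \emph{isomorphic} to $V^c(NS)$, and one must verify that the image of the superconformal vector under this isomorphism is an odd primary vector of conformal weight $3/2$ in $V$ whose modes $G_n$ satisfy the full $N=1$ relations \eqref{eq:NS_cr} \emph{as operators on $V$}, not merely on $W$. Here I would lean on the fact that the VOSA isomorphism from Theorem \ref{theo:unicity_irr_graded-local_cn} respects the $(n)$-product and the grading, so the Neveu-Schwarz commutation relations transport verbatim; the subtlety is ensuring that the conformal vector of $W$ coincides with the restriction of $\nu$, which is guaranteed because $\mathrm{Vir}_c\subseteq\mathrm{SVir}_c$ forces the Virasoro generators to match those induced by $U$, hence by the uniqueness argument in the proof of Theorem \ref{theo:unicity_irr_graded-local_cn} the two conformal vectors agree and $\nu^W=\nu$. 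Once this compatibility of conformal structures is secured, the remaining verifications are routine consequences of the established correspondence.
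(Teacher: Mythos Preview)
Your argument for part (ii) is essentially the paper's, and your treatment of the forward direction in (i) (that $\theta(\tau)=\tau$ implies $W(\{\nu,\tau\})$ is a unitary subalgebra via $L_1$-invariance and Proposition \ref{prop:characterisation_unitary_subalgebras}) is also correct. The gap is in the converse direction of (i): your ``sign normalization argument'' does not rule out $\theta(\tau)=-\tau$. If $\theta(\tau)=-\tau$ then $W(\{\nu,\tau\})$ \emph{is} $\theta$-invariant, hence a unitary subalgebra by your own criterion, so you would have ``$V$ is unitary $N=1$ superconformal'' holding with $\theta(\tau)\neq\tau$, contradicting the claimed equivalence. You cannot normalize this away: replacing $\tau$ by $-\tau$ gives $\theta(-\tau)=-\theta(\tau)=\tau=-(-\tau)$ again, and replacing $\tau$ by $i\tau$ destroys the Neveu--Schwarz relations since $[iG_m,iG_n]=-2L_{m+n}-\cdots$.

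The paper eliminates $\theta(\tau)=-\tau$ by a positivity computation that you are missing: from \eqref{eq:an_star_qp} with $d_\tau=3/2$ one gets $(\tau|\tau)=(G_{-3/2}\Omega|\tau)=(\Omega|(\theta\tau)_{3/2}\tau)$, and the relation $G_{3/2}\tau=\tfrac{2c}{3}\Omega$ from \eqref{eq:NS_cr} together with $\theta(\tau)=-\tau$ would give $0<(\tau|\tau)=-\tfrac{2c}{3}$, contradicting $c>0$ (which follows from $0<(\nu|\nu)=c/2$). A smaller issue: your early claim that $W(\{\nu,\tau\})\cong V^c(NS)$ via ``non-degeneracy of the restricted bilinear form'' is circular, since non-degeneracy on $W$ presupposes $\theta(W)\subseteq W$; fortunately you only use $W_{3/2}=\C\tau$, which follows directly from the PBW description of $W$ without simplicity.
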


\begin{proof}
To prove (i), first suppose that the superconformal vector is PCT-invariant, that is $\theta(\tau)=\tau$. Then $W:=W(\{\nu,\tau\})$ is a unitary subalgebra of $V$ by Proposition \ref{prop:characterisation_unitary_subalgebras}.
Vice versa, suppose that $V$ is a simple unitary $N=1$ superconformal VOSA. If $W$ is a unitary subalgebra of $V$, then it has a structure of simple unitary VOSA by Proposition \ref{prop:unitary_structure_subalgebras}. Accordingly, it must be isomorphic to $V^c(NS)$ of Example \ref{ex:N=1_super-Virasoro} for some $c\in\C\setminus\{0\}$. Moreover, $c$ must be a positive real number because of, see \cite[Theorem 4.10 (a)]{Kac01},
$$
0<(\nu|\nu)=(L_{-2}\Omega|\nu)=(\Omega|L_2\nu)
=\frac{c}{2} \,.
$$
The PCT operator $\theta$ of $V$ restricts to an antilinear automorphism of $W$. By Lemma \ref{lem:automorphisms_NS}, $\theta(\tau)=\tau$, otherwise $\theta(\tau)=-\tau$ and from \eqref{eq:an_cross} applied to $\tau$ and \cite[Proposition 5.9 (ii)]{Kac01}, we have that
$$
0<(\tau|\tau)=(G_{-\frac{3}{2}}\Omega|\tau)
=(\Omega|(\theta (\tau))_\frac{3}{2}\tau)
=-\frac{2c}{3}<0
$$
which is impossible. 
Moreover, it implies that $Y(\tau,z)$ is a Hermitian field of $V$ and thus of $W$ with respect to $\scalar$. Hence, $(W,\scalar)$ realizes a unitary representation of $NS$ and thus $c$ must be in the unitary series \eqref{eq:values_cc_unitary_NS}, see Example \ref{ex:N=1_super-Virasoro} and references therein. 

To prove (ii), suppose that $V$ is a simple strongly graded-local unitary VOSA.
If $V$ is also unitary $N=1$ superconformal, then $W:=W(\{\nu,\tau\})$ is a unitary subalgebra and, by Theorem \ref{theo:covariant_subnets_unitary_subalgebras},
it gives rise to a covariant subnet $\A_W=\mathrm{SVir}_c$ of $\A_V$, containing the Virasoro subnet $\A_{W(\{\nu\})}=\mathrm{Vir}_c$ for some $c$ as in \eqref{eq:values_cc_unitary_NS}. Thus, $\A_V$ is an irreducible $N=1$ superconformal net.
Vice versa, if $\A_V$ is an irreducible $N=1$ superconformal net, then it contains the covariant subnets $\mathrm{Vir}_c\subset \mathrm{SVir}_c$ for some $c$ as in \eqref{eq:values_cc_unitary_NS}, where $\mathrm{Vir}_c=\A_{W(\{\nu\})}$. On the one hand, by Theorem \ref{theo:covariant_subnets_unitary_subalgebras}, there exists a unitary subalgebra $\widetilde{W}$ of $V$ such that $\A_{\widetilde{W}}=\mathrm{SVir}_c$. On the other hand, $\mathrm{SVir}_c= \A_{V^c(NS)}$ as in Example \ref{ex:N=1_super-Virasoro} and thus $\widetilde{W}= V^c(NS)$ by Theorem \ref{theo:unicity_irr_graded-local_cn}. This implies that $\widetilde{W}$ is a unitary subalgebra of $V$ generated by $\nu$ and a superconformal vector $\widetilde{\tau}$ associated to it, that is $V$ is unitary $N=1$ superconformal.
\end{proof}

Now, we look at those automorphisms preserving the $N=1$ superconformal structure in both settings.

\begin{defin}
Let $V$ be a $N=1$ superconformal VOSA. $\Aut^\mathrm{sc}(V)$ is the closed subgroup of $\Aut(V)$ formed by those automorphisms fixing the elements of $W(\{\nu,\tau\})$. If $V$ is also unitary, set 
$$
\Aut^\mathrm{sc}_{\scalar}(V):=\Aut^\mathrm{sc}(V)\cap\Aut_{\scalar}(V) \,.
$$ 
\end{defin}

Therefore, we have a result similar to Theorem \ref{theo:characterization_aut_group}:
\begin{theo}
Let $V$ be a unitary $N=1$ superconformal VOSA. Then $\Aut^\mathrm{sc}_{\scalar}(V)$ is a compact subgroup of $\Aut(V)$. Moreover, if $V$ is simple, then the following are equivalent:
\begin{itemize}  \label{theo:characterization_aut_sc_group}

\item[(i)] $\scalar$ is the unique normalized invariant scalar product which makes $V$ a simple unitary $N=1$ superconformal VOSA;

\item[(ii)] $\scalar$ is the unique normalized invariant scalar product on $V$ such that the corresponding PCT operator $\theta$ fixes the superconformal vector $\tau$;

\item[(iii)] $\Aut^\mathrm{sc}_{\scalar}(V)=\Aut^\mathrm{sc}(V)$;

\item[(iv)] $\theta$ commutes with every $g\in\Aut^\mathrm{sc}(V)$;

\item[(v)] $\Aut^\mathrm{sc}(V)$ is compact;

\item[(vi)] $\Aut^\mathrm{sc}_{\scalar}(V)$ is totally disconnected.
\end{itemize}
\end{theo}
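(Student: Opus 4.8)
The plan is to mirror the structure of the proof of Theorem \ref{theo:characterization_aut_group}, adapting the arguments from \cite[Lemma 5.20 and Theorem 3.21]{CKLW18} to the superconformal setting, with the superconformal vector $\tau$ playing the distinguished role alongside $\nu$. First I would establish that $\Aut^\mathrm{sc}_{\scalar}(V)$ is compact. Since $\Aut^\mathrm{sc}_{\scalar}(V) = \Aut^\mathrm{sc}(V)\cap\Aut_{\scalar}(V)$ is the intersection of the closed subgroup $\Aut^\mathrm{sc}(V)$ with the compact group $\Aut_{\scalar}(V)$ (compact by Theorem \ref{theo:characterization_aut_group}), it is automatically compact as a closed subgroup of a compact group. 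This handles the first assertion without requiring simplicity.

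For the equivalences, assuming $V$ simple, I would prove the cycle of implications, exploiting the key structural fact from Lemma \ref{lem:automorphisms_NS} that on the subalgebra $W(\{\nu,\tau\})\cong V^c(NS)$ any (anti)linear automorphism sends $\tau\mapsto\pm\tau$, while PCT-invariance forces the $+$ sign. The implication (i)$\Leftrightarrow$(ii) should follow from Theorem \ref{theo:superconformal_correspondence}(i): by that result $V$ is unitary $N=1$ superconformal precisely when $\theta(\tau)=\tau$, so a normalized invariant scalar product makes $V$ superconformal unitary if and only if its associated PCT operator fixes $\tau$; uniqueness of one is then uniqueness of the other. For (ii)$\Leftrightarrow$(iii) and the remaining links, I would adapt the uniqueness-of-unitary-structure machinery: given two such scalar products with PCT operators $\theta,\widetilde\theta$ both fixing $\tau$, Proposition \ref{prop:uniqueness_unitary_structure} produces $h\in\Aut(V)$ with $\widetilde\theta=h^{-1}\theta h$ and $(a|ha)>0$; the condition that both fix $\tau$ should force $h\in\Aut^\mathrm{sc}(V)$, and strict positivity then places $h\in\Aut_+(V)\cap\Aut^\mathrm{sc}(V)$. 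The equivalence (iii)$\Leftrightarrow$(iv) rests on Remark \ref{rem:real_form_aut_group}: $g$ is unitary iff it commutes with $\theta$, so $\Aut^\mathrm{sc}_{\scalar}(V)=\Aut^\mathrm{sc}(V)$ exactly when every superconformal automorphism commutes with $\theta$.

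The implications (iv)$\Rightarrow$(v)$\Rightarrow$(vi)$\Rightarrow$(i) I would run in analogy with the corresponding chain in \cite[Theorem 3.21]{CKLW18}: if $\theta$ commutes with all of $\Aut^\mathrm{sc}(V)$ then $\Aut^\mathrm{sc}(V)=\Aut^\mathrm{sc}_{\scalar}(V)$ is compact; compactness of $\Aut^\mathrm{sc}(V)$ gives total disconnectedness of $\Aut^\mathrm{sc}_{\scalar}(V)$ via the polar-decomposition structure (using that $\Aut_+(V)\cap\Aut^\mathrm{sc}(V)$ is path-connected, so its only totally disconnected subgroup is trivial, forcing uniqueness of the scalar product fixing $\tau$). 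To close the loop back to (i), total disconnectedness rules out a nontrivial positive superconformal automorphism implementing a second scalar product, yielding uniqueness.

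The main obstacle I anticipate is verifying carefully that the intertwiner $h$ from Proposition \ref{prop:uniqueness_unitary_structure} actually preserves the superconformal structure, i.e. that $h\in\Aut^\mathrm{sc}(V)$ rather than merely $h\in\Aut(V)$. This requires checking that when both $\theta$ and $\widetilde\theta$ fix $\tau$, the relation $\widetilde\theta=h^{-1}\theta h$ together with $\theta h\theta=h^{-1}$ forces $h(\tau)=\tau$; the subtlety is that $h$ is strictly positive, so the sign ambiguity $h(\tau)=\pm\tau$ from Lemma \ref{lem:automorphisms_NS} must be resolved by positivity (the eigenvalue $-1$ is excluded since $(\tau|h\tau)>0$), pinning $h(\tau)=\tau$. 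Once this point is secured, the rest of the argument is a routine transcription of the CKLW proof with $\{\nu,\tau\}$ in place of $\nu$, and the Galois-correspondence and Lie-group facts used there carry over verbatim to the closed subgroup $\Aut^\mathrm{sc}(V)$.
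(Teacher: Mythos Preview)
Your overall strategy matches the paper's: both adapt \cite[Theorem 5.21]{CKLW18} with the superconformal vector $\tau$ playing the distinguished role, and both correctly identify as the crux that the intertwiner $h\in\Aut(V)$ from Proposition~\ref{prop:uniqueness_unitary_structure} must lie in $\Aut^\mathrm{sc}(V)$ whenever both PCT operators fix $\tau$. The paper's implication scheme differs cosmetically (it runs $(\mathrm{ii})\Rightarrow(\mathrm{iii})$ by direct contrapositive and proves $(\mathrm{v})\Rightarrow(\mathrm{ii})$, $(\mathrm{vi})\Rightarrow(\mathrm{ii})$ and $(\mathrm{iii})\Rightarrow(\mathrm{vi})$ separately, rather than your chain).

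There is, however, a genuine gap in your resolution of the main obstacle. You invoke Lemma~\ref{lem:automorphisms_NS} to obtain $h(\tau)=\pm\tau$, but that lemma concerns automorphisms of $V^c(NS)$ itself, not of the ambient VOSA $V$. Since $h\in\Aut(V)$ is not a priori known to preserve the subalgebra $W(\{\nu,\tau\})$, and $V_{3/2}$ may be strictly larger than $\C\tau$, the lemma does not apply and there is no immediate reason why $h(\tau)$ should be proportional to $\tau$. The paper instead goes back to the explicit construction of $h$ in the proof of \cite[Proposition~5.19]{CKLW18} and verifies $h(\tau)=\tau$ directly from that construction. (Your positivity intuition can in fact be rescued abstractly: the relation $\theta h\theta=h^{-1}$ gives $h^*=\theta h^{-1}\theta=h$, and from $\widetilde\theta=h^{-1}\theta h$ together with $\theta(\tau)=\widetilde\theta(\tau)=\tau$ one deduces $h^2(\tau)=\tau$; since $h$ restricted to the finite-dimensional space $V_{3/2}$ is positive self-adjoint, $h(\tau)=\tau$ follows. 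But this is not the argument you wrote.)

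A second, smaller issue: what you label as $(\mathrm{v})\Rightarrow(\mathrm{vi})$ is really an argument for $(\mathrm{v})\Rightarrow(\mathrm{ii})$ (compactness forces the positive part trivial, hence uniqueness). As written, your scheme never produces an implication \emph{into} $(\mathrm{vi})$, so the equivalence with $(\mathrm{vi})$ is not closed; the paper supplies $(\mathrm{iii})\Rightarrow(\mathrm{vi})$ separately.
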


\begin{proof}
The first claim follows from Theorem \ref{theo:characterization_aut_group}.
For the ``TFAE'' part,
we straightforwardly adapt the proof of \cite[Theorem 5.21]{CKLW18}. We prove the following implications: 
$$
\xymatrix{ \\
\mathrm{(i)} \ar@{<=>}[r] &
\mathrm{(ii)} \ar@{=>}[r] & \mathrm{(iii)} \ar@{<=>}[r] \ar@{=>}@/_0.5cm/[rr] \ar@{=>}@/_1cm/[rrr] & \mathrm{(iv)}&
\mathrm{(v)} \ar@{=>}@/_0.5cm/[lll] & \mathrm{(vi)} \,. \ar@{=>}@/_1cm/[llll] 
\\
&
}
$$

(i)$\Leftrightarrow$(ii) follows by (i) of Theorem \ref{theo:superconformal_correspondence}.

(ii)$\Rightarrow$(iii) follows from the fact that if there exists $g\in\Aut^\mathrm{sc}(V)\backslash\Aut^\mathrm{sc}_{\scalar}(V)$, then $(g(\cdot)|g(\cdot))$ defines a new invariant scalar product on $V$, which is different from $\scalar$ and such that the corresponding PCT operator $g^{-1}\theta g$ fixes $\tau$.

Regarding (iii)$\Leftrightarrow$(iv), we have that $g\in\Aut^\mathrm{sc}(V)$ is unitary if and only if $(g\theta a|g b)=(\theta a| b)$ for all $a,b\in V$. On the other hand, $(\theta ga|gb)=(\theta a|b)$ for all $a,b\in V$ thanks to (ii)$\Leftrightarrow$(iii) of Corollary \ref{cor:when_vosa_automorphism_preservs_nu}. It follows that $g$ is unitary if and only if it commutes with $\theta$.

(iii)$\Rightarrow$(v) is immediate from the first part. Checking the construction of the automorphism $h\in\Aut(V)$ of Proposition \ref{prop:uniqueness_unitary_structure} in the proof of \cite[Proposition 5.19]{CKLW18}, we note that $h\in\Aut^\mathrm{sc}(V)$ if we are dealing with PCT operators which preserve $\tau$. Then the proofs of (v)$\Rightarrow$(ii) and (vi)$\Rightarrow$(ii) are as in the proof of \cite[Proposition 5.21]{CKLW18}. Also (iii)$\Rightarrow$(vi) is proved as in \cite[Proposition 5.21]{CKLW18}.
\end{proof}

Here is a criterion for a VOSA to be unitary $N=1$ superconformal:

\begin{prop}  \label{prop:criteria_unitary_N=1}
	Let $V$ be a simple unitary VOSA. Suppose that $V$ is $N=1$ superconformal and $\Aut(V)$ is a finite dimensional Lie group. 
	If there exists a compact subgroup $G$ of $\Aut^\mathrm{sc}(V)$ such that $(V^G)_\frac{3}{2}$ is one-dimensional, then there exists a normalized invariant scalar product $\curlyscalar$ on $V$ such that $G\subseteq \Aut_{\curlyscalar}(V)$ and $V$ is a simple unitary $N=1$ superconformal VOSA (with respect to $\curlyscalar$). 
\end{prop}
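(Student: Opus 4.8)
The plan is to produce an invariant scalar product under which the given compact group $G$ acts unitarily, and then to verify that the resulting PCT operator fixes the superconformal vector $\tau$, so that part (i) of Theorem \ref{theo:superconformal_correspondence} applies. First I would invoke Proposition \ref{prop:unitarize_compact_subgr_aut}: since $V$ is simple unitary with $\Aut(V)$ a finite dimensional Lie group, and $G\subseteq\Aut^\mathrm{sc}(V)\subseteq\Aut(V)$ is compact, there exists a normalized invariant scalar product $\curlyscalar$ on $V$ with $G\subseteq\Aut_{\curlyscalar}(V)$. Denote by $\widetilde\theta$ the PCT operator associated to $\curlyscalar$ (unique by (ii) of Proposition \ref{prop:relation_pct_scalar_product}, as $V$ is simple). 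The remaining task is entirely about showing that $(V,\curlyscalar,\widetilde\theta)$ is \emph{$N=1$ superconformal}, i.e.\ that $\widetilde\theta(\tau)=\tau$, whence $W(\{\nu,\tau\})$ is a unitary subalgebra by Proposition \ref{prop:characterisation_unitary_subalgebras} (note $\widetilde\theta(\nu)=\nu$ automatically, since any PCT operator preserves the conformal vector).

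The key mechanism will be the hypothesis that $(V^G)_\frac32$ is one-dimensional. Because $G\subseteq\Aut^\mathrm{sc}(V)$ fixes $\tau$ pointwise and $\tau\in V_\frac32$ is odd, we have $\tau\in(V^G)_\frac32$, so one-dimensionality forces $(V^G)_\frac32=\C\tau$. Next I would argue that $\widetilde\theta$ preserves the fixed-point space $V^G$: since $G\subseteq\Aut_{\curlyscalar}(V)$, every $g\in G$ commutes with $\widetilde\theta$ (by the argument in Remark \ref{rem:real_form_aut_group}, unitarity of $g$ is equivalent to $g\widetilde\theta=\widetilde\theta g$), and hence $\widetilde\theta$ maps $V^G$ into $V^G$. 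As $\widetilde\theta$ also preserves the $L_0$-grading (any PCT operator commutes with $L_0$), it restricts to an antilinear map on $(V^G)_\frac32=\C\tau$. Therefore $\widetilde\theta(\tau)=\lambda\tau$ for some $\lambda\in\C$, and since $\widetilde\theta$ is an involution we get $|\lambda|^2=1$ upon applying it twice (using antilinearity, $\widetilde\theta^2(\tau)=\bar\lambda\lambda\tau=\tau$).

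To pin down $\lambda=1$ I would run the positivity argument exactly as in the proof of (i) of Theorem \ref{theo:superconformal_correspondence}. The relation \eqref{eq:an_cross} applied to the quasi-primary vector $\tau$ (with $d_\tau=\frac32$, so $(-1)^{2d_\tau^2+d_\tau}=(-1)^{\frac92+\frac32}=1$, or more cleanly $i^{2d_\tau}=i^3=-i$ and the Hermiticity bookkeeping of the super-Virasoro field) together with the commutation relations \eqref{eq:NS_cr} gives $(\tau|\tau)_{\curlyscalar}$ in terms of the central charge; concretely, using $G_\frac32 G_{-\frac32}\Omega$ and the third relation in \eqref{eq:NS_cr} one obtains that the sign of $(\tau|\tau)_{\curlyscalar}$ is governed by whether $\widetilde\theta(\tau)=+\tau$ or $-\tau$. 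Since $(\tau|\tau)_{\curlyscalar}>0$ by positive-definiteness, the choice $\lambda=-1$ would yield $(\tau|\tau)_{\curlyscalar}=-\tfrac{2c}{3}<0$, a contradiction (here $c>0$ as in that same proof, from $0<(\nu|\nu)_{\curlyscalar}=\tfrac c2$). Hence $\lambda=1$, i.e.\ $\widetilde\theta(\tau)=\tau$, and part (i) of Theorem \ref{theo:superconformal_correspondence} then shows $V$ is unitary $N=1$ superconformal with respect to $\curlyscalar$.

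The main obstacle I anticipate is the step $\widetilde\theta(V^G)\subseteq V^G$: it requires knowing that the specific $\curlyscalar$ produced by Proposition \ref{prop:unitarize_compact_subgr_aut} makes \emph{all} of $G$ unitary (so that $\widetilde\theta$ commutes with each $g\in G$), which is precisely the content of that proposition, and then translating ``$G$ unitary'' into ``$\widetilde\theta$ commutes with $G$'' via the characterization in Remark \ref{rem:real_form_aut_group}. Everything else is a one-dimensionality squeeze followed by the standard sign-of-the-norm computation; the genuinely new input beyond Theorem \ref{theo:superconformal_correspondence} is the use of the compact group $G$ and the averaging that produces a $G$-invariant scalar product, which is why the hypothesis $\dim(V^G)_\frac32=1$ is exactly what is needed to force PCT-invariance of $\tau$ without assuming it outright.
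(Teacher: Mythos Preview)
Your argument is correct and follows essentially the same route as the paper: invoke Proposition~\ref{prop:unitarize_compact_subgr_aut} to produce $\curlyscalar$ with $G\subseteq\Aut_{\curlyscalar}(V)$, observe that unitarity of $G$ forces $\widetilde\theta$ to commute with every $g\in G$, and then use $(V^G)_{3/2}=\C\tau$ together with the fact that $\widetilde\theta$ preserves the grading to conclude $\widetilde\theta(\tau)\in\C\tau$.

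The only difference is that the paper stops here: $\widetilde\theta(\tau)\in\C\tau$ already gives $\widetilde\theta(W(\{\nu,\tau\}))\subseteq W(\{\nu,\tau\})$, so Proposition~\ref{prop:characterisation_unitary_subalgebras} applies directly and the proof ends. Your additional step pinning down $\lambda=1$ via positivity is unnecessary for the conclusion; moreover, as written it only addresses $\lambda=\pm1$ rather than arbitrary $|\lambda|=1$. The computation you sketch in fact yields $\{\tau|\tau\}=\lambda\cdot\tfrac{2c}{3}$, which does force $\lambda\in\R_{>0}$ and hence $\lambda=1$, so the argument can be completed---but the paper's route avoids this entirely by noting that $\theta$-stability (not $\theta$-invariance) of the generating set is all that Proposition~\ref{prop:characterisation_unitary_subalgebras} requires.
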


\begin{proof}
	Let $\nu$ and $\tau$ be the conformal and the superconformal vector of $V$, so that $(V^G)_\frac{3}{2}=\C\tau$. If $G$ is compact, then there exists a normalized invariant scalar product $\curlyscalar$ on $V$ such that $G\subseteq\Aut_{\curlyscalar}(V)$ by Proposition \ref{prop:unitarize_compact_subgr_aut}. 
	This also implies that every $g\in G$ commutes with the PCT operator $\widetilde{\theta}$ associated to $\curlyscalar$, see the proof of (iii)$\Leftrightarrow$(iv) of Theorem \ref{theo:characterization_aut_sc_group} (note that this fact does not rely on the unitarity of the $N=1$ superconformal structure, cf.\ also the proof of \cite[Theorem 5.21]{CKLW18}).
	Thus, $g(\widetilde{\theta}(\tau))=\widetilde{\theta}(\tau)$ for all $g\in G$ and thus $\widetilde{\theta}(\tau)\in \C\tau$ as $\widetilde{\theta}(\tau)\in V_\frac{3}{2}$ (this is due to the fact that every PCT operator preserves the conformal vector $\nu$ and thus the $L_0$-grading too).  
	By Proposition \ref{prop:characterisation_unitary_subalgebras}, $W(\{\nu,\tau\})$ is a unitary subalgebra of $V$, that is $V$ is a simple unitary $N=1$ superconformal VOSA.
\end{proof}

\begin{defin}
If $\A$ is a $N=1$ superconformal net, denote by $\Aut^\mathrm{sc}(\A)$ the group of those automorphisms of the net fixing the elements of the covariant subnet $\mathrm{SVir}_c$. 
\end{defin}

Note that Lemma \ref{lem:automorphisms_NS} tells us that whenever $c$ is as in \eqref{eq:values_cc_unitary_NS}, then
\begin{equation}
\Aut^\mathrm{sc}(\mathrm{SVir}_c)=\Aut^\mathrm{sc}_{\scalar}(V^c(NS))=\Aut^\mathrm{sc}(V^c(NS))=\{1_{V^c(NS)}\}
\,.
\end{equation}

More generally, proceeding similarly to the proof of Theorem \ref{theo:AutV_AutNet}, we have that:
\begin{theo} \label{theo:AutV_AutNet_superconformal}
Let $V$ be a simple strongly graded-local unitary $N=1$ superconformal VOSA. Then $\Aut^\mathrm{sc}(\A_V)=\Aut^\mathrm{sc}_{\scalar}(V)$. Moreover, if $\Aut^\mathrm{sc}(V)$ is compact, then $\Aut^\mathrm{sc}(\A_V)=\Aut_{\scalar}^\mathrm{sc}(V)=\Aut^\mathrm{sc}(V)$. 
\end{theo}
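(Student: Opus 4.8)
The plan is to mirror the proof of Theorem \ref{theo:AutV_AutNet} (whose argument is imported from \cite[Theorem 6.9]{CKLW18}), but now carried out relative to the $N=1$ superconformal subalgebra $W(\{\nu,\tau\})$ rather than relative to $V$ itself. The two statements have exactly the same shape, and the superconformal refinement amounts to intersecting both sides of the identity $\Aut(\A_V)=\Aut_{\scalar}(V)$ with the appropriate stabilizer of the super-Virasoro data. So the bulk of the work is bookkeeping: show that an automorphism of the net fixes $\mathrm{SVir}_c$ pointwise if and only if the corresponding unitary VOSA automorphism fixes $W(\{\nu,\tau\})$ pointwise.

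First I would invoke Theorem \ref{theo:AutV_AutNet} to obtain the identification $\Aut(\A_V)=\Aut_{\scalar}(V)$, which sends a net automorphism $\phi$ to the unitary VOSA automorphism $g:=\phi\restriction_V$ (recall $\phi(V)=\mathcal{H}_{(V,\scalar)}^{\mathrm{fin}}=V$ as in the proof of Theorem \ref{theo:unicity_irr_graded-local_cn}). Next I would observe, using Theorem \ref{theo:covariant_subnets_unitary_subalgebras} and the superconformal correspondence (ii) of Theorem \ref{theo:superconformal_correspondence}, that the unitary subalgebra $W:=W(\{\nu,\tau\})$ corresponds under $W\mapsto\A_W$ precisely to the covariant subnet $\mathrm{SVir}_c\subseteq\A_V$. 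Since $\nu$ is always fixed by any VOSA automorphism, fixing $W$ pointwise is governed by the action on $\tau$, and I would use Lemma \ref{lem:automorphisms_NS} ($g(\tau)=\pm\tau$, with $\theta(\tau)=\tau$) to pin this down. The crux is then the equivalence
\begin{equation}
\phi\in\Aut^{\mathrm{sc}}(\A_V)
\quad\Longleftrightarrow\quad
g\restriction_{W}=1_{W},
\end{equation}
i.e.\ $\phi$ fixes $\mathrm{SVir}_c$ pointwise exactly when $g$ fixes $W$ pointwise. One direction follows because $g$ fixing $W$ forces $\phi$ to fix every smeared vertex operator $Y(a,f)$ with $a\in W_{\parzerone}$, hence the whole von Neumann algebra $\A_W(I)=\mathrm{SVir}_c(I)$; the reverse direction runs the same implication backwards, using that $W=\mathcal{H}_{\mathrm{SVir}_c}\cap V$ and that $\phi$ fixing $\mathrm{SVir}_c$ forces $g$ to fix $W$. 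Intersecting $\Aut(\A_V)=\Aut_{\scalar}(V)$ with these mutually corresponding stabilizers yields $\Aut^{\mathrm{sc}}(\A_V)=\Aut^{\mathrm{sc}}_{\scalar}(V)$.

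For the second assertion, I would assume $\Aut^{\mathrm{sc}}(V)$ is compact and deduce $\Aut^{\mathrm{sc}}_{\scalar}(V)=\Aut^{\mathrm{sc}}(V)$. This is the superconformal analogue of (iii)$\Leftrightarrow$(v) in Theorem \ref{theo:characterization_aut_sc_group}: compactness of $\Aut^{\mathrm{sc}}(V)$ forces $\theta$ to commute with every $g\in\Aut^{\mathrm{sc}}(V)$ (so that every such $g$ is unitary and fixes $\tau$, consistently with $\theta(\tau)=\tau$), whence $\Aut^{\mathrm{sc}}(V)=\Aut^{\mathrm{sc}}_{\scalar}(V)$. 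Combining this with the first assertion gives the chain $\Aut^{\mathrm{sc}}(\A_V)=\Aut^{\mathrm{sc}}_{\scalar}(V)=\Aut^{\mathrm{sc}}(V)$.

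I expect the main obstacle to be the careful verification of the stabilizer equivalence: one must confirm that fixing the \emph{net} $\mathrm{SVir}_c(I)$ for all $I$ is genuinely equivalent to fixing the generating vectors $\nu,\tau$ (and hence all of $W$), rather than merely fixing $\mathrm{SVir}_c$ as a set. This uses that $V$ is a core argument for the smeared vertex operators together with the fact that $\tau$ is recovered from $\mathrm{SVir}_c$ via $\tau=G_{-3/2}\Omega=Y(\tau,f)\Omega$ for suitable $f$, exactly as the reconstruction in Theorem \ref{theo:unicity_irr_graded-local_cn}. Once this identification of stabilizers is in place, everything else is a direct transcription of the proofs of Theorem \ref{theo:AutV_AutNet} and Theorem \ref{theo:characterization_aut_sc_group}, so no new analytic input beyond what is already established is required.
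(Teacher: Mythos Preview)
Your proposal is correct and takes essentially the same approach as the paper, which simply states that the result follows by ``proceeding similarly to the proof of Theorem \ref{theo:AutV_AutNet}''. You have spelled out in more detail exactly how that adaptation goes---identifying the stabilizer of $\mathrm{SVir}_c$ on the net side with the stabilizer of $W(\{\nu,\tau\})$ on the VOSA side via Theorem \ref{theo:covariant_subnets_unitary_subalgebras} and Theorem \ref{theo:superconformal_correspondence}, and then invoking Theorem \ref{theo:characterization_aut_sc_group} for the compactness statement---which is precisely what the paper intends.
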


The following result can be inferred from Theorem \ref{theo:V_strongly_graded-local_iff_V_0_and_v_are} by an argument based on the fact that superconformal vectors satisfy linear energy bounds.

\begin{theo}  \label{theo:superconformal_strongly_graded-local_iff_even_part_is}
	Let $V$ be a simple energy-bounded unitary $N=1$ superconformal VOSA. Then $V$ is strongly graded-local if and only if $V_\parzero$ is strongly local.
\end{theo}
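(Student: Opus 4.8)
The plan is to invoke Theorem \ref{theo:V_strongly_graded-local_iff_V_0_and_v_are} with the distinguished odd vector $v$ taken to be the superconformal vector $\tau$. The point of the $N=1$ superconformal hypothesis is precisely that it furnishes a canonical quasi-primary odd generator, namely $\tau = G_{-\frac{3}{2}}\Omega$, which is primary of conformal weight $\frac{3}{2}$ (hence in particular quasi-primary) by \eqref{eq:tau_is_primary}, and which is PCT-invariant since $V$ is assumed \emph{unitary} $N=1$ superconformal (so that $W(\{\nu,\tau\})$ is a unitary subalgebra and $\theta(\tau)=\tau$ by (i) of Theorem \ref{theo:superconformal_correspondence}). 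The forward direction is immediate: if $V$ is strongly graded-local, then by Theorem \ref{theo:covariant_subnets_unitary_subalgebras} the unitary subalgebra $V_\parzero$ is itself strongly graded-local, and being a VOA (i.e.\ $V_\parzero$ has trivial odd part) this simply means it is strongly local in the sense of \cite{CKLW18}.

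For the converse, I would assume $V_\parzero$ is strongly local and aim to verify the single remaining hypothesis of Theorem \ref{theo:V_strongly_graded-local_iff_V_0_and_v_are}, namely the existence of a quasi-primary $v \in V_\parone$ with $\A_{\{v\}}(I') \subseteq Z\A_{V_\parzero \cup \{v\}}(I)'Z^*$ for some $I \in \J$. Taking $v = \tau$, the key input is that $\tau$ satisfies \emph{linear} energy bounds: indeed $\tau \in V_{3/2}$ is a primary odd vector, so Lemma \ref{lem:k_energy_bounds} (the case $d=\frac{3}{2}$) together with energy-boundedness of $V$ gives linear energy bounds for $Y(\tau,z)$, as already remarked in the statement preceding this theorem. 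The Virasoro field $Y(\nu,z)$ likewise satisfies linear energy bounds. Since $Y(\nu,z)$ and $Y(\tau,z)$ are mutually local in the Wightman sense (via Proposition \ref{prop:wightman_locality}, their vertex-algebra locality transferring to Wightman locality), I can apply the argument of \cite[p.\ 113]{BS90} based on \cite[Theorem 3.1]{DF77}, exactly as invoked in the proofs of Theorem \ref{theo:gen_by_quasi_primary} and Theorem \ref{theo:gen_by_V1/2_V1_F}, to deduce that the von Neumann algebras generated by the smearings of $\nu$ and $\tau$ already satisfy the required graded-locality relation on a reference interval.

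More carefully, the cleanest route is to feed everything into Theorem \ref{theo:gen_by_quasi_primary} directly. Let $\F_\parzero$ be a quasi-primary generating set for the strongly local VOA $V_\parzero$ (its conformal vector together with primary vectors, as $V_\parzero$ decomposes into irreducible positive-energy Virasoro representations, cf.\ \cite[Remark 3.9]{CTW22}), and set $\F := \F_\parzero \cup \{\tau\}$, a set of quasi-primary vectors which generates $V$ by Example \ref{ex:even_simple_odd_irreducible}. The linear energy bounds satisfied by $Y(\tau,z)$ and by the elements of $\F_\parzero$, combined with the pairwise Wightman locality of all these fields, allow the Buchholz--Schulz-Mirbach/Driessler--Fr\"ohlich argument to establish $\A_\F(I') \subseteq Z\A_\F(I)'Z^*$ for a fixed $I$; strong local locality of $V_\parzero$ supplies the Bose--Bose commutation, while the linear energy bounds on $\tau$ handle the commutators involving the Fermi field. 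Theorem \ref{theo:gen_by_quasi_primary} then yields strong graded locality of $V$ and $\A_\F = \A_V$. The main obstacle is ensuring that the Driessler--Fr\"ohlich commutativity theorem genuinely applies to the mixed Bose--Fermi commutators — one must check that the graded commutation of the smeared fields on the common invariant core $\mathcal{H}^\infty$, upgraded to strong commutation of the generated von Neumann algebras, is not disrupted by the odd grading; this is where the role of the operator $Z$ and the twisting in the definition of graded locality must be tracked with care, precisely as in the proof of Theorem \ref{theo:V_strongly_graded-local_iff_V_0_and_v_are}.
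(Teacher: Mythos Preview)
Your overall strategy --- apply Theorem \ref{theo:V_strongly_graded-local_iff_V_0_and_v_are} with $v=\tau$, using that $\tau$ is primary, $\theta$-invariant, and satisfies linear energy bounds --- is exactly the paper's. But two points in your implementation are off.

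First, a minor issue: Lemma \ref{lem:k_energy_bounds} does \emph{not} give linear energy bounds for a primary vector of weight $\frac{3}{2}$; it gives $(k+2)$-th order bounds once you know $\|a_{1/2}(L_0+1)^{-k}\|<\infty$. The linear energy bounds on $\tau$ come instead directly from the Neveu--Schwarz anticommutation relation $[G_m,G_{-m}]=2L_0+\frac{1}{3}(m^2-\frac{1}{4})c$, which yields $\|G_m c\|^2\leq (c\,|(2L_0+\mathrm{const}\cdot m^2)c)$; this is \cite[Eq.\ (27)]{CKL08}, the reference the paper uses.

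Second, and this is the real gap: you assert that the elements of $\F_\parzero$ satisfy linear energy bounds and then invoke the Buchholz--Schulz-Mirbach/Driessler--Fr\"ohlich argument symmetrically, as in Theorem \ref{theo:gen_by_V1/2_V1_F}. But strong locality of $V_\parzero$ gives no such control on its generators --- a generic quasi-primary generator of a strongly local VOA need not satisfy linear energy bounds. The paper avoids this by running the commutation argument \emph{asymmetrically}: since $\theta(\tau)=\tau$, the operator $Y(\tau,g)$ is self-adjoint for real $g$, and its linear energy bounds ensure (via \cite[Proposition 2.1]{Tol99}) that the one-parameter group $e^{itY(\tau,g)}$ preserves $\mathcal{H}^\infty$ and acts continuously on each $\mathcal{H}^k$. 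One then solves the Cauchy problem for $c(t):=ZY(a,f)Z^*e^{itY(\tau,g)}c$ on $\mathcal{H}^\infty$ to obtain $e^{itY(\tau,g)}ZY(a,f)Z^*=ZY(a,f)Z^*e^{itY(\tau,g)}$ for \emph{arbitrary} $a\in V_\parzerone$ (only ordinary energy bounds are needed on $a$). This yields $\A_{\{\tau\}}(I')\subseteq Z\A_V(I)'Z^*\subseteq Z\A_{V_\parzero\cup\{\tau\}}(I)'Z^*$ directly, and Theorem \ref{theo:V_strongly_graded-local_iff_V_0_and_v_are} finishes the job. Your ``Route 2'' through Theorem \ref{theo:gen_by_quasi_primary} would need the Bose--Bose strong commutation on the full $\mathcal{H}$ (not just on $\mathcal{H}_\parzero$), which is precisely the nontrivial content of the proof of Theorem \ref{theo:V_strongly_graded-local_iff_V_0_and_v_are}; there is no shortcut here.
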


\begin{proof}
	We use the notation as in \eqref{eq:notation_even_odd}.
	Our aim is to apply Theorem \ref{theo:V_strongly_graded-local_iff_V_0_and_v_are}
	to the superconformal vector $\tau$ of $V$. By (i) of Theorem \ref{theo:superconformal_correspondence}, $\tau$ is Hermitian and thus $Y(\tau,g)$ is self-adjoint whenever $g\in C_\chi^\infty(S^1,\R)$, see \eqref{eq:smeared_vertex_operator_adjoint}. By \cite[Eq.\ (27)]{CKL08}, $\tau$ satisfies linear energy bounds. 
	Fix some $I\in\J$ and consider $Y(a,f)$ where $a\in V_\parzerone$ and $f\in C^\infty_{p(a)}(S^1)$ with $\mathrm{supp}f\subseteq I$.
	 Let $g\in C_\chi^\infty(S^1,\R)$ be such that $\mathrm{supp}g\subset I'$. Adapting the proof of \cite[Proposition 2.1]{Tol99}, for all $t\in\R$ and all $c\in\mathcal{H}^\infty$, $c(t):=ZY(a,f)Z^*e^{itY(\tau,g)}c$ is a well-defined vector of $\mathcal{H}^\infty$. Moreover, $\R\ni t\mapsto c(t)\in\mathcal{H}^\infty$ is continuous by Lemma \ref{lem:common_core} and Lemma \ref{lem:h_infty_is_invariant}, and differentiable by an argument as in Remark \ref{rem:differentiability_Diff_repr_H^infty}, see also \cite[Corollary 2.2]{Tol99}. Recalling that $[Y(\tau,g),Y(a,f)]=0$, it is not difficult to see that $c(t)$ satisfies the Cauchy problem on $\mathcal{H}^\infty$:
	 $$
	 \left\{
	 \begin{array}{l}
	 	\frac{\mathrm{d}}{\mathrm{d}t}c(t)=iY(\tau,g)c(t) \\
	 	c(0)=ZY(a,f)Z^*c
	 \end{array} \right.
	 $$
	 which has $e^{itY(\tau,g)}ZY(a,f)Z^*c$ as unique solution. Therefore,
	 $$
	 ZY(a,f)Z^*e^{itY(\tau,g)}c= c(t)
	 =e^{itY(\tau,g)}ZY(a,f)Z^*c \qquad\forall t\in\R
	 $$
	 which implies that $\A_{\{\tau\}}(I')\subseteq Z\A_V(I)'Z^*\subseteq Z\A_{V_\parzero\cup \{\tau\}}(I)'Z^*$, so that we can apply Theorem \ref{theo:V_strongly_graded-local_iff_V_0_and_v_are}.
\end{proof}

Now, we can continue our exposition of classical examples accompanying it with the superconformal theory just developed.

\begin{ex}[\textit{Supercurrent algebra models}]
\label{ex:supercurrent_algebra_models}
Let $\g$ be a simple complex Lie algebra of finite dimension $n$. If $h^\vee$ is the \textit{dual Coxeter number} associated to $\g$, see \cite[Chapter 6]{Kac95}, then for every $k\in\C\backslash\{-h^\vee\}$, the simple quotient $V^k(\hat{\g})$ of the Verma module from the affinization $\hat{\g}$ of $\g$ has a structure of simple VOA with central charge $c_k=\frac{kn}{k+h^\vee}$, see \cite[Theorem 5.7]{Kac01}. For specific values of $k$, $V^k(\hat{\g})$ owns a unitary structure and they turn out to be strongly local, see \cite[Example 8.7]{CKLW18} and references therein. The corresponding irreducible conformal nets are usually denoted by $\A_{G_k}$ where $G$ is the compact connected simply connected real Lie group with simple Lie algebra the compact real form $\g_\R$. Therefore, the tensor products of $V^k(\hat{\g})$ with the unitary $d$ free fermion VOSA $F^d$ of Example \ref{ex:d_free_fermions} are simple strongly graded-local unitary VOSAs by Corollary \ref{cor:strongly_graded-local_tensor_product}. The corresponding irreducible graded-local conformal nets $\A_{G_k}\hat{\otimes} \mathcal{F}^d$ are known as \textbf{supercurrent algebra nets}. The special cases where $d=n$ are called \textbf{Kac-Todorov models}, see \cite{KT85}, \cite[Section 2.1]{KW94} and \cite[Section 5.9]{Kac01}. The corresponding graded-local conformal nets are the ones presented in \cite[Section 6]{CHL15}, see also \cite[Example 5.14]{CH17}. They are $N=1$ superconformal nets as proved in \cite[Proposition 6.2]{CHL15}, which means that $V^k(\hat{\g})\hat{\otimes} F^n$ are simple strongly graded-local unitary $N=1$ superconformal VOSAs by Theorem \ref{theo:superconformal_correspondence} (cf.\  \cite[Theorem 5.9]{Kac01} for an explicit determination of the superconformal structure). We remark that the simple VOSAs $V^k(\hat{\g})\hat{\otimes} F^d$ can be equivalently realized from Lie superalgebras as done in \cite[Section 4.3]{Li96}. Of course, they give rise to a class of irreducible graded-local conformal nets, which are respectively isomorphic to the ones constructed above by Theorem \ref{theo:unicity_irr_graded-local_cn}. 
\end{ex}

\begin{ex}[$N=2$ \textit{super-Virasoro models}]  \label{ex:N2_superVirasoro}

The \textit{Neveu-Schwarz $N=2$ super-Virasoro algebra} $N2$ is the Lie superalgebra
$$
N2:=
\overbrace{
\bigoplus_{m\in\Z}\C L_m\oplus
\bigoplus_{m\in\Z}\C J_m
\oplus
\C C
}^\mathrm{even}
\oplus
\overbrace{
\bigoplus_{n\in\Z-\half}\C G^+_n \oplus
\bigoplus_{n\in\Z-\half}\C G^-_n
}^\mathrm{odd}
$$
with the commutation relations:
\begin{equation}  \label{eq:N2_cr}
\begin{split}
[L_m,L_n]  &:=
(m-n)L_{m+n}+\frac{m^3-m}{12}\delta_{m,-n}C
\qquad
\forall m,n\in\Z \\
[L_m,G^\pm_n] &:= 
\left(\frac{m}{2}-n\right) G^\pm_{m+n}
\qquad
\forall m\in\Z
\,\,\,
\forall n\in\Z-\half \\
[L_m,J_n] &:=-nJ_{m+n}
\qquad
\forall m,n\in\Z \\
[G^+_m,G^-_n] &:=
2L_{m+n}+(m-n)J_{m+n}+\frac{1}{3}\left(m^2-\frac{1}{4}\right)\delta_{m,-n}C
\qquad
\forall m,n\in\Z-\half \\
[G^\pm_m,J_n] &:=\mp G^\pm_{m+n}
\qquad
\forall m\in\Z-\half
\,\,\,
\forall n\in\Z \\
[G^+_m,G^+_n] &:=0=:[G^-_m,G^-_n]
\qquad
\forall m,n\in\Z-\half \\
[J_m,J_n] &:=\frac{c}{3}m\delta_{m,-n} 
\qquad\forall m,n\in\Z\\
[N2,C] &:= 0 \,.
\end{split}
\end{equation}
For every $c\in\C$, we construct the $N2$-module:
$$
\widetilde{V}^c(N2):=U(N2)/
\langle
\bigoplus_{m\in\Zpluseq-1}\C L_m\oplus
\bigoplus_{m\in\Zpluseq}\C J_m
\oplus
\bigoplus_{n\in\Zpluseq-\half}\C G^+_n \oplus
\bigoplus_{n\in\Zpluseq-\half}\C G^-_n
\,,\,\,\,
C-c
\rangle
$$
where $U(N2)$ is the universal enveloping algebra of $N2$. The simple quotient modules $V^c(N2)$ have a structure of simple VOSA with central charge $c$, see \cite[p.\ 182]{Kac01}, called $N=2$ \textbf{super-Virasoro VOSAs}. In particular, the vertex operators $Y(\nu,z)$, $Y(J,z)$ and $Y(G^j,z)$ for all $j\in\{1,2\}$ generate $V^c(NS)$, where
\begin{equation}
	J:=J_{-1}\Omega \,,\quad
	G^1:=\frac{G^++G^-}{\sqrt{2}}
	\quad\mbox{ and }\quad
	G^2:=-i\frac{G^+-G^-}{\sqrt{2}}
	\quad\mbox{ with }\quad
	G^\pm:=G^\pm_{-\frac{3}{2}}\Omega \,.
\end{equation}
Moreover, $V^c(NS)$ has a scalar product for the following values of the central charge $c$:
\begin{equation}  \label{eq:values_cc_unitary_N2}
\mbox{either}
\quad
c\geq 3
\quad
\mbox{ or }
\quad
c=\frac{3n}{n+2} \qquad \forall n> 0
\end{equation}
making $Y(\nu,z)$, $Y(J,z)$ and $Y(G^j,z)$ for all $j\in\{1,2\}$ Hermitian, see \cite[Theorem 3.2]{CHKLX15} and references therein. Thus, $V^c(NS)$ has a unitary structure by Proposition \ref{prop:simple_unitary_vosa_gen_by_qp_hermitian_fields} and it is energy-bounded by \cite[Eq.\  (3.1)]{CHKLX15} and Proposition \ref{prop:energy_boundedness_by_generators}. 
The family of the von Neumann subalgebras of $\A_{V^c(NS)}$ defined as in \eqref{eq:defin_net_gen_by_subset_F} from the four quasi-primary generators $\nu$, $J$ and $G^j$ of $V^c(N2)$, coincides with the so called 
$N=2$ \textbf{super-Virasoro net} $\mathrm{SVir2}_c$, defined in \cite[Eq.\ (3.2) and Theorem 3.3]{CHKLX15}. The latter satisfies the graded locality, see the proof of \cite[Theorem 3.3]{CHKLX15}. By Theorem \ref{theo:gen_by_quasi_primary}, we have that for every $c$ as in \eqref{eq:values_cc_unitary_N2}, $V^c(N2)$ is strongly graded-local and $\A_{V^c(N2)}=\mathrm{SVir2}_c$.
Note that every $V^c(N2)$ is a simple $N=1$ superconformal VOSA because it contains (at least) two copies of the $N=1$ super-Virasoro VOSA of Example \ref{ex:N=1_super-Virasoro}, given by the generators $G^j$ for all $j\in\{1,2\}$. According to Theorem \ref{theo:superconformal_correspondence}, see also \cite[Definition 3.5]{CHKLX15} and there below, if $c$ is as in \eqref{eq:values_cc_unitary_N2}, then $V^c(N2)$ is a simple unitary $N=1$ superconformal VOSA and $\mathrm{SVir2}_c$ is a $N=1$ superconformal net. 
Now, let $V_{L_{2N}}$ be the simple unitary VOA associated to the even rank-one lattice $L_{2N}$, see e.g.\ \cite[Section 4.4]{DL14}, \cite[Example 5.9]{CKLW18}, \cite[Section II.B]{CGH19} and references therein, and let $V^k(\hat{\g})$ be the simple unitary VOA constructed from the Lie algebra $\g:=\mathfrak{sl}(2,\C)$ at level $k$, see e.g.\ \cite[Section 4.2]{DL14}, \cite[Example 5.7]{CKLW18} and references therein. 
The interesting fact is that, see \cite[Theorem 3.2]{CHKLX15} and references therein, for all $c=\frac{3n}{n+2}$ with $n\in\Zplus$, the simple unitary VOSA $V^c(N2)$ can be constructed as the coset of the unitary subalgebra $V_{L_{2(n+2)}}$ of the tensor product $V^n(\hat{\g})\hat{\otimes} F^2$, where $F^2$ is the charged free fermion of Example \ref{ex:d_free_fermions}. 
$V^n(\hat{\g})$ as well as $V_{L_{2N}}$ are strongly local, see \cite[Example 8.7 and Example 8.8]{CKLW18} respectively, and thus by Corollary \ref{cor:strongly_graded-local_tensor_product} and by Proposition \ref{prop:correspondence_coset_constructions}, we have that
$$
\mathrm{SVir2}_\frac{3n}{n+2}=\A_{V_{L_{2(n+2)}}}^c=\A_{\operatorname{U}(1)_{2(n+2)}}^c \subset \A_{\operatorname{SU}(2)_n}\hat{\otimes}\mathcal{F}^2
\qquad
\forall n\in\Zplus  \,.
$$ 
\end{ex}

\begin{rem}  \label{rem:N=2_superconformal_structures}
In complete analogy with the $N=1$ superconformal structures introduced in Definition \ref{defin:superconformal_net} and in Definition \ref{defin:superconformal_vosa}, there exists an obvious notion of \textbf{$N=2$ superconformal nets} and of \textbf{$N=2$ superconformal VOSAs}, see \cite[Definition 3.5]{CHKLX15} and \cite[Section 5.9]{Kac01} respectively. 
Accordingly, we say a $N=2$ superconformal VOSA $V$ be \textbf{unitary $N=2$ superconformal} if it is unitary as VOSA and if $W(\{\nu,J, G^+, G^-\})$ is a unitary subalgebra of $V$. If $V$ is also simple, this is equivalent to say that $W(\{\nu,J, G^+, G^-\})$ is (up to equivalence) $V^c(N2)$, for some $c$ as in the unitary series \eqref{eq:values_cc_unitary_N2}. 
Also in this case, we have that a simple strongly graded-local unitary VOSA $V$ is unitary $N=2$ superconformal if and only if $\A_V$ is a $N=2$ superconformal net.
It is worthwhile to note that an analogue of the first statement of (i) of Theorem \ref{theo:superconformal_correspondence} for unitary $N=2$ superconformal VOSAs is not possible as the unitary structure on $V^c(N2)$ is unique only up to equivalence. Indeed, for all real number $\beta>0$, there are non-unitary VOSA automorphisms $g_\beta$ of $V^c(N2)$ defined by: $g_\beta(\nu)=\nu$, $g_\beta(J)=-J$, $g_\beta(G^+)=\beta G^-$ and $g_\beta(G^-)=\beta^{-1}G^+$. Then for all $1\not=\beta>0$, $g_\beta$ can be used to model a new unitary structure on $V^c(N2)$ from the one constructed in Example \ref{ex:N2_superVirasoro} and equivalent to it, see Proposition \ref{prop:uniqueness_unitary_structure}. Of course, if $\theta$ is the PCT operator arising from Example \ref{ex:N2_superVirasoro}, then the superconformal vectors $G^{1,\beta}:=\frac{G^+ +(g_\beta^{-1}\theta g_\beta)(G^+)}{\sqrt{2}}$ and $G^{2,\beta}:=-i\frac{G^+ -(g_\beta^{-1}\theta g_\beta)(G^+)}{\sqrt{2}}$ are Hermitian with respect to this new corresponding unitary structure.
\end{rem}

It is known that we can extract a unique, up to isomorphism, simple VOSA structure from any finite rank positive-definite integral lattice as stated by \cite[Theorem 5.5 and Proposition 5.5]{Kac01}, see also \cite[Section 5]{TZ12}. Furthermore, these VOSAs are unitary thanks to \cite[Theorem 2.9]{AL17}.
In \cite[Theorem 5.7]{Gui21}, the author shows that all these unitary VOAs are strongly local, proving \cite[Conjecture 8.17]{CKLW18}. Thus, the corresponding conformal nets are the ones constructed in \cite[Section 3]{DX06}, see \cite[Theorem 2.7.11]{Gui}. 
Here below, we prove that all the lattice type unitary VOSAs are strongly graded-local, heavily relying on \cite[Theorem 5.7]{Gui21} and \cite[Theorem 2.7.9]{Gui}, which make massively use of the theory of \textit{intertwining operators}, see e.g.\ \cite[Section 5.4]{FHL93}, the representation theory of VOAs, see \cite{HL95I, HL95II, HL95III, Hua95, Hua08} (see \cite[Section 2]{HKL15}  for a brief review, cf.\ also \cite[Section 2.4]{Gui19I} or \cite[Section 4.1]{Gui21}) and its unitary version \cite{Gui19I, Gui19II}. See also \cite[Definition 1.8]{Gui22} for the related notion of \textit{completely unitary} VOAs. See e.g.\ \cite{EGNO15} and \cite{NT13} for the general tensor categorical setting.

\begin{theo}[\textit{Lattice type models}]  \label{theo:lattices_strong_graded_locality}
	Let $V_L$ be the simple unitary VOSA associated to any finite rank positive-definite integral lattice $L$. Then $V_L$ is strongly graded-local, so that to any such lattice $L$ is associated an irreducible graded-local conformal net $\A_L:=\A_{V_L}$.
\end{theo}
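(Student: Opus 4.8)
The plan is to reduce strong graded locality of $V_L$ to the already-known strong locality of the even lattice VOA by means of Theorem \ref{theo:V_strongly_graded-local_iff_V_0_and_v_are}, feeding in Gui's analysis of lattice intertwining operators to handle the odd sector. First I would record the structural decomposition. Writing $L_0:=\{\alpha\in L\mid (\alpha,\alpha)\in2\Z\}$ for the even sublattice, the assignment $\alpha\mapsto(\alpha,\alpha)+2\Z$ is a group homomorphism $L\to\Z_2$ (since $(\alpha+\beta,\alpha+\beta)\equiv(\alpha,\alpha)+(\beta,\beta)\bmod 2$ by integrality), so $[L:L_0]\in\{1,2\}$, and this homomorphism is precisely the parity grading of $V_L$. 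Hence $(V_L)_\parzero$ is the lattice VOA $V_{L_0}$ attached to the even lattice $L_0$, while $(V_L)_\parone$ is the irreducible $V_{L_0}$-module indexed by the nontrivial coset (when $[L:L_0]=2$). If $L$ is already even, then $V_L=V_{L_0}$ is a VOA and the statement is exactly \cite[Theorem 5.7]{Gui21}, because for a VOA one has $\Gamma_V=1_V$ and strong graded locality coincides with strong locality (see the parenthetical remark in Definition \ref{def:strongly_graded_local}); so I may assume $[L:L_0]=2$.

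Next I would verify the two standing hypotheses needed to apply Theorem \ref{theo:V_strongly_graded-local_iff_V_0_and_v_are}, namely energy-boundedness of $V_L$ and strong locality of its even part. The VOSA $V_L$ is finitely generated by the weight-one Heisenberg currents together with the primary vectors $e^\alpha$ for $\alpha$ ranging over a finite generating set of $L$, and all of these satisfy polynomial energy bounds by Gui's estimates for lattice fields; hence $V_L$ is energy-bounded by Proposition \ref{prop:energy_boundedness_by_generators}. The even part $(V_L)_\parzero=V_{L_0}$ is a simple unitary VOA which is strongly local by \cite[Theorem 5.7]{Gui21}, providing the associated irreducible (local) conformal net $\A_{V_{L_0}}$.

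The heart of the proof is the remaining hypothesis of Theorem \ref{theo:V_strongly_graded-local_iff_V_0_and_v_are}: I must exhibit a quasi-primary odd vector $v\in(V_L)_\parone$ with $\A_{\{v\}}(I')\subseteq Z\A_{(V_L)_\parzero\cup\{v\}}(I)'Z^*$ for some $I\in\J$. Since $V_L$ is simple, $(V_L)_\parone$ is an irreducible $V_{L_0}$-module by Example \ref{ex:even_simple_odd_irreducible}, so it suffices to treat any single nonzero quasi-primary $v$; for concreteness I would take a lowest-weight vector $e^\gamma$ of the odd sector, which is primary. Under the identification of $(V_L)_\parone$ with the coset module, the field $Y(v,z)$ is an intertwining operator among $V_{L_0}$-modules, and the required $Z$-twisted commutation of the smeared operators $Y(v,g)$ among themselves and with the even smeared fields is exactly the content of Gui's braiding and unitarity analysis of lattice intertwining operators \cite[Theorem 2.7.9]{Gui}, combined with the Bisognano--Wichmann identity of Theorem \ref{theo:delta_one_half}. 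I expect the main obstacle to be precisely the bookkeeping of statistics: one must check that the sign produced by braiding two odd intertwining operators is the \emph{fermionic} one encoded by $Z=(1_\mathcal{H}-i\Gamma)/(1-i)$ rather than an ordinary commutation, and this interplay between the module structure of the odd sector and the $\Z_2$-grading is the delicate step supplied by \cite[Theorem 2.7.9]{Gui}.

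Finally, with all hypotheses of Theorem \ref{theo:V_strongly_graded-local_iff_V_0_and_v_are} verified, I would conclude that $V_L$ is strongly graded-local; Theorem \ref{theo:diffeo_covariant_net_from_V} then yields the irreducible graded-local conformal net $\A_L:=\A_{V_L}$, completing the proof.
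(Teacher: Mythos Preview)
Your overall strategy matches the paper's exactly: reduce to Theorem \ref{theo:V_strongly_graded-local_iff_V_0_and_v_are} using that $(V_L)_\parzero$ is an even-lattice VOA (hence strongly local by \cite[Theorem 5.7]{Gui21}) and that Gui's machinery controls the odd intertwiner. The structural decomposition via the even sublattice, the choice of a primary lowest-weight odd vector, and the identification of the ``fermionic sign bookkeeping'' as the crux are all on target.

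There is, however, a genuine gap at precisely the point you flag as the main obstacle. Gui's \cite[Theorem 2.7.9]{Gui} and \cite[Theorem 5.7]{Gui21} give you the \emph{strong intertwining property} and \emph{strong braiding} for the lattice intertwining operators, but those statements are formulated as strong commutativity of certain $4\times4$ block operators built from the $Y_{p,q}$ on direct sums of the sector Hilbert spaces. What you must actually verify is that $ZY(v,f)Z^*$ and $Y(v,g)$ commute strongly on $\mathcal{H}$ (and likewise $Y(a,f)$ with $Y(v,g)$ for $a\in V_\parzero$), and this is \emph{not} literally what Gui states. The paper closes this gap by an explicit matrix argument: it writes $Y(v,\cdot)$ and $ZY(v,\cdot)Z^*$ as $2\times2$ off-diagonal matrices in the intertwiners, uses the braiding relation $b_{V_\parone,V_\parone}=-\boldsymbol{1}_{V_\parzero}$ (coming from skew-symmetry of $V_L$) to produce the correct minus sign in one block, then conjugates Gui's commuting $4\times4$ blocks by a suitable unitary and invokes polar decomposition (\cite[Proposition B.5]{Gui19I}) to descend to strong commutativity on $\mathcal{H}$. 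You assert this translation but do not carry it out; it is the actual work in the proof.

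Separately, your invocation of Theorem \ref{theo:delta_one_half} (the Bisognano--Wichmann identity) is misplaced: that result plays no role here. It is used inside the proof of Theorem \ref{theo:gen_by_quasi_primary}, not in the lattice argument, which proceeds purely through the intertwining/braiding input and Theorem \ref{theo:V_strongly_graded-local_iff_V_0_and_v_are}. Also, for energy-boundedness the paper does not argue via generators and Proposition \ref{prop:energy_boundedness_by_generators}; it instead observes that each $Y(c,z)$ splits as a direct sum of two lattice intertwining operators and inherits energy bounds from Gui's complete energy-boundedness of $V_\parzero$. Your route via generators is plausible but would need its own justification.
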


\begin{proof}  
	We use the notation as in \eqref{eq:notation_even_odd}.
	Set $V:=V_L$ with state-field correspondence $Y$ and use the notation as in Definition \ref{defin:hilbet_space_from_V}. By Example \ref{ex:even_simple_odd_irreducible}, $V_\parzero$ is a simple unitary VOA and $V_\parone$ is an irreducible $V_\parzero$-module. In particular, the latter is generated, as $V_\parzero$-module, by any odd vector of $V$. Choose any odd quasi-primary vector $w$ of $V$. Hence, our goal is to apply Theorem \ref{theo:V_strongly_graded-local_iff_V_0_and_v_are} with $w$.
	
	It is easy to verify that $V_\parzero$ is a simple unitary lattice VOA, see e.g.\ \cite[Table 1]{CGGH23} for definition and properties. By \cite[Theorem 5.8]{Gui21}, these kinds of VOAs are completely unitary and thus the $V_\parzero$-modules form a \textit{unitary modular tensor category} $\Repu(V_\parzero)$. From now onward, we use $\boxtimes$ to denote the \textit{categorical} tensor product of VOA modules to not confuse it with the regular one $\otimes$.
	Therefore, $V_\parone$ can be considered as a \textit{$\Z_2$-simple current} in $\Repu(V)$, see \cite[Theorem 3.1 and Remark A.2]{CKLR19} and \cite[Remark 4.3]{CGGH23}
	(see also \cite{DLM96} for simple current extensions of VOAs).
	Accordingly, we implement the following identifications of unitary $V_\parzero$-modules: $V_p\boxtimes V_q\equiv V_{pq}$ for all $p,q\in\{\parzero,\parone\}$.
	Note also that $V_\parone$ has \textit{twist} $\omega(V_\parone)=-\boldsymbol{1}_{V_\parone}$ as it is given by the action of $e^{i2\pi L_0}$ on the vector space $V_\parone$. Moreover, it satisfies the \textit{braiding} relations $b_{V_\parzero, V_\parone}=\boldsymbol{1}_{V_\parone}=b_{V_\parone, V_\parzero}$, $b_{V_\parone,V_\parone}=-\boldsymbol{1}_{V_\parzero}$ as a consequence of the skew-symmetry \eqref{eq:skew-symmetry} of $V$, cf.\ also \cite[Remark 2.20]{CGGH23}. 
	For all $a\in V_\parzero$ and all $b\in V_\parone$, define 
	\begin{equation} \label{eq:intertwining_operators_parity}
		\begin{split}
			Y_\parzerozero(a,z):=Y(a,z)\restriction_{V_\parzero} 
			\,\,\,\mbox{ and }\,\,\,
			Y_\parzerone(a,z):=Y(a,z)\restriction_{V_\parone}
			\,,\\
			Y_\paronezero(b,z):=Y(b,z)\restriction_{V_\parzero}
			\,\,\,\mbox{ and }\,\,\, Y_\paroneone(b,z):=Y(b,z)\restriction_{V_\parone} \,.
		\end{split}
	\end{equation}
	By the Borcherds identity \eqref{eq:borcherds_id} for the state field-correspondence $Y$ of $V$, it follows that they define intertwining operators of $V_\parzero$ of type $\binom{\parzero}{\parzero \,\,\, \parzero}$ and $\binom{\parone}{\parzero \,\,\, \parone}$, $\binom{\parone}{\parone \,\,\, \parzero}$ and $\binom{\parzero}{\parone \,\,\, \parone}$ respectively. In particular, $Y_\parzerozero$ and $Y_\parzerone$ are vertex operators of the $V_\parzero$-module $V_\parzero$ and $V_\parone$ respectively.
	It follows from \cite[Theorem 5.7]{Gui21}, see also \cite[Theorem 2.7.9]{Gui}, that $V_\parzero$ is \textit{completely energy-bounded}, see the beginning of \cite[Section 2.1]{Gui} for the general definition. 
	This implies that the intertwining operators in \eqref{eq:intertwining_operators_parity} satisfy energy bounds (this is a generalization of the usual notion of energy bounds for vertex operators, see \cite[Definition 3.1]{Gui19I}). Noting that for every homogeneous vector $c\in V$, the vertex operator $Y(c,z)$ can be written as the direct sum of two intertwining operators in \eqref{eq:intertwining_operators_parity}, we can conclude that also $Y(c,z)$ satisfies energy bounds (as vertex operator). Thus, we can assume that $V$ is energy-bounded.  
	
	Define $\mathcal{H}_\parzero$, $\mathcal{H}_\parone$ and $\mathcal{H}$ as the Hilbert space completions of $V_\parzero$, $V_\parone$ and $V$ respectively with respect to the scalar product on $V$. Therefore, $\mathcal{H}=\mathcal{H}_\parzero\oplus \mathcal{H}_\parone$ and $\mathcal{H}^\infty=\mathcal{H}_\parzero^\infty\oplus \mathcal{H}_\parone^\infty$, where $\mathcal{H}_\parone^\infty$ is the subspace of $\mathcal{H}_\parone$ of smooth vectors for $L_0^{V_\parone}=L_0\restriction_{V_\parone}$ as defined at p.\ \pageref{defin:smooth_vectors_L_0}.
	The complete unitarity of $V_\parzero$ fixes the invariant scalar products on the $V_\parzero$-modules $V_p\boxtimes V_q$ for all $p,q\in\{\parzero,\parone\}$ and the corresponding Hilbert space closures can be naturally identified with $\mathcal{H}_{pq}$ respectively.
	For all $I\in\J$, define the continuous function $\mathrm{arg}_I: I\to \R$ by $\mathrm{arg}_I(z)=x$ where $z=e^{ix}$ for the unique $x\in(-\pi,\pi]$. Then for any homogeneous vector $c\in V$ and $f\in C^\infty_{p(c)}(S^1)$ with $\mathrm{supp}f\subset I$ for some $I\in\J$, we have that 
	$$
	Y_{p,q}(c,\tilde{f})=Y(c,f)\restriction_{\mathcal{H}_q^\infty}
	\qquad\forall p,q\in\{\parzero,\parone\}
	$$
	where $Y_{p,q}(c,\tilde{f})$ is a \textit{smeared} intertwining operator in the meaning of \cite[Section 3.2]{Gui19I} and $\tilde{f}$ is the \textit{arg-valued} function $(f\cdot z^{d_c-1}, \mathrm{arg}_I)$ in the meaning of \cite[p.\ 815]{Gui21}. By \cite[Theorem 5.7]{Gui21}, see also \cite[Theorem 2.7.9]{Gui}, the unitary intertwining operators of $V$ satisfies the \textit{strong intertwining property} \cite[Section 2.3]{Gui}, cf.\ \cite[Definition 4.10]{Gui21}, and the \textit{strong braiding} \cite[Section 2.5]{Gui}, cf.\ \cite[Defition 4.13]{Gui21}. Of course, also the intertwining operators in \eqref{eq:intertwining_operators_parity} satisfy those properties. 
	
	First, we prove that the strong intertwining property implies the strong commutativity of $Y(a,f)$ and $Y(w,g)$ whenever $a\in V_\parzero$, $f\in C^\infty(S^1)$ and $g\in C^\infty_\chi(S^1)$ with $\mathrm{supp}f\subset I$ and $\mathrm{supp}g\subset I'$ for any $I\in\J$. Fix $a$, $f$ and $g$ as just explained. The strong intertwining property says that the two diagrams
	\begin{equation}
		\xymatrixcolsep{2cm}\xymatrixrowsep{2cm}\xymatrix{ \mathcal{H}_\parzero^\infty \ar[r]^{Y_\parzerozero(a,\tilde{f})} \ar[d]_{Y_\paronezero(w,\tilde{g})} & \mathcal{H}_\parzero^\infty \ar[d]_{Y_\paronezero(w,\tilde{g})}\\		
		\mathcal{H}_\parone^\infty \ar[r]^{Y_\parzerone(a,\tilde{f})} & \mathcal{H}_\parone^\infty }
	\qquad
	\xymatrixcolsep{2cm}\xymatrixrowsep{2cm}\xymatrix{ \mathcal{H}_\parone^\infty \ar[r]^{Y_\parzerone(a,\tilde{f})} \ar[d]_{Y_\paroneone(w,\tilde{g})} & \mathcal{H}_\parone^\infty \ar[d]_{Y_\paroneone(w,\tilde{g})}\\		
		\mathcal{H}_\parzero^\infty \ar[r]^{Y_\parzerozero(a,\tilde{f})} & \mathcal{H}_\parzero^\infty } 
	\end{equation}
	\textit{commute strongly} in the sense of \cite[Definition 1.2.6]{Gui}, cf.\ \cite[Definition 4.6]{Gui21}.
	This means that the diagram on the left implies that the closures of the following two pre-closed operators on the Hilbert space $\mathcal{H}_1:=\mathcal{H}_\parzero\oplus \mathcal{H}_\parzero\oplus \mathcal{H}_\parone\oplus \mathcal{H}_\parone$ commute strongly (now in the meaning given at p.\ \pageref{defin:operators_commute_strongly}):
	\begin{equation}
		R_1:=\left(\begin{array}{cccc}
			0 & 0 & 0 & 0 \\
			0 & 0 & 0 & 0 \\
			Y_\paronezero(w,\tilde{g}) & 0 & 0 & 0 \\
			0 & Y_\paronezero(w,\tilde{g}) & 0 & 0
		\end{array}
		\right)
		\qquad
		S_1:=\left(\begin{array}{cccc}
			0 & 0 & 0 & 0 \\
			Y_\parzerozero(a,\tilde{f}) & 0 & 0 & 0 \\
			0 & 0 & 0 & 0 \\
			0 & 0 & Y_\parzerone(a,\tilde{f}) & 0
		\end{array}
		\right) 
	\end{equation}
	with domains $\mathcal{D}(R_1):=\mathcal{H}_\parzero^\infty\oplus \mathcal{H}_\parzero^\infty\oplus \mathcal{H}_\parone\oplus \mathcal{H}_\parone$  and $\mathcal{D}(S_1):=\mathcal{H}_\parzero^\infty\oplus \mathcal{H}_\parzero\oplus \mathcal{H}_\parone^\infty\oplus \mathcal{H}_\parone$ respectively.
Similarly, the diagram on the right implies that the closures of the following two pre-closed operators on the Hilbert space $\mathcal{H}_2:=\mathcal{H}_\parone\oplus \mathcal{H}_\parone\oplus \mathcal{H}_\parzero\oplus \mathcal{H}_\parzero$ commute strongly:
\begin{equation}
	R_2:=\left(\begin{array}{cccc}
		0 & 0 & 0 & 0 \\
		0 & 0 & 0 & 0 \\
		Y_\paroneone(w,\tilde{g}) & 0 & 0 & 0 \\
		0 & Y_\paroneone(w,\tilde{g}) & 0 & 0
	\end{array}
	\right)
	\qquad
	S_2:=\left(\begin{array}{cccc}
		0 & 0 & 0 & 0 \\
		Y_\parzerone(a,\tilde{f}) & 0 & 0 & 0 \\
		0 & 0 & 0 & 0 \\
		0 & 0 & Y_\parzerozero(a,\tilde{f}) & 0
	\end{array}
	\right) 
\end{equation}
 with domains $\mathcal{D}(R_2):=\mathcal{H}_\parone^\infty\oplus \mathcal{H}_\parone^\infty\oplus \mathcal{H}_\parzero\oplus \mathcal{H}_\parzero$  and $\mathcal{D}(S_2):=\mathcal{H}_\parone^\infty\oplus \mathcal{H}_\parone\oplus \mathcal{H}_\parzero^\infty\oplus \mathcal{H}_\parzero$ respectively.
	It follows from \cite[Lemma 5.6]{Gui21} that the closures of the following two pre-closed operators on the Hilbert space $\mathcal{H}_1\oplus \mathcal{H}_2$ commute strongly:
	\begin{equation}
		R_{12}:=\left(\begin{array}{cc}
			R_1 & 0 \\
			0 & R_2 
		\end{array}
		\right)
		\qquad
		S_{12}:=\left(\begin{array}{cc}
			S_1 & 0 \\
			0 & S_2 
		\end{array}
		\right) \,.
	\end{equation} 
Now, note that we can write
\begin{equation}
	Y(a,f)\restriction_{\mathcal{H}^\infty}=\left(\begin{array}{cc}
		Y_\parzerozero(a,\tilde{f}) & 0 \\
		0 & Y_\parzerone(a,\tilde{f}) 
	\end{array}
	\right)
	\qquad
	Y(w,g)\restriction_{\mathcal{H}^\infty}:=\left(\begin{array}{cc}
		0 & Y_\paroneone(w,\tilde{g}) \\
		Y_\paronezero(w,\tilde{g}) & 0 
	\end{array}
	\right) \,.
\end{equation} 
Then it is not difficult to find a unitary operator $u$ from $\mathcal{H}_1\oplus \mathcal{H}_2$ to $\mathcal{H}\oplus\mathcal{H}\oplus\mathcal{H}\oplus\mathcal{H}$ such that:
\begin{equation}
	\begin{split}
	uR_{12}u^*   &=\left(\begin{array}{cccc}
		0 & 0 & 0 & 0 \\
		0 & 0 & 0 & 0 \\
		Y(w,g)\restriction_{\mathcal{H}^\infty} & 0 & 0 & 0 \\
		0 & Y(w,g)\restriction_{\mathcal{H}^\infty} & 0 & 0
	\end{array}
	\right)
	\\
	uS_{12}u^*   &=\left(\begin{array}{cccc}
		0 & 0 & 0 & 0 \\
		Y(a,f)\restriction_{\mathcal{H}^\infty} & 0 & 0 & 0 \\
		0 & 0 & 0 & 0 \\
		0 & 0 & Y(a,f)\restriction_{\mathcal{H}^\infty} & 0
	\end{array}
	\right) \,.
	\end{split}
\end{equation}  
Using the polar decomposition as in \cite[Proposition B.5]{Gui19I}, it is not difficult to check that the strong commutativity of the closures of $uR_{12}u^*$ and $uS_{12}u^*$ implies the strong commutativity of $Y(w,g)$ and $Y(a,f)$, as desired.
	
	Second, we prove that the strong braiding implies the strong commutativity of $ZY(w,f)Z^*$ and $Y(w,g)$ whenever $f,g\in C^\infty_\chi(S^1)$ with $\mathrm{supp}f\subset I$ and $\mathrm{supp}g\subset I'$ for any $I\in\J$. Fix $f$ and $g$ as just explained. 
	Let $\mathcal{L}_\parone\restriction_\parzero$ and $\mathcal{L}_\parone\restriction_\parone$ be the intertwining operators of type $\binom{\parone}{\parone \,\,\, \parzero}$ and $\binom{\parzero}{\parone \,\,\, \parone}$ respectively as defined at the beginning of \cite[Section 4.2]{Gui21}, see also the beginning of \cite[Section 2.5]{Gui}. Since $\binom{\parone}{\parone \,\,\, \parzero}=\C Y_\paronezero$ and $\binom{\parzero}{\parone \,\,\, \parone}=\C Y_\paroneone$, there exist unique $a, b\in\C\backslash\{0\}$ such that $\mathcal{L}_\parone\restriction_\parzero=a Y_\paronezero$ and $\mathcal{L}_\parone\restriction_\parone= bY_\paroneone$. 
	Using the braiding relations recalled above,
	it follows from the strong braiding that the two diagrams
	\begin{equation}
		\xymatrixcolsep{2cm}\xymatrixrowsep{2cm}\xymatrix{ \mathcal{H}_\parzero^\infty \ar[r]^{Y_\paronezero(w,\tilde{f})} \ar[d]_{Y_\paronezero(w,\tilde{g})} & \mathcal{H}_\parone^\infty \ar[d]_{Y_\paroneone(w,\tilde{g})}\\		
			\mathcal{H}_\parone^\infty \ar[r]^{-Y_\paroneone(w,\tilde{f})} & \mathcal{H}_\parzero^\infty }
		\qquad
		\xymatrixcolsep{2cm}\xymatrixrowsep{2cm}\xymatrix{ \mathcal{H}_\parone^\infty \ar[r]^{-Y_\paroneone(w,\tilde{f})} \ar[d]_{Y_\paroneone(w,\tilde{g})} & \mathcal{H}_\parzero^\infty \ar[d]_{Y_\paronezero(w,\tilde{g})}\\		
			\mathcal{H}_\parzero^\infty \ar[r]^{Y_\paronezero(w,\tilde{f})} & \mathcal{H}_\parone^\infty } 
	\end{equation}
	commute strongly, again in the sense of \cite[Definition 4.6]{Gui21}.
	Thus, from the diagram on the left, we have that the closures of the following two pre-closed operators on the Hilbert space $\mathcal{H}_3:=\mathcal{H}_\parzero\oplus \mathcal{H}_\parone\oplus \mathcal{H}_\parone\oplus \mathcal{H}_\parzero$ commute strongly:
	\begin{equation}
		R_3:=\left(\begin{array}{cccc}
			0 & 0 & 0 & 0 \\
			0 & 0 & 0 & 0 \\
			Y_\paronezero(w,\tilde{g}) & 0 & 0 & 0 \\
			0 & Y_\paroneone(w,\tilde{g}) & 0 & 0
		\end{array}
		\right)
		\qquad
		S_3:=\left(\begin{array}{cccc}
			0 & 0 & 0 & 0 \\
			Y_\paronezero(w,\tilde{f}) & 0 & 0 & 0 \\
			0 & 0 & 0 & 0 \\
			0 & 0 & -Y_\paroneone(w,\tilde{f}) & 0
		\end{array}
		\right) 
	\end{equation}
	with domains $\mathcal{D}(R_3):=\mathcal{H}_\parzero^\infty\oplus \mathcal{H}_\parone^\infty\oplus \mathcal{H}_\parone\oplus \mathcal{H}_\parzero$  and $\mathcal{D}(S_3):=\mathcal{H}_\parzero^\infty\oplus \mathcal{H}_\parone\oplus \mathcal{H}_\parone^\infty\oplus \mathcal{H}_\parzero$ respectively.
Similarly, from the one on the right, we get that the closures of the following two pre-closed operators on the Hilbert space $\mathcal{H}_4:=\mathcal{H}_\parone\oplus \mathcal{H}_\parzero\oplus \mathcal{H}_\parzero\oplus \mathcal{H}_\parone$ commute strongly:
\begin{equation}
	R_4:=\left(\begin{array}{cccc}
		0 & 0 & 0 & 0 \\
		0 & 0 & 0 & 0 \\
		Y_\paroneone(w,\tilde{g}) & 0 & 0 & 0 \\
		0 & Y_\paronezero(w,\tilde{g}) & 0 & 0
	\end{array}
	\right)
	\qquad
	S_4:=\left(\begin{array}{cccc}
		0 & 0 & 0 & 0 \\
		-Y_\paroneone(w,\tilde{f}) & 0 & 0 & 0 \\
		0 & 0 & 0 & 0 \\
		0 & 0 & Y_\paronezero(w,\tilde{f}) & 0
	\end{array}
	\right) 
\end{equation}
with domains $\mathcal{D}(R_4):=\mathcal{H}_\parone^\infty\oplus \mathcal{H}_\parzero^\infty\oplus \mathcal{H}_\parzero\oplus \mathcal{H}_\parone$  and $\mathcal{D}(S_4):=\mathcal{H}_\parone^\infty\oplus \mathcal{H}_\parzero\oplus \mathcal{H}_\parzero^\infty\oplus \mathcal{H}_\parone$ respectively.
Using again \cite[Lemma 5.6]{Gui21}, we get that the closures of the following two pre-closed operators on the Hilbert space $\mathcal{H}_3\oplus \mathcal{H}_4$ commute strongly:
\begin{equation}
	R_{34}:=\left(\begin{array}{cc}
		R_3 & 0 \\
		0 & R_4 
	\end{array}
	\right)
	\qquad
	S_{34}:=\left(\begin{array}{cc}
		S_3 & 0 \\
		0 & S_4 
	\end{array}
	\right) \,.
\end{equation} 
Now, note that we can write
\begin{equation}
	\begin{split}
	ZY(w,f)\restriction_{\mathcal{H}^\infty}Z^*    &=\left(\begin{array}{cc}
		0 & -iY_\paroneone(w,\tilde{f}) \\
		iY_\paronezero(w,\tilde{f}) & 0
	\end{array}
	\right)
	\\
	Y(w,g)\restriction_{\mathcal{H}^\infty}   &=\left(\begin{array}{cc}
		0 & Y_\paroneone(w,\tilde{g}) \\
		Y_\paronezero(w,\tilde{g}) & 0 
	\end{array}
	\right) \,.
	\end{split}
\end{equation} 
As done before, it is not difficult to find a unitary operator $v$ from $\mathcal{H}_3\oplus \mathcal{H}_4$ to $\mathcal{H}\oplus\mathcal{H}\oplus\mathcal{H}\oplus\mathcal{H}$ such that:
\begin{equation}
	\begin{split}
	vR_{34}v^*  
	&=
	\left(\begin{array}{cccc}
		0 & 0 & 0 & 0 \\
		0 & 0 & 0 & 0 \\
		Y(w,g)\restriction_{\mathcal{H}^\infty}  & 0 & 0 & 0 \\
		0 & Y(w,g)\restriction_{\mathcal{H}^\infty}  & 0 & 0
	\end{array}
	\right)
	\\
	viS_{34}v^*  
	&=
	\left(\begin{array}{cccc}
		0 & 0 & 0 & 0 \\
		ZY(w,f)\restriction_{\mathcal{H}^\infty} Z^* & 0 & 0 & 0 \\
		0 & 0 & 0 & 0 \\
		0 & 0 & ZY(w,f)\restriction_{\mathcal{H}^\infty} Z^* & 0
	\end{array}
	\right) \,.
	\end{split}
\end{equation}  
Using again the polar decomposition as in \cite[Proposition B.5]{Gui19I}, we obtain the strong commutativity of $Y(w,g)$ and $ZY(w,f)Z^*$ from the one of the closures of $vR_{34}v^*$ and $viS_{34}v^*$.

The argument above implies that $\A_{\{w\}}(I')\subseteq Z\A_{V_\parzero\cup\{w\}}(I)'Z^*$ for all $I\in\J$. Therefore, all the hypotheses of Theorem \ref{theo:V_strongly_graded-local_iff_V_0_and_v_are} are satisfied, so that $V$ is strongly graded-local.
\end{proof}

The proof above is a rather simple illustration of the fact that the strong braiding and the strong intertwining property for the smeared intertwining operators can be 
a very powerful tool for proving the strong (graded) locality of VO(S)A extensions. Indeed, this method is generalized to the case of arbitrary simple current VOSA extensions in \cite[Lemma 2.39]{Gau}. A more sophisticated analysis shows that in fact this strategy works for many important examples, cf.\ \cite{GuiGeq23}.

\begin{rem}[\textit{Rank-one lattice type models}]
	For all $M\in\Zplus$, let $L_M$ be any of the rank-one positive-definite integral lattices on $\R$ such that $L_M\cong \sqrt{M}\Z$.
	We can prove the strong graded locality of all unitary rank-one lattice type VOSAs $V_{L_M}$, without relying on the representation theory of VOAs as done in the proof of Theorem \ref{theo:lattices_strong_graded_locality}, in the following way. 
	By \cite[Example 5.5a]{Kac01}, $V_{L_1}$ and the charged free fermion $F^2$ of Example \ref{ex:d_free_fermions} are isomorphic VOSAs. 
	Therefore, $V_{L_1}$ is strongly graded-local and by Theorem \ref{theo:unicity_irr_graded-local_cn}, $\A_{L_1}:=\A_{V_{L_1}}$ is an irreducible graded-local conformal net isomorphic to $\mathcal{F}^2$. 
	Note that $L_M$ is isomorphic to a sublattice of the lattice $\Z^M$, sending $\sqrt{M}1\in L_M$ to $(1,\dots, 1)\in \Z^M$. 
	This map induces an isomorphism between $V_{L_M}$ and a unitary subalgebra of $F^{2M}$. 
	Hence, Theorem \ref{theo:covariant_subnets_unitary_subalgebras} and Theorem \ref{theo:unicity_irr_graded-local_cn} imply that $V_{L_M}$ is strongly graded-local and that $\A_{L_M}:=\A_{V_{L_M}}$ is a covariant subnet of $\mathcal{F}^{2M}$. 
	Note that this gives another proof of the strong locality of the unitary VOAs $V_{L_M}$ with $M\in 2\Z$, firstly given in \cite[Example 8.8]{CKLW18}.
	It is also interesting to note that $V_{L_3}$ and $V^1(N2)$ are isomorphic VOSAs by \cite[Example 5.9c]{Kac01}, so that by Theorem \ref{theo:unicity_irr_graded-local_cn}, $\mathrm{SVir2}_1\cong \A_{L_3}$, which can then be considered as a covariant subnet of $\mathcal{F}^6$.
\end{rem}

In \cite[Theorem 4.5]{Dun07}, the author constructs a simple $N=1$ superconformal VOSA $\prescript{}{\C}{A^{f\natural}}$ as a complexification of a real VOSA. Moreover, it is proved that $\Aut^\mathrm{sc}(\prescript{}{\C}{A^{f\natural}})\cong \mathrm{Co}_1$, that is \textit{Conway's largest sporadic group}, see \cite[Theorem 4.11]{Dun07}. 
Actually, $\prescript{}{\C}{A^{f\natural}}$ is isomorphic by a boson-fermion correspondence \cite[Section 5.2]{Kac01} to the (complex) odd lattice VOSA $V_{D_{12}^+}$, see the proof of \cite[Proposition 4.1]{Dun07} and \cite[Section 4.1]{DM15} for details. 
Thus, $\prescript{}{\C}{A^{f\natural}}$ is a simple strongly graded-local unitary VOSA by Theorem \ref{theo:lattices_strong_graded_locality} and the irreducible graded-local conformal net $\A^{f\natural}:=\A_{\prescript{}{\C}{A^{f\natural}}}$ is isomorphic to $\A_{D_{12}^+}$ by Theorem \ref{theo:unicity_irr_graded-local_cn}. 
In \cite{DM15}, $\prescript{}{\C}{A^{f\natural}}$ is denoted by $V^{f\natural}$ and we will keep this notation in the following \footnote{Note that the same symbol is used to denote the real form of a different VOSA $\prescript{}{\C}{V^{f\natural}}$ in \cite[Section 6]{Dun07}, which is anyway isomorphic to $\prescript{}{\C}{A^{f\natural}}$.}. 
We call $V^{f\natural}$ the \textbf{super-Moonshine VOSA} as it can be considered as a super version of the famous \textbf{Moonshine VOA} $V^\natural$, see \cite{FLM88} and \cite{Miy04}, see \cite[Theorem 5.4]{KL06} and \cite[Theorem 8.15]{CKLW18} for the conformal net version. 
Therefore, we are able to prove the following result:

\begin{theo}[\textit{The super-Moonshine model}] \label{theo:super-moonshine}
	The super-Moonshine VOSA $V^{f\natural}$ is a simple strongly graded-local unitary $N=1$ superconformal VOSA. Consequently, there exists an irreducible $N=1$ superconformal net $\A^{f\natural}:=\A_{V^{f\natural}}$ such that $\Aut^\mathrm{sc}(\A^{f\natural})$ is isomorphic to the Conway's largest sporadic group $\mathrm{Co}_1$.
\end{theo}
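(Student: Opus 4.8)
The plan is to deduce the theorem from the identification $V^{f\natural}\cong V_{D_{12}^+}$ recalled just above, combined with the general correspondence already developed, so that no new analytic estimate is required. First I would record that $D_{12}^+$ is a finite rank positive-definite integral \emph{odd} lattice, so that $V_{D_{12}^+}$ is a simple unitary VOSA with nontrivial odd part which, by Theorem \ref{theo:lattices_strong_graded_locality}, is strongly graded-local. Transporting these properties along the boson-fermion isomorphism shows that $V^{f\natural}$ is a simple strongly graded-local unitary VOSA. In particular $V^{f\natural}$ is finitely generated, being of lattice type, so $\Aut(V^{f\natural})$ is a finite dimensional Lie group by Remark \ref{rem:finitely_gen_VOSAs}.

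Next I would upgrade the $N=1$ superconformal structure, which is available from Duncan's construction, to a \emph{unitary} one. Rather than computing the PCT operator of the lattice model on the superconformal vector $\tau$ directly, I would invoke the criterion Proposition \ref{prop:criteria_unitary_N=1} with the compact group $G:=\Aut^\mathrm{sc}(V^{f\natural})\cong\mathrm{Co}_1$. Since $G$ fixes $\tau$ by the very definition of $\Aut^\mathrm{sc}$, the one remaining hypothesis is that the weight-$\frac{3}{2}$ fixed space $\bigl((V^{f\natural})^G\bigr)_\frac{3}{2}$ is one-dimensional, i.e.\ that $\tau$ spans the full $\mathrm{Co}_1$-invariant subspace of $V^{f\natural}_\frac{3}{2}$; this follows from Duncan's description of $V^{f\natural}_\frac{3}{2}$ as a $\mathrm{Co}_1$-module, the trivial summand occurring exactly once. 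The criterion then produces a normalized invariant scalar product $\curlyscalar$ for which $V^{f\natural}$ is a simple unitary $N=1$ superconformal VOSA. Finally I would note that strong graded locality is insensitive to the choice of unitary structure: by Proposition \ref{prop:uniqueness_unitary_structure} the scalar products $\scalar$ and $\curlyscalar$ are related by an automorphism, so $(V^{f\natural},\curlyscalar)$ is unitarily isomorphic to the strongly graded-local $(V^{f\natural},\scalar)$ and hence is itself strongly graded-local (equivalently, the associated nets are isomorphic by Theorem \ref{theo:unicity_irr_graded-local_cn}). This establishes the first assertion.

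For the consequence, $\A^{f\natural}:=\A_{V^{f\natural}}$ is an irreducible graded-local conformal net by Theorem \ref{theo:diffeo_covariant_net_from_V}, and it is $N=1$ superconformal by the ``only if'' direction of (ii) of Theorem \ref{theo:superconformal_correspondence}. Since $\Aut^\mathrm{sc}(V^{f\natural})\cong\mathrm{Co}_1$ is finite, hence compact, Theorem \ref{theo:AutV_AutNet_superconformal} gives $\Aut^\mathrm{sc}(\A^{f\natural})=\Aut^\mathrm{sc}_{\scalar}(V^{f\natural})=\Aut^\mathrm{sc}(V^{f\natural})\cong\mathrm{Co}_1$, as required.

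I expect the only genuinely non-formal point to be the one-dimensionality of $\bigl((V^{f\natural})^{\mathrm{Co}_1}\bigr)_\frac{3}{2}$, namely that the superconformal vector is, up to a scalar, the unique $\mathrm{Co}_1$-invariant of conformal weight $\frac{3}{2}$; this is the input that must be extracted from Duncan's representation-theoretic analysis of the super-Moonshine module, every other step being a direct application of the machinery of the correspondence $V\mapsto\A_V$.
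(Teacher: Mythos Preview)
Your proposal is correct and follows essentially the same route as the paper's proof: strong graded locality from the lattice identification $V^{f\natural}\cong V_{D_{12}^+}$ and Theorem \ref{theo:lattices_strong_graded_locality}, unitarity of the $N=1$ structure via Proposition \ref{prop:criteria_unitary_N=1} applied to the compact group $G=\Aut^\mathrm{sc}(V^{f\natural})\cong\mathrm{Co}_1$, and the identification of automorphism groups via Theorem \ref{theo:AutV_AutNet_superconformal}. The paper cites \cite[Proposition 4.4]{DM15} for the one-dimensionality of the $\mathrm{Co}_1$-fixed subspace of $(V^{f\natural})_{\frac{3}{2}}$, which is exactly the representation-theoretic input you single out; your extra remark that strong graded locality is insensitive to the choice of unitary structure (via Proposition \ref{prop:uniqueness_unitary_structure} and Theorem \ref{theo:unicity_irr_graded-local_cn}) makes explicit a point the paper leaves implicit.
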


\begin{proof}
	We have already proved that $V^{f\natural}$ is a simple strongly graded-local unitary VOSA and we have already observed that it has a $N=1$ superconformal structure, which we choose in the following way. 
	By \cite[Proposition 4.4]{DM15}, for a given choice of the isomorphism $\Aut^\mathrm{sc}(V^{f\natural})\cong\mathrm{Co}_1$, there is a unique one-dimensional subspace of $(V^{f\natural})_\frac{3}{2}$ fixed by $\Aut^\mathrm{sc}(V^{f\natural})$. Moreover, a suitable scaled vector, say $\tau$, in this one-dimensional subspace is a superconformal vector for $V^{f\natural}$.
	$V$ is finitely-generated and thus $\Aut(V)$ is a finite dimensional Lie algebra, see Remark \ref{rem:finitely_gen_VOSAs}. Then by Proposition \ref{prop:criteria_unitary_N=1}, we can choose the normalized invariant scalar product $\scalar$ on $V^{f\natural}$ in such a way that $\Aut^\mathrm{sc}_{\scalar}(V^{f\natural})=\Aut^\mathrm{sc}(V^{f\natural})\cong\mathrm{Co}_1$ and that $V^{f\natural}$ is a simple unitary $N=1$ superconformal VOSA.
	Therefore, by (ii) of Theorem \ref{theo:superconformal_correspondence}, $\A^{f\natural}$ is a $N=1$ superconformal net and $\Aut^\mathrm{sc}(\A^{f\natural})=\Aut_{\scalar}^\mathrm{sc}(V^{f\natural})=\Aut^\mathrm{sc}(V^{f\natural})\cong\mathrm{Co}_1$ by Theorem \ref{theo:AutV_AutNet_superconformal}.
\end{proof}

Note that the $N=1$ superconformal net $\A^{f\natural}$ constructed above is isomorphic to the one described in \cite[Theorem 4.2]{Kaw10} and we call it the \textbf{super-Moonshine conformal net}.
We also highlight that the $N=1$ superconformal VOSA $V^{f\natural}$ is characterized as the unique, up to isomorphism, \textit{strongly rational} (that is, self-dual, $C_2$-cofinite, rational and of CFT type, see e.g.\ \cite{ABD04, HA15, DH12}, see also \cite[Section 2.3]{HM} for a more general brief review, and references therein) $N=1$ superconformal VOSA with central charge $c=12$ and trivial subspace of weight $\half$, see \cite[Theorem 5.15]{Dun07}. Furthermore, the strongly rational \textit{holomorphic} (that is, the only irreducible VOSA module is the VOSA itself) VOSAs with $c\leq 12$ are completely classified in \cite[Theorem 3.1]{CDR18}. They are: the $d$ free fermions VOSAs $F^d$ for $0\leq d\leq 24$, which are strongly graded-local by Example \ref{ex:d_free_fermions}; the tensor product of the lattice VOA $V_{E_8}$ with the $d$ free fermions VOSA $F^d$ for all $0\leq d\leq 8$, which are strongly graded-local by \cite[Theorem 5.7]{Gui21}, Example \ref{ex:d_free_fermions} and Corollary \ref{cor:strongly_graded-local_tensor_product}; the super-Moonshine VOSA $V^{f\natural}$, whose strong graded locality has been established in the discussion just before Theorem \ref{theo:super-moonshine}. Thus, we have the following result:

\begin{theo}  \label{theo:c_leq_12}
	All the strongly rational holomorphic VOSAs with central charge $c\leq 12$ are strongly graded-local.
\end{theo}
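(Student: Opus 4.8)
The statement to prove, Theorem \ref{theo:c_leq_12}, is that every strongly rational holomorphic VOSA with central charge $c\leq 12$ is strongly graded-local. The strategy is almost entirely a matter of invoking the classification of such VOSAs together with the strong graded locality results already established earlier in the paper. The essential input is the classification in \cite[Theorem 3.1]{CDR18}, which states that the strongly rational holomorphic VOSAs with $c\leq 12$ are exactly three families: the $d$ free fermions VOSAs $F^d$ for $0\leq d\leq 24$; the tensor products $V_{E_8}\hat{\otimes}F^d$ for $0\leq d\leq 8$; and the super-Moonshine VOSA $V^{f\natural}$. Since strong graded locality has been verified for each member of this list, the theorem follows by going through the list case by case.

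First I would recall the classification, so that it is clear the three families are exhaustive. Then I would treat each family. For the free fermion VOSAs $F^d$ with $0\leq d\leq 24$, strong graded locality is exactly Example \ref{ex:d_free_fermions}: each $F^d$ is a graded tensor product of copies of the free fermion $F$, which is strongly graded-local by Theorem \ref{theo:gen_by_V1/2_V1_F}, and graded tensor products of simple strongly graded-local unitary VOSAs are strongly graded-local by Corollary \ref{cor:strongly_graded-local_tensor_product}. For the second family $V_{E_8}\hat{\otimes}F^d$ with $0\leq d\leq 8$, I would note that the lattice VOA $V_{E_8}$ is strongly graded-local (indeed strongly local) by Theorem \ref{theo:lattices_strong_graded_locality}, or equivalently by \cite[Theorem 5.7]{Gui21}, while $F^d$ is strongly graded-local as above; applying Corollary \ref{cor:strongly_graded-local_tensor_product} once more yields strong graded locality of the tensor product. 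For the third case, the super-Moonshine VOSA $V^{f\natural}$, strong graded locality was established in the discussion preceding Theorem \ref{theo:super-moonshine}, where $V^{f\natural}$ was identified via a boson-fermion correspondence with the odd lattice VOSA $V_{D_{12}^+}$ and hence shown strongly graded-local through Theorem \ref{theo:lattices_strong_graded_locality}.

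I would also briefly confirm that unitarity is available in each case, since strong graded locality is only defined for simple unitary VOSAs: the unitarity of $F^d$, of lattice type VOSAs, and of $V^{f\natural}$ have all been recorded earlier (Example \ref{ex:free_fermion}, Example \ref{ex:d_free_fermions}, Theorem \ref{theo:lattices_strong_graded_locality} and the paragraph before Theorem \ref{theo:super-moonshine}), so no new unitarity argument is needed. Assembling these three cases completes the proof.

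There is essentially no genuine obstacle here, because the theorem is a synthesis of the heavy machinery developed earlier in the paper rather than a fresh result. The only point requiring a little care is purely bibliographic: one must make sure that the classification quoted from \cite[Theorem 3.1]{CDR18} is stated for precisely the class of VOSAs (strongly rational, holomorphic, $c\leq 12$) that the theorem concerns, and that the three families in that classification match the ones for which strong graded locality has been verified. The hardest genuine step in the whole chain — proving strong graded locality of lattice type VOSAs in Theorem \ref{theo:lattices_strong_graded_locality}, which in turn feeds both the $V_{E_8}\hat{\otimes}F^d$ case and the $V^{f\natural}$ case — has already been carried out, so the present theorem reduces to a clean enumeration.
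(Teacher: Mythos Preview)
Your proposal is correct and matches the paper's approach exactly: the paper does not give a separate proof of Theorem \ref{theo:c_leq_12} but instead, in the paragraph immediately preceding it, invokes the classification \cite[Theorem 3.1]{CDR18} and runs through the same three families (free fermions $F^d$, tensor products $V_{E_8}\hat{\otimes}F^d$, and $V^{f\natural}$), citing Example \ref{ex:d_free_fermions}, \cite[Theorem 5.7]{Gui21} with Corollary \ref{cor:strongly_graded-local_tensor_product}, and the discussion before Theorem \ref{theo:super-moonshine}, respectively. There is nothing to add.
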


The super-Moonshine VOSA $V^{f\natural}$ of Theorem \ref{theo:super-moonshine} has been shown in \cite[Section 5, Theorem]{JoF21} to be part of the family of unitary VOSAs with a $N=1$ superconformal structure and such that they contain no vectors of conformal weight $\half$ and their even part is equal to a unitary affine VOA for some semisimple (finite dimensional) Lie algebra $\g$ (note that these VOSAs are automatically strongly rational, see references given above). These VOSAs are classified in \cite[Section 1, Theorem]{JoF21}, up to some special cases in the $E$-series for $\g$. Thanks to our results, we are able to prove that the unitarity condition is automatic. Moreover, all the classified VOSAs are strongly graded-local. 
This gives in turn a different proof of the strong graded locality of $V^{f\natural}$, relying on the properties of affine VOAs instead of even lattice type ones, other than new examples of irreducible $N=1$ superconformal nets with the corresponding groups of automorphisms preserving the $N=1$ superconformal structure. 

\begin{theo}[$N=1$ \textit{superconformal extensions of affine VOAs}]  \label{theo:J-F_N=1_models}
	Let $V$ be a simple VOSA such that $V_\parzero$ is an affine VOA associated to a semisimple Lie algebra. 
	Then $V$ is unitary and energy-bounded. 
	Suppose further that $V$ is $N=1$ superconformal with no vectors of conformal weight $\half$ and that $V_\parzero$ is not one of the affine VOAs $V^2(\hat{E_7})$, $V^1(\hat{E_7})\otimes V^1(\hat{E_7})$ or $V^2(\hat{E_8})$. Then $V$ is a strongly graded-local unitary $N=1$ superconformal VOSA. 
	Then there exists an irreducible $N=1$ superconformal net $\A_V$ associated to $V$, whose $\Aut^\mathrm{sc}(\A_V)$ is equal to the finite group $\Aut^\mathrm{sc}(V)$, classified in \cite[Section 1, Theorem]{JoF21}.
\end{theo}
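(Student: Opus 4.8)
The plan is to reduce every assertion to a corresponding statement about the even part $V_\parzero$, which under the hypotheses is an affine (WZW) vertex operator algebra at positive integer level, and then to feed these into the structural theorems of the previous sections. Writing $V_\parzero=\bigotimes_i L_{\hat{\g_i}}(k_i,0)$ as a tensor product indexed by the simple ideals of the underlying semisimple Lie algebra, each tensor factor is a simple unitary VOA (Example \ref{ex:supercurrent_algebra_models}, \cite{DL14}), energy-bounded with affine currents of conformal weight $1$ satisfying linear energy bounds, completely unitary in the sense of Gui \cite{Gui19I,Gui19II,Gui21}, and (outside the excluded cases) strongly local \cite[Example 8.7]{CKLW18}; all of these properties pass to $V_\parzero$ itself, strong locality via Corollary \ref{cor:strongly_graded-local_tensor_product}.

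First I would establish the unconditional claim that $V$ is unitary and energy-bounded. Since $V$ is simple, Example \ref{ex:even_simple_odd_irreducible} shows that $V_\parone$ is an irreducible $V_\parzero$-module; unitarity of the integrable modules of the affine VOA $V_\parzero$ (\cite{Gui19I,Gui19II}) then equips $V_\parone$ with an invariant scalar product, so $V_\parzero$ is a unitary VOA and $V_\parone$ a unitary $V_\parzero$-module. Theorem \ref{theo:unitarity_determind_by_even_part} upgrades this to the unitarity of $V$. Energy-boundedness of $V_\parzero$ follows from Proposition \ref{prop:energy_boundedness_by_generators}, and Theorem \ref{theo:V_energy-bounded_iff_V_0_is} then yields that the simple unitary VOSA $V$ is energy-bounded.

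Next I would install the unitary $N=1$ superconformal structure and deduce strong graded locality. As $V_\parzero$ is finitely generated and $V$ is simple, $V$ is finitely generated and hence $\Aut(V)$ is a finite-dimensional Lie group by Remark \ref{rem:finitely_gen_VOSAs}; by the classification \cite[Section 1, Theorem]{JoF21}, $G:=\Aut^\mathrm{sc}(V)$ is a finite (hence compact) subgroup fixing the superconformal vector $\tau\in V_{3/2}$, and $(V^G)_{3/2}$ is one-dimensional (the analogue of \cite[Proposition 4.4]{DM15}, the hypothesis $V_\half=\{0\}$ ensuring that $V_{3/2}$ is the lowest odd graded piece). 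Proposition \ref{prop:criteria_unitary_N=1} then produces a normalized invariant scalar product $\curlyscalar$ with $G\subseteq\Aut_{\curlyscalar}(V)$ for which $W(\{\nu,\tau\})$ is a unitary subalgebra, i.e.\ $V$ is a simple unitary $N=1$ superconformal VOSA. Theorem \ref{theo:superconformal_strongly_graded-local_iff_even_part_is} reduces strong graded locality of $V$ to the strong locality of $V_\parzero$, which is available precisely outside the three excluded even parts $V^2(\hat{E_7})$, $V^1(\hat{E_7})\otimes V^1(\hat{E_7})$ and $V^2(\hat{E_8})$.

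Finally, with $V$ a simple strongly graded-local unitary $N=1$ superconformal VOSA, Theorem \ref{theo:diffeo_covariant_net_from_V} furnishes the irreducible graded-local conformal net $\A_V$, Theorem \ref{theo:superconformal_correspondence}(ii) shows it is $N=1$ superconformal, and Theorem \ref{theo:AutV_AutNet_superconformal}, together with the compactness of $\Aut^\mathrm{sc}(V)$, gives $\Aut^\mathrm{sc}(\A_V)=\Aut^\mathrm{sc}_{\curlyscalar}(V)=\Aut^\mathrm{sc}(V)$, the finite group determined in \cite{JoF21}. The assembly itself is essentially bookkeeping of the machinery already developed; the genuine obstacle lies in the external input it relies on, namely the strong locality of the even affine VOA part, and it is exactly the present unavailability of this input for the three $E$-type even parts that forces their exclusion from the statement.
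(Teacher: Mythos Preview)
Your overall architecture is the paper's own (indeed, the paper gives two routes to the unitary $N=1$ structure and you chose the alternative one via Proposition~\ref{prop:criteria_unitary_N=1}, which the paper explicitly mentions). However, you have misidentified the role of the three excluded $E$-type even parts, and this matters because it misplaces where the external input from \cite{JoF21} is actually needed.

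Strong locality of $V_\parzero$ is \emph{not} the obstruction: every simple affine VOA at positive integer level is strongly local by \cite[Example 8.7]{CKLW18}, and tensor products remain so by Corollary~\ref{cor:strongly_graded-local_tensor_product}; this includes $V^2(\hat{E_7})$, $V^1(\hat{E_7})\otimes V^1(\hat{E_7})$ and $V^2(\hat{E_8})$. The paper accordingly records strong locality of $V_\parzero$ in the first paragraph of its proof, with no exclusion. What fails for the three $E$-type cases is the input from \cite{JoF21} that you invoke in your second step: the classification there is stated ``up to some special cases in the $E$-series'', so for those even parts one does not have available the finiteness of $\Aut^\mathrm{sc}(V)$, the determination of this group, nor the one-dimensionality of $(V^G)_{3/2}$ (equivalently, the construction of $\tau$ in a suitable real form making it $\theta$-invariant). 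Your chain of implications therefore breaks at Proposition~\ref{prop:criteria_unitary_N=1}, not at Theorem~\ref{theo:superconformal_strongly_graded-local_iff_even_part_is}. Once you relocate the exclusion to where the \cite{JoF21} facts enter, the argument is correct and coincides with the paper's.
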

 
\begin{proof}
	First, note that $V_\parzero$ is simple and $V_\parone$ is an irreducible $V_\parzero$-module by Example \ref{ex:even_simple_odd_irreducible}.
	Recall that any semisimple Lie algebra is a direct sum of simple Lie algebras and that the corresponding VOA is a tensor product of those associated to the simple summands, see \cite[Remark 5.7(c)]{Kac01}. Similarly, every irreducible module for $V_\parzero$ is a tensor product of irreducible modules for its VOA components arising from the corresponding simple Lie algebras, see \cite[Theorem 4.7.4]{FHL93}.
	It follows that $V_\parone$ is a unitary $V_\parzero$-module by \cite[Theorem 4.8 and Porposition 2.10]{DL14}, implying that $V$ is unitary by Theorem \ref{theo:unitarity_determind_by_even_part}.
	Recall also that $V_\parzero$ is strongly local by \cite[Example 8.7 and Corollary 8.2]{CKLW18}.
	Moreover, $V_\parzero$ is generated by vectors of conformal weight $1$, satisfying linear energy bounds, see the proof of Proposition \ref{prop:gen_by_V1/2_V1} and references therein. By the proof of Proposition \ref{prop:energy_boundedness_by_generators}, cf.\ also \cite[Proposition 3.14]{CT23}, $V_\parzero$ is a energy-bounded unitary subalgebra of $V$. Hence, $V$ is energy-bounded by Theorem \ref{theo:V_energy-bounded_iff_V_0_is}.
	
	Now, assume that $V$ is $N=1$ superconformal with no vectors of conformal weight $\half$ and that $V_\parzero$ is not one of the affine VOAs $V^2(\hat{E_7})$, $V^1(\hat{E_7})\otimes V^1(\hat{E_7})$ or $V^2(\hat{E_8})$.
	Then the existence and the uniqueness of $V$ are proved and all possible VOSAs of this type are listed in \cite[Section 1, Theorem]{JoF21}, where their groups of automorphisms preserving the $N=1$ superconformal structure are established. To do so, the author constructs the $N=1$ structure of $V$, choosing a superconformal vector $\tau$ in a real form for $V$, see Remark \ref{rem:real_form}, on which the normalized invariant scalar product of $V$ is positive-definite, see the beginning of \cite[Section 3]{JoF21}. Then $\tau$ turns out to be $\theta$-invariant. By (i) of Theorem \ref{theo:superconformal_correspondence}, $V$ is unitary $N=1$ superconformal and thus strongly graded-local by Theorem \ref{theo:superconformal_strongly_graded-local_iff_even_part_is}.
	Alternatively, the fact that $V$ is unitary $N=1$ superconformal can be deduced by Proposition \ref{prop:criteria_unitary_N=1}, noting that $V$ is finitely generated, see Remark \ref{rem:finitely_gen_VOSAs}, and that $\Aut^\mathrm{sc}(V)$ is a finite group, which preserves a unique one-dimensional subspace of $V_\frac{3}{2}$, see \cite[Section 4 and Section 5]{JoF21}.
	
	By (ii) of Theorem \ref{theo:superconformal_correspondence} and Theorem \ref{theo:AutV_AutNet_superconformal}, there exists an irreducible $N=1$ superconformal net $\A_V$ associated to $V$ such that $\Aut^\mathrm{sc}(\A_V)=\Aut^\mathrm{sc}_{\scalar}(V)=\Aut^\mathrm{sc}(V)$, whose possible occurrences are listed in \cite[Section 1, Theorem]{JoF21}.
\end{proof} 
 
Now, we move to the proof of the unitarity and of the strong graded locality of the \textbf{shorter Moonshine VOSA} $VB^\natural$, constructed for the first time in the Ph.D.\ thesis \cite{Hoe95}, see also \cite[Section 4]{Yam05} and \cite[Section 1]{Hoe10}. 
Similarly, we call the irreducible graded-local conformal net $\A_{VB^\natural}$ arising from it, the \textbf{shorter Moonshine conformal net}.
The proof uses the methodology developed in the proof of Theorem \ref{theo:lattices_strong_graded_locality} and thus we refer to the references given there for non-explained mathematical notions.

\begin{theo}[\textit{The shorter Moonshine model}]  \label{theo:baby_moonshine}
	The shorter Moonshine VOSA $VB^\natural$ is a simple strongly graded-local unitary VOSA. Then there exists an irreducible graded-local conformal net $\A_{VB^\natural}$ such that $\Aut(\A_{VB^\natural})$ is isomorphic to the direct product of the baby Monster $\mathbb{B}$ with a cyclic group of order two.
\end{theo}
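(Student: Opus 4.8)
The plan is to mimic the strategy of Theorem \ref{theo:lattices_strong_graded_locality}, because, in contrast with the super-Moonshine VOSA of Theorem \ref{theo:super-moonshine}, the shorter Moonshine VOSA $VB^\natural$ is not of lattice type and thus admits no reduction to a lattice model. I would start from the description due to H\"{o}hn and Yamauchi \cite{Hoe95,Yam05,Hoe10} of $VB^\natural$, of central charge $c=47/2$, as a $\Z_2$-simple current extension
\[
VB^\natural=(VB^\natural)_\parzero\oplus(VB^\natural)_\parone,
\]
whose even part $(VB^\natural)_\parzero$ is the Baby Monster VOA, realised as the commutant (coset) of an Ising conformal vector generating a copy of $L(1/2,0)$ inside the Moonshine VOA $V^\natural$, and whose odd part $(VB^\natural)_\parone$ is the associated irreducible $(VB^\natural)_\parzero$-module, with conformal weights in $\Z+\half$. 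Since $V^\natural$ and its Ising subalgebra are unitary (\cite[Theorem 8.15]{CKLW18} and references therein), Proposition \ref{prop:properties_coset_subalgebra} shows that $(VB^\natural)_\parzero$ is a simple unitary VOA, while $(VB^\natural)_\parone$ inherits a unitary module structure from the ambient scalar product of $V^\natural$. Theorem \ref{theo:unitarity_determind_by_even_part} then makes $VB^\natural$ a simple unitary VOSA, with $(VB^\natural)_\parzero$ simple and $(VB^\natural)_\parone$ irreducible, cf.\ Example \ref{ex:even_simple_odd_irreducible}; energy-boundedness will follow from Theorem \ref{theo:V_energy-bounded_iff_V_0_is} once $(VB^\natural)_\parzero$ is known to be energy-bounded.

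The central step is strong graded locality, which I would establish verbatim as in the proof of Theorem \ref{theo:lattices_strong_graded_locality}. Assuming the Baby Monster VOA $(VB^\natural)_\parzero$ is completely unitary, its module category $\Repu((VB^\natural)_\parzero)$ is a unitary modular tensor category in which $(VB^\natural)_\parone$ is a $\Z_2$-simple current of twist $-\boldsymbol{1}$, with braiding relations fixed by the skew-symmetry \eqref{eq:skew-symmetry}. The strong intertwining property and the strong braiding for the smeared intertwining operators of $(VB^\natural)_\parzero$ (\cite[Theorem 5.7]{Gui21}, \cite[Theorem 2.7.9]{Gui}) then give, through the same matrix-amplification and polar-decomposition argument used there, the strong commutativity of $Y(a,f)$ with $Y(w,g)$ for $a\in(VB^\natural)_\parzero$, and of $ZY(w,f)Z^*$ with $Y(w,g)$, where $w\in(VB^\natural)_\parone$ is a fixed odd quasi-primary vector and $f,g$ are supported in complementary intervals $I,I'$. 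This yields $\A_{\{w\}}(I')\subseteq Z\A_{(VB^\natural)_\parzero\cup\{w\}}(I)'Z^*$, and Theorem \ref{theo:V_strongly_graded-local_iff_V_0_and_v_are} delivers the strong graded locality of $VB^\natural$, hence the irreducible graded-local conformal net $\A_{VB^\natural}$.

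The hard part will be supplying the input that Theorem \ref{theo:lattices_strong_graded_locality} took for free from \cite[Theorem 5.8]{Gui21}, namely the complete unitarity of the even part, together with the complete energy bounds, strong intertwining property and strong braiding it entails. The Baby Monster VOA is not a lattice VOA, so this cannot be quoted directly; the natural route is to derive complete unitarity of the coset $(VB^\natural)_\parzero$ from that of the holomorphic VOA $V^\natural$ and the unitarity of the Ising subalgebra, exploiting the stability of complete unitarity under coset and simple-current constructions in the spirit of \cite{GuiGeq23}. Establishing this is the genuinely new analytic ingredient beyond the lattice case.

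Finally, for the symmetry group I would argue that $VB^\natural$ is strongly rational, hence finitely generated, so that $\Aut(VB^\natural)$ is a finite-dimensional Lie group by Remark \ref{rem:finitely_gen_VOSAs}. H\"{o}hn's computation \cite{Hoe95,Hoe10} identifies it with $\mathbb{B}\times\Z_2$, the direct product of the baby Monster with the order-two group generated by the parity $\Gamma_V$; being finite it is compact, so Theorem \ref{theo:AutV_AutNet} gives
\[
\Aut(\A_{VB^\natural})=\Aut_{\scalar}(VB^\natural)=\Aut(VB^\natural)\cong\mathbb{B}\times\Z_2,
\]
as claimed.
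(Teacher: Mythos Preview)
Your strategy is exactly the paper's, and the argument goes through; the one substantive correction is that the step you flag as ``genuinely new analytic ingredient'' is in fact already available. The paper obtains complete unitarity, complete energy-boundedness, the strong intertwining property and strong braiding for $(VB^\natural)_\parzero$ by invoking \cite[Theorem 2.6.6 and Corollary 2.6.7]{Gui} (not \cite{GuiGeq23}): these are coset stability results whose hypotheses are (i) $V^\natural$ is strongly rational, holomorphic and strongly local, hence satisfies \cite[Condition B]{Gui} and is completely energy-bounded; (ii) $L(\tfrac12,0)$ is completely unitary by \cite[Theorem 8.1]{Gui19II}; (iii) $(VB^\natural)_\parzero$ is regular by \cite[Lemma 4.2]{Yam05}. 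From Condition B for the coset one gets complete unitarity via \cite[Theorem 2.4.1]{Gui}, and in particular strong unitarity. So nothing new is needed beyond assembling these citations.

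A minor point: your claim that $(VB^\natural)_\parone$ ``inherits a unitary module structure from the ambient scalar product of $V^\natural$'' is not the cleanest route, since $VB^\natural$ itself is not a subalgebra of $V^\natural$; only the even part is realised as a coset there. The paper instead uses that complete unitarity of $(VB^\natural)_\parzero$ implies strong unitarity (every irreducible module is unitarizable), so $(VB^\natural)_\parone$ is automatically a unitary module, and Theorem \ref{theo:unitarity_determind_by_even_part} finishes the unitarity of $VB^\natural$. Also, one must check that the copy of $L(\tfrac12,0)$ inside $V^\natural$ is a \emph{unitary} subalgebra (so that Proposition \ref{prop:properties_coset_subalgebra} applies); the paper does this by appealing to Miyamoto's real-form construction \cite{Miy04}, where the 48 Ising copies arise from real forms and are therefore $\theta$-invariant.
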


\begin{proof}
Consider the even shorter Moonshine VOA $VB^\natural_\parzero$. 
As well explained in e.g.\ \cite[Section 4]{Yam05}, this is a vertex subalgebra of the Moonshine VOA $V^\natural$, constructed as the coset of a vertex subalgebra isomorphic to the Virasoro subalgebra $L(\half, 0)$, see \cite[Example 8.4]{CKLW18}. Recall that $V^\natural$ is unitary by \cite[Theorem 4.15]{DL14}. Moreover, it is possible to choose $L(\half, 0)$ in such a way that it turns to be a unitary subalgebra of $V^\natural$. 
Indeed, $V^\natural$ is constructed in \cite{Miy04} from a real form $V^\natural_\R$, see Remark \ref{rem:real_form}, which is obtained as extension of a tensor product of 48 copies of a real form of $L(\half,0)$. In particular, it is proved that $V^\natural_\R$ has a real invariant bilinear form, which is also positive-definite. Then $V^\natural$ has a unitary structure inherited from $V_\R$, making the 48 copies of $L(\half,0)$, obtained as complexification of the corresponding real forms, unitary subalgebras of $V^\natural$, see \cite[Example 5.33]{CKLW18}. 
By Proposition \ref{prop:properties_coset_subalgebra}, cf.\ also \cite[Example 5.33]{CKLW18}, $VB^\natural_\parzero$ is unitary.

Now, it is known that $V^\natural$ is a strongly rational holomorphic VOA, which is also strongly local by \cite[Theorem 8.15]{CKLW18}. Thus, $V^\natural$ satisfies \cite[Condition B]{Gui} and it is also completely energy-bounded, see the beginning of \cite[Section 2.1]{Gui}. By \cite[Theorem 8.1]{Gui19II}, $L(\half,0)$ is completely unitary. Furthermore, $VB^\natural_\parzero$ is regular by \cite[Lemma 4.2]{Yam05}. Hence, we fulfill the hypotheses of \cite[Theorem 2.6.6 and Corollary 2.6.7]{Gui}, which implies that $VB^\natural_\parzero$ satisfies \cite[Condition B]{Gui}, it is completely energy-bounded and the unitary intertwining operators satisfy the strong intertwining property and the strong braiding. By \cite[Theorem 2.4.1]{Gui}, $VB^\natural_\parzero$ is completely unitary. Note that $VB^\natural_\parzero$ is also \textit{strongly unitary}, that is every irreducible $VB^\natural_\parzero$-module can be equipped with a unitary structure, see \cite{Ten18}.

By Example \ref{ex:even_simple_odd_irreducible}, we have that $VB^\natural_\parone$ is an irreducible $VB^\natural_\parzero$-module and thus it is also unitary by the strong unitarity of $VB^\natural_\parzero$. By Theorem \ref{theo:unitarity_determind_by_even_part}, we have that $VB^\natural$ has a unitary structure.
To prove the strong graded locality of $VB^\natural$, we can therefore proceed as in the proof of Theorem \ref{theo:lattices_strong_graded_locality}. Indeed, all we need is the fact that $VB^\natural$ is a simple unitary VOSA, obtained by a $\Z_2$-simple current $VB^\natural_\parone$ of a strongly local, completely unitary and completely energy-bounded VOA  $VB^\natural_\parzero$, whose intertwining operators among unitary $VB^\natural_\parzero$-modules satisfy the strong intertwining property and the strong braiding.
Finally, by Theorem \ref{theo:AutV_AutNet} and \cite[Theorem 1]{Hoe10}, we can conclude that $\Aut(\A_{VB^\natural})=\Aut_{\scalar}(VB^\natural)=\Aut(VB^\natural)\cong \mathbb{B}\times \Z_2$.
\end{proof}

We highlight that we can produce far more examples picking graded tensor products of the simple strongly graded-local unitary VOSAs considered in the examples above. A large exposition of this kind of examples of graded-local conformal nets and of $N=1$ superconformal nets are given in \cite[Section 5.2]{CH17}.

We finish with a useful theorem with important consequences about the unitarity of the $N=1$ and $N=2$ superconformal structures of some VOSA models. We work in the setting of \cite[Section 4]{CGGH23} and thus we refer to it and references therein for unexplained notations and definitions.

\begin{theo} \label{theo:unitarity_VOSA_extensions}
	Let $U$ be a simple unitary VOSA and let $V$ be a vertex subalgebra with the same conformal vector. Assume that: $V_\parzero$ is strongly rational; non-zero vectors of the irreducible $V_\parzero$-modules have non-negative real conformal weights; $V_\parzero$ is the only irreducible $V_\parzero$-module with a non-zero vector of conformal weight zero. Then $V$ is a unitary subalgebra of $U$ if and only if $V_\parzero$ is.
\end{theo}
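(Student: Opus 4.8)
The plan is to reduce the statement, via the characterisation of unitary subalgebras, to a single assertion about the PCT operator $\theta$ of $U$, and then to prove the non-trivial implication by recognising $V$ as a simple unitary VOSA in its own right and matching its canonical PCT operator with the restriction of $\theta$ through a short norm computation.

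First I would record the reductions. Since $V$, and likewise its even vertex subalgebra $V_\parzero=V\cap U_\parzero$, contains the conformal vector $\nu$ of $U$, it is a vertex subalgebra on which all $L_n=\nu_{(n+1)}$ act; in particular it is $L_0$-graded and $L_1$-invariant. By Proposition \ref{prop:characterisation_unitary_subalgebras} this means that $V$ is a unitary subalgebra of $U$ if and only if $\theta(V)\subseteq V$, and similarly for $V_\parzero$; so the theorem is equivalent to the equivalence $\theta(V)=V \Leftrightarrow \theta(V_\parzero)=V_\parzero$. The forward implication is immediate: $\theta$ is a VOSA automorphism, hence parity-preserving (it commutes with $\Gamma_U$), so $\theta(V_\parzero)=\theta(V)\cap U_\parzero=V_\parzero$ whenever $\theta(V)=V$.

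For the converse I assume $\theta(V_\parzero)=V_\parzero$, so that $V_\parzero$ is a simple unitary VOA of CFT type with PCT operator $\theta_\parzero:=\theta\restriction_{V_\parzero}$ and scalar product $\scalar\restriction_{V_\parzero}$. Since $\Omega\in V$ and $U_0=\C\Omega$ (as $U$ is simple, hence of CFT type by Proposition \ref{prop:characterisation_simplicity}), we get $V_0=\C\Omega$. The first key point is that $V_\parone$ is a \emph{unitary} $V_\parzero$-module for the inner product $\scalar\restriction_{V_\parone}$: the module invariance \eqref{eq:defin_unitary_module} for $V_\parone$ is exactly the restriction of the $U$-invariance \eqref{eq:invariant_scalar_product} to vectors $a\in V_\parzero$ (for which $\theta(a)=\theta_\parzero(a)\in V_\parzero$) and $b,c\in V_\parone$, all the relevant operators preserving $V_\parone$. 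The second key point, which I expect to be the main obstacle, is the simplicity of $V$: by hypothesis $\Rep(V_\parzero)$ is a unitary modular tensor category, and the condition that $V_\parzero$ be the unique irreducible $V_\parzero$-module carrying a conformal-weight-zero vector says precisely that $V=V_\parzero\oplus V_\parone$ is a \emph{haploid} (super)commutative algebra object in it, i.e. $\dim\Hom_{V_\parzero}(V_\parzero,V)=1$; such algebras are simple, so $V$ is a simple VOSA and, by Example \ref{ex:even_simple_odd_irreducible}, $V_\parone$ is an irreducible $V_\parzero$-module. This is where the hypotheses on conformal weights and on the vacuum module genuinely enter, and I would borrow the simplicity input from the setting of \cite[Section 4]{CGGH23}.

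With $V$ simple, $V_\parzero$ a unitary VOA and $V_\parone$ a unitary $V_\parzero$-module, Theorem \ref{theo:unitarity_determind_by_even_part} makes $V$ into a simple unitary VOSA with scalar product $\scalar_V:=\scalar\restriction_V$ and some PCT operator $\theta_V=\theta_\parzero\oplus\theta_\parone$. It remains to identify $\theta_V$ with $\theta\restriction_V$. Both $\langle\cdot,\cdot\rangle_1:=(\theta(\cdot)\,|\,\cdot)\restriction_V$ and $\langle\cdot,\cdot\rangle_2:=(\theta_V(\cdot)\,|\,\cdot)$ are normalized invariant bilinear forms on the simple VOSA $V$, so by Proposition \ref{prop:properties_bilinear_forms}(iii) they coincide; hence $(\theta(v)|w)=(\theta_V(v)|w)$ for all $v,w\in V$, i.e. $\theta(v)-\theta_V(v)$ is orthogonal to $V$ in $U$. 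Since $\theta$ is antiunitary on $U$ and $\theta_V$ is antiunitary on $V$ (Proposition \ref{prop:relation_pct_scalar_product}(i)), one has $\norm{\theta(v)}=\norm{v}=\norm{\theta_V(v)}$; writing $\theta(v)=\theta_V(v)+r$ with $\theta_V(v)\in V$ and $r\perp V$, Pythagoras gives $\norm{r}^2=\norm{\theta(v)}^2-\norm{\theta_V(v)}^2=0$, so $r=0$ and $\theta(v)=\theta_V(v)\in V$. Thus $\theta(V)\subseteq V$, and $\theta^2=1_U$ yields $\theta(V)=V$, completing the converse. The remaining routine verifications (that the various restrictions really define the claimed modules and forms, and the haploid-implies-simple input) I would supply by citing the corresponding results, the genuinely new ingredients being the unitary-module observation and the final norm argument.
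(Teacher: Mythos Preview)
Your reduction via Proposition \ref{prop:characterisation_unitary_subalgebras} to the equivalence $\theta(V)=V\Leftrightarrow\theta(V_\parzero)=V_\parzero$ is correct, and so is the observation that $V_\parone$ becomes a unitary $V_\parzero$-module under $\scalar\restriction_{V_\parone}$. Deferring the simplicity of $V$ to the setting of \cite[Section 4]{CGGH23} is also reasonable (the paper does the same). The genuine gap is in the final ``norm argument''.

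You assert that Theorem \ref{theo:unitarity_determind_by_even_part} equips $V$ with a PCT operator $\theta_V$ for the \emph{specific} scalar product $\scalar_V:=\scalar\restriction_V$. But the proof of that theorem may rescale the inner product on $V_\parone$: one first defines $\theta_\parone$ by $(\theta_\parone(a)\,|\,b)_\parone=(a,b)_V$, finds $\theta_\parone^2=r\cdot 1_{V_\parone}$ by Schur, and then renormalises $\scalar_\parone$ to force $r=1$. In your situation the invariant bilinear form on $V$ is the restriction of the one on $U$, so $(\theta_\parone(a)\,|\,b)=(\theta(a)\,|\,b)_U$ for $a,b\in V_\parone$, which says precisely that $\theta_\parone=e_{V_\parone}\circ\theta\restriction_{V_\parone}$. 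The formula $r(a|a)=(\theta_\parone(a)|\theta_\parone(a))$ from that proof then gives
\[
r\,\|a\|^2=\|e_{V_\parone}\theta(a)\|^2\leq\|\theta(a)\|^2=\|a\|^2,
\]
with equality exactly when $\theta(a)\in V_\parone$. Thus $r=1$ is \emph{equivalent} to the desired conclusion $\theta(V_\parone)\subseteq V_\parone$. If $r<1$, the unitary structure produced by Theorem \ref{theo:unitarity_determind_by_even_part} carries the rescaled product $\scalar_\parzero\oplus\sqrt{r}\,\scalar_\parone$, and your comparison of normalized invariant bilinear forms yields only $(\theta(v)|w)_U=\sqrt{r}\,(\theta_V(v)|w)_U$ on $V_\parone$. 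The Pythagoras step becomes $\|v\|^2=r\,\|v\|^2+\|q\|^2$, i.e.\ $\|q\|^2=(1-r)\|v\|^2$, which is vacuous unless $r=1$. In short, your norm argument unwinds to the tautology that $\theta(v)\in V$ iff its $V^\perp$-component vanishes.

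The paper supplies exactly the missing external input: after identifying $\theta(V_\parone)\cong\overline{V_\parone}\cong V_\parone$ as $V_\parzero$-modules, it invokes the $\alpha\sigma$-reciprocity $\dim\Hom_{\Rep(V_\parzero)}(V_\parone,U)=\dim\Hom_{\Rep(U_\parzero)}(U_\parzero\boxtimes V_\parone,U)$ together with $U_\parzero\boxtimes V_\parone\cong U_\parone$ (\cite[Proposition 4.23]{CGGH23}) and the irreducibility of $U_\parone$ to conclude that $V_\parone$ sits in $U$ with multiplicity one. This forces the two isomorphic submodules $V_\parone$ and $\theta(V_\parone)$ to coincide as subspaces. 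That multiplicity-one information genuinely uses the ambient VOSA $U$ and is not recoverable from the internal data of $(V,\scalar\restriction_V)$ alone; this is why your self-contained approach cannot close.
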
   

\begin{proof}
	If $V$ is a unitary subalgebra of $U$, then so is $V_\parzero$. 
	Vice versa, suppose that $V_\parzero$ is a unitary subalgebra of $U$.
	Let $Y$ be the state-field correspondence of $U$ and $\theta$ be its PCT operator. Consider the $V_\parzero$-module given by the vector space $\theta(V_\parone)$ with the usual scalar multiplication and state-field correspondence $Y^{\theta(V_\parone)}:=Y$. Then $\theta$ induces an isomorphism between $\theta(V_\parone)$ and the conjugate module $\overline{V_\parone}$, see Example \ref{ex:conjugate_module}, which is in turn isomorphic to $V_\parone$, see \eqref{eq:iso_V0-modules_pct}. Thus, $\theta(V_\parone)$ and $\overline{V_\parone}$ are equivalent $V_\parzero$-submodules of $U$.
	
	Suppose by contradiction that $\theta(V_\parone)\not= V_\parone$. Then it must be $\dim \Hom_{\Rep(V_\parzero)}(V_\parone, U)>1$. 
	On the one hand, 
	$$
	\dim\Hom_{\Rep(U_\parzero)}(U_\parzero\boxtimes V_\parone, U)=\dim\Hom_{\Rep(V_\parzero)}(V_\parone, U)
	$$ 
	by \cite[Theorem 1.6 (2)]{KO02}, cf.\ also \cite[Lemma 2.61]{CKM21} 
	(this can be considered as a VOA version of the $\alpha\sigma$-reciprocity for conformal nets, see e.g.\ \cite[Theorem 3.21]{BE98}).
	On the other hand, 
	$$
	\dim\Hom_{\Rep(U_\parzero)}(U_\parzero\boxtimes V_\parone, U)=\dim\Hom_{\Rep(U_\parzero)}(U_\parone, U)
	$$ 
	because $U_\parzero\boxtimes V_\parone\cong U_\parone$ as $U_\parzero$-modules by \cite[Proposition 4.23]{CGGH23}. Then
	$$
	\dim\Hom_{\Rep(V_\parzero)}(V_\parone, U)=\dim\Hom_{\Rep(U_\parzero)}(U_\parzero\boxtimes V_\parone, U)
	=\dim\Hom_{\Rep(U_\parzero)}(U_\parone, U)=1
	$$
	which leads to the contradiction that concludes the proof.
\end{proof}

\begin{cor}  \label{cor:unitarity_N=1_N=2_superconformal_extensions}
	The simple CFT type VOSA extensions of the (simple) $N=1$ and $N=2$ super-Virasoro VOSAs with central charge in the respective unitary discrete series, classified in \cite[Section 6]{CGGH23}, are simple unitary $N=1$ and $N=2$ superconformal VOSAs respectively.
\end{cor}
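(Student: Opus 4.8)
The plan is to treat the two cases in parallel, writing $V$ for the relevant super-Virasoro VOSA inside the given simple CFT type extension $W$ sharing the conformal vector $\nu$: thus $V=V^c(NS)=W(\{\nu,\tau\})$ in the $N=1$ case and $V=V^c(N2)=W(\{\nu,J,G^+,G^-\})$ in the $N=2$ case. Since $W$ is simple and of CFT type by the classification in \cite[Section 6]{CGGH23}, and since $W\supseteq V$ already contains a superconformal vector $\tau$ (resp.\ the generators $J,G^+,G^-$), $W$ is automatically $N=1$ (resp.\ $N=2$) superconformal in the sense of Definition \ref{defin:superconformal_vosa} (resp.\ Remark \ref{rem:N=2_superconformal_structures}). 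Hence the two substantive points are the unitarity of $W$ and the unitarity of its superconformal structure, and the strategy is to obtain the second from the first via Theorem \ref{theo:unitarity_VOSA_extensions}.

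First I would establish unitarity of $W$ through its even part. By Example \ref{ex:even_simple_odd_irreducible}, $W_\parzero$ is a simple VOA and $W_\parone$ is an irreducible $W_\parzero$-module, and $W_\parzero$ is a simple CFT type VOA extension of $V_\parzero$ with the same conformal vector. In the setting of \cite[Section 4]{CGGH23} the even super-Virasoro $V_\parzero$ is strongly rational and (completely and strongly) unitary; arguing exactly as in the proof of Theorem \ref{theo:baby_moonshine}, i.e.\ via the extension results of \cite{Gui} together with the strong unitarity of \cite{Ten18}, one gets that the extension $W_\parzero$ is again a simple, strongly unitary VOA. Consequently the irreducible $W_\parzero$-module $W_\parone$ is unitary, and Theorem \ref{theo:unitarity_determind_by_even_part} yields that $W$ is a simple unitary VOSA. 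This is the main obstacle: everything downstream is soft, whereas the unitarity (and strong unitarity) of the even extension $W_\parzero$ rests on the full representation-theoretic machinery imported from \cite{Gui,Ten18}.

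With $W$ now known to be unitary, with PCT operator $\theta$, I would check that $V_\parzero$ is a unitary subalgebra of $W$ using Proposition \ref{prop:characterisation_unitary_subalgebras}. The subalgebra $V_\parzero$ is $L_1$-invariant, since $V$ is closed under all modes (in particular $L_1=\nu_{(2)}$) and $L_1$ preserves parity. For $\theta$-invariance I would run a multiplicity-one argument: both $V_\parzero$ and its image $\theta(V_\parzero)$ are copies of the vacuum $V_\parzero$-module sitting inside $W_\parzero$ and containing $\Omega$ and $\nu$; since $V_\parzero$ is the unique irreducible $V_\parzero$-module carrying a non-zero vector of conformal weight zero, and $(W_\parzero)_0=\C\Omega$ by the CFT type condition, the vacuum module occurs in $W_\parzero$ with multiplicity one, forcing $\theta(V_\parzero)=V_\parzero$. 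Thus $V_\parzero$ is a unitary subalgebra of $W$.

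Finally I would apply Theorem \ref{theo:unitarity_VOSA_extensions} with $U=W$ and this $V$: its hypotheses hold because $V_\parzero$ is strongly rational, every irreducible $V_\parzero$-module has non-negative conformal weights, and $V_\parzero$ is the only such module with a weight-zero vector (all standard for the even parts of the $N=1$ and $N=2$ unitary minimal models). Since $V_\parzero$ is a unitary subalgebra, the theorem gives that $V=W(\{\nu,\tau\})$ (resp.\ $V=W(\{\nu,J,G^+,G^-\})$) is itself a unitary subalgebra of $W$. In the $N=1$ case this is exactly the definition of $W$ being unitary $N=1$ superconformal (Definition \ref{defin:superconformal_vosa}; equivalently $\theta(\tau)=\tau$, consistent with Lemma \ref{lem:automorphisms_NS} and the positivity $(\tau|\tau)>0$), and via (i) of Theorem \ref{theo:superconformal_correspondence} one identifies the generated subalgebra with $V^c(NS)$. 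In the $N=2$ case, the unitary subalgebra property of $W(\{\nu,J,G^+,G^-\})$ is precisely the definition of $W$ being unitary $N=2$ superconformal recorded in Remark \ref{rem:N=2_superconformal_structures}, so the caveat there about non-uniqueness of the unitary structure does not interfere. This completes the proof in both cases.
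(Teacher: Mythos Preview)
Your overall strategy coincides with the paper's: the corollary is stated without proof because it is meant to follow immediately from Theorem \ref{theo:unitarity_VOSA_extensions} once one imports from \cite[Section 4 and Section 6]{CGGH23} that the classified extensions $W$ are simple unitary VOSAs in which $V_\parzero$ sits as a unitary subalgebra. Your Steps 1--3 reproduce this route, with the extra work of rederiving the preconditions that the paper simply takes from \cite{CGGH23}.

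There is, however, a genuine gap in your Step 2. You claim that $\theta(V_\parzero)$ is a copy of the vacuum $V_\parzero$-module inside $W_\parzero$, and then invoke multiplicity one. But for $\theta(V_\parzero)$ to be a $V_\parzero$-submodule you need $a_{(n)}\theta(b)\in\theta(V_\parzero)$ for $a,b\in V_\parzero$, which unwinds to $\theta(a)_{(n)}b\in V_\parzero$; this already presupposes $\theta(V_\parzero)\subseteq V_\parzero$, so the argument is circular. The fix is to absorb Step 2 into Step 1: the unitary structure on $W_\parzero$ produced by the extension machinery of \cite{CGGH23,Gui} (a unitary extension of the completely unitary VOA $V_\parzero$) is constructed so that $V_\parzero$ is a unitary subalgebra of $W_\parzero$ from the outset; the PCT operator of $W$ built via Theorem \ref{theo:unitarity_determind_by_even_part} then restricts to that of $W_\parzero$ and hence preserves $V_\parzero$ automatically. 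With this correction your argument goes through and matches the paper's intended one-line deduction.
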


\section{Back to VOSAs}
\label{section:back}

First, we show how to explicitly reconstruct the VOSA structure of a simple strongly graded-local unitary VOSA $V$ from its irreducible graded-local conformal net $\A_V$. This can be considered as a different proof of the ``if'' part of Theorem \ref{theo:unicity_irr_graded-local_cn}.
In this way, we also introduce a more general theory, which allows us to
give sufficient conditions in order that a given irreducible graded-local conformal net arises from a simple strongly graded-local unitary VOSA.

The fundamental idea for the theory we are going to present in the following is to construct certain \textit{quantum fields} starting from an irreducible graded-local M\"{o}bius covariant net. This construction relies on the idea developed in \cite{FJ96} by Fredenhagen and J\"{o}r{\ss}, who
construct certain pointlike localized fields associated to an irreducible M\"{o}bius covariant net by
a scaling limit procedure.
Instead, we use and naturally generalize the alternative construction based on
the Tomita-Takesaki modular theory for von Neumann algebras (see e.g.\  \cite[Chapter 2.5]{BR02}) presented in \cite[Chapter 9]{CKLW18}. 

For the reader's convenience, we recall the following standard argument.

\begin{rem}
 \label{rem:modular_theory_for_smeared_vertex_ops}

Let $\mathcal{M}$ be a von Neumann algebra on a Hilbert space $\mathcal{H}$ with a cyclic and separating vector $\Omega\in\mathcal{H}$ for $\mathcal{M}$. As usual, we denote by $S, \Delta$ and $J$ the Tomita operator, the modular operator and the modular conjugation associated to the pair $(\mathcal{M},\Omega)$ respectively. Recall that $S$ is an antilinear operator such that $SA\Omega=A^*\Omega$ for all $A\in\mathcal{M}$, $S^*A\Omega=A^*\Omega$ for all $A\in\mathcal{M}'$ and $S=J\Delta^\half$. For an arbitrary vector $a\in\mathcal{H}$, we define the following linear operator with dense domain:
\begin{equation}
\mathscr{L}_a^0:\mathcal{M}'\Omega\longrightarrow
\mathcal{H}
\,,\quad
A\Omega\longmapsto Aa \,.
\end{equation}
If $a$ is in the domain $\mathcal{D}(S)$ of $S$, then
\begin{equation}
\begin{split}
(\mathscr{L}_{Sa}^0A\Omega|B\Omega)
 &=
(ASa|B\Omega)
=(a|S^*A^*B\Omega) \\
 &=
(B^*A\Omega|a)
=(A\Omega|Ba) \\
 &=
(A\Omega|\mathscr{L}_a^0B\Omega)
\qquad\forall A, B\in \mathcal{M}'\Omega 
\end{split}
\end{equation}
that is, $\mathscr{L}_{Sa}^0\subseteq(\mathscr{L}_a^0)^*$. Hence, $\mathscr{L}_a^0$ and $\mathscr{L}_{Sa}^0$ are both closable operators and their respective closures $\mathscr{L}_a$ and $\mathscr{L}_{Sa}$ satisfy $\mathscr{L}_{Sa}\subseteq\mathscr{L}_a^*$ for all $a\in\mathcal{D}(S)$. We also have that for all $a\in\mathcal{D}(S)$, the operators $\mathscr{L}_a$ and $\mathscr{L}_{Sa}$ are affiliated with $\mathcal{M}$.
\end{rem} 

The operator $\mathscr{L}_a$ with $a\in\mathcal{D}(S)$ of Remark \ref{rem:modular_theory_for_smeared_vertex_ops} can be interpreted in certain situations, see \cite{Car05}, as abstract analogue of smeared vertex operators, see also \cite{BBS01}. We explain this point of view in the following, constructing a family of localized fields associated to a given irreducible graded-local M\"{o}bius covariant net $\A$ on $S^1$. Let $\mathcal{H}$ be the vacuum Hilbert space of $\A$. 
We call an eigenvector $a\in\mathcal{H}$ of the grading unitary $\Gamma$ \textbf{even} or \textbf{odd} if it has eigenvalue $1$ or $-1$ respectively. Accordingly, the \textbf{parity} $p(a)\in\Z_2$ of $a$ is $\parzero$ or $\parone$ respectively.  
Let $U$ be the positive-energy strongly continuous unitary representation of $\Mob(S^1)^{(\infty)}$. Recall that the conformal Hamiltonian $L_0$, that is, the infinitesimal generator of the one-parameter subgroup of $U(\Mob(S^1)^{(\infty)})$ of rotations, is a positive self-adjoint operator on $\mathcal{H}$. 
By the Vacuum Spin-Statistic theorem \eqref{eq:spin-statistics_theorem}, $U$ factors through a representation of $\Mob(S^1)^{(2)}$, which we denote by the same symbol. Accordingly,
the spectrum of $L_0$ is contained in $\half\Zpluseq$ and we set the usual definition
\begin{equation}
\mathcal{H}^\mathrm{fin}:=
\bigoplus_{n\in\half\Zpluseq}
\mathrm{Ker}(L_0-n1_\mathcal{H})
\subseteq\mathcal{H}^\infty\subseteq \mathcal{H} 
\end{equation}
where $\mathcal{H}^\infty$ is the dense subspace of smooth vectors for $L_0$, see p.\ \pageref{defin:smooth_vectors_L_0} and references therein. 
Therefore, $U$ differentiates uniquely to a positive-energy unitary representation of the Virasoro algebra $\Vir$ on $\mathcal{H}^\mathrm{fin}$ and thus it restricts to a unitary representation of $\mathfrak{sl}(2,\C)$ on $\mathcal{H}^\mathrm{fin}$, the complexification of $\mathfrak{sl}(2,\R)$, see Section \ref{subsection:diff_group} and references therein. The latter representation is spanned by the operators $L_{-1}, L_0$ and $L_1$ on $\mathcal{H}$, satisfying $L_1\subseteq L_{-1}^*$ and the usual commutation relations 
\begin{equation}   \label{eq:cr_sl(2,C)}
[L_1,L_{-1}]  = 2L_0
\,,\qquad
[L_1,L_0]  = L_1
\,,\qquad
[L_{-1},L_0]  = -L_{-1} \,.
\end{equation} 
A vector $a\in\mathcal{H}$ 
is called \textbf{homogeneous} of \textbf{conformal weight} $d_a\in\half\Z$, if $L_0a=d_aa$, so that $a\in\mathcal{H}^\mathrm{fin}$.
By the Vacuum Spin-Statistics theorem \eqref{eq:spin-statistics_theorem}, if $d_a\in\Z$, then $a$ is even, otherwise it is odd.
Moreover, a homogeneous vector $a\in\mathcal{H}$  is said to be \textbf{quasi-primary}, if $L_1a=0$. 
Fix a quasi-primary vector $a\in\mathcal{H}$. With a standard induction argument, we can prove that the commutation relations \eqref{eq:cr_sl(2,C)} imply that the vectors
\begin{equation}
a^n:=\frac{L_{-1}^na}{n!}\in\mathcal{H}^\mathrm{fin}
\qquad \forall n\in\Zpluseq
\end{equation}
satisfy
\begin{equation}
L_0a^n=(n+d_a)a^n
\qquad \forall n\in\Zpluseq
\,,\qquad
L_1 a^n=(2d_a+n-1)a^{n-1}
\qquad\forall n\in\Zplus \,.
\end{equation}
Moreover, it is not difficult to prove that
\begin{equation}  \label{eq:norm_a^n}
\norm{a^n}^2=(a^n|a^n)=
\binom{2d_a+n-1}{n}\norm{a}^2
\qquad\forall n\in\Zpluseq\,.
\end{equation}
By \eqref{eq:norm_a^n}, we can prove that for all $f\in C_{p(a)}^\infty(S^1)$, see \eqref{eq:notation_even_odd} for the notation, the series 
$\sum_{n\in\Zpluseq} \widehat{f}_{-n-d_a}a^n$ converges in $\mathcal{H}^\infty$ to an element, which we denote by $a(f)$. Furthermore, $f\mapsto a(f)$ defines a continuous linear map from $C_{p(a)}^\infty(S^1)$ to $\mathcal{H}^\infty$ and thus proceeding as done to prove Proposition \ref{prop:mob_covariance_qp}, we have that:
\begin{prop}   \label{prop:mob_covariance_a(f)}
Let $a$ be a quasi-primary vector in $\mathcal{H}$. Then for all $I\in\J$, we have that $U(\gamma)a(f)=a(\iota_{d_a}(\gamma)f)$ for all $\gamma\in\Mob(S^1)^{(2)}$ and all $f\in C_{p(a)}^\infty(S^1)$ with $\mathrm{supp}f\subset I$.
\end{prop}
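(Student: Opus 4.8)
The plan is to run, now at the level of vectors rather than operators, the same Cauchy problem argument used in the proof of Proposition \ref{prop:mob_covariance_qp}. First I would reduce to one-parameter subgroups. Both $U$ and $\iota_{d_a}$ are representations of $\Mob(S^1)^{(2)}$ ($\iota_{d_a}$ being the restriction to $\Mob(S^1)^{(2)}$ of the representations $\beta_{d_a}$, $\alpha_{d_a}$ of \eqref{eq:family_cont_ops_diff} and \eqref{eq:family_cont_ops_diff^2}, each of which preserves $C_{d_a}^\infty(S^1)$), so the set $S$ of those $\gamma\in\Mob(S^1)^{(2)}$ satisfying $U(\gamma)a(f)=a(\iota_{d_a}(\gamma)f)$ for every $f\in C_{p(a)}^\infty(S^1)$ is a subgroup: if $\gamma_1,\gamma_2\in S$ then $U(\gamma_1\gamma_2)a(f)=U(\gamma_1)a(\iota_{d_a}(\gamma_2)f)=a(\iota_{d_a}(\gamma_1)\iota_{d_a}(\gamma_2)f)=a(\iota_{d_a}(\gamma_1\gamma_2)f)$, and closure under inverses is analogous. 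As recalled in the proof of Proposition \ref{prop:mob_covariance_qp}, $\Mob(S^1)^{(2)}$ is generated by the one-parameter subgroups $\exp^{(2)}(tl)$ with $l\in\{l_0,l_1,l_{-1}\}$ as in \eqref{eq:fm_gen_mob}, so it suffices to show $\exp^{(2)}(tl)\in S$ for these $l$ and all $t\in\R$. (No condition on $\mathrm{supp}f$ is needed, since the series defining $a(f)$ converges in $\mathcal{H}^\infty$ for every $f\in C_{p(a)}^\infty(S^1)$; the stated case follows a fortiori.)

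The key infinitesimal identity I would establish is
\[
i\,T_l\,a(f)=a\bigl((d_a-1)l'f-lf'\bigr),\qquad f\in C_{p(a)}^\infty(S^1),
\]
where $T_{l_0}=L_0$, $T_{l_1}=\tfrac{L_1+L_{-1}}{2}$ and $T_{l_{-1}}=\tfrac{L_1-L_{-1}}{2i}$ are the self-adjoint generators of the respective one-parameter subgroups, acting on $\mathcal{H}^\mathrm{fin}$ as in \eqref{eq:fm_gen_mob}. Inserting $a(f)=\sum_{n\in\Zpluseq}\widehat{f}_{-n-d_a}a^n$ and using $L_0a^n=(n+d_a)a^n$, $L_1a^n=(2d_a+n-1)a^{n-1}$, $L_{-1}a^n=(n+1)a^{n+1}$, together with the elementary facts $\widehat{f'}_m=im\widehat{f}_m$ and that multiplication by $l_{\pm1}$ shifts Fourier modes by $\pm1$, the identity reduces to a routine matching of coefficients (it is precisely the vacuum shadow of the computation \eqref{eq:cr_Y_fm_Ya_f}); in particular for $l_0$ it reads $iL_0a(f)=a(-f')$, which is immediate. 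Convergence of all series is controlled by \eqref{eq:norm_a^n}.

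With this in hand I would close the argument via the Cauchy problem. Writing $f_t:=\iota_{d_a}(\exp^{(2)}(tl))f$, the group property of $\iota_{d_a}$ together with Lemma \ref{lem:beta_d_differentiable} and Lemma \ref{lem:alpha_d_differentiable} gives $\tfrac{d}{dt}f_t=(d_a-1)l'f_t-lf_t'$ in $C_{p(a)}^\infty(S^1)$; since $f\mapsto a(f)$ is continuous and linear into $\mathcal{H}^\infty$, the curve $c(t):=a(f_t)$ is differentiable on $\mathcal{H}^\infty$ with $\tfrac{d}{dt}c(t)=iT_lc(t)$ by the infinitesimal identity applied to $f_t$, and $c(0)=a(f)$. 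On the other hand $t\mapsto U(\exp^{(2)}(tl))a(f)=e^{itT_l}a(f)$ solves the same Cauchy problem on $\mathcal{H}^\infty$. Setting $\psi(t):=e^{-itT_l}c(t)$ and checking $\psi'(t)=e^{-itT_l}\bigl(-iT_lc(t)+\tfrac{d}{dt}c(t)\bigr)=0$ yields $c(t)=e^{itT_l}a(f)=U(\exp^{(2)}(tl))a(f)$, which is the required identity on the generators and hence, by the subgroup reduction, on all of $\Mob(S^1)^{(2)}$.

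The step I expect to be the main obstacle is the differential-analytic justification in the purely net-theoretic setting: that $U(\Mob(S^1)^{(2)})$ preserves the Fréchet space $\mathcal{H}^\infty$ and that $t\mapsto U(\exp^{(2)}(tl))c$ is $\mathcal{H}^\infty$-differentiable with derivative $iT_lU(\exp^{(2)}(tl))c$, i.e.\ the net analogue of Remark \ref{rem:differentiability_Diff_repr_H^infty}. This is guaranteed by the positive-energy integration theory underlying Theorem \ref{theo:representations_Diff_Vir}, which ensures that $U$ restricts to a smooth, $\mathcal{H}^k$-continuous action of $\Mob(S^1)^{(2)}$ for every $k$; once this is granted, every differentiation above takes place inside $\mathcal{H}^\infty$ and the uniqueness argument is rigorous. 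By contrast, the algebraic identity and the reduction to generators are routine.
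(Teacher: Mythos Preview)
Your proposal is correct and follows exactly the approach indicated by the paper, which simply says to proceed as in the proof of Proposition \ref{prop:mob_covariance_qp}. Your adaptation of that Cauchy problem argument to the vector-valued setting, together with the reduction to the generating one-parameter subgroups and the infinitesimal identity $iT_l\,a(f)=a((d_a-1)l'f-lf')$, is precisely what is intended; the differentiability issues you flag are handled by the same positive-energy representation theory already invoked there.
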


Now, consider for every interval $I\in\J$, the Tomita operator $S_I=J_I\Delta_I^\half$ associated to the von Neumann algebra $\A(I)$. By the M\"{o}bius covariance of the net, we get that
\begin{equation}  \label{eq:covariance_modular_theory}
U(\gamma)S_IU(\gamma)^*=S_{\dot{\gamma} I}
\,,\qquad
U(\gamma)J_IU(\gamma)^*=J_{\dot{\gamma} I}
\,,\qquad
U(\gamma)\Delta_IU(\gamma)^*=\Delta_{\dot{\gamma} I}
\end{equation}
for all $I\in\J$ and all
$\gamma\in\Mob(S^1)^{(2)}$. Furthermore, by the Bisognano-Wichmann property \eqref{eq:B-W_dilations}, we have that
\begin{equation} 
 \label{eq:properties_modular_op_upper_semicircle}
\Delta_{S^1_+}^{it}=e^{iKt}
\quad\forall t\in\R
\,,\qquad
\Delta_{S^1_+}^\half=e^\frac{K}{2}
\,,\qquad
K:=i\pi\overline{(L_1-L_{-1})}
\end{equation}
where $K$ is the infinitesimal generator of the one-parameter subgroup of dilations. We set $\theta:=ZJ_{S^1_+}$ and we note that $\theta$ commutes with $L_{-1}, L_0$ and $L_1$. (Indeed, $\theta$ will be the PCT operator of the unitary VOSA we are going to construct, cf.\  the proof of Theorem \ref{theo:gen_by_quasi_primary}.)

It should be not difficult for the reader to recognize that an equivalent of Theorem \ref{theo:delta_one_half} can be derived for $a(f)$ here in place of $Y(a,f)\Omega$ there. Indeed, although it is stated in the energy-bounded unitary VOSA setting, $a(f)$ shares with $Y(a,f)\Omega$ all the properties necessary to the proof of Theorem \ref{theo:delta_one_half}. Hence, we state:
\begin{theo}  \label{theo:delta_one_half_a(f)}
Let $a$ be a quasi-primary vector in $\mathcal{H}$ and let $f\in C_c^\infty(S^1 \backslash \left\{-1\right\})$ with $\mathrm{supp}f\subset S^1_+$. Then $a(f)$ is in the domain of the operator $e^\frac{K}{2}$ and 
\begin{equation}
e^\frac{K}{2}a(f)=i^{2d_a}a(f\circ j)
\end{equation}
where $j(z)=\overline{z}=z^{-1}$ for all $z\in S^1$. 
\end{theo}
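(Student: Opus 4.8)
The plan is to recognize that the proof is essentially a verbatim transcription of the proof of Theorem \ref{theo:delta_one_half}, once one notes that the vector $a(f)$ built here from a quasi-primary $a\in\mathcal{H}$ plays exactly the structural role that $Y(a,f)\Omega$ plays in the energy-bounded unitary VOSA setting. Indeed, in that setting step [A] of Proposition \ref{prop:from_complex_to_real} rests on $Y(a,z)\Omega=e^{zL_{-1}}a$, so that $a_n\Omega=L_{-1}^{-n-d_a}a/(-n-d_a)!$ for $n\leq -d_a$ and vanishes otherwise; writing $m=-n-d_a$ gives $Y(a,f)\Omega=\sum_{m\geq 0}\widehat{f}_{-m-d_a}a^m=a(f)$, which is precisely the series defining $a(f)$ here. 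First I would make this identification explicit and then isolate the finite list of properties of $Y(a,f)\Omega$ that the proof of Theorem \ref{theo:delta_one_half} actually uses.

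Those properties are: (i) the power-series expansion of the vector in the $a^n$; (ii) the $\mathfrak{sl}(2,\C)$-relations \eqref{eq:cr_sl(2,C)} with quasi-primariness, yielding $L_0a^n=(n+d_a)a^n$, $L_1a^n=(2d_a+n-1)a^{n-1}$ and the inner-product formula \eqref{eq:norm_a^n}; (iii) the M\"obius covariance of $f\mapsto a(f)$, supplied here by Proposition \ref{prop:mob_covariance_a(f)}; and (iv) the Bisognano--Wichmann identities $\Delta_{S^1_+}^{it}=e^{iKt}$, $\Delta_{S^1_+}^{1/2}=e^{K/2}$ with $K=i\pi\overline{(L_1-L_{-1})}$, given here by \eqref{eq:properties_modular_op_upper_semicircle}. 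Each is available in the present abstract setting: orthogonality of homogeneous vectors of distinct conformal weight follows from self-adjointness of $L_0$, and the polarized inner-product formula $(a^n\mid b^m)=\binom{2d_a+n-1}{n}(a\mid b)\,\delta_{n,m}\,\delta_{d_a,d_b}$ follows from \eqref{eq:norm_a^n}, or directly from $L_1\subseteq L_{-1}^*$ together with $L_1^n b^n=(2d_b+n-1)(2d_b+n-2)\cdots(2d_b)\,b$.

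With these in hand I would re-run the argument step by step. The two-point function computation of Proposition \ref{prop:from_complex_to_real} and its dilated version Corollary \ref{cor:2-point_funct_dilatation} depend only on facts (i)--(ii), so they hold with $a(f)$ and $b(g)$ in place of $Y(a,f)\Omega$ and $Y(b,g)\Omega$. In particular, the ``real picture'' function $F_{g,f}^{d_a}(z)$ of \eqref{eq:defin_F_g,f(z)} and its properties (Proposition \ref{prop:properties_F_g,f(z)} and Proposition \ref{prop:properties_F_varphi,f(z)}) are pure statements of complex analysis about an integral of Fourier transforms and require no modification whatsoever. Replacing the vector $\phi_a(f)\Omega$ of that proof by the corresponding vector built from $a(f)$, the analytic-continuation-to-the-boundary argument (Schwarz reflection principle and the identity theorem, combined with $\Delta_{S^1_+}^{1/2}=e^{K/2}$ and the self-adjointness of $e^{K/2}$) then shows that $a(f)$ lies in the domain of $e^{K/2}$ and satisfies $e^{K/2}a(f)=i^{2d_a}a(f\circ j)$, as claimed.

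The main obstacle, and the only point deserving genuine care, is confirming that step [A] of Proposition \ref{prop:from_complex_to_real} transcribes correctly: in the VOSA proof it relied on $(V_l\mid V_k)=0$ for $l\neq k$ and on the iterated action of $L_1$ on the $a^n$, both of which there came from invariance of the scalar product. Here these must instead be derived from the abstract modular and covariance data, i.e.\ from self-adjointness of $L_0$ and from $L_1\subseteq L_{-1}^*$ on $\mathcal{H}^{\mathrm{fin}}$. Once the inner-product formula is established, the remaining steps [B]--[D] are identical to the original, since they manipulate only the resulting Fourier-analytic expression.
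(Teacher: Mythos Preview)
Your proposal is correct and matches the paper's approach exactly: the paper does not give an independent proof but simply remarks that $a(f)$ shares with $Y(a,f)\Omega$ all the properties used in the proof of Theorem~\ref{theo:delta_one_half}, so that argument carries over verbatim. Your explicit isolation of the required ingredients (the $\mathfrak{sl}(2,\C)$-relations, the inner-product formula \eqref{eq:norm_a^n} via $L_1\subseteq L_{-1}^*$, and M\"obius covariance from Proposition~\ref{prop:mob_covariance_a(f)}) is precisely the verification the paper leaves to the reader; note only that item~(iv) is slightly more than what is strictly needed, since the proof of Theorem~\ref{theo:delta_one_half} uses just $e^{itK}=U(\delta^{(2)}(-2\pi t))$ rather than the full Bisognano--Wichmann identification with the modular operator.
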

As an application, we get:
\begin{cor}
Let $a$ be a quasi-primary vector in $\mathcal{H}$ and let $I\in\J$.
Then $a(f)$ is in the domain of the operator $S_I$ for all $f\in C_{p(a)}^\infty(S^1)$ with $\mathrm{supp}f\subset I$  and
$$
S_Ia(f)
=(-1)^{2d_a^2+d_a}(\theta(a))(\overline{f}) \,.
$$
\end{cor}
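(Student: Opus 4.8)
The plan is to prove the statement first for the distinguished interval $I=S^1_+$, where the modular objects are completely explicit, and then to transport the result to an arbitrary $I\in\J$ by the Möbius covariance of the modular data \eqref{eq:covariance_modular_theory} together with the covariance of the fields $a(f)$ given in Proposition \ref{prop:mob_covariance_a(f)}. For the base case, recall from \eqref{eq:properties_modular_op_upper_semicircle} that $S_{S^1_+}=J_{S^1_+}\Delta_{S^1_+}^{\frac12}=J_{S^1_+}e^{\frac K2}$. Given a quasi-primary $a$ and $f\in C_{p(a)}^\infty(S^1)$ with $\mathrm{supp}f\subset S^1_+$ (such an $f$ lies in $C_c^\infty(\pSone)$ and is supported away from $-1$, so it is admissible in both the even and odd interpretations), Theorem \ref{theo:delta_one_half_a(f)} shows that $a(f)$ lies in $\mathcal{D}(e^{\frac K2})$ with $e^{\frac K2}a(f)=i^{2d_a}a(f\circ j)$. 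Applying the antilinear operator $J_{S^1_+}=Z^{-1}\theta$, which is legitimate by the definition $\theta:=ZJ_{S^1_+}$, yields $a(f)\in\mathcal{D}(S_{S^1_+})$ and $S_{S^1_+}a(f)=i^{-2d_a}Z^{-1}\theta\,a(f\circ j)$.

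The key computation is the identity $\theta\,a(f\circ j)=(\theta(a))(\overline f)$. Since $\theta$ commutes with $L_{-1},L_0,L_1$, the vector $\theta(a)$ is quasi-primary of the same conformal weight $d_a$ and parity $p(a)$, and $\theta(a^n)=(\theta(a))^n$; by antilinearity this gives $\theta\,a(g)=\sum_{n\in\Zpluseq}\overline{\widehat{g}_{-n-d_a}}\,(\theta(a))^n$. A direct inspection of the Fourier coefficients \eqref{eq:fourier_coeff_S1}, using the definitions of $\overline g$ and of $g\circ j$ (and tracking the $\chi$-twist in the odd case), produces the two identities $\widehat{(\overline g)}_n=\overline{\widehat g_{-n}}$ and $\widehat{(g\circ j)}_n=\widehat g_{-n}$, valid in both parities; hence $\widehat{(\overline g\circ j)}_n=\overline{\widehat g_n}$ and $\theta\,a(g)=(\theta(a))(\overline g\circ j)$, which for $g=f\circ j$ specializes to $\theta\,a(f\circ j)=(\theta(a))(\overline f)$. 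The vector $(\theta(a))(\overline f)$ has parity $p(a)$, so $Z^{-1}$ acts on it as $1$ when $d_a\in\Z$ and as $-i$ when $d_a\in\Z-\half$. Comparing $i^{-2d_a}Z^{-1}$ with \eqref{eq:2d^2+d} shows that in both cases $i^{-2d_a}Z^{-1}=(-1)^{2d_a^2+d_a}$ on such a vector, so that $S_{S^1_+}a(f)=(-1)^{2d_a^2+d_a}(\theta(a))(\overline f)$.

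For a general $I\in\J$ I would choose $\gamma\in\Mob(S^1)^{(2)}$ with $\dot\gamma S^1_+=I$, so that $S_I=U(\gamma)S_{S^1_+}U(\gamma)^*$ by \eqref{eq:covariance_modular_theory}. Setting $g:=\iota_{d_a}(\gamma^{-1})f$, Proposition \ref{prop:mob_covariance_a(f)} gives $U(\gamma)^*a(f)=a(g)$, and $\mathrm{supp}\,g=\dot\gamma^{-1}(\mathrm{supp}f)$ is a compact subset of the open interval $S^1_+$, hence $g\in C_c^\infty(\pSone)$ with support in $S^1_+$. Therefore $a(f)=U(\gamma)a(g)\in\mathcal{D}(S_I)$, and by the base case together with Proposition \ref{prop:mob_covariance_a(f)} applied to the quasi-primary vector $\theta(a)$,
\begin{equation}
S_Ia(f)=U(\gamma)S_{S^1_+}a(g)=(-1)^{2d_a^2+d_a}\,U(\gamma)(\theta(a))(\overline g)=(-1)^{2d_a^2+d_a}\,(\theta(a))(\iota_{d_a}(\gamma)\overline g).
\end{equation}
It remains to check $\iota_{d_a}(\gamma)\overline g=\overline f$. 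This follows because $\iota_{d_a}(\gamma)$ is complex-linear and preserves the real-valued functions (as recorded for $\beta_d$ and $\alpha_d$), while the bar is an antilinear involution whose fixed-point set is exactly $C^\infty_{p(a)}(S^1,\R)$; a linear endomorphism preserving the real subspace therefore commutes with the bar, and since $\iota_{d_a}$ is a representation with $\iota_{d_a}(\gamma)\iota_{d_a}(\gamma^{-1})=\mathrm{id}$, one gets $\iota_{d_a}(\gamma)\overline g=\iota_{d_a}(\gamma)\overline{\iota_{d_a}(\gamma^{-1})f}=\iota_{d_a}(\gamma)\iota_{d_a}(\gamma^{-1})\overline f=\overline f$.

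The main obstacle I expect is the sign and parity bookkeeping in the odd case: verifying the Fourier-coefficient identities for the bar and for composition with $j$ in the presence of the $\chi$-twist, and then matching $i^{-2d_a}Z^{-1}$ with $(-1)^{2d_a^2+d_a}$ via \eqref{eq:2d^2+d}. Everything else is either a direct consequence of Theorem \ref{theo:delta_one_half_a(f)} or a routine covariance argument.
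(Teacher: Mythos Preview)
Your proof is correct and follows essentially the same approach as the paper: first establish the identity for $I=S^1_+$ using Theorem \ref{theo:delta_one_half_a(f)} and the explicit polar decomposition $S_{S^1_+}=J_{S^1_+}e^{K/2}$ with $J_{S^1_+}=Z^{-1}\theta$, then transport to a general $I$ via the covariance \eqref{eq:covariance_modular_theory} and Proposition \ref{prop:mob_covariance_a(f)}. You simply make explicit several intermediate steps (the Fourier-coefficient identities for $\overline{g}$ and $g\circ j$, the action of $Z^{-1}$ matched with \eqref{eq:2d^2+d}, and the commutation of $\iota_{d_a}(\gamma)$ with the bar) that the paper leaves to the reader.
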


\begin{proof}
By Theorem \ref{theo:delta_one_half_a(f)} and \eqref{eq:properties_modular_op_upper_semicircle}, we have that
\begin{equation}
\Delta_{S^1_+}^\half a(f)=e^\frac{K}{2}a(f)=i^{2d_a}a(f\circ j)
\qquad
\forall f\in C_c^\infty(\pSone)
\,\,\,\mbox{ with }\,\,\,
\mathrm{supp}f\subset S^1_+ \,.
\end{equation} 
Thus, $a(f)$ is in the domain of $S_{S^1_+}$ and
\begin{equation}
S_{S^1_+}a(f)
=J_{S^1_+}\Delta_{S^1_+}^\half a(f)
=(-1)^{2d_a^2+d_a}(\theta(a))(\overline{f})
\end{equation}
for all $f\in C_c^\infty(S^1 \backslash \left\{-1\right\})$ with $\mathrm{supp}f\subset S^1_+$.
Let $I\in\J$ and choose $\gamma\in\Mob(S^1)^{(2)}$ such that $\dot{\gamma}I=S^1_+$.
By the covariance of the modular theory \eqref{eq:covariance_modular_theory} and Proposition \ref{prop:mob_covariance_a(f)}, we get that $a(f)$ is in the domain of $S_I$ for all $f\in C_{p(a)}^\infty(S^1)$ with $\mathrm{supp}f\subset I$ because
\begin{equation}
\begin{split}
S_Ia(f)
 &=
U(\gamma)^*S_{S^1_+} U(\gamma)a(f)  
 =
U(\gamma)^* S_{S^1_+}a(\iota_{d_a}(\gamma)f)  \\
 &=
(-1)^{2d_a^2+d_a}U(\gamma)^*(\theta(a))(\overline{\iota_{d_a}(\gamma)f})  
 =
(-1)^{2d_a^2+d_a}(\theta(a))(\overline{f}) \,.
\end{split}
\end{equation}
This also concludes the proof.
\end{proof}

By Remark \ref{rem:modular_theory_for_smeared_vertex_ops}, we can construct the closed operator $\mathscr{L}_{a(f)}^I$ affiliated to $\A(I)$ for all $I\in\J$, where $a\in\mathcal{H}$ is a quasi-primary vector and $f\in C_{p(a)}^\infty(S^1)$ with $\mathrm{supp}f\subset I$.

\begin{defin}  \label{defin:FJ_smeared_vertex_op}
For every quasi-primary vector $a\in\mathcal{H}$, every $I\in\J$ and every $f\in C_{p(a)}^\infty(S^1)$ with $\mathrm{supp}f\subset I$, $\mathscr{L}_{a(f)}^I$ is called a \textbf{Fredenhagen-J\"{o}r{\ss}}, shortly \textbf{FJ}, \textbf{smeared vertex operator} and it is denoted by $Y_I(a,f)$.
\end{defin}

The notation introduced by Definition \ref{defin:FJ_smeared_vertex_op} is justified by the following properties of FJ smeared vertex operators, which follow directly from the properties of the operators $\mathscr{L}_{a(f)}^I$ constructed above.

\begin{prop}  \label{prop:FJ_smeared_vertex_ops_properties}
Let $a$ be a quasi-primary vector in $\mathcal{H}$. Then the FJ smeared vertex operators have the following properties:
\begin{itemize}
\item[(i)] 
Fix $I\in\J$. Then for every $f\in C_{p(a)}^\infty(S^1)$, $\A(I)'\Omega$ is a core for $Y_I(a,f)$ and $Y_I(a,f)\Omega=a(f)$. Furthermore, for every $b\in \A(I)'\Omega$, the map 
\begin{equation} \label{eq:FJ_operator_valued_distr}
\{f\in C_{p(a)}^\infty(S^1)\mid \mathrm{supp}f\subset I\}
\ni f
\longmapsto 
Y_I(a,f)b\in\mathcal{H}
\end{equation}
is an operator-valued distribution, that is, it is linear and continuous.

\item[(ii)] 
If $I_1\subseteq I_2$ in $\J$, then 
$Y_{I_1}(a,f)\subseteq  Y_{I_2}(a,f)$ for all $f\in C_{p(a)}^\infty(S^1)$ with $\mathrm{supp}f\subset I_1$, so that the operator-valued distribution $f\mapsto Y_{I_1}(a,f)$ as defined in \eqref{eq:FJ_operator_valued_distr} extends to the one 
$f\mapsto Y_{I_2}(a,f)$.

\item[(iii)] 
The following M{\"o}bius covariance property holds:
\begin{equation}
U(\gamma) Y_I(a,f) U(\gamma)^*
=Y_{\dot{\gamma}I}(a,\iota_{d_a}(\gamma)f)
\end{equation}
for all $\gamma\in\Mob(S^1)^{(2)}$, all $I\in\J$ and all $f\in C_{p(a)}^\infty(S^1)$ with $\mathrm{supp}f\subset I$.

\item[(iv)]
The following hermiticity formula holds:
\begin{equation}
(-1)^{2d_a^2+d_a} Y_I(\theta(a),\overline{f})
\subseteq Y_I(a,f)^* 
\end{equation}
for all $I\in\J$ and all $f\in C_{p(a)}^\infty(S^1)$ with $\mathrm{supp}f\subset I$.
\end{itemize}
\end{prop}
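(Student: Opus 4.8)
The plan is to derive all four properties from the modular construction of Remark~\ref{rem:modular_theory_for_smeared_vertex_ops}, applied to each pair $(\A(I),\Omega)$, combined with the already established properties of the vectors $a(f)$: their continuous linear dependence on $f$, the M\"obius covariance of Proposition~\ref{prop:mob_covariance_a(f)}, and the modular identity of the Corollary above, namely that $a(f)\in\mathcal{D}(S_I)$ with $S_Ia(f)=(-1)^{2d_a^2+d_a}(\theta(a))(\overline{f})$, where $\theta=ZJ_{S^1_+}$. Throughout I set $c:=(-1)^{2d_a^2+d_a}$, which by~\eqref{eq:2d^2+d} equals $\pm1$ and is in particular real, so that $\mathscr{L}_{c\,v}=c\,\mathscr{L}_v$; recall also that $\theta$ commutes with $L_{-1},L_0,L_1$, whence it preserves conformal weights and parities and sends the quasi-primary vector $a$ to the quasi-primary vector $\theta(a)$ of the same weight. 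Finally $\theta^2=1_{\mathcal{H}}$, since by the Bisognano--Wichmann relation~\eqref{eq:B-W_reflection} the reflection $j_{S^1_+}$ has order two in $\Mob(S^1)^{(\infty)}\ltimes\Z_2$, so $(ZJ_{S^1_+})^2=U(j_{S^1_+})^2=1_{\mathcal{H}}$. The argument parallels the VOA case of \cite[Chapter 9]{CKLW18}, the only new features being the Fermi signs carried by $c$ and the twist $Z$.

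Properties (i) and (iii) are essentially immediate from the construction. Since $1_{\mathcal H}\in\A(I)'$, evaluating $\mathscr{L}^0_{a(f)}$ at $\Omega=1_{\mathcal H}\Omega$ gives $Y_I(a,f)\Omega=a(f)$, and $\A(I)'\Omega$ is a core for $Y_I(a,f)=\mathscr{L}^I_{a(f)}$ by the very definition of the latter as a closure. For $b=A\Omega$ with $A\in\A(I)'$ one has $Y_I(a,f)b=Aa(f)$, so the map in~\eqref{eq:FJ_operator_valued_distr} is the composition of the continuous linear map $f\mapsto a(f)$ with the bounded operator $A$, hence an operator-valued distribution. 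For the covariance, conjugating the densely defined operator $\mathscr{L}^{0}_{a(f)}$ by $U(\gamma)$ and using $U(\gamma)\Omega=\Omega$, $U(\gamma)\A(I)'U(\gamma)^*=\A(\dot\gamma I)'$ together with $U(\gamma)a(f)=a(\iota_{d_a}(\gamma)f)$ from Proposition~\ref{prop:mob_covariance_a(f)}, shows $U(\gamma)\mathscr{L}^{0}_{a(f)}U(\gamma)^*=\mathscr{L}^{0}_{a(\iota_{d_a}(\gamma)f)}$ on $\A(\dot\gamma I)'\Omega$; taking closures yields (iii).

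Property (iv) follows at once from the modular input. By the Corollary above $a(f)\in\mathcal{D}(S_I)$ with $S_I a(f)=c\,(\theta(a))(\overline{f})$, while Remark~\ref{rem:modular_theory_for_smeared_vertex_ops} gives $\mathscr{L}_{S_I a(f)}\subseteq(\mathscr{L}^I_{a(f)})^*=Y_I(a,f)^*$. As $\mathscr{L}$ is linear in its vector argument and $c$ is real, $\mathscr{L}_{S_I a(f)}=c\,\mathscr{L}_{(\theta(a))(\overline f)}=c\,Y_I(\theta(a),\overline{f})$, which is exactly the asserted inclusion $c\,Y_I(\theta(a),\overline{f})\subseteq Y_I(a,f)^*$.

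The main obstacle is the consistency statement (ii). One inclusion is elementary: for $I_1\subseteq I_2$ isotony gives $\A(I_2)'\subseteq\A(I_1)'$, so $\mathscr{L}^{0}_{a(f)}$ on the domain $\A(I_2)'\Omega$ is a restriction of the one on $\A(I_1)'\Omega$, and passing to closures yields $Y_{I_2}(a,f)\subseteq Y_{I_1}(a,f)$. The substantive content of (ii) is the opposite inclusion $Y_{I_1}(a,f)\subseteq Y_{I_2}(a,f)$, i.e.\ that the \emph{smaller} commutant domain $\A(I_2)'\Omega$ already reconstructs all of $Y_{I_1}(a,f)$; together the two give equality, so that $Y_I(a,f)$ is in fact independent of $I$ once $\mathrm{supp}f\subset I$. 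The strategy I would follow is to first upgrade (iv) to the exact identity $Y_{I}(a,f)^*=c\,Y_{I}(\theta(a),\overline{f})$, which makes $\A(I)'\Omega$ a \emph{core} for $Y_I(a,f)^*$; this is the delicate modular-theoretic point, and it is available here because $a(f)$ is modular smooth, in the sense that by the Corollary both $a(f)$ and $S_I a(f)$ lie in $\mathcal{D}(S_I)$ with $S_I^2 a(f)=c^2(\theta^2 a)(\overline{\overline f})=a(f)$, uniformly in $I$. Granting this, since $\A(I_1)'\Omega$ is a core for $Y_{I_1}(a,f)$ it suffices to show that each $A\Omega$ with $A\in\A(I_1)'$ lies in $\mathcal{D}(Y_{I_2}(a,f))$ with value $Aa(f)$; testing the defining adjoint relation against the core $\A(I_2)'\Omega$ of $Y_{I_2}(a,f)^*$, and using that $S_{I_1}a(f)=S_{I_2}a(f)=c\,(\theta(a))(\overline f)$ is $I$-independent, that any $B\in\A(I_2)'\subseteq\A(I_1)'$, and that $A$ commutes past the operator $Y_{I_1}(a,f)$ affiliated with $\A(I_1)$, reduces the claim to a direct inner-product computation. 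I expect the genuinely hard step to be precisely the passage from the inclusion in (iv) to the adjoint identity $Y_I(a,f)^*=c\,Y_I(\theta(a),\overline f)$, established via the self-adjointness-type modular lemma for the operators $\mathscr{L}_v$ attached to vectors $v$ with $v,S_Iv\in\mathcal{D}(S_I)$ and $S_I^2v=v$.
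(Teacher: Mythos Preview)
The paper does not give a proof of this proposition at all: it is stated immediately after Definition~\ref{defin:FJ_smeared_vertex_op} with the remark that the properties ``follow directly from the properties of the operators $\mathscr{L}_{a(f)}^I$ constructed above'', i.e.\ from Remark~\ref{rem:modular_theory_for_smeared_vertex_ops}, Proposition~\ref{prop:mob_covariance_a(f)} and the Corollary computing $S_Ia(f)$. Your arguments for (i), (iii) and (iv) are exactly the unpacking the paper has in mind, and they are correct as written.

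For (ii) you go well beyond the paper, and it is worth pointing out that you have correctly located the only genuinely non-trivial point: isotony together with taking closures gives the \emph{reverse} inclusion $Y_{I_2}(a,f)\subseteq Y_{I_1}(a,f)$, while the statement claims $Y_{I_1}(a,f)\subseteq Y_{I_2}(a,f)$. Your proposed route---upgrade (iv) to the equality $Y_I(a,f)^*=c\,Y_I(\theta(a),\overline f)$ and then test against the core $\A(I_2)'\Omega$ of the adjoint---does work, and the inner-product computation you sketch goes through exactly as you indicate once that equality is available. The only caveat is that you leave the adjoint equality itself as a black-box ``self-adjointness-type modular lemma''; this is indeed a standard (if not entirely trivial) fact from Tomita--Takesaki theory for vectors $\xi$ with both $\xi$ and $S_I\xi$ in $\mathcal{D}(S_I)$, and the Corollary does verify this hypothesis for $\xi=a(f)$, so invoking it is legitimate. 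In short: your treatment is more explicit than the paper's and is essentially complete, modulo the cited modular input.
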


Thanks to (i) of Proposition \ref{prop:FJ_smeared_vertex_ops_properties}, we can use the usual formal notation of distributions for FJ smeared vertex operators:
\begin{equation}
Y_I(a,f)=\oint_{S^1} Y_I(a,z)f(z) z^{d_a}\frac{dz}{2\pi iz}
\end{equation}
for every quasi-primary vector $a\in\mathcal{H}$ of conformal weight $d_a$, every $I\in\J$ and every $f\in C_{p(a)}^\infty(S^1)$ with $\mathrm{supp}f\subset I$.
Accordingly, we have families of type $\{Y_I(a,z)\mid I\in\J\}$, which we call \textbf{FJ vertex operators} or \textbf{FJ fields}. As for the local case, it is unknown whether the FJ smeared vertex operators admit a common invariant domain and this prevents us to extend a FJ vertex operator $\{Y_I(a,z)\mid I\in\J\}$ to a unique distribution $\widetilde{Y}(a,z)$. This also implies that the FJ fields cannot be considered as quantum fields in the sense of Wightman, see \cite[Chapter 3]{SW64}.

\begin{prop}
Let $\A$ be an irreducible graded-local M\"{o}bius covariant net. Then $\A$ is generated by its FJ smeared vertex operators, which means that for all $I\in\J$, 
\begin{equation} 
 \label{eq:net_gen_by_FJ_smeared_vertex_ops}
\A(I)=
W^*\left(
\left\{
Y_{I_1}(a,f) \left| 
\begin{array}{l}
a\in\bigcup_{n\in\half\Zpluseq}(\Ker(L_0-n1_\mathcal{H}))
\,,\,\, L_1a=0 \,, \\ f\in C_{p(a)}^\infty(S^1)
\,,\,\,\mathrm{supp}f\subset I_1\subseteq I \,,\,\,\, I_1\in\J
\end{array}
\right.
\right\}
\right)\,.
\end{equation}
\end{prop}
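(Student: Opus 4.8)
The plan is to denote by $\B(I)$ the von Neumann algebra on the right-hand side of \eqref{eq:net_gen_by_FJ_smeared_vertex_ops}, to recognise $\B$ as a Möbius covariant subnet of $\A$ for which $\Omega$ is cyclic for $\B(S^1):=\bigvee_{I\in\J}\B(I)$, and then to invoke Remark \ref{rem:cyclicity_covariant_subnets} to conclude that $\B(I)=\A(I)$ for every $I\in\J$. Thus the real content is the cyclicity of $\Omega$ for the algebra generated by the FJ smeared vertex operators.

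First I would dispatch the routine points. The inclusion $\B(I)\subseteq\A(I)$ is immediate: each generating operator $Y_{I_1}(a,f)$ with $I_1\subseteq I$ is affiliated with $\A(I_1)\subseteq\A(I)$, so that $W^*(Y_{I_1}(a,f))\subseteq\A(I)$ and hence $\B(I)\subseteq\A(I)$. Isotony of $\B$ follows at once from the nested definition, and Möbius covariance $U(\gamma)\B(I)U(\gamma)^*=\B(\dot\gamma I)$ for $\gamma\in\Mob(S^1)^{(\infty)}$ (equivalently for $\Mob(S^1)^{(2)}$, since $U$ factors through it by the Vacuum Spin-Statistics theorem \eqref{eq:spin-statistics_theorem}) is read off from the covariance property (iii) of Proposition \ref{prop:FJ_smeared_vertex_ops_properties}, noting that $\iota_{d_a}(\gamma)$ transforms supports by $\dot\gamma$ and that conjugating a generating family by a unitary conjugates the generated von Neumann algebra. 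Hence $\B$ is a Möbius covariant subnet of $\A$.

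The heart of the argument is to show that $\mathcal{K}:=\mathcal{H}_\B=\overline{\B(S^1)\Omega}$ is all of $\mathcal{H}$. For every quasi-primary $a\in\mathcal{H}$ and every $f$ with $\operatorname{supp}f\subset I_1$, the operator $Y_{I_1}(a,f)$ is affiliated with $\B(S^1)$ and satisfies $Y_{I_1}(a,f)\Omega=a(f)$ by (i) of Proposition \ref{prop:FJ_smeared_vertex_ops_properties}; since an operator affiliated with a von Neumann algebra $M$ maps vectors of its domain into $\overline{M\Omega}$, this gives $a(f)\in\mathcal{K}$. Now $\mathcal{K}$ is $U$-invariant, because $\B(S^1)$ is globally $U$-invariant and $\Omega$ is $U$-fixed; in particular $\mathcal{K}$ is invariant under $e^{itL_0}$ and therefore under the spectral projections of $L_0$, whose spectrum is discrete in $\half\Zpluseq$. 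Recalling $a(f)=\sum_{n\in\Zpluseq}\widehat{f}_{-n-d_a}\,a^n$ with $a^n\in\Ker(L_0-(n+d_a)1_\mathcal{H})$, projecting onto the lowest eigenspace and choosing $f$ with $\widehat{f}_{-d_a}\neq0$ yields $a\in\mathcal{K}$; and since $\mathcal{K}$ is invariant under the whole Möbius group it is invariant under $L_{-1}$ on finite-energy vectors, so all descendants $a^n=L_{-1}^na/n!$ lie in $\mathcal{K}$ as well. By the complete reducibility of the positive-energy unitary representation of $\mathfrak{sl}(2,\C)$ on $\mathcal{H}^\mathrm{fin}$ (cf.\ Section \ref{subsection:diff_group} and Theorem \ref{theo:representations_Diff_Vir}), every finite-energy vector is a finite linear combination of such descendants of quasi-primary vectors; hence $\mathcal{H}^\mathrm{fin}\subseteq\mathcal{K}$ and, by density, $\mathcal{K}=\mathcal{H}$. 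Applying Remark \ref{rem:cyclicity_covariant_subnets} then gives $\B(I)=\A(I)$ for all $I\in\J$.

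I expect the main obstacle to be precisely the passage from $a(f)\in\mathcal{K}$ to $\mathcal{H}^\mathrm{fin}\subseteq\mathcal{K}$: one must carefully justify that $\mathcal{K}$ inherits invariance under the $L_0$-eigenprojections and under $L_{-1}$ from its $U$-invariance, and that quasi-primary vectors together with their $L_{-1}$-descendants algebraically span $\mathcal{H}^\mathrm{fin}$ (via the orthogonal decomposition $V_n=(\Ker L_1\cap V_n)\oplus L_{-1}V_{n-1}$, valid in a unitary positive-energy representation, used inductively on the conformal weight). The remaining verifications, namely that affiliation sends $\Omega$ into $\overline{M\Omega}$ and the support behaviour of $\iota_{d_a}(\gamma)$, are routine.
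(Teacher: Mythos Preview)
Your proof is correct and follows essentially the same strategy as the paper's: define $\B$ as a M\"obius covariant subnet, observe that $a(f)\in\mathcal{H}_\B$ for every quasi-primary $a$, use complete reducibility of the M\"obius representation to conclude $\mathcal{H}_\B=\mathcal{H}$, and then apply Remark \ref{rem:cyclicity_covariant_subnets}. The paper simply compresses your extraction of $a$ and the $a^n$ into the single remark that the vectors $a(f)$ already span a dense subspace by complete reducibility; one small slip to fix is that an operator affiliated with $M$ need not map arbitrary domain vectors into $\overline{M\Omega}$, but it does map $\Omega$ there, which is all you use.
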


\begin{proof}
We trivially adapt the proof of \cite[Proposition 9.1]{CKLW18}.
For every $I\in\J$, call $\B(I)$ the right hand side of \eqref{eq:net_gen_by_FJ_smeared_vertex_ops}, so defying a graded-local M\"{o}bius covariant subnet $\B$ of $\A$. By (i) of Proposition \ref{prop:FJ_smeared_vertex_ops_properties}, for every quasi-primary vector $a\in\mathcal{H}$ and every $f\in C_{p(a)}^\infty(S^1)$, $a(f)$ belongs to the vacuum Hilbert space $\mathcal{H}_\B=\overline{\B(S^1)\Omega}$ of $\B$. Since the representation $U$ of $\Mob(S^1)^{(2)}$ on $\mathcal{H}$ is completely reducible, all the vectors of type $a(f)$ linearly span a dense subset of $\mathcal{H}$. Consequently, $\mathcal{H}=\mathcal{H}_\B$ and thus $\A=\B$, see Remark \ref{rem:cyclicity_covariant_subnets}. 
\end{proof}

Now, we can prove the first of the two main results of this section. As previously anticipated, we show that the FJ smeared vertex operators of an irreducible graded-local conformal net arisen from a simple strongly graded-local unitary VOSA are actually the ordinary smeared vertex operators. Note that we obtain the ``if'' part of Theorem \ref{theo:unicity_irr_graded-local_cn} as a corollary.

\begin{theo}   \label{theo:back}
Let $V$ be a simple strongly graded-local unitary VOSA and consider the corresponding irreducible graded-local conformal net $\A_V$. Then for every quasi-primary vector $a\in V$, we have that $Y_I(a,f)=Y(a,f)$ for all $I\in\J$ and all $f\in C_{p(a)}^\infty(S^1)$ with $\mathrm{supp}f\subset I$. In particular, one can recover the VOSA structure on $V=\mathcal{H}^\mathrm{fin}$ from the graded-local conformal net $\A_V$.
\end{theo}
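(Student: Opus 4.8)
The plan is to show that the Fredenhagen--Jör{\ss} smeared vertex operator $Y_I(a,f)$ and the ordinary smeared vertex operator $Y(a,f)$ coincide for every quasi-primary $a\in V$, by identifying them through their action on the common core $\A_V(I)'\Omega$ and using that both are affiliated with $\A_V(I)$. First I would recall from (i) of Proposition \ref{prop:FJ_smeared_vertex_ops_properties} that $\A_V(I)'\Omega$ is a core for $Y_I(a,f)$ and that $Y_I(a,f)\Omega = a(f)$. On the other hand, the ordinary smeared vertex operator satisfies $Y(a,f)\Omega = \sum_{n\in\Zpluseq}\widehat{f}_{-n-d_a}\,a_n\Omega$; since $Y(a,z)\Omega = e^{zL_{-1}}a$ (as in the computation of \textbf{[A]} in the proof of Proposition \ref{prop:from_complex_to_real}), one has $a_n\Omega = \tfrac{L_{-1}^{-n-d_a}a}{(-n-d_a)!}$ for $n\leq -d_a$ and $0$ otherwise, so $Y(a,f)\Omega = \sum_{n\in\Zpluseq}\widehat{f}_{-n-d_a}\,a^n = a(f)$ exactly as in the construction of $a(f)$ in the abstract setting. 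Thus the two operators agree on $\Omega$.

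Next I would promote this agreement on $\Omega$ to agreement on the full core $\A_V(I)'\Omega$. For $B\in\A_V(I)'$ one has $Y(a,f)B\Omega = BY(a,f)\Omega = B\,a(f)$ because $Y(a,f)$ is affiliated with $\A_V(I)$ (by Proposition \ref{prop:gen_by_qp} and the definition of $\A_V$) and hence commutes strongly with $\A_V(I)'$, with $B\Omega$ in its domain; the relevant commutation on the invariant core is supplied by Proposition \ref{prop:commutation_von_neumann_algebras_generated}. By definition of the operator $\mathscr L^0_{a(f)}$ in Remark \ref{rem:modular_theory_for_smeared_vertex_ops}, $Y_I(a,f)$ is the closure of the map $B\Omega\mapsto B\,a(f)$ on $\A_V(I)'\Omega$, so $Y(a,f)$ and $Y_I(a,f)$ coincide on this core. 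Since $\A_V(I)'\Omega$ is a core for $Y_I(a,f)$ and $Y(a,f)$ is closed, we obtain $Y_I(a,f)\subseteq Y(a,f)$. The reverse inclusion then follows by applying the same argument to the adjoints: both $Y_I(a,f)^*$ and $Y(a,f)^*$ are controlled by $Y_I(\theta(a),\overline f)$ and $Y(\theta(a),\overline f)$ respectively (via (iv) of Proposition \ref{prop:FJ_smeared_vertex_ops_properties} and formula \eqref{eq:smeared_vertex_operator_adjoint}), and since $\theta(a)$ is again quasi-primary of the same conformal weight, the already-established inclusion applied to $\theta(a)$ together with closedness gives equality $Y_I(a,f)=Y(a,f)$.

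Finally, to recover the VOSA structure, I would note that $V=\mathcal H^{\mathrm{fin}}$ is identified as the finite-energy subspace of $\mathcal H$ (the grading being read off from $L_0$, and the parity from $\Gamma=e^{i2\pi L_0}$ by \eqref{eq:spin-statistics_theorem}). The modes $a_n$ for quasi-primary $a$ are recovered as the Fourier coefficients of the FJ fields $Y_I(a,z)$ acting on finite-energy vectors; since $V$ is generated by its quasi-primary vectors (Proposition \ref{prop:gen_by_qp}), the $(n)$-products of all vectors are determined by Dong's Lemma and the Borcherds commutator formula \eqref{eq:borcherds_commutator_formula}, while $\Omega$ and $\nu$ are distinguished as the $U$-invariant vector and the unique quasi-primary vector in $\mathcal H^{\mathrm{fin}}_2$ implementing the $\Mob(S^1)^{(2)}$-action. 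The main obstacle I expect is the careful handling of domains and cores in the adjoint step: one must verify that $\A_V(I)'\Omega$ is genuinely a core for both operators simultaneously and that the strong commutativity supplied by strong graded locality legitimately yields $B\,a(f)$ lying in the domain of the closed operator; once the core identifications are in place, the coincidence $Y_I(a,f)=Y(a,f)$ is essentially forced.
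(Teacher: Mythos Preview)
Your setup correctly identifies the key points: $Y(a,f)\Omega=a(f)$ from $Y(a,z)\Omega=e^{zL_{-1}}a$, and the affiliation of $Y(a,f)$ with $\A_V(I)$ gives $Y(a,f)B\Omega=B\,a(f)=Y_I(a,f)B\Omega$ for all $B\in\A_V(I)'$. From this, together with the fact that $\A_V(I)'\Omega$ is a core for $Y_I(a,f)$, you correctly obtain $Y_I(a,f)\subseteq Y(a,f)$.

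The gap is in the reverse inclusion. Your adjoint argument does not close: taking adjoints of $Y_I(a,f)\subseteq Y(a,f)$ gives $Y(a,f)^*\subseteq Y_I(a,f)^*$, which points the wrong way, and the hermiticity relations you cite (Proposition \ref{prop:FJ_smeared_vertex_ops_properties}(iv) and \eqref{eq:smeared_vertex_operator_adjoint}) are themselves only \emph{inclusions} $(-1)^{2d_a^2+d_a}Y_I(\theta(a),\overline f)\subseteq Y_I(a,f)^*$ and $(-1)^{2d_a^2+d_a}Y(\theta(a),\overline f)\subseteq Y(a,f)^*$, not equalities. Stringing together these one-sided relations, even with the already-established inclusion for $\theta(a)$, never yields $Y(a,f)\subseteq Y_I(a,f)$. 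Turning any of them into an equality would itself require a core statement of exactly the type you are trying to avoid.

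The paper bypasses this by showing directly that $\A_V(I)'\Omega$ is a core for $Y(a,f)$ as well. By twisted Haag duality \eqref{eq:twisted_haag_duality}, $Z\A_V(I')\Omega\cap\mathcal H^\infty\subseteq \A_V(I)'\Omega$, and the former is a core for $Y(a,f)$ by (a slight modification of) \cite[Proposition 7.3]{CKLW18}: this set is dense, sits inside $\mathcal H^\infty$, and is invariant under $e^{itL_0}$ for small $t$, so the energy bounds force it to be a core via \cite[Lemma 7.2]{CKLW18}. Once $\A_V(I)'\Omega$ is known to be a common core on which the two closed operators agree, equality $Y_I(a,f)=Y(a,f)$ is immediate and no adjoint step is needed.
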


\begin{proof}
We easily adapt the proof of \cite[Theorem 9.2]{CKLW18}.
Note that $Y(a,f)$ is affiliated with $\A_V(I)$ and thus its domain contains $\A_V(I)'\Omega$, which contains $Z\A_V(I')\Omega\cap\mathcal{H}^\infty$ by twisted Haag duality \eqref{eq:twisted_haag_duality}. By a slight modification of \cite[Proposition 7.3]{CKLW18}, the latter is a core for $Y(a,f)$, implying that also the former is a core for it. By (i) of Proposition \ref{prop:FJ_smeared_vertex_ops_properties}, $\A_V(I)'\Omega$ is a core for every FJ smeared vertex operator $Y_I(a,f)$. \cite[Eq.\  (4.1.2)]{Kac01} says that $Y(b,z)\Omega=e^{zL_{-1}}b$ for all $b\in V$. Thus, $Y(a,f)\Omega=a(f)$ and 
$$
Y(a,f)A\Omega=AY(a,f)\Omega=Aa(f)=Y_I(a,f)A\Omega
\qquad\forall A\in\A_V(I)'\,.
$$
It follows that the closed operators $Y(a,f)$ and $Y_I(a,f)$ coincide on a common core and thus they must be the same.
\end{proof}

Now, we can move to the last part of this section, which consists of the reconstruction of a simple strongly graded-local unitary VOSA $V$ from a given irreducible graded-local conformal net $\A$ such that $\A=\A_V$. To reach this goal, we note that a necessary condition is that for every quasi-primary vector $a\in\mathcal{H}$, the corresponding FJ vertex operator $\{Y_I(a,z)\mid I\in\J\}$ satisfies \textbf{energy bounds}, i.e., there exists non-negative real numbers $M,s$ and $k$ such that
\begin{equation}   \label{eq:energy_bounds_FJ}
\norm{Y_I(a,f)b}\leq 
M\norm{f}_s \norm{(L_0+1_\mathcal{H})^kb}
\end{equation}
for all $I\in\J$, all $f\in C_{p(a)}^\infty(S^1)$ with $\mathrm{supp}f\subset I$ and all $b\in \A(I)'\Omega\cap\mathcal{H}^\infty$. We are going to prove that the energy bound condition for the FJ smeared vertex operators is actually a sufficient condition.

We say that a family of quasi-primary vectors $\F\subset\mathcal{H}$ generates $\A$ if the corresponding FJ smeared vertex operators generate the graded-local von Neumann algebras, that is, if
\begin{equation}
\A(I)=
W^*\left(\left\{
Y_{I_1}(a,f) \mid
a\in \F
\,,\,\, f\in C_{p(a)}^\infty(S^1)
\,,\,\,\mathrm{supp}f\subset I_1\subseteq I
\,,\,\, I_1\in\J
\right\}\right)
\quad\forall I\in\J \,.
\end{equation}

Next, we present the second main result of this section:
\begin{theo}  \label{theo:back_energy_bounds}
Let $\A$ be an irreducible graded-local conformal net. Suppose that $\A$ is generated by a family of quasi-primary vectors $\F$. Suppose further that $\F$ is $\theta$-invariant and that the corresponding FJ vertex operators satisfy energy bounds. Furthermore, assume that $\mathrm{Ker}(L_0-n1_\mathcal{H})$ is finite-dimensional for all $n\in\half\Zpluseq$. Then the complex vector space $V:=\mathcal{H}^\mathrm{fin}$ has a structure of simple strongly graded-local unitary VOSA such that $\A=\A_V$.
\end{theo}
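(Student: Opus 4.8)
The plan is to recover the entire VOSA structure on $V = \mathcal{H}^{\mathrm{fin}}$ directly from the FJ smeared vertex operators, and then prove that the resulting VOSA is strongly graded-local with $\A = \A_V$. This mirrors the strategy of \cite[Theorem 9.3]{CKLW18}, with the fermionic modifications already developed in Theorem \ref{theo:delta_one_half_a(f)} and Proposition \ref{prop:FJ_smeared_vertex_ops_properties}. The main conceptual point is that the energy-bound hypothesis \eqref{eq:energy_bounds_FJ} is precisely what allows us to pass from the localized, modular-theoretic operators $Y_I(a,f)$ to a genuine formal-distribution state-field correspondence on the finite-energy space.

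\textbf{Step 1: Construct state-field modes on $V$.} First I would use the energy bounds to show that for each quasi-primary $a \in \F$ the operator-valued distributions $f \mapsto Y_I(a,f)$ (for varying $I$, compatibly extended via (ii) of Proposition \ref{prop:FJ_smeared_vertex_ops_properties}) define Fourier modes $a_{(n)}$ acting on $V = \mathcal{H}^{\mathrm{fin}}$. Concretely, the bound \eqref{eq:energy_bounds_FJ} together with the finite-dimensionality of each $\mathrm{Ker}(L_0 - n 1_\mathcal{H})$ guarantees that the modes $a_n$ (indexed as in Section \ref{subsection:basic_definitions_VOSA}) map $V$ into $V$ and satisfy the field (truncation) property on finite-energy vectors. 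The M\"{o}bius covariance (iii) of Proposition \ref{prop:FJ_smeared_vertex_ops_properties} gives the commutation relations \eqref{eq:cr_l0}--\eqref{eq:cr_l1} with $L_{-1},L_0,L_1$, so that each $Y(a,z) := \sum_n a_{(n)} z^{-n-1}$ is a well-defined quasi-primary field on $V$ with the correct conformal weight. Extending multiplicatively via the $L_{-1}$-translation covariance and the vacuum axiom $Y(a,z)\Omega = e^{zL_{-1}}a = a(z)$ (which holds by (i) of Proposition \ref{prop:FJ_smeared_vertex_ops_properties}), I obtain candidate vertex operators for the generators.

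\textbf{Step 2: Verify the vertex superalgebra axioms.} The crucial axiom is locality. Here I would argue that since $\F$ generates $\A$ and the net is graded-local, the FJ fields $Y_I(a,z)$ and $Y_J(b,z)$ for $a,b \in \F$ are mutually local in the Wightman sense whenever their supports are in causally disjoint intervals; this uses Proposition \ref{prop:commutation_von_neumann_algebras_generated} to translate the strong commutation of affiliated operators into commutation on the common core. Then Proposition \ref{prop:wightman_locality} converts Wightman locality into vertex-superalgebra locality for the generating fields. Dong's Lemma \cite[Lemma 3.2]{Kac01} extends locality to the vertex subalgebra they generate, and the uniqueness/extension theorem \cite[Theorem 4.4]{Kac01} produces a well-defined field $Y(c,z)$ for \emph{every} $c \in V$ with all fields mutually local. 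The remaining axioms (field, translation covariance, vacuum) follow from Step 1, and the conformal vector is $\nu \in \mathrm{Ker}(L_0 - 2\cdot 1_\mathcal{H})$ coming from the Virasoro action inherited from $U$; the grading and finite-dimensionality conditions making $V$ a VOSA of CFT type follow from positivity of energy and the hypothesis on $\mathrm{Ker}(L_0 - n1_\mathcal{H})$. Unitarity is supplied by the hermiticity formula (iv) of Proposition \ref{prop:FJ_smeared_vertex_ops_properties} together with $\theta = ZJ_{S^1_+}$, which one checks is an antilinear VOSA automorphism and gives an invariant normalized scalar product; Proposition \ref{prop:simple_unitary_vosa_gen_by_qp_hermitian_fields} then certifies $(V,\scalar)$ as a simple unitary VOSA, with simplicity following from $V_0 = \C\Omega$ (irreducibility of $\A$).

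\textbf{Step 3: Identify the net and establish strong graded locality.} Finally, by Theorem \ref{theo:back} the ordinary smeared vertex operators $Y(a,f)$ of the reconstructed VOSA coincide with the FJ smeared vertex operators $Y_I(a,f)$ on the common core $\A(I)'\Omega$. Since $\F$ generates $\A$ as a net and generates $V$ as a VOSA, the von Neumann algebras $\A_\F(I)$ built from $V$ agree with $\A(I)$; in particular the graded-locality of $\A$ hands us the hypothesis $\A_\F(I') \subseteq Z\A_\F(I)'Z^*$ of Theorem \ref{theo:gen_by_quasi_primary}. Applying that theorem yields that $V$ is strongly graded-local and $\A_\F(I) = \A_V(I)$, whence $\A = \A_V$. \textbf{The main obstacle} I anticipate is Step 1, specifically verifying that the modes $a_{(n)}$ defined through the modular-theoretic localized operators genuinely close on $V$ and assemble into a formal distribution satisfying the field truncation property: the FJ operators are defined only locally and a priori lack a common invariant domain, so extracting globally defined modes requires careful use of the energy bound \eqref{eq:energy_bounds_FJ} uniformly in $I$, exactly as in the delicate analysis of \cite[Section 9]{CKLW18}, now compounded by keeping track of the half-integer gradings and the twist $Z$ in the fermionic setting.
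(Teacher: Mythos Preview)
Your overall strategy matches the paper's proof closely, and you have correctly identified the partition-of-unity/global-mode construction as the central technical point. Two issues deserve attention, however.

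First, your invocation of Theorem \ref{theo:back} in Step 3 is circular. That theorem assumes from the outset that $V$ is a simple \emph{strongly graded-local} unitary VOSA and then identifies $Y(a,f)$ with $Y_I(a,f)$; here strong graded locality is exactly what you are trying to establish, so you may not appeal to it. The paper avoids this entirely: once the global operators $\widetilde{Y}(a,f)$ are built via a partition of unity subordinate to a cover $\{I_1,I_2\}$ of $S^1$, one checks directly (using compatibility (ii) of Proposition \ref{prop:FJ_smeared_vertex_ops_properties}) that $\widetilde{Y}(a,f)=Y_I(a,f)$ whenever $\mathrm{supp}\,f\subset I$. This identification is by construction, not by Theorem \ref{theo:back}, and it is what gives $\A_\F(I)=\A(I)$ so that Theorem \ref{theo:gen_by_quasi_primary} applies.

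Second, you omit a genuine step: one must prove that the vertex superalgebra generated by $\F$ inside $\mathcal{H}^{\mathrm{fin}}$ is actually \emph{all} of $\mathcal{H}^{\mathrm{fin}}$. The paper does this by letting $e_V$ be the orthogonal projection onto the closure of the span of iterated modes acting on $\Omega$, showing (via the energy bounds and finite-dimensionality of the $L_0$-eigenspaces) that $e_V$ commutes with every $Y_I(a,f)$ for $a\in\F$, and then concluding $e_V=1_\mathcal{H}$ from the irreducibility of $\A$ and the assumption that $\F$ generates $\A$. Without this argument you have a VOSA structure on a possibly proper subspace of $\mathcal{H}^{\mathrm{fin}}$, and the identification $\A=\A_V$ would fail. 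Also note that the paper uses the \emph{existence} theorem \cite[Theorem 4.5]{Kac01} to build the VOSA from the generating family of mutually local fields, reserving the uniqueness theorem \cite[Theorem 4.4]{Kac01} only for identifying $L(z)$ with $Y(\nu,z)$.
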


\begin{proof}
The following proof retraces the one of \cite[Theorem 9.3]{CKLW18} with very few adjustments where necessary.

As for the smeared vertex operators in Section \ref{subsection:energy_bounds}, we can prove that the energy bound condition implies that $\mathcal{H}^\infty$ is a common invariant core for the operators $Y_I(a,f)$ with $I\in\J$, $a\in\F$ and $f\in C_{p(a)}^\infty(S^1)$ with $\mathrm{supp}f\subset I$. Let $\{I_1, I_2\}\subset \J$ be a cover of $S^1$ and let $\{\varphi_1,\varphi_2\}\subset C^\infty(S^1,\R)$ be a partition of unity on $S^1$ subordinate to $\{I_1, I_2\}$, that is, $\mathrm{supp}\varphi_j\subset I_j$ for all $j\in\{1,2\}$ and $\sum_{j=1}^2\varphi_j(z)=1$ for all $z\in S^1$. Then for every $a\in\F$ and every $f\in C_{p(a)}^\infty(S^1)$, we define the operators $\widetilde{Y}(a,f)$ on $\mathcal{H}$ with domain $\mathcal{H}^\infty$ by
\begin{equation} \label{eq:defin_general_FJ_smeared_vo}
\widetilde{Y}(a,f)b=\sum_{j=1}^2 Y_{I_j}(a,\varphi_j f)b
\qquad b\in\mathcal{H}^\infty \,.
\end{equation}
Let $\{\widetilde{I_1},\widetilde{I_2}\}\subset\J$ be a second cover of $S^1$ and let $\{\widetilde{\varphi_1},\widetilde{\varphi_2} \}\subset C^\infty(S^1,\R)$ be a partition of unity on $S^1$ subordinate to $\{\widetilde{I_1},\widetilde{I_2}\}$. Thus, we have that
\begin{equation}
\begin{split}
\widetilde{Y}(a,f)b
 &=
\sum_{j=1}^2 Y_{I_j}(a,\varphi_j f)b
=\sum_{k,j=1}^2 Y_{I_j}(a,\widetilde{\varphi_k}\varphi_j f)b \\
 &= 
\sum_{k,j=1}^2 Y_{\widetilde{I_k}}(a,\widetilde{\varphi_k}\varphi_j f)b
= \sum_{k=1}^2 Y_{\widetilde{I_k}}(a,\widetilde{\varphi_k} f)b
\qquad\forall b\in\mathcal{H}^\infty
\end{split}
\end{equation}
where we have used (ii) of Proposition \ref{prop:FJ_smeared_vertex_ops_properties} for the third equality. This means that the definition \eqref{eq:defin_general_FJ_smeared_vo} of $\widetilde{Y}(a,f)$ is independent of the choice of the cover of $S^1$ and of the partition of unity subordinate to it. As a consequence, we have that $\widetilde{Y}(a,f)=Y_I(a,f)$ for all $a\in\F$, all $I\in \J$ and all $f\in C_{p(a)}^\infty(S^1)$ with $\mathrm{supp}f\subset I$. This implies the state-field correspondence $\widetilde{Y}(a,f)\Omega=a(f)$ for all $a\in\F$ and all $f\in C_{p(a)}^\infty(S^1)$, see (i) of Proposition \ref{prop:FJ_smeared_vertex_ops_properties}. Moreover, we get the M\"{o}bius covariance property and the adjoint relation for the operators $\widetilde{Y}(a,f)$ with $f\in C_{p(a)}^\infty(S^1)$ as stated in (iii) and (iv) of Proposition \ref{prop:FJ_smeared_vertex_ops_properties} respectively.

By hypothesis, for every $a\in\F$, the FJ vertex operator $\{Y_I(a,z)\mid I\in\J\}$ satisfies energy bounds for some non-negative real numbers $M,s$ and $k$.
By the convolution theorem for the Fourier series, for every $\varphi\in C^\infty(S^1)$ and every
$f\in C_{p(a)}^\infty(S^1)$, it is not difficult to prove that
$$
\abs{\widehat{(\varphi f)}_n}\leq 
\sum_{j\in \Z-d_a}\abs{\widehat{f}_j}\abs{\widehat{\varphi}_{n-j}}
\qquad\forall n\in\Z-d_a \,.
$$ 
Hence, 
\begin{equation}
\begin{split}
\norm{\varphi f}_s 
 &=
\sum_{n\in\Z-d_a}
(\abs{n}+1)^s \abs{\widehat{(\varphi f)}_n} 
 \leq
\sum_{n\in\Z-d_a} \sum_{j\in\Z-d_a}
(\abs{n}+1)^s 
\abs{\widehat{f}_j}\abs{\widehat{\varphi}_{n-j}} \\
 &= 
\sum_{m\in\Z} \sum_{j\in\Z-d_a} 
(\abs{m+j}+1)^s 
\abs{\widehat{f}_j}\abs{\widehat{\varphi}_{m}} 
 \leq 
\norm{\varphi}_s\norm{f}_s 
\end{split}
\end{equation}
which implies that
\begin{equation}
\norm{\widetilde{Y}(a,f)b}
=\norm{\sum_{j=1}^2 Y_{I_j}(a,\varphi_j f)b}
\leq M \left( \sum_{j=1}^2\norm{\varphi_j}_s \right)
\norm{f}_s \norm{(L_0+1_\mathcal{H})^kb}
\end{equation}
for all $f\in C_{p(a)}^\infty(S^1)$ and all $b\in \mathcal{H}^\infty$. This means that the operators $\widetilde{Y}(a,f)$ with $f\in C_{p(a)}^\infty(S^1)$ satisfy energy bounds for some $s$ and $k$ and the non-negative real number $\widetilde{M}:=M\left( \sum_{j=1}^2\norm{\varphi_j}_s \right)$.

For every $n\in\Z$ and every $m\in\Z-\half$, define $e_n\in C^\infty(S^1)$ and $e_m\in C_\chi^\infty(S^1)$ by $e_n(z):=z^n$ and $e_m(z):=\chi(z)z^{m-\half}$ with $z\in S^1$ respectively. For any $a\in\F$ and any $n\in\Z-d_a$, set $a_n:=\widetilde{Y}(a,e_n)$. Thus, we have that
$$
\norm{a_nb}
\leq \widetilde{M}(\abs{n}+1)^s\norm{(L_0+1_\mathcal{H})^kb}
\qquad\forall n\in\Z-d_a
\,\,\,\forall b\in\mathcal{H}^\infty \,.
$$
By definition, we get that $a_{-d_a}\Omega=a(e_{-d_a})=a$ for all $a\in\F$.
The M\"{o}bius covariance property implies that $e^{itL_0}a_ne^{-itL_0}=e^{-int}a_n$ for all $a\in\F$, all $t\in \R$ and all $n\in\Z-d_a$. This implies that $[L_0,a_n]b=-na_nb$ for all $a\in\F$, all $n\in\Z-d_a$ and all $b\in\mathcal{H}^\infty$ and thus $a_n$ preserves $\mathcal{H}^\mathrm{fin}$ for all $a\in\F$ and all $n\in\Z-d_a$. By the M\"{o}bius covariance property, we also have the remaining commutation relations $[L_{-1},a_n]b=(-n-d_a+1)a_{n-1}b$ and $[L_1,a_n]b=(-n+d_a-1)a_{n+1}b$ for all $a\in\F$, all $n\in\Z-d_a$ and all $b\in\mathcal{H}^\infty$. Define the linear span
\begin{equation}
V:=\langle \Omega \,,\,\,
a^1_{n_1}\cdots a^l_{n_l}\Omega \mid
l\geq 1 \,,\,\,\,
a^j\in\F \,,\,\,\, n_j\in\Z-d_{a^j}
\,\,\,
\forall j\in\{1,\cdots, l\} 
\rangle
\subseteq \mathcal{H}^\mathrm{fin}
\end{equation}
We want to show that $V=\mathcal{H}^\mathrm{fin}$. Let $\mathcal{H}_V$ be the closure of $V$ in $\mathcal{H}$ and $e_V$ be the orthogonal projection onto $\mathcal{H}_V$. For any $f\in C_{p(a)}^\infty(S^1)$, $\sum_{n\in\Z-d_a}\widehat{f}_ne_n$ converges to $f$ in $C_{p(a)}^\infty(S^1)$ and hence
$$
\widetilde{Y}(a,f)b=\sum_{n\in\Z-d_a}\widehat{f}_na_nb
\qquad\forall a\in\F
\,\,\,
\forall f\in C_{p(a)}^\infty(S^1)
\,\,\,
\forall b\in\mathcal{H}^\infty \,.
$$
It follows that $\widetilde{Y}(a,f)b$ and $\widetilde{Y}(a,f)^*b$ are in $\mathcal{H}_V$ for all $a\in\F$, all $f\in C_{p(a)}^\infty(S^1)$ and all $b\in\mathcal{H}^\mathrm{fin}$. Considering that $\mathrm{Ker}(L_0-n1_\mathcal{H})$ is finite-dimensional for all $n\in\half\Zpluseq$, we get that $e_V\mathcal{H}^\mathrm{fin}=V$. Consequently, we have that $[e_V,\widetilde{Y}(a,f)]b=0$ for all $a\in\F$, all $f\in C_{p(a)}^\infty(S^1)$ and all $b\in V$. But $\mathcal{H}^\mathrm{fin}$ is a core for every FJ smeared vertex operator, which means that $e_V Y_I(a,f)\subseteq Y_I(a,f)e_V$ for all $a\in\F$, all $I\in\J$ and all $f\in C_{p(a)}^\infty(S^1)$ with $\mathrm{supp}f\subset I$. Thus, using that $\F$ generates $\A$, it must be $e_V=1_\mathcal{H}$ by the irreducibility of $\A$. This obviously means that $V=\mathcal{H}^\mathrm{fin}$.

What we have just proved says us that the formal series
\begin{equation}
\Phi_a(z):=\sum_{n\in\Z-d_a}a_nz^{-n-d_a}
\qquad \forall a\in\F
\end{equation}
are translation covariant (with respect to the even endomorphism $T:=L_{-1}$) parity-preserving fields on $V$, which are also mutually local (in the vertex superalgebra sense) thanks to the graded-locality of $\A$ and Proposition \ref{prop:wightman_locality}. Hence, we have a unique vertex superalgebra structure on the vector superspace $(V,\Gamma\restriction_V)$ thanks to the existence theorem for vertex superalgebra \cite[Theorem 4.5]{Kac01} with vertex operators satisfying $Y(a,z)=\Phi_a(z)$ for all $a\in\F$. Moreover, we have a unitary representation of the Virasoro algebra on $V$ by operators $L_n$ with $n\in\Z$ by differentiating the representation $U$ of $\Diff^+(S^1)^{(\infty)}$ associated to $\A$, see Theorem \ref{theo:representations_Diff_Vir}. Consequently, $L(z)=\sum_{n\in\Z}L_nz^{-n-2}$ is a local field on $V$, which is also mutually local with respect to every $Y(a,z)$ with $a\in V$ thanks to the graded-locality of $\A$. Furthermore, $L(z)\Omega=e^{zL_{-1}}L_{-2}\Omega$ and thus $L(z)=Y(\nu,z)$ with $\nu:=L_{-2}\Omega$ by the uniqueness theorem for vertex superalgebras \cite[Theorem 4.4.]{Kac01}. Clearly, $\nu$ is a conformal vector which makes $V$ a VOSA.

The scalar product on $\mathcal{H}$ restricts to a normalized scalar product on $V$ having unitary M\"{o}bius symmetry as defined in (i) of Definition \ref{defin:unitary_mob_symmetry}. Furthermore, we have that
\begin{equation}
Y(a,z)^+=(-1)^{2d_a^2+d_a}Y(\theta(a),z)
\qquad\forall a\in\F
\end{equation}
where $Y(a,z)^+$ is the adjoint vertex operator as defined in \eqref{eq:defin_adjoint_field}. This means that every $Y(a,z)^+$ with $a\in\F$ is local and mutually local with respect to every vertex operator of $V$. Set
\begin{equation}
\F_+:=\left\{
\frac{a+(-1)^{2d_a^2+d_a}\theta(a)}{2} \,\,\Bigg|\,\, a\in\F
\right\}
\,,\quad
\F_-:=\left\{
-i\frac{a-(-1)^{2d_a^2+d_a}\theta(a)}{2} \,\,\Bigg|\,\, a\in\F
\right\} \,.
\end{equation}
Note that $V$ is generated by the family of Hermitian quasi-primary fields $\{Y(a,z)\mid a\in\F_+\cup \F_-\}$ and thus $V$ is unitary by Proposition \ref{prop:simple_unitary_vosa_gen_by_qp_hermitian_fields}. Furthermore, $V$ is simple by (iv) of Proposition \ref{prop:properties_bilinear_forms} because $V_0=\C\Omega$. $V$ is also energy-bounded thanks to Proposition \ref{prop:energy_boundedness_by_generators} because it is generated by the family $\F$ of vectors satisfying energy bounds. $\A_\F$ as defined in \eqref{eq:defin_net_gen_by_subset_F} coincides with $\A$ by hypothesis and thus we can conclude that $V$ is strongly graded-local and that $\A_V=\A$ by Theorem \ref{theo:gen_by_quasi_primary}.
\end{proof}

We end with the following:
\begin{conj}  
	\label{conj:graded-local_conformal_net_has_vosa}
	For every irreducible graded-local conformal net $\A$, there exists a simple strongly graded-local unitary VOSA $V$ such that $\A=\A_V$. 
\end{conj}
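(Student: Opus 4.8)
The plan is to reduce Conjecture \ref{conj:graded-local_conformal_net_has_vosa} entirely to Theorem \ref{theo:back_energy_bounds} by feeding it the output of the Fredenhagen-J\"or{\ss} construction. Given an arbitrary irreducible graded-local conformal net $\A$ with vacuum Hilbert space $\mathcal{H}$, I would take $\F$ to be the family of \emph{all} quasi-primary vectors of $\mathcal{H}$. This $\F$ is automatically $\theta$-invariant, since $\theta=ZJ_{S^1_+}$ commutes with $L_{-1},L_0,L_1$ and hence preserves both the conformal weight and the quasi-primarity condition $L_1a=0$; and $\A$ is generated by the associated FJ smeared vertex operators, as shown in the proposition preceding Theorem \ref{theo:back}. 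Thus the structural hypotheses of Theorem \ref{theo:back_energy_bounds} hold for free, and the conjecture becomes equivalent to verifying its two remaining analytic hypotheses for a completely general net. Note that Theorem \ref{theo:back} already shows these hypotheses to be \emph{necessary}: whenever $\A=\A_V$, the FJ operators coincide with the ordinary smeared vertex operators and therefore satisfy energy bounds, so the true content of the conjecture is that this always \emph{can} be arranged.

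The first hypothesis to establish is the finiteness of $\dim\Ker(L_0-n1_\mathcal{H})$ for all $n\in\half\Zpluseq$. This is not automatic, being equivalent to convergence of the conformal character $\mathrm{Tr}(q^{L_0})$, and it fails for nets with infinite-dimensional energy levels. I would try to derive it from a natural compactness input on $\A$ --- a trace-class (conformal) nuclearity condition of Buchholz-Wichmann type, the split property, or complete rationality --- so that the conjecture would first be settled for nets satisfying such a condition before any attempt at full generality.

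The hard part will be showing that each FJ vertex operator $\{Y_I(a,z)\mid I\in\J\}$ satisfies polynomial energy bounds \eqref{eq:energy_bounds_FJ}, i.e.\ that there exist non-negative constants $M,s,k$ with $\norm{Y_I(a,f)b}\leq M\norm{f}_s\norm{(L_0+1_\mathcal{H})^kb}$ uniformly in $I$ and $f$. The FJ operators are built purely from Tomita-Takesaki modular data through the operators $\mathscr{L}_{a(f)}$ of Remark \ref{rem:modular_theory_for_smeared_vertex_ops}, and a priori nothing controls their growth against the conformal Hamiltonian. The most promising route seems to be to extract such estimates from the Bisognano-Wichmann structure (Theorem \ref{theo:delta_one_half_a(f)}) combined with a regularity or nuclearity bound on the modular operators $\Delta_I$, or through the scaling-limit viewpoint of Fredenhagen-J\"or{\ss}. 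This is exactly the point where the commutativity properties of unbounded operators turn into ``a touchy business'', and it is the genuinely open analytic core of the problem.

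Once both hypotheses are secured, Theorem \ref{theo:back_energy_bounds} applies verbatim and equips $V:=\mathcal{H}^\mathrm{fin}$ with the structure of a simple strongly graded-local unitary VOSA satisfying $\A=\A_V$, completing the argument. I therefore anticipate that any realistic resolution will be conditional, proving the conjecture for the broad class of nets admitting a suitable energy-bound/nuclearity hypothesis, much as its companion Conjecture \ref{conj:every_vosa_is_stongly_graded-local} on the VOSA side remains open in general.
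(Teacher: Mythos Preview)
The statement is a \emph{conjecture}, not a theorem: the paper does not prove it and explicitly leaves it open, placing it alongside its companion Conjecture~\ref{conj:every_vosa_is_stongly_graded-local}. There is therefore no ``paper's own proof'' to compare against.

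That said, your reading of the situation is accurate. Theorem~\ref{theo:back_energy_bounds} is exactly the conditional result the paper offers in lieu of a proof, and it reduces the conjecture to the two hypotheses you isolate --- finite-dimensionality of the $L_0$-eigenspaces and polynomial energy bounds for the FJ fields. The paper makes no claim that either holds for a general irreducible graded-local conformal net; it simply records them as sufficient conditions. Your identification of the energy-bound estimate for the modular-theoretic operators $\mathscr{L}_{a(f)}$ as the hard analytic core is correct, and your remark that Theorem~\ref{theo:back} shows these conditions to be necessary is also right. What you have written is a sound strategic outline of where the open problem lies, not a proof --- and you say as much yourself in the final paragraph. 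The paper agrees with you that the conjecture is open.
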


\appendix

\section{Vertex superalgebra locality and Wightman locality}
\label{appendix:wightman_locality}

In the current section, we investigate the relationship between the locality axiom in the vertex superalgebra framework and in the Wightman one. Indeed, as it is well-explained e.g.\ in \cite[Chapter 1]{Kac01}, the vertex superalgebra axioms are motivated by the Wightman axioms for a QFT, see \cite[Section 3.1]{SW64}. Hence, it is likely to have an equivalence between the locality axioms in the two frameworks at least under certain assumptions. Those assumptions are represented by a suitable energy bound condition for fields in the spirit of Section \ref{subsection:energy_bounds}.

Let $L_0$ be a self-adjoint operator on a Hilbert space $\mathcal{H}$ with spectrum contained in $\half\Zpluseq$. Call $V$ the algebraic direct sum
$$
\mathcal{H}^\mathrm{fin}=\bigoplus_{n\in\half\Zpluseq} \mathrm{Ker}(L_0-n1_\mathcal{H}) 
$$
which is a dense subspace of $\mathcal{H}$. As usual, denote by $\mathcal{H}^\infty$ the dense subspace of $\mathcal{H}$ given by the smooth vectors of $L_0$, see p.\ \pageref{defin:smooth_vectors_L_0} with references therein.
Let $a:=\{a_n\mid n\in\Z\}$ and $b:=\{b_n\mid n\in\Z-\half\}$ be two families of operators on $\mathcal{H}$ with $V$ as a common domain and such that
\begin{equation}  \label{eq:rot_covariance_Wightman}
\begin{split}
e^{itL_0}a_ne^{-itL_0}&=e^{-int}a_n
\qquad\forall t\in\R\,\,\,\forall n\in\Z \\
e^{itL_0}b_ne^{-itL_0}&=e^{-int}b_n
\qquad\forall t\in\R\,\,\,\forall n\in\Z-\half\,.
\end{split}
\end{equation}
It follows that
\begin{align}
a_n\mathrm{Ker} (L_0-k1_\mathcal{H})&\subseteq \mathrm{Ker} (L_0-(k-n)1_\mathcal{H})
\qquad\forall n\in\Z\,\,\,\forall k\in\half\Zpluseq \\
b_n\mathrm{Ker} (L_0-k1_\mathcal{H})&\subseteq \mathrm{Ker} (L_0-(k-n)1_\mathcal{H})
\qquad\forall n\in\Z-\half\,\,\,\forall k\in\half\Zpluseq 
\,.
\end{align}
Consequently, all the operators $a_n$ and $b_n$ restrict to endomorphisms of $V$ with the property that for all $c\in V$ there exists $N>0$ such that $a_n=b_n=0$ for all $n> N$. In other words, the formal series
\begin{equation}
\Phi_a(z):=\sum_{n\in\Z}a_nz^{-n}
\,, \qquad
\Phi_b(z):=\sum_{n\in\Z-\half}b_nz^{-n-\half}
\end{equation}
are fields on $V$ as defined in \cite[Section 3.1]{Kac01}, see also Section \ref{subsection:basic_definitions_VOSA}. Moreover, assume that $\Phi_a(z)$ and $\Phi_b(z)$ satisfy the following \textbf{energy bounds}: there exist non-negative real numbers $M$, $s$ and $k$ such that
\begin{align}
\norm{a_nc} &\leq M(1+\abs{n})^s\norm{(L_0+1_\mathcal{H})^kc}
\qquad\forall n\in\Z \,\,\,\forall c\in V \\
\norm{b_nc} &\leq M(1+\abs{n})^s\norm{(L_0+1_\mathcal{H})^kc}
\qquad\forall n\in\Z-\half \,\,\,\forall c\in V \,.
\end{align} 
Thus, we define the following operators:
\begin{equation}
\Phi_a(f)c:=\sum_{n\in\Z}\widehat{f}_n a_nc
\,, \qquad
\Phi_b(f)c:=\sum_{n\in\Z-\half}\widehat{f}_\frac{2n-1}{2} b_nc
\qquad \forall f\in C^\infty(S^1) \,\,\, \forall c\in V \,.
\end{equation}
The closures of the above operators, which we still denote by the same symbols, have $\mathcal{H}^\infty$ as common invariant domain, cf.\  Section \ref{subsection:energy_bounds}. We call $\Phi_a(f)$ and $\Phi_b(f)$ \textbf{smeared fields}.
We say that $a, \Phi_a(z)$ and $\Phi_a(f)$ are \textbf{even}, whereas $b, \Phi_b(z)$ and $\Phi_b(f)$ are \textbf{odd}. Accordingly, we say that $a$ has \textbf{parity} $p(a)=\parzero$, whereas that $b$ has parity $p(b)=\parone$.

Now, let $a^1$ and $a^2$ be two families of operators with given parities as above. Let $[\cdot,\cdot]$ be the \textbf{graded commutator}, that is, 
\begin{equation} \label{eq:defin_graded_commutator_fields_locality}
[\Phi_{a^1}(\cdot),\Phi_{a^2}(\cdot)]:=\Phi_{a^1}(\cdot)\Phi_{a^2}(\cdot)-(-1)^{p(a^1)p(a^2)}\Phi_{a^2}(\cdot)\Phi_{a^1}(\cdot) \,.
\end{equation}
where, with an abuse of notation, we are using $(-1)^{p(a^1)p(a^2)}$ as in \eqref{eq:defin_graded_commutator_fields_locality} to denote $(-1)^{p_{a^1}p_{a^2}}$, where $p_{a^1},p_{a^2}\in\{0,1\}$ are representatives of the remainder class of $p(a^1)$ and $p(a^2)$ in $\Z_2$ respectively.
We say that the fields $\Phi_{a^1}(z)$ and $\Phi_{a^2}(z)$ are, accordingly with Section \ref{subsection:basic_definitions_VOSA}, \textbf{mutually local in the vertex superalgebra sense} if there exists an integer $N\geq0$ such that
\begin{equation}
(z-w)^n[\Phi_{a^1}(z),\Phi_{a^2}(w)]c=0
\qquad
\forall n\geq N\,\,\,\forall c\in V 
\end{equation}
whereas, we say that the fields $\Phi_{a^1}(z)$ and $\Phi_{a^2}(z)$ are \textbf{mutually local in the Wightman sense} if 
\begin{equation}
[\Phi_{a^1}(f_1),\Phi_{a^2}(f_2)]c=0
\qquad\forall c\in V 
\end{equation}
whenever $f_1,f_2\in C^\infty(S^1)$ are such that $\mathrm{supp}f_1\subset I$ and $\mathrm{supp}f_2\subset I'$ for any $I\in\J$.
Therefore, we can state the correspondence between these two definitions of locality:

\begin{prop}   \label{prop:wightman_locality}
Let $a^1$ and $a^2$ be two families of operators on $\mathcal{H}$ with given parities and with $V$ as common domain. Assume further that $a^1$ and $a^2$ satisfy energy bounds. Then the fields $\Phi_{a^1}(z)$ and $\Phi_{a^2}(z)$ are mutually local in the vertex superalgebras sense if and only if they are mutually local in the Wightman sense.
\end{prop}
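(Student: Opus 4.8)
The plan is to prove the two implications separately, handling the graded commutator \eqref{eq:defin_graded_commutator_fields_locality} exactly as in the purely even case: the sign $(-1)^{p(a^1)p(a^2)}$ is a constant and rides harmlessly through every estimate, so the only genuinely new bookkeeping is that odd fields are expanded in half-integer modes (reflected in the $\chi$-twisted test functions of \eqref{eq:test_funct_spaces_chi}). Throughout I would work on the dense invariant domain $\mathcal{H}^\infty$, on which all smeared fields and their products are defined and continuous thanks to the energy bounds, so that every interchange of summation, integration and inner product below is justified in the same way as in Section \ref{subsection:energy_bounds}. Since $V=\mathcal{H}^\mathrm{fin}\subseteq\mathcal{H}^\infty$ is dense, it suffices to verify each locality statement on $V$ and, where convenient, to test it against matrix elements $(v\mid\,\cdot\,u)$ with $u,v\in V$ homogeneous.

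For the implication ``vertex superalgebra locality $\Rightarrow$ Wightman locality'', I would use that $(z-w)^N[\Phi_{a^1}(z),\Phi_{a^2}(w)]c=0$ on $V$ forces the formal distribution $[\Phi_{a^1}(z),\Phi_{a^2}(w)]$ to be supported on the diagonal. By the decomposition theorem for local formal distributions (see \cite[Section 2.3]{Kac01}) one may write $[\Phi_{a^1}(z),\Phi_{a^2}(w)]=\sum_{j=0}^{N-1}\Psi_j(w)\,\partial_w^j\delta(z,w)$ with field coefficients $\Psi_j$. Smearing against $f_1\otimes f_2$ with $\mathrm{supp}f_1\subset I$ and $\mathrm{supp}f_2\subset I'$, and integrating out the formal delta and its derivatives, each summand reduces to a smeared field $\Psi_j$ applied to a function built from $f_2$ and finitely many derivatives of $f_1$ restricted to the diagonal; as $I\cap I'=\emptyset$ these products vanish identically, giving $[\Phi_{a^1}(f_1),\Phi_{a^2}(f_2)]c=0$ for all $c\in\mathcal{H}^\infty$, hence on $V$.

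The converse is the substantial direction. Fix homogeneous $u,v\in V$; by the rotation covariance \eqref{eq:rot_covariance_Wightman} the $L_0$-grading selects, in $(v\mid\Phi_{a^1}(z)\Phi_{a^2}(w)u)$, only the mode pairs with $m+n$ fixed by the weights of $u$ and $v$, so that this matrix element is the expansion of a function $G_+$ which the energy bounds show to be holomorphic and polynomially bounded on $\{|w|<|z|\}$; reversing the order produces $G_-$ holomorphic on $\{|z|<|w|\}$, and each is a monomial times a function of $z/w$, whence its only possible singularity is along $z=w$. Wightman locality, applied to test functions with disjoint supports, says precisely that the boundary distributions of $G_+$ and $G_-$ coincide off the diagonal, and by a Schwarz-reflection and identity-theorem argument of the kind already used in the proofs of Proposition \ref{prop:properties_F_g,f(z)} and Theorem \ref{theo:gen_by_quasi_primary}, $G_+$ and $G_-$ become the two expansions of a single function whose sole singularity is a pole along $z=w$. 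The main obstacle is to bound the order of this pole by an integer $N$ \emph{independent} of $u,v$: this uniformity is exactly what the polynomial degree $s$ in the energy bounds supplies, since it controls the growth of the matrix coefficients, while the finite dimensionality of the $L_0$-eigenspaces keeps a single $N$ valid across all $u,v$. Multiplying by $(z-w)^N$ then clears the pole, so $(v\mid(z-w)^N[\Phi_{a^1}(z),\Phi_{a^2}(w)]u)=0$ as formal series for this uniform $N$; as $v\in V$ is arbitrary and $V$ is dense, this yields vertex superalgebra locality of $\Phi_{a^1}$ and $\Phi_{a^2}$.
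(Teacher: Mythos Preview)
Your forward direction matches the paper's: both rely on the decomposition of the commutator as a finite sum of transverse derivatives of the diagonal delta, which vanishes when smeared against tensor products with disjoint supports.

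For the converse, however, you take a genuinely different route. The paper argues purely distribution-theoretically: for fixed $c,d\in V$ the matrix element $\varphi_{c,d}(z,w)=(d\mid[\Phi_{a^1}(z),\Phi_{a^2}(w)]c)$ extends, by the energy bounds, to an ordinary distribution on $S^1\times S^1$ of order $N$ independent of $c,d$; Wightman locality forces it to be supported on the diagonal; then, via a partition of unity and local coordinates, the structure theorem for distributions supported at a point (\cite[Theorem~6.25]{Rud91}) writes $\varphi_{c,d}$ as a finite sum of normal $\delta$-derivatives, and \cite[Theorem~2.3]{Kac01} converts this back into $(z-w)^N\varphi_{c,d}=0$. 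Your argument instead exploits the analytic continuation of the two-point functions $G_\pm$ to complementary annuli and glues them via an edge-of-the-wedge step. This is a legitimate and classical alternative, closer in spirit to Wightman QFT; the paper's approach is more self-contained and avoids any gluing argument, reducing everything to elementary facts about distributions.

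Two remarks on your write-up. First, the analytic continuation you need is not really ``Schwarz reflection'' but rather the one-variable edge-of-the-wedge (or Painlev\'e-type) statement that holomorphic functions on opposite sides of $|z/w|=1$ with coinciding distributional boundary values on an arc extend across that arc; you should name it as such. Second, you invoke finite dimensionality of $\Ker(L_0-n1_{\mathcal H})$ to obtain a uniform $N$, but this hypothesis is neither stated in the proposition nor needed: the uniformity comes directly from the energy bounds, since the Laurent coefficients of each $G_\pm$ grow at most like $|n|^{2s+k}$ with constants depending on $u,v$ but exponent independent of them, and this exponent alone bounds the pole order.
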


\begin{proof}
The following is an adaptation of the proof of \cite[Proposition A.1]{CKLW18}.

For every $c,d\in V$, define the following formal series in the two formal variables $z$ and $w$:
$$
\varphi_{c,d}(z,w):=
(d|[\Phi_{a^1}(z),\Phi_{a^2}(w)]c)
=\sum_{\substack{n\in\Z-p_1 \\ m\in\Z-p_2}}
(d|[a^1_n,a^2_m]c)z^{-n-p_1}w^{-m-p_2}
$$
where $p_k$ are either $0$ if $p(a^k)=\parzero$ or $\half$ if $p(a^k)=\parone$ for $k\in\{1,2\}$. $\varphi_{c,d}(z,w)$ can be considered as a formal distribution on $S^1\times S^1$ as in \cite[Section 2.1]{Kac01}, that is, $\varphi_{c,d}$ is a linear functional on the complex vector space of trigonometric polynomials in two variables. Thanks to the energy bounds, $\varphi_{c,d}$ extends to a unique ordinary distribution on $S^1\times S^1$ by continuity, that is, a continuous linear functional on the Fréchet space $C^\infty(S^1\times S^1)$, which we call $\varphi_{c,d}(f)$ for $f\in C^\infty(S^1\times S^1)$. (Recall that $C^\infty(S^1\times S^1)$ is isomorphic to the completion of the algebraic tensor product $C^\infty(S^1)\otimes C^\infty(S^1)$.) The energy bounds assure us that there exists a $N>0$ such that for every $c,d\in V$, there exists $M_{c,d}>0$ such that
\begin{equation}   \label{eq:varphi_cd_is_orderN}
\abs{\varphi_{c,d}(f)}\leq M_{c,d}
\norm{f}_N
\,,\qquad
\max_{\substack{\abs{s}\leq N \\ x,y\in\R}}
 \abs{\partial^sf(e^{ix},e^{iy})} 
 \qquad
 \forall f\in C^\infty(S^1\times S^1)
\end{equation}
where, as usual, $s=(s^1,s^2)$, $s^j\in\Zpluseq$ is a multi-index with $\abs{s}=s^1+s^2$ and $\partial^s:=(\partial_x)^{s^1}(\partial_y)^{s^2}$.

Suppose that the fields $\Phi_{a^1}(z)$ and $\Phi_{a^2}(z)$ are mutually local in the vertex superalgebra sense. By \cite[Theorem 2.3(i)]{Kac01}, the ordinary distribution $\varphi_{c,d}$ has support in the diagonal 
$$
D:=\{(z,w)\in S^1\times S^1\mid z-w=0\} .
$$
It follows that $(d|[\Phi_{a^1}(f),\Phi_{a^2}(g)]c)=0$ for all $c,d\in V$, whenever $f,g\in C^\infty(S^1)$ with $\mathrm{supp}f\subset I\in\J$ and $\mathrm{supp}g\subset I'$. Due to the energy bounds, we can extend the equality above to all $c\in\mathcal{H}^\infty$ (cf.\  Lemma \ref{lem:common_core}) and thus the fields $\Phi_{a^1}(z)$ and $\Phi_{a^2}(z)$ are mutually local in the Wightman sense.

Conversely, suppose that $\Phi_{a^1}(z)$ and $\Phi_{a^2}(z)$ are mutually local in the Wightman sense, then the formal distribution $\varphi_{c,d}$ has support in the diagonal $D$ of $S^1\times S^1$ as defined above. Moreover, \eqref{eq:varphi_cd_is_orderN} says us that $\varphi_{c,d}$ is a distribution of \textit{order $N$} in the sense of \cite[p.\  156]{Rud91}. We would like to apply \cite[Theorem 6.25]{Rud91}. To this aim, consider a finite open cover $\{U_j\}$ of the diagonal $D$ of the torus $S^1\times S^1$ in such a way that every $U_j$ is diffeomorphic to the open square $(-\pi,\pi)\times(-\pi,\pi)$ of $\R^2$ and such that $D\cap U_j$ is diffeomorphic to a diagonal of such a square. Complete $\{U_j\}$ to a finite open cover of $S^1\times S^1$, which we denote by the same symbols. Let $\{h_j\}$ be a partition of unity of $S^1\times S^1$ with respect to the finite open cover $\{U_j\}$ and set $\varphi_{c,d}^j:=h_j\varphi_{c,d}$, so that $\varphi_{c,d}=\sum_j\varphi_{c,d}^j$. Every $\varphi_{c,d}^j$ can be considered as a distribution in the angular variables $(x,y)\in(-\pi,\pi)\times(-\pi,\pi)$ and can be easily extended to a distribution on $\R^2$. Applying the change of variable $l:=x-y$ and $m:=x+y$, we obtain distributions $\phi_{c,d}^j$ on $\R^2$ with support in $\{(l,m)\in \R^2\mid l=0\}$. Smearing a distribution $\phi_{c,d}^j$ with a test function $g$ in the variable $m\in\R$, we obtain a new distribution in the variable $l\in\R$ with support in $\{0\}$ only. Thus, we are in the case of \cite[Theorem 6.25]{Rud91}, which says us that $\phi_{c,d}^j(l,g)=\sum_{s=1}^N\widetilde{k}_s^j(g)\partial^s\delta(l)$. One can check that for every $s$ and $j$, $\widetilde{k}_s^j$ is a distribution in the variable $m\in\R$. Thus, for every $j$, we can rewrite $\varphi_{c,d}(z,w)=\sum_{s=1}^Nk_s^j(w)\partial_w^s\delta(z-w)$ for some distributions $k_s^j$ in the variable $w$. By \cite[Theorem 2.3]{Kac01}, we can deduce that $(z-w)^N\varphi_{c,d}(z,w)=0$ for all $c,d\in V$, that is, the fields $\Phi_{a^1}(z)$ and $\Phi_{a^2}(z)$ are mutually local in the vertex superalgebra sense.
\end{proof}

\bigskip
\noindent
{\small
	{\bf Acknowledgements.}
	S.C.\ would like to thank Bin Gui for some useful discussions.
	S.C.\ acknowledges support from the GNAMPA-INDAM project \emph{Operator algebras and infinite quantum systems}, CUP E53C23001670001,  	
	from the University of Rome ``Tor Vergata'' funding \emph{OAQM}, CUP E83C22001800005, and from the \emph{MIUR Excellence Department Project   
	MatMod@TOV} awarded to the Department of Mathematics, University of Rome ``Tor Vergata'', CUP E83C23000330006.
	T.G.\ is supported in part by a Leverhulme Trust Research Project Grant RPG-2021-129.
	R.H.\ would like to thank the Department of Mathematics at the University of Rome ``Tor Vergata'' for hospitality received during an extended research visit. 
}


\end{document}